\numberwithin{equation}{section}
\crefname{thm}{Theorem}{Theorems}
\crefname{cor}{Corollary}{Corollaries}
\crefname{lem}{Lemma}{Lemmas}
\crefname{sublem}{Sublemma}{Sublemmas}
\crefname{prop}{Proposition}{Propositions}
\crefname{def}{Definition}{Definitions}
\crefname{example}{Example}{Examples}
\crefname{claim}{Claim}{Claims}
\crefname{conj}{Conjecture}{Conjectures}
\crefname{conv}{Notation}{Notations}
\crefname{rem}{Remark}{Remarks}
\crefname{rmk}{Remark}{Remarks}
\crefname{prob}{Problem}{Problems}
\crefname{figure}{Figure}{Figures}
\crefname{table}{Table}{Tables}
\crefname{section}{Section}{Sections}
\crefname{subsection}{Section}{Sections}
\crefname{appendix}{Appendix}{Appendices}
\crefname{introthm}{Theorem}{Theorems}
\crefname{introprop}{Proposition}{Propositions}
\crefname{introcor}{Corollary}{Corollaries}
\crefname{introconj}{Conjecture}{Conjectures}
\newtheorem{thm}{Theorem}[section]
\newtheorem{prop}[thm]{Proposition}
\newtheorem{cor}[thm]{Corollary}
\newtheorem{lem}[thm]{Lemma}
\newtheorem{introthm}{Theorem}
\newtheorem{introprop}[introthm]{Proposition}
\theoremstyle{definition}
\newtheorem{dfn}[thm]{Definition}
\newtheorem{conj}[thm]{Conjecture}
\theoremstyle{remark}
\newtheorem{rem}[thm]{Remark}
\newcommand{\beq}{\begin{equation}}
\newcommand{\eeq}{\end{equation}}
\newcommand{\ot}{\leftarrow}
\tikzset{
    squigarrow/.style={-{Classical TikZ Rightarrow[length=4pt]}, decorate, decoration={snake, amplitude=1.8pt, pre length=2pt, post length=3pt}}
}
\newcommand{\bC}{\mathbb C}
\newcommand{\bJ}{\mathbb J}
\newcommand{\bN}{\mathbb N}
\newcommand{\bP}{\mathbb P}
\newcommand{\bR}{\mathbb R}
\newcommand{\bT}{\mathbb T}
\newcommand{\bZ}{\mathbb Z}
\newcommand{\cA}{\mathcal A}
\newcommand{\cF}{\mathcal F}
\newcommand{\cN}{\mathcal N}
\newcommand{\cO}{\mathcal O}
\newcommand{\cS}{\mathcal S}
\newcommand{\cX}{\mathcal X}
\newcommand{\cY}{\mathcal Y}
\newcommand{\cZ}{\mathcal Z}
\newcommand{\fB}{\mathfrak{B}}
\newcommand{\fS}{\mathfrak S}
\newcommand{\sfC}{\mathsf{C}}
\newcommand{\sfH}{\mathsf{H}}
\newcommand{\sfK}{\mathsf{K}}
\newcommand{\sfP}{\mathsf{P}}
\newcommand{\sfQ}{\mathsf{Q}}
\newcommand{\bk}{\mathbf k}
\newcommand{\pr}{{\rm pr}}
\newcommand{\barA}{\overline{\cA}}
\newcommand{\barH}{\overline{\sfH}}
\newcommand{\barP}{\overline{\sfP}}
\newcommand{\barQ}{\overline{\sfQ}}
\newcommand{\Id}{{\rm Id}}
\DeclareMathOperator{\SL}{\mathrm{SL}}
\DeclareMathOperator{\skeleton}{\textbf{sk}}
\DeclareMathOperator{\im}{\mathrm{Im}}
\DeclareMathOperator{\kernel}{\mathrm{Ker}}
\DeclareMathOperator{\coker}{\mathrm{Coker}}
\DeclareMathOperator{\Int}{\mathrm{Int}}
\DeclareMathOperator{\Specm}{\mathrm{MaxSpec}}
\DeclareMathOperator{\Frac}{\mathrm{Frac}}
\DeclareMathOperator{\rankZ}{\text{rank}_{\mathcal{Z}}}
\newcommand{\gaa}{\mathsf{g}}
\newcommand{\tra}{{\rm tr}_{\lambda}^A}
\newcommand{\A}{\mathcal A_{\hat q}(\Sigma,\lambda)}
\newcommand{\Ap}{\mathcal A_{\hat{q}}^{+}(\Sigma,\lambda)}
\newcommand{\rA}{\overline{\mathcal A}_{\hat q}(\Sigma,\lambda)}
\newcommand{\rAp}{\overline{\mathcal A}^{+}_{\mathbbm{v}}(\Sigma,\lambda)}
\newcommand{\Sn}{\cS_n(\Sigma,{\mathbbm{v}})}
\newcommand{\cAl}{\cA_{\hat q}(\Sigma,\lambda)}
\newcommand{\cXl}{\cX_{\hat q}(\Sigma,\lambda)}
\newcommand{\barX}{\overline{\cX}}
\newcommand{\Xbll}{\cX^{\rm bl}_{\hat q}(\Sigma,\lambda)}
\def\bk{\mathbf{k}}
\def\barH{\overline{\sfH}}
\def\barK{\overline{\sfK}}
\def\barKl{\overline{\sfK}_{\lambda^\ast}}
\def\barKt{\overline{\sfK}_\tau}
\def\barQ{\overline{\sfQ}}
\def\barQl{\overline{\sfQ}_{\lambda^\ast}}
\def\barV{\overline{V}}
\def\barVl{\overline{V}_\lambda}
\def\barVlast{\overline{V}_{\lambda^\ast}}
\def\barVt{\overline{V}_\tau}
\def\obV{{\mathring{\barV}}}
\def\obVlast{\obV_{\lambda^\ast}}
\def\obVl{\obV_{\lambda}}
\def\Oq{\mathcal{O}_q(\mathrm{SL}(n))}
\def\Ov{\mathcal{O}_{q}(\mathrm{SL}(n))}
\def\Ou{\mathcal{O}_{\varepsilon}(\mathrm{SL}(n))}
\def\buu{{\mathbf u}}
\def\ot{\otimes}
\def\Bu{\cS_n(\fB,\mathbbm{u})}
\def\Bv{\cS_n(\fB,\mathbbm{v})}
\def\proj{\mathbf{pr}}
\def\tfk{\mathbf{k}}
\def\tft{\mathbf{t}}
\def \Zmp{\mathbb Z_{m'}}
\def\tfb{\mathbf{b}}
\def\tff{\mathbf{h}}
\def\ttk{\mathbf{k}}
\def\Lp{\Lambda_{\partial}}
\def\ZA{\mathcal Z(\A)}
\def\tZS{\text{Frac}(\mathcal Z(\Sn))}
\def\tZA{\text{Frac}(\mathcal Z(\A))}
\def\tSn{\widetilde{\cS_n}(\Sigma,\mathbbm{v})}
\def\tA{\widetilde{\cA_{\hat{q}}}(\Sigma,\lambda)}
\def\rSn{\overline \cS_n(\Sigma,\mathbbm{v})}
\def\Si{\Sigma}
\newcommand{\cev}[1]{\reflectbox{\ensuremath{\vec{\reflectbox{\ensuremath{#1}}}}}}
\DeclareMathOperator{\Irrep}{{\rm Irrep}}
\DeclareMathOperator{\Azumaya}{\mathsf{Azm}}
\title{Center of stated $\mathrm{SL}(n)$-skein algebras}
\author[Hiroaki Karuo]{Hiroaki Karuo}
\address{Hiroaki Karuo, Department of Mathematics, Gakushuin University, Mejiro, Toshima-ku, Tokyo, Japan.}
\email{hiroaki.karuo@gakushuin.ac.jp}
\author[Zhihao Wang]{Zhihao Wang}
\address{Zhihao Wang, School of Mathematics, Korea Institute for Advanced Study (KIAS), 85 Hoegi-ro, Dongdaemun-gu, Seoul 02455, Republic of Korea}
\email{zhihaowang@kias.re.kr}
\date{}
\begin{document}
\maketitle

\begin{abstract}
In the paper, we show some properties of (reduced) stated $\mathrm{SL}(n)$-skein algebras related to their centers for essentially bordered pb surfaces, especially their centers, finitely generation over their centers, and their PI-degrees. 
The proofs are based on the quantum trace maps, embeddings of (reduced) stated $\mathrm{SL}(n)$-skein algebras into quantum tori appearing in higher Teichm\"uller theory. 
Thanks to the Unicity theorem in [BG02, FKBL19], we can understand the representation theory of (reduced) stated $\mathrm{SL}(n)$-skein algebras. 
Moreover, the applications are beyond low-dimensional topology. For example, we can access to the representation theory of unrestricted quantum moduli algebras, and that of quantum higher cluster algebras potentially. 
\end{abstract}

\setcounter{tocdepth}{1}
\tableofcontents

\def\cR{\mathcal R}

\section{Introduction}
\subsection{Background}
Before explaining (reduced) stated $\SL(n)$-skein algebras, which is the subject of this paper, we will first explain $\SL(n)$-skein algebras. For a domain $\cR$ with a distinguished invertible element $\hat{q}$ and an oriented surface $\Sigma$, the $\SL(n)$-skein algebra is a noncommutative $\cR$-algebra associated with $\Sigma$ and is known to be a quantization of the $\SL(n,\bC)$-character variety of $\Sigma$ when $\cR=\bC$ \cite{Sik05, Bul97, PS19}. This algebra is generated by oriented $n$-valent graphs, and relations are imposed based on the $U_q(\frak{sl}_n)$-Reshetikhin--Turaev functor \cite{RT91}. While it is generally a noncommutative algebra, it is characterized by being understandable through diagrammatic computations. It is known that the $\SL(2)$-skein algebra is isomorphic to the Kauffman bracket skein algebra \cite{Prz99, Tur91}, and the $\SL(3)$-skein algebra is isomorphic to the Kuperberg skein algebra \cite{Kup96}. Specifically, in the case of $n=2$, it is deeply related to the quantum Teichm\"uller space \cite{BW11, Le19}, topological quantum field theory \cite{BHMV95}, and quantum moduli algebras \cite{Fai20, BFR23}, etc. 

As a generalization of the $\SL(2)$-skein algebra, the (reduced) stated $\SL(2)$-skein algebra was introduced by incorporating elements called stated skeins and imposing appropriate relations \cite{Le18, CL22}. 
This was inspired by the construction of the quantum trace map by Bonahon and Wong, which embeds the ${\rm SL}(2)$-skein algebra into the quantum Teichm\"uller space \cite{BW11}, and has the advantage of reducing discussions on complicated surfaces to simpler pieces like ideal triangles. 
Furthermore, this skein algebra is deeply related to quantum cluster algebras \cite{Mul16, LY23}, quantum Teichm\"uller spaces \cite{Le19, CL22}, topological quantum field theory \cite{CL22a}, and factorization homology \cite{BZBJ18, Coo23}. 
Through the skein algebra, phenomena in other fields can be captured diagrammatically, indicating its significance extends beyond low-dimensional topology. 

Similarly to the case of $n=2$, the (reduced) stated $\SL(n)$-skein algebra was introduced for general $n$ by considering stated $1$-$n$-valent graphs \cite{LS21, LY23}; see also \cite{Hig23} for $n=3$. This generalization is reasonable from the following results. 
\begin{enumerate}
    \item The stated SL$(n)$-skein algebra of a bigon is isomorphic to the quantum coordinate ring $\cO_q(\SL(n))$ as a Hopf algebra (for $n=2$, see \cite{CL22}; for general $n$, see \cite{LS21}).
    \item For a certain class of surfaces and  generic $\hat{q}$, it is isomorphic to the unrestricted quantum moduli algebra (also called the graph algebra) introduced by Alekseev--Grosse--Schomerus \cite{AGS95} and  Buffenoir--Roche \cite{BR95} independently \cite{Fai20, BFR23}.
    \item A homomorphism (quantum trace map) to the Fock--Goncharov algebra \cite{FG06, FG09} appearing in the context of the (quantum) higher Teichm\"uller theory is constructed, and under certain conditions, it is injective (for $n=2$, see \cite{BW11, Le19}; for $n=3$, see \cite{Kim20, Kim21, Dou24}; for general $n$, see \cite{LY23}).
\end{enumerate}

While we aim to contribute to other fields through understanding the structure of the stated $\SL(n)$-skein algebra, a fundamental and important first step is to understand the representation theory of the algebra. 
The Unicity theorem is effective in understanding the representation theory of noncommutative algebras \cite{FKBL19,BG02}. The Unicity theorem states that when an $\cR$-algebra is \textbf{almost Azumaya}, that is, if it satisfies 
\begin{enumerate}
    \item it is finitely generated as an $\cR$-algebra,
    \item it has no zero-divisors,
    \item it is finitely generated as a module over its center $Z$,
\end{enumerate}
then its finite-dimensional (irreducible) representations can be understood using the Azumaya locus (a Zariski dense open subset of MaxSpec($Z$)). Conditions (1) and (2) have already been shown in \cite{LY23}, so the remaining problem is to show the condition (3).

When determining the center of the (stated) $\SL(2)$ or $\SL(3)$-skein algebra, the use of its basis and an appropriate filtration was essential in \cite{FKBL19,KW24, Yu23}. However, applying this approach to the (stated) $\SL(n)$-skein algebra is challenging. One reason is that we do not know a `good' basis of the (stated) $\SL(n)$-skein algebra (no complexity for applying the confluence theory of \cite{SW07} has been found).

Bonahon--Higgins \cite{BH23} showed that, for general $n$, the image of a loop under `higher Chebyshev polynomials' gives as a central element of the $\SL(n)$-skein algebra, analogous to the case when $n=2$. However, these are multivariate polynomials defined by recurrence relations, which makes them more difficult to handle as elements of the skein algebra.
See \cite{KLW} for related works.

On the other hand, 
it is proved in \cite{Wan23}
that, analogous to the case of $n=2$ \cite{BL22,KQ24}, the $m'$-th power of an arc (a $1$-valent graph) is a central element of the stated $\SL(n)$-skein algebra. 
From the relation \eqref{eq:sticking} introduced in Section~\ref{sec:skein}, we know that every $n$-web (a generator of the $\SL(n)$-skein algebra) can be expressed as a sum and product of arcs, leading us to the understanding that certain images of all $n$-webs are included in the center. This implies that considering the stated $\SL(n)$-skein algebra, rather than the $\SL(n)$-skein algebra, allows for a more manageable treatment of the structures involved.

\subsection{Main results}
In this subsection, we will explain our results. 
Let $\hat q^2$ be a primitive $m''$-th root of unity. 
Let $d'$ denote the greatest common divisor of $n$ and $m''$, and set $m'=m''/d'$. Define $d$ to be the greatest common divisor of $2n$ and $m'$, and set $m =m'/d$. 

A {\bf pb surface} $\Sigma$ is obtained from a compact oriented surface $\overline{\Sigma}$ by removing finite points such that every boundary component of $\Sigma$ is diffeomorphic to $(0,1)$. 
A pb surface is {\bf essentially bordered} if every connected component has a non-empty boundary.

Fix a pb surface $\Sigma$ and let $\cS_n(\Sigma)$ (resp. $\overline\cS_n(\Sigma)$) denote the (resp. reduced) stated $\SL(n)$-skein algebra of $\Sigma$. See Section~1.3 for reasons why we deal with not only stated $\SL(n)$-skein algebras but also their reduced version.

As we shall see, the Unicity theorem (Theorem \ref{thm:Unicity-alomost}) is applied to the (reduced) stated $\SL(n)$-skein algebra. Thus studying the center of 
the (reduced) stated $\SL(n)$-skein algebra is crucial to understand the representation theory of the (reduced) stated $\SL(n)$-skein algebra.

Let $v$ be a boundary puncture of the pb surface $\Sigma$. For $i \in \{1, 2, \dots, n\}$, a stated $n$-web diagram $\gaa_{i}^{v} \in \cS_n(\Sigma)$ is defined in \cite{LY23} (we use $\gaa_{u_i}^e$ instead of $\gaa_{i}^{v}$ in Section~\ref{sub_center}), where $\gaa_{i}^{v}$ is equal, up to a power of $-q$, to
\begin{align}
\begin{array}{c}\includegraphics[scale=0.38]{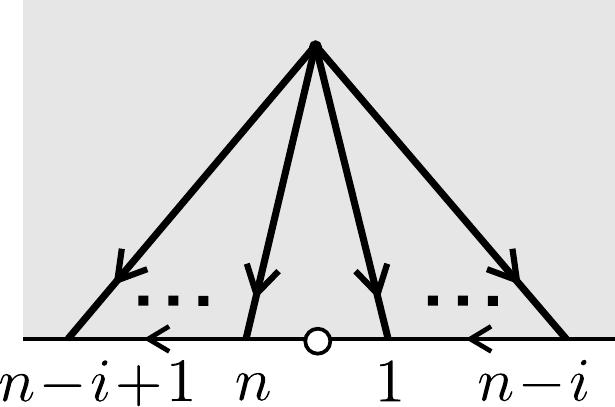}\end{array}
\end{align}
with the white dot being $v$. 
Consider an even boundary component $\partial$ of $\overline{\Sigma}$ whose punctures are labeled as $v_1, v_2, \cdots, v_r$ ($r$ is even) along the orientation of $\partial\overline{\Sigma}$. Then, for any $0 \leq k \leq m'$ and $1 \leq i \leq n-1$, 
\begin{align}\label{introeq-central}
(\gaa_{i}^{v_1})^k (\gaa_{i}^{v_2})^{m'-k} \cdots (\gaa_{i}^{v_{r-1}})^k (\gaa_{i}^{v_r})^{m'-k}
\end{align}
is a central element of $\cS_n(\Sigma)$ (Lemma~\ref{boundary_center}). 
Let $\mathsf{B}$ denote the set of the central elements in \eqref{introeq-central}. 
Then, we describe the center of the $A$-version quantum torus $\A$ explicitly. 
\begin{introthm}[Theorem~\ref{center_torus}]\label{intro:center_torus}
Let $\Sigma$ be a triangulable essentially
bordered pb surface without interior punctures, and $\lambda$ be a triangulation of $\Sigma$.
If $m''$ is odd, the center $\cZ(\A)$ of $\A$ is generated by $\tra(\mathsf{B})$ and $\{a^{\mathbf{k}}\mid \mathbf{k} \in \Lambda_{m'} \}$. 
\end{introthm}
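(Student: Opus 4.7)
The plan is to reduce the theorem to a lattice-theoretic computation inside the quantum torus $\A$. The basic principle is that in any quantum torus over $\cR$ whose generators $\{a_v\}_{v\in V_\lambda}$ satisfy $\hat q$-commutation controlled by a skew matrix $Q$, the center is the $\cR$-linear span of its central monomials, and $a^{\mathbf{k}}$ is central if and only if $\mathbf{k}$ lies in the ``central lattice''
\[
\Lambda \;:=\; \{\,\mathbf{k}\in \ZVl \mid Q\mathbf{k}\equiv 0 \pmod{m'}\,\}.
\]
The theorem then reduces to the identity $\Lambda = \Lambda_{m'} + \bZ\langle \mathbf{b}_{i,\partial,k}\rangle$, where $\mathbf{b}_{i,\partial,k}\in\ZVl$ is the exponent of the single monomial that $\tra$ assigns (up to a power of $\hat q$) to the boundary element in \eqref{introeq-central}.

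The first step is to compute the vectors $\mathbf{b}_{i,\partial,k}$ explicitly. Since $\tra$ is an algebra homomorphism and since $\tra(\gaa_i^v)$ is known from \cite{LY23} to be, up to a power of $\hat q$, a single Laurent monomial in the generators of $\A$ attached to the two edges of $\lambda$ adjacent to $v$, the product in \eqref{introeq-central} traces to a single monomial whose exponent is the sum of local contributions at $v_1,\ldots,v_r$. By Lemma~\ref{boundary_center} each $\mathbf{b}_{i,\partial,k}$ lies in $\Lambda$.

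For the main step, partition $V_\lambda$ into interior vertices and those supported at each boundary component $\partial$, and take the corresponding block decomposition of $Q$. By construction $\Lambda_{m'}$ captures the interior block together with the part of $\Lambda$ that projects to zero on every boundary component. The boundary subsystem of $Q\mathbf{k}\equiv 0\pmod{m'}$ attached to a component $\partial$ of cyclic length $r$ can be read off from the explicit quantum trace formulas of \cite{LY23}; it splits into $n-1$ independent cyclic first-order recurrences along the cyclic sequence of punctures $v_1,\ldots,v_r$ of $\partial$, one for each weight $i\in\{1,\dots,n-1\}$. When $r$ is even, the solution set modulo $m'$ of each such recurrence is precisely the $\bZ$-span of the exponents $\mathbf{b}_{i,\partial,k}$ as $k$ varies in $\{0,\dots,m'\}$.

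The main obstacle, and the reason for the parity assumption, is the control of these cyclic recurrences modulo $m'$. The boundary block of $Q$ contains coefficients $\pm 2$ coming from the half-integer powers of $\hat q$ in $\tra$, so a clean reduction modulo $m'$ requires $\gcd(2,m')=1$; under the definitions $d'=\gcd(n,m'')$ and $m'=m''/d'$ this is equivalent to $m''$ being odd. Under this hypothesis, the first-order cyclic recurrence around an odd-length boundary component forces all coordinates to vanish modulo $m'$, so odd boundary components contribute nothing new beyond $\Lambda_{m'}$. This is exactly why the set $\mathsf{B}$ is defined using only even boundary components. Combining the even-boundary calculation, the odd-boundary vanishing, and the interior block yields the desired lattice identity, which is equivalent to the theorem.
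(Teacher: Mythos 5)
Your proposal captures the right global shape of the argument — reduce to a lattice identity, decompose into interior and boundary blocks, treat odd and even boundary components separately — but several steps as stated would not go through, and some claims are inaccurate.

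First, the ``central lattice'' is not $\{\mathbf{k}\mid Q\mathbf{k}\equiv 0\ (\mathrm{mod}\ m')\}$. Since the generators $a_v$ satisfy $a_u a_v=\hat q^{2\sfP_\lambda(u,v)}a_v a_u$ and $\hat q^2$ has order $m''$, centrality of $a^{\mathbf{t}_0}$ is equivalent to $\mathbf{t}\sfP_\lambda\mathbf{t}_0^T\equiv 0\ (\mathrm{mod}\ m'')$ for all $\mathbf{t}$ (Lemma~\ref{quantum}). Passing from this $\mathrm{mod}\ m''$ condition on $\sfP_\lambda$ to the $\mathrm{mod}\ m'$ condition $\mathbf{k}\sfK_\lambda\equiv\bm 0$ in the definition of $\Lambda_{m'}$ requires the factorization $\sfP_\lambda=\sfK_\lambda\sfQ_\lambda\sfK_\lambda^T$ and the change of variables $\mathbf{k}_0:=\mathbf{t}_0\sfK_\lambda$ (which is where Proposition~\ref{prop:LY23_11.10}, the balancedness of $\mathbf{k}_0$, enters). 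You never mention $\sfK_\lambda$, yet $\Lambda_{m'}$ is defined via $\sfK_\lambda$; without this step the claimed identity $\Lambda=\Lambda_{m'}+\bZ\langle\mathbf{b}_{i,\partial,k}\rangle$ does not even parse.

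Second, after the block decomposition of $\sfK_\lambda\sfQ_\lambda$ in \eqref{KQ}, the interior and boundary blocks are \emph{coupled} through the off-diagonal block $D+C_1A$. Your argument treats the two as independent. One must show that once the boundary part of the discrepancy vanishes $\mathrm{mod}\ m'$, so does the interior part; the paper does this through Lemma~\ref{matrixDC}, which shows $D+C_1A$ has entries in $n\bZ$, so that $-2n\mathbf{k}_1^T+(D+C_1A)\mathbf{k}_2^T\equiv\bm 0\ (\mathrm{mod}\ m'')$ forces $\mathbf{k}_1\equiv\bm 0\ (\mathrm{mod}\ m')$ whenever $\mathbf{k}_2\equiv\bm 0\ (\mathrm{mod}\ m')$. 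This step is missing from your outline.

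Third, your account of the even-boundary case is too thin. The cyclic recurrence $n\mathbf{b}_{ij}+n\mathbf{b}_{i,j+1}\equiv\bm 0$ yields only $\mathbf{b}_{ij}\equiv(-1)^{j+1}\mathbf{b}_{i1}\ (\mathrm{mod}\ m')$; matching this to the monomials $\tra(\mathsf{B})$ requires the balance condition (Lemma~\ref{balance}, giving $\mathbf{b}_{i1}\equiv l_i(1,\dots,n-1)\ (\mathrm{mod}\ n)$), the explicit factorization $G=EF$ of the $S$-block of $\sfK_\lambda$ on the boundary (Lemmas~\ref{matrixK} and~\ref{matrixG}), and the solvability of $2\mathbf{d}_iE\equiv(-l_i,0,\dots,0)$ $\mathrm{mod}\ m''$. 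Your appeal to ``the solution set of the recurrence is the $\bZ$-span of the exponents'' omits all of this structure. Relatedly, your claim that the $\pm 2$ in the boundary block ``comes from the half-integer powers of $\hat q$'' is misattributed: the entries of $B_i-A_i$ are multiples of $n$; the factor $2$ enters via $\det(B_i-A_i)=2^{n-1}n^{r_i(n-1)}$ and the $-2nI$ corner block of $\sfK_\lambda\sfQ_\lambda$, and the oddness of $m''$ is used both there and to solve the $2\mathbf{d}_iE$ equation above. The odd-boundary case you describe correctly — the determinant is a unit $\mathrm{mod}\ m'$ — but this matches the paper exactly rather than being a separate contribution.

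In short: you have the right skeleton (the same block decomposition and odd/even dichotomy the paper uses), but the argument as written would stall at (i) the change of variables via $\sfK_\lambda$, (ii) the interior–boundary coupling via $D+C_1A$, and (iii) the balance and $E,F,G$ computations needed for the even case. These are not bookkeeping details; they are the content of the proof.
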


To show Theorem~\ref{intro:center_torus}, we decompose a certain matrix into block matrices according to the interior of $\overline{\Sigma}$ and the boundary components of $\overline{\Sigma}$, and describe some of them explicitly. If a boundary component of $\overline{\Sigma}$ has even (resp. odd) number of punctures, the corresponding block matrix is non-invertible (resp. invertible). The invertibility of each block matrix is important to describe the center, and the non-invertibility specifically gives us the central element \eqref{introeq-central}.

Suppose $\Sigma$ is a triangulable pb surface with a triangulation $\lambda$.
As we shall see, the $X$-version (resp. $A$-version) quantum trace map defined in \cite{LY23} is an algebra homomorphism from the stated $\SL(n)$-skein algebra to the quantum torus associated with an anti-symmetric integer matrix indexed by the $V_{\lambda}$ (resp. $V_{\lambda}'$); see Section \ref{sec-quantumtrace}. 
There is a matrix $\mathsf{K}_\lambda$ in \eqref{eq-Klambda}, originally given in \cite{LY23}, 
defining a transition map between the above two quantum tori such that this transition map is compatible with the quantum trace maps \cite[Theorem 11.7]{LY23}. 
Define
$$\Lambda_{m'}^{+}= \{\mathbf{k}\in\mathbb N^{V_{\lambda}'} \mid 
\mathbf{k}\sfK_{\lambda} =\bm{0} \text{ in }\mathbb Z_{m'} \}.$$

Suppose $\Sigma$ is an essentially bordered pb surface and contains no interior punctures. Then for any $v\in V_{\lambda}'$, there is an important element $\gaa_v\in \cS_n(\Sigma)$, given in Section~\ref{sec:quantum_trace}.
We have $\gaa_u\gaa_v = \hat q^{2\mathsf P_{\lambda}(u,v)}\gaa_v\gaa_u$ for $u,v\in V_{\lambda}'$ from \cite[Lemmas 4.13 and 4.10]{LY23}.
Here ${\mathsf P}_{\lambda}$ is the anti-symmetric integer matrix for the $A$-version quantum torus.
Like the Weyl normalization in \eqref{eq:Weyl}, for any ${\bf k}\in\mathbb Z^{V_{\lambda}'}$, define
\begin{align}
\gaa^{\mathbf{k}} := [\prod_{v\in V_{\lambda}'}\gaa_v^{\mathbf{k}(v)}]_{\rm Weyl}\in {\cS}_n(\Sigma).
\end{align}
Let $\cY_{\lambda}$ denote the subalgebra of $\cS_n(\Sigma)$ generated by $\{\gaa^{\mathbf{k}}\mid \textbf k \in \Lambda_{m'}^{+}\}$ and $\mathsf{B}$.

Thanks to the 'sandwiched property' (Theorem~\ref{traceA}), we know that the center $\mathcal{Z}(\cS_n(\Sigma,\mathbbm{v}))$ of $\cS_n(\Sigma,\mathbbm{v})$ equals  $\cS_n(\Sigma,\mathbbm{v})\cap \cZ(\A)$ 
under the condition on $\Sigma$ in  Theorem~\ref{intro:center_torus}. 
Using this, we have the following.
\begin{introthm}[{Theorem~\ref{thm-main-center-skein}}]\label{mainthm-tro-1}
Let $\Sigma$ be a triangulable essentially bordered pb surface without interior punctures, and let $\lambda$ be a triangulation of $\Sigma$.
Assume that $m''$ is odd and $\hat{q}^2$ is a primitive $m''$-th root of unity.
 We have 
$$\mathcal{Z}(\cS_n(\Sigma)) = \{x\in \cS_n(\Sigma)\mid \text{$\exists  \mathbf{k}\in \mathbb{N}^{V_{\lambda}'}  $  such that  $ \gaa^{m'\mathbf{k}}x\in\mathcal Y_{\lambda}$}\}.$$
\end{introthm}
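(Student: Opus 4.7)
The plan is to exploit the quantum trace embedding $\tra\colon \cS_n(\Sigma)\hookrightarrow \A$ together with the sandwiched property $\cZ(\cS_n(\Sigma))=\cS_n(\Sigma)\cap \cZ(\A)$ and the explicit description of $\cZ(\A)$ provided by \cref{intro:center_torus}. The two containments then reduce to monomial manipulations inside the $A$-version quantum torus $\A$, in which every $a^{\mathbf{k}}$ (with $\mathbf{k}\in \mathbb Z^{V_{\lambda}'}$) is invertible.

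For the $(\supseteq)$ direction I would first check that $\cY_{\lambda}\subseteq \cZ(\cS_n(\Sigma))$: elements of $\mathsf{B}$ are central by \cref{boundary_center}, and for $\mathbf{k}\in \Lambda_{m'}^{+}$ the image $\tra(\gaa^{\mathbf{k}})$ agrees, up to a unit, with the monomial $a^{\mathbf{k}}\in \cZ(\A)$, so injectivity of $\tra$ promotes centrality of $a^{\mathbf{k}}$ back to $\gaa^{\mathbf{k}}$. Now if $\gaa^{m'\mathbf{k}}x\in \cY_{\lambda}\subseteq \cZ(\cS_n(\Sigma))$, passing through $\tra$ gives $a^{m'\mathbf{k}}\tra(x)\in \cZ(\A)$; since $a^{m'\mathbf{k}}$ is itself invertible and central in $\A$ (as $m'\mathbf{k}\in \Lambda_{m'}$), one obtains $\tra(x)\in \cZ(\A)$, and the sandwiched property yields $x\in \cZ(\cS_n(\Sigma))$.

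For the $(\subseteq)$ direction, take $x\in \cZ(\cS_n(\Sigma))$, so that $\tra(x)\in \cZ(\A)$. By \cref{intro:center_torus}, $\tra(x)$ is a polynomial expression in $\tra(\mathsf{B})$ and monomials $a^{\mathbf{k}}$ with $\mathbf{k}\in \Lambda_{m'}\subseteq \mathbb Z^{V_{\lambda}'}$, whose entries may be negative. Fix such an expression and pick $\mathbf{k}_0\in \mathbb N^{V_{\lambda}'}$ coordinatewise large enough that $\mathbf{k}+m'\mathbf{k}_0\in \mathbb N^{V_{\lambda}'}$ for every $\mathbf{k}$ that appears. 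Each such $\mathbf{k}+m'\mathbf{k}_0$ again belongs to $\Lambda_{m'}$ and, being nonnegative, lies in $\Lambda_{m'}^{+}$. Using the Weyl identity $a^{m'\mathbf{k}_0}a^{\mathbf{k}}=(\text{unit})\cdot a^{\mathbf{k}+m'\mathbf{k}_0}$ and the fact that $a^{m'\mathbf{k}_0}$ commutes with every $\tra(b)$ by centrality, the product $a^{m'\mathbf{k}_0}\tra(x)$ lands in the subalgebra of $\A$ generated by $\tra(\mathsf{B})$ and $\{a^{\mathbf{j}}\mid \mathbf{j}\in \Lambda_{m'}^{+}\}$, which coincides with $\tra(\cY_{\lambda})$. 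Injectivity of $\tra$ together with $\tra(\gaa^{m'\mathbf{k}_0})=a^{m'\mathbf{k}_0}$ (again up to a unit) then upgrades this to $\gaa^{m'\mathbf{k}_0}x\in \cY_{\lambda}$.

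The delicate point is the last step of the $(\subseteq)$ argument: verifying that the subalgebra of $\A$ generated by $\tra(\mathsf{B})$ and $\{a^{\mathbf{k}}\mid \mathbf{k}\in \Lambda_{m'}^{+}\}$ really coincides with $\tra(\cY_{\lambda})$, and keeping careful track of all $\hat q$-powers produced by the Weyl reorderings so that, after multiplication by $a^{m'\mathbf{k}_0}$, the resulting expression is a genuine polynomial in the required generators with nonnegative exponent vectors. Everything else is essentially formal once the centrality of $\cY_{\lambda}$ and the invertibility of $a^{m'\mathbf{k}_0}$ in $\A$ are in hand.
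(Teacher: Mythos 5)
Your proposal follows essentially the same route as the paper's own proof of Theorem~\ref{thm-main-center-skein}: both directions are reduced, via the quantum trace embedding $\tra$ and the sandwiched property $\mathcal{Z}(\cS_n(\Sigma,\mathbbm{v}))=\cS_n(\Sigma,\mathbbm{v})\cap\cZ(\A)$, to the description of $\cZ(\A)$ in Theorem~\ref{center_torus}, with the same shift by $a^{m'\mathbf{k}_0}$ to move exponents into $\Lambda_{m'}^{+}$. The point you flag as delicate is in fact immediate: since $\tra$ is an injective algebra homomorphism with $\tra(\gaa^{\mathbf{k}})=a^{\mathbf{k}}$, the image $\tra(\cY_{\lambda})$ is exactly the subalgebra generated by $\tra(\mathsf{B})$ and $\{a^{\mathbf{k}}\mid\mathbf{k}\in\Lambda_{m'}^{+}\}$, and the $\hat q$-powers arising from Weyl reordering are harmless because $\cY_{\lambda}$ is closed under scalar multiplication.
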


The following theorem and the Unicity theorem for almost Azumaya algebras, proved in \cite{BG02, FKBL19} (see Theorem \ref{thm:Unicity-alomost}), imply the Unicity theorem for (reduced) stated $\SL(n)$-skein algebras. This generalizes the work in \cite{Wan23} to all roots of unity. 
See \cite{FKBL19,Kor21} for related works when $n=2$, and \cite{KW24} when $n=3$.

\begin{introprop}[{Propositions~\ref{thm;azumaya} and \ref{thm-Unicity-reduced}}]\label{introthm:azumaya}
Suppose $\mathbbm{v}$ is a root of unity and $\Sigma$ be an essentially bordered pb surface. We have  the following:
(a) the stated $\SL(n)$-skein algebra $\cS_n(\Sigma)$ is almost Azumaya;
(b) if $n=2,3$, or $n>3$ and $\Sigma$ is a polygon, then the (reduced) stated $\SL(n)$-skein algebra 
$\overline\cS_n(\Sigma)$ is almost Azumaya.
\end{introprop}
Note that we will use the injectivity of the "quantum trace map" to describe the center of $\overline\cS_n(\Sigma)$ and related results. 
Due to the requirement of the condition (b) for injectivity in \cite{LY23}, we have to put the condition for some claims in the reduced case.

As we mentioned, to complete Theorem~\ref{introthm:azumaya}, the remaining part is to show the condition (3) in the definition of almost Azumaya. In stead of (3), we show finite generation as a module over the subalgebra generated by $m'$-th powers of arcs. This implies that we do not need the whole center to show (3).
We emphasize that Theorem~\ref{introthm:azumaya} does not require the condition that $\Sigma$ has no interior punctures. This means that our results are beyond consequences from 'sandwiched property'; see Section~\ref{sec:Azumaya} for more details.

The Unicity theorem classifies the finite-dimensional irreducible representations 
of the (reduced) stated $\SL(n)$-skein algebra. It states that any point in a Zariski open dense subset of the maximal spectrum of the center corresponds to a unique irreducible representation of the (reduced) stated $\SL(n)$-skein algebra. All these representations have the same dimension, which equals the square root of the rank defined in Section \ref{sub-Almost-Azumaya-algebra}.  The following theorem formulates this rank for the stated $\SL(n)$-skein algebra.

The following rank was not known and gives the highest dimension among finite dimensional irreducible representations of $\cS_n(\Sigma,\mathbbm{v})$. See Section~\ref{sec-Unicity-Theorem} for more details.
\begin{introthm}[{Theorem~\ref{thm:rank}}]\label{mainthm-tro-2}
Let $\Sigma$ be a triangulable essentially bordered pb surface without interior punctures, and
let $r(\Sigma) := \# (\partial \Sigma) - \chi(\Sigma)$, where $\chi(\Sigma)$ denotes the Euler characteristic of $\Sigma$.
Suppose $\overline\Sigma$ contains $t$ boundary components having even number of boundary punctures.
Assume that $m''$ is odd and $\hat{q}^2$ is a primitive $m''$-th root of unity.
We have $$\rankZ \cS_n(\Sigma,\mathbbm{v})=\rankZ \A= d^{r(\Sigma)-t}m^{(n^2-1)r(\Sigma)-t(n-1)}.$$
\end{introthm}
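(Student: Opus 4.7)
The approach is first to reduce the computation to the quantum torus and then to analyze its defining commutation matrix block by block. For the reduction, by Theorems~\ref{intro:center_torus} and \ref{mainthm-tro-1} together with the sandwich property, the algebras $\cS_n(\Sigma,\mathbbm{v})$ and $\A$ are related through the quantum trace map $\tra$ in a way that makes them share the same skew field of fractions. Since the rank over the center is an invariant of the skew field of fractions for an almost-Azumaya algebra, and both algebras are almost Azumaya by Theorem~\ref{introthm:azumaya}, one concludes $\rankZ \cS_n(\Sigma,\mathbbm{v}) = \rankZ \A$.

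The remaining task is the explicit computation of $\rankZ \A$. Being a quantum torus with generators $a^{\mathbf k}$ and commutation governed by the antisymmetric integer matrix $\sfP_\lambda$, the rank of $\A$ over its center is determined by the Smith normal form of $\sfP_\lambda$ (equivalently, of the coordinate-change matrix $\sfK_\lambda$ from \cite{LY23}) reduced modulo $m'$, together with the Weyl-normalization correction encoded by $d = \gcd(2n,m')$. Concretely, each direction whose elementary divisor is coprime to $m'$ contributes a factor $m$ to the rank, while each degenerate direction contributes only a factor of $d$ coming from the Weyl shift by $\hat q$ (rather than $\hat q^2$).

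The combinatorics is then read off from the block decomposition of $\sfK_\lambda$ set up in the proof of Theorem~\ref{intro:center_torus}: one block coming from the interior together with the $r(\Sigma)-t$ odd-punctured boundary components, which is invertible modulo $m'$; and one block per even-punctured boundary component, each of exact corank $n-1$ over $\mathbb{Z}_{m'}$ with kernel spanned by the $n-1$ directions that produce the central elements \eqref{introeq-central}. After using $\#V_\lambda' = (n^2-1)r(\Sigma) + \text{boundary correction}$ and separating invertible from kernel directions boundary-by-boundary, the invertible part contributes $m^{(n^2-1)r(\Sigma)} d^{r(\Sigma)-t}$, while each of the $t$ even-punctured boundaries replaces a local factor $m^{n-1}$ by $1$ through its $(n-1)$-dimensional kernel. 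Multiplying gives the desired $d^{r(\Sigma)-t}\, m^{(n^2-1)r(\Sigma) - t(n-1)}$.

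The main obstacle I anticipate is the careful bookkeeping of the two moduli $m$ and $m'$ together with the Weyl-normalization correction coded by $d$. In particular, one must verify that the invariant factors of each even-boundary block (whose precise form already underlies Theorem~\ref{intro:center_torus}) line up so that each kernel direction contributes exactly $d$ rather than some proper divisor of $d$; this in turn relies on the hypothesis that $m''$ is odd, which guarantees that $m'$ is odd and hence $\gcd(2n,m') = \gcd(n,m')$ behaves cleanly with respect to the half-integer exponents arising from Weyl normalization. Once the modular analysis from Theorem~\ref{intro:center_torus} is upgraded to this finer numerical form, the rank formula follows from a straightforward multiplication of block contributions.
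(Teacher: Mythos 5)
The reduction $\rankZ\cS_n(\Sigma,\mathbbm{v})=\rankZ\A$ at the start of your proposal is exactly the paper's Proposition~\ref{rank_eq}, and your observation that the rank equals $|\mathbb Z^{V_\lambda'}/\Lambda_z|$ with $\Lambda_z$ the kernel of $\sfP_\lambda$ modulo $m''$ is also correct (Lemma~\ref{lem5.1}, Lemma~\ref{PI}). From that point on, however, your route diverges from the paper's and has a genuine gap.

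You propose to read off the rank from the Smith normal form of $\sfP_\lambda$ (or of $\sfK_\lambda$) by examining the block decompositions constructed in the proof of Theorem~\ref{intro:center_torus}. The problem is that those block computations (Lemmas~\ref{matrixA}, \ref{matrixB}, \ref{matrixK} and equation~\eqref{KQ}) give individual entries and submatrices of $\sfK_\lambda\sfQ_\lambda$ and $\sfK_\lambda$, not elementary divisors; passing to $\sfP_\lambda=\sfK_\lambda\sfQ_\lambda\sfK_\lambda^T$ mixes the blocks further, and none of the intermediate lemmas produce a Smith normal form. In fact the paper goes in the \emph{opposite} direction: Proposition~\ref{prop-anti-decom-P} deduces the anti-symmetric matrix decomposition of $\sfP_\lambda$ (up to $2$-power factors) \emph{from} the rank formula of Theorem~\ref{thm:rank}, using the varying-$m''$ argument of Lemma~\ref{lem-2power-eq}. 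So the elementary-divisor structure you take as the starting input is actually an output in the paper, and a direct SNF computation would be a genuinely new piece of linear algebra that you have not carried out.

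What the paper does instead is transfer the index computation to the balanced lattice $\Lambda_\lambda$ via $\varphi(\mathbf{k})=\mathbf{k}\sfK_\lambda$, and then factor $|\Lambda_\lambda/\varphi(\Lambda_z)|$ as a ratio whose numerator $|\Lambda_\lambda/(\Lambda_\lambda\cap m'\mathbb Z^{V_\lambda})|=d^{r(\Sigma)}m^{|V_\lambda|}$ is computed cohomologically: Lemma~\ref{lem5.3} gives a short exact sequence relating balanced vectors to $Z^1(\overline\Sigma,\mathbb Z_n)$, and Lemma~\ref{lem_short} (which uses the snake lemma and Poincar\'e--Lefschetz duality to show surjectivity of $Z^1(\overline\Sigma,\mathbb Z_n)\to Z^1(\overline\Sigma,\mathbb Z_d)$) reduces this to $|Z^1(\overline\Sigma,\mathbb Z_d)|=d^{r(\Sigma)}$. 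The denominator, the contribution of $\Lambda_\partial$, is handled by the explicit analysis of the map $\mu\colon\mathbf{p}\mapsto 2\mathbf{p}G$ on $\mathbb Z_{m'}^{n-1}$ in Lemmas~\ref{lem5.10} and~\ref{lem;Im_mu}. None of this cohomological machinery appears in your sketch, and your heuristic that each ``degenerate direction'' contributes exactly $d$ is precisely the assertion that requires this apparatus; it is not visible in the block structure alone. Your final sentence correctly identifies the danger (``one must verify that the invariant factors \ldots line up so that each kernel direction contributes exactly $d$''), but the proposal leaves that verification as the entire content of the theorem.

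A smaller imprecision: $\sfP_\lambda=n(\sfK_\lambda-\sfK_\lambda^T)$, so all elementary divisors of $\sfP_\lambda$ are multiples of $n$; in particular they are never coprime to $m'$ unless $\gcd(n,m')=1$. The statement ``each direction whose elementary divisor is coprime to $m'$ contributes a factor $m$'' therefore cannot be taken literally, and the parenthetical claim that the SNF of $\sfK_\lambda$ is ``equivalent'' to that of $\sfP_\lambda$ for this purpose also needs justification since $\sfK_\lambda$ is not anti-symmetric.
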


The isomorphism between stated skein algebras and unrestricted quantum moduli algebras (a.k.a graph algebras) in \cite{BFR23} should be extended to roots of unity. 
When $\Sigma$ has only one connected boundary component, \cite{BFR23} and the above results imply that we can access to the representation theory of the graph algebra obtained from $\Sigma$. 
In particular, this implies that Theorem~\ref{mainthm-tro-2} generalizes \cite[Theorem 1.3]{BR24} to the case when genus is  greater than $0$ in an appropriate sense. 
Baseilhac--Faitg--Roche \cite{BFR24} are also working to extend Theorem 1.3 of \cite{BR24} to arbitrary genera and simple Lie algebras.

Suppose $\Sigma$ is an essentially bordered pb surface and contains no interior punctures. Let $\lambda$ be a triangulation of $\Sigma$. 
Like the non-reduced case, there are two quantum trace maps, the $X$-version and the $A$-version, for the reduced stated $\SL(n)$-skein algebra. Both of the $X$-version and $A$-version quantum tori are associated to anti-symmetric integer matrices indexed by a finite set $\overline V_\lambda$; see
Section \ref{sec-quantumtrace}.
Like the non-reduced case, there is a transition matrix $\overline{\mathsf{K}}_\lambda$ in \eqref{eq-surgen-exp} for the reduced case, originally given in \cite{LY23}, between the $A$-version quantum torus and the $X$-version quantum torus.

Note that $\overline\cS_n(\Sigma)$ is a quotient algebra of $\cS_n(\Sigma)$ and $\overline V_\lambda\subset V_{\lambda}'$. 
For any $v\in \overline V_{\lambda}$, let $\bar\gaa_v\in \overline\cS_n(\Sigma)$ denote the image of $\gaa_v$ by the projection  $\cS_n(\Sigma)\to \overline\cS_n(\Sigma)$.
When the $A$-version quantum trace map is injective,  we have $\bar\gaa_u\bar\gaa_v = \hat q^{2\overline{\mathsf P}_{\lambda}(u,v)}\bar\gaa_v\bar\gaa_u$ for $u,v\in \overline{V}_{\lambda}$ from Theorem \ref{traceA}.
Here $\overline{\mathsf P}_{\lambda}$ is the anti-symmetric matrix for the $A$-version quantum torus.
Like the non-reduced case, for any ${\bf k}\in\mathbb Z^{\overline V_{\lambda}}$, we  define
\begin{align}
\bar \gaa^{\mathbf{k}} := [\prod_{v\in \overline V_{\lambda}}\bar\gaa_v^{\mathbf{k}(v)}]_{\rm Weyl}\in\overline {\cS}_n(\Sigma).
\end{align}

Suppose $\overline{\Sigma}$ has a boundary component $\partial$ such that  
the connected components of $\partial\cap \Sigma$ are labeled as $e_1,e_2,\cdots,e_r$ with respect to the orientation of $\partial$ (the orientation of $\partial$ is induced by the orientation of $\overline\Sigma$).
As we shall see in Section \ref{subsec:FGalg}, the finite set $\overline V_{\lambda}$ consists of points in $\Sigma$, called {\bf small vertices}.
For each $1\leq t\leq r$, there are $n-1$ small vertices contained in $e_t$, labeled as $u_{t,1},\cdots,u_{t,n-1}$ with respect to the orientation of $\partial$. 
 Suppose $r$ is even. For $k\in\mathbb N$ and $1\leq i\leq n-1$, the element
\begin{align}\label{eqintro-reduced-central1}
\bar \gaa_{u_{1,i}}^k \bar \gaa_{u_{2,n-i}}^k\cdots \bar \gaa_{u_{r-1,i}}^k \bar \gaa_{u_{r,n-i}}^k
\end{align}
is central in $\overline \cS_n(\Sigma)$ (Lemma \ref{reduced-boundary_center}). 
Suppose $r$ is odd. For  $k\in\mathbb N$ and $1\leq i\leq \lfloor\frac{n}{2}\rfloor$ ($\lfloor\,\cdot\, \rfloor$ denotes the floor function), the element
\begin{align}\label{eqintro-reduced-central2}
\bar \gaa_{u_{1,i}}^k \bar \gaa_{u_{1,n-i}}^k \bar \gaa_{u_{2,i}}^k \bar \gaa_{u_{2,n-i}}^k\cdots \bar \gaa_{u_{r,i}}^k \bar \gaa_{u_{r,n-i}}^k
\end{align}
is central in $\overline \cS_n(\Sigma)$ (Lemma \ref{reduced-boundary_center}).

Let $\overline \Lambda_{\partial}$ be the subset of $\mathbb Z^{\overline V_{\lambda}}$ describing the images of the central elements \eqref{eqintro-reduced-central1} and \eqref{eqintro-reduced-central2} by the $A$-version quantum trace map; see Section \ref{sec:central_generic} for the explicit definition of $\overline \Lambda_{\partial}$. 
We also define
$$\overline \Lambda_{m'}= \{\mathbf{k}\in\mathbb Z^{\overline V_{\lambda}} \mid 
\mathbf{k}\barK_{\lambda} =\bm{0} \text{ in }\mathbb Z_{m'}\}
,\quad \overline\Lambda_{m'}^{+}= \{\mathbf{k}\in\mathbb N^{\overline V_{\lambda}} \mid 
\mathbf{k} \barK_{\lambda} =\bm{0} \text{ in }\mathbb Z_{m'} \}.$$

Let $\overline \Lambda_z$ denote the subgroup of $\mathbb Z^{\overline V_{\lambda}}$ generated by $\overline \Lambda_{m'}$ and $\overline \Lambda_{\partial}$.

\begin{introthm}[Theorem~\ref{center_torus-reduced}]\label{thm:intro5}
Let $\Sigma$ be a triangulable essentially
bordered pb surface without interior punctures, and $\lambda=\mu$ be a triangulation of $\Sigma$ introduced in Section~\ref{sub:quantum-torus-reduced}.
If $m''$ is odd, $\mathcal Z(\rA)$ is generated by $\{a^{\mathbf{k}}\mid \mathbf{k} \in \overline\Lambda_\partial\cup \overline\Lambda_{m'} \}$
\end{introthm}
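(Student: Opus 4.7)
The plan is to mirror the proof of Theorem~\ref{intro:center_torus} (the non-reduced case), adapting the block-matrix analysis to the reduced quantum torus $\rA$ with index set $\barVl$. The organizing principle is that, for any quantum torus, the $\mathbb C$-vector-space center is spanned by those Weyl monomials $a^{\bf k}$ whose Weyl-commutators with every generator are trivial. Writing $\barP_\lambda$ for the antisymmetric matrix defining the commutation in $\rA$, and tracking the $n$-factors inherent to the stated $\SL(n)$-construction, one sees that under the hypothesis that $\hat q^2$ is primitive of order $m''$ with $m''$ odd, the effective order controlling centrality reduces to $m'$, so
$$\mathcal Z(\rA) = \mathbb{C}\text{-span}\{a^{\bf k} \mid {\bf k}\,\barP_\lambda \equiv \bm{0} \pmod{m'}\}.$$
The theorem is therefore equivalent to the lattice identity $\Ker(\barP_\lambda \bmod m') = \overline\Lambda_z$.

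The easy inclusion $\overline\Lambda_z \subseteq \Ker(\barP_\lambda \bmod m')$ is checked on generators. For $\mathbf k\in\overline\Lambda_{m'}$, I would use the transition map $\barK_\lambda$ between the reduced $A$- and $X$-version quantum tori: the correspondence $a^{\bf k}\leftrightarrow x^{{\bf k}\barK_\lambda}$ reduces the centrality of $a^{\bf k}$ to that of $x^{{\bf k}\barK_\lambda}$, and the defining condition ${\bf k}\,\barK_\lambda \equiv \bm{0}\pmod{m'}$ exhibits $x^{{\bf k}\barK_\lambda}$ as an $m'$-th power, hence tautologically central in the $X$-torus. For $\mathbf k\in\overline\Lambda_\partial$, the prescribed symmetric shape along each boundary component makes the Weyl commutators with all generators vanish identically in $\mathbb Z$, directly paralleling the centrality of the elements in \eqref{introeq-central} from the non-reduced setting.

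To show $\Ker(\barP_\lambda \bmod m')\subseteq \overline\Lambda_z$, I would split the pairing $\barP_\lambda$ into blocks indexed by the boundary components of $\overline\Sigma$ (there are no interior punctures by hypothesis). Boundary components with an odd number of punctures contribute invertible blocks over $\mathbb Z_{m'}$, so the corresponding kernel contribution lies entirely inside $\overline\Lambda_{m'}$. Boundary components with an even number of punctures contribute non-invertible blocks, whose one-dimensional kernel is spanned precisely by the symmetric vectors that define $\overline\Lambda_\partial$. Reassembling the block contributions yields $\Ker(\barP_\lambda \bmod m') = \overline\Lambda_{m'} + \overline\Lambda_\partial = \overline\Lambda_z$.

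The delicate point will be the passage to the reduced indexing set $\barVl$: the reduced boundary relations identify certain generators of the non-reduced torus, and one must check that (a) the block decomposition used in the proof of Theorem~\ref{center_torus} descends cleanly to $\barP_\lambda$ under this quotient, and (b) $\overline\Lambda_\partial$ exhausts the even-block kernels without spurious overlap with $\overline\Lambda_{m'}$ that would disturb the presentation $\overline\Lambda_z=\langle\overline\Lambda_{m'},\overline\Lambda_\partial\rangle$. Once this $\mathbb Z_{m'}$ linear algebra is set up along the lines of the non-reduced case, the statement follows.
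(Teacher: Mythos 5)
Your sketch breaks down exactly at the point where the reduced case differs from the non-reduced one: the odd boundary components. You claim that boundary components with an odd number of punctures contribute blocks that are invertible over $\mathbb Z_{m'}$, so that their kernel contribution lands in $\overline\Lambda_{m'}$, in parallel with Corollary~\ref{invertibility}. In the reduced setting this is false. By Lemma~\ref{lem-reduced-P}, for $r_i=1$ the relevant block is $P_i=-nI+nI'$, whose kernel mod $m'$ is the full space of palindromic vectors $\mathbf{b}=\cev{\mathbf{b}}$, and for odd $r_i>1$ the system \eqref{eq-reduced-ABE} forces the boundary part to be $(\mathbf{b}_i,\dots,\mathbf{b}_i)$ with $\mathbf{b}_i=\cev{\mathbf{b}}_i$ in $\mathbb Z_{m'}$ rather than $\equiv\mathbf{0}$. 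This is precisely why the definition of $\overline\Lambda_{\partial}$ carries the symmetric condition on odd components, why the extra central elements \eqref{eqintro-reduced-central2} exist, why the PI-degree acquires the factor $m^{(b-t)\lfloor n/2\rfloor}$, and why the paper needs Lemmas~\ref{lem-reverse}, \ref{lem-reverse-d}, \ref{lem-G-reverse} and the $G+G'$ computation of Lemma~\ref{reduced-K}. With your claimed invertibility, the "reassembly" step would produce the wrong lattice (it would miss, or be inconsistent with, the odd-component palindromic directions of $\overline\Lambda_\partial$). Incidentally, the even-component kernels are $(n-1)$-parameter families $(\mathbf{a},\dots,\mathbf{a})$, not one-dimensional.

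The structural setup is also not the one that works. The matrix $\barP_\lambda$ does not admit a block decomposition indexed by boundary components: interior small vertices of $\obVl$ pair nontrivially with boundary ones, and your outline never handles them. The paper's proof (like that of Theorem~\ref{center_torus}) first passes to the balanced lattice, setting $\mathbf{k}_0=\mathbf{t}_0\barK_{\lambda}$ and converting the centrality condition into $\barK_{\lambda}\barQ_{\lambda}\mathbf{k}_0^T=\mathbf{0}$ in $\mathbb Z_{m''}$; it is $\barK_{\mu}\barQ_{\mu}$, in the block-triangular form \eqref{eq;matrix_P}, whose boundary block $P$ decomposes by components (Lemma~\ref{lem-reduced-P}), while the interior part is recovered mod $m'$ from $-2n\mathbf{k}_1^T+P'\mathbf{k}_2^T=\mathbf{0}$ using that $P'$ has entries in $n\bZ$ (Lemma~\ref{lem:reduced_P_n}); all of this depends on the specific triangulation $\mu$ of Section~\ref{sub:quantum-torus-reduced}, which your argument never uses although the statement requires it. Two further points: the centrality condition is mod $m''$, not mod $m'$ (the reduction uses that the matrices involved have entries in $n\bZ$), and an $m'$-th power is not "tautologically central" in the $X$-torus when $d'>1$ — one only gets commutation against balanced monomials, again via $n$-divisibility of $\barK_\lambda\barQ_\lambda$. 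Finally, even after identifying the kernel conditions one must, as in the paper, construct an explicit element of $\overline\varphi(\langle\overline\Lambda_{\partial}\rangle)$ matching the boundary data mod $d'$ and correct the difference by an element of $\overline\Lambda_{m'}$ (the $\mathbf{d},\mathbf{x},\mathbf{z}$ construction, solving equations mod $m''$ using oddness); this step is absent from your proposal.
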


An advantage of the use of a certain triangulation $\mu$ is that we can reuse some arguments in non-reduced case.

Let $\overline{\cY}_{\lambda}$ denote the subalgebra of $\overline \cS_n(\Sigma)$ generated by $\{\bar\gaa^{\mathbf{k}}\mid \textbf k \in \overline\Lambda_{m'}^{+}\}$ and  the central elements in \eqref{eqintro-reduced-central1} and \eqref{eqintro-reduced-central2}.

Similarly to the non-reduced case, we know that the center $\mathcal{Z}(\overline\cS_n(\Sigma,\mathbbm{v}))$ of $\overline\cS_n(\Sigma,\mathbbm{v})$ equals  $\overline\cS_n(\Sigma,\mathbbm{v})\cap \cZ(\rA)$ from the 'sandwiched property' under the condition on $\Sigma$ in Theorem~\ref{introthm-main4}. 
Using this, we have the following.

\begin{introthm}[{Theorem~\ref{main-thm-reduced-center}}]
\label{introthm-main4}
Assume that $m''$ is odd and $\hat{q}^2$ is a primitive $m''$-th root of unity. Let $\Sigma$ be a triangulable essentially bordered pb surface without interior punctures, and let $\lambda=\mu$ be a triangulation of $\Sigma$ introduced in Section~\ref{sub:quantum-torus-reduced}. We require $\Sigma$ to be a polygon if $n>3$.
 We have 
$$\mathcal{Z}(\overline\cS_n(\Sigma,\mathbbm{v})) = \{x\in \overline\cS_n(\Sigma,\mathbbm{v})\mid \text{$\exists  \mathbf{k}\in \mathbb{N}^{\overline V_{\lambda}}  $  such that  $ \bar\gaa^{m'\mathbf{k}}x\in\overline{\mathcal Y}_{\lambda}$}\}.$$
\end{introthm}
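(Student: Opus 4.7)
The plan is to mirror the strategy of Theorem~\ref{mainthm-tro-1}, working now with the reduced $A$-version quantum trace map, which we denote by $\overline{\tra}\colon \overline{\cS}_n(\Sigma,\mathbbm{v})\hookrightarrow \rA$; this map is injective by Theorem~\ref{traceA} under the standing hypothesis ($n\le 3$, or $\Sigma$ a polygon when $n>3$). The key reduction is the sandwiched property, which, combined with the fact that $\overline{\cS}_n(\Sigma,\mathbbm{v})$ embeds into $\rA$, yields
\[
\mathcal{Z}(\overline{\cS}_n(\Sigma,\mathbbm{v})) = \overline{\cS}_n(\Sigma,\mathbbm{v})\cap \mathcal{Z}(\rA).
\]
So the problem becomes to identify which $x\in \overline{\cS}_n(\Sigma,\mathbbm{v})$ have image under $\overline{\tra}$ in $\mathcal{Z}(\rA)$, and by Theorem~\ref{center_torus-reduced} the latter equals $\mathbb{C}\text{-span}\{a^{\mathbf{k}}\mid \mathbf{k}\in\overline{\Lambda}_z\}$, where $\overline{\Lambda}_z$ is generated by $\overline{\Lambda}_{m'}$ and $\overline{\Lambda}_{\partial}$.

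Under $\overline{\tra}$, each generator of $\overline{\mathcal{Y}}_\lambda$ lands in $\mathcal{Z}(\rA)$: the elements $\bar\gaa^{\mathbf{h}}$ with $\mathbf{h}\in\overline{\Lambda}_{m'}^{+}$ map (up to a Weyl-normalization scalar) to $a^{\mathbf{h}}$, and the boundary central elements in \eqref{eqintro-reduced-central1}, \eqref{eqintro-reduced-central2} (shown central in Lemma~\ref{reduced-boundary_center}) map to monomials whose exponents lie in $\overline{\Lambda}_{\partial}$. The inclusion $(\supseteq)$ is then immediate: if $\bar\gaa^{m'\mathbf{k}}x\in\overline{\mathcal{Y}}_{\lambda}$, then both $\bar\gaa^{m'\mathbf{k}}$ (because $m'\mathbf{k}\in\overline{\Lambda}_{m'}$) and $\bar\gaa^{m'\mathbf{k}}x$ are central, so
\[
\overline{\tra}(x) \;=\; a^{-m'\mathbf{k}}\,\overline{\tra}(\bar\gaa^{m'\mathbf{k}}x) \;\in\; \mathcal{Z}(\rA),
\]
and hence $x\in \mathcal{Z}(\overline{\cS}_n(\Sigma,\mathbbm{v}))$ by the sandwiched property.

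For $(\subseteq)$, take a central $x$ and write $\overline{\tra}(x) = \sum_{i} c_i\, a^{\mathbf{k}_i}$ with $c_i\in\mathbb{C}$ and $\mathbf{k}_i\in\overline{\Lambda}_z$. Decompose $\mathbf{k}_i = \mathbf{k}_i'+\mathbf{k}_i''$ with $\mathbf{k}_i'\in\overline{\Lambda}_{m'}$ and $\mathbf{k}_i''\in\overline{\Lambda}_{\partial}$, and choose a single $\mathbf{k}\in\mathbb{N}^{\overline{V}_{\lambda}}$ large enough that $m'\mathbf{k}+\mathbf{k}_i'\in \mathbb{N}^{\overline{V}_{\lambda}}$ for every $i$; since $m'\mathbf{k}\in\overline{\Lambda}_{m'}$, this forces $m'\mathbf{k}+\mathbf{k}_i'\in\overline{\Lambda}_{m'}^{+}$. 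Then $a^{m'\mathbf{k}}\,\overline{\tra}(x)$ is a $\mathbb{C}$-linear combination of products $a^{m'\mathbf{k}+\mathbf{k}_i'}\cdot a^{\mathbf{k}_i''}$, and each such product is the image under $\overline{\tra}$ of a product of a $\bar\gaa^{\mathbf{h}}$ with $\mathbf{h}\in\overline{\Lambda}_{m'}^{+}$ and of a product of the boundary central elements \eqref{eqintro-reduced-central1}, \eqref{eqintro-reduced-central2}; hence the right-hand side lies in $\overline{\tra}(\overline{\mathcal{Y}}_{\lambda})$, and injectivity of $\overline{\tra}$ gives $\bar\gaa^{m'\mathbf{k}}x\in\overline{\mathcal{Y}}_{\lambda}$.

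The main technical obstacle is the realizability assertion used above: one must verify that every monomial $a^{\mathbf{h}}$ with $\mathbf{h}\in\overline{\Lambda}_{\partial}$ actually lies in $\overline{\tra}(\overline{\mathcal{Y}}_{\lambda})$, i.e., that the explicit boundary central elements in \eqref{eqintro-reduced-central1} and \eqref{eqintro-reduced-central2} are rich enough to exhaust the $\overline{\Lambda}_{\partial}$-graded piece of $\mathcal{Z}(\rA)$ on the quantum-torus side. Concretely, this requires computing $\overline{\tra}(\bar\gaa_{u_{t,i}})$ at each small vertex $u_{t,i}$ along every even and every odd boundary component and checking that the resulting exponent vectors generate $\overline{\Lambda}_{\partial}$ over $\mathbb{Z}$. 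The specific choice of triangulation $\lambda=\mu$ from Section~\ref{sub:quantum-torus-reduced} is what makes this bookkeeping transparent, allowing us to reuse the analogous identifications carried out in the non-reduced case (Theorem~\ref{mainthm-tro-1}); a secondary, purely algebraic subtlety is tracking the Weyl normalization scalars so that the collected expression in $\rA$ is recognized as the image of an honest element of $\overline{\mathcal{Y}}_\lambda$, which again reduces to injectivity of $\overline{\tra}$ under the polygon/low-$n$ hypothesis.
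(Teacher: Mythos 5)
Your overall strategy is exactly the paper's: the paper proves this theorem by observing that the proof of Theorem~\ref{thm-main-center-skein} carries over, i.e.\ the sandwich $\rAp\subset \overline{\mathrm{tr}}_{\lambda}^{A}(\overline{\cS}_n(\Sigma,\mathbbm{v}))\subset\rA$ together with the injectivity of $\overline{\mathrm{tr}}_{\lambda}^{A}$ (Theorem~\ref{traceA}, whence the polygon hypothesis for $n>3$) gives $\mathcal{Z}(\overline{\cS}_n(\Sigma,\mathbbm{v}))=\overline{\cS}_n(\Sigma,\mathbbm{v})\cap\mathcal{Z}(\rA)$, and then Theorem~\ref{center_torus-reduced} plus a clearing shift by $a^{m'\mathbf{k}}$ finishes both inclusions. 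Your easy inclusion and the structure of the hard inclusion match the paper.

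However, the step you flag as ``the main technical obstacle'' is mis-stated, and as stated it would fail: it is \emph{not} true that every monomial $a^{\mathbf{h}}$ with $\mathbf{h}\in\overline{\Lambda}_{\partial}$ lies in $\overline{\mathrm{tr}}_{\lambda}^{A}(\overline{\mathcal{Y}}_{\lambda})$, because $\overline{\mathcal{Y}}_{\lambda}$ is generated by elements with non-negative exponent vectors (those in $\overline{\Lambda}_{m'}^{+}$ and the boundary elements \eqref{eqintro-reduced-central1}, \eqref{eqintro-reduced-central2}), while $\overline{\Lambda}_{\partial}$ is a subgroup, so your $\mathbf{k}_i''$ may have negative entries and the corresponding $a^{\mathbf{k}_i''}$ is then simply not in the image. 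The remedy is not to prove that realizability but to choose the decomposition better: since $m'\mathbb{Z}^{\overline{V}_{\lambda}}\subset\overline{\Lambda}_{m'}$, replace $(\mathbf{k}_i',\mathbf{k}_i'')$ by $(\mathbf{k}_i'-m'\mathbf{h}_0,\ \mathbf{k}_i''+m'\mathbf{h}_0)$, where $\mathbf{h}_0\in\overline{\Lambda}_{\partial}\cap\mathbb{N}^{\overline{V}_{\lambda}}$ is, say, a large constant vector on each boundary component (this automatically satisfies the palindromic condition on odd components), so that every boundary part becomes non-negative; then enlarge the single shift $m'\mathbf{k}$ so that all $m'\mathbf{k}+\mathbf{k}_i'$ are non-negative, hence in $\overline{\Lambda}_{m'}^{+}$. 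After this adjustment the only realizability needed is for non-negative elements of $\overline{\Lambda}_{\partial}$, and that is precisely the identification of the images of products of the boundary central elements with such monomials, which is recorded in Section~\ref{sec:central_generic} via Lemma~\ref{reduced-boundary_center} and is already used in the proof of Theorem~\ref{center_torus-reduced}; no new computation of $\overline{\mathrm{tr}}_{\lambda}^{A}(\bar\gaa_{u_{t,i}})$ is required, as the choice $\lambda=\mu$ has done its work there.
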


The following theorem formulates the rank of the reduced stated $\SL(n)$-skein algebra over its center, which equals 
the square of
the maximal dimension of finite-dimensional irreducible representations of the reduced stated $\SL(n)$-skein algebra.

\begin{introthm}[{Theorems~\ref{thm-PI-reducedA} and \ref{main-thm-reduced-PI}}]
\label{introthm-main5}
Assume that $m''$ is odd and $\hat{q}^2$ is a primitive $m''$-th root of unity. 
Suppose $\Sigma$ is a triangulable essentially bordered pb surface and contains no interior punctures. 
Assume $\overline\Sigma$ has $b$ boundary components among which there are $t$ boundary components of $\overline{\Sigma}$ with even punctures. 

(a) We have 
$$ \rankZ \rA=
d^{r(\Sigma)-t} m^{(n^2-1)r(\Sigma)-\binom{n}{2}(\#\partial\Sigma)-t(n-1)-(b-t)\lfloor\frac{n}{2}\rfloor},$$
where $\binom{n}{2}=\frac{n(n-1)}{2}$ and $\lfloor\,\cdot\, \rfloor$ denotes the floor function. 

(b) In addition, we require $\Sigma$ to be a polygon if $n>3$. 
Then we have 
$$\rankZ\overline\cS_n(\Sigma,\mathbbm{v}) = \rankZ \rA=
d^{r(\Sigma)-t} m^{(n^2-1)r(\Sigma)-\binom{n}{2}(\#\partial\Sigma)-t(n-1)-(b-t)\lfloor\frac{n}{2}\rfloor}.$$
\end{introthm}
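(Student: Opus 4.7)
The plan is to treat the two parts in turn, both resting on the identification of the center already carried out in Theorem~\ref{center_torus-reduced} and Theorem~\ref{introthm-main4}, together with the almost Azumaya property from Theorem~\ref{introthm:azumaya}(b).

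For part (a), since $\rA$ is a quantum torus over $\mathbb{C}$ at the primitive $m''$-th root of unity $\hat q^2$, its rank over the center is a purely lattice-theoretic invariant: standard theory of quantum tori gives
\[
\rankZ \rA = [\mathbb{Z}^{\overline V_\lambda} : \overline\Lambda_z],
\]
where $\overline\Lambda_z$ is the sublattice identified in Theorem~\ref{center_torus-reduced} as being generated by $\overline\Lambda_{m'}$ and $\overline\Lambda_\partial$. The task reduces to computing this index explicitly. I would exploit the transition matrix $\barK_\lambda$ of \eqref{eq-surgen-exp}, which relates $\overline{\mathsf{P}}_\lambda$ to the $X$-version matrix whose block structure is adapted to the triangulation $\lambda=\mu$: the interior of $\overline{\Sigma}$ contributes the bulk $m^{(n^2-1)r(\Sigma)}$ factor, each boundary interval of $\Sigma$ accounts for a $\binom{n}{2}$-fold drop in generators when passing from $V_\lambda'$ to $\overline V_\lambda$ (producing the $-\binom{n}{2}(\#\partial\Sigma)$ in the exponent), the even-puncture boundary components contribute the same $d^{r(\Sigma)-t}m^{-t(n-1)}$ correction as in the non-reduced computation (Theorem~\ref{mainthm-tro-2}), and each of the $b-t$ odd-puncture boundary components contributes an additional $m^{-\lfloor n/2\rfloor}$ factor, coming from the extra central elements \eqref{eqintro-reduced-central2}. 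Assembling these block contributions yields the claimed formula.

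For part (b), Theorem~\ref{introthm:azumaya}(b) guarantees under the hypothesis that $\rSn$ is almost Azumaya; $\rA$ is almost Azumaya as a quantum torus at a root of unity. For almost Azumaya prime PI-algebras the rank over the center equals the square of the PI-degree, so it suffices to show that $\rSn$ and $\rA$ share the same PI-degree. For this I would prove that they share the same skew field of fractions: the $A$-version quantum trace furnishes an injective embedding $\rSn \hookrightarrow \rA$, and since every generator $\bar a_v$ of $\rA$ is, up to Weyl normalization, the image of a Laurent monomial in the elements $\bar\gaa_v \in \rSn$, inverting the central powers $\bar\gaa_v^{m'}$ from $\overline{\cY}_\lambda$ (Theorem~\ref{introthm-main4}) realizes $\rA$ inside $\mathrm{Frac}(\rSn)$. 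Hence $\mathrm{Frac}(\rSn) = \mathrm{Frac}(\rA)$, the two PI-degrees coincide, and part (a) gives the common value.

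The principal technical obstacle is the block-decomposition argument in part (a): while the non-reduced computation of Theorem~\ref{mainthm-tro-2} already packages the interior and the even-puncture boundary contributions, the passage from $V_\lambda'$ to the smaller index set $\overline V_\lambda$ and the emergence of an additional $\lfloor n/2\rfloor$ central constraint per odd-puncture boundary component must be tracked cleanly through $\barK_\lambda$ and the matrix $\overline{\mathsf{P}}_\lambda \pmod{m'}$. My approach would be to write the ratio $[\mathbb{Z}^{V_\lambda'} : \Lambda_z]/[\mathbb{Z}^{\overline V_\lambda} : \overline\Lambda_z]$ as a product of purely local contributions, indexed by boundary intervals and boundary components of each parity, each of which can be read off from the corresponding block of $\overline{\mathsf{P}}_\lambda$ after Smith normal form modulo $m'$.
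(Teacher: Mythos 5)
Your overall strategy is sound and, modulo one detail, tracks the paper's proof: part~(a) is a lattice-index computation for the center of a quantum torus at a root of unity, and part~(b) follows from the sandwich property $\rAp\subset\overline{\mathrm{tr}}_\lambda^A(\rSn)\subset\rA$ once one shows the two algebras localize to the same division ring (the paper packages this as Proposition~\ref{reduced-rank_eq}, proved verbatim like Proposition~\ref{rank_eq}). Where you depart from the paper is in part~(a): you propose computing the \emph{ratio} $[\mathbb Z^{V_\lambda'}:\Lambda_z]\big/[\mathbb Z^{\overline V_\lambda}:\overline\Lambda_z]$ and deducing the answer from the non-reduced Theorem~\ref{mainthm-tro-2}, whereas the paper computes the reduced index directly by factoring it as
\[
\left|\frac{\overline\Lambda_\lambda}{\overline\varphi(\overline\Lambda_z)}\right|
=\left|\frac{\overline\Lambda_\lambda}{\overline\Lambda_\lambda\cap m'\mathbb Z^{\overline V_\lambda}}\right|
\Big/ \left|\frac{\overline\varphi(\overline\Lambda_z)}{\overline\Lambda_\lambda\cap m'\mathbb Z^{\overline V_\lambda}}\right|,
\]
with the first factor handled by a cohomological argument (Lemmas~\ref{reduced-exact} and \ref{lem-PI-1}, giving $d^{r(\Sigma)}m^{|\overline V_\lambda|}$) and the second reduced to counting $|\im\cev\nu|$. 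Both routes reach the same place; yours has the advantage of reusing the non-reduced answer, but it still requires the same local boundary computation and does not avoid any of the heavy lifting.

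The genuine gap is precisely that heavy lifting. The formula's only genuinely new ingredient compared to the non-reduced case is the factor $m^{(b-t)\lfloor n/2\rfloor}$ from the odd-puncture boundary components, and you assert it rather than prove it. Its source is the fact that the boundary central elements \eqref{eqintro-reduced-central2} on an odd component obey the palindrome constraint $\cev{\mathbf b}=\mathbf b$, which cuts the $(n-1)$-dimensional boundary lattice $\mathbb Z_{m'}^{n-1}$ down to the $\lceil (n-1)/2\rceil$-dimensional sublattice $\overleftarrow{\mathbb Z_{m'}^{n-1}}$, and that the relevant map $\cev\nu\colon\mathbf p\mapsto 2\mathbf p G$ on this sublattice has $|\im\cev\nu|=m^{\lfloor n/2\rfloor}$ (Lemma~\ref{lem-reduced-mu}). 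Proving this is a nontrivial arithmetic computation: one must decompose $G=EF$ (Lemma~\ref{matrixG}), exploit $\overleftarrow{G_i}=G_{n-i}$ (Lemma~\ref{lem-G-reverse}), and treat $n$ even and $n$ odd separately because the ``middle'' coordinate $b_{n/2}$ behaves differently from the paired ones — the divisibility by $n/2$ rather than $n$ is exactly why $\gcd$-arithmetic with $m'$ odd is needed here. Your ``Smith normal form modulo $m'$'' plan is reasonable in spirit, but it is not automatic that a Smith form over $\mathbb Z_{m'}$ (which is not a PID when $m'$ is composite) respects the palindrome sublattice; the paper instead works directly with an explicit basis of $\overleftarrow{\mathbb Z_{m'}^{n-1}}$. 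Without filling this in, the exponent $(b-t)\lfloor n/2\rfloor$ in the statement remains unverified, so I'd call this a gap rather than merely an omitted routine detail.

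One smaller point on part~(b): you should be explicit that the passage from ``same fraction field'' to ``same $\rankZ$'' uses that both $\widetilde{\rSn}$ and $\widetilde{\rA}$ are division algebras (as in \cite[Proposition~2.1(a)]{FKBL21}), so that the sandwich $\rAp\subset\overline{\mathrm{tr}}_\lambda^A(\rSn)\subset\rA$ forces equality after localization; inverting only the $m'$-th powers of $\bar\gaa_v$ in $\rSn$ gives $a_v^{-1}$ but you also need $\Frac(\cZ(\rSn))=\Frac(\cZ(\rA))$, which follows because $\cZ(\rA)$ is generated by Weyl monomials $a^{\mathbf k}$ with $\mathbf k\in\overline\Lambda_z$ and each such $a^{\mathbf k}$ is a quotient of two elements of $\cZ(\rAp)\subset\cZ(\rSn)$.
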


For Theorem~\ref{introthm-main4} and Theorem~\ref{introthm-main5}~(b), we require $\Sigma$ to be a polygon when $n>3$ because we need the injectivity of the $A$-version quantum trace map. 
In Theorem \ref{thm:intro5} and Theorem \ref{introthm-main5} (a), we formulate the center and the PI-degree of the $A$-quantum torus for the reduced case when the essentially bordered pb surface $\Sigma$ has no interior punctures, i.e., under much more mild conditions.
So, once we have the injectivity of the $A$-version quantum trace map for the reduced case, we could remove the restriction for $\Sigma$ being a polygon when $n>3$ in Theorem~\ref{introthm-main4} and Theorem~\ref{introthm-main5}~(b).

From \cite[Theorem 4.1]{New72}, the matrix $\sfP_\lambda$ (resp. $\overline{\sfP}_\lambda$) can be regarded as the direct sum of $2\times 2$ anti-symmetric integer matrices and zero-matrices. 
Note that each $2\times 2$ anti-symmetric matrix is determined by an integer and these integers capture the non-commutativity of $\cA_q(\Sigma,\lambda)$ (resp. $\overline{\cA}_q(\Sigma,\lambda)$). 
Hence, understanding of these integers is also important and we show each integer is $1$ or $n$ up to a power of $2$ and how many such numbers there are in Proposition~\ref{prop-anti-decom-P} using Theorems~\ref{mainthm-tro-2} (resp. Theorem~\ref{introthm-main5}).

Dealing with general $n$ is much more challenging than the cases when $n=2,3$. One reason, as we mentioned before, is that there is no `good' basis for the (reduced) stated $\SL(n)$-skein algebra. Meanwhile, there are some properties of general $n$ that are not reflected by the cases when $n=2,3$.

On each boundary component $\partial$ of $\overline\Sigma$, there are central elements of $\overline\cS_n(\Sigma)$ obtained by taking products of stated corner arcs around punctures in $\partial$; see Lemma \ref{reduced-boundary_center}.
When we consider these central elements for reduced stated $\SL(n)$-skein algebras, we need to consider two different cases as in Lemma \ref{reduced-boundary_center}.
When $r$ is odd ($r$ is the number of punctures contained in $\partial$), the boundary central elements in  \eqref{eqintro-reduced-central2}  satisfy some symmetric property, which is not reflected by the case when $n=2$ \cite{Kor21}.
To prove Theorem~\ref{introthm-main4} and Theorem~\ref{introthm-main5}~(b), we have to deal with this symmetric property.

When $n=2,3$, Theorems \ref{mainthm-tro-1}, \ref{mainthm-tro-2}, \ref{introthm-main4}, and \ref{introthm-main5} were studied in \cite{FKBL19,KW24,Kor21,Yu23}.
 When $n=2$ (resp. $n=3$), there are only three (resp. four) cases for $d$.
They considered these theorems by considering different cases for $d$.  
The number of cases of $d$ tends to infinity when $n$ is large enough. This makes it challenging to deal with general $n$.

Some results and techniques (e.g. describing the center and computing the PI-degree using a quantum torus) in the proofs can be applied not only to skein algebras but also to other non-commutative algebras with `sandwiched property'; see, e.g. Section~\ref{sec:Azumaya}. Hence, The impact of this paper is not limited to low-dimensional topology.

\subsection{Further directions}
In this paper, we deal with only odd roots of unity. 
We have already had some results for even roots of unity and will investigate this case subsequently \cite{KW25}.

In the current setting (without interior punctures), it is known that the reduced stated skein algebra $\overline{\cS}_2(\Sigma)$ is isomorphic to the quantum cluster algebra associated with $\mathfrak{sl}_2$ \cite{Mul16, LY22}. 
It is also known that $\overline{\cS}_3(\Sigma)$ is contained in the quantum cluster algebra associated with $\mathfrak{sl}_3$ in the same setting, and it is conjectured that they are isomorphic \cite{IY23, LY23}. 
Considering these results, it is expected that $\overline{\cS}_n(\Sigma)$ is isomorphic to the quantum cluster algebra associated with $\mathfrak{sl}_n$. 
If the expectation is correct, the results shown in this paper lead us to the understanding of the representation theory of quantum higher cluster algebras.

In the future, it is expected that the Unicity theorem will also be applicable to $\SL(n)$-skein algebras not treated in this paper. The benefit in such cases lies in the decomposition of the $\SL(n)$-character variety into symplectic leaves having a representation-theoretic interpretation for the skein algebra. Prior studies include \cite{GJS19, KK22, Yu23a, FKBL23, KW24}.

\subsection*{Acknowledgements}
The authors are grateful to St\'ephane Baseilhac, Matthieu Faitg, and Philippe Roche for sharing their results and a project in progress on (unrestricted) quantum moduli algebras. The authors are also grateful to Thang T. Q.  L\^e and H.-K. Kim for valuable comments on the introduction. 
H. K. was supported by JSPS KAKENHI Grant Number JP23K12976. 
Z. W. was supported by the NTU research scholarship from the Nanyang Technological University (Singapore) and the PhD scholarship from the University of Groningen (Netherlands).

\section{Preliminaries}
In this section, we will recall some definitions and known results related to (reduced) stated $\SL(n)$-skein algebras defined in \cite{LS21,LY23}.

\subsection{Notations}\label{notation}
Suppose $n\geq 2$ is an integer.  
Let $\bN$ denote the set of all non-negative integers, and $\bZ_k:=\bZ/k\bZ$.

Our ground ring is a commutative domain $\cR$ with an invertible element $\hat{q}$. We set 
$\mathbbm{v}= \hat{q}^n$, $q= \hat{q}^{2n^2}$ so that $q^{1/2n^2} = \hat q$ and $\mathbbm{v}^{1/n} = \hat q$, and define the following constants:
\begin{align*}
\mathbbm{c}_{i,\mathbbm{v}}= (-q)^{n-i} q^{\frac{n-1}{2n}},\quad
\mathbbm{t}_\mathbbm{v}= (-1)^{n-1} q^{\frac{n^2-1}{n}},\quad 
\mathbbm{a}_\mathbbm{v} =   q^{\frac{n+1-2n^2}{4}}.
\end{align*}

\paragraph{\textbf{Condition}}
We will refer the following conditions as Condition ($\ast$): \\
Let $\hat q^2$ is a primitive $m''$-th root of unity. 
Let $d'$ denote the greatest common divisor of $n$ and $m''$, and set $m'=m''/d'$. Define $d$ to be the greatest common divisor of $2n$ and $m'$, and set $m =m'/d$. Then we have $\mathbbm{v}^2$ is a primitive $m'$-th root of unity, and $q^2$ is a primitive $m$-th root of unity.

\subsection{Punctured bordered surfaces}
A {\bf punctured bordered surface} (or {\bf pb surface} for simplicity) $\Sigma$ is obtained from a compact oriented surface $\overline{\Si}$ by removing finite points, which are called punctures, such that every boundary component of $\Sigma$ is diffeomorphic to an open interval. The puncture in the interior of $\overline{\Si}$ is called the {\bf interior puncture}, the one that lies in $\partial \overline{\Si}$ is called the {\bf boundary puncture}.
An {\bf essentially bordered pb surface}
is a pb surface such that every connected component has a non-empty boundary.

An \textbf{even boundary component} 
(resp. \textbf{odd boundary component}) 
of $\overline{\Sigma}$ is a connected boundary component of $\overline{\Sigma}$ which has even 
(resp. odd) 
number of boundary punctures of $\Sigma$.

The orientation of $\partial \Si$ induced by the orientation of $\Si$ is called the {\bf positive orientation} of $\partial \Si$. The one opposite to the positive orientation of $\partial\Si$ is called the {\bf negative orientation} of $\partial \Si$.
In the paper, we always assume that the orientations of depicted 
surfaces point to the readers, i.e., the surfaces are equipped with the orientations in counter-clockwise.  
When we mention the orientation of a boundary component of a surface, we always mean its positive orientation, i.e., the orientation induced by the orientation of the surface.

An {\bf ideal arc} $c$ of $\Sigma$ is an embedding from $(0,1)$ to $\Sigma$ which can be extended to an immersion $[0,1]\to \overline{\Sigma}$ such that $c(0)$ and $c(1)$ are punctures. We identify an ideal arc with its image on $\Si$.
An ideal arc is \textbf{trivial} if it is null-homotopic.

For a pb surface $\Sigma$, 
$\Sigma$ is \textbf{triangulable} if every connected component of it has at least one ideal point and is neither the once- or twice-punctured sphere, the monogon, nor the bigon. 
An \textbf{(ideal) triangulation} of a triangulable surface $\Sigma$ is a maximal collection $\lambda$ of non-trivial ideal arcs which are pairwise disjoint except at punctures and pairwise non-isotopic. 
In this paper, we consider ideal triangulations up to isotopy.

\subsection{$n$-webs and their diagrams}
A thickening of a pb surface $\Sigma$ is $\Si\times(-1,1)$, we identify $\Si$ with $\Si\times \{0\}$. For any point $(x,t)\in\Si\times(-1,1)$, we call $t$ the \textbf{height} of this point. 

An {\bf $n$-web} $\alpha$ in $\Si\times(-1,1)$ is a disjoint union of oriented closed curves and a directed finite graph properly embedded into $\Si\times(-1,1)$, satisfying the following requirements:
\begin{enumerate}
    \item $\alpha$ only contains $1$-valent or $n$-valent vertices. Each $n$-valent vertex is a source or a  sink. The set of $1$-valent vertices is denoted as $\partial \alpha$, which are called \textbf{endpoints} of $\alpha$. For any boundary component $c$ of $\Si$, we require $\partial\alpha\cap (c\times(-1,1))$ have distinct heights.
    \item Every edge of the graph is an embedded oriented  closed interval  in $\Si\times(-1,1)$.
    \item $\alpha$ is equipped with a transversal \textbf{framing}. 
    \item The set of half-edges at each $n$-valent vertex is equipped with a  cyclic order. 
    \item $\partial \alpha$ is contained in $\partial\Si\times (-1,1)$ and the framing at these endpoints is given by the positive direction of $(-1,1)$.
\end{enumerate}
We will consider $n$-webs up to (ambient) \textbf{isotopy} which are continuous deformations of $n$-webs in their class. 
The empty $n$-web, denoted by $\emptyset$, is also considered as an $n$-web, with the convention that $\emptyset$ is only isotopic to itself. 

A {\bf state} for $\alpha$ is a map $s\colon\partial\alpha\rightarrow \{1,2,\cdots,n\}$. A {\bf stated $n$-web} in $\Si\times(-1,1)$ is an $n$-web equipped with a state.

We say the (stated) $n$-web $\alpha$ is in {\bf vertical position} if 
\begin{enumerate}
    \item the framing at everywhere is given by the positive direction of $(-1,1)$,
    \item $\alpha$ is in general position with respect to the projection  $\text{pr}\colon \Si\times(-1,1)\rightarrow \Si\times\{0\}$,
    \item at every $n$-valent vertex, the cyclic order of half-edges as the image of $\text{pr}$ is given by the positive orientation of $\Si$ (drawn counter-clockwise in pictures).
\end{enumerate}

For every (stated) $n$-web $\alpha$, we can isotope $\alpha$ to be in vertical position. For each boundary component $c$ of $\Si$, the heights of $\partial\alpha\cap (c\times(-1,1))$ determine a linear order on  $c\cap \text{pr}(\alpha)$.
Then a {\bf (stated) $n$-web diagram} of $\alpha$ is $\text{pr}(\alpha)$ equipped with the usual over/underpassing at each double point and a linear order on $c\cap \text{pr}(\alpha)$ for each boundary component $c$ of $\Si$.

A stated $n$-web diagram $\alpha$ is called {\bf negatively ordered}  if the linear order on $\alpha\cap c$, for each boundary component $c$ of $\Si$, is indicated by the negative orientation of $c$.

\subsection{Stated $\SL(n)$-skein algebras and their reduced version}\label{sec:skein}
Let $\fS_n$ denote the permutation group on the set $\{1,2,\cdots,n\}$. 
For an integer $i\in\{1,2,\cdots,n\}$, we use $\bar{i}$ to denote $n+1-i$. 
Recall that $q=\mathbbm{v}^{2n}$. 

\def\M {M,\cN}

The \textbf{stated $\SL(n)$-skein algebra} $\cS_n(\Si,\mathbbm{v})$ of $\Si$ is
the quotient module of the $\cR$-module freely generated by the set 
 of all isotopy classes of stated 
$n$-webs in $\Sigma\times (-1,1)$ subject to  relations \eqref{w.cross}-\eqref{wzh.eight}.

\beq\label{w.cross}
q^{\frac{1}{n}} 
%%%%%%%%%%%%%%%%%%%%%%%%%%%%%%%%%%%%%%%%%%%%%
\raisebox{-.20in}{

\begin{tikzpicture}%[dline /. style ={line width =2pt}]
\tikzset{->-/.style=

{decoration={markings,mark=at position #1 with

{\arrow{latex}}},postaction={decorate}}}
\filldraw[draw=white,fill=gray!20] (-0,-0.2) rectangle (1, 1.2);%gray part 
\draw [line width =1pt,decoration={markings, mark=at position 0.5 with {\arrow{>}}},postaction={decorate}](0.6,0.6)--(1,1);
\draw [line width =1pt,decoration={markings, mark=at position 0.5 with {\arrow{>}}},postaction={decorate}](0.6,0.4)--(1,0);
\draw[line width =1pt] (0,0)--(0.4,0.4);
\draw[line width =1pt] (0,1)--(0.4,0.6);
%\draw[line width =1pt] (0.6,0.6)--(0.4,0.4);% negative crossing
\draw[line width =1pt] (0.4,0.6)--(0.6,0.4);%positive  crossing
\end{tikzpicture}
}
%%%%%%%%%%%%%%%%%%%%%%%%%%%%%%%%%%%%%%%%%%%%%%%%%%%%%%%%%
- q^{-\frac {1}{n}}
%%%%%%%%%%%%%%%%%%%%%%%%%%%%%%%%%%%%%%%%%%%%%%%%%%%%%%%%%%%%
\raisebox{-.20in}{
\begin{tikzpicture}%[dline /. style ={line width =2pt}]
\tikzset{->-/.style=

{decoration={markings,mark=at position #1 with

{\arrow{latex}}},postaction={decorate}}}
\filldraw[draw=white,fill=gray!20] (-0,-0.2) rectangle (1, 1.2);%gray part 
\draw [line width =1pt,decoration={markings, mark=at position 0.5 with {\arrow{>}}},postaction={decorate}](0.6,0.6)--(1,1);
\draw [line width =1pt,decoration={markings, mark=at position 0.5 with {\arrow{>}}},postaction={decorate}](0.6,0.4)--(1,0);
\draw[line width =1pt] (0,0)--(0.4,0.4);
\draw[line width =1pt] (0,1)--(0.4,0.6);
\draw[line width =1pt] (0.6,0.6)--(0.4,0.4);% negative crossing
%\draw[line width =0.8pt] (0.4,0.6)--(0.6,0.4);%positive  crossing
\end{tikzpicture}
}
= (q-q^{-1})
%%%%%%%%%%%%%%%%%%%%%%%%%%%%%%%%%%%%%%%%%%%%%%%%%%%%%%%%%%%%
\raisebox{-.20in}{

\begin{tikzpicture}%[dline /. style ={line width =2pt}]
\tikzset{->-/.style=

{decoration={markings,mark=at position #1 with

{\arrow{latex}}},postaction={decorate}}}
\filldraw[draw=white,fill=gray!20] (-0,-0.2) rectangle (1, 1.2);%gray part 
\draw [line width =1pt,decoration={markings, mark=at position 0.5 with {\arrow{>}}},postaction={decorate}](0,0.8)--(1,0.8);
\draw [line width =1pt,decoration={markings, mark=at position 0.5 with {\arrow{>}}},postaction={decorate}](0,0.2)--(1,0.2);
\end{tikzpicture}
},
\eeq 
\beq\label{w.twist}
\raisebox{-.15in}{
\begin{tikzpicture}%[dline /. style ={line width =2pt}]
\tikzset{->-/.style=
{decoration={markings,mark=at position #1 with
{\arrow{latex}}},postaction={decorate}}}
\filldraw[draw=white,fill=gray!20] (-1,-0.35) rectangle (0.6, 0.65);%gray part 
\draw [line width =1pt,decoration={markings, mark=at position 0.5 with {\arrow{>}}},postaction={decorate}](-1,0)--(-0.25,0);
\draw [color = black, line width =1pt](0,0)--(0.6,0);
\draw [color = black, line width =1pt] (0.166 ,0.08) arc (-37:270:0.2);
\end{tikzpicture}}
%%%%%%%%%%%%%%%%%%%%%%%%%%%%%%%%%%%%%%%%%%%%%%%%%%%%%%%%%%%%%%%%%%%%%%%%%%%%%%%
= \mathbbm{t}_\mathbbm{v}%%%%%%%%%%%%%%%%%%%%%%%%%%%%%%%%%%%%%%%%%%%%%%%%%%%%%%%%%%%%%%%%%%%%%%%%%%%%%%%
\raisebox{-.15in}{
\begin{tikzpicture}%[dline /. style ={line width =2pt}]
\tikzset{->-/.style=
{decoration={markings,mark=at position #1 with
{\arrow{latex}}},postaction={decorate}}}
\filldraw[draw=white,fill=gray!20] (-1,-0.5) rectangle (0.6, 0.5);%gray part 
\draw [line width =1pt,decoration={markings, mark=at position 0.5 with {\arrow{>}}},postaction={decorate}](-1,0)--(-0.25,0);
\draw [color = black, line width =1pt](-0.25,0)--(0.6,0);
%\draw [color = black, line width =1pt] (0.166 ,0.08) arc (-37:270:0.2);
\end{tikzpicture}}
,%\quad \text{where}\  t=(-1)^{n-1} q^{n-\frac 1n} 
%%%%%%%%%%%%%%%%%%%%%%%%%%%%%%%%%%%%%%%%%%%%%%%%%%%%%%%%%%%%%%%%%%%%%%5
\eeq
\beq\label{w.unknot}
\raisebox{-.20in}{
\begin{tikzpicture}%[dline /. style ={line width =2pt}]
\tikzset{->-/.style=
{decoration={markings,mark=at position #1 with
{\arrow{latex}}},postaction={decorate}}}
\filldraw[draw=white,fill=gray!20] (0,0) rectangle (1,1);%gray part 
\draw [line width =1pt,decoration={markings, mark=at position 0.5 with {\arrow{>}}},postaction={decorate}](0.45,0.8)--(0.55,0.8);
\draw[line width =1pt] (0.5 ,0.5) circle (0.3);
%\draw [color = black, line width =1pt] (0.166 ,0.08) arc (-37:270:0.2);
\end{tikzpicture}}
= (-1)^{n-1} [n]\ 
%%%%%%%%%%%%%%%%%%%%%%%%%%%%%%%%%%%%
\raisebox{-.20in}{
\begin{tikzpicture}%[dline /. style ={line width =2pt}]
\tikzset{->-/.style=
{decoration={markings,mark=at position #1 with
{\arrow{latex}}},postaction={decorate}}}
\filldraw[draw=white,fill=gray!20] (0,0) rectangle (1,1);%gray part 
%\draw [color = black, line width =1pt] (0.166 ,0.08) arc (-37:270:0.2);
\end{tikzpicture}}
,\ \text{where}\ [n]=\frac{q^n-q^{-n}}{q-q^{-1}},
\eeq
\beq\label{wzh.four}
\raisebox{-.30in}{
\begin{tikzpicture}%[dline /. style ={line width =2pt}]
\tikzset{->-/.style=
{decoration={markings,mark=at position #1 with
{\arrow{latex}}},postaction={decorate}}}
\filldraw[draw=white,fill=gray!20] (-1,-0.7) rectangle (1.2,1.3);%gray part 
\draw [line width =1pt,decoration={markings, mark=at position 0.5 with {\arrow{>}}},postaction={decorate}](-1,1)--(0,0);
\draw [line width =1pt,decoration={markings, mark=at position 0.5 with {\arrow{>}}},postaction={decorate}](-1,0)--(0,0);
\draw [line width =1pt,decoration={markings, mark=at position 0.5 with {\arrow{>}}},postaction={decorate}](-1,-0.4)--(0,0);
\draw [line width =1pt,decoration={markings, mark=at position 0.5 with {\arrow{<}}},postaction={decorate}](1.2,1)  --(0.2,0);
\draw [line width =1pt,decoration={markings, mark=at position 0.5 with {\arrow{<}}},postaction={decorate}](1.2,0)  --(0.2,0);
\draw [line width =1pt,decoration={markings, mark=at position 0.5 with {\arrow{<}}},postaction={decorate}](1.2,-0.4)--(0.2,0);
\node  at(-0.8,0.5) {$\vdots$};
\node  at(1,0.5) {$\vdots$};
\end{tikzpicture}}=(-q)^{\frac{n(n-1)}{2}}\cdot \sum_{\sigma\in \fS_n}
(-q^{\frac{1-n}n})^{\ell(\sigma)} \raisebox{-.30in}{
\begin{tikzpicture}%[dline /. style ={line width =2pt}]
\tikzset{->-/.style=
{decoration={markings,mark=at position #1 with
{\arrow{latex}}},postaction={decorate}}}
\filldraw[draw=white,fill=gray!20] (-1,-0.7) rectangle (1.2,1.3);%gray part 
\draw [line width =1pt,decoration={markings, mark=at position 0.5 with {\arrow{>}}},postaction={decorate}](-1,1)--(0,0);
\draw [line width =1pt,decoration={markings, mark=at position 0.5 with {\arrow{>}}},postaction={decorate}](-1,0)--(0,0);
\draw [line width =1pt,decoration={markings, mark=at position 0.5 with {\arrow{>}}},postaction={decorate}](-1,-0.4)--(0,0);
\draw [line width =1pt,decoration={markings, mark=at position 0.5 with {\arrow{<}}},postaction={decorate}](1.2,1)  --(0.2,0);
\draw [line width =1pt,decoration={markings, mark=at position 0.5 with {\arrow{<}}},postaction={decorate}](1.2,0)  --(0.2,0);
\draw [line width =1pt,decoration={markings, mark=at position 0.5 with {\arrow{<}}},postaction={decorate}](1.2,-0.4)--(0.2,0);
\node  at(-0.8,0.5) {$\vdots$};
\node  at(1,0.5) {$\vdots$};
\filldraw[draw=black,fill=gray!20,line width =1pt]  (0.1,0.3) ellipse (0.4 and 0.7);
\node  at(0.1,0.3){$\sigma_{+}$};
\end{tikzpicture}},
\eeq
where the ellipse enclosing $\sigma_+$  is the minimum crossing positive braid representing a permutation $\sigma\in \fS_n$ and $\ell(\sigma)=\#\{(i,j)\mid 1\leq i<j\leq n,\ \sigma(i)>\sigma(j)\}$ is the length of $\sigma\in \fS_n$.

\beq
   \raisebox{-.30in}{
\begin{tikzpicture}%[dline /. style ={line width =2pt}]
\tikzset{->-/.style=
{decoration={markings,mark=at position #1 with
{\arrow{latex}}},postaction={decorate}}}
\filldraw[draw=white,fill=gray!20] (-1,-0.7) rectangle (0.2,1.3);%gray part 
\draw [line width =1pt](-1,1)--(0,0);
\draw [line width =1pt](-1,0)--(0,0);
\draw [line width =1pt](-1,-0.4)--(0,0);
\draw [line width =1.5pt](0.2,1.3)--(0.2,-0.7);
\node  at(-0.8,0.5) {$\vdots$};
\filldraw[fill=white,line width =0.8pt] (-0.5 ,0.5) circle (0.07);
\filldraw[fill=white,line width =0.8pt] (-0.5 ,0) circle (0.07);
\filldraw[fill=white,line width =0.8pt] (-0.5 ,-0.2) circle (0.07);
\end{tikzpicture}}
   = 
   \mathbbm{a}_\mathbbm{v} \sum_{\sigma \in \fS_n} (-q)^{\ell(\sigma)}\,  \raisebox{-.30in}{
\begin{tikzpicture}%[dline /. style ={line width =2pt}]
\tikzset{->-/.style=
{decoration={markings,mark=at position #1 with
{\arrow{latex}}},postaction={decorate}}}
\filldraw[draw=white,fill=gray!20] (-1,-0.7) rectangle (0.2,1.3);%gray part 
\draw [line width =1pt](-1,1)--(0.2,1);
\draw [line width =1pt](-1,0)--(0.2,0);
\draw [line width =1pt](-1,-0.4)--(0.2,-0.4);
\draw [line width =1.5pt,decoration={markings, mark=at position 1 with {\arrow{>}}},postaction={decorate}](0.2,1.3)--(0.2,-0.7);
\node  at(-0.8,0.5) {$\vdots$};
\filldraw[fill=white,line width =0.8pt] (-0.5 ,1) circle (0.07);
\filldraw[fill=white,line width =0.8pt] (-0.5 ,0) circle (0.07);
\filldraw[fill=white,line width =0.8pt] (-0.5 ,-0.4) circle (0.07);
\node [right] at(0.2,1) {$\sigma(n)$};
\node [right] at(0.2,0) {$\sigma(2)$};
\node [right] at(0.2,-0.4){$\sigma(1)$};
\end{tikzpicture}},\label{eq:sticking}
\eeq
\beq \label{wzh.six}
\raisebox{-.20in}{
\begin{tikzpicture}%[dline /. style ={line width =2pt}]
\tikzset{->-/.style=
{decoration={markings,mark=at position #1 with
{\arrow{latex}}},postaction={decorate}}}
\filldraw[draw=white,fill=gray!20] (-0.7,-0.7) rectangle (0,0.7);%gray part 
\draw [line width =1.5pt,decoration={markings, mark=at position 1 with {\arrow{>}}},postaction={decorate}](0,0.7)--(0,-0.7);
\draw [color = black, line width =1pt] (0 ,0.3) arc (90:270:0.5 and 0.3);
\node [right]  at(0,0.3) {$i$};
\node [right] at(0,-0.3){$j$};
\filldraw[fill=white,line width =0.8pt] (-0.5 ,0) circle (0.07);
%\draw [line width =1pt,decoration={markings, mark=at position 0.5 with {\arrow{>}}},postaction={decorate}](-0.5,0.02)--(-0.5,-0.02);%going down
%\draw [line width =1pt,decoration={markings, mark=at position 0.5 with {\arrow{<}}},postaction={decorate}](-0.5,0.02)--(-0.5,-0.02);%going up
\end{tikzpicture}}   = \delta_{\bar j,i }\,  \mathbbm{c}_{i,\mathbbm{v}}\ \raisebox{-.20in}{
\begin{tikzpicture}%[dline /. style ={line width =2pt}]
\tikzset{->-/.style=
{decoration={markings,mark=at position #1 with
{\arrow{latex}}},postaction={decorate}}}
\filldraw[draw=white,fill=gray!20] (-0.7,-0.7) rectangle (0,0.7);%gray part 
\draw [line width =1.5pt](0,0.7)--(0,-0.7);
\end{tikzpicture}},
\eeq
\beq \label{wzh.seven}
\raisebox{-.20in}{
\begin{tikzpicture}%[dline /. style ={line width =2pt}]
\tikzset{->-/.style=
{decoration={markings,mark=at position #1 with
{\arrow{latex}}},postaction={decorate}}}
\filldraw[draw=white,fill=gray!20] (-0.7,-0.7) rectangle (0,0.7);%gray part 
\draw [line width =1.5pt](0,0.7)--(0,-0.7);
\draw [color = black, line width =1pt] (-0.7 ,-0.3) arc (-90:90:0.5 and 0.3);
\filldraw[fill=white,line width =0.8pt] (-0.55 ,0.26) circle (0.07);%whitedot
%\draw [line width =1pt,decoration={markings, mark=at position 0.5 with {\arrow{>}}},postaction={decorate}](-0.2,-0.02)--(-0.2,0.02);%going up
\end{tikzpicture}}
= \sum_{i=1}^n  (\mathbbm{c}_{\bar i,\mathbbm{v}})^{-1}\, \raisebox{-.20in}{
\begin{tikzpicture}%[dline /. style ={line width =2pt}]
\tikzset{->-/.style=
{decoration={markings,mark=at position #1 with
{\arrow{latex}}},postaction={decorate}}}
\filldraw[draw=white,fill=gray!20] (-0.7,-0.7) rectangle (0,0.7);%gray part 
\draw [line width =1.5pt,decoration={markings, mark=at position 1 with {\arrow{>}}},postaction={decorate}](0,0.7)--(0,-0.7);
\draw [line width =1pt](-0.7,0.3)--(0,0.3);
\draw [line width =1pt](-0.7,-0.3)--(0,-0.3);
\filldraw[fill=white,line width =0.8pt] (-0.3 ,0.3) circle (0.07);%whitedot
\filldraw[fill=black,line width =0.8pt] (-0.3 ,-0.3) circle (0.07);%whitedot
\node [right]  at(0,0.3) {$i$};
\node [right]  at(0,-0.3) {$\bar{i}$};
\end{tikzpicture}},
\eeq
\beq\label{wzh.eight}
\raisebox{-.20in}{

\begin{tikzpicture}%[dline /. style ={line width =2pt}]
\tikzset{->-/.style=

{decoration={markings,mark=at position #1 with

{\arrow{latex}}},postaction={decorate}}}
\filldraw[draw=white,fill=gray!20] (-0,-0.2) rectangle (1, 1.2);%gray part 
\draw [line width =1.5pt,decoration={markings, mark=at position 1 with {\arrow{>}}},postaction={decorate}](1,1.2)--(1,-0.2);
\draw [line width =1pt](0.6,0.6)--(1,1);
\draw [line width =1pt](0.6,0.4)--(1,0);
\draw[line width =1pt] (0,0)--(0.4,0.4);
\draw[line width =1pt] (0,1)--(0.4,0.6);
%\draw[line width =0.8pt] (0.6,0.6)--(0.4,0.4);% negative crossing
\draw[line width =1pt] (0.4,0.6)--(0.6,0.4);%positive  crossing
\filldraw[fill=white,line width =0.8pt] (0.2 ,0.2) circle (0.07);%whitedot
\filldraw[fill=white,line width =0.8pt] (0.2 ,0.8) circle (0.07);%whitedot
\node [right]  at(1,1) {$i$};
\node [right]  at(1,0) {$j$};
\end{tikzpicture}
} =q^{-\frac{1}{n}}\left(\delta_{{j<i} }(q-q^{-1})\raisebox{-.20in}{

\begin{tikzpicture}%[dline /. style ={line width =2pt}]
\tikzset{->-/.style=

{decoration={markings,mark=at position #1 with

{\arrow{latex}}},postaction={decorate}}}
\filldraw[draw=white,fill=gray!20] (-0,-0.2) rectangle (1, 1.2);%gray part 
\draw [line width =1.5pt,decoration={markings, mark=at position 1 with {\arrow{>}}},postaction={decorate}](1,1.2)--(1,-0.2);
\draw [line width =1pt](0,0.8)--(1,0.8);
\draw [line width =1pt](0,0.2)--(1,0.2);
\filldraw[fill=white,line width =0.8pt] (0.2 ,0.8) circle (0.07);%whitedot
\filldraw[fill=white,line width =0.8pt] (0.2 ,0.2) circle (0.07);%whitedot
\node [right]  at(1,0.8) {$i$};
\node [right]  at(1,0.2) {$j$};
\end{tikzpicture}
}+q^{\delta_{i,j}}\raisebox{-.20in}{

\begin{tikzpicture}%[dline /. style ={line width =2pt}]
\tikzset{->-/.style=

{decoration={markings,mark=at position #1 with

{\arrow{latex}}},postaction={decorate}}}
\filldraw[draw=white,fill=gray!20] (-0,-0.2) rectangle (1, 1.2);%gray part 
\draw [line width =1.5pt,decoration={markings, mark=at position 1 with {\arrow{>}}},postaction={decorate}](1,1.2)--(1,-0.2);
\draw [line width =1pt](0,0.8)--(1,0.8);
\draw [line width =1pt](0,0.2)--(1,0.2);
\filldraw[fill=white,line width =0.8pt] (0.2 ,0.8) circle (0.07);%whitedot
\filldraw[fill=white,line width =0.8pt] (0.2 ,0.2) circle (0.07);%whitedot
\node [right]  at(1,0.8) {$j$};
\node [right]  at(1,0.2) {$i$};
\end{tikzpicture}
}\right),
\eeq
where   % {w.twist} {wzh.eight}
$\delta_{j<i}= 
\begin{cases}
1  & j<i\\
%    \sum^{N}_{j=1,j\neq i}M_{ij},     & case 2\
0 & \text{otherwise}
\end{cases},\ 
\delta_{i,j}= 
\begin{cases} 
1  & i=j\\
%    \sum^{N}_{j=1,j\neq i}M_{ij},     & case 2\\
0  & \text{otherwise}
\end{cases}$, and the white dot and the black dot represent the direction of the corresponding stated $n$-webs.  
The arrows on the boundary edges represent the height orders of the endpoints of the $n$-webs with increasing heights in the direction of the arrow.

 \def \Sv{\cS_n(\Si,\mathbbm{v})}

 The algebra structure for $\cS_n(\Sigma,\mathbbm{v})$ is given by stacking the stated $n$-webs, i.e., for any two stated $n$-webs $\alpha,\alpha'\in\Si\times(-1,1)$, the product $\alpha\alpha'$ is defined by stacking $\alpha$ above $\alpha'$.

For a boundary puncture $p$ of a pb surface $\Sigma$, a corner arcs $C(p)_{ij}$ and $\cev{C}(p)_{ij}$ are stated arcs depicted as in Figure \ref{Fig;badarc}.
For a boundary puncture $p$ which is not on a monogon component of $\Sigma$, set 
$$C_p=\{C(p)_{ij}\mid i<j\},\quad\cev{C}_p=\{\cev{C}(p)_{ij}\mid i<j\}.$$  
Each element of $C_p\cup \cev{C}_p$ is called a \emph{bad arc} at $p$. 
\begin{figure}[h]
    \centering
    \includegraphics[width=150pt]{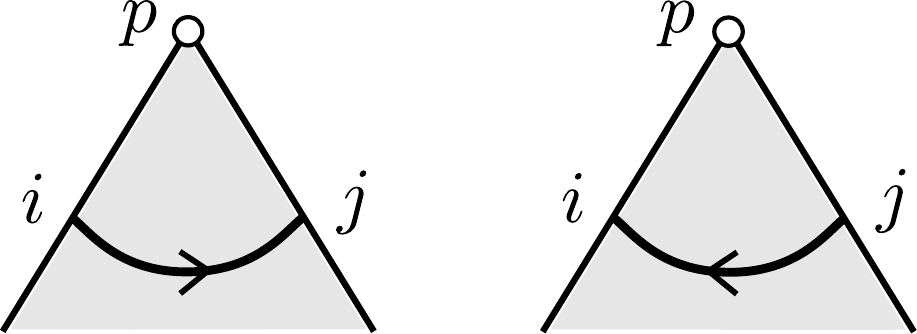}
    \caption{The left is $C(p)_{ij}$ and the right is $\cev{C}(p)_{ij}$.}\label{Fig;badarc}
\end{figure}

\def\barS{\overline{\cS}}
For a pb surface $\Sigma$, $$\barS_n(\Sigma,\mathbbm{v}) = \cS_n(\Sigma,\mathbbm{v})/I^{\text{bad}}$$ 
is called the \textbf{reduced stated $\SL(n)$-skein algebra}, defined in \cite{LY23}, where $I^{\text{bad}}$ is the two-sided ideal of $\cS_n(\Sigma,\mathbbm{v})$ generated by all bad arcs.

\subsection{The splitting map}

Let $e$ be an ideal arc of a pb surface $\Sigma$ such that it is contained in the interior of $\Sigma$. After cutting $\Sigma$ along $e$, we get a new pb surface $\text{Cut}_e\,{\Sigma}$, which has two copies $e_1,e_2$ for $e$ such that 
${\Sigma}= \text{Cut}_e\,{\Sigma}/(e_1=e_2)$. We use $\pr$ to denote the projection from $\text{Cut}_e\,{\Sigma}$ to $\Sigma$.  Suppose $\alpha$ is  a stated $n$-web diagram in $\Sigma$, which is transverse to $e$.
Let $s$ be a map from $e\cap\alpha$ to $\{1,2,\cdots,n\}$, and let $h$ be a linear order on $e\cap\alpha$. Then there is a lift diagram $\alpha(h,s)$ for a stated $n$-web in $\text{Cut}_e\,{\Sigma}$. 
 For $i=1,2$, the heights of the endpoints of $\alpha(h,s)$ on $e_i$ are induced by $h$ (via $\pr$), and the states of the endpoints of $\alpha(h,s)$ on $e_i$ are induced by $s$ (via $\pr$).
%The heights of the newly created endpoints of $\alpha(h,s)$ are induced by $h$ (via $\pr$), and the states of the newly created endpoints of $\alpha(h,s)$ are induced by $s$ (via $\pr$).
Then the splitting map is defined by 
$$\Theta_e(\alpha) =\sum_{s\colon \alpha \cap e \to \{1,\cdots, n\}} \alpha(h, s). $$
Furthermore $\Theta_e$ is an $\cR$-algebra homomorphism \cite{LS21}. When there is no confusion we  will omit the subscript for $\Theta_e$.

\subsection{The bigon and $\cO_q(\SL(n))$}\label{sec:bigon}

 Here we refer to \cite{KS12,LS21,LY23} for the definition of $\cO_q(\SL(n))$.
 
The {\bf bigon} $\fB$ is obtained from a closed disk $D$ by removing two points in $\partial D$. We can label the two boundary components of the bigon  by $e_l$ and $e_r$. a bigon with this labeling is called a {\bf directed bigon},
 see an example
$
\raisebox{-.20in}{

\begin{tikzpicture}%[dline /. style ={line width =2pt}]
\tikzset{->-/.style=

{decoration={markings,mark=at position #1 with

{\arrow{latex}}},postaction={decorate}}}

\filldraw[draw=black,fill=gray!20] (0.5 ,0.5) circle (0.5);
\filldraw[draw=black,fill=white] (0.5,0) circle (0.05);
\filldraw[draw=black,fill=white] (0.5,1) circle (0.05);
\node [left] at(0,0.5) {$e_{l}$};
\node [right] at(1,0.5) {$e_{r}$};
\end{tikzpicture}
}
$. 
We can draw $\fB$ like $
\raisebox{-.20in}{

\begin{tikzpicture}%[dline /. style ={line width =2pt}]
\tikzset{->-/.style=

{decoration={markings,mark=at position #1 with

{\arrow{latex}}},postaction={decorate}}}

\filldraw[draw=white,fill=gray!20] (0,0) rectangle (1, 1);
%\draw [line width =1pt,decoration={markings, mark=at position 0.5 with {\arrow{>}}},postaction={decorate}](0,0.5)--(1,0.5);
\draw[line width =1pt] (0,0)--(0,1);
\draw[line width =1pt] (1,0)--(1,1);
\end{tikzpicture}
}
$, and use $u_{ij}$
to denote 
$
\raisebox{-.20in}{

\begin{tikzpicture}%[dline /. style ={line width =2pt}]
\tikzset{->-/.style=

{decoration={markings,mark=at position #1 with

{\arrow{latex}}},postaction={decorate}}}

\filldraw[draw=white,fill=gray!20] (0,0) rectangle (1, 1);
\draw [line width =1pt,decoration={markings, mark=at position 0.5 with {\arrow{>}}},postaction={decorate}](0,0.5)--(1,0.5);
\draw[line width =1pt] (0,0)--(0,1);
\draw[line width =1pt] (1,0)--(1,1);
\node [left] at(0,0.5) {$i$};
\node [right] at(1,0.5) {$j$};
\end{tikzpicture}
}
$, where $i,j\in\{1,2,\cdots,n\}$.
We also use $\cev{u}_{ij}$ to denote the stated $n$-web diagram in $\fB$ obtained from $a_{ij}$ by reversing its orientation.

%\FIGc{bigon}{(a) \& (b) Bigon $\fB$. (c) Stated arc $a^i_j$, (d) splitting of $\fB$}{2.5cm}

%\draw[line width =2pt,decoration={markings, mark=at position 1.0 with {\arrow{>}}},postaction={decorate}](2.6,4) --(2.6,0);
%\draw [line width =0.8pt,decoration={markings, mark=at position 0.5 with {\arrow{>}}},postaction={decorate}](1.3,1)--(0,0.5);
%\draw[line width =0.8pt,decoration={markings, mark=at position 0.5 with {\arrow{>}}},postaction={decorate}] (1.3,1)--(2.6,0.5)node[right]{{$j$}};
%\draw [line width =0.8pt,decoration={markings, mark=at position 1.0 with {\arrow{>}}},postaction={decorate}](1.3,1)--(1.3,2) node[right]{{$n-2$}};
%\draw [line width =0.8pt](1.3,2)--(1.3,3);

For
$i,j,k,l\in \{1,2,\cdot,n\}$,
we have the following coefficients
\beq
 \cR^{ij}_{lk} = q^{-\frac 1n} \left(    q^{ \delta_{i,j}} \delta_{jk} \delta_{il} + (q-q^{-1})
    \delta_{j<i} \delta_{jl} \delta_{ik}\right),
 \label{R}
\eeq
where $\delta_{j<i}=1$ if $j<i$ and $\delta_{j<i}=0$ otherwise.

Let $\cO_q(M(n))$ be the associative algebra generated by  $u_{ij}$,   $i,j\in\{1,2,\cdots,n\},$
subject to the relations 
\beq
(\buu \ot \buu) \cR = \cR (\buu \ot \buu),  
\eeq
where $\cR$ is the $n^2\times n^2$ matrix given by equation \eqref{R}, and $\buu \ot \buu$ is the $n^2\times n^2$ matrix with entries $(\buu \ot \buu)^{ik}_{jl} = u_{i,j} u_{k,l}$ for $i,j,k,l\in \{1,2,\cdots,n\}$. 

Define  the element 
$$ {\det}_q(\buu):= \sum_{\sigma\in S_n} (-q)^{\ell(\sigma)}u_{1\sigma(1)}\cdots u_{n\sigma(n)} = \sum_{\sigma\in S_n} (-q)^{\ell(\sigma)}u_{\sigma(1)1}\cdots u_{\sigma(n)n}.$$

Define $\Oq$ to be  $\cO_q(M(n))/(\det_q(\buu)-1).$ Then 
$\Oq$ is a Hopf algebra with the Hopf algebra structure given by
\begin{align*}
\Delta(u_{ij})  = \sum_{k=1}^n u_{ik} \ot u_{kj}, \quad  \epsilon(u_{ij})= \delta_{ij},\quad 
S({u}_{ij} )%=(\buu)_{ij} 
= (-q)^{i-j} {\det}_q(\buu^{ji}), %\label{e-Oq-ops}
\end{align*}
where $\Delta,\ \epsilon$ and $S$ denote the comultiplication, the counit and the antipode respectively, and 
$\buu^{ji}$ is the result of removing the $j$-th row and $i$-th column from $\buu$.

\begin{thm}[\cite{LS21}]\label{Hopf} 
(a)  $\cS_n(\fB,\mathbbm{v})$ is a Hopf algebra over $\cR$.

(b) There is a unique Hopf algebra isomorphism  $g_{big}\colon \cO_{q}(SL_n) \rightarrow \cS_n(\fB,\mathbbm{v}) $  defined by
  $ g_{big} (u_{ij}) = a_{ij}$.
\end{thm}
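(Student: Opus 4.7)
The plan is to prove (a) by constructing each Hopf-algebra operation directly on stated webs in the bigon, and (b) by verifying that $g_{big}$ is a well-defined algebra homomorphism via the defining quadratic relations, that it respects the Hopf structure on generators, and that it is bijective.

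For (a), multiplication is stacking, and the unit is the empty web. The comultiplication $\Delta$ is built by choosing a horizontal ideal arc $e$ cutting $\fB$ into two bigons $\fB_1, \fB_2$ and composing the splitting homomorphism $\Theta_e$ from Section 2.5 with the tensor factorization $\cS_n(\fB_1 \sqcup \fB_2, \mathbbm{v}) \cong \cS_n(\fB,\mathbbm{v}) \otimes \cS_n(\fB,\mathbbm{v})$. The counit is realized by collapsing $\fB$ to an interval and sends $a_{ij}$ to $\delta_{ij}$. The antipode comes from the orientation-reversing involution of $\fB$ swapping the two boundary edges, renormalized by the cap/cup coefficients of \eqref{wzh.six}--\eqref{wzh.seven}. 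Coassociativity is naturality of $\Theta_e$ under further cutting, and the remaining bialgebra and antipode axioms reduce to diagrammatic identities on arcs and loops that follow from \eqref{w.cross}--\eqref{wzh.eight}.

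For (b), the target relations $(\buu \otimes \buu)\cR = \cR(\buu \otimes \buu)$ unpack, after indexing, to the height-exchange identity \eqref{wzh.eight}, whose coefficients match those of $\cR$ in \eqref{R}; and the identity $\det_q(\buu) = 1$ is what \eqref{wzh.four} combined with \eqref{eq:sticking} and \eqref{wzh.six} encode once the normalization constants $\mathbbm{a}_\mathbbm{v}$ and $\mathbbm{c}_{i,\mathbbm{v}}$ are absorbed. Thus $g_{big}$ is a well-defined algebra map. Compatibility with $\Delta$ reduces to the identity $\Theta_e(a_{ij}) = \sum_{k=1}^{n} a_{ik} \otimes a_{kj}$, which is the definition of the splitting map on a single horizontal arc; compatibility with counit and antipode is verified analogously on generators. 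Surjectivity is direct: any stated web in $\fB$ is reducible to a polynomial in the $a_{ij}$ by resolving crossings via \eqref{w.cross}, eliminating contractible loops via \eqref{w.unknot}, and absorbing internal $n$-valent vertices into parallel arc families via \eqref{eq:sticking}.

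The main obstacle is injectivity. The attack would be to produce a normal form for stated webs in $\fB$---parallel horizontal arcs stacked in a fixed height order with states satisfying a lexicographic condition---and to show that its image is a PBW-type basis of $\cO_{q}(SL(n))$. Spanning is a straightforward induction using \eqref{wzh.eight}. Linear independence is the delicate point; the cleanest route is a flatness argument via specialization $\hat q \to 1$, where $\cO_{q}(SL(n))$ degenerates to the classical coordinate ring of $\SL(n)$ and both sides become isomorphic to the algebra of functions on $\SL(n,\bC)$, combined with the fact that the normal-form spanning set has the correct cardinality in each degree, so a graded dimension count propagates back to the quantum level. Alternatively, one can exhibit a faithful comodule action of $\cS_n(\fB,\mathbbm{v})$ on the stated skein of a disk with sufficiently many boundary punctures and separate the candidate basis elements via explicit matrix coefficient computations.
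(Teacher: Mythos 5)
A preliminary remark: this paper does not prove Theorem \ref{Hopf} at all---it is imported verbatim from [LS21]---so your proposal has to be measured against the argument in that reference rather than against anything in this paper. Your outline of part (a) (coproduct from the splitting map $\Theta_e$ along an ideal arc cutting $\fB$ into two bigons, counit and antipode from symmetries of the bigon normalized by the coefficients in \eqref{wzh.six}--\eqref{wzh.seven}, consistent with $S(a_{ij})=(-q)^{i-j}\cev{a}_{\bar j,\bar i}$) and of the first half of (b) (RTT relations encoded by \eqref{wzh.eight} matching \eqref{R}, $\det_q(\buu)=1$ from \eqref{wzh.four}, \eqref{eq:sticking}, \eqref{wzh.six}, compatibility of $\Delta$ with $\Theta_e$ on $a_{ij}$, surjectivity by reducing webs to arcs) is the standard route and is fine as a sketch, modulo the point that surjectivity must also eliminate arcs returning to the same boundary edge using the boundary relations, not only crossings, loops and $n$-valent vertices.

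The genuine gap is injectivity, exactly where you place the weight, and the route you propose does not close it. The theorem is stated over an arbitrary commutative domain $\cR$ with $\hat q$ invertible---and this paper applies it precisely at roots of unity, e.g.\ in the Frobenius construction---so an argument by specialization $\hat q\to 1$ over $\bC$ gives nothing unless you already know that $\cS_n(\fB,\mathbbm{v})$ arises by base change from a torsion-free (flat) $\bZ[\hat q^{\pm1}]$-module: a relation $\sum_i f_i m_i=0$ with every $f_i$ divisible by $\hat q-1$ is invisible after specializing, and torsion-freeness of the skein module is essentially the basis statement you are trying to establish, so the argument is circular. The ``graded dimension count'' also runs in the wrong direction: exhibiting a normal form that \emph{spans} only bounds the skein algebra from above, whereas injectivity of the surjection $g_{big}\colon \cO_q(\SL(n))\to\cS_n(\fB,\mathbbm{v})$ requires a lower bound, i.e.\ a linear evaluation \emph{out of} the skein algebra, well defined on all of \eqref{w.cross}--\eqref{wzh.eight} and defined integrally over $\bZ[\hat q^{\pm1}]$, that separates the proposed normal forms. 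This separating evaluation (interpreting stated webs in the bigon as matrix coefficients via the Reshetikhin--Turaev calculus, in effect constructing an inverse to $g_{big}$) is the substantive content of the proof in [LS21]; your alternative (ii) gestures at the right kind of input, but as stated it is a plan rather than an argument, since the hard part is precisely verifying that such an evaluation respects every defining relation.
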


\subsection{The saturated system}\label{sec;sat_sys}
Let $f\colon\Sigma\rightarrow\Sigma'$ be a proper embedding between two pb surfaces. For a negatively ordered stated $n$-web diagram $\alpha$, we define $f_*(\alpha)$ to be a negatively ordered  stated $n$-web diagram $f(\alpha)$ in $\Sigma'$. It is easy to show that $f_*\colon\Sv\rightarrow \cS_n(\Si',\mathbbm{v})$ is an $\cR$-linear map.

For an essentially bordered pb  surface $\Sigma$, let  $B = \{b_1,\cdots,b_r \}$ be the collection of properly embedded
disjoint compact oriented arcs in $\Sigma.$ We say $B$ is a {\bf saturated system} if we have (1) after cutting $\Sigma$ along $B$, every component of the  resulting surface contains exactly one ideal point (2) $B$ is maximal under the condition (1). The existence of a saturated system is ensured as follows. First, we can cut $\Sigma$ so as to be a marked disk. Here the cutting paths are a part of a saturated system. Then the existence of a saturated system is obvious for marked disks. More precisely, any saturated system of a polygon with $k$ edges consists of $k-1$ disjoint oriented arcs.

Let $U(b_1),\cdots,U(b_r)$ be  disjoint open tubular neighborhoods of $b_1,\cdots,b_r$,  respectively. Each $U(b_i)$ is diffeomorphic with $b_i \times(0,1)$ (the diffeomorphism is orientation preserving)  and we require that $(\partial b_i) \times (0,1) \subset\partial\Sigma$. Set $U(B) = \cup_{1\leq i\leq r}U(b_i)$. Note that each $U(b_i)$ can be seen a bigon naturally, then $U(B)$ is the disjoint union of bigons.

\begin{thm}[\cite{LS21}]\label{key}
Assume  $B = \{b_1,\cdots,b_r \}$ is a  saturated system of arcs of $\Sigma$.

 (1) We have $r = r(\Sigma) := \# (\partial \Sigma) - \chi(\Sigma)$, where $\#( \partial \Sigma)$ is the number of boundary components  of $\Sigma$ and $\chi(\Sigma)$ denotes the Euler characteristics of $\Sigma$.

 (2) The embedding $U(B)\rightarrow \Sigma$  induces  an $\cR$-module isomorphism $\mathcal S_n(U(B),\mathbbm{v})\rightarrow \mathcal S_n(\Sigma,\mathbbm{v})$.
\end{thm}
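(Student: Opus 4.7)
The plan is to prove (1) topologically via an Euler characteristic computation, and (2) by combining a surjectivity argument based on isotoping $n$-webs into $U(B)$ with an injectivity argument that exploits the Hopf algebra structure on the bigon supplied by Theorem~\ref{Hopf}.

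For (1), the key identity I would establish first is that cutting $\Sigma$ along a single properly embedded arc $b$ with endpoints on $\partial\Sigma$ raises the Euler characteristic by exactly $1$, since $b$ is duplicated in the cut surface and $\chi(b)=1$. Iterating over $b_1,\ldots,b_r$ gives $\chi(\cut_B \Sigma) = \chi(\Sigma) + r$. Next I would use maximality of $B$ to force each component of $\cut_B\Sigma$ to be a once-punctured disk (a closed half-plane with a single puncture at infinity): any component with higher genus, an additional boundary circle, or more than one puncture would admit a further disjoint arc preserving the one-puncture-per-component condition, contradicting maximality. Each such component has $\chi=1$ and contains exactly one boundary puncture of $\Sigma$, so the number of components equals both $\chi(\cut_B\Sigma)$ and $\#(\partial\Sigma)$. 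Combining these counts yields $r = \#(\partial\Sigma) - \chi(\Sigma)$.

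For (2), define the map $\iota_\ast\colon \cS_n(U(B),\mathbbm{v})\to \cS_n(\Sigma,\mathbbm{v})$ as the $\cR$-linear extension of the inclusion $U(B)\hookrightarrow \Sigma$. Surjectivity follows via the splitting maps: any stated $n$-web in $\Sigma$ can be isotoped to be transverse to $B$, and $\Theta_B$ carries it into $\cS_n(\cut_B\Sigma,\mathbbm{v})$, which is a tensor product over once-punctured disks. In each once-punctured disk, relations \eqref{wzh.four} and \eqref{eq:sticking} resolve all $n$-valent vertices, \eqref{w.unknot} and \eqref{w.twist} remove contractible loops and framing twists, and \eqref{w.cross} resolves crossings, so any stated $n$-web reduces to a linear combination of disjoint boundary-parallel strands whose endpoints have been pushed to $\partial U(B)$; after regluing, these representatives lie in $U(B)$, which proves surjectivity.

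Injectivity is the most delicate step. Using Theorem~\ref{Hopf}, $\cS_n(U(B),\mathbbm{v}) \cong \cO_q(\SL(n))^{\otimes r}$, which has an explicit linear basis of ordered monomials in the generators $u_{ij}$. I would construct a left inverse to $\iota_\ast$ by composing $\Theta_B$ with a readout map that projects each once-punctured-disk tensor factor onto its coefficients in this basis and reassembles the data into an element of $\cS_n(U(B),\mathbbm{v})$. The main obstacle is verifying that this composition is the identity on basis elements; this reduces to checking the Hopf-algebraic compatibility between the splitting maps and the comultiplication of $\cO_q(\SL(n))$, which is an algebraic bookkeeping exercise in the quantum coordinate ring rather than a further topological difficulty.
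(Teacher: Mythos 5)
Your part (1) is almost right but, as written, assumes the unique ideal point of each component of $\mathrm{Cut}_B\Sigma$ is a \emph{boundary} puncture. The definition of a saturated system only demands one ideal point per component, and that point may be an interior puncture of $\Sigma$; maximality then makes that component a disk with one interior puncture, which has $\chi=0$, not a ``half-plane with a puncture at infinity.'' Consequently your identities ``number of components $=\chi(\mathrm{Cut}_B\Sigma)=\#(\partial\Sigma)$'' fail: for the disk with one boundary and one interior puncture one has $r=1$, two components, $\chi(\mathrm{Cut}_B\Sigma)=1$, $\#(\partial\Sigma)=1$. This case cannot be discarded, since the theorem is used in this paper precisely for essentially bordered surfaces that may have interior punctures (Theorem \ref{thm;azumaya}). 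The count is repairable — components whose ideal point is a boundary puncture contribute $\chi=1$ and are in bijection with the boundary punctures, i.e.\ with the components of $\partial\Sigma$, while components with an interior ideal point contribute $0$ — but your argument needs this extra case to be stated and handled.

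Part (2) has more serious gaps. First, the splitting map $\Theta_e$ is only defined for \emph{ideal} arcs in the interior of $\Sigma$; the $b_i$ are compact arcs with endpoints on $\partial\Sigma$, so ``$\Theta_B$'' does not exist, and even a cutting homomorphism $\cS_n(\Sigma,\mathbbm{v})\to\cS_n(\mathrm{Cut}_B\Sigma,\mathbbm{v})$ points the wrong way for surjectivity: there is no ``regluing'' homomorphism from the cut surface back to $\Sigma$, so producing a representative in the cut pieces does not exhibit a preimage in $U(B)$. Second, surjectivity is not a matter of isotopy: in an annulus with one puncture on each boundary circle, essential arcs that wind around the core are not isotopic into $U(B)$; one must use the boundary relations (such as \eqref{eq:sticking} and \eqref{wzh.seven}) to rewrite arbitrary stated webs as combinations of products of stated arcs each supported in $U(B)$, and this spanning statement is genuine skein-theoretic work, not a consequence of resolving crossings and vertices inside the cut components. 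Third, your injectivity step is where the real content lies: the complementary pieces are once-punctured disks, not bigons, so Theorem \ref{Hopf} gives you no identification of their skein modules with $\cO_q(\SL(n))$, and the ``readout map onto coefficients'' presupposes exactly the kind of basis/freeness theorem for stated skein modules that the statement encodes; it is not an algebraic bookkeeping exercise. Note also that this paper does not reprove Theorem \ref{key} — it is imported from \cite{LS21}, where the proof goes through the spanning-by-stated-arcs and linear-independence (basis) results for stated skein modules of essentially bordered surfaces rather than through a cut-and-reglue argument of the kind you sketch.
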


\section{Quantum tori in higher Teichm\"uller theory}
\label{sec-quantumtrace}
\subsection{Quantum tori and monomial subalgebras}
Assume $m$ is a positive integer, and $\mathsf{P}$ is an $m\times m$ anti-symmetric matrix. For a ground ring $\cR$, the \textbf{quantum torus} $\mathbb{T}(\mathsf{P})$ associated with $\mathsf{P}$ is defined as 
$$\mathbb{T}(\mathsf{P}) := \cR\langle x_1^{\pm 1},x_2^{\pm 2},\cdots,x_m^{\pm 1} \rangle/(x_ix_j = \hat{q}^{2\mathsf{P}_{ij}}x_jx_i,1\leq i,j\leq m).$$
For $\mathbf{k} = (k_1,\cdots,k_m)\in\mathbb{Z}^m$, we define
\begin{align}
x^{\mathbf{k}} := [x_1^{k_1}x_2^{k_2}\cdots x_m^{k_m}]_{\rm Weyl} = 
\hat{q}^{-\sum_{1\leq i<j\leq m} k_ik_j\mathsf{P}_{ij}}x_1^{k_1}x_2^{k_2}\cdots x_m^{k_m},\label{eq:Weyl}
\end{align}
called the Weyl normalization. 
Then $\{x^{\mathbf{k}}\mid \mathbf{k}\in\mathbb{Z}^m\}$
is an $\cR$-basis of $\bT(\mathsf{P})$.
For $\mathbf{k}_1,\mathbf{k}_2\in\mathbb{Z}^m$, we have 
\begin{align}
x^{\mathbf{k}_1}x^{\mathbf{k}_2} := \hat{q}^{\langle \mathbf{k}_1,\mathbf{k}_2\rangle_{\mathsf{P}}}
x^{\mathbf{k}_1+\mathbf{k}_2},\quad 
x^{\mathbf{k}_1}x^{\mathbf{k}_2} = \hat{q}^{2\langle \mathbf{k}_1,\mathbf{k}_2\rangle_{\mathsf{P}}} 
x^{\mathbf{k}_2}x^{\mathbf{k}_1},\label{eq;q-comm}
\end{align}
with $\langle \mathbf{k}_1,\mathbf{k}_2\rangle_{\mathsf{P}} := \mathbf{k}_1 \mathsf{P} \mathbf{k}_2^{T}$, where we regard $\mathbf{k}_i$ as row vectors and $\mathbf{k}_2^{T}$ denotes the transpose of $\mathbf{k}_2$.

We define $\mathbb T_{+}(\mathsf P)$ to be the $\cR$-submodule of 
$\mathbb T(\mathsf P)$ generated by $x^{{\bf k}}$ for ${\bf k}\in \mathbb N^m$, i.e. 
$$\mathbb{T}_{+}(\mathsf{P}) := \cR\langle x_1,x_2,\cdots,x_m \rangle/(x_ix_j = \hat{q}^{2\mathsf{P}_{ij}}x_jx_i,1\leq i,j\leq m).$$

Let $\Lambda\subset\mathbb Z^m$ be a submonoid, i.e., the subset $\Lambda$ is closed under the addition. Then, 
$$\bT(\sfP;\Lambda):=\cR\text{-span}\{x^{\bf k}\mid {\bf k}\in\Lambda\}$$ is a subalgebra of $\mathbb T(\mathsf{P})$, called the \textbf{$\Lambda$-monomial subalgebra} of $\bT(\sfP)$.

Let $\mathcal{Z}(\mathbb{T}(\mathsf{P}))$ denote the center of $\mathbb{T}(\mathsf{P})$.
\begin{lem}\label{quantum}
    (a) For generic $\hat{q}$, we have 
    $\mathcal{Z}(\mathbb{T}(\mathsf{P})) = \cR\text{-span}\{x^{{\bf k}}\mid \langle {\bf k},{\bf k}'\rangle_{\mathsf{P}}=0,\forall {\bf k}'\in\mathbb{Z}^m  \}.$ 

    (b) If $\hat{q}^2$ is a root of unity of order $d$, we have 
    $\mathcal{Z}(\mathbb{T}(\mathsf{P})) = \cR\text{-span}\{x^{\bf k}\mid \langle {\bf k},{\bf k}'\rangle_{\mathsf{P}}=0\in \mathbb{Z}_d,\forall {\bf k}'\in\mathbb{Z}^m  \},$ 
    where $\mathbb{Z}_d:=\mathbb{Z}/d\mathbb{Z}$.
\end{lem}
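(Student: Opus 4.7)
The plan is to exploit the fact, already recorded right before the lemma, that $\{x^{\bf k}\mid {\bf k}\in \mathbb Z^m\}$ is an $\cR$-basis of $\bT(\sfP)$ together with the $q$-commutation rule
$x^{\mathbf{k}_1}x^{\mathbf{k}_2} = \hat{q}^{2\langle \mathbf{k}_1,\mathbf{k}_2\rangle_{\mathsf{P}}} x^{\mathbf{k}_2}x^{\mathbf{k}_1}$ from \eqref{eq;q-comm}. Writing a general element as a finite sum $z = \sum_{\bf k} c_{\bf k}\, x^{\bf k}$ with $c_{\bf k}\in\cR$ and using this basis to separate coefficients, centrality can be tested one monomial at a time against each generator $x_{i}$, equivalently against each $x^{{\bf k}'}$ with ${\bf k}'\in\mathbb Z^m$.

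First I would prove the two easy inclusions $\supseteq$: if $\langle {\bf k},{\bf k}'\rangle_{\mathsf P}=0$ in $\cR^{\times}$-exponents (i.e., equal to $0$ in case (a), or $0$ mod $d$ in case (b)) for every ${\bf k}'\in\mathbb Z^m$, then $\hat q^{2\langle {\bf k},{\bf k}'\rangle_{\mathsf P}}=1$ for all such ${\bf k}'$, so $x^{\bf k}$ commutes with every $x^{{\bf k}'}$. Since the $x^{{\bf k}'}$ span $\bT(\sfP)$, the element $x^{\bf k}$ is central, and so is any $\cR$-linear combination of such monomials.

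Next I would prove $\subseteq$. Suppose $z=\sum_{\bf k} c_{\bf k}\, x^{\bf k}$ is central. For any ${\bf k}'\in \mathbb Z^m$, compute
\[
x^{{\bf k}'} z - z\,x^{{\bf k}'} \;=\; \sum_{\bf k} c_{\bf k}\,\bigl(1-\hat q^{2\langle {\bf k},{\bf k}'\rangle_{\mathsf P}}\bigr)\,x^{{\bf k}'} x^{\bf k},
\]
using \eqref{eq;q-comm} to move $x^{\bf k}$ past $x^{{\bf k}'}$. Because $x^{{\bf k}'}x^{\bf k}$ is, up to a nonzero scalar from $\eqref{eq;q-comm}$, the basis element $x^{{\bf k}+{\bf k}'}$, the family $\{x^{{\bf k}'}x^{\bf k}\}_{\bf k}$ is $\cR$-linearly independent. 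Since $\cR$ is a domain, the vanishing of the displayed expression forces $c_{\bf k}\bigl(1-\hat q^{2\langle {\bf k},{\bf k}'\rangle_{\mathsf P}}\bigr)=0$ for each ${\bf k}$ and each ${\bf k}'$. Therefore, for every ${\bf k}$ with $c_{\bf k}\neq 0$, one has $\hat q^{2\langle {\bf k},{\bf k}'\rangle_{\mathsf P}}=1$ for all ${\bf k}'\in\mathbb Z^m$.

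Finally I translate the relation $\hat q^{2N}=1$: in case (a), ``generic $\hat q$'' means $\hat q^{2N}=1$ implies $N=0$, yielding $\langle {\bf k},{\bf k}'\rangle_{\mathsf P}=0$ in $\mathbb Z$; in case (b), $\hat q^{2}$ is of order $d$, so $\hat q^{2N}=1$ iff $d\mid N$, yielding $\langle {\bf k},{\bf k}'\rangle_{\mathsf P}\equiv 0$ in $\mathbb Z_d$. This gives the desired descriptions of the center. There is really no obstacle here: the only point requiring care is the linear-independence step, which is why I would state and use explicitly that the $x^{\bf k}$ form an $\cR$-basis, and that $\cR$ is a domain so that $c_{\bf k}\bigl(1-\hat q^{2N}\bigr)=0$ forces one of the factors to vanish.
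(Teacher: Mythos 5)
Your proposal is correct and follows essentially the same route as the paper: both test centrality of $\sum_{\bf k} c_{\bf k}x^{\bf k}$ against each $x^{{\bf k}'}$, use the $q$-commutation relation \eqref{eq;q-comm} together with the fact that $\{x^{\bf k}\}$ is an $\cR$-basis (and $\cR$ is a domain) to force $\hat q^{2\langle{\bf k},{\bf k}'\rangle_{\mathsf P}}=1$ whenever $c_{\bf k}\neq 0$, and then translate that condition according to whether $\hat q$ is generic or a root of unity. Your write-up simply makes explicit the linear-independence and coefficient-separation steps that the paper leaves implicit.
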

\begin{proof}
    It is obvious that these special elements in (a) or (b) live in the center of $\mathbb{T}(\mathsf{P})$. Then it suffices to show $\mathcal{Z}(\mathbb{T}(\mathsf{P}))$ is actually the $\cR$-span of the elements in (a) or (b).

    Suppose $\sum_{\ttk\in\Lambda}c_{\ttk} x^{\ttk}\in \mathcal{Z}(\mathbb{T}(\mathsf{P}))$, where $\Lambda$ is a finite subset of $\bZ^m$ and $c_{\ttk}\neq 0$ for all $\ttk\in \Lambda$. For any $\ttk'\in\bZ^m$, from \eqref{eq;q-comm}, we have $$(\sum_{\ttk\in\Lambda}c_{\ttk} x^{\ttk})x^{\ttk'} = 
    x^{\ttk'}(\sum_{\ttk\in\Lambda}c_{\ttk} \hat{q}^{2\langle \mathbf{k},\mathbf{k}'\rangle_{\mathsf{P}} } x^{\ttk})$$
    Thus $(\sum_{\ttk\in\Lambda}c_{\ttk} x^{\ttk})x^{\ttk'} = x^{\ttk'}(\sum_{\ttk\in\Lambda}c_{\ttk} x^{\ttk})$ implies $\hat{q}^{2\langle \mathbf{k},\mathbf{k}'\rangle_{\mathsf{P}} } = 1$. This completes the proof.
\end{proof}

\begin{lem}\label{PI}
    From Lemma \ref{quantum},  
    $\mathcal Z(\mathbb T(\mathsf{P})) = \{x^{\mathbf{k}}\mid \mathbf{k}\in\Lambda\}$, where $\Lambda$ is a submonoid of $\mathbb Z^m$. Suppose $\mathbb Z^m/\Lambda=\{ \mathbf{k}+\Lambda\mid \mathbf{k}\in S\subset \mathbb Z^m\}$. 
    
    (a) $\{ x^\mathbf{k}\mid \mathbf{k}\in S\}$ is a free basis of $\mathbb T(\mathsf{P})$ over the center $\mathcal Z(\mathbb T(\mathsf{P}))$.

    (b) If $|S|=|\frac{\mathbb Z^m}{\Lambda}|$ is finite, then the rank of $\mathbb T(\mathsf{P})$ as a $\mathcal Z(\mathbb T(\mathsf{P}))$-module is $|\frac{\mathbb Z^m}{\Lambda}|$.
\end{lem}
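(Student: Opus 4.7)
The plan is to use the coset decomposition of $\bZ^m$ with respect to $\Lambda$ together with the quasi-commutation rule \eqref{eq;q-comm}, which is the only ingredient needed. Since by hypothesis every $\ttk\in\bZ^m$ can be written uniquely as $\ttk = \ttk_0 + \ttl$ with $\ttk_0\in S$ and $\ttl\in\Lambda$, and since $x^{\ttl}\in \mathcal Z(\bT(\sfP))$ for every $\ttl\in\Lambda$ by the description given by \cref{quantum}, I first want to show that each basis element $x^{\ttk}$ of $\bT(\sfP)$ over $\cR$ is a $\mathcal Z(\bT(\sfP))$-multiple of some $x^{\ttk_0}$ with $\ttk_0\in S$.

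More precisely, applying the first identity in \eqref{eq;q-comm} gives
\begin{equation*}
x^{\ttk} \;=\; x^{\ttk_0+\ttl} \;=\; \hat q^{-\langle \ttk_0,\ttl\rangle_{\sfP}}\, x^{\ttk_0}\, x^{\ttl},
\end{equation*}
which lies in $x^{\ttk_0}\cdot \mathcal Z(\bT(\sfP))$. Since $\{x^{\ttk}\mid \ttk\in\bZ^m\}$ $\cR$-spans $\bT(\sfP)$, this proves that $\{x^{\ttk_0}\mid \ttk_0\in S\}$ spans $\bT(\sfP)$ over $\mathcal Z(\bT(\sfP))$.

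For linear independence, suppose $\sum_{\ttk_0\in S'} z_{\ttk_0}\, x^{\ttk_0} = 0$ for some finite $S'\subset S$ and some $z_{\ttk_0}\in \mathcal Z(\bT(\sfP))$. Writing each $z_{\ttk_0} = \sum_{\ttl\in \Lambda'} c_{\ttk_0,\ttl}\, x^{\ttl}$ for some finite $\Lambda'\subset\Lambda$ and $c_{\ttk_0,\ttl}\in\cR$, and using \eqref{eq;q-comm} again, the relation becomes
\begin{equation*}
\sum_{\ttk_0\in S',\,\ttl\in\Lambda'} c_{\ttk_0,\ttl}\, \hat q^{\langle \ttl,\ttk_0\rangle_{\sfP}}\, x^{\ttl+\ttk_0} \;=\; 0.
\end{equation*}
The uniqueness of the coset decomposition $\ttk = \ttk_0+\ttl$ implies that the exponents $\ttl+\ttk_0$ are pairwise distinct as $(\ttk_0,\ttl)$ varies. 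Since $\{x^{\ttk}\mid \ttk\in \bZ^m\}$ is a free $\cR$-basis of $\bT(\sfP)$ and $\hat q$ is invertible, we conclude $c_{\ttk_0,\ttl}=0$ for all $(\ttk_0,\ttl)$, hence $z_{\ttk_0}=0$ for all $\ttk_0\in S'$. This finishes (a), and (b) is immediate since a free module of basis cardinality $|S|=|\bZ^m/\Lambda|<\infty$ has rank exactly $|\bZ^m/\Lambda|$.

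There is no real obstacle here; the entire argument is essentially bookkeeping with Weyl-normalized monomials, and the only genuine input is that $S$ is a set of coset representatives (so that sums $\ttk_0+\ttl$ are distinct) together with the centrality of $x^{\ttl}$ for $\ttl\in\Lambda$ guaranteed by \cref{quantum}.
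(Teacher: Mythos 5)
Your proposal is correct and follows essentially the same route as the paper: the spanning step is the same coset decomposition (which the paper dismisses as obvious, while you spell out the $\hat q$-power bookkeeping), and the linear-independence step is identical — expand each central coefficient in the monomials $x^{\ttl}$, $\ttl\in\Lambda$, combine via \eqref{eq;q-comm}, and use the distinctness of the exponents $\ttl+\ttk_0$ together with the freeness of the Weyl-normalized monomial basis over $\cR$. The only cosmetic difference is that you retain the invertible $\hat q$-factors explicitly, which the paper's proof silently absorbs.
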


\begin{proof}
    (a) Obviously, $\{ x^\mathbf{k}\mid \mathbf{k}\in S\}$ spans $\mathbb T(\mathsf{P})$ over  $\mathcal Z(\mathbb T(\mathsf{P}))$. Suppose 
    $\sum_{\mathbf{k}\in S_0}f_{\mathbf{k}} x^{\mathbf{k}} =0$,
    where $S_0$ is a finite subset of $S$ and $f_{\mathbf{k}}\in \mathcal Z(\mathbb T(\mathsf{P}))$ for every $\mathbf{k}\in S_0$.
    For each $\mathbf{k}\in S_0$, we suppose 
    $f_{\mathbf{k}} = \sum_{\mathbf{u}\in\Lambda_{\mathbf{k}}} l_{\mathbf{k},\mathbf{u}} x^{\mathbf{u}}$, where $\Lambda_{\mathbf{k}}$ is a finite subset of $\Lambda$ and $l_{\mathbf{k},\mathbf{u}} \in\cR$. Then 
    $$0=\sum_{\mathbf{k}\in S_0}f_{\mathbf{k}} x^{\mathbf{k}} =
    \sum_{\mathbf{k}\in S_0} \sum_{\mathbf{u}\in \Lambda_{\mathbf{k}}}l_{\mathbf{k},\mathbf{u}}  x^{\mathbf{u}+\mathbf{k}}.$$
    Since $\mathbf{u}+\mathbf{k}$ are distinct with each other, we have 
    $l_{\mathbf{k},\mathbf{u}} =0$ for $\mathbf{k}\in S_0$ and $\mathbf{u}\in \Lambda_{\mathbf{k}}$. 
    Thus $f_{\mathbf{k}} = 0$ for $\mathbf{k}\in S_0$.

    (b) The claim follows from (a).
\end{proof}

\subsection{Quantum (torus) frames}

Let $A$ be an $\cR$-domain and let $S \subset A$ be a subset of non-zero elements. Let $\mathsf{Pol}(S)$ be the $\cR$-subalgebra of $A$ generated by $S$ and $\mathsf{LPol}(S)$ be the set of all $a \in A$ such that there is an $S$-monomial $m$ satisfying $am \in \mathsf{Pol}(S)$. If $A = \mathsf{LPol}(S)$ we say $S$ \textbf{weakly generates} $A$.

\begin{dfn}[\cite{LY23}]
For an $\cR$-domain $A$, consider a finite set $S = \{a_1,\cdots,a_r\} \subset A$. We call $S$ a \textbf{quantum torus frame} for $A$ if $S$ satisfies the following conditions; 
\begin{enumerate}
    \item $S$ is $q$-commuting and each element of $S$ is non-zero, 
    \item $S$ weakly generates $A$, 
    \item the set $\{a_1^{n_1}\cdots a_r^{n_r}\mid  n_i \in \bN\}$ is $\cR$-linearly independent.
\end{enumerate}
\end{dfn}

\subsection{Weight lattice of the Lie algebra $\mathfrak{sl}_n(\bC)$}
Let $\mathbbm{J}=\{1,2,\dots, n\}$. 
For a subset $\mathbf{i}=\{i_1,i_2,\dots, i_k\}\subset \mathbbm{J}$, 
let $\bar{\mathbf{i}}=\{\bar{i}_1,\bar{i}_2,\dots, \bar{i}_k\}$ and 
$\bar{\mathbf{i}}^c=\mathbbm{J}\setminus \bar{\mathbf{i}}$. 

Let $\mathsf{L}$ be the weight lattice of the Lie algebra $\mathfrak{sl}_n(\bC)$, i.e., the free abelian group generated by $\mathsf{w}_1,\cdots, \mathsf{w}_n$, modulo $\mathsf{w}_1+\cdots+ \mathsf{w}_n=0$. 

We have a symmetric bilinear form defined by 
\begin{align}
\langle \mathsf{w}_i, \mathsf{w}_j\rangle:=\delta_{ij}-1/n. \label{eq;bracket}   
\end{align}

For an edge of $\partial \Sigma$ and a stated $n$-web diagram $\alpha$, define
\begin{align}
\mathbbm{d}_{e}(\alpha)=\sum_{x\in \partial\alpha\cap e}\mathsf{w}^{\ast}_{\overline{s(x)}}\in \mathsf{L},
\end{align}
where $s(x)$ is the state at $x$, and 
$\mathsf{w}_{\overline{s(x)}}^{\ast}=\begin{cases}
\mathsf{w}_{\overline{s(x)}}  & \text{if $\alpha$ points out of $\Sigma$ at $x$},\\
-\mathsf{w}_{s(x)} & \text{if $\alpha$ points into $\Sigma$ at $x$}.
\end{cases}$

\subsection{$n$-triangulation}

\begin{figure}[h]
    \centering
    \includegraphics[width=260pt]{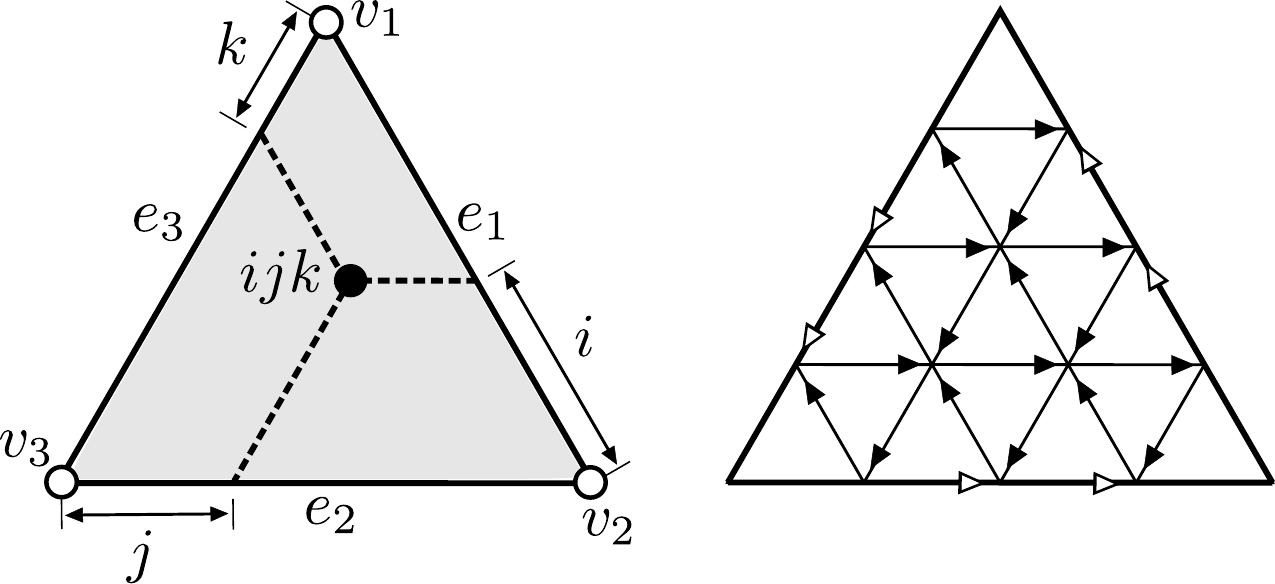}
    \caption{Barycentric coordinates $ijk$ and a $4$-triangulation with its quiver}\label{Fig;coord_ijk}
\end{figure}

Consider barycentric coordinates for $\bP_3$ so that
\begin{equation}
\bP_3=\{(i,j,k)\in\bR^3\mid i,j,k\geq 0,\ i+j+k=n\}\setminus\{(0,0,n),(0,n,0),(n,0,0)\}, 
\end{equation}
where $(i,j,k)$ (or $ijk$ for simplicity) are the barycentric coordinates. 
Let $v_1=(n,0,0)$, $v_2=(0,n,0)$, $v_3=(0,0,n)$. 
Let $e_i$ denote the edge on $\partial \bP_3$ whose endpoints are $v_i$ and $v_{i+1}$. 
See Figure \ref{Fig;coord_ijk}.

The \textbf{$n$-triangulation} of $\bP_3$ is the subdivision of $\bP_3$ into $n^2$ small triangles using lines $i,j,k=\text{constant integers}$. 
For the $n$-triangulation of $\bP_3$, the vertices and edges of all small triangles, except for $v_1,v_2,v_3$ and the small edges adjacent to them, form a quiver $\Gamma_{\bP_3}$.
An \textbf{arrow} is the direction of a small edge defined as follows. If a small edge $e$ is in the boundary $\partial\bP_3$ then $e$ has the counterclockwise direction of $\partial \bP_3$. If $e$ is in the interior then its direction is the same with that of a boundary edge of $\bP_3$ parallel to $e$. Assign weight $1$ to any boundary arrow and weight $2$ to any interior arrow.

The set $\barV=\barV_{\bP_3}$ 
of points with integer barycentric coordinates of $\bP_3$:
\begin{align}
\barV=\barV_{\bP_3} = \{ijk \in \bP_3 \mid i, j, k \in \bZ\}.
\end{align}
Each element of $\barV$ is called a \textbf{small vertex}. 

\subsection{The (extended) Fock--Goncharov algebra and balanced part}\label{subsec:FGalg}
\def\barX{\overline{\cX}}

The triangle $\bP_3$ has a unique triangulation consisting of the 3 boundary edges up to isotopy, and we will also use ${\bP_3}$ to denote the triangulation. 

By cutting $\Sigma$ along all edges not isotopic to a boundary edge, we have a disjoint union of ideal triangles. Each triangle is called a \textbf{face} of $\lambda$. Let $\cF_\lambda$ denote the set of faces. Then
\begin{equation}\label{eq.glue}
\Sigma = \Big( \bigsqcup_{\tau\in\cF_\lambda} \tau \Big) /\sim,
\end{equation}
where each face $\tau$ is a copy of $\bP_3$, and $\sim$ is the identification of edges of the faces to recover $\lambda$. 
Each face $\tau$ is characterized by a \textbf{characteristic map} $f_\tau\colon \tau \to \Sigma$, which is a homeomorphism when we restrict $f_\tau$ to $\Int\tau$ or the interior of each edge of $\tau$.

An \textbf{$n$-triangulation} of $\lambda$ is a collection of $n$-triangulations of the faces $\tau$ which are compatible with the gluing $\sim$,  where compatibility means, for any edges $b$ and $b'$ glued via $\sim$, the vertices on $b$ and $b'$ are identified. Consider the \textbf{reduced vertex set}
$$\barV_\lambda=\bigcup_{\tau\in\cF_\lambda}\barV_\tau, \quad \barV_\tau=f_\tau(\barV).$$
Each element of $\barV_\lambda$ is also called a \textbf{small vertex}. 
The images of the weighted quivers $\Gamma_\tau$ by $f_\tau$ form a quiver $\Gamma_\lambda$ on $\Sigma$.
Note that when edges $b$ and $b'$ are glued, a small edge on $b$ and the corresponding small edge of $b'$ have opposite directions, i.e., the resulting arrows are of weight $0$.

Let $\barQ_\lambda\colon \barV_\lambda\times \barV_\lambda \to \bZ$ be the signed adjacency matrix of the weighted quiver $\Gamma_\lambda$ defined by 
\begin{equation}
\barQ_\lambda(v,v') = \begin{cases} w \quad & \text{if there is an arrow from $v$ to $v'$ of weight $w$},\\
0 &\text{if there is no arrow between $v$ and $v'$}, 
\end{cases}
\end{equation}
especially we use $\barQ_{\bP_3}$ to denote $\barQ_\lambda$ when $\Sigma=\bP_3$. 

The ($n$-th root version) \textbf{Fock-Goncharov algebra} is the quantum torus of $\barQ_\lambda$:
\begin{equation}
\barX_{\hat{q}}(\Sigma,\lambda)= \bT(\barQ_\lambda) = \cR \langle x_v^{\pm 1}, v \in \barV_\lambda \rangle / (x_v x_{v'}= \hat{q}^{\, 2 \barQ_\lambda(v,v')} x_{v'}x_v \text{ for } v,v'\in \barV_\lambda ).
\end{equation}

There is another interpretation of $\barX_{\hat{q}}(\Sigma,\lambda)$ as follows. For the tensor product algebra
\begin{equation}\label{eq.Xlambda}
\widetilde{\cX}_{\hat{q}}(\Sigma,\lambda):= \bigotimes_{\tau\in \cF_\lambda} \barX_{\hat{q}}(\tau)
= \bigotimes_{\tau\in \cF_\lambda} \bT(\barQ_\tau)
= \bT\Big( \bigoplus_{\tau\in \cF_\lambda} \barQ_\tau \Big), 
\end{equation}
where $\barX_{\hat{q}}(\Sigma,\lambda)$ can be seen as the $\cR$-submodule of $\widetilde{\cX}_{\hat{q}}(\Sigma,\lambda)$ spanned by $x^{\bk}$ with $\bk({v'}) = \bk({v''})$ for any $v'$ and $v''$ glued in the identification \eqref{eq.glue}.

Consider a matrix $M_{\bP_3}\colon \barV\times\barV\to\bZ$ associated to ${\bP_3}$, and the induced map $f_\tau\colon \barV\to\barV_\lambda$ by the characteristic map. Define the \textbf{zero-extension} 
$M_\tau\colon\barV_\lambda\times\barV_\lambda\to\bZ$ of $M_{\bP_3}$ by
\begin{equation}
M_\tau(u,v)=\sum_{u'\in f_\tau^{-1}(u)}\sum_{v'\in f_\tau^{-1}(v)} M_{\bP_3}(u',v').
\end{equation}

When we see $\Gamma_\lambda$ as a union of copies of $\Gamma_{\bP_3}$, $\barQ_\lambda$ can be regarded as
\begin{equation}\label{eq-Q-def}
\barQ_\lambda = \sum_{\tau\in\cF_\lambda}\barQ_\tau,
\end{equation}
where $\barQ_\tau$ is the zero-extension of $\barQ_{\bP_3}$.

We attach an triangle $\bP_3$ to each boundary edge of $\Sigma$, and let $\Sigma^\ast$ be the resulting surface. See Figure \ref{Fig;attaching}.
Suppose the attaching edge is $e_1$ of $\bP_3$. 
For an ideal triangulation $\lambda$ of $\Sigma$, let $\lambda^\ast$ be the ideal triangulation of $\Sigma^\ast$ whose restriction to $\Sigma$ is $\lambda$. 
Let $\barV_{\lambda^\ast}$ denote the reduced vertex set of the extended $n$-triangulation. 
The $X$-vertex set $V_\lambda\subset\barV_{\lambda^\ast}$ is the subset of all small vertices not on $e_3$ in the attached triangles. 
The $A$-vertex set  $V'_\lambda\subset\barV_{\lambda^\ast}$ is the subset of all small vertices not on $e_2$ in the attached triangles. 

\begin{lem}[{\cite[Lemma 11.2]{LY23}}]\label{lem:cardinarity}
Let $\Sigma$ be a triangulable pb surface and $\lambda$ be a triangulation of $\Sigma$. Let $\binom{n}{2}:=\frac{n(n-1)}{2}$ and $r(\Sigma)=\# \partial\Sigma- \chi(\Sigma)$, where $\chi(\Sigma)$ is the Euler characteristic.
Then we have 
\begin{align}
(a)\ |V_\lambda|=(n^2-1)r(\Sigma),\qquad (b)\ |\barV_\lambda|=(n^2-1)r(\Sigma)-\binom{n}{2}(\#\partial\Sigma). \label{eq:cardinarity}
\end{align}
\end{lem}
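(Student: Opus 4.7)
The plan is to compute $|\barV_\lambda|$ by summing per-face contributions and correcting for identifications along interior edges, and then use Euler's formula to rewrite the result in terms of $r(\Sigma)$ and $\#\partial\Sigma$; part (a) will then follow from (b) by analyzing what the triangles attached to form $\lambda^\ast$ contribute.

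For part (b), I would first count the small vertices in a single triangle $\tau\simeq\bP_3$: lattice points $(i,j,k)$ with $i,j,k\in\bN$ and $i+j+k=n$, minus the three corners, giving $|\barV_\tau| = \binom{n+2}{2} - 3 = \tfrac{(n-1)(n+4)}{2}$, among which exactly $n-1$ sit in the interior of each edge and $\binom{n-1}{2}$ in the open interior of $\tau$. When two faces of $\lambda$ share an interior edge, their $n-1$ small vertices along that edge are identified by $\sim$ in \eqref{eq.glue}, while small vertices on a boundary edge of $\Sigma$ or in the interior of a face are not identified with anything. Writing $F = |\cF_\lambda|$ and $E_{\mathrm{int}}$ for the number of interior edges of $\lambda$, this yields
\[
|\barV_\lambda| \;=\; F\cdot\tfrac{(n-1)(n+4)}{2} \;-\; (n-1)\,E_{\mathrm{int}}.
\]

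The next step is to express $F$ and $E_{\mathrm{int}}$ purely in terms of $r(\Sigma)$ and $\#\partial\Sigma$. Double-counting the sides of triangles gives $3F = 2E_{\mathrm{int}} + \#\partial\Sigma$. For the Euler characteristic, I would treat the ideal triangulation as a CW structure on $\overline\Sigma$ with the punctures as $0$-cells, so $\chi(\overline\Sigma) = (V_{\mathrm{int}}+V_\partial) - (E_{\mathrm{int}}+E_\partial) + F$, where $V_{\mathrm{int}}$, $V_\partial$ are the numbers of interior and boundary punctures; combining this with $V_\partial = E_\partial = \#\partial\Sigma$ (each boundary puncture is the endpoint of exactly two boundary edges, themselves in bijection with the components of $\partial\Sigma$) and with $\chi(\Sigma) = \chi(\overline\Sigma) - V_{\mathrm{int}}$ (removing a boundary puncture leaves the local homotopy type of a half-disk unchanged, whereas removing an interior puncture drops $\chi$ by one), one obtains $\chi(\Sigma) = F - E_{\mathrm{int}}$, hence $F = \#\partial\Sigma - 2\chi(\Sigma) = 2r(\Sigma) - \#\partial\Sigma$ and $E_{\mathrm{int}} = \tfrac{3F-\#\partial\Sigma}{2}$. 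Substituting into the display above and simplifying gives $|\barV_\lambda| = \tfrac{n-1}{2}\bigl[(n+1)F + \#\partial\Sigma\bigr] = (n^2-1)r(\Sigma) - \binom{n}{2}\#\partial\Sigma$, proving (b).

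For (a), I would compare $\barV_\lambda$ with $V_\lambda \subset \barV_{\lambda^\ast}$ by analyzing the $\#\partial\Sigma$ triangles attached in $\Sigma^\ast$. Each attached triangle shares its attaching edge $e_1$ with a boundary edge of $\Sigma$, so its $n-1$ small vertices on $e_1$ are already in $\barV_\lambda$ and it contributes $\tfrac{(n-1)(n+4)}{2} - (n-1) = \tfrac{(n-1)(n+2)}{2}$ genuinely new small vertices to $\barV_{\lambda^\ast}$. Excluding the $n-1$ small vertices on $e_3$ of each attached triangle (per the definition of $V_\lambda$) subtracts $(n-1)\#\partial\Sigma$, yielding
\[
|V_\lambda| \;=\; |\barV_\lambda| + \bigl[\tfrac{(n-1)(n+2)}{2} - (n-1)\bigr]\#\partial\Sigma \;=\; |\barV_\lambda| + \binom{n}{2}\#\partial\Sigma \;=\; (n^2-1)\,r(\Sigma),
\]
which is (a). The main obstacle I expect is getting the Euler-characteristic bookkeeping right — in particular the precise relationship between $V - E + F$ for $\overline\Sigma$, the distinct roles of interior versus boundary punctures, and the resulting identity $\chi(\Sigma) = F - E_{\mathrm{int}}$ — since once that is in place the remainder is direct counting.
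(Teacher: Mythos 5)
Your argument is correct, and there is nothing in the paper to compare it against: the paper does not prove this lemma but imports it verbatim from [LY23, Lemma 11.2]. Your proof is the expected self-contained counting argument. The key numerical inputs all check out: each face contributes $\binom{n+2}{2}-3=\tfrac{(n-1)(n+4)}{2}$ small vertices, split as $n-1$ per open edge and $\binom{n-1}{2}$ interior ones; gluing along each interior edge identifies $n-1$ pairs (this works with multiplicity even for self-folded triangles, since the two sides are distinct edges of the disjoint union $\bigsqcup_\tau\tau$); the side count $3F=2E_{\mathrm{int}}+\#\partial\Sigma$ and the bookkeeping $V_\partial=E_\partial=\#\partial\Sigma$, $\chi(\Sigma)=\chi(\overline\Sigma)-V_{\mathrm{int}}$ give $\chi(\Sigma)=F-E_{\mathrm{int}}$, hence $F=2r(\Sigma)-\#\partial\Sigma$, and the algebra yields (b). For (a), each of the $\#\partial\Sigma$ attached triangles shares its $e_1$-vertices with $\barV_\lambda$ and, after deleting its $e_3$-vertices, adds exactly $\tfrac{(n-1)(n+2)}{2}-(n-1)=\binom{n}{2}$ new small vertices, so $|V_\lambda|=|\barV_\lambda|+\binom{n}{2}\#\partial\Sigma=(n^2-1)r(\Sigma)$. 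This is in line with how the count is done in [LY23], so your proposal can stand as a proof of the cited fact.
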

\begin{figure}[h]
    \centering
    \includegraphics[width=90pt]{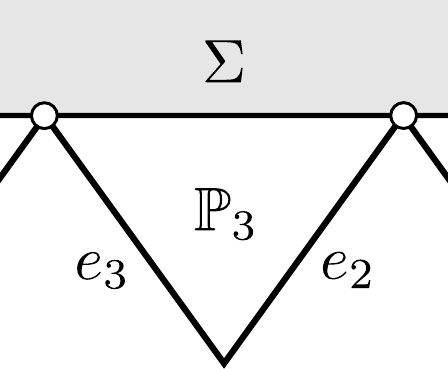}
    \caption{Attaching triangles.}\label{Fig;attaching}
\end{figure}

Let $\sfQ_\lambda\colon V_\lambda \times V_\lambda \to \bZ$ be the restriction of $ \barQ_{\lambda^\ast}\colon \barV_{\lambda^\ast} \times \barV_{\lambda^\ast} \to \bZ$. The \textbf{extended $X$-algebra} is defined as
$$\cX_{\hat{q}}(\Sigma,\lambda) = \bT(\sfQ_\lambda).$$
There is a natural identification of subalgebras $\barX_{\hat{q}}(\Sigma,\lambda)\subset\cX_{\hat{q}}(\Sigma,\lambda)\subset\barX_{\hat{q}}(\Sigma^\ast,\lambda^\ast)$.

Let $\proj_i\colon \barV \to\bZ\ (i=1,2,3)$ be the functions defined by
\begin{equation}
\proj_1(ijk)=i,\quad \proj_2(ijk)=j,\quad \proj_3(ijk)=k.\label{def:proj}
\end{equation}
Note that in \cite{LY23} they use $\bk_i$ instead of $\proj_i$. 
Let $\overline{\Lambda}_{\bP_3}\subset\bZ^{\barV}$ be the subgroup generated by $\proj_1,\proj_2,\proj_3$ and $(n\bZ)^{\barV}$. Elements in $\overline{\Lambda}_{\bP_3}$ are called \textbf{balanced}.

A vector $\bk\in\bZ^{\barV_\lambda}$ is \textbf{balanced} if its pullback $f_\tau^\ast\bk$ to ${\bP_3}$ is balanced for every face of $\lambda$, where for every face $\tau$ and its characteristic map  $f_\tau\colon\tau\to\Sigma$, the pullback $f_\tau^\ast\bk$ is a vector $\barV\to\bZ$ given by $f_\tau^\ast\bk(v)=\bk(f_\tau(v))$. 
Let $\overline{\Lambda}_\lambda$ denote the subgroup of $\bZ^{\barV_\lambda}$ generated by all the balanced vectors.

The \textbf{balanced Fock-Goncharov algebra} is the monomial subalgebra
$$\barX^{\rm bl}_{\hat{q}}(\Sigma,\lambda)=\bT(\barQ_\lambda;\overline{\Lambda}_\lambda).$$
The extended version is defined as 
$$\cX^{\rm bl}_{\hat{q}}(\Sigma,\lambda)=\bT(\sfQ_\lambda)\cap \barX_{\hat{q}}^{\rm bl}(\Sigma^\ast,\lambda^\ast)=\bT(\sfQ_\lambda;\Lambda_\lambda),$$
where $\bT(\sfQ_\lambda)$ is considered a subalgebra of $\bT(\barQ_{\lambda^{\ast}})$ by the natural embedding, and $\Lambda_\lambda=\overline{\Lambda}_{\lambda^\ast}\cap\bZ^{V_\lambda}$ is the subgroup of balanced vectors.

\subsection{The $A$-version quantum tori} \label{sec;A_tori}
In this subsection, assume $\Sigma$ does not have interior punctures.

For a small vertex $v\in \barV_\lambda$ and an ideal triangle $\nu \in \cF_\lambda$, we define its \textbf{skeleton} $\skeleton_\tau(v)\in \bZ [\barV_\tau]$ as follows. 

For a face $\nu\in\cF_\lambda$ containing $v$, suppose $v=(ijk)\in V_\nu$. Draw a weighted directed graph $Y_v$ properly embedded into $\nu$ as in the left of Figure~\ref{Fig;skeleton}, where an edge of $Y_v$ has weight $i$, $j$ or $k$ according as the endpoint lying on the edge $e_1$, $e_2$ or $e_3$ respectively. 
\begin{figure}[h]
    \centering
    \includegraphics[width=380pt]{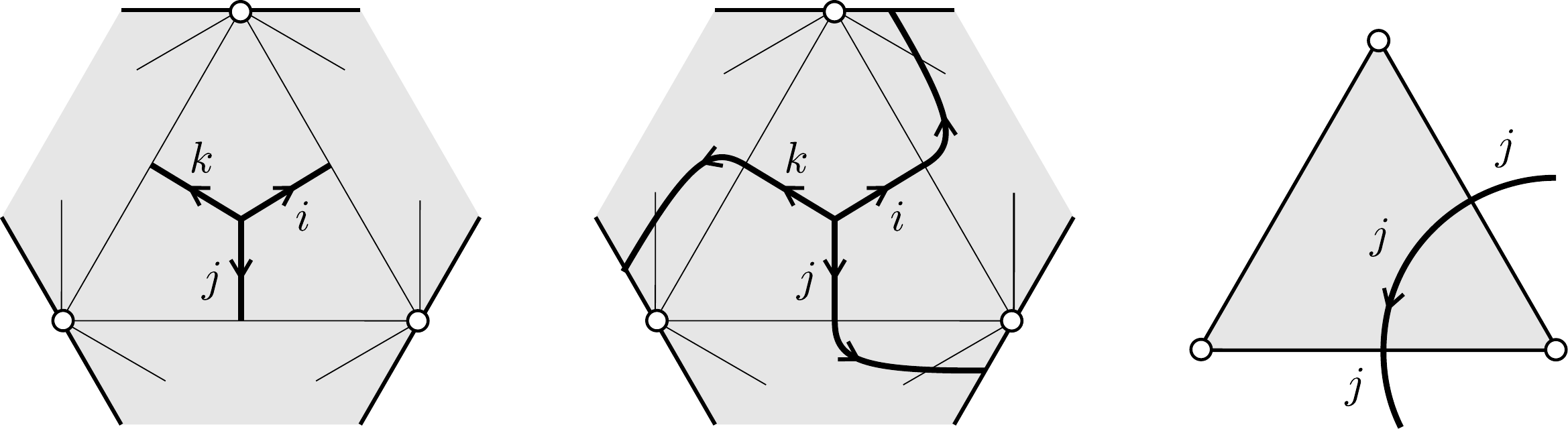}
    \caption{Left: Weighted graph $Y_v$,\quad Middle: Elongation $\widetilde{Y}_v$,\quad Right: Turning left}\label{Fig;skeleton}
\end{figure}

Elongate the nonzero-weighted edges of $Y_v$ to have an embedded weighted directed graph $\widetilde{Y}_v$ as drawn in the middle of Figure~\ref{Fig;skeleton}. Here, each edge is elongated by turning left whenever it enters a triangle. 
The part of the elongated edge in a triangle $\tau$ is called a \textbf{(arc) segment} of $\widetilde{Y}_v$ in $\tau$. In addition, we also regard $Y_v$ as a segment of $\widetilde{Y}_v$, called the \textbf{main segment}.

For the main segment $s=Y_v$, define $Y(s) = v \in \barV_\nu$. For an arc segment $s$ in a triangle $\tau$, define $Y(s)\in \barV_\tau$ to be the small vertex of the following weighted graph. 
$$
s=\begin{array}{c}\includegraphics[scale=0.27]{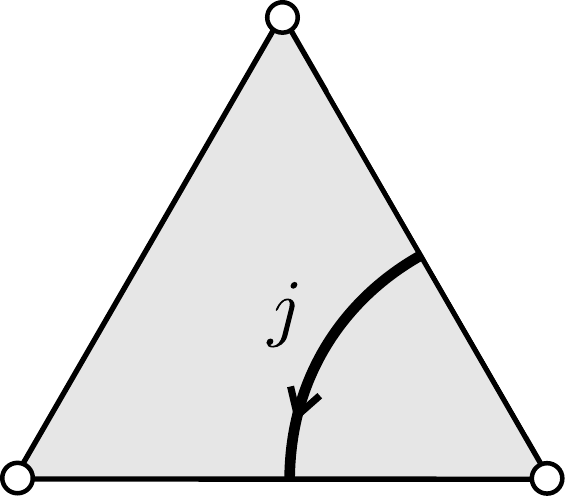}\end{array}
\longrightarrow\quad Y(s):=\begin{array}{c}\includegraphics[scale=0.27]{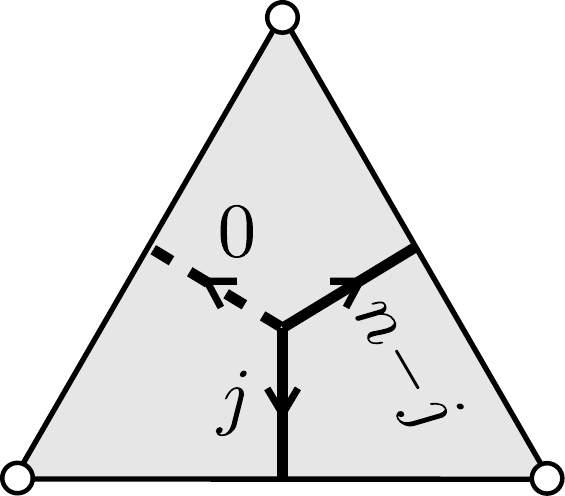}\end{array}
$$
Define $\skeleton_\tau(v)$ by
\begin{equation}
\skeleton_\tau(v) = \sum_{s \subset \tau\cap\tilde{Y}_v} Y(s) \in \bZ [\barVt],
\end{equation}
where the sum is over all the segments of $\widetilde{Y}_v$ in $\tau$.
It is known that $\skeleton_\tau(v)$ is well-defined \cite[Lemma 11.4]{LY23}. 

Define a $\bZ_3$-invariant map
$$\barK_{\bP_3}\colon \barV\times\barV\to\bZ$$
such that if $v=ijk$ and $v'=i'j'k'$ satisfy $i'\leq i$ and $j'\geq j$ then 
\begin{align}
\barK_{\bP_3}(v,v')=jk'+ki'+i'j.
\end{align}
Under identifying $\bP_3$ and a face $\tau$ of $\lambda$, we also use $\barK_{\tau}$ as $\barK_{\bP_3}$. 

Recall that $\cF_\lambda$ denotes the set of all the faces of the triangulation $\lambda$. 
For $u, v \in \barVl$ and a face $\tau \in \cF_\lambda$ containing $v$,  define 
\begin{equation}\label{eq-surgen-exp}
\barK_{\lambda}(u,v)=\barKt(\skeleton_\tau(u),v)=\sum_{s\subset\tau\cap\tilde{Y}_u}\barKt(Y(s),v).
\end{equation}
It is known that $\barK_{\lambda}$ is well-defined \cite[Lemma 11.5]{LY23}.

Define a map $p\colon \barVlast\setminus\barVl\to\barVlast\setminus V'_\lambda$ as follows. 
Every $v \in\barVlast\setminus\barVl$ has coordinates $ijk$ in an attached triangle with $k\ne0$, and $\barVlast\setminus V'_{\lambda}$ consists of vertices $ijk$ in attached triangles with $i=0$. Then
\begin{equation}\label{eq-cov-pdef}
p(v)=(0,n-k,k)\quad \text{in the same triangle}.
\end{equation}
The change-of-variable matrix $\sfC\colon V'_\lambda\times\barVlast\to\bZ$ is given by
\begin{align*}
\sfC(v,v)&=1,&&v\in V'_\lambda,\nonumber\\
\sfC(v,p(v))&=-1,&&v\in V'_\lambda\setminus\barVl,\\
\sfC(v,v')&=0,&&\text{otherwise}.\nonumber
\end{align*}

For the extended ideal triangulation $\lambda^\ast$ of $\lambda$, consider $\barK_{\lambda^\ast}\colon \barVlast\times\barVlast\to\bZ$. 
Note that the product $\sfC\barK_{\lambda^\ast}$ is a bilinear form on $V'_\lambda\times\barVlast$. 
The matrix $\sfK_\lambda$ is the restriction of $\sfC\barK_{\lambda^\ast}$:
\begin{equation}\label{eq-Klambda}
\sfK_\lambda=(\sfC\barK_{\lambda^\ast})|_{V'_\lambda\times V_\lambda}.
\end{equation}

From \cite[Lemma~11.9]{LY23}, we define the anti-symmetric integer matrices $\barP_{\lambda}$ and $\sfP_\lambda$ by 
\begin{align}\label{eq-anti-matric-P-def}
\barP_\lambda:=\barK_\lambda\barQ_\lambda\barK^t_\lambda,\qquad 
\sfP_\lambda:=\sfK_\lambda\sfQ_\lambda\sfK^t_\lambda
\end{align}

The following are \textbf{$A$-version quantum tori and quantum spaces} of $(\Sigma,\lambda)$:
\begin{align*}
\barA_{\hat{q}}(\Sigma,\lambda) =\bT(\bar{\sfP}_\lambda),\qquad
\overline{\cA^+_{\hat{q}}}(\Sigma,\lambda)=\bT_+(\bar{\sfP}_\lambda),\\
\cA_{\hat{q}}(\Sigma,\lambda)=\bT(\sfP_\lambda),\qquad
\cA_{\hat{q}}^{+}(\Sigma,\lambda)=\bT_+(\sfP_\lambda),
\end{align*}
where $\bT_+$ is defined in Section~\ref{sec-quantumtrace}. 
In the following, let $a_v$ denote the generator of $\barA_{\hat{q}}(\Sigma,\lambda)$ (resp. $\cA_{\hat{q}}(\Sigma,\lambda)$) corresponding to $v\in\barV_\lambda$ (resp. $V'_\lambda$), and 
let $a^{\bf k}$ denote the Weyl normalized monomial of $\prod_{v}a_{v}^{{\bf k}(v)}$ for ${\bf k}\in \bZ^{\barV_\lambda}$ (resp. ${\bf k}\in\bZ^{V'_\lambda}$).

\begin{thm}[{\cite[Theorem 11.7]{LY23}}]\label{thm-transition-LY}
The $\cR$-linear maps 
\begin{align}
{\psi_\lambda}&\colon  \cAl \to \cXl, \quad  a^\mathbf{k}\mapsto x^{\mathbf{k} \sfK_{\lambda}}, \ ({\mathbf{k}} \in \bZ^{V'_{\lambda}})\\
\overline \psi_\lambda &\colon \barA_{\hat{q}}(\Sigma,\lambda)\to
\overline{\mathcal X}_{\hat q}(\Sigma,\lambda),
\quad  a^\mathbf{k}\mapsto x^{\mathbf{k} \overline{\mathsf{K}}_{\lambda}}, \ ({\mathbf{k}} \in \bZ^{\overline V_{\lambda}})
\end{align}
are $\cR$-algebra embeddings with $\im \psi_\lambda=\Xbll$ and 
$\im \overline\psi_\lambda=\barX^{\rm bl}_{\hat{q}}(\Sigma,\lambda)$. 
\end{thm}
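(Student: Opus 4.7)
The plan is to verify three things in order: (a) that $\psi_\lambda$ and $\overline\psi_\lambda$ are well-defined $\cR$-algebra homomorphisms, (b) that they are injective, and (c) that their images coincide with the (extended) balanced Fock--Goncharov subalgebras. The arguments for the two maps are parallel, so I sketch them only for $\psi_\lambda$; for $\overline\psi_\lambda$ one replaces the triple $(\sfK_\lambda,\sfP_\lambda,\sfQ_\lambda)$ by $(\overline{\sfK}_\lambda,\overline{\sfP}_\lambda,\overline{\sfQ}_\lambda)$ throughout, and $V'_\lambda,V_\lambda$ by $\overline V_\lambda$.

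Step (a) is a direct computation once the identity $\sfP_\lambda=\sfK_\lambda\sfQ_\lambda\sfK_\lambda^t$ from \eqref{eq-anti-matric-P-def} is in hand. For any ${\bf k}_1,{\bf k}_2\in\bZ^{V'_\lambda}$, formula \eqref{eq;q-comm} gives
\[
a^{{\bf k}_1}a^{{\bf k}_2}=\hat q^{\langle{\bf k}_1,{\bf k}_2\rangle_{\sfP_\lambda}}\, a^{{\bf k}_1+{\bf k}_2}\quad\text{and}\quad x^{{\bf k}_1\sfK_\lambda}x^{{\bf k}_2\sfK_\lambda}=\hat q^{\langle {\bf k}_1\sfK_\lambda,\,{\bf k}_2\sfK_\lambda\rangle_{\sfQ_\lambda}}\, x^{({\bf k}_1+{\bf k}_2)\sfK_\lambda},
\]
and the exponents agree because $\langle {\bf k}_1\sfK_\lambda,{\bf k}_2\sfK_\lambda\rangle_{\sfQ_\lambda} = {\bf k}_1\sfK_\lambda\sfQ_\lambda\sfK_\lambda^t{\bf k}_2^t = {\bf k}_1\sfP_\lambda{\bf k}_2^t$. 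Thus $a^{\bf k}\mapsto x^{{\bf k}\sfK_\lambda}$ extends uniquely to an $\cR$-algebra homomorphism on the $\cR$-basis $\{a^{\bf k}\}$.

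For (b), since $\{x^{\bf m}\}_{{\bf m}\in\bZ^{V_\lambda}}$ is a free $\cR$-basis of $\cXl$, injectivity of $\psi_\lambda$ reduces to injectivity of the $\bZ$-linear map ${\bf k}\mapsto {\bf k}\sfK_\lambda$. By Lemma~\ref{lem:cardinarity} together with the symmetric role of $e_2$ and $e_3$ in the attached triangles, $|V'_\lambda|=|V_\lambda|=(n^2-1)r(\Sigma)$, so $\sfK_\lambda$ is square and it suffices to prove $\det\sfK_\lambda\ne 0$. I would first compute $\overline{\sfK}_{\bP_3}$ for a single triangle from its $\bZ_3$-symmetric formula $jk'+ki'+i'j$, order $\overline V$ lexicographically in the barycentric coordinates so that the recipe exposes a (block-)triangular form with nonvanishing diagonal, and deduce $\det \overline{\sfK}_{\bP_3}\ne 0$. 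The change-of-variable matrix $\sfC$ in \eqref{eq-cov-pdef} is manifestly unimodular (identity up to a few $\pm 1$ entries from the bijection $p$), so $\sfK_\lambda=(\sfC\overline{\sfK}_{\lambda^\ast})|_{V'_\lambda\times V_\lambda}$ reduces the global determinant to an assembly of local ones; the key observation is that the left-turning elongation in the definition of $\tilde Y_u$ is compatible with the edge identifications in \eqref{eq.glue}, so a consistent triangular ordering propagates across faces.

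For (c), the inclusion $\im\psi_\lambda\subset\Xbll$ is obtained by inspecting $\skeleton_\tau(u)$ directly: each arc segment contributes a vector of the form $i\,\proj_1+j\,\proj_2+k\,\proj_3$ with $i+j+k=n$ inside its face, so the $u$-th row of $\sfK_\lambda$ restricts to a balanced vector in every face visited by $\tilde Y_u$, certifying membership in $\Lambda_\lambda$. For the reverse inclusion, compare the finite indices $[\bZ^{V_\lambda}:\sfK_\lambda(\bZ^{V'_\lambda})] = |\det\sfK_\lambda|$ and $[\bZ^{V_\lambda}:\Lambda_\lambda]$; the latter is computed face-by-face from the generators $\proj_1,\proj_2,\proj_3$ together with $n\bZ^{V_\lambda}$, and the explicit value of $\det\sfK_\lambda$ from step (b) is designed to match it exactly, which forces $\sfK_\lambda(\bZ^{V'_\lambda})=\Lambda_\lambda$ and hence $\im\psi_\lambda=\Xbll$. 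The main obstacle is step (b) together with this index matching: although a single triangle is tractable, the left-turn elongation makes $\sfK_\lambda$ globally intricate, so producing a consistent ordering on $V'_\lambda$ and $V_\lambda$ in which $\sfK_\lambda$ is triangular, and computing the exact value of its determinant (not merely its non-vanishing), is the genuine combinatorial challenge.
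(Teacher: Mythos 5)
This theorem is not proved in the paper at all: it is quoted verbatim from \cite[Theorem 11.7]{LY23}, so there is no internal argument to compare against, and your sketch must stand on its own. Step (a) is fine: the homomorphism property is exactly the computation with $\sfP_\lambda=\sfK_\lambda\sfQ_\lambda\sfK_\lambda^t$ and \eqref{eq;q-comm}. The problem is that steps (b) and (c), which you yourself identify as the real content, are only plans. You never establish the triangular (or block-triangular) form of $\sfK_\lambda$ under any ordering, and the left-turn elongation defining $\skeleton_\tau(u)$ produces rows supported on several faces, so it is not at all clear that a ``consistent triangular ordering propagates across faces''; nothing in your sketch rules out cancellation or a vanishing diagonal entry. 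Likewise, the reverse inclusion in (c) rests on an exact index match $|\det\sfK_\lambda|=[\bZ^{V_\lambda}:\Lambda_\lambda]$, and neither quantity is computed; saying the determinant ``is designed to match it exactly'' is an assertion, not an argument. So as written the proof has a genuine gap at both of its load-bearing points.

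What you are missing is the companion matrix $\sfH_\lambda$, which the paper already records: Lemma \ref{lem:invertible_KH} gives $\sfK_\lambda\sfH_\lambda=nI$ and $\barK_\lambda\barH_\lambda=nI$, so ${\bf k}\mapsto{\bf k}\sfK_\lambda$ is injective over $\bZ$ with no determinant computation at all, and Proposition \ref{prop:LY23_11.10} states that ${\bf k}$ is balanced if and only if ${\bf k}\barH_\lambda\in(n\bZ)^{\barV_\lambda}$ if and only if ${\bf k}={\bf c}\barK_\lambda$ for some integral ${\bf c}$ (and the same with $V_\lambda,\sfH_\lambda,\sfK_\lambda$). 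The second equivalence is precisely the statement that the row lattice of $\sfK_\lambda$ equals $\Lambda_\lambda$, which yields both the inclusion $\im\psi_\lambda\subset\Xbll$ and the surjectivity onto it in one stroke; your face-by-face balancedness check for the forward inclusion is essentially the implication $(3)\Rightarrow(1)$ of that proposition, but the converse is where the work lies and is exactly what your index-counting scheme would have to reproduce by hand. If you want a self-contained proof rather than a citation, the efficient route is to prove the two identities of Lemma \ref{lem:invertible_KH} and the equivalences of Proposition \ref{prop:LY23_11.10} (local computations on $\bP_3$ glued via \eqref{eq.glue}), not to chase $\det\sfK_\lambda$ globally.
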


\subsection{$\SL(n)$-quantum trace map}\label{sec:quantum_trace}

For a pb surface $\Sigma$ and $\cR = \bZ[\hat{q}^{\pm1}]$, 
it is known that there is a unique reflection $\omega \colon \cS_n (\Sigma,\mathbbm{v})\to \cS_n (\Sigma,\mathbbm{v})$ (i.e., $\omega(\hat q) = \hat q^{-1}$, $\omega$ is $\mathbb Z$-linear, $\omega^2$ is the identity, and $\omega(xy)= \omega(y) \omega(x)$ for any $x,y\in \cS_n (\Sigma,\mathbbm{v})$) such that, for a stated $n$-web diagram $\alpha$, $\omega(\alpha)$ is defined from $\alpha$ by switching all the crossings and reversing the height order on each boundary edge \cite[Theorem 4.6]{LS21}. 

A stated $n$-web diagram $\alpha$ in a pb surface $\Sigma$ is \textbf{reflection-normalizable} if $\omega(\alpha) = \hat{q}^{2k}\alpha$ for $k \in \bZ$. Note that such $k$ is unique. Over any ground ring $\cR$ with a distinguished invertible element $\hat q$, 
we define the \textbf{reflection-normalization} of $\alpha$ by
$$[\alpha]_{\rm norm} := \hat{q}^{k}\alpha.$$
Then, over $\cR = \bZ[\hat{q}^{\pm1}]$, we have $\omega([\alpha]_{\rm norm}) = [\alpha]_{\rm norm}$, i.e., $[\alpha]_{\rm norm}$ is reflection invariant. 
Note the Weyl normalization of a monomial in a quantum torus corresponds with the reflection normalization.

Let $\binom{\bJ}{k}$ denote the set of all $k$-element subsets of $\bJ =\{1,\dots, n\}$. For $I\subset\bJ$, define
$$\bar{I}=\{\bar{i}\mid i\in I\},\quad
I^c=\bJ\setminus I,\quad \bar{I}^c=(\bar{I})^c.$$
For $I,J\in \binom{\bJ}{k}$, let $M^I_J(\mathbf{u})\in \cO_q(\SL(n))$ denote the quantum determinant of the $I\times J$-submatrix of $\mathbf{u}$. 
We identify $\cS_n(\fB,\mathbbm{v})= \Oq$ and also identify $u_{ij}\in \Oq$ with the stated arc given in Section~\ref{sec:bigon}. 
Let $a$ be an oriented $n$-web diagram in a pb surface $\Sigma$, and $N(a)$ be a small open tubular neighborhood of $a$ in $\Sigma$. By an identification of $\fB$ and $N(a)$ which induces $\cS_n(\fB,\mathbbm{v})\to \Oq$, let $M^I_J(a)$ be the image of $M^I_J(\mathbf{u})$ and depict it as
\begin{align}
\begin{array}{c}\includegraphics[scale=0.43]{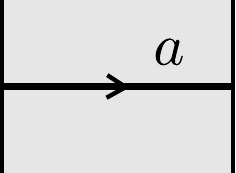}\end{array}
\longrightarrow I\!\begin{array}{c}\includegraphics[scale=0.43]{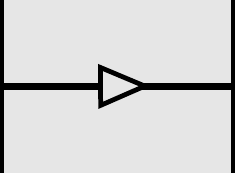}\end{array}\! J:=M^I_J(a). \label{eq:quantum_minor}
\end{align}

Lemma 4.13 in \cite{LY23} claims that the stated $n$-web diagram 
\begin{align*}
\alpha=\begin{array}{c}\includegraphics[scale=0.27]{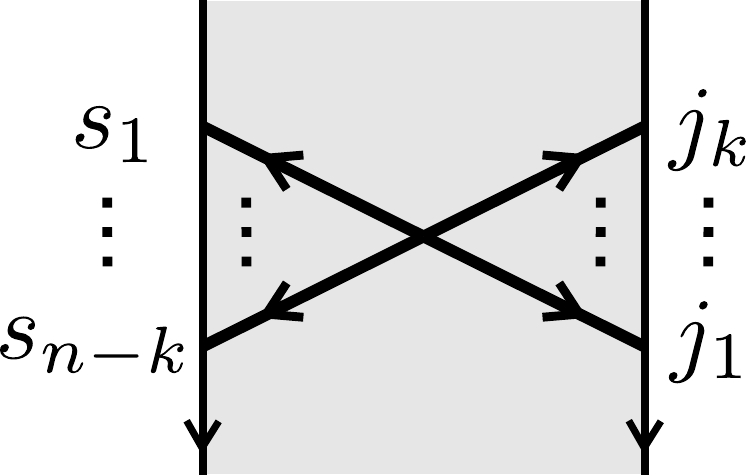}\end{array}. 
\end{align*}
is reflection-normalizable, and 
$M^{I}_{J}({\bf u})$ is equal to $\alpha$ up to a sign and a power of $q^{1/2n}$, where $J=\{j_1,j_2,\dots,j_k\},\bar{I}^c=\{s_1,s_2,\dots, s_{n-k}\}$.

For $v=(ijk) \in  \barV_\nu\subset \barV_\lambda$ with a triangle $\nu$ of $\lambda$,  consider the graph $\widetilde{Y}_v$ defined in Section~\ref{sec;A_tori}. 
By replacing a $k$-labeled edge of $\widetilde{Y}_v$ with $k$-parallel edges, we obtain a stated $n$-web $g''_v$, adjusted by a sign: 
$$\widetilde{Y}_v=
\begin{array}{c}\includegraphics[scale=0.40]{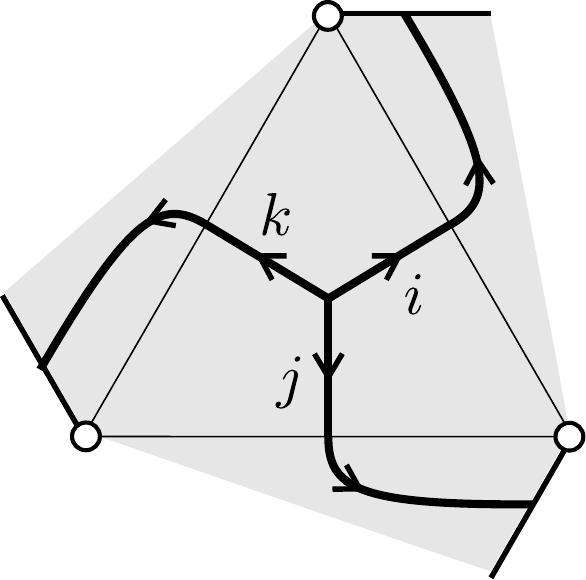}\end{array}\longrightarrow g''_v:=(-1)^{\binom{n}{2}}\begin{array}{c}\includegraphics[scale=0.50]{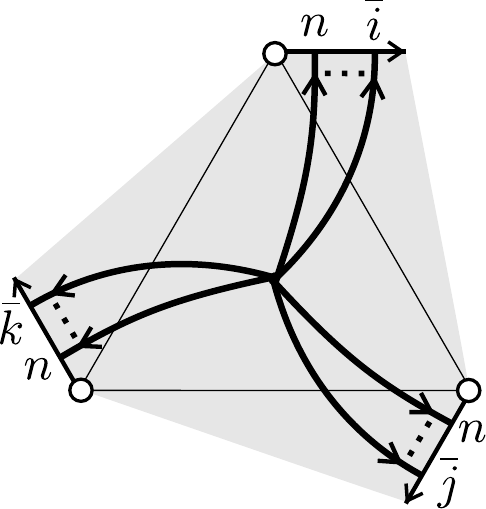}\end{array}. $$
I have changed the picture. We should confirm the change does not affect the proof.
It is known that $g''_v$ is reflection-normalizable \cite[Lemma 4.12]{LY23}. 

For an attached triangle $\nu=\bP_3$ and $v=(ijk)= \barV_{\nu}\subset V'_\lambda\setminus \barV_\lambda$, 
let $c$ denote the oriented corner arc of $\Sigma$ 
starting on $e$ and turning left all the time:  
$$
\begin{array}{c}\includegraphics[scale=0.35]{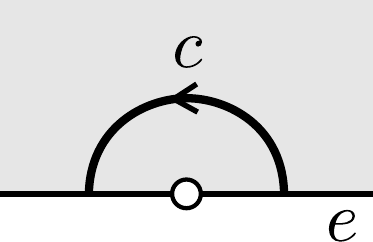}\end{array}\longrightarrow g''_v:=M^{[j+1,j+i]}_{[\bar{i},n]}(c)\begin{array}{c}\includegraphics[scale=0.47]{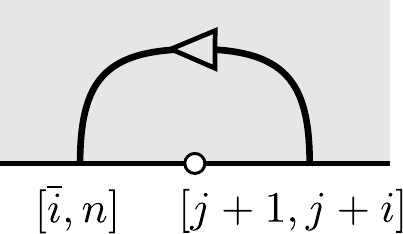}\end{array}. $$
Then it is known that the obtained element 
$g''_v$ is reflection-normalizable \cite[Lemmas 4.13 and 4.10]{LY23}

Define $\gaa_v$ as the reflection invariant of $\gaa_v''$. 
Let $\bar\gaa_v$ be the image of $\gaa_v$ in $\rSn$ by the projection $\Sn\to \rSn$. Note that $\bar\gaa_v = 0$ if $v \in V'_\lambda\setminus\barV_\lambda$.

The following theorem plays an important role in this paper, and we often call the properties \eqref{eq-sandwitch-property} and \eqref{eq-sandwitch-property_reduced} the sandwiched property.

\begin{thm} [{\cite[Theorems 13.1 and 15.5]{LY23}}]\label{traceA}
Let $\Sigma$ be a triangulable essentially bordered pb surface without interior punctures, and let $\lambda$ be an ideal triangulation of $\Sigma$. 

(a) There exists an algebra embedding $tr_{\lambda}^A:\cS_n(\Sigma,\mathbbm{v})\rightarrow \mathcal A_{\hat{q}}(\Sigma,\lambda)$ such that $\tra(\gaa_v) = a_v$ for all $v\in V_{\lambda}'$. Furthermore, we have 
\begin{align}\label{eq-sandwitch-property}
\cA^{+}_{\hat{q}}(\Sigma,\lambda)\subset \tra(\Sn)\subset \cA_{\hat{q}}(\Sigma,\lambda).
\end{align}

(b) There exists an algebra homomorphism $\overline{tr}_{\lambda}^A:\rSn\rightarrow \rA$ such that $\overline{tr}_{\lambda}^A(\bar{\gaa}_v) = a_v$ for all $v\in \overline{V}_{\lambda}$. Furthermore, we have
\begin{align}\label{eq-sandwitch-property_reduced}
    \rAp\subset \overline{tr}_{\lambda}^A(\rSn)\subset \rA.
\end{align}
If $n=2,3$, or $n>3$ and $\Sigma$ is a polygon, then $\overline{tr}_{\lambda}^A$ is injective.
\end{thm}

\begin{cor}[{\cite[Corollary 13.4]{LY23}}]
    Let $\Sigma$ be an essentially bordered pb surface without interior punctures. 

    (a) The set $\{\gaa_v\mid v\in V_{\lambda}' \}$ is a quantum torus frame for $\cS_n(\Sigma,\mathbbm{v})$.

    (b) If $\Sigma$ is a polygon, the set $\{\bar{\gaa}_v\mid v\in \overline{V}_{\lambda}\}$ is a quantum torus frame for $\overline{\cS}_n(\Sigma,\mathbbm{v})$.
\end{cor}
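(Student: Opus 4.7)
The plan is to verify the three defining conditions of a quantum torus frame for $S=\{\gaa_v\mid v\in V_\lambda'\}$ (resp.\ $\{\bar\gaa_v\mid v\in \overline V_\lambda\}$) by transporting each to the corresponding statement about the $A$-version quantum torus through the algebra embedding of Theorem~\ref{traceA}. In other words, the injectivity of $\tra$ (resp.\ $\overline{tr}_\lambda^A$) together with the sandwich \eqref{eq-sandwitch-property} does all the work.

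For part (a), I would begin with conditions (1) and (3). Since $\tra\colon \cS_n(\Sigma,\mathbbm{v})\to \cA_{\hat q}(\Sigma,\lambda)$ is an injective $\cR$-algebra homomorphism sending $\gaa_v\mapsto a_v$, and since the generators satisfy $a_ua_v=\hat q^{2\sfP_\lambda(u,v)}a_va_u$, injectivity lifts this identity to $\gaa_u\gaa_v=\hat q^{2\sfP_\lambda(u,v)}\gaa_v\gaa_u$ and forces each $\gaa_v$ to be nonzero, establishing (1). For (3), each ordered monomial $\prod_v\gaa_v^{\mathbf{k}(v)}$ (with $\mathbf{k}\in \bN^{V_\lambda'}$) maps under $\tra$ to a nonzero scalar multiple of the Weyl-normalized monomial $a^{\mathbf{k}}$, and the $\{a^{\mathbf{k}}\}_{\mathbf{k}\in\bZ^{V_\lambda'}}$ are an $\cR$-basis of $\cA_{\hat q}(\Sigma,\lambda)$; injectivity of $\tra$ therefore yields $\cR$-linear independence of the ordered products.

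The remaining condition (2), weak generation, is where the sandwiched inclusion \eqref{eq-sandwitch-property} enters. Given $x\in \cS_n(\Sigma,\mathbbm{v})$, expand $\tra(x)=\sum_{\mathbf{k}\in F}c_{\mathbf{k}}a^{\mathbf{k}}$ over a finite $F\subset \bZ^{V_\lambda'}$ and choose $\mathbf{m}\in \bN^{V_\lambda'}$ large enough that $\mathbf{k}+\mathbf{m}\in \bN^{V_\lambda'}$ for every $\mathbf{k}\in F$. Then $\tra(x\gaa^{\mathbf{m}})=\tra(x)a^{\mathbf{m}}$ lies in $\cA_{\hat q}^+(\Sigma,\lambda)\subset \tra(\cS_n(\Sigma,\mathbbm{v}))$, so by injectivity there exists $y\in \cS_n(\Sigma,\mathbbm{v})$ with $y=x\gaa^{\mathbf{m}}$ and $\tra(y)$ lying in the $\cR$-subalgebra of $\cA_{\hat q}(\Sigma,\lambda)$ generated by $\{a_v\}$. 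Pulling back via the injection shows $y\in\mathsf{Pol}(S)$; since $\gaa^{\mathbf{m}}$ is a nonzero scalar times an ordered $S$-monomial, this gives $x\in \mathsf{LPol}(S)$, completing (2).

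For part (b), the identical argument applies verbatim with $\bar\gaa_v$, $\overline{tr}_\lambda^A$, $\overline{\sfP}_\lambda$, $\rA$ and $\rAp$ in place of their non-reduced counterparts, the crucial input being the injectivity of $\overline{tr}_\lambda^A$ from Theorem~\ref{traceA}(b), which is precisely why the polygon hypothesis is imposed for $n>3$. The main conceptual obstacle is ensuring that the sandwiched inclusion genuinely produces a preimage in $\rSn$ rather than merely in some localization; this is handled exactly by the injectivity of $\overline{tr}_\lambda^A$, so once Theorem~\ref{traceA} is invoked no new difficulty arises. In the reduced case one should also verify that nothing is lost when $\bar\gaa_v=0$ for the small vertices $v\in V_\lambda'\setminus \overline V_\lambda$, but this is automatic since the set indexed by $\overline V_\lambda$ already generates $\rA^+$ by Theorem~\ref{traceA}(b).
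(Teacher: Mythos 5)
The paper does not give its own proof of this corollary; it is quoted verbatim from [LY23, Corollary~13.4] as an imported result. That said, your reconstruction is correct, and it uses exactly the ingredients one would expect (and that the LY23 proof in fact uses): the injective algebra embedding $\tra$ of Theorem~\ref{traceA}, the $q$-commutation $a_u a_v = \hat q^{2\sfP_\lambda(u,v)}a_v a_u$ in the target, the fact that $\{a^{\bf k}\}_{{\bf k}\in\bZ^{V_\lambda'}}$ is a basis of $\cA_{\hat q}(\Sigma,\lambda)$ to get $\cR$-linear independence, and the sandwich $\cA^+_{\hat q}(\Sigma,\lambda)\subset\tra(\cS_n(\Sigma,\mathbbm v))\subset\cA_{\hat q}(\Sigma,\lambda)$ to convert weak generation of the quantum torus into weak generation of the skein algebra.

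One phrasing in your step (2) is slightly muddled: you write ``by injectivity there exists $y\in\cS_n(\Sigma,\mathbbm v)$ with $y=x\gaa^{\mathbf m}$,'' but $y:=x\gaa^{\mathbf m}$ is of course already an element of $\cS_n(\Sigma,\mathbbm v)$, so nothing needs to be produced. The point you actually need (and do use) is that $\tra$ restricts to a bijection $\mathsf{Pol}(S)\to\cA^+_{\hat q}(\Sigma,\lambda)$: surjectivity holds since $\tra(\gaa^{\bf k})=a^{\bf k}$ for ${\bf k}\in\bN^{V_\lambda'}$, and injectivity is inherited from $\tra$. Then $\tra(y)\in\cA^+_{\hat q}(\Sigma,\lambda)$ forces $y\in\mathsf{Pol}(S)$, giving $x\in\mathsf{LPol}(S)$. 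With that clean-up the argument is fully rigorous, and part (b) indeed goes through verbatim once the polygon hypothesis supplies injectivity of $\overline{\mathrm{tr}}^A_\lambda$.
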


\section{The Frobenius map for (reduced) stated $\SL(n)$-skein algebras}
Suppose $q^{2}$ is a primitive $m$-th root of unity. Set  $\mathbbm{u} = \mathbbm{v}^{m^2}$,  $\varepsilon = q^{m^2}$, and
\begin{align*}
\mathbbm{c}_{i,\mathbbm{u}}= (-\varepsilon)^{n-i} \varepsilon^{\frac{n-1}{2n}},\quad
\mathbbm{t}_\mathbbm{u}= (-1)^{n-1} \varepsilon^{\frac{n^2-1}{n}},\quad \mathbbm{a}_\mathbbm{u}=   \varepsilon^{\frac{n+1-2n^2}{4}}.
\end{align*}
In this section, we will use the relations calculated in \cite{Wan23} to 
 construct an injective algebra homomorphism 
$\cF\colon \cS_n(\Sigma,\mathbbm{u})\rightarrow \cS_n(\Sigma,\mathbbm{v})$, called the Frobenius map, when $\Si$ is an essentailly bordered pb surface.
This generalizes the $\SL(n)$-Frobenius map in \cite{Wan23} to all roots of unity. See \cite{KLW} for a different construction using the ideal triangulation.

Let $\alpha$ to a stated framed oriented arc in $\Si\times (-1,1)$, and let $k$ be a non-negative integer. We use $\alpha^{(k)}$ to denote the stated $n$-web obtained from $\alpha$ by taking $k$ parallel copies of $\alpha$ in the framing direction. We require that $\alpha^{(k)}$ lies in a small open tubular neighborhood of $\alpha$. There is a convention that $\alpha^{(0)} = \emptyset$.  
Suppose $\alpha$ is a stated $n$-web in $\Si\times(-1,1)$ such that $\alpha =\cup_{1\leq i\leq t}\alpha_i$, where $\alpha_i$ is a stated framed oriented arc in $\Si\times (-1,1)$ for each $1\leq i\leq t$. For any non-negative integer $k$, we define $\alpha^{(k)}$ to be $\cup_{1\leq i\leq t}\alpha_i^{(k)}$.

\subsection{The Frobenius map for the bigon}
Theorem \ref{Hopf} implies $\cS_n(\fB,\mathbbm{v})\simeq \cO_{q}(\SL(n))$ and $\cS_n(\fB,\mathbbm{u})\simeq \cO_{\varepsilon}(\SL(n))$.
We have $\varepsilon = \mathbbm{u}^{2n} = \mathbbm{v}^{2nm^2} = q^{m^2} = \pm 1$.

\begin{lem}[Subsection 7.2 in \cite{PW91}]\label{Oq}
There exists a Hopf algebra homomorphism
\begin{align*}
F_n \colon \Ou \rightarrow \Ov,\quad 
u_{ij}\mapsto (u_{ij})^{m}.
\end{align*}
\end{lem}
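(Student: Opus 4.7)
The plan is to verify that the assignment $u_{ij}\mapsto u_{ij}^m$ defines a well-defined Hopf algebra homomorphism $F_n\colon \Ou \to \Ov$ by directly checking the RTT and determinant relations, and then the compatibility with the Hopf structure.

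Set-up observation: since $q^2$ is a primitive $m$-th root of unity we have $q^{2m}=1$, so $\varepsilon = q^{m^2} = (q^m)^m = (\pm 1)^m \in \{\pm 1\}$; in particular $\varepsilon^{-1}=\varepsilon$ and $\varepsilon-\varepsilon^{-1}=0$. The central tool will be the \emph{quantum binomial lemma}: for any $x,y$ in an associative $\cR$-algebra satisfying $xy = q^{2}yx$, the $q^2$-binomial expansion $(x+y)^m = \sum_{k=0}^{m}\binom{m}{k}_{q^{2}} y^k x^{m-k}$ collapses to $(x+y)^m = x^m + y^m$, because the non-extreme $q^2$-binomial coefficients vanish when $q^2$ has order $m$; the analogous statement holds for $xy = q^{-2}yx$.

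First I would check the RTT relations for the $m$-th powers inside $\Ov$. The relations extracted from $(\mathbf{u}\otimes\mathbf{u})\cR = \cR(\mathbf{u}\otimes\mathbf{u})$ split into two types: $q$-commutations $u_{ij} u_{kl} = q^{\pm 1} u_{kl}u_{ij}$ for entries in the same row, same column, or ``opposite corners'' of the matrix, and a commutator relation $u_{ij}u_{kl} - u_{kl}u_{ij} = (q-q^{-1})\,u_{il}u_{kj}$ for $i<k$, $j<l$. Raising a $q$-commutation to the $m$-th power immediately produces a $q^{\pm m^2} = \varepsilon^{\pm 1}$-commutation, matching the corresponding defining relation in $\Ou$. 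For the ``same-corners'' case, I would argue by induction on $m$, using the quantum binomial lemma on appropriately chosen $q^2$-commuting subexpressions to conclude $u_{ij}^m u_{kl}^m = u_{kl}^m u_{ij}^m$; this is the correct $\varepsilon$-relation since $\varepsilon - \varepsilon^{-1} = 0$.

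Next I would verify that the quantum determinant relation descends. Expanding $\det_\varepsilon(\mathbf{u}^m)$ via the defining alternating sum, and repeatedly applying the $q$-minor identities of $\Ov$, yields $\det_\varepsilon(\mathbf{u}^m) = \det_q(\mathbf{u})^m$, which becomes $1$ in $\Ov$. Thus the algebra map from $\mathcal{O}_\varepsilon(M(n))$ to $\mathcal{O}_q(M(n))$ descends to the quotient $\Ou \to \Ov$. Finally, for the Hopf structure, I would apply the quantum binomial lemma to $\Delta(u_{ij}) = \sum_k u_{ik}\otimes u_{kj}$ in $\Ov\otimes \Ov$, using that the summands $q^{\pm 2}$-commute pairwise by the RTT relations combined with the graded-commutativity of the tensor product, to obtain $\Delta(u_{ij}^m) = \sum_k u_{ik}^m \otimes u_{kj}^m$. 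Counit compatibility is immediate from $\varepsilon(u_{ij}^m) = \delta_{ij}^m = \delta_{ij}$, and antipode compatibility follows from the determinantal formula for $S$ together with the determinant identity used above.

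The main obstacle is the commutator relation in the same-corners case: iterating $u_{ij}u_{kl} - u_{kl}u_{ij} = (q-q^{-1})\,u_{il}u_{kj}$ produces a cascade of correction terms, and the technical crux is to reorganize these so that at each step the quantum binomial vanishing can be applied to a $q^{\pm 2}$-commuting pair, collapsing the cross terms to zero. Keeping careful track of the resulting monomials is the main bookkeeping challenge, and is essentially the content of the computation in \cite{PW91}.
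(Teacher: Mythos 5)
The paper does not prove this lemma at all: it is imported from \cite{PW91}, Subsection 7.2, so your direct verification is a reconstruction of the standard argument rather than a parallel to anything in the text, and as an outline it is essentially sound — the generator-by-generator check of the RTT relations, the descent through the quantum determinant, and the $q^2$-binomial collapse for the coproduct are exactly the right ingredients, in the paper's setting $\varepsilon=q^{m^2}=\pm1$, $\varepsilon-\varepsilon^{-1}=0$. Two of your steps can be streamlined. For the ``same-corners'' case you do not need a cascade of binomial reorganizations: prove the stronger elementwise statement that $u_{ik}^m$ commutes with $u_{jl}$ ($i<j$, $k<l$). From $u_{ik}u_{jl}=u_{jl}u_{ik}+(q-q^{-1})u_{il}u_{jk}$ and $u_{ik}(u_{il}u_{jk})=q^{\pm2}(u_{il}u_{jk})u_{ik}$ one gets by induction
$u_{ik}^{t}u_{jl}=u_{jl}u_{ik}^{t}+(q-q^{-1})\bigl(1+q^{\pm2}+\cdots+q^{\pm2(t-1)}\bigr)u_{il}u_{jk}u_{ik}^{t-1}$,
and at $t=m$ the geometric sum vanishes because $q^{2}$ is a primitive $m$-th root of unity (when $m=1$ the factor $q-q^{-1}$ vanishes instead). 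In the coproduct step no ``graded-commutativity'' is involved: $(u_{ik}\otimes u_{kj})(u_{il}\otimes u_{lj})=u_{ik}u_{il}\otimes u_{kj}u_{lj}$, and the row and column relations alone show the summands of $\Delta(u_{ij})$ pairwise $q^{2}$-commute, after which the vanishing of $\binom{m}{k}_{q^{2}}$ for $0<k<m$ gives exactly \eqref{eq_co}. Also, antipode compatibility is automatic for any bialgebra morphism between Hopf algebras, so that check can be omitted.

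The one place where your sketch, read as a proof, has a genuine hole is the determinant step: the identity $\sum_{\sigma}(-\varepsilon)^{\ell(\sigma)}u_{1\sigma(1)}^{m}\cdots u_{n\sigma(n)}^{m}=\bigl({\det}_q(\mathbf{u})\bigr)^{m}$ is not a routine consequence of ``$q$-minor identities''; it is the substantive computation behind the quantum Frobenius and is precisely what is established in \cite{PW91}. Since the paper itself delegates the entire lemma to that reference, the honest ways to close your argument are either to cite that identity explicitly or to supply its proof; everything else in your outline is a correct and complete plan.
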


Lemma \ref{Oq} implies an important property that, for any $1\leq i,j\leq n$, we have 
\begin{equation}\label{eq_co}
\Delta(u_{ij}^m) = \sum_{1\leq k\leq n}u_{ik}^m\ot u_{kj}^m\in \Ov\ot_{R}\Ov
\end{equation}

\begin{lem}[Lemma 7.36 in \cite{Wan23}]\label{Inj}
$F_n$ in Lemma \ref{Oq} is injective.
\end{lem}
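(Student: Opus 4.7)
My plan is to prove injectivity by a PBW-basis argument, first at the level of the quantum matrix bialgebras and then descending to the $SL$-quotient via a normal form modulo the quantum determinant.

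First, I would introduce the matrix-level version $\widetilde{F}_n\colon \cO_\varepsilon(M(n))\to \cO_q(M(n))$ defined by $u_{ij}\mapsto u_{ij}^m$; this is well-defined by the same commutation-relation check used for $F_n$, since $\varepsilon=q^{m^2}$. Both algebras carry the standard PBW $\cR$-basis consisting of ordered monomials $u^{\mathbf{a}}=\prod_{(i,j)}u_{ij}^{a_{ij}}$ in a fixed (say lexicographic) order on indices. Weyl reordering of a product of $m$-th powers yields $\widetilde{F}_n(u^{\mathbf{a}})=q^{c(\mathbf{a})}\,u^{m\mathbf{a}}$ for an explicit exponent $c(\mathbf{a})$, so $\widetilde{F}_n$ sends distinct PBW basis vectors to invertible-scalar multiples of distinct PBW basis vectors; in particular, $\widetilde{F}_n$ is injective.

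Next, I would fix a normal form modulo the two-sided ideal generated by the central element $\det_q(\mathbf{u})-1$. Since $\det_q(\mathbf{u})$ expands as $u_{11}u_{22}\cdots u_{nn}$ plus terms strictly smaller in a suitable PBW filtration, every PBW monomial $u^{\mathbf{a}}$ with $\min_i a_{ii}\geq 1$ can be rewritten modulo $\det_q-1$ as an $\cR$-combination of monomials of strictly smaller total degree. Iterating, one obtains an $\cR$-basis of $\cO_q(SL(n))$ consisting of PBW monomials with $\min_i a_{ii}=0$, and the same prescription gives a basis of $\cO_\varepsilon(SL(n))$. Since the condition $\min_i a_{ii}=0$ is preserved under $\mathbf{a}\mapsto m\mathbf{a}$, combining this with the first step, $F_n$ sends the chosen basis of $\cO_\varepsilon(SL(n))$ to invertible-scalar multiples of distinct basis elements of $\cO_q(SL(n))$, and injectivity follows.

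The main obstacle is justifying the normal-form statement in the second step — verifying that the rewriting procedure terminates and yields precisely the stated basis. This requires a well-chosen filtration, for instance the total-degree filtration refined lexicographically, in which $u_{11}u_{22}\cdots u_{nn}$ is the strict leading term of $\det_q(\mathbf{u})$; then the rewriting strictly decreases a well-founded measure on monomials. A cleaner alternative, if it suffices here, is to invoke the Peter--Weyl-type decomposition $\cO_q(SL(n))\cong\bigoplus_\lambda V_\lambda^\ast\otimes V_\lambda$, under which the quantum-group Frobenius identifies the $\lambda$-isotypic component at $\varepsilon$ with the $m\lambda$-isotypic component at $q$; injectivity then reduces to nontriviality of $V_{m\lambda}$ at $q$ for each dominant $\lambda$, which is standard.
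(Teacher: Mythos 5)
The paper does not actually prove this lemma; it imports it verbatim from \cite{Wan23} (Lemma 7.36), so your argument should be judged as a self-contained alternative, and as such its main route is sound: the ordered monomials $u^{\mathbf a}$ do form an $\cR$-basis of $\cO_q(M(n))$, the map $u_{ij}\mapsto u_{ij}^m$ sends the ordered monomial $u^{\mathbf a}$ exactly to $u^{m\mathbf a}$ (no Weyl correction is even needed, since powers of a single generator stay together), and once one knows that the monomials with $\min_i a_{ii}=0$ form a basis of the quotient by $(\det_q(\buu)-1)$ at both $q$ and $\varepsilon$, injectivity follows because $\mathbf a\mapsto m\mathbf a$ preserves that condition and sends distinct basis elements to distinct basis elements. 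Two cautions. First, the well-definedness of the matrix-level lift $\cO_\varepsilon(M(n))\to\cO_q(M(n))$ is not literally "the same check" as Lemma \ref{Oq}, which is stated only after imposing $\det_q=1$; you must verify the RTT relations for the $m$-th powers in $\cO_q(M(n))$ itself. This reduces to $2\times2$ computations such as $u_{ik}^mu_{jl}^m=u_{jl}^mu_{ik}^m$ (for $i<j$, $k<l$, since $\varepsilon-\varepsilon^{-1}=0$), which use centrality of the $2\times 2$ quantum determinant and the hypothesis that $q^2$ is a primitive $m$-th root of unity — true here, but it is a genuine computation, not a citation. Second, your decisive normal-form claim is correct but for the reason you only half-state: the rewriting does not lower total degree (it replaces the leading monomial by a lower-degree term plus same-degree, lex-smaller terms), and termination only gives spanning. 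Linear independence of the standard monomials is where the work is; it holds because $(\det_q(\buu)-1)$ is a principal ideal with central generator in a PBW (solvable) algebra for the row-major deg-lex order, so leading monomials of its elements are exactly multiples of the diagonal monomial $u_{11}u_{22}\cdots u_{nn}$ and no cancellation of leading terms can occur. Finally, do not fall back on the Peter--Weyl alternative: at a root of unity $\cO_q(SL(n))$ is not cosemisimple and the decomposition $\bigoplus_\lambda V_\lambda^\ast\otimes V_\lambda$ fails, so that shortcut does not apply in the present setting; the Gr\"obner/filtration route is the one to complete.
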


The Hopf algebra isomorphism $g_{big}$ in Theorem \ref{Hopf} and the algebraic embedding
$F_n$ in Lemma \ref{Oq} induce an algebraic embedding $\cF\colon\cS_n(\fB,\mathbbm{u})\rightarrow \cS_n(\fB,\mathbbm{v})$ such that the following diagram commutes. 
\begin{equation}\label{diag}
\begin{tikzcd}
\Ou  \arrow[r, "F_n"]
\arrow[d, "g_{big}"]  
&  \Ov  \arrow[d, "g_{big}"] \\
 \Bu  \arrow[r, "\cF"] 
&  \Bv\\
\end{tikzcd}
\end{equation}

The Frobenius map constructed in \cite{Wan23} implies the following. For reader's convenience, we provide a brief proof.

\begin{lem}\label{lem_bi}
The Hopf algebra embedding $\cF$ in the commutative diagram (\ref{diag}) maps $a_{ij}$ (resp. $\cev{a}_{ij}$) to $a_{ij}^{(m)}$ (resp. $\cev{a}_{ij}^{(m)}$), where $i,j\in\{1,2,\cdots,n\}$.
\end{lem}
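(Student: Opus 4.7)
The plan is to chase the commutative diagram \eqref{diag} and then translate the resulting algebraic product into a diagrammatic (isotopy) statement.

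For the first claim $\cF(a_{ij})=a_{ij}^{(m)}$, the commutativity of \eqref{diag} together with the algebra-isomorphism property of $g_{big}$ in Theorem~\ref{Hopf} gives
\begin{equation*}
\cF(a_{ij}) \;=\; g_{big}\bigl(F_n(g_{big}^{-1}(a_{ij}))\bigr) \;=\; g_{big}(u_{ij}^m) \;=\; g_{big}(u_{ij})^m \;=\; a_{ij}^m,
\end{equation*}
where $a_{ij}^m$ denotes the $m$-fold skein product in $\cS_n(\fB,\mathbbm{v})$, i.e.\ $m$ copies of the stated horizontal arc $a_{ij}$ stacked in the $(-1,1)$-direction of $\fB\times(-1,1)$. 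Since each copy carries matching states on both boundary edges ($i$ on $e_\ell$, $j$ on $e_r$), no height reordering of the boundary points is needed, and the stack is isotopic rel.\ boundary to $m$ parallel copies of $a_{ij}$ taken in the framing direction, which is by definition $a_{ij}^{(m)}$.

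For the second claim $\cF(\cev{a}_{ij})=\cev{a}_{ij}^{(m)}$, the strategy is to reduce to the oriented case using the cup/cap skein relations \eqref{wzh.six}--\eqref{wzh.seven}. One expresses $\cev{a}_{ij}\in \cS_n(\fB,\mathbbm{v})$ as a linear combination of products of oriented arcs $a_{kl}$, with coefficients built from the constants $\mathbbm{c}_{\bullet,\mathbbm{v}}^{\pm 1}$; the same expansion (with $\mathbbm{u}$-constants) holds in $\cS_n(\fB,\mathbbm{u})$. Applying $\cF$ to the $\mathbbm{u}$-expansion and using the first part along with the algebra-homomorphism property yields a combination of the $a_{kl}^{(m)}$'s, which one matches with $\cev{a}_{ij}^{(m)}$ by observing that $m$ parallel framing copies of cups/caps are themselves governed by the $\mathbbm{v}$-version of the same relations. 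A more conceptual alternative is to invoke that $F_n$ is Hopf, so $\cF$ commutes with the antipode $S$; under $g_{big}$ the antipode corresponds (up to quantum-cofactor factors $S(u_{ij})=(-q)^{i-j}\det_q(\buu^{ji})$) to orientation reversal in the bigon, which would give $\cF(\cev{a}_{ij})$ immediately from the value of $\cF(a_{ij})$.

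The main obstacle is the bookkeeping of scalar factors in the second step: one must verify that the $\mathbbm{u}$-constants $\mathbbm{c}_{i,\mathbbm{u}},\mathbbm{t}_{\mathbbm{u}},\mathbbm{a}_{\mathbbm{u}}$ pushed through $\cF$ reproduce precisely the $\mathbbm{v}$-constants that appear in the parallel-copy expansion of $\cev{a}_{ij}^{(m)}$. Since $\varepsilon=\mathbbm{u}^{2n}=q^{m^2}=\pm 1$ is fixed just before Lemma~\ref{Oq}, this identification is elementary but requires careful tracking of signs and powers of $q$ in the cup/cap coefficients, together with the isotopy identification of $m$-fold products with $m$ parallel copies already used in the first claim.
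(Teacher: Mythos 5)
Your first claim is fine and is exactly the paper's step: chasing \eqref{diag} gives $\cF(a_{ij})=g_{big}(u_{ij}^m)=a_{ij}^m$, and the stack of $m$ state-matched copies is isotopic to $a_{ij}^{(m)}$. The problem is the second claim, where your proposal stops short of a proof in both of the routes you sketch. The route you call a "conceptual alternative" is in fact the paper's actual argument, and its entire content is the scalar verification you skip: on the skein side $S(a_{ij})=(-q)^{i-j}\cev{a}_{\bar j,\bar i}$ in $\cS_n(\fB,\mathbbm{v})$ and $S(a_{ij})=(-\varepsilon)^{i-j}\cev{a}_{\bar j,\bar i}$ in $\cS_n(\fB,\mathbbm{u})$, so compatibility of $\cF$ with the antipode (via $F_n$ and $g_{big}$ being Hopf maps and $F_n(S(u_{ij}))=S(u_{ij})^m$) yields $(-\varepsilon)^{i-j}\cF(\cev{a}_{\bar j,\bar i})=(-q)^{m(i-j)}\cev{a}_{\bar j,\bar i}^{\,m}$, and one must then check $(-q)^m=-\varepsilon$ (which holds because $q^2$ has order $m$, so $q^m=1$ for $m$ odd and $q^m=-1$ for $m$ even, while $\varepsilon=q^{m^2}$). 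Without that identity the two sides differ by a sign and the lemma is not proved; "up to quantum-cofactor factors" is precisely the part that has to be nailed down.

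Your primary route (expanding $\cev{a}_{ij}$ through the cup/cap relations \eqref{wzh.six}--\eqref{wzh.seven} and applying $\cF$) has a genuine flaw as written: you assert that $m$ parallel framing copies of cups/caps "are governed by the $\mathbbm{v}$-version of the same relations." That is not true — parallel copies obey the relations with the $\mathbbm{u}$-constants $\mathbbm{c}_{i,\mathbbm{u}},\mathbbm{t}_{\mathbbm{u}},\mathbbm{a}_{\mathbbm{u}}$ (this is exactly the content of Lemmas \ref{lem_cross}--\ref{lem_twist}, quoted from \cite{Wan23}), and those parallel-copy relations are the nontrivial input the paper only invokes later, in the proof of Proposition \ref{Fro}, not something available for free inside the bigon lemma. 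So either you import those lemmas (making this route a heavier detour than the three-line antipode argument) or the "bookkeeping of scalar factors" you flag as the main obstacle is not elementary but is the whole proof, and it is left undone. To repair the proposal, carry out the antipode computation above, including the identity $(-q)^m=-\varepsilon$.
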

\begin{proof}
Obviously $\cF(a_{ij}) = a_{ij}^m = a_{ij}^{(m)}$.

Since $S(a_{ij}) = (-q)^{i-j}\cev{a}_{\bar{j},\bar{i}}\in \Bv$ and $g_{big}$ is a Hopf algebra isomorphism, we have 
$g_{big}(S(u_{ij})) = (-q)^{i-j}\cev{a}_{\bar{j},\bar{i}}\in \Bv$. Similarly, we have $g_{big}(S(u_{ij})) = (-\varepsilon)^{i-j}\cev{a}_{\bar{j},\bar{i}}\in \Bu$.
We have $F_n(S(u_{ij})) = S(u_{ij}^m) = S(u_{ij})^{m}$ because $F_n$ is a Hopf algebra embedding. 
Then $$(-\varepsilon)^{i-j}\cF(\cev{a}_{\bar{j},\bar{i}})
= \cF(g_{big}(S(u_{ij}))) = g_{big}(F_n(S(u_{ij})))
= g_{big}(S(u_{ij}))^m = (-q)^{m(i-j)}\cev{a}_{\bar{j},\bar{i}}^m.$$
Concrete computation shows $(-q)^m =-\varepsilon$. Thus we have 
$\cF(\cev{a}_{\bar{j},\bar{i}}) = \cev{a}_{\bar{j},\bar{i}}^m = \cev{a}_{\bar{j},\bar{i}}^{(m)}$.
\end{proof}

\subsection{The Frobenius map for general essentially bordered pb surfaces}

In this subsection, we will build the $\SL(n)$-Frobenius map for essentially bordered pb surfaces using the saturated system. 
In the following we assume all the essentially bordered pb surfaces are connected.

For an essentially bordered pb surface $\Si$ and a saturated system $B=\{b_1,\cdots,b_r\}$ of $\Si$, let $\iota^B$ denote the embedding from 
$U(B)$ to $\Sigma$.
Theorem \ref{key} implies there are two $\cR$-linear isomorphisms 
\begin{equation}\label{eq_iso}
\iota^B_{*} \colon\cS_n(U(B),\mathbbm{u})\rightarrow \cS_n(\Sigma,\mathbbm{u}),\quad \iota^B_{*} \colon\cS_n(U(B),\mathbbm{v})\rightarrow \cS_n(\Sigma,\mathbbm{v}).
\end{equation}
Since $U(B)$ is the disjoint union of bigons (see Section \ref{sec;sat_sys}) and we already built the Frobenius map $\cF$ for the bigon, we have an algebraic embedding 
\begin{equation}\label{eq_bigon}
\cF^{\ot r}\colon\cS_n(U(B),\mathbbm{u})\rightarrow \cS_n(U(B),\mathbbm{v}).
\end{equation}
The two $\cR$-linear isomorphisms in \eqref{eq_iso} and the algebraic embedding in \eqref{eq_bigon} induce an $\cR$-linear embedding $\cF_B\colon\cS_n(\Sigma,\mathbbm{u})\rightarrow \cS_n(\Sigma,\mathbbm{v})$ such that the following diagram commutes.
\begin{equation}
\begin{tikzcd}
\cS_n(U(B),\mathbbm{u})  \arrow[r, "\cF^{\ot r}"]
\arrow[d, "\iota^B_{*}"]  
&  \cS_n(U(B),\mathbbm{v})  \arrow[d, "\iota^B_{*}"] \\
 \cS_n(\Si,\mathbbm{u})  \arrow[r, "\cF_B"] 
&  \cS_n(\Si,\mathbbm{v})\\
\end{tikzcd}
\end{equation}

\begin{prop}\label{Fro}
Let $\Sigma$ be an essentially bordered pb surface and $B$ be a saturated system of $\Sigma$.

(a) For any stated $n$-web $\alpha$ in $\Si\times(-1,1)$ consisting of stated framed oriented arcs, we have $\cF_B(\alpha) = \alpha^{(m)}$. In particular, the $\cR$-linear map $\cF_B$ is independent of the choice of $B$. So, we will omit the subscript for $\cF_B$.

(b) $\cF\colon\cS_n(\Sigma,\mathbbm{u})\rightarrow \cS_n(\Si,\mathbbm{v})$ is an algebra embedding.

(c) For any interior ideal arc $c$ of $\Si$, the following diagram commutes.
\begin{equation*}
\begin{tikzcd}
\cS_n(\Si,\mathbbm{u})  \arrow[r, "\cF"]
\arrow[d, "\Theta_c"]  
&  \cS_n(\Si,\mathbbm{v})  \arrow[d, "\Theta_c"] \\
 \cS_n(\text{Cut}_c\Si,\mathbbm{u})  \arrow[r, "\cF"] 
&  \cS_n(\text{Cut}_c\Si,\mathbbm{v})\\
\end{tikzcd}
\end{equation*}
\end{prop}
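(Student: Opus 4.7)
The plan is to prove the three parts in the stated order, bootstrapping (b) and (c) from (a). For part (a), I would start from the defining composition $\cF_B = \iota^B_* \circ \cF^{\otimes r} \circ (\iota^B_*)^{-1}$ and verify the formula first in the base case where $\alpha = \alpha_1 \sqcup \cdots \sqcup \alpha_r$ is a disjoint union with each $\alpha_i$ a stated framed oriented arc contained in the bigon component $U(b_i)$; viewing $\alpha$ as living in $U(B)$, Lemma \ref{lem_bi} applied bigon-by-bigon yields $\cF^{\otimes r}(\alpha) = \alpha^{(m)}$, and pushing forward by $\iota^B_*$ gives $\cF_B(\alpha) = \alpha^{(m)}$. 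For a general arc web $\alpha$ in $\Sigma$, I would use the module isomorphism of Theorem \ref{key} to write $\alpha = \iota^B_*(\beta)$ for some $\beta \in \cS_n(U(B), \mathbbm{u})$, expand $\beta$ in the natural arc basis arising from tensor products of bigon generators $a_{ij}$ and $\cev{a}_{ij}$, and apply the base case termwise. The subtle point to verify is the identity $(\iota^B_* \beta)^{(m)} = \iota^B_*(\beta^{(m)})$ for a linear combination $\beta$, which I would deduce from the naturality of taking $m$-parallel copies in the framing direction under proper embeddings. Independence from $B$ is then immediate from the resulting explicit formula $\cF_B(\alpha) = \alpha^{(m)}$.

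For part (b), injectivity of $\cF$ is formal: the module isomorphism $\iota^B_*$ from Theorem \ref{key} combined with the injectivity of each tensor factor of $\cF^{\otimes r}$ given by Lemma \ref{Inj} yields injectivity of $\cF$. For the algebra property, I would argue from (a): for arc webs $\alpha, \beta$ in $\Sigma$, their product $\alpha \cdot \beta$ (vertical stacking) is again an arc web, and
\[\cF(\alpha \cdot \beta) = (\alpha \cdot \beta)^{(m)} = \alpha^{(m)} \cdot \beta^{(m)} = \cF(\alpha) \cdot \cF(\beta),\]
since $m$ parallel copies in the framing direction interleave consistently with the vertical stacking. General stated $n$-webs reduce to $\cR$-linear combinations of arc webs via the sticking relation \eqref{eq:sticking}, and multiplicativity extends by bilinearity.

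For part (c), the compatibility with splitting reduces to the bigon case, where the key input is the Hopf identity \eqref{eq_co}: $\Delta(u_{ij}^m) = \sum_{k} u_{ik}^m \otimes u_{kj}^m$. Under the identification $\cS_n(\fB, \mathbbm{v}) \cong \Ov$ of Theorem \ref{Hopf}, this is precisely the commutativity $\Theta_e \circ \cF = \cF^{\otimes 2} \circ \Theta_e$ for the central arc $e$ separating the bigon into two bigons. For a general interior ideal arc $c \subset \Sigma$, I would choose a saturated system $B$ transverse to $c$ and compatible with the bigon decomposition, so that each bigon of $U(B)$ that meets $c$ is split by $c$ into two bigons of a compatible saturated system of $\text{Cut}_c \Sigma$. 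Combining the naturality of $\iota^B_*$ with respect to splitting along $c$ with the bigon case and part (a) then yields the desired commutativity.

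The main obstacle is verifying that the state-sum formula for $\Theta_c(\alpha^{(m)})$, which naively involves $n^{m|\alpha \cap c|}$ terms from independent states on each of the $m$ parallel strands crossing $c$, collapses to the $n^{|\alpha \cap c|}$ terms of $\cF(\Theta_c(\alpha))$. This collapse is precisely the content of the quantum Frobenius comultiplicativity \eqref{eq_co}, and its verification in a general surface requires careful bookkeeping of states and skein relations in a tubular neighborhood of $c$.
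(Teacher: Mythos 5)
Your part (b) is fine and matches the paper (injectivity from Theorem \ref{key} and Lemma \ref{Inj}; multiplicativity from (a) on arc webs, which span by \eqref{eq:sticking}), but part (a) has a genuine gap. The map $\cF_B=\iota^B_{*}\circ\cF^{\otimes r}\circ(\iota^B_{*})^{-1}$ is only $\cR$-linear, and for a general arc web $\alpha$ the element $\beta=(\iota^B_{*})^{-1}(\alpha)$ is a nontrivial linear combination $\sum_i c_i\beta_i$ of basis webs of $U(B)$, whose coefficients $c_i$ arise from applying the skein relations of $\cS_n(\Si,\mathbbm{u})$ (crossings, twists, \eqref{wzh.seven}, \eqref{wzh.eight}) while pushing $\alpha$ into $U(B)$. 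Your ``subtle point'' $(\iota^B_{*}\beta)^{(m)}=\iota^B_{*}(\beta^{(m)})$ is not a naturality statement: the cabling $(\cdot)^{(m)}$ is defined on diagrams, not on skein classes, and is not linear, so ``$\beta^{(m)}$'' for a linear combination only makes sense if you decree it to be $\sum_i c_i\beta_i^{(m)}$ --- and then the needed identity $\alpha^{(m)}=\sum_i c_i\,\iota^B_{*}(\beta_i^{(m)})$ in $\cS_n(\Si,\mathbbm{v})$ is exactly the assertion being proved, namely that $m$-cables at $\mathbbm{v}$ satisfy the same relations, with the same coefficients (in $\varepsilon=q^{m^2}$), as single strands at $\mathbbm{u}$. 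This is not formal; it is the content of the cabled relation computations of Lemmas \ref{lem_cross}, \ref{lem7.5}, \ref{lem_wall} and \ref{lem_twist} (imported from \cite{Wan23}), which the paper's proof invokes at precisely this step --- there $\alpha$ is generated from the arcs $(b_t)_{ij}$, $(\cev{b_t})_{ij}$ by relations \eqref{w.cross}, \eqref{w.twist}, \eqref{wzh.seven}, \eqref{wzh.eight}, and the cabled lemmas guarantee the coefficients match --- and which your proposal never uses. ``Naturality of cabling under proper embeddings'' cannot substitute for these computations.

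Part (c) is also left incomplete as written. The paper's proof is a one-liner: Lemma \ref{lem_splitting} already states the collapse $\Theta_e(\alpha^{(m)})=\sum_s\alpha(h,s)^{(m)}$ for arc webs, and together with (a) and the fact that arc webs span this gives the commutative square. Your alternative route through the bigon coproduct identity \eqref{eq_co} and ``compatible saturated systems'' correctly identifies the crux (the collapse of $n^{m|\alpha\cap c|}$ state terms to $n^{|\alpha\cap c|}$), but you defer it to ``careful bookkeeping''; that bookkeeping is the whole point, and moreover the claim that cutting the bigons of $U(B)$ along $c$ yields a saturated system of $\mathrm{Cut}_c\Si$ would itself need justification. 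Using Lemma \ref{lem_splitting} directly both closes this gap and removes the need for the compatible-system construction.
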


In the rest of this subsection, we mainly focus on proving Proposition \ref{Fro}.
\def \tSi{\Si\times(-1,1)}

Note that
we review some relations calculated in \cite{Wan23} for $m$ parallel copies of arcs.
Although the author restricts to the case when $m$ and $2n$ are coprime, the arguments in \cite{Wan23} apply to general roots of unity. 

\begin{lem}[\cite{Wan23}, Corollary 7.14]\label{lem_splitting}
    Suppose $\alpha\in \cS_n(\Sigma,\mathbbm{v})$ is a stated $n$-web diagram in $\tSi$ consisting of arcs, and $e$ is an interior ideal arc in $\Si$ such that $\alpha$ is transverse to $e$ and $\alpha\cap e\neq\emptyset$.
    Let $h$ be a linear order on $\alpha\cap e$.
    Then we have 
    $$\Theta_e(\alpha^{(m)}) = \sum_{s\colon\alpha\cap e\rightarrow \{1,2,\cdots,n\}} \alpha(h,s)^{(m)}\in \cS_n(\text{Cut}_e\Si,\mathbbm{v}).$$
\end{lem}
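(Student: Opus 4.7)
The plan is to reduce, via localization around each intersection of $\alpha$ with $e$, to the bigon case, where it becomes a direct consequence of the Frobenius-type coproduct identity
\[
\Delta(u_{ij}^m) = \sum_{k=1}^n u_{ik}^m\otimes u_{kj}^m
\]
from Equation~\eqref{eq_co}. The content of the lemma is a collapse: a priori, $\Theta_e(\alpha^{(m)})$ is a sum over $n^{m|\alpha\cap e|}$ state assignments, while the claim says it collapses to the ``diagonal'' sum of $n^{|\alpha\cap e|}$ lifts obtained by assigning a single state to each entire packet of $m$ parallel copies.

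First, I would isotope $\alpha$ so that in a tubular neighborhood $N\cong e\times(-\epsilon,\epsilon)$ of $e$, each arc of $\alpha$ meeting $e$ appears as a short vertical segment $\{p\}\times(-\epsilon,\epsilon)$; taking $m$ parallel copies in the framing direction then partitions $\alpha^{(m)}\cap e$ into $|\alpha\cap e|$ packets of $m$ collinear points each. I would then establish the baby bigon case: for the stated arc $u_{ij}$ in $\fB$ and a middle arc $c$ connecting the two punctures and cutting $\fB$ into two bigons, Theorem~\ref{Hopf} identifies $\cS_n(\fB)\cong\cO_q(\SL(n))$ in such a way that $\Theta_c$ corresponds to the Hopf coproduct $\Delta$. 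Together with the identification $u_{ij}^m=u_{ij}^{(m)}$ in the bigon (obtained by isotoping $m$ stacked horizontal arcs with matching states into $m$ parallel copies), Equation~\eqref{eq_co} then gives
\[
\Theta_c\bigl(u_{ij}^{(m)}\bigr) = \Delta(u_{ij}^m) = \sum_{k=1}^n u_{ik}^{(m)}\otimes u_{kj}^{(m)},
\]
which is exactly the lemma for one arc crossing an interior ideal arc once inside a bigon.

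Next I would globalize by a local-to-global argument. For each $p\in\alpha\cap e$ pick pairwise disjoint disk neighborhoods $U_p\subset\Sigma$ meeting $\alpha$ only in the single arc through $p$ and meeting $e$ only in a sub-arc $e\cap U_p$. Identify each $U_p$ with a bigon by declaring the two endpoints of $e\cap U_p$ to be the punctures and $e\cap U_p$ to be its middle arc. The $m$ parallel segments of $\alpha^{(m)}$ in $U_p$ then correspond, after resumming over auxiliary ``virtual'' states on $\partial U_p\setminus e$, to $u_{ij}^{(m)}$ in the bigon $U_p$. Since $\Theta_e$ factorizes its state sum independently over the different packets, the baby case applies packet by packet, converting the $n^{m|\alpha\cap e|}$ state assignments into the $n^{|\alpha\cap e|}$ diagonal ones, i.e.\ into $\sum_{s\colon\alpha\cap e\to\{1,\dots,n\}}\alpha(h,s)^{(m)}$, as required.

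The main obstacle is the rigorous justification of the local-to-global step: identifying $U_p$ with a bigon and showing that the bigon collapse really applies despite the fact that $\partial U_p\setminus e$ lies in the interior of $\Sigma$ and carries no states in the global diagram. The careful bookkeeping here should use that $\Theta_e$ is an $\cR$-algebra homomorphism (compatible with stacking), that it is invariant under isotopies disjoint from $e$, and that the ``virtual'' auxiliary states introduced on $\partial U_p\setminus e$ cancel out when the local bigon computation is re-embedded into the global diagram, so that the contribution near each $p$ can be factored out and analyzed in the local bigon model while the remainder of $\alpha^{(m)}$ is held fixed.
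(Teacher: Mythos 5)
First, note that the paper itself does not prove this lemma: it is imported verbatim from \cite{Wan23} (Corollary 7.14), with only the remark that the coprimality hypothesis there is inessential. So your proposal has to be judged on its own terms, and there it has a genuine gap. The bigon input is correct and is indeed the heart of the matter: under $\cS_n(\fB,\mathbbm{v})\cong\Oq$ the splitting along the middle ideal arc is the coproduct, $a_{ij}^m=a_{ij}^{(m)}$, and \eqref{eq_co} gives the collapse. What is not sound is the local-to-global step via the disks $U_p$. Cutting and state-summing are only defined along \emph{ideal} arcs (arcs running between punctures), and induced maps $f_*$ exist only for proper embeddings of pb surfaces; your $U_p$ is an interior disk whose boundary, apart from $e\cap U_p$, lies in the interior of $\Sigma$, ends at no punctures, and carries no states. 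Hence ``virtual auxiliary states on $\partial U_p\setminus e$'' and ``resumming'' over them are not operations available in the stated skein formalism, and the identification of $U_p$ with a bigon does not produce any map $\cS_n(\fB)\to\cS_n(\Sigma)$ or factorization of $\Theta_e$ through which the bigon identity could be pushed. This is not bookkeeping to be supplied later; it is the entire content of the lemma.

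Relatedly, the collapse is not a crossing-by-crossing local phenomenon, so ``the baby case applies packet by packet'' begs the question. The identity \eqref{eq_co} is the quantum Frobenius property of the $m$-th power of a single \emph{stated} generator: the off-diagonal terms disappear because $q$-multinomial coefficients vanish at the root of unity, and those coefficients arise from reordering/exchanging the $m$ stated endpoints of a packet on an honest boundary edge via relations such as \eqref{wzh.eight}. At an interior crossing of an unstated cable with $e$ there are no states and no such relations to invoke inside $U_p$, so nothing forces the non-constant state assignments on a packet to cancel locally; the cancellation only materializes once the packet is attached to genuine stated boundary (the endpoints of the arcs of $\alpha$ on $\partial\Sigma$, or the new edges $e_1,e_2$ of $\text{Cut}_e\Sigma$). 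A repairable route, and essentially the one taken in \cite{Wan23}, keeps your bigon input but localizes along legitimate objects: each component of $\alpha$ is a properly embedded arc and so sits in a properly embedded bigon neighborhood (exactly as in the saturated systems of Section \ref{sec;sat_sys}), and one transports \eqref{eq_co} using proper embeddings, the compatibility and injectivity of splitting maps along genuine ideal arcs, and the height-exchange relations near $e_1,e_2$ — rather than through interior disks with fictitious states.
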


Let  
$
\begin{tikzpicture}%[dline /. style ={line width =2pt}]
\tikzset{->-/.style=
{decoration={markings,mark=at position #1 with
{\arrow{latex}}},postaction={decorate}}}
\draw [color = black, line width =1pt](0.6,0) --(0,0);
\draw[line width =1pt,decoration={markings, mark=at position 0.8 with {\arrow{>}}},postaction={decorate}](1,0) --(1.5,0);
\node at(0.8,0) {\small $m$};
\draw[color=black] (0.8,0) circle(0.2);
\end{tikzpicture}
$
denote the 
$m$ parallel copies of
$
\begin{tikzpicture}%[dline /. style ={line width =2pt}]
\tikzset{->-/.style=
{decoration={markings,mark=at position #1 with
{\arrow{latex}}},postaction={decorate}}}
\draw [color = black, line width =1pt](0.6,0) --(1,0);
\draw[line width =1pt,decoration={markings, mark=at position 0.8 with {\arrow{>}}},postaction={decorate}](1,0) --(1.5,0);
\end{tikzpicture}
$.

The following four lemmas are straightforward from the proofs in \cite{Wan23}. 
\begin{lem}[{\cite[Lemmas 7.2, 7.3, and Corollary 7.17]{Wan23}}]\label{lem_cross}
In $\cS_n(\Si,\mathbbm{v})$, we have
\begin{equation}\label{eqq14}
\raisebox{-.10in}{
\begin{tikzpicture}%[dline /. style ={line width =2pt}]
\tikzset{->-/.style=
{decoration={markings,mark=at position #1 with
{\arrow{latex}}},postaction={decorate}}}
\draw [color = black, line width =1pt](0,0) --(-0.7,0);
\draw [color = black, line width =1pt](0,0) --(0.4,0);
\draw[line width =1pt,decoration={markings, mark=at position 0.8 with {\arrow{>}}},postaction={decorate}](0.8,0) --(1.3,0);
\draw[line width =1pt,decoration={markings, mark=at position 0.8 with {\arrow{>}}},postaction={decorate}](0,0.1) --(0,0.5);
\draw [color = black, line width =1pt](0,-0.5) --(0,-0.1);
\node at(0.6,0) {\small $m$};
\draw[color=black] (0.6,0) circle(0.2);
\end{tikzpicture}}
=
 q^{-\frac{2m}{n}}
\raisebox{-.10in}{
\begin{tikzpicture}%[dline /. style ={line width =2pt}]
\tikzset{->-/.style=
{decoration={markings,mark=at position #1 with
{\arrow{latex}}},postaction={decorate}}}
\draw [color = black, line width =1pt](-0.1,0) --(-0.7,0);
\draw [color = black, line width =1pt](0.1,0) --(0.4,0);
\draw[line width =1pt,decoration={markings, mark=at position 0.8 with {\arrow{>}}},postaction={decorate}](0.8,0) --(1.3,0);
\draw[line width =1pt,decoration={markings, mark=at position 0.8 with {\arrow{>}}},postaction={decorate}](0,0) --(0,0.5);
\draw [color = black, line width =1pt](0,-0.5) --(0,0);
\node at(0.6,0) {\small $m$};
\draw[color=black] (0.6,0) circle(0.2);
\end{tikzpicture}}, \qquad
%q^{\frac{m}{n}}
\raisebox{-.10in}{
\begin{tikzpicture}%[dline /. style ={line width =2pt}]
\tikzset{->-/.style=
{decoration={markings,mark=at position #1 with
{\arrow{latex}}},postaction={decorate}}}
\draw [color = black, line width =1pt](-0.1,0) --(-0.7,0);
\draw [color = black, line width =1pt](0.1,0) --(0.4,0);
\draw[line width =1pt,decoration={markings, mark=at position 0.8 with {\arrow{>}}},postaction={decorate}](0.8,0) --(1.3,0);
\draw[color = black, line width =1pt](0,0.1) --(0,0.5);
\draw [color = black, line width =1pt](0,-0.5) --(0,0.1);
\draw [color = black, line width =1pt][line width =1pt,decoration={markings, mark=at position 0.5 with {\arrow{<}}},postaction={decorate}](0,-0.5) --(0,-0.1);
\node at(0.6,0) {\small $m$};
\draw[color=black] (0.6,0) circle(0.2);
\end{tikzpicture}}
=
%%%%%%%%%%%%%%%%%%%%%%%%%%%%%%%%%%%%%%%%%%%%%%%%%%%%%%%
 q^{-\frac{2m}{n}}
\raisebox{-.10in}{
\begin{tikzpicture}%[dline /. style ={line width =2pt}]
\tikzset{->-/.style=
{decoration={markings,mark=at position #1 with
{\arrow{latex}}},postaction={decorate}}}
\draw [color = black, line width =1pt](0.1,0) --(-0.7,0);
\draw [color = black, line width =1pt](0.1,0) --(0.4,0);
\draw[line width =1pt,decoration={markings, mark=at position 0.8 with {\arrow{>}}},postaction={decorate}](0.8,0) --(1.3,0);
\draw[color = black, line width =1pt](0,0.1) --(0,0.5);
\draw [color = black, line width =1pt][line width =1pt,decoration={markings, mark=at position 0.5 with {\arrow{<}}},postaction={decorate}](0,-0.5) --(0,-0.1);
\node at(0.6,0) {\small $m$};
\draw[color=black] (0.6,0) circle(0.2);
\end{tikzpicture}}
\end{equation}
where on the left-hand side of the equality
$\begin{tikzpicture}%[dline /. style ={line width =2pt}]
\tikzset{->-/.style=
{decoration={markings,mark=at position #1 with
{\arrow{latex}}},postaction={decorate}}}
\draw [color = black, line width =1pt](0,0) --(-0.7,0);
\draw [color = black, line width =1pt](0,0) --(0.4,0);
\draw[line width =1pt,decoration={markings, mark=at position 0.8 with {\arrow{>}}},postaction={decorate}](0.8,0) --(1.3,0);
\node at(0.6,0) {\small $m$};
\draw[color=black] (0.6,0) circle(0.2);
\end{tikzpicture}$ is a part of $m$ parallel copies of a stated framed oriented arc and the single vertical oriented line is not a part of these $m$ parallel copies, and
\begin{equation}\label{crossing}
\raisebox{-.20in}{
\begin{tikzpicture}%[dline /. style ={line width =2pt}]
\tikzset{->-/.style=
{decoration={markings,mark=at position #1 with
{\arrow{latex}}},postaction={decorate}}}
\draw [color = black, line width =1pt](0,0) --(-0.7,0);
\draw [color = black, line width =1pt](0,0) --(0.4,0);
\draw[line width =1pt,decoration={markings, mark=at position 0.8 with {\arrow{>}}},postaction={decorate}](0.8,0) --(1.3,0);
\draw[line width =1pt,decoration={markings, mark=at position 0.8 with {\arrow{>}}},postaction={decorate}](0,0.1) --(0,0.5);
\draw [color = black, line width =1pt](0,-1) --(0,-0.1);
\filldraw[fill=white] (0,-0.5) circle(0.2);
\node at(0.6,0) {\small $m$};
\node at(0,-0.5) {\small $m$};
\draw[color=black] (0.6,0) circle(0.2);
\end{tikzpicture}}
=
 \varepsilon^{-\frac{2}{n}}
\raisebox{-.20in}{
\begin{tikzpicture}%[dline /. style ={line width =2pt}]
\tikzset{->-/.style=
{decoration={markings,mark=at position #1 with
{\arrow{latex}}},postaction={decorate}}}
\draw [color = black, line width =1pt](-0.1,0) --(-0.7,0);
\draw [color = black, line width =1pt](0.1,0) --(0.4,0);
\draw[line width =1pt,decoration={markings, mark=at position 0.8 with {\arrow{>}}},postaction={decorate}](0.8,0) --(1.3,0);
\draw[line width =1pt,decoration={markings, mark=at position 0.8 with {\arrow{>}}},postaction={decorate}](0,0) --(0,0.5);
\draw [color = black, line width =1pt](0,-1) --(0,0);
\node at(0.6,0) {\small $m$};
\filldraw[fill=white] (0,-0.5) circle(0.2);
\node at(0,-0.5) {\small $m$};
\draw[color=black] (0.6,0) circle(0.2);
\end{tikzpicture}},
\end{equation}
where 
$\begin{tikzpicture}%[dline /. style ={line width =2pt}]
\tikzset{->-/.style=
{decoration={markings,mark=at position #1 with
{\arrow{latex}}},postaction={decorate}}}
\draw [color = black, line width =1pt](0,0) --(-0.7,0);
\draw [color = black, line width =1pt](0,0) --(0.4,0);
\draw[line width =1pt,decoration={markings, mark=at position 0.8 with {\arrow{>}}},postaction={decorate}](0.8,0) --(1.3,0);
\node at(0.6,0) {\small $m$};
\draw[color=black] (0.6,0) circle(0.2);
\end{tikzpicture}$ is part of $m$ parallel copies of a stated framed oriented arc.

\end{lem}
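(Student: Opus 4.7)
My plan is to treat the two identities in \eqref{eqq14} first and then derive \eqref{crossing} from them. The overall strategy is iterated use of the basic Hecke-type skein relation \eqref{w.cross}, together with relations for $n$-valent vertices and framing, so that the "Hecke smoothing" terms reorganize into parallel copies absorbed back into the bundle.

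For the first identity in \eqref{eqq14}, I would proceed by induction on $m$. The base $m=1$ is immediate from \eqref{w.cross}: rewriting that relation as
\begin{equation*}
q^{1/n}\,[X_+] \;-\; q^{-1/n}\,[X_-] \;=\; (q-q^{-1})\,[X_0]
\end{equation*}
and then combining with the consecutive crossing of the single horizontal arc on the second, symmetric side of the picture (which produces another Hecke relation); the two "$X_0$" smoothing terms appear with opposite signs and cancel, leaving just the scalar ratio $q^{-2/n}$ between the two configurations. This is essentially the content of \cite[Lemma~7.2]{Wan23} and follows from the symmetric position of the single vertical arc relative to a \emph{single} horizontal arc passing through the bundle on both sides of the vertical arc. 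For the inductive step, one expands the outermost of the $m$ "over/under" crossings via \eqref{w.cross}: the Hecke-smoothed term produces a pair of parallel horizontal arcs that can be absorbed into the remaining $m-1$ parallel copies of the bundle (using the bundle's parallel structure and the splitting viewpoint), reducing the claim to the case $m-1$. Combining the inductive factor $q^{-2(m-1)/n}$ with the additional $q^{-2/n}$ produced at this step yields $q^{-2m/n}$.

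The second identity in \eqref{eqq14} is obtained from the first by reversing the orientation of the vertical strand and applying the same argument, where the role of over/under is swapped; the framing relation \eqref{w.twist} is invoked if needed to match conventions. This is \cite[Lemma~7.3]{Wan23}.

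Once \eqref{eqq14} is established, I derive \eqref{crossing} (i.e.\ \cite[Corollary~7.17]{Wan23}) by applying the first identity of \eqref{eqq14} separately to each of the $m$ strands in the vertical bundle: push them one at a time past the horizontal bundle, accumulating a factor of $q^{-2m/n}$ per strand. After $m$ such moves, the total scalar is
\begin{equation*}
\bigl(q^{-2m/n}\bigr)^{m} \;=\; q^{-2m^{2}/n} \;=\; \varepsilon^{-2/n},
\end{equation*}
where $\varepsilon = q^{m^{2}}$ as set in this section.

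The main technical obstacle is the inductive step in the proof of \eqref{eqq14}: one must verify that the smoothing term produced by \eqref{w.cross} genuinely reincorporates into the remaining $m-1$ parallel copies without producing further error terms. This relies on the parallel bundle's symmetry (so that smoothing produces two parallel horizontal strands that merge cleanly with the remaining bundle), and on the fact that any auxiliary framing change introduced by \eqref{w.twist} contributes a scalar compatible with the claimed ratio $q^{-2m/n}$. Once this combinatorial cancellation is in place, everything else is bookkeeping of scalar factors and the passage from $q$ to $\varepsilon = q^{m^{2}}$.
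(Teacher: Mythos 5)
There is a genuine gap: your proof of \eqref{eqq14} never uses the hypothesis, stated at the beginning of this section, that $q^{2}$ is a primitive $m$-th root of unity, and the identities are false without it. They are not generic skein identities but root-of-unity ``transparency'' statements about the Frobenius ($m$-th parallel) of an arc. Already your base case is wrong for generic $q$: for a single strand, \eqref{w.cross} gives over $=q^{-2/n}\,$under $+\,q^{-1/n}(q-q^{-1})\,$smoothing, and the smoothing term does not cancel against anything in the pictures of the lemma (there is only one crossing site); it disappears only when $q-q^{-1}=0$, i.e.\ when $q^{2}=1$, which is exactly the case $m=1$ of the hypothesis. For the same reason the induction is ill-posed: $m$ is determined by $q$ (it is the order of $q^{2}$), so ``the statement for $m-1$ parallel copies at the same $q$'' is simply false, and the step in which the Hecke smoothing term is ``absorbed into the remaining $m-1$ copies'' is not a legitimate skein move --- the smoothed diagram merges the transversal strand with a strand of the bundle and is topologically different; it neither cancels nor reassembles into parallel copies. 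In the actual argument of \cite{Wan23} (Lemmas 7.2, 7.3, Corollary 7.17), one expands all $m$ crossings (or works through the splitting map and relation \eqref{wzh.eight} in $\cO_q(\mathrm{SL}(n))$) and the extra terms cancel only because their coefficients organize into geometric sums $1+q^{\pm2}+\cdots+q^{\pm2(m-1)}$ or Gaussian binomials $\binom{m}{k}_{q^{2}}$, which vanish precisely because $q^{2}$ is a primitive $m$-th root of unity. This vanishing is the entire content of the lemma and is absent from your argument.

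Two smaller points. First, your derivation of \eqref{crossing} from \eqref{eqq14} --- pushing the $m$ strands of the second bundle through one at a time and collecting $(q^{-2m/n})^{m}=q^{-2m^{2}/n}=\varepsilon^{-2/n}$ --- is correct and is indeed how the corollary follows in \cite{Wan23}; but it rests entirely on \eqref{eqq14}, which your argument does not establish. Second, note that the paper itself does not reprove this lemma: it quotes \cite{Wan23} and only remarks that the computations there, written under the assumption $\gcd(m,2n)=1$, go through for general roots of unity. So a self-contained proof would have to reproduce the root-of-unity cancellation computations, not replace them by topological absorption.
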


\begin{lem}[{\cite[Lemmas 7.6, 7.7, and Corollary 7.18]{Wan23}}]\label{lem7.5}
In $\cS_n(\Sigma,\mathbbm{v})$, we have 
$$
\raisebox{-.30in}{
\begin{tikzpicture}%[dline /. style ={line width =2pt}]
\tikzset{->-/.style=
{decoration={markings,mark=at position #1 with
{\arrow{latex}}},postaction={decorate}}}
\draw [line width =1.5pt,decoration={markings, mark=at position 0.91 with {\arrow{>}}},postaction={decorate}] (0,1)--(0,-1);
\draw [color = black, line width =1pt](-1,0.5) --(-0,-0.5);
\draw [color = black, line width =1pt](-1,-0.5) --(-0.6,-0.1);
\draw [color = black, line width =1pt](-0.4,0.1) --(0,0.5);
\draw [color = black, line width =1pt](-1.5,-0.5) --(-1,-0.5);
\draw [color = black, line width =1pt](-1.5,0.5) --(-1.4,0.5);
\node [right]at(0,0.5) {\small $i$};
\node [right]at(0,-0.5) {\small $j$};
\node at(-1.2,0.5) {\small $m$};
%\draw[color=black] (0.5,0) ellipse (0.5 and 0.2);
\draw[color=black] (-1.2,0.5) circle (0.2);
\filldraw[fill=white,line width =0.8pt]  (-0.2,0.3) circle (0.085);
\filldraw[fill=white,line width =0.8pt]  (-0.2,-0.3) circle (0.085);
\end{tikzpicture}}= 
     (q^{-\frac{1}{n}+\delta_{ij}})^{m}
\raisebox{-.30in}{
\begin{tikzpicture}%[dline /. style ={line width =2pt}]
\tikzset{->-/.style=
{decoration={markings,mark=at position #1 with
{\arrow{latex}}},postaction={decorate}}}
\draw [line width =1.5pt,decoration={markings, mark=at position 0.91 with {\arrow{>}}},postaction={decorate}] (0,1)--(0,-1);
\draw [color = black, line width =1pt](-1,-0.5) --(0,-0.5);
\draw [color = black, line width =1pt](-1,0.5) --(-0,0.5);
\draw [color = black, line width =1pt](-0.3,0.5) --(0,0.5);
\node [right]at(0,0.5) {\small $j$};
\node [right]at(0,-0.5) {\small $i$};
%\draw[color=black] (0.5,0) ellipse (0.5 and 0.2);
\filldraw[fill=white] (-0.6,0.5) circle (0.2);
\filldraw[fill=white,line width =0.8pt]  (-0.2,0.5) circle (0.085);
\filldraw[fill=white,line width =0.8pt]  (-0.2,-0.5) circle (0.085);
\node at(-0.6,0.5) {\small $m$};
\end{tikzpicture}},  \qquad  
\raisebox{-.30in}{
\begin{tikzpicture}%[dline /. style ={line width =2pt}]
\tikzset{->-/.style=
{decoration={markings,mark=at position #1 with
{\arrow{latex}}},postaction={decorate}}}
\draw [line width =1.5pt,decoration={markings, mark=at position 0.91 with {\arrow{>}}},postaction={decorate}] (0,1)--(0,-1);
\draw [color = black, line width =1pt](-1,0.5) --(-0,-0.5);
\draw [color = black, line width =1pt](-1,-0.5) --(-0.6,-0.1);
\draw [color = black, line width =1pt](-0.4,0.1) --(0,0.5);
\draw [color = black, line width =1pt](-1.5,-0.5) --(-1,-0.5);
\draw [color = black, line width =1pt](-1.5,0.5) --(-1.4,0.5);
\node [right]at(0,0.5) {\small $i$};
\node [right]at(0,-0.5) {\small $j$};
\node at(-1.2,0.5) {\small $m$};
%\draw[color=black] (0.5,0) ellipse (0.5 and 0.2);
\draw[color=black] (-1.2,0.5) circle (0.2);
\filldraw[fill=black,line width =0.8pt]  (-0.2,0.3) circle (0.085);
\filldraw[fill=white,line width =0.8pt]  (-0.2,-0.3) circle (0.085);
\end{tikzpicture}}= 
     (q^{\frac{1}{n}-\delta_{i\bar{j}}})^{m}
\raisebox{-.30in}{
\begin{tikzpicture}%[dline /. style ={line width =2pt}]
\tikzset{->-/.style=
{decoration={markings,mark=at position #1 with
{\arrow{latex}}},postaction={decorate}}}
\draw [line width =1.5pt,decoration={markings, mark=at position 0.91 with {\arrow{>}}},postaction={decorate}] (0,1)--(0,-1);
\draw [color = black, line width =1pt](-1,-0.5) --(0,-0.5);
\draw [color = black, line width =1pt](-1,0.5) --(-0,0.5);
\draw [color = black, line width =1pt](-0.3,0.5) --(0,0.5);
\node [right]at(0,0.5) {\small $j$};
\node [right]at(0,-0.5) {\small $i$};
%\draw[color=black] (0.5,0) ellipse (0.5 and 0.2);
\filldraw[fill=white] (-0.6,0.5) circle (0.2);
\filldraw[fill=white,line width =0.8pt]  (-0.2,0.5) circle (0.085);
\filldraw[fill=black,line width =0.8pt]  (-0.2,-0.5) circle (0.085);
\node at(-0.6,0.5) {\small $m$};
\end{tikzpicture}},   
%
%%%%%%%%%%%%%%%%%%%%%%%%%%%%%%%%%%%%%%%%%%%%%%%%%%%%%%%%%%%%%%
%%%%%%%%%%%%%%%%%%%%%%%%%%%%%%%%%
$$
where on the left-hand side of the equality 
$\begin{tikzpicture}%[dline /. style ={line width =2pt}]
\tikzset{->-/.style=
{decoration={markings,mark=at position #1 with
{\arrow{latex}}},postaction={decorate}}}
\draw [color = black, line width =1pt](0.8,0) --(1.2,0);
\draw [color = black, line width =1pt](0,0) --(0.4,0);
%\draw[line width =1pt,decoration={markings, mark=at position 0.8 with {\arrow{>}}},postaction={decorate}](0.8,0) --(1.3,0);
%
\node at(0.6,0) {\small $m$};
\draw[color=black] (0.6,0) circle(0.2);
\end{tikzpicture}$ is a part of  $m$ parallel copies of a stated framed oriented arc and the other single arc (the one stated by $i$) is not a part of these $m$ parallel copies, and 
\begin{equation}\label{height}
\raisebox{-.30in}{
\begin{tikzpicture}%[dline /. style ={line width =2pt}]
\tikzset{->-/.style=
{decoration={markings,mark=at position #1 with
{\arrow{latex}}},postaction={decorate}}}
\draw [line width =1.5pt,decoration={markings, mark=at position 0.91 with {\arrow{>}}},postaction={decorate}] (0,1)--(0,-1);
\draw [color = black, line width =1pt](-1,0.5) --(-0,-0.5);
\draw [color = black, line width =1pt](-1,-0.5) --(-0.6,-0.1);
\draw [color = black, line width =1pt](-0.4,0.1) --(0,0.5);
\draw [color = black, line width =1pt](-1.5,-0.5) --(-1,-0.5);
\draw [color = black, line width =1pt](-1.5,0.5) --(-1.4,0.5);
\node [right]at(0,0.5) {\small $i$};
\node [right]at(0,-0.5) {\small $j$};
\node at(-1.2,0.5) {\small $m$};
%\draw[color=black] (0.5,0) ellipse (0.5 and 0.2);
\draw[color=black] (-1.2,0.5) circle (0.2);
\filldraw[fill=white] (-1.2,-0.5) circle (0.2);
\node at(-1.2,-0.5) {\small $m$};
\filldraw[fill=white,line width =0.8pt]  (-0.2,0.3) circle (0.085);
\filldraw[fill=white,line width =0.8pt]  (-0.2,-0.3) circle (0.085);
\end{tikzpicture}}= 
     \varepsilon^{-\frac{1}{n}+\delta_{ij}}
\raisebox{-.30in}{
\begin{tikzpicture}%[dline /. style ={line width =2pt}]
\tikzset{->-/.style=
{decoration={markings,mark=at position #1 with
{\arrow{latex}}},postaction={decorate}}}
\draw [line width =1.5pt,decoration={markings, mark=at position 0.91 with {\arrow{>}}},postaction={decorate}] (0,1)--(0,-1);
\draw [color = black, line width =1pt](-1,-0.5) --(0,-0.5);
\draw [color = black, line width =1pt](-1,0.5) --(-0,0.5);
\draw [color = black, line width =1pt](-0.3,0.5) --(0,0.5);
\node [right]at(0,0.5) {\small $j$};
\node [right]at(0,-0.5) {\small $i$};
%\draw[color=black] (0.5,0) ellipse (0.5 and 0.2);
\filldraw[fill=white] (-0.6,0.5) circle (0.2);
\filldraw[fill=white,line width =0.8pt]  (-0.2,0.5) circle (0.085);
\filldraw[fill=white,line width =0.8pt]  (-0.2,-0.5) circle (0.085);
\node at(-0.6,0.5) {\small $m$};
\filldraw[fill=white] (-0.6,-0.5) circle (0.2);
\node at(-0.6,-0.5) {\small $m$};
\end{tikzpicture}},  \qquad  
\raisebox{-.30in}{
\begin{tikzpicture}%[dline /. style ={line width =2pt}]
\tikzset{->-/.style=
{decoration={markings,mark=at position #1 with
{\arrow{latex}}},postaction={decorate}}}
\draw [line width =1.5pt,decoration={markings, mark=at position 0.91 with {\arrow{>}}},postaction={decorate}] (0,1)--(0,-1);
\draw [color = black, line width =1pt](-1,0.5) --(-0,-0.5);
\draw [color = black, line width =1pt](-1,-0.5) --(-0.6,-0.1);
\draw [color = black, line width =1pt](-0.4,0.1) --(0,0.5);
\draw [color = black, line width =1pt](-1.5,-0.5) --(-1,-0.5);
\draw [color = black, line width =1pt](-1.5,0.5) --(-1.4,0.5);
\node [right]at(0,0.5) {\small $i$};
\node [right]at(0,-0.5) {\small $j$};
\node at(-1.2,0.5) {\small $m$};
%\draw[color=black] (0.5,0) ellipse (0.5 and 0.2);
\draw[color=black] (-1.2,0.5) circle (0.2);
\filldraw[fill=black,line width =0.8pt]  (-0.2,0.3) circle (0.085);
\filldraw[fill=white,line width =0.8pt]  (-0.2,-0.3) circle (0.085);
\filldraw[fill=white] (-1.2,-0.5) circle (0.2);
\node at(-1.2,-0.5) {\small $m$};
\end{tikzpicture}}= 
     \varepsilon^{\frac{1}{n}-\delta_{i\bar{j}}}
\raisebox{-.30in}{
\begin{tikzpicture}%[dline /. style ={line width =2pt}]
\tikzset{->-/.style=
{decoration={markings,mark=at position #1 with
{\arrow{latex}}},postaction={decorate}}}
\draw [line width =1.5pt,decoration={markings, mark=at position 0.91 with {\arrow{>}}},postaction={decorate}] (0,1)--(0,-1);
\draw [color = black, line width =1pt](-1,-0.5) --(0,-0.5);
\draw [color = black, line width =1pt](-1,0.5) --(-0,0.5);
\draw [color = black, line width =1pt](-0.3,0.5) --(0,0.5);
\node [right]at(0,0.5) {\small $j$};
\node [right]at(0,-0.5) {\small $i$};
%\draw[color=black] (0.5,0) ellipse (0.5 and 0.2);
\filldraw[fill=white] (-0.6,0.5) circle (0.2);
\filldraw[fill=white,line width =0.8pt]  (-0.2,0.5) circle (0.085);
\filldraw[fill=black,line width =0.8pt]  (-0.2,-0.5) circle (0.085);
\filldraw[fill=white] (-0.6,-0.5) circle (0.2);
\node at(-0.6,-0.5) {\small $m$};
\node at(-0.6,0.5) {\small $m$};
\end{tikzpicture}},   
%
%%%%%%%%%%%%%%%%%%%%%%%%%%%%%%%%%%%%%%%%%%%%%%%%%%%%%%%%%%%%%%
%%%%%%%%%%%%%%%%%%%%%%%%%%%%%%%%%
\end{equation}
where each  
$\begin{tikzpicture}%[dline /. style ={line width =2pt}]
\tikzset{->-/.style=
{decoration={markings,mark=at position #1 with
{\arrow{latex}}},postaction={decorate}}}
\draw [color = black, line width =1pt](0.8,0) --(1.2,0);
\draw [color = black, line width =1pt](0,0) --(0.4,0);
%\draw[line width =1pt,decoration={markings, mark=at position 0.8 with {\arrow{>}}},postaction={decorate}](0.8,0) --(1.3,0);
%
\node at(0.6,0) {\small $m$};
\draw[color=black] (0.6,0) circle(0.2);
\end{tikzpicture}$  is a part of  $m$ parallel copies of a stated framed oriented arc.
\end{lem}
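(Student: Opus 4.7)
The plan is to derive both parts of the lemma by iterating the basic boundary height-exchange relation \eqref{wzh.eight}. For the first two equations (single arc versus $m$ parallel copies), I would push the single state-$i$ arc past the $m$ parallel copies one copy at a time. A single application of \eqref{wzh.eight} at a crossing a priori produces two terms: the desired swap, with scalar $q^{-1/n}q^{\delta_{ij}}$, and a parallel-preserved term proportional to $\delta_{j<i}(q-q^{-1})$. The key step is to verify that the parallel-preserved term vanishes within the full $m$-parallel bundle, owing to state collisions among the adjacent endpoints on the boundary edge which collapse via the antisymmetrization implicit in \eqref{wzh.four} together with the corner identities \eqref{wzh.six}. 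Iterating the clean swap $m$ times yields the scalar $(q^{-1/n + \delta_{ij}})^{m}$. The variant involving the black dot (reversed orientation at an endpoint) follows from combining \eqref{wzh.eight} with the state-sum relation \eqref{wzh.seven} and tracking the normalization constants $\mathbbm{c}_{i,\mathbbm{v}}$, producing $(q^{1/n - \delta_{i\bar{j}}})^{m}$ in the same way.

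For the relation \eqref{height} between two bundles of $m$ parallel copies, I would bootstrap from the first part: apply the single-arc-versus-$m$-parallel result, which contributes scalar $q^{m(-1/n + \delta_{ij})}$, to each of the $m$ arcs in the outer bundle in turn. This accumulates a total factor of $q^{m^{2}(-1/n + \delta_{ij})}$; invoking the identity $\varepsilon = q^{m^{2}}$ rewrites this as $\varepsilon^{-1/n + \delta_{ij}}$, as asserted. The reversed-orientation variant is handled identically and yields $\varepsilon^{1/n - \delta_{i\bar{j}}}$. The analogue of Lemma \ref{lem_cross} for double $m$-bundles is a precedent for this bootstrapping pattern, where the $q^{-2/n}$ factor for a single bulk crossing upgrades to $\varepsilon^{-2/n}$ after $m^{2}$ iterations.

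The main obstacle will be the vanishing of the parallel-preserved terms at each intermediate step of the iteration. One must check carefully that the extraneous contribution disappears modulo the defining relations \eqref{w.cross}--\eqref{wzh.eight} of $\cS_n(\Sigma,\mathbbm{v})$, rather than accumulating as uncontrolled corrections; this is essentially the content of the cited \cite[Lemmas 7.6 and 7.7]{Wan23}. Once the single-step cancellation is secured, the remainder of the proof reduces to purely multiplicative bookkeeping of scalars and the substitution $\varepsilon = q^{m^{2}}$, so all four identities in the statement follow uniformly.
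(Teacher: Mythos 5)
There is a genuine gap, and it sits exactly at the step you flag as the ``key step.'' The cross-term $\delta_{j<i}(q-q^{-1})(\cdots)$ produced by each application of \eqref{wzh.eight} does \emph{not} vanish for diagrammatic reasons: the diagrams involved contain no $n$-valent vertices and no caps, so neither the antisymmetrization in \eqref{wzh.four} nor the corner relation \eqref{wzh.six} can be invoked, and for generic $\hat q$ the first two identities of the lemma are simply false (already in the bigon: for $j<i$ the $m$-th power of an arc with boundary state $j$ does not $q$-commute with an arc of state $i$ in $\cO_q(SL(n))$ when $q$ is generic). What actually happens is a root-of-unity cancellation: after resolving all $m$ crossings, the unwanted terms can be regrouped, using the clean $q$-commutations among the parallel copies themselves (the $i=j$ case of \eqref{wzh.eight}, i.e.\ same-row/column relations in $\cO_q(M(n))$), into multiples of quantum-integer sums of the type $1+q^{2}+\cdots+q^{2(m-1)}$ (equivalently $(q-q^{-1})[m]_{q^{2}}$-type factors), and these vanish precisely because $q^{2}$ is a primitive $m$-th root of unity --- the standing hypothesis of this section, which your argument for the first two identities never uses. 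Without invoking that hypothesis the ``clean swap $m$ times'' conclusion cannot be reached, so the heart of the proof is missing; this computation is the actual content of \cite[Lemmas 7.6, 7.7]{Wan23}, which the paper cites rather than reproves (the paper offers no proof of this lemma beyond the citation and the remark that the arguments there extend to general roots of unity).

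Your second half is sound in outline: once the single-arc-versus-$m$-bundle relations are available, applying them to each strand of the second bundle and using $\varepsilon=q^{m^{2}}$ does give \eqref{height}, in the same bootstrapping pattern as \eqref{crossing} in Lemma~\ref{lem_cross}, and the black-dot variants via \eqref{wzh.seven} with the $\mathbbm{c}_{i,\mathbbm{v}}$ bookkeeping are plausible. But all of this is conditional on the first part, so to repair the proposal you must replace the claimed diagrammatic vanishing by the root-of-unity cancellation argument (e.g.\ by transporting the computation into the bigon via a neighborhood of the two arcs and using the relations of $\cO_q(M(n))$ together with $[m]_{q^{2}}=0$).
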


\begin{lem}[{\cite[Lemma 7.20]{Wan23}}]\label{lem_wall}
In $\cS_n(\Sigma,\mathbbm{v})$, we have 
\begin{equation}\label{86}
\sum_{1\leq k\leq n}\mathbbm{c}_{\bar{k},\mathbbm{u}}^{-1}
\raisebox{-.30in}{
\begin{tikzpicture}%[dline /. style ={line width =2pt}]
\tikzset{->-/.style=
{decoration={markings,mark=at position #1 with
{\arrow{latex}}},postaction={decorate}}}
\draw [line width =1.5pt,decoration={markings, mark=at position 0.91 with {\arrow{>}}},postaction={decorate}] (0,1)-- (0,-1);
\draw [line width =1pt] (-1.3,-0.5)--(-0.7,-0.5);
\filldraw[fill=black] (-1,-0.5) circle (0.1);
\draw [color = black, line width =1pt](-0,-0.5)--(-0.3,-0.5);
%\draw [color = black, line width =1pt](-0.4,0.1) --(0,0.5);
\node at(-0.5,-0.5) {\small $m$};
\node [right] at(-0,-0.5) {\small $\bar{k}$};
\draw[color=black] (-0.5,-0.5) circle (0.2);
%%%%%%%%%%%%%%%%%%%%%%%%%%%%%%%%%%%%%%%%%%%%%%%%%%%%%%%%%%%%%%%5
\draw [line width =1pt] (-1.3,0.5)--(-0.7,0.5);
\draw [color = black, line width =1pt](-0,0.5)--(-0.3,0.5);
%\draw [color = black, line width =1pt](-0.4,0.1) --(0,0.5);
\node at(-0.5,0.5) {\small $m$};
\node [right] at(-0,0.5) {\small $k$};
\draw[color=black] (-0.5,0.5) circle (0.2);
\filldraw[fill=white] (-1,0.5) circle (0.1);
\end{tikzpicture}}
%%%%%%%%%%%%%%%%%%%%%%%%%%%%%%%%%%%%%%%%%%%%%%%%%%%%%%%%%%%%%%%%%%%%%%%
= 
\raisebox{-.30in}{
\begin{tikzpicture}%[dline /. style ={line width =2pt}]
\tikzset{->-/.style=
{decoration={markings,mark=at position #1 with
{\arrow{latex}}},postaction={decorate}}}
\draw [line width =1.5pt,decoration={markings, mark=at position 0.91 with {\arrow{>}}},postaction={decorate}] (0.7,1)--(0.7,-1);
\draw [line width =1pt] (-1.3,-0.5)--(-0.7,-0.5);
\draw [color = black, line width =1pt](-0,-0.5)--(-0.3,-0.5);
%\draw [color = black, line width =1pt](-0.4,0.1) --(0,0.5);
\node at(-0.5,-0.5) {\small $m$};
%\node [right] at(-0,-0.5) {\small $\bar{k}$};
\draw[color=black] (-0.5,-0.5) circle (0.2);
%%%%%%%%%%%%%%%%%%%%%%%%%%%%%%%%%%%%%%%%%%%%%%%%%%%%%%%%%%%%%%%5
\draw [color = black, line width =1pt] (-1.3,0.5)--(-0.7,0.5);
\draw [color = black, line width =1pt](-0,0.5)--(-0.7,0.5);
%\draw [color = black, line width =1pt](-0.4,0.1) --(0,0.5);
%\node at(-0.5,0.5) {\small $m$};
%\node [right] at(-0,0.5) {\small $k$};
\draw [color = black, line width =1pt] (0 ,-0.5) arc (-90:90:0.5);
\filldraw[fill=white] (-0.5,0.5) circle (0.1);
\end{tikzpicture}},
%%%%%%%%%%%%%%%%%%%%%%%%%%%%%%%%%%%%%%%%%%%%%%%%%%%%%%%%%%%%%%%%%%%%%%%%%%%%%%%%%
%%%%%%%%%%%%%%%%%%%%%%%%%%%%%%%%%%%%%%%%%%%%%%%%%%%%%%%%%%%%%%
%%%%%%%%%%%%%%%%%%%%%%%%%%%%%%%%%
\end{equation}
where  all three
$\begin{tikzpicture}%[dline /. style ={line width =2pt}]
\tikzset{->-/.style=
{decoration={markings,mark=at position #1 with
{\arrow{latex}}},postaction={decorate}}}
\draw [color = black, line width =1pt](0.8,0) --(1.2,0);
\draw [color = black, line width =1pt](0,0) --(0.4,0);
%\draw[line width =1pt,decoration={markings, mark=at position 0.8 with {\arrow{>}}},postaction={decorate}](0.8,0) --(1.3,0);
%
\node at(0.6,0) {\small $m$};
\draw[color=black] (0.6,0) circle(0.2);
\end{tikzpicture}$ are parts of  $m$ parallel copies of some stated framed oriented arc. 
\end{lem}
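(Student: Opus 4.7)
My plan is to iterate the elementary cap-cup relation \eqref{wzh.seven} $m$ times on the right-hand side of \eqref{86}, one application per pair of antiparallel arcs in the $m$-cap. Each application introduces a sum $\sum_{i_t=1}^n \mathbbm{c}_{\bar{i}_t,\mathbbm{v}}^{-1}$, so after $m$ applications one obtains
\[
\sum_{(i_1,\ldots,i_m)\in\{1,\ldots,n\}^m}\Bigl(\prod_{t=1}^m \mathbbm{c}_{\bar{i}_t,\mathbbm{v}}^{-1}\Bigr)\,D(i_1,\ldots,i_m),
\]
where $D(i_1,\ldots,i_m)$ is the diagram with the $t$-th arc-pair carrying states $(i_t,\bar{i}_t)$ at successive heights dictated by the order of the iteration. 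The goal is then to collapse this $n^m$-term sum into the $n$-term sum on the left-hand side of \eqref{86}.

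To achieve this, I would apply the height-exchange relations of Lemma \ref{lem7.5} (together with the crossing relations of Lemma \ref{lem_cross}) to permute the states in each bundle into a standard order. When the tuple $(i_1,\ldots,i_m)$ is not constant, the resulting $q$-phase factors produced by reordering combine to give zero after summation, via a $q$-shuffle identity analogous to the one underlying the co-product formula \eqref{eq_co} for the Hopf algebra map $F_n\colon \Ou\to \Ov$. Only the diagonal terms with $i_1=\cdots=i_m=k$ survive, yielding a single-state $k$ on the entire $m$-bundle. An equivalent, and perhaps cleaner, route is to localize the configuration in a bigon neighborhood of the cap via a saturated system as in Proposition \ref{Fro}, and to invoke the Hopf algebra embedding $F_n$ of Lemma \ref{Oq} directly: applying $F_n$ to \eqref{wzh.seven} in $\cS_n(\fB,\mathbbm{u})$ and using Lemma \ref{lem_bi} to translate each summand into its $m$-parallel analogue gives \eqref{86} in $\cS_n(\fB,\mathbbm{v})$.

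The main obstacle is the coefficient identification: for the diagonal contribution, one must verify that $\mathbbm{c}_{\bar k,\mathbbm{v}}^{-m}$, combined with the phase factor accumulated from bringing the $m$ identical states into standard $m$-bundle form, equals the single $\mathbbm{c}_{\bar k,\mathbbm{u}}^{-1}$ appearing on the left of \eqref{86}. Using the explicit formulas $\mathbbm{c}_{i,\mathbbm{v}}=(-q)^{n-i}q^{(n-1)/2n}$, $\mathbbm{c}_{i,\mathbbm{u}}=(-\varepsilon)^{n-i}\varepsilon^{(n-1)/2n}$, and $\varepsilon=q^{m^2}$, this reduces to a direct but delicate bookkeeping computation with $q$-phases produced by \eqref{wzh.eight} and \eqref{height}. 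On the bigon route, this coefficient match becomes automatic since both sides of \eqref{86} are the images under $\cF$ of the two sides of \eqref{wzh.seven} in $\cS_n(\fB,\mathbbm{u})$, and $\cF$ is an $\cR$-algebra homomorphism.
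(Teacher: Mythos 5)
You should first note that the paper does not reprove this statement at all: it is imported verbatim from \cite[Lemma 7.20]{Wan23}, where it is established by a direct inductive skein computation, and the only point made here is that that computation does not use the coprimality of $m$ and $2n$. So a blind proof has to actually supply that computation, and this is where your proposal falls short. Your first route has the right overall shape (iterate \eqref{wzh.seven} on the $m$ nested caps to produce an $n^m$-term sum, then collapse it), but the collapse is precisely the content of the lemma, and you only assert it ``by analogy'' with \eqref{eq_co}. Concretely, reordering boundary endpoints with distinct states is governed by \eqref{wzh.eight}, which is \emph{not} a phase: it has a second term with coefficient $(q-q^{-1})$, so the non-constant tuples $(i_1,\dots,i_m)$ do not die because ``phase factors sum to zero''; they die through a quantum-binomial (quantum Frobenius) cancellation that is valid only because $q^2$ is a primitive $m$-th root of unity --- for generic $q$ the identity \eqref{86} is simply false. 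Your route~1 never invokes the root-of-unity hypothesis at this step, so the heart of the argument is missing; and the coefficient match you flag is part of the same issue, since $\mathbbm{c}_{\bar k,\mathbbm{v}}^{-m}$ only produces powers $q^{-m(\cdot)}$ while $\mathbbm{c}_{\bar k,\mathbbm{u}}^{-1}$ involves $\varepsilon^{\pm}=q^{\pm m^2}$, so the discrepancy must be absorbed by exactly those reordering terms you have not controlled.

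Your ``cleaner'' second route is circular and also does not localize correctly. Lemma \ref{lem_bi} only says that $\cF$ sends the through-going generators $a_{ij},\cev{a}_{ij}$ of the bigon to their $m$-parallels. Applying $\cF$ to \eqref{wzh.seven} in $\cS_n(\fB,\mathbbm{u})$ gives $\cF(\text{returning arc})=\sum_k \mathbbm{c}_{\bar k,\mathbbm{u}}^{-1}(\text{stated pair})^{(m)}$; to deduce \eqref{86} you would still need $\cF(\text{returning arc})=(\text{returning arc})^{(m)}$, which is not covered by Lemma \ref{lem_bi} and is essentially the statement being proved --- in this paper the fact that $\cF_B$ parallelizes arbitrary arcs is Proposition \ref{Fro}(a), whose proof \emph{uses} Lemma \ref{lem_wall} (together with Lemmas \ref{lem_cross}--\ref{lem_twist}). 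Moreover the configuration in \eqref{86} does not sit inside a bigon: the two $m$-bundles continue into $\Sigma$ as parts of a larger web, the saturated-system comparison \eqref{eq_iso} is only an $\cR$-linear isomorphism, and there is no homomorphism through which a bigon identity can be transplanted to this local picture without the very machinery this lemma is meant to feed; the same objection applies to trying to reduce via $\Theta_e$ and Lemma \ref{lem_splitting}, which again presupposes the $m$-parallel cap computation. To fix the proposal you should either reproduce the inductive computation of \cite[Lemma 7.20]{Wan23} (verifying that each step needs only that $q^2$ is a primitive $m$-th root of unity), or make the cross-term cancellation explicit by reducing it to the Parshall--Wang identity \eqref{eq_co} rather than appealing to it as an analogy.
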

%\begin{proof}
%    The proving technique for  also applies here.
%\end{proof}

\begin{lem}[{\cite[Lemma 7.21]{Wan23}}]\label{lem_twist}
In $\cS_n(\Sigma,\mathbbm{v})$, we have 
\begin{equation*}
%\sum_{1\leq k\leq n}(-1)^{k+1}
\raisebox{-.10in}{
\begin{tikzpicture}%[dline /. style ={line width =2pt}]
\tikzset{->-/.style=
{decoration={markings,mark=at position #1 with
{\arrow{latex}}},postaction={decorate}}}
\draw [color = black, line width =1pt](-1,0)--(-0.25,0);
\draw [color = black, line width =1pt](0,0)--(0.6,0);
\draw [color = black, line width =1pt](1,0)--(1.5,0);
%\draw [color = black, line width =1pt](-0.4,0.1) --(0,0.5);
\node at(0.8,0) {\small $m$};
%\node [right] at(-0,0.5) {\small $k$};
\draw[color=black] (0.8,0) circle (0.2);
\draw [color = black, line width =1pt] (0.166 ,0.08) arc (-37:270:0.2);
\end{tikzpicture}}=
\mathbbm{t}_\mathbbm{u}
\raisebox{-.10in}{
\begin{tikzpicture}%[dline /. style ={line width =2pt}]
\tikzset{->-/.style=
{decoration={markings,mark=at position #1 with
{\arrow{latex}}},postaction={decorate}}}
\draw [color = black, line width =1pt](-0.3,0)--(0.5,0);
\draw [color = black, line width =1pt](0.23,0)--(0.6,0);
\draw [color = black, line width =1pt](1,0)--(1.5,0);
%\draw [color = black, line width =1pt](-0.4,0.1) --(0,0.5);
\node at(0.8,0) {\small $m$};
%\node [right] at(-0,0.5) {\small $k$};
\draw[color=black] (0.8,0) circle (0.2);
%\draw [color = black, line width =1pt] (0 ,0) arc (-90:215:0.2);
\end{tikzpicture}},\qquad
%%%%%%%%%%%%%%%%%%%%%%%%%%%%%%%%%%%%%%%%%%%%%%%%%%%%%%%%%%%%%%%%%%%%%%%
\raisebox{-.10in}{
\begin{tikzpicture}%[dline /. style ={line width =2pt}]
\tikzset{->-/.style=
{decoration={markings,mark=at position #1 with
{\arrow{latex}}},postaction={decorate}}}
\draw [color = black, line width =1pt](-1,0)--(0,0);
\draw [color = black, line width =1pt](0.23,0)--(0.6,0);
\draw [color = black, line width =1pt](1,0)--(1.5,0);
%\draw [color = black, line width =1pt](-0.4,0.1) --(0,0.5);
\node at(0.8,0) {\small $m$};
%\node [right] at(-0,0.5) {\small $k$};
\draw[color=black] (0.8,0) circle (0.2);
\draw [color = black, line width =1pt] (0 ,0) arc (-90:215:0.2);
\end{tikzpicture}}
%%%%%%%%%%%%%%%%%%%%%%%%%%%%%%%%%%%%%%%%%%%%%%%%%%%%%%%%%%%%%%%%%%%%%%%
= \mathbbm{t}_\mathbbm{u}^{-1}
\raisebox{-.10in}{
\begin{tikzpicture}%[dline /. style ={line width =2pt}]
\tikzset{->-/.style=
{decoration={markings,mark=at position #1 with
{\arrow{latex}}},postaction={decorate}}}
\draw [color = black, line width =1pt](-0.3,0)--(0.5,0);
\draw [color = black, line width =1pt](0.23,0)--(0.6,0);
\draw [color = black, line width =1pt](1,0)--(1.5,0);
%\draw [color = black, line width =1pt](-0.4,0.1) --(0,0.5);
\node at(0.8,0) {\small $m$};
%\node [right] at(-0,0.5) {\small $k$};
\draw[color=black] (0.8,0) circle (0.2);
%\draw [color = black, line width =1pt] (0 ,0) arc (-90:215:0.2);
\end{tikzpicture}},
%%%%%%%%%%%%%%%%%%%%%%%%%%%%%%%%%%%%%%%%%%%%%%%%%%%%%%%%%%%%%%%%%%%%%%%%%%%%%%%%%
%%%%%%%%%%%%%%%%%%%%%%%%%%%%%%%%%%%%%%%%%%%%%%%%%%%%%%%%%%%%%%
%%%%%%%%%%%%%%%%%%%%%%%%%%%%%%%%%
\end{equation*}
where  
$\begin{tikzpicture}%[dline /. style ={line width =2pt}]
\tikzset{->-/.style=
{decoration={markings,mark=at position #1 with
{\arrow{latex}}},postaction={decorate}}}
\draw [color = black, line width =1pt](0.8,0) --(1.2,0);
\draw [color = black, line width =1pt](0,0) --(0.4,0);
%\draw[line width =1pt,decoration={markings, mark=at position 0.8 with {\arrow{>}}},postaction={decorate}](0.8,0) --(1.3,0);
%
\node at(0.6,0) {\small $m$};
\draw[color=black] (0.6,0) circle(0.2);
\end{tikzpicture}$ is part of  $m$ parallel copies of some stated framed oriented arc. 
\end{lem}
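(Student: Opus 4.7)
The plan is to prove the $m$-cable twist relation by induction on $m$, using the single-strand twist relation \eqref{w.twist} together with the cabled crossing relations in Lemmas \ref{lem_cross} and \ref{lem7.5} that were just established. The base case $m=1$ is automatic, since then $\mathbbm{u}=\mathbbm{v}$ and $\mathbbm{t}_\mathbbm{u}=\mathbbm{t}_\mathbbm{v}$, and the claim reduces to \eqref{w.twist}.

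For the inductive step, I would peel off one of the $m$ parallel copies at the top of the framing and write the $m$-cable locally as an $(m-1)$-cable lying below a single parallel arc. A positive kink on the full $m$-cable is then isotopic, through a framed-isotopy of stated $n$-webs, to a configuration consisting of (i) a positive kink on the $(m-1)$-cable, (ii) a positive kink on the isolated strand, and (iii) the full linking of the isolated strand around the $(m-1)$-cable that is produced when the loop is pulled through the cable. Part (i) is handled by the inductive hypothesis with $(m-1)$ in place of $m$, (ii) is handled by \eqref{w.twist}, and (iii) is a composite of cabled self-crossings that can be evaluated by iterated use of the relations in \eqref{eqq14} and \eqref{crossing}.

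The last step is the explicit verification that the combined factor from (i)--(iii) equals $\mathbbm{t}_\mathbbm{u}$. Writing
\[
\mathbbm{t}_{\mathbbm{v}^{k^2}} = (-1)^{n-1}\hat{q}^{2nk^2(n^2-1)},
\]
this reduces to checking that the contribution from the winding in (iii) supplies precisely the factor
\[
(-1)^{n-1}\hat{q}^{2n(n^2-1)\bigl(m^2-(m-1)^2-1\bigr)}=(-1)^{n-1}\hat{q}^{4n(m-1)(n^2-1)},
\]
which is consistent with the $R$-matrix action of $U_q(\mathfrak{sl}_n)$ on the highest-weight vector of $V\otimes V^{\otimes(m-1)}$. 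The second equation of the lemma then follows either by running the same induction with reversed crossings or, more cleanly, by applying the reflection involution $\omega$ to the first and noting that $\omega(\mathbbm{t}_\mathbbm{u})=\mathbbm{t}_\mathbbm{u}^{-1}$.

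The main obstacle is the bookkeeping: one has to fix a definite isotopy separating the single strand from the $(m-1)$-cable at the kink, track all the over/under data, and then verify that the $\hat{q}$-exponent and sign contributions coming from the cabled RII-type and RIII-type relations aggregate precisely to $(-1)^{n-1}\hat{q}^{4n(m-1)(n^2-1)}$ at each induction step. Once this calculation is done for the inductive step it propagates $\mathbbm{t}_{\mathbbm{v}^{(m-1)^2}}$ to $\mathbbm{t}_{\mathbbm{v}^{m^2}}=\mathbbm{t}_\mathbbm{u}$, closing the induction.
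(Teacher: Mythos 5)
Your induction on the cable size does not get off the ground, because the statement you want to induct on is false for intermediate cable sizes. The lemma is special to the integer $m$ fixed by the root of unity (recall $q^2$ is a primitive $m$-th root of unity and $\varepsilon=q^{m^2}$): it is exactly this that makes the $m$-parallel cable ``transparent'' and turns the cabled kink into a scalar. For a $k$-cable with $1<k<m$ the full twist is \emph{not} a scalar multiple of the untwisted cable -- it acts with different eigenvalues on the different irreducible summands of $V^{\otimes k}$ -- so your inductive hypothesis (i), ``a positive kink on the $(m-1)$-cable is handled by the inductive hypothesis,'' is not an available statement. The same defect kills step (iii): the crossing-change relations \eqref{eqq14} and \eqref{crossing} of Lemmas \ref{lem_cross} and \ref{lem7.5} are proved only for the full $m$-cable (their proofs use $q^{2m}=1$-type cancellations), so you cannot use them to evaluate the linking of the peeled-off strand around an $(m-1)$-cable; resolving those crossings honestly via \eqref{w.cross} produces extra non-scalar terms. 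A further symptom is your coefficient check: the factor you need from (iii) is $(-1)^{n-1}\hat{q}^{4n(m-1)(n^2-1)}$, but transparency-type relations only ever contribute powers of $q^{1/n}$ with no sign, so the sign $(-1)^{n-1}$ cannot be produced by the moves you invoke. (The reflection trick for deducing the second identity from the first is fine, but it rests on the first identity.)

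Note also that the paper does not reprove this lemma: it quotes \cite[Lemma 7.21]{Wan23} and only observes that the argument there, written under the hypothesis $\gcd(m,2n)=1$, goes through verbatim at a general root of unity. That argument works with the full $m$-cable at once, expanding the cabled kink and using the root-of-unity relations for the whole cable (the same mechanism behind Lemmas \ref{lem_cross}--\ref{lem7.5}), rather than peeling off strands. If you want a self-contained proof you should follow that route; any argument that passes through cables of size $k<m$ will need input that the present relations simply do not provide.
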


\begin{proof}[Proof of Proposition \ref{Fro}] Suppose $B=\{b_1,\cdots,b_r\}$. For each $1\leq t\leq r$, we regard $b_t$ as a proper embedding from $[0,1]$ to $\Sigma$. For $i,j\in\{1,2,\cdots,n\}$, we use $(b_t)_{ij}$ to denote the stated framed oriented arc obtained from $b_t$ such that $s(b_t(0)) = i,\, s(b_t(1)) = j$. From the definition of $\cF_B$  and Lemma \ref{lem_bi}, we have 
\begin{equation}
\cF_B((b_t)_{ij}) = (b_t)_{ij}^{(m)},\quad\cF_B((\cev{b_t})_{ij}) = (\cev{b_t})_{ij}^{(m)}.
\end{equation}

(a)  One can obtain $\alpha$ from $(b_t)_{ij}$ and $(\cev{b_t})_{ij}$, $1\leq t\leq r,\ 1\leq i,j\leq n$ using the relations \eqref{w.cross}, \eqref{w.twist}, \eqref{wzh.seven}, and \eqref{wzh.eight} (maybe we need to reverse the orientations of some stated $n$-webs in these relations). Lemmas \ref{lem_cross}-\ref{lem_twist} imply these relations for stated framed oriented arcs in $\cS_n(\Sigma,\mathbbm{u})$ coincide with the relations for the corresponding $m$-parallel copies of these stated framed oriented arcs in $\cS_n(\Sigma,\mathbbm{v})$. Thus we have $\cF_B(\alpha) = \alpha^{(m)}$.

(b) The claim follows from (a).

(c) Lemma \ref{lem_splitting} implies the claim.
\end{proof}

\paragraph{\textbf{The Frobenius map for reduced stated $\SL(n)$-skein algebras}}
For any essentially bordered pb surface $\Sigma$,
clearly $\cF\colon \cS_n(\Sigma,\mathbbm{u})\rightarrow \cS_n(\Sigma,\mathbbm{v})$ sends the bad arc to $m$ parallel copies of the bad arc. Then 
$\cF$ induces an algebra homomorphism $\overline{\cF}\colon
\barS_n(\Sigma,\mathbbm{u}) \rightarrow \barS_n(\Sigma,\mathbbm{v})$.

\section{Center of stated $\SL(n)$-skein algebras}\label{sec-centers}
In this section,
we formulate the center of the stated $\SL(n)$-skein algebra when $\hat q$ is a root of unity. Studying the center of the stated $\SL(n)$-skein algebra is crucial to understand the representation theory of the stated $\SL(n)$-skein algebras. 
As we shall see in Section~\ref{sec-Unicity-Theorem}, the Unicity theorem holds for stated $\SL(n)$-skein algebras, which classifies a particular family of irreducible representations of the stated $\SL(n)$-skein algebra with the `maximal' dimension. 
In Section~\ref{sec-Unicity-Theorem}, we will use the center
of the stated $\SL(n)$-skein algebra to precisely calculate this maximal dimension.

Thanks to
\eqref{eq-sandwitch-property},
to understand the center of the stated $\SL(n)$-skein algebra, it suffices to understand the center of the corresponding $\mathcal A$-quantum torus.

Throughout the section, we assume $\Sigma$ is an essentially bordered pb surface and contains no interior punctures.

\subsection{Ordered vertex sets}\label{sec:vertex}
For a triangulable pb surface $\Sigma$ with a triangulation $\lambda$, we introduced the small vertex set $\overline V_{\lambda}$ in Section \ref{subsec:FGalg}. Define the subset $\obV_{\lambda}\subset \overline{V}_\lambda$ consisting of all the small vertices contained in the interior of $\Sigma$.

Fix an ideal triangulation $\lambda$ of an essentially bordered pb surface $\Sigma$ and consider the extended triangulation $\lambda^\ast$. An advantage to consider such an extended triangulation, one can construct an embedding of stated $\SL(n)$-skein algebra into the quantum torus associated with $\lambda^\ast$.
Let $v_i,u_i,w_i\ (i=1,\dots,n-1)$ denote the vertices on the boundary of an attached triangle as in Figure \ref{Fig;coord_uvw}. 
Note that $v_i$'s  are on the attached edge $e_1$, and 
$$v_i=(n-i,i,0),\quad u_i=(i,0,n-i),\quad w_i=(0,i,n-i).$$ 
For two attached triangles $\tau$ and $\tau'$, we will use 
$v_i,u_i,w_i$ (resp. $v_i',u_i',w_i'$) to denote $(n-i,i,0), (i,0,n-i), (0,i,n-i)$ of $\tau$ (resp. $\tau'$).
\begin{figure}[h]
    \centering
    \includegraphics[width=140pt]{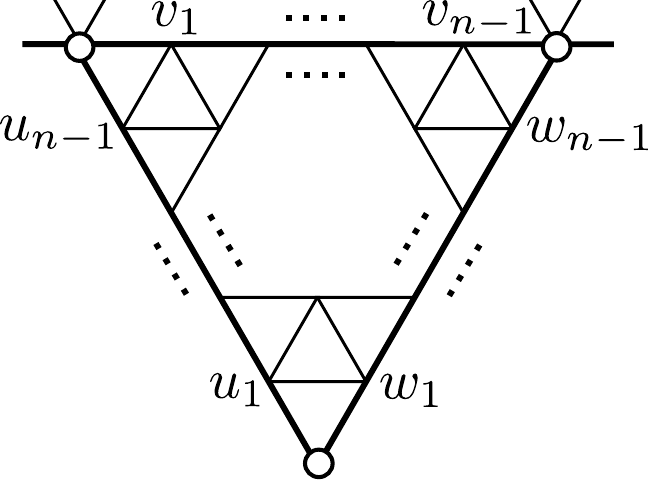}
    \caption{The vertices $v_i$ are on the attached edge.}\label{Fig;coord_uvw}
\end{figure}

Suppose $\overline{\Sigma}$ has boundary components $\partial_1,\cdots,\partial_b$, and $\partial_i$ contains $r_i$ punctures for each $1\leq i\leq b$.
For each $1\leq i\leq b$, we label the boundary edges contained in $\partial_i$

Let $W$ (resp. $U$) be the ordered set of the small vertices on $V_\lambda\setminus\obVlast$ (resp.$V'_\lambda\setminus\obVlast$) 
with the order defined as follows. 
On the boundary component $\partial_i$ of $\overline{\Sigma}$, we label the connected components of $\partial_i\cap \Sigma$ by $e_1,e_2, \dots,e_{r_i}$ consecutively following the positive orientation of $\partial \overline{\Sigma}$. 
For each $e_i$, there is a unique attached triangle $\tau_j$ in $\lambda^\ast$ containing $e_i$. 
Each $w\in V_\lambda\setminus\obVlast$ is determined by $(\partial_i,\tau_j,w_k)$, where $w$ is the small vertex with  the coordinate $w_k$ in the attached triangle $\tau_j$, which is attached to $\partial_i$. 
We identify $(\partial_i,\tau_j,w_k)$ with $(b-i,r_i-j,n-k)$.
Consider the lexicographic order on $W$ with respect to $(b-i,r_i-j,n-k)$ and suppose that $W$ is equipped with the order. 
In the same manner, we define an order on $U$ and suppose $U$ is  also equipped with the order.

\subsection{Block decomposition of matrices}

The bilinear form $\barH_\lambda\colon \barV_\lambda\times\barV_\lambda\to\bZ$ is defined as
\begin{itemize}
\item for $v, v'\in \barV_\lambda$ not on the same boundary edge, $\barH_\lambda(v,v')=-\frac{1}{2}\barQ_\lambda(v,v')\in\bZ$,
\item for $v,v'\in \barV_\lambda$ on the same boundary edge, 
$$\barH_\lambda(v,v') = \begin{cases} -1 \qquad &\text{if $v=v'$},\\
1 &\text{if there is an arrow from $v$ to $v'$}, \\
0 &\text{otherwise}.
\end{cases}$$
\end{itemize}
Define $\sfH_\lambda$ as the restriction of $\barH_{\lambda^\ast}$ to $V_\lambda\times V'_\lambda$. 
\begin{lem}[{\cite[A part of Lemma 11.9]{LY23}}]\label{lem:invertible_KH}
The following matrix identities holds.
\begin{enumerate}
    \item $n(\mathsf{K}_\lambda -\mathsf{K}_\lambda^T) =\mathsf{P}_\lambda$.
    \item $\barK_\lambda\barH_\lambda=nI$ and $\sfK_\lambda\sfH_\lambda=nI$.
\end{enumerate} 
\end{lem}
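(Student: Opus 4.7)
The plan is to reduce both identities to local statements on a single triangle $\bP_3$ and then glue across faces of $\lambda$. On $\bP_3$, the identity $\barK_{\bP_3}\barH_{\bP_3}=nI$ should follow from a direct computation using the explicit formula $\barK_{\bP_3}(v,v')=jk'+ki'+i'j$ (valid when $v=ijk$, $v'=i'j'k'$ with $i'\le i$ and $j'\ge j$, extended by the $\bZ_3$-symmetry otherwise) together with the piecewise definition of $\barH_{\bP_3}$. For each fixed $v\in\barV$, I would expand $\sum_{v'\in\barV}\barK_{\bP_3}(u,v')\,\barH_{\bP_3}(v',v)$ and rearrange in barycentric coordinates: the boundary-edge contributions to $\barH_{\bP_3}$ (the diagonal $-1$'s, together with the weight-$1$ arrows along $\partial\bP_3$) provide precisely the correction terms needed so that the interior pieces $-\tfrac{1}{2}\barQ_{\bP_3}$ telescope to the Kronecker delta $n\delta_{u,v}$.

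To pass to a general triangulation, I would use that $\barK_\lambda$ is built face-by-face via $\barK_\lambda(u,v)=\sum_{s\subset\tau\cap\widetilde{Y}_u}\barK_\tau(Y(s),v)$ with $\tau$ the face of $\lambda$ containing $v$, while $\barH_\lambda$ is, up to boundary corrections, a sum of face contributions. When two faces are glued along an interior edge of $\Sigma$, a pair of weight-$1$ arrows (one contributed from each side of the glued edge) combines into a single weight-$2$ arrow in the quiver $\Gamma_\lambda$; the factor $-\tfrac{1}{2}$ in the off-boundary definition of $\barH_\lambda$ exactly absorbs this doubling, while the genuine boundary edges of $\Sigma$ retain the uncorrected values used in the single-triangle calculation. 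This gives $\barK_\lambda\barH_\lambda=nI$. The identity $\sfK_\lambda\sfH_\lambda=nI$ then follows from the corresponding identity on $\lambda^\ast$ combined with the definition $\sfK_\lambda=(\sfC\barK_{\lambda^\ast})|_{V'_\lambda\times V_\lambda}$: the change-of-variable matrix $\sfC$, supported on the pairs $(v,v)$ and $(v,p(v))$ living in attached triangles, is engineered so that the outer-edge columns of $\barK_{\lambda^\ast}\barH_{\lambda^\ast}$ are annihilated when restricting to $V_\lambda$, and only the diagonal $n\delta$ on $V_\lambda$ survives.

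For the antisymmetric identity $n(\sfK_\lambda-\sfK_\lambda^T)=\sfP_\lambda$, the equality $\sfP_\lambda=\sfK_\lambda\sfQ_\lambda\sfK_\lambda^T$ is tautological from the definition in \eqref{eq-anti-matric-P-def}, so the content is the relation $n(\sfK_\lambda-\sfK_\lambda^T)=\sfK_\lambda\sfQ_\lambda\sfK_\lambda^T$. One clean approach is to multiply both sides by $\sfH_\lambda$ on the right and use part (2), reducing the claim to the equivalent statement $nI-\sfK_\lambda^T\sfH_\lambda=\sfK_\lambda\sfQ_\lambda$, which can then be checked entry-by-entry. After substituting the skeleton expansion of $\sfK_\lambda$, each entry becomes a face-local identity comparing how the elongated graphs $\widetilde{Y}_u$ and $\widetilde{Y}_v$ interact inside each triangle, and in a single triangle $\bP_3$ this reduces to a direct verification using the explicit formula for $\barK_{\bP_3}$ and $\barQ_{\bP_3}$.

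The main obstacle will be the combinatorial bookkeeping in this last step: one must match, segment by segment, the intersection pattern of $\widetilde{Y}_u\cap\widetilde{Y}_v$ inside each face against the bilinear form $\barK_{\bP_3}\barQ_{\bP_3}\barK_{\bP_3}^T$, and verify that the "turn-left" convention built into the elongation $\widetilde{Y}_u$ causes all the boundary and crossover contributions to cancel in just the right way. Since this is a purely combinatorial check but requires careful handling of the attached-triangle contributions, I would follow the same bookkeeping as in \cite[Lemma~11.9]{LY23}, which is essentially the statement being cited here.
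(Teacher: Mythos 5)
This lemma is quoted verbatim from \cite[Lemma~11.9]{LY23}; the present paper gives no proof of its own, so there is no internal argument to compare against, and deferring the triangle-level combinatorics to that reference (as your sketch ultimately does) is consistent with how the paper treats the statement.

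However, the algebraic reduction you propose for part (1) is wrong. Right-multiplying $n(\sfK_\lambda-\sfK_\lambda^T)=\sfK_\lambda\sfQ_\lambda\sfK_\lambda^T$ by $\sfH_\lambda$ and applying $\sfK_\lambda\sfH_\lambda=nI$ gives
\begin{equation*}
n^2 I - n\,\sfK_\lambda^T\sfH_\lambda \;=\; \sfK_\lambda\sfQ_\lambda\,\bigl(\sfK_\lambda^T\sfH_\lambda\bigr),
\end{equation*}
not $nI-\sfK_\lambda^T\sfH_\lambda=\sfK_\lambda\sfQ_\lambda$: the factor $\sfK_\lambda^T\sfH_\lambda$ persists on both sides and cannot be stripped off by part (2), which controls only $\sfK_\lambda\sfH_\lambda$. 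Indeed $\sfK_\lambda^T\sfH_\lambda\neq nI$ whenever $\sfP_\lambda\neq 0$, since equality would force $\sfK_\lambda$ to be symmetric and $\sfP_\lambda=n(\sfK_\lambda-\sfK_\lambda^T)$ to vanish. What part (2) actually buys you is the ability to conjugate by $\sfH_\lambda$ on the left and $\sfH_\lambda^T$ on the right, which over $\bQ$ collapses (1) to the simpler local identity $\sfH_\lambda^T-\sfH_\lambda=\sfQ_\lambda$; but checking that identity (in particular the boundary-edge entries, where $\barH_\lambda$ departs from the uniform $-\tfrac12\barQ_\lambda$) is precisely the combinatorial content your sketch delegates to \cite{LY23}. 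So the step intended to dispose of the hard identity does not actually do so, and the proposal is circular on part (1).
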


\begin{prop}[{\cite[Propositon 11.10]{LY23}}]\label{prop:LY23_11.10}
Let $\bk$ be a vector in $\bZ^{\barV_\lambda}$. Then the following are equivalent.
\begin{enumerate}
\item $\bk$ is balanced.
\item $\bk\barH_\lambda\in(n\bZ)^{\barV_\lambda}$.
\item There exists a vector $\mathbf{c}\in\bZ^{\barV_\lambda}$ such that $\bk=\mathbf{c}\barK_\lambda$.
\end{enumerate}
The same results hold for the non-reduced case, i.e., when $\barV_\lambda, \barH_\lambda, \barK_\lambda$ are replaced with ${V}_\lambda, \sfH_\lambda, \sfK_\lambda$ respectively.
\end{prop}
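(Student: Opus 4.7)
The plan is to use the matrix identity of Lemma \ref{lem:invertible_KH}(2) to obtain $(2)\Leftrightarrow(3)$ as pure linear algebra, and then to reduce $(1)\Leftrightarrow(3)$ to an explicit computation on a single triangle followed by a gluing step. For $(2)\Leftrightarrow(3)$: Lemma \ref{lem:invertible_KH}(2) gives $\barK_\lambda\barH_\lambda=nI$. Since $\barK_\lambda$ and $\barH_\lambda$ are square integer matrices, both are invertible over $\bQ$ and hence also $\barH_\lambda\barK_\lambda=nI$. If $\bk=\mathbf{c}\,\barK_\lambda$ for some $\mathbf{c}\in\bZ^{\barV_\lambda}$, then $\bk\,\barH_\lambda=\mathbf{c}(\barK_\lambda\barH_\lambda)=n\mathbf{c}\in(n\bZ)^{\barV_\lambda}$, giving $(3)\Rightarrow(2)$. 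Conversely, if $\bk\,\barH_\lambda=n\mathbf{c}$ with $\mathbf{c}\in\bZ^{\barV_\lambda}$, then $\bk=\tfrac{1}{n}\bk(\barH_\lambda\barK_\lambda)=\mathbf{c}\,\barK_\lambda$, giving $(2)\Rightarrow(3)$.

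For $(1)\Leftrightarrow(3)$, I would show that the $\bZ$-row span of $\barK_\lambda$ coincides with the balanced lattice $\overline{\Lambda}_\lambda$. The first step is the local case $\Sigma=\bP_3$: using the explicit formula $\barK_{\bP_3}(v,v')=jk'+ki'+i'j$ (for $v=ijk$, $v'=i'j'k'$ with $i'\leq i$ and $j'\geq j$), one verifies that each row of $\barK_{\bP_3}$ is balanced, and that every generator of $\overline{\Lambda}_{\bP_3}$--namely $\proj_1,\proj_2,\proj_3$ and the basis vectors $n\delta_v$ for $v\in\barV$--can be written as an integer combination of rows of $\barK_{\bP_3}$. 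For a general triangulation $\lambda$, use the formula $\barK_\lambda(u,v)=\sum_{s\subset\tau\cap\widetilde{Y}_u}\barK_\tau(Y(s),v)$ with $\tau\ni v$: the pullback of row $u$ of $\barK_\lambda$ to a face $\tau'$ is an integer combination of rows of $\barK_{\tau'}$, one per segment of $\widetilde{Y}_u$ in $\tau'$, which is balanced by the local case, giving $(3)\Rightarrow(1)$. For $(1)\Rightarrow(3)$, assemble $\mathbf{c}$ face by face using the local case, then check that contributions coming from skeletal segments through shared edges are compatible because such a segment is uniquely determined by its weight and its starting edge.

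The main technical obstacle is the second half of the local computation on $\bP_3$: explicitly inverting the combinatorial system to express every $\proj_i$ and every $n\delta_v$ as an integer combination of rows of $\barK_{\bP_3}$. Once the local case is established, the global step is essentially bookkeeping, since the elongation $\widetilde{Y}_u$ turns left at each triangle entry and so produces a finite, controllable list of skeletal segments. The non-reduced analogue (with $V_\lambda$, $\sfH_\lambda$, $\sfK_\lambda$) then follows by applying the same argument to the extended triangulation $\lambda^\ast$ and restricting to $V_\lambda$, using the companion identity $\sfK_\lambda\sfH_\lambda=nI$ from Lemma \ref{lem:invertible_KH}(2).
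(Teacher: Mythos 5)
This proposition is not proved in the paper at all; it is cited verbatim from \cite[Proposition 11.10]{LY23}, so there is no internal proof to compare against. Your argument must therefore be evaluated on its own.

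Your $(2)\Leftrightarrow(3)$ step is correct and clean: from $\barK_\lambda\barH_\lambda=nI$ and invertibility over $\bQ$ you obtain $\barH_\lambda\barK_\lambda=nI$, and the two directions then follow by multiplying through. This is exactly the role Lemma~\ref{lem:invertible_KH}(2) is designed to play.

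The $(1)\Leftrightarrow(3)$ leg is where your proposal is incomplete, and you flag this yourself as ``the main technical obstacle.'' The claim that every generator of $\overline{\Lambda}_{\bP_3}$ (the three projections $\proj_i$ and the vectors $n\delta_v$) lies in the $\bZ$-row span of $\barK_{\bP_3}$ is precisely the content of the proposition in the local case, and you have not exhibited the required integer combinations; asserting that they exist is circular. The global assembly step also needs more care: for $(1)\Rightarrow(3)$ you would have to construct a \emph{single} vector $\mathbf{c}\in\bZ^{\barV_\lambda}$ from local data across faces, and the compatibility along shared edges does not obviously reduce to uniqueness of a skeletal segment by weight and starting edge, because the row $\barK_\lambda(u,\cdot)$ at a fixed $u$ spreads across all faces that $\widetilde{Y}_u$ visits. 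A cleaner route, more aligned with the structure of the definitions, is to prove $(1)\Leftrightarrow(2)$ directly: the matrix $\barH_\lambda$ is built from \emph{purely local} data ($-\tfrac12\barQ_\lambda$ off boundary edges and the explicit arrow rule on boundary edges), so the balance condition, which is likewise a face-by-face condition, translates into divisibility of $\bk\barH_\lambda$ face by face; then $(2)\Leftrightarrow(3)$ finishes the proof. Attacking $(1)\Leftrightarrow(3)$ head-on forces you to control the non-local skeleton structure of $\barK_\lambda$, which is the hard object.

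Finally, for the non-reduced case you write ``applying the same argument to the extended triangulation $\lambda^\ast$ and restricting to $V_\lambda$,'' but this glosses over the change-of-variable matrix $\sfC$: by definition $\sfK_\lambda=(\sfC\barK_{\lambda^\ast})|_{V'_\lambda\times V_\lambda}$ and $\Lambda_\lambda=\overline{\Lambda}_{\lambda^\ast}\cap\bZ^{V_\lambda}$, so one must verify that left-multiplication by $\sfC$ (and the restriction of the row index set from $\barV_{\lambda^\ast}$ to $V'_\lambda$) preserves the row span's intersection with $\bZ^{V_\lambda}$. This is plausible but needs an argument; the companion identity $\sfK_\lambda\sfH_\lambda=nI$ handles $(2)\Leftrightarrow(3)$ but not the reconciliation of $(1)$ with the extended-triangulation balanced lattice.
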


When we regard $\sfC$ as a ($\obVlast, U)\times (\obVlast,W, U)$-matrix, 
we have the following form; 
$$\sfC=\begin{pmatrix}
I&C_1&O\\
O&-I&I
\end{pmatrix}.$$

\begin{lem}\label{lemKQ}
    We have $\barK_{\lambda} \barQ_{\lambda} =
    \begin{pmatrix}
-2nI &  \ast \\
O    &  \ast \\
\end{pmatrix}$,
where the rows and columns are divided in 
$(\obV_{\lambda}, \overline{V}_{\lambda}\setminus \obV_{\lambda})$.
\end{lem}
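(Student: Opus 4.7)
The plan is to deduce the block structure directly from Lemma \ref{lem:invertible_KH}(2), which already gives $\barK_\lambda \barH_\lambda = nI$. The only real content is to relate $\barQ_\lambda$ and $\barH_\lambda$ on the columns indexed by interior small vertices.

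First I observe the following key fact from the definition of $\barH_\lambda$: if $v \in \obV_\lambda$, then by definition $v$ lies in the interior of $\Sigma$, so $v$ does not belong to any boundary edge. Hence, for any other small vertex $w \in \overline{V}_\lambda$, the vertices $v$ and $w$ are not on the same boundary edge, and the first case in the definition of $\barH_\lambda$ applies, giving
\begin{equation*}
\barH_\lambda(w,v) = -\tfrac{1}{2}\,\barQ_\lambda(w,v), \qquad \text{equivalently } \barQ_\lambda(w,v) = -2\,\barH_\lambda(w,v).
\end{equation*}
In other words, if $J$ denotes the $\overline{V}_\lambda \times \obV_\lambda$ submatrix obtained by restricting columns to $\obV_\lambda$, then $\barQ_\lambda|_J = -2\,\barH_\lambda|_J$.

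Next I take the product $\barK_\lambda\barQ_\lambda$ and look at its columns indexed by $v \in \obV_\lambda$. By the previous identity,
\begin{equation*}
(\barK_\lambda \barQ_\lambda)(u,v) = \sum_{w \in \overline V_\lambda} \barK_\lambda(u,w)\,\barQ_\lambda(w,v) = -2 \sum_{w \in \overline V_\lambda} \barK_\lambda(u,w)\,\barH_\lambda(w,v) = -2\,(\barK_\lambda\barH_\lambda)(u,v).
\end{equation*}
Applying Lemma \ref{lem:invertible_KH}(2), this equals $-2n\,\delta_{u,v}$. Since $v \in \obV_\lambda$, the entry is $-2n$ precisely when $u=v \in \obV_\lambda$, and is $0$ whenever $u \in \overline{V}_\lambda\setminus \obV_\lambda$. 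In block form with rows partitioned as $(\obV_\lambda,\;\overline{V}_\lambda\setminus\obV_\lambda)$, this is precisely the left column of blocks $\begin{pmatrix}-2nI \\ O\end{pmatrix}$, while the right column of blocks is not constrained by the argument and is left as $\ast$.

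There is essentially no obstacle here: the whole statement is a direct consequence of Lemma \ref{lem:invertible_KH}(2) together with the elementary observation that interior small vertices trigger the $\barH_\lambda = -\tfrac{1}{2}\barQ_\lambda$ case. The only thing to be careful about is to work columnwise (restricting to columns indexed by $\obV_\lambda$) rather than rowwise, since it is the \emph{column} vertex $v$ that must be interior for the identification $\barQ_\lambda(w,v) = -2\,\barH_\lambda(w,v)$ to apply for every $w$.
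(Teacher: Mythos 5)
Your proof is correct and takes essentially the same route as the paper: both arguments rest on the two facts that $\barK_\lambda\barH_\lambda = nI$ (Lemma \ref{lem:invertible_KH}) and that $\barH_\lambda(\cdot,v') = -\tfrac{1}{2}\barQ_\lambda(\cdot,v')$ whenever the column vertex $v'$ is interior. The paper writes this in block-matrix form ($H_1 = -\tfrac{1}{2}Q_1$, then $\barK_\lambda Q_1 = -2\barK_\lambda H_1$) while you write it entry-wise, but the content is identical, and your remark about working columnwise correctly pinpoints the one place where care is needed.
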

\begin{proof}
In the proof, all divisions for rows or columns are in 
$(\obV_{\lambda}, \overline{V}_{\lambda}\setminus \obV_{\lambda})$.
Suppose $\barH_{\lambda} = 
\begin{pmatrix}
H_1 &H_2
\end{pmatrix}$ and $\overline{\mathsf Q}_\lambda= 
\begin{pmatrix}
    Q_1 & Q_2
\end{pmatrix}$. From the definition of $\barH_{\lambda}$, we have $H_1 = -\frac{1}{2}Q_1$.
Lemma~\ref{lem:invertible_KH} implies $\barK_{\lambda}
\begin{pmatrix}
H_1 &H_2
\end{pmatrix}
= \begin{pmatrix}
\barK_{\lambda}H_1 &\barK_{\lambda}H_2
\end{pmatrix} = nI$.
Then $\barK_{\lambda} H_1 =   \begin{pmatrix}
nI  \\
O     \\
\end{pmatrix}$. We have $\barK_{\lambda} Q_1 = -2\barK_{\lambda} H_1 =   \begin{pmatrix}
-2nI  \\
O     \\
\end{pmatrix}$.
\end{proof}

From Lemma \ref{lemKQ}, $\barKl\barQl$ has the following form; 
\begin{align}
\barKl\barQl=
\begin{pmatrix}
-2nI & D & \ast \\
O    & A & \ast \\
O    & B & \ast \\
\end{pmatrix},   \label{eq:matrix_barKQast}
\end{align}
where the rows and columns are divided in $(\obVlast, W, U)$. 

We have the $(\obVlast,U)\times(\obVlast,W, U)$-matrix
\begin{equation}\label{CKQ1}
\sfC\barKl\barQl=
\begin{pmatrix}
-2nI & D+C_1A & \ast\\
O    & B-A & \ast
\end{pmatrix}
\end{equation}

Suppose 
\begin{align}
\barKl = \begin{pmatrix}
K_{11} & K_{12} & K_{13} \\
K_{21}    & K_{22} & K_{23} \\
K_{31}    & K_{32} & K_{33} \\
\end{pmatrix},\label{eq:matrix_K_docomp3}    
\end{align}
where the rows and columns are divided in $(\obVlast, W, U)$.

\begin{lem}[{\cite[Lemma 11.6]{LY23}}]\label{Lem:LY23_11.6}
The restriction of $\sfC \barKl$ to $V_\lambda'\times(\barVlast \setminus V_\lambda)$ is $O$.
\end{lem}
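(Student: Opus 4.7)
The plan is to verify the vanishing column by column. Fix $w\in\barVlast\setminus V_\lambda=U$; then $w$ lies on $e_3$ of a unique attached triangle $\tau_0$ with barycentric coordinates $w=(a,0,n-a)$ for some $1\le a\le n-1$. Since both $e_2$ and $e_3$ of $\tau_0$ lie on $\partial\Sigma^\ast$, the unique face of $\lambda^\ast$ containing $w$ is $\tau_0$, and the defining formula \eqref{eq-surgen-exp} specializes to
\[
\barKl(v,w)=\sum_{s\subset\tau_0\cap\widetilde Y_v}\barK_{\tau_0}(Y(s),w),\qquad v\in\barVlast.
\]
Using the sparse form of $\sfC$ (the only nonzero entries in row $v\in V'_\lambda$ are $\sfC(v,v)=1$ and, when $v\in V'_\lambda\setminus\barV_\lambda$, $\sfC(v,p(v))=-1$), together with the inclusion $\barV_\lambda\subset\obVlast\subset V'_\lambda$, the lemma reduces to two combinatorial identities: (i) $\barKl(v,w)=0$ when $v\in\barV_\lambda$; and (ii) $\barKl(v,w)=\barKl(p(v),w)$ when $v\in V'_\lambda\setminus\barV_\lambda$, i.e., when $v$ is either an interior small vertex of some attached triangle or lies in $U$.

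For (i), the main segment $Y_v$ lies in a face of $\lambda$, never in $\tau_0$, so every contributing segment is an elongation entering $\tau_0$; since no segment crosses the $\partial\Sigma^\ast$-edges $e_2$ or $e_3$, every such segment enters $\tau_0$ through $e_1$. I would carry out the turn-left rule explicitly to write $Y(s)$ in closed form in terms of the entry point $v_b=(n-b,b,0)\in e_1$ and the weight $\ell$ of the incoming arm of $Y_v$, then plug $Y(s)$ and $w=(a,0,n-a)$ into the explicit formula $\barK_{\bP_3}(v',v)=jk'+ki'+i'j$ (extended by $\bZ_3$-invariance), and sum over the at most three arms of $Y_v$ that reach $\tau_0$; the coordinate relation $i+j+k=n$ for $v$ forces the contributions to cancel. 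For (ii), $v=(i,j,k)$ and $p(v)=(0,n-k,k)$ lie in a common attached triangle $\tau$, and only the weight-$i$ arm of $Y_v$ can elongate beyond $\partial\Sigma^\ast$, since every other arm of $Y_v$ or $Y_{p(v)}$ either hits $\partial\Sigma^\ast$ or has weight $0$. When $\tau\ne\tau_0$, both sides reduce by an instance of (i) applied to the $\lambda$-face adjacent to $\tau$ across $e_1$; when $\tau=\tau_0$, a direct computation on the main segments $Y_v,Y_{p(v)}\subset\tau_0$ using the $\bZ_3$-symmetry of $\barK_{\bP_3}$ yields the equality.

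The main obstacle is the explicit bookkeeping in claim (i): for each of the three arms of $Y_v$, one must identify the unique elongated segment entering $\tau_0$, determine its associated small vertex $Y(s)$, and verify that the resulting values of $\barK_{\tau_0}(Y(s),w)$ sum to zero. Because $\barK_{\bP_3}$ is defined piecewise and extended by $\bZ_3$-symmetry, the verification naturally splits into cases according to the position of $Y(s)$ relative to $w=(a,0,n-a)$ inside $\tau_0$, and the telescoping that produces $0$ ultimately comes down to the identity $i+j+k=n$ for the coordinates of $v$.
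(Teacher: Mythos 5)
This lemma is quoted verbatim from [LY23, Lemma 11.6]; the paper offers no proof of its own, so there is no in-paper argument to compare against and what you have written is a from-scratch reconstruction of [LY23]'s result. Your reduction to the two identities (i) $\barKl(v,w)=0$ for $v\in\barVl$ and (ii) $\barKl(v,w)=\barKl(p(v),w)$ for $v\in V'_\lambda\setminus\barVl$ is correct, as is the observation that the whole computation localizes to the unique attached triangle $\tau_0$ containing $w$, with every contributing segment of $\widetilde Y_v$ entering $\tau_0$ through $e_1$. Your outline for (ii) is also reasonable: for $\tau\ne\tau_0$ the arms of $Y_{p(v)}$ stay in $\tau$ and only the weight-$i$ arm of $Y_v$ leaves, so the statement reduces to (i); for $\tau=\tau_0$ one checks directly that $\barK_{\tau_0}((i,j,k),(a,0,n-a))$ equals $ka$ or $(n-k)(n-a)$, which depends only on the third coordinate $k$ and is therefore invariant under $v\mapsto p(v)=(0,n-k,k)$ (you should also say why the weight-$i$ arm of $Y_v$, if it returns to $\tau_0$, contributes nothing — see below).

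The genuine gap is (i), and the mechanism you propose for it is misdirected. You claim that the contributions of the (at most three) arms of $Y_v$ "cancel, forced by $i+j+k=n$." But nothing forces all three (or even two) arms to reach $\tau_0$; generically none or exactly one does, so there is nothing to cancel against and the statement must hold arm-by-arm. It in fact does, for a reason your write-up never surfaces: every segment $s\subset\tau_0\cap\widetilde Y_v$ is a corner arc near a puncture of $e_1$, so the associated vertex $Y(s)$ lies on $e_1$ of $\tau_0$, i.e.\ has vanishing third barycentric coordinate — this is precisely what the paper's own skeleton computations exhibit, e.g.\ $\skeleton_\tau(u'_i)=v_i\in e_1$ in the proof of Lemma~\ref{matrixB}. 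Plugging $Y(s)=(p,n-p,0)$ and $w=(a,0,n-a)$ into the $\bZ_3$-invariant formula then gives $\barK_{\tau_0}(Y(s),w)=0$ outright, with $i+j+k=n$ playing no role. So (i) holds term-wise, not by telescoping; without identifying this, the pivotal step of your plan is both missing and based on the wrong idea, and the proposal remains a sketch rather than a proof.
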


Lemma \ref{Lem:LY23_11.6} implies 
$$\sfC\barKl = \begin{pmatrix}K_{11}+C_1 K_{21} & K_{12}+C_1K_{22} & O\\
K_{31} - K_{21}    & K_{32} - K_{22} & O
\end{pmatrix} = \begin{pmatrix}
\sfK_{\lambda} & O
\end{pmatrix},$$
where 
\begin{equation}\label{eq_K}
\sfK_{\lambda}=(\sfC\barKl)|_{V_\lambda'\times V_\lambda} = \begin{pmatrix}
K_{11}+C_1 K_{21} & K_{12}+C_1 K_{22} \\
K_{31} - K_{21}    & K_{32} - K_{22}
\end{pmatrix}.
\end{equation}

Suppose $\barQl = \begin{pmatrix}
\sfQ_{\lambda} & Q_{12}\\
Q_{21}    & Q_{22} 
\end{pmatrix},$
where the rows and columns are divided in $(V_{\lambda},U)$.
Then we have 
\begin{equation}\label{CKQ2}
    \sfC\barKl\barQl = \begin{pmatrix}
\sfK_{\lambda} & O
\end{pmatrix}
\begin{pmatrix}
\sfQ_{\lambda} & Q_{12}\\
Q_{21}    & Q_{22} 
\end{pmatrix}
=\begin{pmatrix}
\sfK_{\lambda} \sfQ_{\lambda} & \ast
\end{pmatrix}.
\end{equation}

Compare equations \eqref{CKQ1} and \eqref{CKQ2}, we have
\begin{equation}\label{KQ}
\sfK_{\lambda}\sfQ_{\lambda} = 
\begin{pmatrix}
-2nI & D+C_1 A\\
   O & B-A
\end{pmatrix}.
\end{equation}

\subsection{Explicit presentations of matrices}

We will explicitly give the submatrix $B-A$ of $\sfK_\lambda \sfQ_{\lambda}$ in \eqref{KQ} and show some property of $D+C_1A$. 
\begin{lem}\label{matrixA}
    We have $A = -n I$. 
\end{lem}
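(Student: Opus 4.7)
The plan is to reduce the computation of the $(W,W)$-block of $\barK_\lambda\barQ_\lambda$ to a local calculation inside each attached triangle. The key geometric observation is that for any $w\in W$ (sitting on $e_2$ of an attached triangle $\tau$ with barycentric coordinates $(0,j,n-j)$), the extended skeleton $\tilde{Y}_w$ is entirely contained in $\tau$. Indeed, the edge of $Y_w$ toward the attaching edge $e_1$ has weight $i=0$ and is therefore absent, while the remaining edges (of weights $j$ and $n-j$) go toward the boundary edges $e_2$ and $e_3$ of $\Sigma^\ast$, terminate at $\partial\Sigma^\ast$, and admit no elongation. Consequently, $\skeleton_\tau(w)=w$, and by the definition of $\barK_\lambda$ one gets $\barK_\lambda(w,v)=0$ whenever $v\notin\barV_\tau$.

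This localization immediately handles the off-diagonal part: if $w$ and $w'$ lie in distinct attached triangles $\tau_w\neq\tau_{w'}$, every term in $\sum_v \barK_\lambda(w,v)\barQ_\lambda(v,w')$ vanishes, because either $\barK_\lambda(w,v)=0$ (for $v$ outside $\barV_{\tau_w}$) or $\barQ_\lambda(v,w')=0$ (the neighbors of $w'$ in $\Gamma_{\lambda^\ast}$ all lie in $\tau_{w'}$). Hence the corresponding off-diagonal entries of $A$ vanish.

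For $w,w'$ on a common attached triangle $\tau\cong\bP_3$, the plan is to use the identity $\barK_\lambda\barH_\lambda=nI$ from Lemma~\ref{lem:invertible_KH} and to write $\barQ_\lambda=-2\barH_\lambda+E_\lambda$, where $E_\lambda:=\barQ_\lambda+2\barH_\lambda$ is supported on pairs of vertices sharing a boundary edge of $\Sigma^\ast$. This yields $A(w,w')=-2n\delta_{w,w'}+\sum_v\barK_\lambda(w,v)\,E_\lambda(v,w')$, with the surviving sum running over the $n-1$ small vertices on $e_2$ of $\tau$. On this edge, $E_\lambda$ takes the values $-2$ at $v=w'$, $3$ at the neighbor with an arrow into $w'$, and $-1$ at the neighbor with an arrow out of $w'$. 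Combined with the $\bZ_3$-invariant formula for $\barK_{\bP_3}$ restricted to $e_2$ (namely $\barK_{\bP_3}(w_j,w_{j'})$ computed from $jk'+ki'+i'j$ in each of the two monotonicity ranges of $(j,j')$), the sum should collapse via a telescoping cancellation to $n\delta_{w,w'}$, producing $A=-nI$.

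The main obstacle will be the final combinatorial verification inside $\bP_3$: one has to handle the two cases $j'\ge j$ versus $j'\le j$ dictated by the $\bZ_3$-invariance of $\barK_{\bP_3}$, and correctly identify the orientation of the boundary arrows in the attached triangle (inherited from the orientation of $\Sigma^\ast$ rather than from a standalone $\bP_3$). Once both are in place, the polynomial cancellations should produce exactly $n$ on the diagonal and $0$ off it, completing the proof.
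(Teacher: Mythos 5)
Your first half (the skeleton of $w\in W$ reduces to its main segment, so $\barK_{\lambda^\ast}(w,\cdot)$ is supported on the attached triangle containing $w$, and the blocks of $\barK_{\lambda^\ast}\barQ_{\lambda^\ast}$ between distinct attached triangles vanish) is correct and coincides with the paper's first step. The treatment of the diagonal blocks, however, has a genuine gap. The values you assign to $E_\lambda=\barQ_\lambda+2\barH_\lambda$ on a boundary edge, namely $-2$ at $v=w'$, $3$ at the neighbour with an arrow into $w'$ and $-1$ at the neighbour with an arrow out of $w'$, are read off from the boundary-edge clause of the printed definition of $\barH_\lambda$, and they are incompatible with the identity $\barK_\lambda\barH_\lambda=nI$ on which your decomposition rests. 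Concretely, with $w_l=(0,l,n-l)$ on the edge $e_2$ of an attached triangle one has $\barK_\tau(w_i,w_l)=i(n-l)$ for $i\le l$ and $l(n-i)$ for $i>l$; with your weights the surviving local sum is $3\barK_\tau(w_i,w_{j-1})-2\barK_\tau(w_i,w_j)-\barK_\tau(w_i,w_{j+1})$ (or its mirror image for the other boundary orientation), which for $i<j$ equals $4i$ (resp.\ $-4i$), and on the diagonal equals $4i-3n$ (resp.\ its mirror value). So no choice of arrow orientation makes the sum collapse to $n\delta_{ij}$, and your formula would give off-diagonal entries $4i$ and diagonal entries $4i-5n$ instead of $-n\delta_{ij}$; the final "telescoping" step fails as stated.

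The source of the inconsistency is that the boundary-edge block of $\barH_\lambda$ as printed does not satisfy $\barK_\lambda\barH_\lambda=nI$ under the orientation conventions that produce $A=-nI$ (the paper never uses that block, only the interior columns where $\barH=-\tfrac12\barQ$, so its own argument is unaffected). The identity forces the opposite signs on that block, i.e.\ in the column of $w_j$ one must have $+1$ at $w_j$, $-1$ at the boundary neighbour carrying the arrow into $w_j$, and $0$ at the other; consequently $E_\lambda$ equals $+2$ at $v=w'$ and $-1$ at \emph{both} adjacent vertices on the boundary edge. With these corrected values your strategy does work and is genuinely different from the paper's: the surviving sum becomes $2\barK_\tau(w_i,w_j)-\barK_\tau(w_i,w_{j-1})-\barK_\tau(w_i,w_{j+1})$, the negative of a second difference of the piecewise-linear function $l\mapsto\barK_\tau(w_i,w_l)$, whose single slope jump of size $-n$ at $l=i$ yields exactly $n\delta_{ij}$ and hence $A=-2nI+nI=-nI$, with no need to evaluate $\barK_\tau$ at interior vertices. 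By contrast, the paper computes the four-term sum $\sum_v\barK_\tau(w_i,v)\barQ_{\lambda^\ast}(v,w_j)$ directly, using the explicit values of $\barK_\tau$ at the two interior neighbours $(1,j,n-j-1)$ and $(1,j-1,n-j)$ of $w_j$ as well as at $w_{j\pm1}$. So your route is viable and arguably cleaner, but only after the sign of the boundary block of $\barH_\lambda$ is repaired; as written, the key computation does not close.
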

\begin{proof} 
For any $w\in W$, $\widetilde{Y}_w$ is an arc segment in a triangle and $Y(\widetilde{Y}_w)=w$. By combining with the definitions of $\barKl$ and $\barQl$, this implies that $\barKl\barQl(w,w')=0$ if $w$ and $w'$ are not in a same attached triangle. For $w_i,w_j\in W$ in a same attached triangle $\tau$, we have 
\begin{eqnarray*}
\barKl\barQl(w_i,w_j)&=&\sum_{v\in \overline{V}_{\lambda^{*}}}\barKl(w_i,v)\barQl(v,w_j)\\
&=&2\barK_{\tau}(w_i,(1,j,n-j-1))-2\barK_{\tau}(w_i,(1,j-1,n-j))\\
&\ & +\barK_{\tau}(w_i,w_{j-1})
-\barK_{\tau}(w_i,w_{j+1})=-n \delta_{ij}, 
\end{eqnarray*}
where we formally set $\barK_{\tau}(w_i,w_{0})=0=\barK_{\tau}(w_i,w_{n})$ and 
the last equality follows by concrete computations with $w_i=(0, i, n-i)$ and the definition of $\barK_{\tau}=\barK_{\mathbb P_3}$. 
\end{proof}

\begin{lem}\label{matrixB}
    Suppose $\overline{\Sigma}$ has boundary components $\partial_1,\cdots,\partial_b$, and $\partial_i$ contains $r_i$ punctures for each $1\leq i\leq b$.
    We have $B = diag\{B_{1},B_2,\cdots,B_b\}$, where $B_i$ is the matrix associated to $\partial_i$ for $1\leq i\leq b$. Furthermore, for each $1\leq i\leq b$, we have 
    \begin{equation}\label{Bi}
    B_i = \begin{pmatrix}
     O & O &  \cdots &O & nI \\
     nI & O & \cdots &O & O \\
     O  & nI& \cdots & O & O\\
     \vdots & \vdots &  & \vdots & \vdots \\
     O & O & \cdots  & nI &  O \\
    \end{pmatrix} \text{\ if $r_i>1$, and\ } 
    B_i = nI \text{\ if $r_i=1$}, 
    \end{equation}
    where $B_i$ is of size $r_i(n-1)$ and  every block matrix is of size $n-1$.
\end{lem}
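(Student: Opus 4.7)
The plan is to compute each entry $B(u,w) = (\barK_{\lambda^\ast}\barQ_{\lambda^\ast})(u,w)$ for $u\in U$, $w\in W$, and match it against the claimed block shape. Two observations drive the argument. First, since $w=w_j$ lies on the outer boundary edge $e_2$ of its attached triangle $\tau$, all quiver-neighbors of $w$ also lie in $\tau$. Combined with the segment decomposition $\barK_{\lambda^\ast}(u,v)=\sum_s\barK_\tau(Y(s),v)$ from \eqref{eq-surgen-exp}, this yields
\begin{equation*}
B(u,w)=\sum_s(\barK_\tau\barQ_\tau)(Y(s),w),
\end{equation*}
where $s$ ranges over arc segments of $\widetilde Y_u$ inside $\tau$. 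Second, because $\Sigma$ has no interior punctures, the left-turn elongation of the $e_1$-edge of $Y_{u_i}$ starting in $\tau_{k'}$ is a finite path: it winds exactly once around the boundary puncture of $\partial_i$ between $e_{k'}$ and the next boundary edge, exits $\Sigma$ through that next edge, and so terminates in the cyclically adjacent attached triangle $\tau_{k'+1}$ after one last left turn onto an outer boundary edge.

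I first treat the same-triangle case $\tau_{k'}=\tau$. Here, the only segment of $\widetilde Y_u$ in $\tau$ is the main segment $Y_{u_i}$: the $e_2$-edge of $Y_{u_i}$ does not exist (weight $j=0$), the $e_3$-edge of weight $n-i$ terminates on $\partial\Sigma^\ast$, and the $e_1$-edge elongation leaves $\tau$. Direct evaluation of $\barK_{\bP_3}((i_1,j_1,k_1),(i_2,j_2,k_2))=j_1 k_2+k_1 i_2+i_2 j_1$ on $u_i=(i,0,n-i)$ against the four quiver-neighbors of $w_j=(0,j,n-j)$, namely $w_{j\pm 1}$ and $(1,j-1,n-j),(1,j,n-j-1)$, gives $(\barK_\tau\barQ_\tau)(u_i,w_j)=0$: the two boundary-$w$ contributions vanish since $\barK_\tau(u_i,w_k)=0$ whenever $j_1=i_2=0$, and the two interior contributions $\pm 2(n-i)$ cancel. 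This yields the vanishing diagonal blocks of $B_i$.

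Next, for $u\in\tau_{k'}$ and $w\in\tau$ with $\tau\neq\tau_{k'}$, the second observation above forces $\tau=\tau_{k'+1}$ on the same boundary component. In particular, if $u$ and $w$ lie on distinct boundary components no such $\tau_{k'+1}$ qualifies, so $B(u,w)=0$ and we obtain the block-diagonal form $B=\mathrm{diag}(B_1,\ldots,B_b)$. To compute the surviving $nI$ block I would identify $Y(s)$ for the unique terminal arc segment $s\subset \tau_{k'+1}$ (entering through the attaching edge and exiting on an outer boundary edge, carrying weight $i$ throughout) via the weighted-graph correspondence of Section~\ref{sec;A_tori}, substitute into $(\barK_{\tau_{k'+1}}\barQ_{\tau_{k'+1}})(Y(s),w_j)$, and verify the result equals $n\delta_{ij}$. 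Matching the lexicographic ordering on $U$ and $W$ within each $\partial_i$, which reverses the natural numbering $\tau_1,\ldots,\tau_{r_i}$, places the nonzero $nI$ blocks in the cyclic subdiagonal positions together with the wrap-around $(1,r_i)$ of \eqref{Bi}. The case $r_i=1$ is recovered automatically because the unique attached triangle is its own cyclic successor, so the terminal segment lies back in $\tau_1$ and contributes $nI$.

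The principal obstacle is the final identification of $Y(s)$ for the terminal arc and the verification of the numerical value $n\delta_{ij}$. This requires carefully tracking the entry and exit points of the left-turn path along its journey around the puncture, confirming that the weight $i$ is preserved, and unpacking the weighted-graph correspondence in the explicit attached-triangle coordinates in order to apply $\barK_\tau\barQ_\tau$ against $w_j$.
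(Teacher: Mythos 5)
Your proposal shares the paper's structure and its central reduction: since every quiver-neighbor of $w_j$ lies in the unique attached triangle $\tau$ containing $w_j$, the entry $B(u,w_j)$ localizes to a sum $\sum_s(\barK_\tau\barQ_\tau)(Y(s),w_j)$ over the arc segments of $\widetilde Y_u$ inside $\tau$. Your same-triangle computation is correct and matches the paper: the two boundary-edge terms vanish because $\barK_\tau((i,0,n-i),(0,k,n-k))=0$, and the two interior terms $\pm 2(n-i)$ cancel. You also correctly identify that the block-diagonal structure $B=\mathrm{diag}(B_1,\dots,B_b)$ follows from the skeleton of $u$ contributing nothing to triangles attached to other boundary components, and you correctly see the cyclic wrap-around that produces the $(1,r_i)$ block and the automatic $r_i=1$ specialization.

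The gap, which you yourself flag, is the crux of the lemma: you never identify $Y(s)$ for the terminal arc segment in the cyclically adjacent attached triangle, nor do you carry out the evaluation $(\barK_\tau\barQ_\tau)(Y(s),w_j)=n\delta_{ij}$. The paper resolves this in one stroke by asserting the skeleton value directly: for $r>1$ and $u'_i$ in the clockwise-adjacent attached triangle $\tau'$, $\skeleton_\tau(u'_i)=v_i$ (the small vertex $(n-i,i,0)$ on the attaching edge), and for $r=1$, $\skeleton_\tau(u_i)=u_i+v_i$. To finish your argument you would need to derive exactly this: trace the left-turn elongation of the weight-$i$ edge of $Y_{u_i}$ once around the boundary puncture, observe that upon entering the next attached triangle it crosses from the attaching edge $e_1$ to the outer edge $e_2$, and conclude via the weighted-graph correspondence of Section~\ref{sec;A_tori} that the resulting $Y(s)$ has coordinates $(n-i,i,0)=v_i$; the claimed value then follows from the same $\barK_\tau\barQ_\tau$ evaluation you performed in the diagonal case, now applied to $v_i$ rather than $u_i$. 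Without this identification the key numerical conclusion $n\delta_{ij}$ is left unverified, so the proof is incomplete even though its architecture is sound. A minor expository issue: your first paragraph asserts flatly that the main segment is the \emph{only} segment of $\widetilde Y_u$ in $\tau$ when $\tau_{k'}=\tau$, which is false for $r_i=1$; you correct this later, but as written the two passages contradict one another.
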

\begin{proof}
From the definition of skeletons, $\barKl\barQl(u,w)=0$ if $u$ and $w$ are in triangles attached to different boundary components of $\overline{\Sigma}$. 
Hence, it suffices to consider $u,w$ are in triangles attached to the same boundary component of $\overline{\Sigma}$. 
Fix $w_j\in W$ and let $\tau$ be the triangle containing $w_j$. Let $r$ be the number of punctures on the boundary component of $\overline{\Sigma}$ intersecting $\tau$.

\noindent\textbf{Case of $r>1$.} 
Let $\tau'$ be the attached triangle sharing a vertex with $\tau$ such that the attached edge of $\tau'$ follows that of $\tau$ in clockwise with respect to the vertex. See Figure \ref{Fig;tau_tau'}. 
\begin{figure}[h]
    \centering
    \includegraphics[width=170pt]{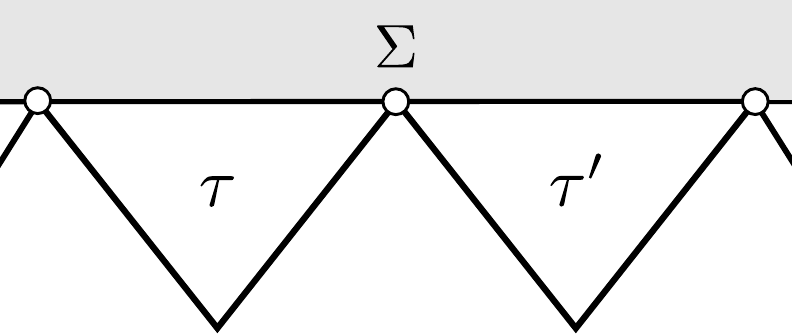}
    \caption{A local picture of $\Sigma^\ast$}\label{Fig;tau_tau'}
\end{figure}

For $u_i\in (U\cap \tau)$, $\skeleton_\tau(u_i)=u_i.$ 
One can easily show  
\begin{eqnarray*}
\barKl\barQl(u_i,w_j)&=&
2\barKt(u_i,(1,j,n-j-1))-2\barKt(u_i,(1,j-1,n-j))\\
&\ &+\barKt(u_i,w_{j-1})-\barKt(u_i,w_{j+1})=0, 
\end{eqnarray*} 
where we formally set $\barK_{\tau}(\ast,w_{0})=0=\barK_{\tau}(\ast,w_{n})$. 

For $u'_i\in (U\cap \tau')$, $\skeleton_\tau(u'_i)=v_i.$ 
One can easily show  
\begin{eqnarray*}
\barKl\barQl(u_i',w_j)&=&
2\barKt(v_i,(1,j,n-j-1))-2\barKt(v_i,(1,j-1,n-j))\\
&\ &+\barKt(v_i,w_{j-1})-\barKt(v_i,w_{j+1})=n\delta_{ij}.
\end{eqnarray*} 
In particular, for $u\not\in (U\cap (\tau\cup \tau'))$, 
$\barKl\barQl(u,w_j)=0$ from the definition of skeletons. 

\noindent\textbf{Case of $r=1$.} 
For $u_i\in (U\cap \tau)$, $\skeleton_\tau(u_i)=u_i+v_i.$
From the computations in Case of $r>1$, we have
\begin{eqnarray*}
\barKl\barQl(u_i,w_j)&=&\sum_{v\in \overline V_{\lambda^{*}}}\barKl(u_i,v)\barQl(v,w_j)\\
&=&2\barKl(u_i,(1,j,n-j-1))-2\barKl(u_i,(1,j-1,n-j))\\
&\ &+\barKl(u_i,w_{j-1})
-\barKl(u_i,w_{j+1})\\
&=&2\{\barK_{\tau}(u_i,(1,j,n-j-1))+\barK_{\tau}(v_i,(1,j,n-j-1))\}\\
&\ &-2\{\barK_{\tau}(u_i,(1,j-1,n-j))+\barK_{\tau}(v_i,(1,j-1,n-j))\}\\
&\ &+\{\barK_{\tau}(u_i,w_{j-1})+\barK_{\tau}(v_i,w_{j-1})\}
-\{\barK_{\tau}(u_i,w_{j+1})+\barK_{\tau}(v_i,w_{j+1})\}=n\delta_{ij}
\end{eqnarray*}
and $\barKl\barQl(u,w_j)=0$ for any $u\not\in U\cap \tau$.
\end{proof}

From Lemmas \ref{matrixA} and \ref{matrixB}, we have the following. 
\begin{cor}\label{invertibility} 
Suppose that the connected components of $\partial\overline{\Sigma}$ are labeled as $1,2,\dots, b$ and the $i$-th boundary component has $r_i$ punctures ($r_i$ is odd) for any $i=1,\dots,b$. 
Then, 
$\det(B_i-A_i)=2^{n-1}n^{r_i(n-1)}$ and 
$\det(B-A)=2^{b(n-1)}n^{p(n-1)}$, 
where $p$ denotes the total number of the punctures of $\overline\Sigma$.
\end{cor}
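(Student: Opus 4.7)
The plan is to use the block-circulant structure of $B_i-A_i$ coming from Lemmas \ref{matrixA} and \ref{matrixB}, diagonalize via roots of unity, and exploit the oddness of $r_i$.

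First I would observe that from Lemma \ref{matrixA} we have $A_i=-nI$ on each boundary block (since $A=-nI$ globally and $B$ is block-diagonal according to the boundary components of $\overline{\Sigma}$). Hence $B_i-A_i=B_i+nI$, and writing $P_{r_i}$ for the $r_i\times r_i$ cyclic permutation matrix with $(P_{r_i})_{j+1,j}=1$ and $(P_{r_i})_{1,r_i}=1$, Lemma \ref{matrixB} gives
$$B_i=n\bigl(P_{r_i}\otimes I_{n-1}\bigr),\qquad B_i-A_i=n\bigl((P_{r_i}+I_{r_i})\otimes I_{n-1}\bigr),$$
valid even in the degenerate case $r_i=1$ (where $P_1=1$). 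Taking determinants,
$$\det(B_i-A_i)=n^{r_i(n-1)}\bigl(\det(P_{r_i}+I_{r_i})\bigr)^{n-1}.$$

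Next I would compute $\det(P_{r_i}+I_{r_i})$ using the fact that the eigenvalues of $P_{r_i}$ are exactly the $r_i$-th roots of unity $\{\omega^k\}_{k=0}^{r_i-1}$ with $\omega=e^{2\pi i/r_i}$. Thus
$$\det(P_{r_i}+I_{r_i})=\prod_{k=0}^{r_i-1}(1+\omega^k).$$
Evaluating $x^{r_i}-1=\prod_{k=0}^{r_i-1}(x-\omega^k)$ at $x=-1$ yields $(-1)^{r_i}-1=\prod_{k=0}^{r_i-1}(-1-\omega^k)=(-1)^{r_i}\prod_{k=0}^{r_i-1}(1+\omega^k)$. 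Since $r_i$ is odd, this gives $-2=-\prod_{k=0}^{r_i-1}(1+\omega^k)$, so $\det(P_{r_i}+I_{r_i})=2$. This is exactly the place where the odd hypothesis is used; if $r_i$ were even, the determinant would vanish. Substituting back, $\det(B_i-A_i)=2^{n-1}n^{r_i(n-1)}$.

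Finally, since $B-A=\operatorname{diag}(B_1-A_1,\dots,B_b-A_b)$ by the boundary block decomposition, multiplicativity of determinants on block-diagonal matrices gives
$$\det(B-A)=\prod_{i=1}^{b}\det(B_i-A_i)=\prod_{i=1}^{b}2^{n-1}n^{r_i(n-1)}=2^{b(n-1)}n^{p(n-1)},$$
using $p=\sum_{i=1}^b r_i$. There is no real obstacle here; the only conceptual point is that oddness of every $r_i$ is precisely what makes each cyclic block invertible via the identity $\det(P_{r_i}+I_{r_i})=2$, and this is what drives the appearance of the factor $2^{b(n-1)}$.
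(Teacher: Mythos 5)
Your proof is correct and is essentially the computation the paper leaves implicit: the corollary is stated as an immediate consequence of the explicit forms $A_i=-nI$ and $B_i=n(P_{r_i}\otimes I_{n-1})$ from Lemmas \ref{matrixA} and \ref{matrixB}, and your Kronecker-product/roots-of-unity evaluation of $\det(P_{r_i}+I_{r_i})=2$ for odd $r_i$ (vanishing for even $r_i$) is just a clean way of carrying out that direct determinant computation, including the degenerate case $r_i=1$.
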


\begin{lem}\label{matrixDC}
    The matrix $D+C_1A$ is over $n\bZ$. 
\end{lem}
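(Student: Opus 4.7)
The plan is as follows. Since Lemma~\ref{matrixA} gives $A = -nI$, one has $(C_1 A)(\mathring v, w) = -n\, C_1(\mathring v, w) \in n\bZ$ automatically, so it suffices to show $D(\mathring v, w_j) \in n\bZ$ for every $\mathring v \in \obVlast$ and every $w_j \in W$.

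Each $w_j \in W$ sits on the edge $e_2$ of its attached triangle $\tau$, hence on the boundary of $\Sigma^*$, so all quiver neighbors of $w_j$ lie inside $\tau$. Reading off the four nonzero arrow weights exactly as in the proof of Lemma~\ref{matrixA} gives the closed expression
\begin{align*}
D(\mathring v, w_j) &= 2\,\barK_{\lambda^*}(\mathring v, (1, j, n{-}j{-}1)) - 2\,\barK_{\lambda^*}(\mathring v, (1, j{-}1, n{-}j))\\
&\quad + \barK_{\lambda^*}(\mathring v, w_{j-1}) - \barK_{\lambda^*}(\mathring v, w_{j+1}),
\end{align*}
with the convention $\barK_{\lambda^*}(\cdot, w_0) = \barK_{\lambda^*}(\cdot, w_n) = 0$ at the two corners of $e_2$.

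To reduce modulo $n$, I would invoke Proposition~\ref{prop:LY23_11.10}: the row $e_{\mathring v}\,\barK_{\lambda^*}$ is balanced, so its pullback along the characteristic map $f_\tau$ lies in $\overline\Lambda_{\bP_3}$. Hence there exist integers $a_1, a_2, a_3$ (depending on $\mathring v$ and $\tau$) such that $\barK_{\lambda^*}(\mathring v, v) \equiv a_1 \proj_1(v) + a_2 \proj_2(v) + a_3 \proj_3(v) \pmod{n}$ for every $v \in \barV_\tau$. Substituting the barycentric coordinates of the five vertices appearing in the displayed expression and absorbing the $n$-shifts (e.g.\ $a_3(n-j-1) \equiv -(j+1)\,a_3 \pmod{n}$), a direct coefficient-collecting argument shows that the $a_1$-contributions cancel through the opposite signs $+2, -2$ on the interior terms, while the $a_2$- and $a_3$-contributions also cancel pairwise. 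This yields $D(\mathring v, w_j) \equiv 0 \pmod{n}$, completing the proof.

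The main hurdle is not conceptual but arithmetic: one must verify that all three coefficient sums vanish modulo $n$, and separately handle the boundary cases $j = 1$ and $j = n-1$ in which one of $w_{j\pm 1}$ is a corner of $\bP_3$ and is declared to be $0$ by the convention above. Once this bookkeeping is carried out carefully, the vanishing is automatic because the coefficient pattern $(+2, -2, +1, -1)$ on the four displayed vertices annihilates every linear form $a_1\proj_1 + a_2\proj_2 + a_3\proj_3$ on $\barV_\tau$ modulo $n$.
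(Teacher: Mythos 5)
Your proof is correct, but it takes a genuinely different route from the paper's. The paper's proof expands $\barK_{\lambda^\ast}(v,\cdot)$ via the skeleton decomposition $\skeleton_\tau(v)=\sum_s Y(s)$ and then verifies, segment by segment, that the four-term combination $2\barK_\tau(\xi,(1,j,n{-}j{-}1))-2\barK_\tau(\xi,(1,j{-}1,n{-}j))+\barK_\tau(\xi,w_{j-1})-\barK_\tau(\xi,w_{j+1})$ lies in $n\bZ$, invoking the computations already done for $\xi=v_i$ in Lemma~\ref{matrixB} and running a further three-case analysis on the barycentric coordinates for $\xi=v$. You instead black-box the skeleton combinatorics into Proposition~\ref{prop:LY23_11.10}: the row $e_{\mathring v}\,\barK_{\lambda^\ast}$ is of the form $\mathbf{c}\,\barK_{\lambda^\ast}$, hence balanced, hence on $\barV_\tau$ equal mod~$n$ to a linear form $a_1\proj_1+a_2\proj_2+a_3\proj_3$, and the signed sum $(+2,-2,+1,-1)$ over the four vertices
$(1,j,n{-}j{-}1),\,(1,j{-}1,n{-}j),\,(0,j{-}1,n{-}j{+}1),\,(0,j{+}1,n{-}j{-}1)$
annihilates every such linear form \emph{identically} over $\bZ$ (each $\proj_i$-sum is zero). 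Your route is tighter: the cancellation is an exact algebraic identity rather than a case check, and the corner cases $j=1,\,n-1$ are handled automatically because $\proj_1,\proj_2,\proj_3$ at $(0,0,n)$ and $(0,n,0)$ are all $\equiv 0\pmod n$, which is consistent with the paper's formal convention $\barK(\cdot,w_0)=\barK(\cdot,w_n)=0$. One tiny slip: you speak of ``five vertices,'' but only four appear in the displayed formula. The argument is otherwise sound.
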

\begin{proof}
From Lemma \ref{matrixA}, it suffices to show that $D$ is a matrix over $n\bZ$. 
For $w_j$ in a triangle $\tau$ and $v\in \obVlast$, $$\barKl\barQl(v,w_j)=2\barKl(v,(1,j,n-j-1))-2\barKl(v,(1,j-1,n-j))+\barKl(v,w_{j-1})-\barKl(v,w_{j+1}).$$
From the definition of skeletons, 
the right-hand side is a sum of 
\begin{align}
2\barKt(\xi,(1,j,n-j-1))-2\barKt(\xi,(1,j-1,n-j))+\barKt(\xi,w_{j-1})-\barKt(\xi,w_{j+1}),\label{eq:vw_tau}
\end{align}
where $\xi$ is of $v, v_i$ in $\tau$ $(i\in \{1,2,\cdots,n-1\})$ depending on the number of punctures on each boundary component of $\overline{\Sigma}$ and the location of $v$. 
The computation for $\xi=v_i$ was done in the proof of Lemma \ref{matrixB}. 
For $\xi=v$, one can easily show that 
$\eqref{eq:vw_tau}\in n\bZ$ by considering the three cases $j'\geq j+1$,\quad $j'=j$,\quad $j'\leq j-1$ with $v=i'j'k'$. 
\end{proof}

We will explicitly give the submatrix $K_{32}-K_{22}$ of $K_\lambda$ in \eqref{eq_K}. 
\begin{lem}\label{matrixK}
    Suppose $\overline{\Sigma}$ has boundary components $\partial_1,\cdots,\partial_b$, and $\partial_i$ contains $r_i$ punctures for each $1\leq i\leq b$.
    We have $K_{32}-K_{22} = diag\{L_{1},L_2,\cdots,L_b\}$, where $L_i$ is the matrix associated to $\partial_i$ for each $1\leq i\leq b$.
    Furthermore, for each $1\leq i\leq b$, we have 
\begin{equation}\label{eq_L}
L_i = \begin{pmatrix}
-G & O & O & \cdots &O& G \\
G & -G & O & \cdots &O& O \\
\vdots & \vdots & \vdots & \; &\vdots& \vdots \\
O & O & O & \cdots &-G& O \\
O & O & O & \cdots &G& -G \\
\end{pmatrix}\text{\ if $r_i>1$, and }
L_i=O\text{ if $r_i=1$},
\end{equation}
where $G$ is the square matrix of size $(n-1)$ with
\begin{align}\label{eq-matrix-G-def}
    G_{ij} = \begin{cases}i(n-j) & i\leq j,\\
j(n-i) & i>j.
\end{cases}
\end{align}
\end{lem}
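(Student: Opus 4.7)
The plan is to compute the $(u,w)$-entry $(K_{32}-K_{22})(u,w)=\barKl(u,w)-\barKl(p(u),w)$ directly, using the definition of skeletons and the explicit formula for $\barK_{\bP_3}$. Two elementary skeleton calculations do most of the work. First, for $w_i=(0,i,n-i)$ in an attached triangle $\tau$, the nonzero-weighted edges of $Y_{w_i}$ lie on $e_2$ and $e_3$, both of which are contained in $\partial\Sigma^{\ast}$ and admit no elongation; hence $\widetilde{Y}_{w_i}$ consists only of its main segment in $\tau$, and $\skeleton_{\tau'}(w_i)=\delta_{\tau,\tau'}\,w_i$. Second, for $u_i=(i,0,n-i)\in\tau$, only the weight-$i$ edge toward $e_1$ elongates; since $\Sigma$ contains no interior punctures, the left-turning elongation circles a boundary puncture $p$ of $\overline\Sigma$ and re-enters the adjacent attached triangle $\tau'$ on the same boundary component of $\overline\Sigma$. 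A direct inspection of $Y(s)$ for the unique segment $s$ in $\tau'$, parallel to the identification $\skeleton_\tau(u_i')=v_i$ used in the proof of Lemma~\ref{matrixB}, gives $\skeleton_{\tau'}(u_i)=v_i$, while $\skeleton_{\tau''}(u_i)=0$ for every other triangle $\tau''$.

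Substituting these skeletons into $\barKl(\cdot,w_j)=\barK_{\nu}(\skeleton_\nu(\cdot),w_j)$ with $\nu$ the face containing $w_j$, and then applying the explicit formula for $\barK_{\bP_3}$ (using its $\bZ_3$-invariance when needed), one computes $\barK_\tau(u_i,w_j)=0$, $\barK_\tau(w_i,w_j)=G_{ij}$, and $\barK_{\tau'}(v_i,w_j)=G_{ij}$. Consequently the $(u_i,w_j)$-entry of $K_{32}-K_{22}$ vanishes whenever $u_i$ and $w_j$ sit in attached triangles on different boundary components of $\overline\Sigma$; equals $0-G_{ij}=-G_{ij}$ when they are in the same attached triangle; equals $G_{ij}-0=G_{ij}$ when $u_i\in\tau_s$ and $w_j\in\tau_{s'}$ with $\tau_{s'}$ the attached triangle adjacent to $\tau_s$ in the direction selected by the left-turning convention; and vanishes otherwise. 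Combined with the lexicographic ordering on $W$ and $U$ fixed in Section~\ref{sec:vertex}, this reproduces the cyclic block form \eqref{eq_L} of $L_i$ when $r_i>1$.

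For the case $r_i=1$, the $e_1$-edge of the unique attached triangle $\tau$ is a loop based at the single puncture of $\partial_i$, so the elongation from $u_i$ circles back into $\tau$ itself and $\skeleton_\tau(u_i)=u_i+v_i$, exactly as in the proof of Lemma~\ref{matrixB}. The computation then yields $\barKl(u_i,w_j)-\barKl(w_i,w_j)=(0+G_{ij})-G_{ij}=0$, so $L_i=O$. The main obstacle is the geometric bookkeeping needed to identify precisely which adjacent attached triangle $\tau'$ the left-turning elongation enters across $p$, and to justify that its segment in $\tau'$ corresponds to the small vertex $v_i$; once this matches the conventions already used in Lemmas~\ref{matrixA} and~\ref{matrixB}, the remainder reduces to a short list of $\barK_{\bP_3}$ evaluations.
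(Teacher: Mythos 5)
Your proof is correct and follows essentially the same route as the paper: reduce $(K_{32}-K_{22})(u,w)$ to $\barKl(u,w)-\barKl(p(u),w)$ via the change-of-variable matrix, compute the relevant skeletons, and evaluate $\barK_{\bP_3}$ on the pairs $(u_i,w_j)$, $(w_i,w_j)$, $(v_i,w_j)$; the paper simply reuses the skeleton identifications already established in the proof of Lemma~\ref{matrixB} rather than re-deriving them.

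One small inaccuracy: the claim that $\skeleton_{\tau''}(u_i)=0$ ``for every other triangle $\tau''$'' is too strong as stated, since the left-turning elongation of $\widetilde{Y}_{u_i}$ does contribute segments to the intermediate faces of $\lambda$ it passes through while circling the boundary puncture $p$, so the skeleton is nonzero there. What is true, and what your argument actually uses, is that $\skeleton_{\tau''}(u_i)=0$ for every \emph{attached} triangle $\tau''$ other than $\tau$ and $\tau'$; since $w_j$ always lies in an attached triangle and $\barKl(u_i,w_j)=\barK_{\nu}(\skeleton_{\nu}(u_i),w_j)$ is evaluated in the face $\nu$ containing $w_j$, only the skeleton in attached triangles matters, and the conclusion is unaffected.
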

\begin{proof}
Let $\tau$ be the triangle containing $w_i\in W$ and $r$ be the number of boundary punctures on the component of $\overline{\Sigma}$ attached by $\tau$. 
When the attaching edge of $\tau$ is $e_k$ then let $\tau'$ be the attached triangle with $e_{k+1}$.

Note that, for $u''_i\in U$, $(K_{32}-K_{22})(u''_i,w_j)=K_{32}(u''_i,w_j)-K_{22}(w''_i,w_j)$ since $\mathsf{C}\barKl(u''_i,\ast)=\barKl(u''_i,\ast)-\barKl(w''_i,\ast)$, where $w''_i\in W$ is in the attached triangle containing $u''_i$.

It is easy to see that $K_\tau(u_i,w_j)=0$ and 
$$K_\tau(v_i,w_j)=K_\tau(w_i,w_j)=\begin{cases}
i(n-j) & i\leq j,\\
j(n-i) & i>j.
\end{cases}$$

When $r>1$, we have
$K_{32}(u_i,w_j)-K_{22}(w_i,w_j)=
K_\tau(u_i,w_j)+K_\tau(v_i,w_j)-K_\tau(w_i,w_j)$. 
This implies that each matrix on the diagonal of $L$ is $-G$. 
For $u'_i\in\tau'$,  
$K_{32}(u'_i,w_j)-K_{22}(w'_i,w_j)=K_\tau(v_i,w_j)$. 
This implies that we have the presentation of $L$ given by \eqref{eq_L} except for the entries written as $O$. 
Similarly to the proofs of Lemmas \ref{matrixA} and \ref{matrixB}, 
the other entries are 0.  

When $r=1$, we have 
$$K_{32}(u_i,w_j)-K_{22}(w_i,w_j)=
K_\tau(u_i,w_j)+K_\tau(v_i,w_j)-K_\tau(w_i,w_j)=0.$$
\end{proof}

Let $E$ and $F$ be square matrices of size $(n-1)$ defined by 
\begin{align}
E_{ij}=\begin{cases}
i-j+1 & \text{if $i\geq j$}\\
0 & \text{if $i<j$}
\end{cases},\qquad 
F_{ij}=\begin{cases}
n-j & \text{if $i=1$}\\
-n &\text{if $i=j+1$}\\
0 & \text{otherwise}
\end{cases}.\label{matrixEF}
\end{align}

The following lemma holds from a direct matrix computation.
\begin{lem}\label{matrixG}
    We have $EF=G$.
\end{lem}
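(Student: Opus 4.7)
The plan is a direct matrix computation, exploiting the sparsity of $F$. Since $F_{kj}$ is nonzero only when $k=1$ (giving $n-j$) or $k=j+1$ (giving $-n$), the inner sum collapses to just two terms:
$$(EF)_{ij}=\sum_{k=1}^{n-1}E_{ik}F_{kj}=E_{i1}(n-j)+E_{i,j+1}(-n).$$
Since $i\geq 1$, we have $E_{i1}=i-1+1=i$, so $(EF)_{ij}=i(n-j)-nE_{i,j+1}$.

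Next I split into two cases based on the comparison of $i$ and $j$ to evaluate $E_{i,j+1}$.

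If $i\leq j$, then $i<j+1$, so $E_{i,j+1}=0$, and we get $(EF)_{ij}=i(n-j)$, which is exactly $G_{ij}$ in the regime $i\leq j$.

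If $i>j$, then $i\geq j+1$, so $E_{i,j+1}=i-(j+1)+1=i-j$, and we get
$$(EF)_{ij}=i(n-j)-n(i-j)=in-ij-ni+nj=j(n-i),$$
which matches $G_{ij}$ in the regime $i>j$.

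There is no real obstacle; the only thing to be careful about is the boundary case $i=j$, which falls under the first case and gives $i(n-i)=G_{ii}$, consistent with either formula for $G$. So the identity $EF=G$ follows entry by entry.
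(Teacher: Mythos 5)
Your proof is correct and is exactly the direct entry-by-entry matrix computation that the paper invokes (the paper states the lemma "holds from a direct matrix computation" without writing out the details, and your case split $i\leq j$ versus $i>j$ supplies precisely those details). The only cosmetic point is the column $j=n-1$, where the $k=j+1$ term is simply absent from the sum rather than being a zero entry $E_{i,n}$, but your case $i\leq j$ handles it correctly since that term contributes nothing either way.
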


\subsection{When $\hat q$ is generic}

\begin{prop}[generic case]
With the assumption as Corollary \ref{invertibility},
the center of stated $\SL(n)$-skein algebra of $\Sigma$ with generic $q^{1/n}$ is trivial. 
\end{prop}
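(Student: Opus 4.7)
The plan is to use the quantum trace map $\tra$ of Theorem \ref{traceA}(a) together with Lemma \ref{quantum}(a) to reduce the statement to the non-degeneracy of $\sfP_\lambda$ over $\mathbb{Q}$, and then invoke the block decomposition developed in \eqref{KQ} and Corollary \ref{invertibility} to establish that non-degeneracy.

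First I would transfer centrality through the quantum trace. Given $z \in \cZ(\cS_n(\Sigma,\mathbbm{v}))$, its image $\tra(z)$ commutes with $\tra(\cS_n(\Sigma,\mathbbm{v}))$, hence with the sub-quantum-space $\cA^{+}_{\hat{q}}(\Sigma,\lambda)$ by the sandwich property \eqref{eq-sandwitch-property}. Since the generators $a_v$ of $\cAl$ all lie in $\cA^{+}_{\hat{q}}(\Sigma,\lambda)$ and their Weyl monomials form an Ore set whose localization produces all of $\cAl$, commuting with $\cA^{+}_{\hat{q}}(\Sigma,\lambda)$ forces $\tra(z)$ to lie in $\cZ(\cAl)$. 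Using the injectivity of $\tra$ and the fact that $\tra$ fixes $\cR$, it therefore suffices to prove $\cZ(\cAl) = \cR$.

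For generic $\hat{q}$, Lemma \ref{quantum}(a) identifies $\cZ(\cAl) = \cR\text{-span}\{a^{\bf k}\mid {\bf k}\sfP_\lambda = 0\}$, so $\cZ(\cAl) = \cR$ is equivalent to $\sfP_\lambda$ having trivial left kernel over $\mathbb{Z}$, equivalently $\det\sfP_\lambda \ne 0$ over $\mathbb{Q}$. Writing $\sfP_\lambda = \sfK_\lambda \sfQ_\lambda \sfK_\lambda^T$ from \eqref{eq-anti-matric-P-def} and noting that $\sfK_\lambda \sfH_\lambda = nI$ by Lemma \ref{lem:invertible_KH} forces $\det \sfK_\lambda \ne 0$, the question further reduces to $\det \sfQ_\lambda \ne 0$, or equivalently $\det(\sfK_\lambda \sfQ_\lambda) \ne 0$.

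Finally, the block-upper-triangular form \eqref{KQ} gives $\det(\sfK_\lambda \sfQ_\lambda) = (-2n)^{|\obVlast|}\det(B-A)$, and the odd-puncture hypothesis feeds directly into Corollary \ref{invertibility} to yield $\det(B-A) = 2^{b(n-1)} n^{p(n-1)} \ne 0$, completing the argument. I do not anticipate any serious obstacle; the only mildly subtle point is the Ore-localization step at the beginning, but it is standard once one observes that every generator of $\cAl$ is a $q$-commuting monomial from $\cA^{+}_{\hat{q}}(\Sigma,\lambda)$, so commuting with $\cA^{+}_{\hat{q}}(\Sigma,\lambda)$ automatically promotes to commuting with $\cAl$.
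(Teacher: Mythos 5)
Your proof is correct and takes the same route as the paper, which merely cites Lemma \ref{quantum}(a) and Corollary \ref{invertibility} without spelling out the intermediate steps. You have correctly filled in those steps: the sandwich property to transfer centrality into the $A$-torus, the reduction via $\sfP_\lambda = \sfK_\lambda \sfQ_\lambda \sfK_\lambda^T$ and $\sfK_\lambda\sfH_\lambda = nI$ to non-degeneracy of $\sfQ_\lambda$, and the block-triangular determinant $\det(\sfK_\lambda\sfQ_\lambda) = (-2n)^{|\obVlast|}\det(B-A)$ closed by Corollary \ref{invertibility}. The only stylistic remark is that invoking an Ore localization in the first step is heavier machinery than needed; it suffices to observe that an element commuting with each $a_v$ automatically commutes with $a_v^{-1}$ (conjugate both sides of $xa_v=a_vx$ by $a_v^{-1}$), hence with every Laurent monomial and thus with all of $\cAl$.
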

\begin{proof}
The sandwiched property \eqref{eq-sandwitch-property} implies $\mathcal Z(\Ap)\subset\mathcal Z(\Sn)\subset\mathcal Z(\A)$, especially $\mathcal Z(\Sn)=\Sn\cap \mathcal Z(\A)$.
Under the same condition, $\mathcal Z(\A)$ is trivial from (a) in Lemma \ref{quantum} and Corollary \ref{invertibility}. This implies the claim for $\mathcal Z(\Sn)$.  
\end{proof}

\subsection{When $\hat q$ is a root of unity}\label{sub_center}
In the rest of this section, we suppose Condition~$(\ast)$.

We recall the mod $n$ intersection number between negatively ordered stated $n$-web diagrams introduced in \cite{Wan24}.
Suppose $\alpha$ is a negatively ordered (stated) $n$-web diagram in an essentially bordered pb surface $\Sigma$. We define $P(\alpha)$ from $\alpha$ by smoothly moving its endpoints along $\partial \Sigma$ in the positive direction until all the endpoints of $\alpha$ lie in boundary punctures. During the procedure of moving the endpoints of $\alpha$, there is no crossing created except at boundary punctures.

For every crossing $c$, we define  $w(c)\in\{-1,1\}$ as in Figure \ref{fg1}. 
\begin{figure}[h]  
	\centering\includegraphics[width=5cm]{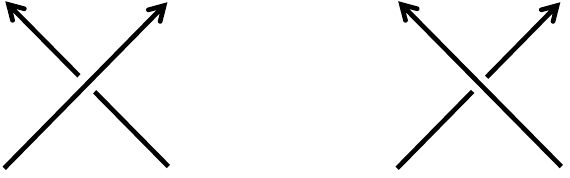} 
	\caption{Around a crossing $c$, the value $w(c)$ for the left (resp. the right) is "$1$" (resp. "$-1$").}
	\label{fg1}   
\end{figure}

 For any two (stated) $n$-web diagrams $\alpha,\beta$, we use $P(\alpha)\cup \beta$ to denote the diagram obtained from $P(\alpha)$ and $\beta$ by stacking $P(\alpha)$ above $\beta$ with only double points as transversal crossings. Define $w(P(\alpha),\beta)\in \{0,1,\dots, n-1\}$ by 
$$w(P(\alpha),\beta) \equiv \sum_{c}w(c)\mod{n},$$
where $c$ runs over the crossings in $P(\alpha)\cup\beta$ between $P(\alpha)$ and $\beta$. 
From Figure \ref{moves}, we know $w(P(\alpha),\beta)$ is well-defined.

\begin{figure}[h]  
	\centering\includegraphics[width=10cm]{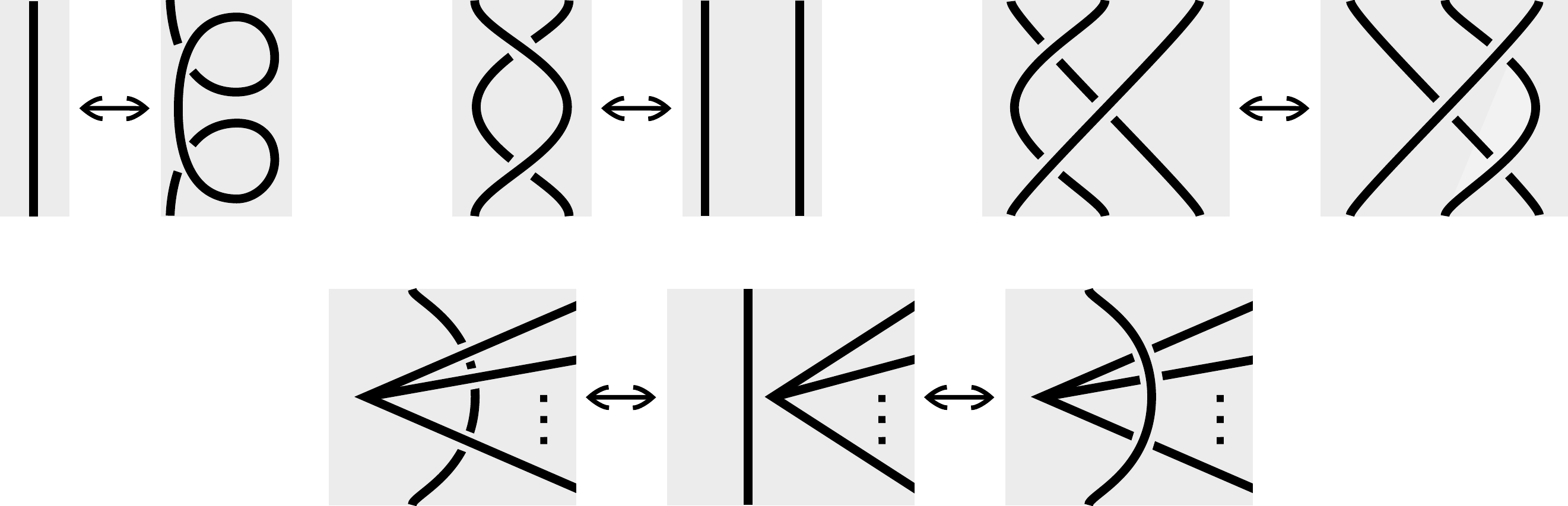}
	\caption{Four moves for the $n$-web diagrams that preserve the isotopy class of the corresponding $n$-webs.}  
	\label{moves}   
\end{figure}

Let $e$ denote a connected component of $\partial\Sigma$ and $\alpha$ be a  stated $n$-web diagram in $\Sigma$. Let $e(\alpha)$ denote the set of endpoints of $\alpha$ on $e$.

Define $\delta_e(\alpha) = \sum_{c\in e(\alpha)} sign(c)$, where $sign(c) = 1$ (resp. $sign(c) = -1$) if, at an endpoint $c$, the diagram $\alpha$ points inward (resp. outward) $e$.

For any $i\in\{1,2,\cdots,n\}$, set $1*i = i$ and $(-1)*i = \bar{i}$. Then define 
$$k_{(e,i)}(\alpha) = |\{c\in e(\alpha)\mid sign(c)*s(c) = i\}|,$$ where $s(c)$ is the state of $c$.

For any two  stated $n$-web diagrams $\alpha,\beta$, define 
$$k_e(\alpha,\beta) = \sum_{i\in\{1,2,\cdots,n\}}
k_{(e,i)}(\alpha)k_{(e,i)}(\beta).$$

The following generalizes Lemma 4.2 in \cite{Yu23} for $n=2$ to general $n$. 
\begin{lem}\label{lem_q}
 Let $\alpha\in \cS_n(\Sigma,\mathbbm{u})$ be a stated $n$-web diagram consisting of arcs, and let $\beta\in \cS_n(\Sigma,\mathbbm{v})$ be a stated $n$-web diagram. We have 
 $$\cF(\alpha)\beta =
 q^{-\frac{m}{n}\big(2 w(P(\alpha),\beta) +\sum_{e}\delta_e(\alpha)\delta_e(\beta)\big)}
 q^{m\sum_{e}k_e(\alpha,\beta)}\beta\cF(\alpha).$$
\end{lem}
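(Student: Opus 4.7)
The plan is to prove Lemma \ref{lem_q} by computing the commutator of $\cF(\alpha)$ with $\beta$ explicitly, using the identification $\cF(\alpha)=\alpha^{(m)}$ from Proposition \ref{Fro}(a) together with the $m$-parallel crossing and height-swap relations from Lemmas \ref{lem_cross} and \ref{lem7.5}. A natural first reduction is to observe that both sides of the claimed equation are multiplicative in the components of $\alpha$ (and in $\beta$), in the sense that the exponents $w(P(\cdot),\cdot)$, $\delta_e(\cdot)\delta_e(\cdot)$, and $k_e(\cdot,\cdot)$ are all bilinear/additive over the decomposition of $\alpha$ and $\beta$ into connected components. Hence we may reduce to the case where $\alpha$ is a single stated framed oriented arc and $\beta$ is a single stated arc, with both in generic position.

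Next, I would realize the passage from $\cF(\alpha)\beta$ to $\beta\cF(\alpha)$ by an explicit vertical isotopy of $\alpha^{(m)}$ from above $\beta$ to below. Each transversal passage of a strand of $\alpha^{(m)}$ through a strand of $\beta$ in the interior of $\Sigma$ corresponds to $m$ parallel crossings. By Lemma \ref{lem_cross}, each such block of $m$ crossings contributes a scalar $q^{-2m w(c)/n}$, where $w(c)\in\{\pm 1\}$ is the sign of the crossing as in Figure \ref{fg1}. The sliding of the boundary endpoints of $\alpha$ to the nearest punctures to obtain $P(\alpha)$ does not change the mod-$n$ count of interior crossings with $\beta$, and the relation $q^{2m}=1$ forces $q^{-2m/n}$ to have order dividing $n$. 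These two facts together yield a well-defined total interior factor $q^{-\frac{2m}{n}w(P(\alpha),\beta)}$.

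For the boundary contribution, on each boundary edge $e$ and for each pair $(c,d)$ with $c\in\partial\alpha\cap e$ and $d\in\partial\beta\cap e$, I would apply Lemma \ref{lem7.5} to compute the scalar obtained by sliding the $m$ copies of $c$ past $d$ in the height order on $e$. The resulting factor is $q^{m(-1/n+\delta_{s(c),s(d)})}$ or $q^{m(1/n-\delta_{s(c),\overline{s(d)}})}$, according to whether $sign(c)=sign(d)$ or $sign(c)=-sign(d)$. Rewriting the per-pair contributions in terms of the effective labels $sign(\cdot)*s(\cdot)\in\{1,\dots,n\}$, a direct case check unifies the two formulas and collapses them into a common per-pair exponent; summing over all pairs on $e$ produces exactly $-\frac{m}{n}\delta_e(\alpha)\delta_e(\beta)+m\,k_e(\alpha,\beta)$ by bilinearity of $sign$ and by the definition of $k_{(e,i)}$. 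Combining the interior and boundary factors and summing over all boundary edges gives the formula claimed in Lemma \ref{lem_q}.

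The main obstacle is the careful bookkeeping of orientations and states in the boundary step: one must match the four sign patterns $(sign(c),sign(d))\in\{\pm 1\}^2$ against the two scalar formulas from Lemma \ref{lem7.5}, and verify that the effective-label definition of $k_{(e,i)}$ is precisely what bundles the $\delta_{s(c),s(d)}$ and $\delta_{s(c),\overline{s(d)}}$ contributions into a single uniform expression. A secondary, more conceptual point already embedded in Lemma \ref{lem7.5} is that the $(q-q^{-1})$ correction terms appearing in the single-strand boundary exchange relation \eqref{wzh.eight} telescope away under the $m$-parallelization provided by the Frobenius map; this cancellation is exactly what permits $\cF(\alpha)$ to $q$-commute with $\beta$ rather than merely satisfy a skein-theoretic exchange relation.
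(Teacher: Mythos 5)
Your overall architecture---push $\cF(\alpha)=\alpha^{(m)}$ from above $\beta$ to below, collecting $q^{-2m/n}$ per block of $m$ parallel crossings via Lemma \ref{lem_cross} and state/orientation-dependent factors at the boundary via Lemma \ref{lem7.5}---is exactly the argument the paper invokes (its proof simply defers to the proof of Lemma 4.2 in \cite{Yu23} combined with those two lemmas). However, there is a genuine gap at the one delicate point of the statement, namely the role of $P(\alpha)$. You assert that sliding the endpoints of $\alpha$ along $\partial\Sigma$ to the punctures ``does not change the mod-$n$ count of interior crossings with $\beta$'', and, correspondingly, that each boundary pair $(c,d)$ contributes a factor depending only on orientations and states. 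Neither claim is correct: whenever an endpoint $d$ of $\beta$ lies on the same boundary edge between $c$ and the puncture, the sliding creates a new crossing of $P(\alpha)$ with $\beta$, so $w(P(\alpha),\beta)$ differs from the crossing number of the diagrams as given; dually, the scalar obtained from exchanging the heights of the $m$ copies of $c$ with $d$ does depend on their relative position along the edge (the relations of Lemma \ref{lem7.5} are stated for a specific crossed configuration). The discrepancy per created crossing is $q^{\mp 2m/n}$, which is a nontrivial root of unity in general (e.g.\ $n=9$, $m''=27$ gives $q^{2m/n}=(\hat q^{2})^{18}\neq 1$), so the computation as you describe it would output $w(\alpha,\beta)$ in place of $w(P(\alpha),\beta)$ and hence a wrong scalar; the total must be isotopy-invariant, and your split into a position-independent boundary part plus the crossings of the given diagrams is not.

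The repair is local and is exactly what the normalization by $P(\alpha)$ encodes: before commuting, isotope $\alpha^{(m)}$ as a web (this does not change $\cF(\alpha)$ as an element, since endpoints of $\alpha^{(m)}$ and of $\beta$ on a common edge have distinct heights) so that on every boundary edge its endpoints lie beyond all endpoints of $\beta$, adjacent to the positive-end puncture. In this position the crossings between the two diagrams are precisely those counted by $w(P(\alpha),\beta)$, Lemma \ref{lem_cross} yields $q^{-\frac{2m}{n}w(P(\alpha),\beta)}$ (well defined modulo $n$ because $q^{2m}=1$), and the boundary height exchanges become position-independent, so your case analysis with Lemma \ref{lem7.5} then does produce $-\frac{m}{n}\delta_e(\alpha)\delta_e(\beta)+m\,k_e(\alpha,\beta)$ on each edge (the sign ambiguity in the $m\delta$-terms is harmless, again because $q^{2m}=1$). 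Two minor points: $\beta$ need not consist of arcs, so the reduction should be to single strands/components rather than to ``a single stated arc'' (the local relations only ever involve one strand of $\beta$), and the telescoping of the $(q-q^{-1})$-terms is indeed already packaged in Lemma \ref{lem7.5}, as you say.
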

\begin{proof}
Combining with Lemmas \ref{lem_cross} and \ref{lem7.5}, the proof of Lemma 4.2 in \cite{Yu23}  works here. 

\end{proof}

\begin{lem}\label{lem_central}
Let $\alpha$ be a  stated $n$-web diagram consisting of arcs. Then $\alpha^{(2md)}\in \mathcal{Z}(\cS_n(\Sigma,\mathbbm{v}))$.
If $q^m = 1$, we have $\alpha^{(dm)}\in \mathcal{Z}(\cS_n(\Sigma, \mathbbm{v}))$.
\end{lem}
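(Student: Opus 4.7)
The plan is to derive both centrality statements directly from Lemma~\ref{lem_q} applied to suitable powers of $\alpha$.

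First I would note that $\alpha^{2d}$ and $\alpha^{d}$ are again stated $n$-web diagrams consisting of arcs, so Proposition~\ref{Fro}(a) gives $\cF(\alpha^{2d})=\alpha^{(2md)}$ and $\cF(\alpha^{d})=\alpha^{(md)}$. Inserting these into Lemma~\ref{lem_q} and using that $w(P(\,\cdot\,),\beta)$, $\delta_{e}(\cdot)$, $k_{(e,i)}(\cdot)$, and hence $k_{e}(\cdot,\beta)$ all scale linearly under stacking copies, one obtains
\[
\alpha^{(2md)}\beta = q^{-\tfrac{2md}{n}\bigl(2w(P(\alpha),\beta)+\sum_{e}\delta_{e}(\alpha)\delta_{e}(\beta)\bigr)}\, q^{2md\sum_{e} k_{e}(\alpha,\beta)}\,\beta\,\alpha^{(2md)},
\]
together with the analogous identity for $\alpha^{(md)}$, in which every occurrence of $2md$ is replaced by $md$.

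The task then reduces to showing that the resulting prefactor is $1$ for every stated $n$-web diagram $\beta$. Since the integers $w$, $\sum_{e}\delta_{e}(\alpha)\delta_{e}(\beta)$, and $\sum_{e} k_{e}(\alpha,\beta)$ can be arbitrary as $\beta$ varies, this amounts to showing that the two scalars $q^{2md/n}$ and $q^{2md}$ (respectively $q^{md/n}$ and $q^{md}$) both equal $1$ in $\cR$; here $q^{a/n}$ is always interpreted via the unambiguous formula $q^{1/n}=\hat q^{2n}$.

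For the first claim, I would check these two identities as follows. Using $m'=md$ and the hypothesis that $q^{2}$ is a primitive $m$-th root of unity, $q^{2md}=(q^{2})^{md}=((q^{2})^{m})^{d}=1$. For the other factor, $q^{2md/n}=\hat q^{4nm'}=(\hat q^{2})^{2nm'}$; and since $m''=d'm'$ with $d'=\gcd(n,m'')$ dividing $n$, we have $m''\mid 2nm'$, so $(\hat q^{2})^{2nm'}=1$. This proves $\alpha^{(2md)}\in \mathcal Z(\cS_{n}(\Sigma,\mathbbm{v}))$.

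For the second claim, the same divisibility argument shows $q^{md/n}=\hat q^{2nm'}=1$ unconditionally. The remaining factor $q^{md}=(q^{m})^{d}$ is equal to $1$ precisely under the standing assumption $q^{m}=1$, which handles the second claim. No serious obstacle arises; the main bookkeeping point is to keep track of the relations $m'=md$, $m''=d'm'$, and $d'\mid n$ so that the divisibility reductions come out correctly.
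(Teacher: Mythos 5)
Your proof is correct and takes essentially the same route as the paper's: both write $\alpha^{(2md)}=\cF(\alpha^{(2d)})$ (resp.\ $\alpha^{(md)}=\cF(\alpha^{(d)})$), feed this into Lemma~\ref{lem_q}, use that $w$, $\delta_e$, $k_e$ scale linearly under taking parallel copies, and then kill the resulting powers of $q$ via Condition~($\ast$); your bookkeeping $q^{2md}=1$ and $q^{2m'/n}=\hat q^{4nm'}=1$ (from $m''\mid 2nm'$) is just a mild repackaging of the paper's $q^{m'/n}=\mathbbm{v}^{2m'}=1$. One small caution: read your $\alpha^{2d}$, $\alpha^{d}$ as the parallel copies $\alpha^{(2d)}$, $\alpha^{(d)}$ rather than products in the skein algebra, since for a multi-component $\alpha$ the algebra power need not coincide with the parallel copy, and Proposition~\ref{Fro}(a) is stated for diagrams of arcs with the parallel-copy operation.
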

\begin{proof}
    Note that $\alpha^{(2md)} = \cF(\alpha^{(2d)})$. For any stated $n$-web diagram $\beta$,  $2w(P(\alpha^{(2d)}),\beta)+\sum_{e}\delta_e(\alpha^{(2d)})\delta_e(\beta)$ is a multiple of $d$, and $\sum_{e}k_e(\alpha,\beta)$ is even. By combining $q^{\frac{md}{n}} =q^{\frac{m'}{n}} = \mathbbm{v}^{2m'} = 1$, Lemma \ref{lem_q} implies the first claim. Similarly, the second claim follows.
\end{proof}

\begin{lem}[{\cite[A part of Lemma 4.9]{LY23}}]\label{lem;LY_4.9}
For $1\leq i< j \leq n$, we have 
\begin{align*}
\begin{array}{c}\includegraphics[scale=0.27]{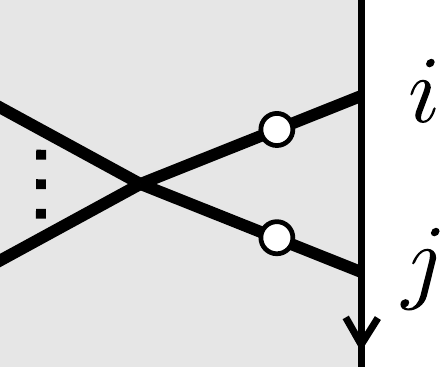}\end{array}
=(-q)\begin{array}{c}\includegraphics[scale=0.27]{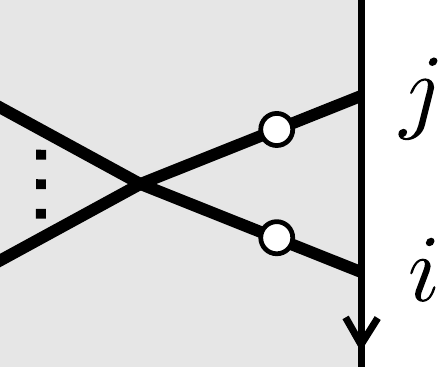}\end{array}.
\end{align*}
\end{lem}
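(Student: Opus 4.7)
The statement is a height-state reordering relation for two arcs meeting the same boundary edge. My plan combines the boundary crossing relation \eqref{wzh.eight}, the crossing skein relation \eqref{w.cross}, and the framing twist \eqref{w.twist}, all applied in a small rectangular neighborhood of the two boundary endpoints.

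Let $D_{ab}$ denote the two-arc diagram with upper boundary state $a$ and lower boundary state $b$ (the right-hand side smoothings appearing in \eqref{wzh.eight}), and let $X_+^{ab}$ denote the same diagram with a small positive crossing inserted just inside the boundary (the boundary endpoints still carrying states $a$ above $b$). First, \eqref{wzh.eight} with upper state $i$ and lower state $j$ gives, using $i<j$ so that $\delta_{j<i}=0$ and $\delta_{i,j}=0$,
$$X_+^{ij}=q^{-1/n}\,D_{ji}.$$
Next, \eqref{wzh.eight} with upper state $j$ and lower state $i$ gives
$$X_+^{ji}=q^{-1/n}\bigl((q-q^{-1})D_{ji}+D_{ij}\bigr),$$
and combining this with the crossing skein relation \eqref{w.cross} (which reads $q^{1/n}X_+^{ab}-q^{-1/n}X_-^{ab}=(q-q^{-1})D_{ab}$) yields, after a short simplification,
$$X_-^{ji}=q\,D_{ij}.$$

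Finally, I evaluate $X_-^{ji}$ by a second route: isotope the negative crossing all the way onto the boundary edge, converting it to a framing kink on one of the two arcs via a Reidemeister-I-type move at the boundary, and absorb this kink using the twist relation \eqref{w.twist} at the cost of the scalar $\mathbbm{t}_\mathbbm{v}=(-1)^{n-1}q^{(n^2-1)/n}$. After collecting factors, this second evaluation produces $X_-^{ji}=c\,D_{ji}$ for an explicit scalar $c$, and equating the two expressions of $X_-^{ji}$ forces $D_{ij}=(-q)\,D_{ji}$, which is the claim.

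The main obstacle is the explicit computation of the scalar $c$ in the second evaluation of $X_-^{ji}$: one must track framings, strand orientations, and powers of $q$ through the boundary isotopy consistently so that the accumulated factors collapse to exactly $-q$. A cleaner route that bypasses this bookkeeping is to confine the two arcs to a bigon neighborhood of the boundary edge and apply the Hopf algebra isomorphism $\cS_n(\fB,\mathbbm{v})\simeq\cO_q(\SL(n))$ of Theorem~\ref{Hopf}; under this identification $D_{ij}$ and $D_{ji}$ correspond to products of the generators $u_{ab}$ taken in opposite orders, and $D_{ij}=(-q)\,D_{ji}$ then follows directly from the FRT commutation relations $\cR(\mathbf{u}\otimes\mathbf{u})=(\mathbf{u}\otimes\mathbf{u})\cR$ with the explicit R-matrix \eqref{R}.
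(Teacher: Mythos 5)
The paper itself does not prove this lemma—it is quoted from \cite[Lemma~4.9]{LY23}—so the only issue is whether your argument is a valid proof, and it is not. The decisive point you miss is that the two pictured strands are not arbitrary arcs meeting the edge: they belong to the specific webs near a boundary puncture (this is how the lemma is used immediately afterwards, to rewrite $\gaa_{u_i}^e$, i.e.\ quantum minors of corner arcs, as a single diagram), and the factor $-q$ is precisely the $q$-antisymmetry of that configuration. Your first route uses only \eqref{w.cross} and \eqref{wzh.eight}, which hold for \emph{every} web; such relations can only express the crossed diagrams through $D_{ij}$ and $D_{ji}$ (indeed your two displayed identities together with \eqref{w.cross} give $X_-^{ji}=q^{1/n}D_{ij}$ for any web—note the exponent $1/n$, not $1$), and they impose no linear relation between $D_{ij}$ and $D_{ji}$ themselves. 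Nor could they: if the two strands are completed to two parallel copies of a single arc in a bigon, the exchange of the two boundary states is governed by the quantum-matrix relations and the factor is $q^{\pm1}$, not $-q$, while for a generic completion no one-term exchange relation holds at all. Hence no purely local manipulation of the kind you propose can yield the statement; any proof must use the structure of the web away from the edge, e.g.\ absorbing the crossing created by \eqref{wzh.eight} into the $n$-valent vertex via \eqref{wzh.four}/\eqref{eq:sticking}, or equivalently invoking the antisymmetry of the quantum minors $M^I_J$.

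Concretely, both of your closing steps fail. The ``second evaluation'' of $X_-^{ji}$ is not a legal skein move: the crossing involves two distinct strands, so it cannot be isotoped into a framing kink on one of them; there is no Reidemeister-I-type move at the boundary (changes of height order at $\partial\Sigma$ are governed exactly by relations such as \eqref{wzh.eight}, not by isotopy); and the scalar $c$ is never computed, so the factor $-q$ is never actually produced—the argument stops exactly where the content of the lemma begins. The fallback via Theorem~\ref{Hopf} has the same defect: the strands cannot be confined to a bigon neighborhood of the edge (they leave it and are attached to the rest of the web), and even inside the bigon the relations $\cR(\buu\otimes\buu)=(\buu\otimes\buu)\cR$ with \eqref{R} give multi-term exchange relations whose one-term degenerations carry coefficient $q^{\pm1}$; the minus sign in $-q$ does not come from there but from minor/vertex antisymmetry. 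What is salvageable is your first route: combined with the rule for pushing a crossing into the antisymmetrizer (or the known antisymmetry of $M^I_J$), it does give a clean proof, but that missing ingredient is the whole point.
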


For $\Sigma^\ast$ and $u_i=(i,0,n-i)$ in the attached triangle with the attaching edge $e$, we use $\gaa_{u_i}^e$ to denote $\gaa_{u_i}$ defined in Section~\ref{sec:quantum_trace}. 
From $u_i=(i,0,n-i)$ and  Lemma \ref{lem;LY_4.9}, 
$\gaa_{u_{i}}^{e}$ is equal to 
\begin{align}
\begin{array}{c}\includegraphics[scale=0.38]{draws/gu.pdf}\end{array}
\end{align} up to a power of $q$.

\begin{lem}\label{boundary_center}
Suppose $\overline{\Sigma}$ has a boundary component $\partial$ such that  the number of the connected components of $\partial\cap \Sigma$ is even and the connected components are labeled as $e_1,e_2,\cdots,e_r$ with respect to the orientation of $\partial$.
For every $0\leq k\leq m'$ and $1\leq i\leq n-1$, the element
\begin{align}
(\gaa_{u_{i}}^{e_1})^k (\gaa_{u_{i}}^{e_2})^{m'-k}\cdots (\gaa_{u_{i}}^{e_{r-1}})^k (\gaa_{u_{i}}^{e_r})^{m'-k}\label{central}
\end{align}
is central in $\cS_n(\Sigma,\mathbbm{v})$.
\end{lem}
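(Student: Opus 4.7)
The plan is to use the $A$-version quantum trace embedding $\tra\colon \cS_n(\Sigma,\mathbbm{v}) \hookrightarrow \cA_{\hat q}(\Sigma,\lambda)$ from Theorem~\ref{traceA}(a), for some triangulation $\lambda$ of $\Sigma$, and reduce centrality in $\cS_n(\Sigma,\mathbbm{v})$ to centrality of the image in the quantum torus $\cA_{\hat q}(\Sigma,\lambda)$. Since $\tra$ is injective, if $\tra$ applied to \eqref{central} commutes with every generator $a_v$ of the ambient quantum torus, then it commutes with everything in $\tra(\cS_n(\Sigma,\mathbbm{v}))$, and injectivity lifts this back to centrality of \eqref{central} in $\cS_n(\Sigma,\mathbbm{v})$.

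Let $u_{i,j}$ denote the small vertex $u_i=(i,0,n-i)$ in the triangle attached to the boundary edge $e_j$. By Theorem~\ref{traceA}(a), $\tra(\gaa_{u_i}^{e_j})=a_{u_{i,j}}$, so the image under $\tra$ of the product \eqref{central} equals $\hat q^{\,c}\, a^{\mathbf{k}}$ for some $c\in\bZ$ (from the Weyl re-normalization), where $\mathbf{k}\in\bZ^{V'_\lambda}$ is supported on $u_{i,1},\dots,u_{i,r}$ with $\mathbf{k}(u_{i,j})=k$ for $j$ odd and $\mathbf{k}(u_{i,j})=m'-k$ for $j$ even. By Lemma~\ref{quantum}(b), centrality of $a^{\mathbf{k}}$ in $\cA_{\hat q}(\Sigma,\lambda)$ is equivalent to $(\mathbf{k}\,\sfP_\lambda)_v \in m''\bZ$ for every $v\in V'_\lambda$.

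To verify this, I would use the factorization $\sfP_\lambda=\sfK_\lambda\sfQ_\lambda\sfK_\lambda^T$ from \eqref{eq-anti-matric-P-def}. Since $\mathbf{k}$ is supported on $U$, equation~\eqref{KQ} yields $\mathbf{k}\,\sfK_\lambda\sfQ_\lambda=(0_{\obVlast},\mathbf{k}_U(B-A))$. By Lemmas~\ref{matrixA} and~\ref{matrixB}, $A=-nI$ and the $\partial$-block $B_l$ of $B$ is the block-cyclic shift by $nI$. A short direct calculation using this cyclic action evaluates
\[
(\mathbf{k}_U(B-A))_{(j,i)}=n\big(k+(m'-k)\big)=nm'
\]
at every position $j$, with all other entries vanishing. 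The parity of $r$ is essential here: it is precisely because $r$ is even that the cyclic shift $B_l$ pairs each odd position with an even one, so that $k$ and $m'-k$ combine into the constant $m'$; for odd $r$ the cancellation would fail. Multiplying by the integer matrix $\sfK_\lambda^T$ then gives $\mathbf{k}\,\sfP_\lambda = nm'\,\mathbf{y}$ for some integer vector $\mathbf{y}$.

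Finally, since $m''=m'd'$ with $d'=\gcd(n,m'')$ dividing $n$, one has $m''\mid nm'$, so every entry of $\mathbf{k}\,\sfP_\lambda$ is a multiple of $m''$. This yields centrality of $a^{\mathbf{k}}$ in $\cA_{\hat q}(\Sigma,\lambda)$, hence of \eqref{central} in $\cS_n(\Sigma,\mathbbm{v})$. The only real obstacle is bookkeeping: tracking the block decomposition in \eqref{KQ} and the explicit cyclic action of $B_l$ provided by Lemma~\ref{matrixB}; once those are in hand, the key cancellation $k+(m'-k)=m'$ makes the verification immediate.
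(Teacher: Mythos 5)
Your proof is correct, but it takes a genuinely different route from the paper. The paper's argument for Lemma~\ref{boundary_center} is purely skein-theoretic: it localizes the commutator to segments of a web crossing each boundary edge, applies the local $q$-commutation rule from \cite[Lemma 4.10]{LY23} (stated as Lemma~\ref{lem;LY_comm}) and its companion Lemma~\ref{lem;comm_op}, and observes that the resulting exponent $-m'\langle w_\ell,\mathbf{j}^c\rangle$ dies because $q^{m'}=\hat q^{2n^2m'}=1$ (which boils down to $m''\mid nm'$, the same divisibility you invoke). You instead push everything through the $A$-version quantum trace and turn the question into the congruence $\mathbf{k}\sfP_\lambda\equiv\bm 0\pmod{m''}$; your block computation via $\sfK_\lambda\sfQ_\lambda$ and the cyclic structure of $B_i-A_i=B_i+nI$ is correct, and the parity of $r$ enters exactly as you say. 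Your route has one advantage: it directly shows $\tra$ of the element is central in the \emph{full} quantum torus $\A$, which is what the proof of Theorem~\ref{center_torus} actually needs when it asserts that $\tra(\mathsf{B})\subset\cZ(\A)$ (in the paper this is inferred from Lemma~\ref{boundary_center} via the sandwiched property $\Ap\subset\Sn\subset\A$). Its disadvantage is that it requires $\Sigma$ triangulable and without interior punctures (so that $\tra$ exists), while the paper's diagrammatic argument makes no such requirement -- indeed the remark after the lemma notes it holds for all essentially bordered pb surfaces, including those with interior punctures (and the bigon, which is not triangulable). So your proof is valid within the section's running hypotheses and for all the downstream applications in the paper, but it is strictly less general than the one the paper uses.
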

\begin{proof}

It suffices to show the commutativity of the element \eqref{central} and an $n$-web diagram $\alpha$ incident to $e_l$ for some $i$, especially the commutativity of each segment of $\alpha$
incident to $e_l$ and $(\gaa_{u_{i}}^{e_{l}})^k (\gaa_{u_{i}}^{e_{l+1}})^{m'-k}$ or $(\gaa_{u_{i}}^{e_{l}})^{m'-k} (\gaa_{u_{i}}^{e_{l+1}})^k$.

\begin{lem}[{\cite[Lemma 4.10]{LY23}}]\label{lem;LY_comm}
Suppose $\mathbf{i},\mathbf{j} \subset \bJ$ are sequences of consecutive numbers, with either $\max \mathbf{i} \geq \max \mathbf{j}$ or $\min \mathbf{i} \geq \min \mathbf{j}$, then $$\begin{array}{c}\includegraphics[scale=0.38]{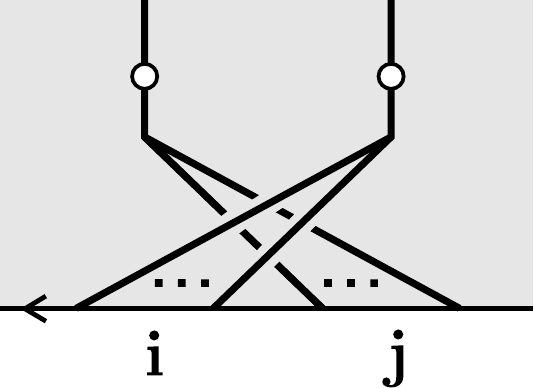}\end{array}= q^{\langle\mathbf{i},\mathbf{j}\rangle}
\begin{array}{c}\includegraphics[scale=0.38]{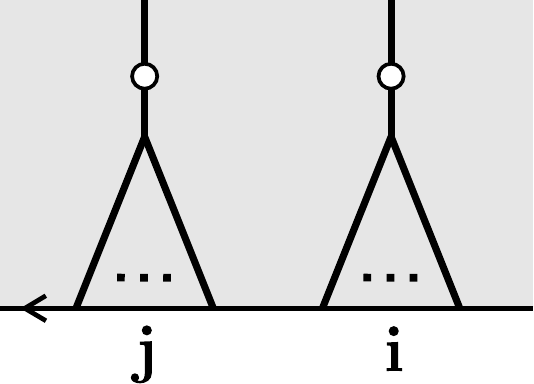}\end{array},$$ 
where the bracket was defined by \eqref{eq;bracket}, and by abuse of notations, $\mathbf{i}$ and $\mathbf{j}$ also denote the corresponding $\mathbbm{d}$-grading.
\end{lem}
Note that Lemma \ref{lem;LY_comm} works even for stated arcs. 
If the orientations of the strands of $\alpha$ and $(\gaa_{u_{i}}^{e_{j}})^k (\gaa_{u_{i}}^{e_{j+1}})^{m'-k}$ are compatible, then the exponent of $q$ is 
$$\langle w_\ell, k\mathbf{j}\rangle-\langle w_\ell, (m'-k)\mathbf{j}^c\rangle
=\langle w_\ell, k(\mathbf{j}+\mathbf{j}^c)\rangle-m'\langle w_\ell,\mathbf{j}^c\rangle=-m'\langle w_\ell,\mathbf{j}^c\rangle,$$ 
where the last equality follows from $\mathbf{j}+\mathbf{j}^c=w_1+\cdots+w_n=0$. 
From $q^{m'}=\hat{q}^{2n^2m'}=1$, each segment of $\alpha$ incident to $e_l$ and $(\gaa_{u_{i}}^{e_{j}})^k (\gaa_{u_{i}}^{e_{j+1}})^{m'-k}$ are (locally) commutative. 
With a similar computation, each segment of $\alpha$ incident to $e_l$ and $(\gaa_{u_{i}}^{e_{j}})^{m'-k} (\gaa_{u_{i}}^{e_{j+1}})^{k}$ are (locally) commutative. 

Similarly to Lemma \ref{lem;LY_comm}, we also have the following. 
\begin{lem}\label{lem;comm_op}
Suppose $\mathbf{i},\mathbf{j} \subset \bJ$ are sequences of consecutive numbers, with either $\max \mathbf{i} \geq \max \mathbf{j}$ or $\min \mathbf{i} \geq \min \mathbf{j}$, then $$\begin{array}{c}\includegraphics[scale=0.38]{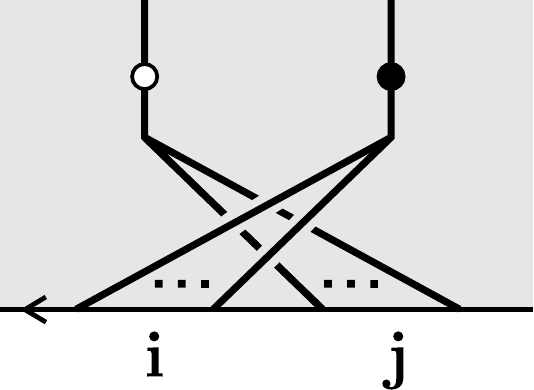}\end{array}= q^{-\langle\bar{\mathbf{i}},\mathbf{j}\rangle}
\begin{array}{c}\includegraphics[scale=0.38]{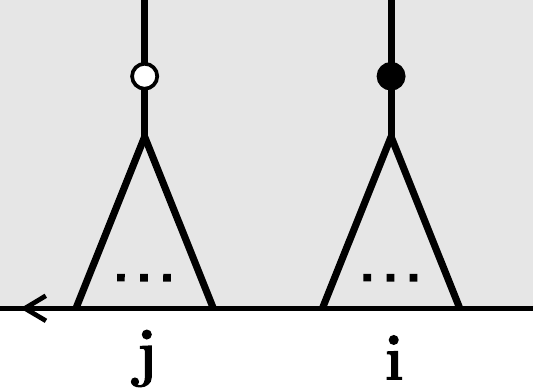}\end{array},$$ 
where the bracket was defined by \eqref{eq;bracket}, and we regard $\mathbf{i}$ and $\mathbf{j}$ as the corresponding $\mathbbm{d}$-grading. 
\end{lem}
Note that, from $\langle\bar{\mathbf{i}},\mathbf{j}\rangle=\langle\mathbf{i},\bar{\mathbf{j}}\rangle$, the coefficient in Lemma \ref{lem;comm_op} is unique. 
With Lemma \ref{lem;comm_op}, one can show the (local) commutativity of each segment of $\alpha$ incident to $e_l$ and $(\gaa_{u_{i}}^{e_{j}})^k (\gaa_{u_{i}}^{e_{j+1}})^{m'-k}$ (or $(\gaa_{u_{i}}^{e_{j}})^{m'-k} (\gaa_{u_{i}}^{e_{j+1}})^{k}$) similarly to the case when the orientations are compatible. 
\end{proof}

\begin{rem}
    Lemmas \ref{lem_q}, \ref{lem_central}, and \ref{boundary_center} hold for general essentially bordered pb surfaces. 
\end{rem}

Define \begin{align}
    \Lambda_{m'}= \{\mathbf{k}\in\mathbb Z^{V_{\lambda}'} \mid 
\mathbf{k}\sfK_{\lambda} =\bm{0} \text{ in }\mathbb Z_{m'}\},\quad \Lambda_{m'}^{+}= \{\mathbf{k}\in\mathbb N^{V_{\lambda}'} \mid 
\mathbf{k}\sfK_{\lambda} =\bm{0} \text{ in }\mathbb Z_{m'} \}.\label{eq:Lambda}
\end{align}

\begin{rem}\label{rem_coprime}
    Suppose $m'$ and $n$ are coprime. Then we will show 
$\Lambda_{m'} = (m'\mathbb Z)^{V_{\lambda}'}$ and $\Lambda_{m'}^{+} = (m' \mathbb N)^{V_{\lambda}'}$.
Obviously, we have $(m'\mathbb Z)^{V_{\lambda}'}\subset \Lambda_{m'}$.
For any $\mathbf{k}\in \Lambda_{m'}$, we have 
$\mathbf{k}\sfK_{\lambda} = \bm{0}$ in  $\bZ_m$. 
We have $\sfK_{\lambda}$ is invertible in $\mathbb Z_{m'}$ because $m$ and $2n$ are coprime and $\sfK_{\lambda}\sfH_{\lambda} = n I$.
Then we have $\mathbf{k} = \bm{0}$ in $\mathbb Z_{m'}$.
Similarly, we have $\Lambda_{m'}^{+} = (m' \mathbb N)^{V_{\lambda}'}$ if $m'$ and $n$ are coprime.
\end{rem}

\begin{lem}\label{balance}
    Suppose $\mathbf{k}\in\mathbb Z^{V_{\lambda}}$ is balanced, $e$ is a boundary component of $\Sigma$, and $w_1,\cdots,w_{n-1}$ are illustrated in Figure \ref{Fig;coord_uvw} (here $e$ is the attached edge). Then there exists $l\in\mathbb Z$ such that $(\mathbf{k}(w_1),\mathbf{k}(w_2),\cdots,\mathbf{k}(w_{n-1})) - l(1,2,\cdots,n-1) = \bm{0}$ in $\mathbb Z_n$.
\end{lem}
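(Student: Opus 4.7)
The plan is to apply the definition of balancedness of $\mathbf{k}$ at a single face, namely the attached triangle $\tau$ of $\lambda^{\ast}$ whose attaching edge is $e$. With respect to the barycentric coordinates on $\tau\cong\bP_3$, the vertices $w_1,\dots,w_{n-1}$ are precisely the small vertices $(0,i,n-i)$ on the side $e_2$ of $\tau$ (opposite to $v_1$), so the argument reduces to reading off the values of $f_\tau^{\ast}\mathbf{k}$ at these points from a balanced presentation on $\bP_3$.

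By Proposition~\ref{prop:LY23_11.10} (or directly by the definition $\Lambda_\lambda=\overline{\Lambda}_{\lambda^{\ast}}\cap \mathbb{Z}^{V_\lambda}$), the pullback $f_\tau^{\ast}\mathbf{k}$ lies in $\overline{\Lambda}_{\bP_3}$, the subgroup of $\mathbb{Z}^{\barV}$ generated by the coordinate projections $\proj_1,\proj_2,\proj_3$ together with $(n\mathbb{Z})^{\barV}$. Hence there exist integers $a,b,c$ (depending on $\mathbf{k}$ and $\tau$) such that
$\mathbf{k}(ijk)\equiv ai+bj+ck \pmod n$ for every $ijk\in\overline{V}_\tau$. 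Evaluating at $w_i=(0,i,n-i)$ and using $cn\equiv 0\pmod n$ gives
$\mathbf{k}(w_i)\equiv bi+c(n-i)\equiv(b-c)\,i\pmod n$, so setting $l:=b-c$ delivers the desired identity.

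There is essentially no substantive obstacle: the conclusion is immediate from the definition of balancedness at the single attached face, and the $l$ in the statement is simply the difference $b-c$ of two coefficients in a balanced presentation on $\tau$. The only point to double-check is the coordinate identification $w_i=(0,i,n-i)$ so that the projections $\proj_2$ and $\proj_3$ restrict correctly to the linear forms $i$ and $n-i$. The companion constraint $a\equiv c\pmod n$ coming from $\mathbf{k}(u_i)=0$ (valid because $u_i\notin V_\lambda$) is not needed for this particular lemma, though it would appear naturally if one wanted to also control the values $\mathbf{k}(v_i)$ on the attaching edge in the same manner.
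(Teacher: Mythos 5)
Your proposal is correct and follows essentially the same route as the paper: restrict to the attached triangle, write the pullback of $\mathbf{k}$ as an element of $\overline{\Lambda}_{\bP_3}$, i.e. $n\mathbf{b}+s_1\proj_1+s_2\proj_2+s_3\proj_3$, and evaluate at $w_i=(0,i,n-i)$ to get $\mathbf{k}(w_i)\equiv(s_2-s_3)i \pmod n$, so $l=s_2-s_3$ (your $b-c$). The coordinate identification you flag is exactly the one the paper uses, so there is nothing further to check.
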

\begin{proof}
    We restrict our discussion to the attached triangle.
    Since $\mathbf{k}$ is balanced, then {\bf k}$=n\mathbf{b} + s_1\proj_1+ s_2\proj_2 + s_3\proj_3$, where $ \mathbf{b}\in\mathbb Z^{V_{\lambda}},s_1,s_2,s_3,\in\mathbb Z$ and $\proj_i\colon \barV \to\bZ\ (i=1,2,3)$ were defined as \eqref{def:proj}.
    For each $1\leq i\leq n-1$, in $\mathbb Z_n$, 
    \begin{align*}
        \mathbf{k}(w_i) &= s_1\proj_1(w_i)+ s_2\proj_2(w_i) + s_3\proj_3(w_i)\\&=
        s_1\proj_1(0,i,n-i)+ s_2\proj_2(0,i,n-i) + s_3\proj_3(0,i,n-i)\\
        &=s_2i+s_3(n-i) = (s_2-s_3)i.
    \end{align*}
    Thus we can set $l=s_2-s_3$.
\end{proof}
Note that, while there is ambiguity on the choice of $l$ in $\bZ$, $l$ is uniquely determined in $\bZ_n$.

Let $\mathsf{B}$ denote the set of the central elements in Lemma \ref{boundary_center}. 
\begin{thm}\label{center_torus}
Let $\Sigma$ be a triangulable essentially
bordered pb surface without interior punctures, and $\lambda$ be a triangulation of $\Sigma$.
If $m''$ is odd, the center of $\A$ is generated by $\tra(\mathsf{B})$ and $\{a^{\mathbf{k}}\mid \mathbf{k} \in \Lambda_{m'} \}$. 
\end{thm}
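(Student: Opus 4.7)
The first step is to translate the center condition into a linear-algebra problem over $\mathbb{Z}_{m'}$. By Lemma~\ref{quantum}(b), $\mathcal{Z}(\A)$ is the $\cR$-span of the monomials $a^{\mathbf{k}}$ with $\mathbf{k}\mathsf{P}_\lambda \equiv \bm{0} \pmod{m''}$. By Lemma~\ref{lem:invertible_KH}(1) we have $\mathsf{P}_\lambda = n(\mathsf{K}_\lambda - \mathsf{K}_\lambda^T)$ (after identifying $V_\lambda$ with $V'_\lambda$ via the bijection $p$ from \eqref{eq-cov-pdef}); since $\gcd(n,m'') = d'$ and $m'' = d'm'$, the congruence $n\mathbf{x} \equiv \bm{0} \pmod{m''}$ is equivalent to $\mathbf{x} \equiv \bm{0} \pmod{m'}$. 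Hence $a^{\mathbf{k}}$ is central if and only if $\mathbf{k}(\mathsf{K}_\lambda - \mathsf{K}_\lambda^T) \equiv \bm{0} \pmod{m'}$.

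For the inclusion ``$\supseteq$'', the elements of $\mathsf{B}$ are central in $\cS_n(\Sigma, \mathbbm{v})$ by Lemma~\ref{boundary_center}, so $\tra(\mathsf{B}) \subset \mathcal{Z}(\A)$ since $\tra$ is an algebra homomorphism. For $\mathbf{k} \in \Lambda_{m'}$, writing $\mathbf{k}\mathsf{K}_\lambda = m'\mathbf{y}$ gives $\mathbf{k}\mathsf{P}_\lambda = m'\mathbf{y}\mathsf{Q}_\lambda \mathsf{K}_\lambda^T = -m'\mathbf{y}(\mathsf{K}_\lambda \mathsf{Q}_\lambda)^T$. The block form \eqref{KQ}, combined with $A = -nI$ (Lemma~\ref{matrixA}), the entries of $B-A$ (Lemma~\ref{matrixB}) and of $D+C_1A$ (Lemma~\ref{matrixDC}) all lying in $n\mathbb{Z}$, shows that every entry of $\mathsf{K}_\lambda \mathsf{Q}_\lambda$ lies in $n\mathbb{Z}$. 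Since $d' \mid n$, we conclude $\mathbf{k}\mathsf{P}_\lambda \in m'n\mathbb{Z} \subseteq m''\mathbb{Z}$.

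For the inclusion ``$\subseteq$'', take $\mathbf{k} = (\mathbf{k}_1, \mathbf{k}_2) \in \mathbb{Z}^{\obV_{\lambda^*}} \oplus \mathbb{Z}^{U}$ satisfying $\mathbf{k}\mathsf{P}_\lambda \equiv \bm{0} \pmod{m''}$. I would proceed boundary component by boundary component, using the block decomposition $K_{32} - K_{22} = \mathrm{diag}(L_1, \ldots, L_b)$ from Lemma~\ref{matrixK} together with the factorization $G = EF$ from Lemma~\ref{matrixG}. For an odd boundary component, $L_i$ turns out to be invertible modulo $m'$ (mirroring the invertibility of $B_i - A_i$ in Corollary~\ref{invertibility}; here the oddness of $m''$ gives $\gcd(2, m') = 1$, allowing the factor of $2^{n-1}$ in $\det(B_i - A_i)$ to be cleared), so the corresponding slice of $\mathbf{k}_2$ already lies in $\Lambda_{m'}$. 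For an even boundary component, the kernel of $L_i$ modulo $m'$ is spanned by the alternating vectors $(\mathbf{e}_i, -\mathbf{e}_i, \mathbf{e}_i, \ldots, -\mathbf{e}_i)$ for $i = 1, \ldots, n-1$, which, modulo $\Lambda_{m'}$, coincide with the exponent vectors of $\tra$ applied to the central elements in \eqref{central}. Subtracting these brings $\mathbf{k}_2$ into $\Lambda_{m'}$; the interior coordinates $\mathbf{k}_1$ are then dispatched using the leading $-2nI$ block column of $\mathsf{K}_\lambda \mathsf{Q}_\lambda$, with oddness of $m''$ once again used to clear factors of~$2$.

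The main obstacle will be the detailed case analysis in ``$\subseteq$'': one must carefully manage the interplay between the interior $-2nI$ block and the boundary $B-A$ blocks when $\mathbf{k}\mathsf{P}_\lambda = \mathbf{k}\mathsf{K}_\lambda\mathsf{Q}_\lambda\mathsf{K}_\lambda^T$ is expanded, and extract the exact kernel structure modulo $m'$ rather than just over $\mathbb{Q}$. The appearance of the alternating vectors as new generators on even boundary components --- a phenomenon absent in the simpler $n=2,3$ analyses of \cite{FKBL19,KW24,Yu23} --- stems from the specific cyclic block shape of $B_i$ in \eqref{Bi} when $r_i$ is even.
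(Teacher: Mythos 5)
Your ``$\supseteq$'' direction is essentially fine, with one small repair: an algebra homomorphism does not send central elements to central elements of the target, so ``$\tra(\mathsf{B})\subset\mathcal Z(\A)$ since $\tra$ is an algebra homomorphism'' is a non sequitur as written; you need the sandwich $\Ap\subset\tra(\cS_n(\Sigma,\mathbbm{v}))\subset\A$ from Theorem \ref{traceA}, so that commuting with the image forces commuting with every generator $a_v$ and hence with all of $\A$. The real problem is the ``$\subseteq$'' direction, which misidentifies which matrix's kernel produces the boundary central elements. In the paper the alternating pattern arises from the second block equation of \eqref{eq_key}, $(B-A)\mathbf{k}_2^{T}=\bm{0}$ in $\mathbb Z_{m''}$, where $B-A$ is a block of $\sfK_\lambda\sfQ_\lambda$ and $\mathbf{k}_2$ is the $W$-part of the \emph{transformed} (balanced) vector $\mathbf{k}_0=\mathbf{t}_0\sfK_\lambda$ --- not from the kernel of $L_i$, which is a block of $\sfK_\lambda$ itself acting on the original exponent vector. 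Both of your kernel claims about $L_i$ are false: from \eqref{eq_L}, every block column of $L_i$ contains exactly one $G$ and one $-G$, so the constant block-vectors $(\mathbf v,\dots,\mathbf v)$ lie in its left kernel and $L_i$ is singular for every $r_i>1$ (so it is \emph{not} invertible mod $m'$ for odd components), while the alternating vectors are \emph{not} in its kernel, since $(\mathbf e,-\mathbf e,\dots,-\mathbf e)L_i=\pm(2\mathbf e G,-2\mathbf e G,\dots)\neq\bm 0$ in general. The invertibility that matters for odd components is that of $B_i-A_i$ (Corollary \ref{invertibility}), and it kills the corresponding block of $\mathbf k_0$, not a ``slice of $\mathbf k_2$''; note also that $\Lambda_{m'}$ is a condition on the whole vector through $\sfK_\lambda$, so ``subtracting these brings $\mathbf k_2$ into $\Lambda_{m'}$'' does not parse blockwise. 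Finally, your alternative starting identity $\sfP_\lambda=n(\sfK_\lambda-\sfK_\lambda^T)$ is legitimate, but then none of the computed blocks in the paper ($A$, $B$, $D+C_1A$, Lemma \ref{matrixDC}, Corollary \ref{invertibility}) apply directly, since they describe $\sfK_\lambda\sfQ_\lambda$ after the substitution $\mathbf k_0=\mathbf t_0\sfK_\lambda$, and the paper never computes $\sfK_\lambda-\sfK_\lambda^T$.

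Even after repairing this bookkeeping, the technical core is missing. On the transformed side, the exponent vectors coming from $\tra(\mathsf B)$ have blocks of the special form $\pm 2\mathbf d G$ (Lemma \ref{matrixK}), and the image of $\mathbf p\mapsto 2\mathbf p G$ modulo $m'$ is a \emph{proper} subgroup of $\mathbb Z_{m'}^{\,n-1}$ when $\gcd(n,m')>1$ (compare Lemma \ref{lem;Im_mu}); hence an arbitrary alternating solution of $(B-A)\mathbf k_2^T=\bm 0$ cannot simply be absorbed by subtracting images of $\tra(\mathsf B)$ modulo $\Lambda_{m'}$ --- this assertion is exactly the crux, not a routine step. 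The paper bridges it by: using balancedness of $\mathbf k_0$ (Lemma \ref{balance}) to pin the block down mod $n$ as $l_i(1,2,\dots,n-1)$; using oddness of $m''$ to solve $2\mathbf d_iE=(-l_i,0,\dots,0)$; removing the remaining discrepancy (a multiple of $d'$) by solving the triangular system \eqref{equation2}, which needs $\gcd(n/d',m')=1$ and $G=EF$; controlling the interior coordinates via Lemma \ref{matrixDC} (entries of $D+C_1A$ in $n\mathbb Z$), not merely by clearing a factor of $2$ from $-2nI$; and invoking balancedness once more to write the final difference as $\mathbf z\sfK_\lambda$ with $\mathbf z\in\Lambda_{m'}$. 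None of these steps appear in your sketch, and they are precisely where the hypotheses ($m''$ odd, no interior punctures, the structure of $G$) are consumed, so as it stands the proposal does not prove the hard inclusion.
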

We will give a proof of Theorem \ref{center_torus} in Section \ref{sec;pf}.

\begin{cor}\label{coprime}
When $m''$ is odd  and $\gcd(m',n)=1$, the center of $\A$ is generated by $\tra(\mathsf{B})$ and $\im \cF$.
\end{cor}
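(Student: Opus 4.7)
The plan is to derive the corollary from Theorem~\ref{center_torus} together with Remark~\ref{rem_coprime} and the compatibility between the Frobenius map $\cF$ and the $A$-version quantum trace map $\tra$.

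Under the hypotheses of the corollary, I would first check that the auxiliary integers collapse to $m=m'$. Indeed $m'=m''/d'$ divides the odd number $m''$, so $m'$ is odd; combined with $\gcd(n,m')=1$, this gives $\gcd(2n,m')=\gcd(n,m')=1$, whence $d=1$ and $m=m'/d=m'$.

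By Theorem~\ref{center_torus} the center of $\A$ is generated by $\tra(\mathsf B)$ together with $\{a^{\mathbf k}\mid \mathbf k\in\Lambda_{m'}\}$, and Remark~\ref{rem_coprime} identifies $\Lambda_{m'}=(m'\mathbb Z)^{V_\lambda'}$. As a subring of the quantum torus $\A$ (inside which each $a_v$ is already invertible), the set $\{a^{m'\mathbf k}\mid \mathbf k\in\mathbb Z^{V_\lambda'}\}$ is generated by $\{a_v^{\pm m'}\mid v\in V_\lambda'\}$, so the problem reduces to exhibiting each $a_v^{m'}$ inside $\tra(\im\cF)$.

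For this last point I would invoke the compatibility $\tra\circ\cF=F_A\circ\tra$, where $F_A\colon \cA_{\hat\varepsilon}(\Sigma,\lambda)\to\A$ sends $a^{\mathbf k}\mapsto a^{m\mathbf k}$. Combined with the sandwich property $\cA_{\hat\varepsilon}^+(\Sigma,\lambda)\subset\tra(\cS_n(\Sigma,\mathbbm u))$ (the $\mathbbm u$-version of Theorem~\ref{traceA}), one obtains
\begin{align*}
\tra(\im\cF)=F_A\bigl(\tra(\cS_n(\Sigma,\mathbbm u))\bigr)\supset F_A\bigl(\cA_{\hat\varepsilon}^+(\Sigma,\lambda)\bigr)\ni a_v^m = a_v^{m'}
\end{align*}
for every $v\in V_\lambda'$, which closes the argument.

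The main obstacle is the compatibility $\tra\circ\cF=F_A\circ\tra$ itself; if it is taken as established in the companion work~\cite{KLW}, the above chain of inclusions makes the corollary essentially immediate. Otherwise one has to verify directly that $\tra$ applied to the $m$ parallel copies $(\gaa_v)^{(m)}=\cF(\gaa_v^{(\mathbbm u)})$ produced by Proposition~\ref{Fro}(a) equals $a_v^m$, which reduces to an explicit skeleton-and-Weyl-normalization calculation based on $\tra(\gaa_v)=a_v$ and the fact that parallel copies in the framing direction of a stated arc in vertical position coincide with powers in the algebra.
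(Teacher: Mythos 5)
Your approach follows the same essential route as the paper's own proof: apply Theorem~\ref{center_torus}, use Remark~\ref{rem_coprime} to identify $\Lambda_{m'}=(m'\mathbb Z)^{V_\lambda'}$, and observe that $m''$ odd together with $\gcd(m',n)=1$ forces $d=1$ and hence $m=m'$. The paper's proof is two lines and leaves entirely implicit the step that relates the $m'$-power monomial subalgebra to $\tra(\im\cF)$; what you add is a precise justification for that step, namely the commuting square $\tra\circ\cF=F_A\circ\tra$ together with the sandwich $\cA^+_{\hat\varepsilon}(\Sigma,\lambda)\subset\tra(\cS_n(\Sigma,\mathbbm u))\subset\cA_{\hat\varepsilon}(\Sigma,\lambda)$ from the $\mathbbm u$-version of Theorem~\ref{traceA}. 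You correctly identify this compatibility as the missing hinge and note that it is deferred to the companion work~\cite{KLW}; your chain $\tra(\im\cF)=F_A(\tra(\cS_n(\Sigma,\mathbbm u)))\supset F_A(\cA^+_{\hat\varepsilon}(\Sigma,\lambda))\ni a_v^{m}=a_v^{m'}$ is a faithful and more explicit reconstruction of the intended argument.

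One point both your write-up and the paper's proof pass over silently: the argument via $F_A(\cA^+_{\hat\varepsilon})$ only places the \emph{positive} powers $a_v^{m'}$ inside $\tra(\im\cF)$. Theorem~\ref{center_torus} describes $\mathcal Z(\A)$ in terms of all monomials $a^{\mathbf k}$ with $\mathbf k\in\Lambda_{m'}$, including those with negative coordinates, so to literally recover the center as the \emph{subalgebra} generated by $\tra(\mathsf B)$ and $\tra(\im\cF)$ one also needs $a_v^{-m'}$; the sandwich $F_A(\cA^+_{\hat\varepsilon})\subset\tra(\im\cF)\subset F_A(\cA_{\hat\varepsilon})$ does not by itself supply these, and your sentence that the problem ``reduces to exhibiting each $a_v^{m'}$'' elides exactly this. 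Since the paper's own proof makes the identical omission (and Theorem~\ref{thm-main-center-skein}(b) uses a localized formulation precisely to avoid it), this should be read as a limitation of the corollary's phrasing rather than a gap introduced by you, but it is worth being aware of if you want the inclusion $\mathcal Z(\A)\subset\langle\tra(\mathsf B),\tra(\im\cF)\rangle$ to hold at the level of subalgebras.
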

\begin{proof}
From Theorem \ref{center_torus},
it suffices to show $\Lambda_{m'} = m \mathbb Z^{V_{\lambda}'}$. Since $m'$ and $2n$ are coprime, we have $m'=m$.
Then Remark \ref{rem_coprime} completes the proof. 
\end{proof}

When $\Sigma$ is an essentially bordered pb surface and contains no interior punctures, we know $\gaa_u\gaa_v = \hat q^{2\mathsf P_{\lambda}(u,v)}\gaa_v\gaa_u$ for $u,v\in V_{\lambda}'$ from Theorem~\ref{traceA}.
Like in \eqref{eq:Weyl}, for any ${\bf k}\in\mathbb Z^{V_{\lambda}'}$, define
\begin{align}\label{eq-Weyl-normalization-g}
\gaa^{\mathbf{k}} := [\prod_{v\in V_{\lambda}'}\gaa_v^{\mathbf{k}(v)}]_{\rm Weyl}.
\end{align}
Let $\cY_{\lambda}$ denote the subalgebra of $\cS_n(\Sigma,\mathbbm{v})$ generated by $\{\gaa^{\mathbf{k}}\mid \textbf k \in \Lambda_{m'}^{+}\}$ and $\mathsf{B}$.

\begin{thm}\label{thm-main-center-skein}
Let $\Sigma$ be a triangulable essentially
bordered pb surface without interior punctures, and $\lambda$ be a triangulation of $\Sigma$.
Assume that $m''$ is odd and $\hat{q}^2$ is a primitive $m''$-th root of unity. 

(a) We have 
$$\mathcal{Z}(\cS_n(\Sigma,\mathbbm{v})) = \{x\in \cS_n(\Sigma,\mathbbm{v})\mid \text{$\exists  \mathbf{k}\in \mathbb{N}^{V_{\lambda}'}  $  such that  $ \gaa^{m'\mathbf{k}}x\in\mathcal Y_{\lambda}$}\}.$$

(b) Let $\mathcal F_{\mathsf B}$ denote the subalgebra of $\cS_n(\Sigma,\mathbbm{v})$ generated by $\im\cF$ and $\mathsf B$.
If $m'$ and $n$ are coprime, we have 
$$\mathcal{Z}(\cS_n(\Sigma,\mathbbm{v})) = \{x\in \cS_n(\Sigma,\mathbbm{v})\mid \exists  y\in\im\cF  \text{  such that  } yx\in \mathcal F_{\mathsf B}  \}.$$

\end{thm}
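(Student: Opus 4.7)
The plan is to pass through the quantum trace embedding $\tra$ of Theorem~\ref{traceA} and reduce everything to Theorem~\ref{center_torus}. Since $\tra$ is injective and $\cA_{\hat q}^{+}(\Sigma,\lambda) \subset \tra(\cS_n(\Sigma,\mathbbm v)) \subset \A$, and $\cA_{\hat q}^{+}(\Sigma,\lambda)$ contains all the generators $a_v$ of $\A$, one has the sandwich identity $\mathcal Z(\cS_n(\Sigma,\mathbbm v)) = \tra^{-1}(\mathcal Z(\A))$: indeed, $x \in \cS_n(\Sigma,\mathbbm v)$ is central iff $\tra(x)$ commutes with $\tra(\cS_n(\Sigma,\mathbbm v))$, iff (using $\cA_{\hat q}^{+} \subset \tra(\cS_n) \subset \A$) $\tra(x)$ commutes with every $a_v$, iff $\tra(x) \in \mathcal Z(\A)$.

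For the direction $(\Leftarrow)$ of (a): every generator of $\mathcal Y_\lambda$ is central, since $\tra(\gaa^{\mathbf k}) = a^{\mathbf k}$ is central in $\A$ for $\mathbf k \in \Lambda_{m'}^+$ by Theorem~\ref{center_torus}, and $\mathsf B \subset \mathcal Z(\cS_n(\Sigma,\mathbbm v))$ by Lemma~\ref{boundary_center}. If $\gaa^{m'\mathbf k}x \in \mathcal Y_\lambda$ for some $\mathbf k \in \mathbb N^{V_\lambda'}$, then both $\gaa^{m'\mathbf k}x$ and $\gaa^{m'\mathbf k}$ (note $m'\mathbf k \in \Lambda_{m'}^+$) are central, whence $\gaa^{m'\mathbf k}(xy - yx) = 0$ for every $y \in \cS_n(\Sigma,\mathbbm v)$. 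Since $\cS_n(\Sigma,\mathbbm v)$ embeds into the quantum torus $\A$ and is therefore a domain, this forces $x \in \mathcal Z(\cS_n(\Sigma,\mathbbm v))$.

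For $(\Rightarrow)$, take $x \in \mathcal Z(\cS_n(\Sigma,\mathbbm v))$. Each element of $\tra(\mathsf B)$ is a scalar multiple of a single monomial $a^{\mathbf b_i}$ with $\mathbf b_i \in \mathbb N^{V_\lambda'}$, since $\mathsf B$ is built from products of the $\gaa_v$'s and $\tra(\gaa_v) = a_v$; combining this with Theorem~\ref{center_torus} gives an expansion $\tra(x) = \sum_j c_j a^{\mathbf k_j}$ with $\mathbf k_j$ in the subgroup $\Lambda_z \subset \mathbb Z^{V_\lambda'}$ generated by $\Lambda_{m'}$ and $\{\mathbf b_i\}$. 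Decomposing $\mathbf k_j = \mathbf k_{0,j} + \sum_i n_{i,j}\mathbf b_i$ with $\mathbf k_{0,j} \in \Lambda_{m'}$ and $n_{i,j} \in \mathbb Z$, choose $T \in \mathbb N$ so that $p_{i,j} := n_{i,j} + m'T \geq 0$ for every $i,j$, set $\mathbf u_{0,j} := \mathbf k_{0,j} - m'T\sum_i \mathbf b_i \in \Lambda_{m'}$, and pick $\mathbf l \in \mathbb N^{V_\lambda'}$ coordinate-wise large enough that $\mathbf u_{0,j} + m'\mathbf l \in \Lambda_{m'}^+$ for every $j$. Then
\[ m'\mathbf l + \mathbf k_j = (\mathbf u_{0,j} + m'\mathbf l) + \sum_i p_{i,j}\,\mathbf b_i, \]
exhibiting each $a^{m'\mathbf l + \mathbf k_j}$ as an element of $\tra(\mathcal Y_\lambda)$. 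Summing over $j$ gives $a^{m'\mathbf l}\tra(x) \in \tra(\mathcal Y_\lambda)$, and injectivity of $\tra$ upgrades this to $\gaa^{m'\mathbf l}x \in \mathcal Y_\lambda$.

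Part (b) follows the same outline after two identifications. When $\gcd(m',n) = 1$, Remark~\ref{rem_coprime} gives $\Lambda_{m'} = (m'\mathbb Z)^{V_\lambda'}$; moreover $m''$ odd together with $\gcd(m',n)=1$ forces $\gcd(2n,m') = 1$, hence $m = m'$. Proposition~\ref{Fro}(a) then gives $\cF(\gaa_v) = \gaa_v^{(m')}$, which coincides with $\gaa_v^{m'}$ up to a unit in $\cR$ coming from the reordering of heights/framing at endpoints; consequently the subalgebra generated by $\im \cF$ coincides, up to unit scalars, with the subalgebra generated by $\{\gaa^{m'\mathbf k} : \mathbf k \in \mathbb N^{V_\lambda'}\}$, so $\mathcal F_{\mathsf B} = \mathcal Y_\lambda$ and (b) reduces to (a). The main obstacle is the combinatorial bookkeeping in $(\Rightarrow)$: one must simultaneously turn every $\mathbf k_j$ into a sum of an element of $\Lambda_{m'}^+$ and non-negative multiples of the $\mathbf b_i$'s using only shifts of the form $m'\mathbb N^{V_\lambda'}$ and $m'\mathbb N\cdot\mathbf b_i$ (the shifts genuinely available inside $\mathcal Y_\lambda$), and this is exactly what the two-step choice of $T$ and $\mathbf l$ above achieves.
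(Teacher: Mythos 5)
Part (a) of your proposal is correct and takes essentially the paper's route: the sandwich property $\cA_{\hat{q}}^{+}(\Sigma,\lambda)\subset \tra(\cS_n(\Sigma,\mathbbm{v}))\subset \A$ gives $\mathcal Z(\cS_n(\Sigma,\mathbbm{v}))=\cS_n(\Sigma,\mathbbm{v})\cap\mathcal Z(\A)$, and then Theorem~\ref{center_torus} together with clearing denominators by a central monomial $a^{m'\mathbf l}$ finishes. Your bookkeeping is heavier than needed: the paper keeps the $\mathsf B$-part as coefficient polynomials $f_{\mathbf k}$ and only shifts the exponents $\mathbf k\in\Lambda_{m'}$, so the auxiliary integer $T$ and the decomposition over the subgroup generated by the $\mathbf b_i$ are unnecessary, but your version is still valid since $m'\mathbf b_i\in\Lambda_{m'}$.

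Part (b) has a genuine gap. You claim that the subalgebra generated by $\im\cF$ coincides, up to units, with the subalgebra generated by $\{\gaa^{m'\mathbf k}:\mathbf k\in\mathbb N^{V_\lambda'}\}$, hence $\mathcal F_{\mathsf B}=\mathcal Y_\lambda$, and this is both unjustified and false in general. First, Proposition~\ref{Fro}(a) computes $\cF$ only on stated webs consisting of arcs, while the elements $\gaa_v$ are in general webs with $n$-valent sinks and sources (for $v\in\overline V_\lambda$) or quantum minors $M^I_J(c)$ of corner arcs (for $v$ in an attached triangle); so the identity $\cF(\gaa_v)=\gaa_v^{(m')}\doteq\gaa_v^{m'}$ is not covered by your citation — it amounts to the compatibility of the Frobenius map with the $A$-quantum trace, which is not proved in this paper (it is attributed to [KLW]). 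Second, and more seriously, $\im\cF$ is the image of the whole algebra $\cS_n(\Sigma,\mathbbm{u})$, not merely of the subalgebra generated by the $\gaa_v$: it contains $\cF(\beta)=\beta^{(m')}$ for every stated skein $\beta$, and under $\tra$ such elements typically involve monomials $a^{m'\mathbf u}$ with $\mathbf u$ having negative entries, whereas $\tra(\mathcal Y_\lambda)$ lies in the span of monomials with non-negative exponents. Hence $\mathcal F_{\mathsf B}$ is in general strictly larger than $\mathcal Y_\lambda$, and (b) does not reduce to (a) via your identification. The paper instead proves (b) by rerunning the argument of (a) with Corollary~\ref{coprime} (for $\gcd(m',n)=1$ the center of $\A$ is generated by $\tra(\mathsf B)$ and $\im\cF$) in place of Theorem~\ref{center_torus}; for the easy inclusion one uses that $\mathsf B$ and $\im\cF$ consist of central elements (Lemma~\ref{boundary_center}, and Lemma~\ref{lem_central} with $d=1$ and $q^{m'}=1$) together with the domain property, exactly as in your argument for (a). Your proof of (b) would need either Corollary~\ref{coprime} or the Frobenius–trace compatibility as an explicit input; as written, the step ``so $\mathcal F_{\mathsf B}=\mathcal Y_\lambda$'' is where it breaks.
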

\begin{proof}
(a)
    We have $\tra (\{x\in \cS_n(\Sigma,\mathbbm{v})\mid \text{$\exists  \mathbf{k}\in \mathbb{Z}^{V_{\lambda}'}  $  such that  $ \gaa^{m'\mathbf{k}}x\in\mathcal Y_{\lambda}$}\}) \subset \mathcal Z(\A)$. Thus $ \{x\in \cS_n(\Sigma,\mathbbm{v})\mid \text{$\exists  \mathbf{k}\in \mathbb{Z}^{V_{\lambda}'}  $  such that  $ \gaa^{m'\mathbf{k}}x\in\mathcal Y_{\lambda}$}\}\subset \mathcal{Z}(\cS_n(\Sigma,\mathbbm{v})) .$

From Theorem  \ref{traceA}, we can 
 identify $\cS_n(\Sigma,\mathbbm{v})$ and $\tra(\cS_n(\Sigma,\mathbbm{v}))$. 
 Then  have $\Ap\subset\Sn\subset\A$, which implies 
$$\mathcal{Z}(\cS_n(\Sigma,\mathbbm{v}))=\cS_n(\Sigma,\mathbbm{v})\cap \cZ(\A).$$ 
For $z\in \mathcal{Z}(\cS_n(\Sigma,\mathbbm{v}))$, 
Theorem \ref{center_torus} implies 
$z = \sum_{\mathbf{k}\in \Lambda} f_{\mathbf{k}} a^{\mathbf{k}}$, where $f_{\mathbf{k}}$ is a polynomial in $\mathsf{B}$ and $\Lambda$ is a finite subset of $\Lambda_{m'}$. There exists $\mathbf{k}_0\in \bN^{V_{\lambda}'}$ such that $\mathbf{k}+m'\mathbf{k}_0\in \bN^{V_{\lambda}'}$ for all $\mathbf{k}\in \Lambda$. 
We have 
$$a^{m'\mathbf{k}_0}z = \sum_{\mathbf{k}\in \Lambda} f_{\mathbf{k}} a^{\mathbf{k}+m'\mathbf{k}_0}\in \mathcal Y_{\lambda}.$$
From $\tra(\mathbf{g}^\mathbf{k})=a^\mathbf{k}$, 
we have $z\in \{x\in \cS_n(\Sigma,\mathbbm{v})\mid \text{$\exists  \mathbf{k}\in \mathbb{Z}^{V_{\lambda}'}  $  such that  $ \gaa^{m'\mathbf{k}}x\in\mathcal Y_{\lambda}$}\}$.

(b) As in (a), one can prove (b) using Corollary \ref{coprime}.
\end{proof}

\begin{conj}
Under the assumption of Theorem \ref{thm-main-center-skein} (b),
    we have $$\mathcal Z(\cS_n(\Sigma,\mathbbm{v})) = \mathcal F_{\mathsf B}.$$
\end{conj}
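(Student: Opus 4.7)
The forward inclusion $\mathcal F_{\mathsf B}\subset\mathcal Z(\cS_n(\Sigma,\mathbbm v))$ is immediate: the generators of $\im\cF$ are central by Lemma~\ref{lem_central} (they are $m$-th parallel copies of skein elements), and the elements of $\mathsf B$ are central by Lemma~\ref{boundary_center}. The substance of the conjecture is the converse inclusion.

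For the converse, the plan is to upgrade Theorem~\ref{thm-main-center-skein}(b) from a divisibility statement to an exact equality. Given $x\in\mathcal Z(\cS_n(\Sigma,\mathbbm v))$, Theorem~\ref{thm-main-center-skein}(b) produces $y\in\im\cF$ with $yx\in\mathcal F_{\mathsf B}$; the canonical choice is $y=\cF(\gaa^{\mathbf k_0})=\gaa^{m\mathbf k_0}$ for a sufficiently large $\mathbf k_0\in\mathbb N^{V_\lambda'}$. The task reduces to showing that such a single cancellation identity forces $x\in\mathcal F_{\mathsf B}$. I would transfer the entire problem to $\cA_{\hat q}(\Sigma,\lambda)$ via the injective quantum trace $\tra$ (Theorem~\ref{traceA}(a)). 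Using the compatibility $\tra\circ\cF=F_A\circ\tra$ with $F_A\colon\cA_{\hat\varepsilon}\to\cA_{\hat q},\ a^{\mathbf k}\mapsto a^{m\mathbf k}$, together with Corollary~\ref{coprime}, the conjecture becomes the inclusion
\[
\mathcal Z(\cA_{\hat q})\cap\tra(\cS_n(\Sigma,\mathbbm v))\;\subset\;F_A\bigl(\tra(\cS_n(\Sigma,\mathbbm u))\bigr)\cdot\mathcal R[\tra(\mathsf B)].
\]

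The key observation is that $\mathcal Z(\cA_{\hat q})$ contains \emph{all} monomials $a^{m\mathbf k}$ with $\mathbf k\in\mathbb Z^{V_\lambda'}$, whereas $F_A(\tra(\cS_n(\Sigma,\mathbbm u)))$ reaches a given monomial $a^{m\mathbf k}$ only when $a^{\mathbf k}$ itself already lies in $\tra(\cS_n(\Sigma,\mathbbm u))$. The problem therefore reduces to a Frobenius-descent lemma: \emph{if $w\in\cA_{\hat\varepsilon}(\Sigma,\lambda)$ satisfies $F_A(w)\in\tra(\cS_n(\Sigma,\mathbbm v))$, then $w\in\tra(\cS_n(\Sigma,\mathbbm u))$.} Granting this, any $x\in\mathcal Z(\cS_n)$ with $\tra(x)=\sum_{\mathbf k\in\Lambda}f_{\mathbf k}\,a^{m\mathbf k}$ (with $f_{\mathbf k}\in\mathcal R[\tra(\mathsf B)]$) can be lifted termwise to an element of $\mathcal F_{\mathsf B}$, and the injectivity of $\tra$ then yields $x\in\mathcal F_{\mathsf B}$.

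To attack the descent lemma, I would use the saturated-system decomposition (Theorem~\ref{key}) to reduce to the bigon, where $\cS_n(\fB)\simeq\mathcal O_q(SL(n))$ and the Hopf-algebraic Frobenius $F_n$ is injective (Lemma~\ref{Inj}); combined with the explicit description of $\tra(\gaa_v)=a_v$ and the sandwich $\cA^+\subset\tra(\cS_n)\subset\cA$, one can try to characterize when a Laurent monomial $a^{\mathbf k}$ with some negative entries appears in $\tra(\cS_n)$ and check that this property is stable under $F_A$-descent. An alternative route is via localization: localize $\cS_n(\Sigma,\mathbbm v)$ at the central Ore set $\im\cF\setminus\{0\}$, observe that $x$ lies in the localization $\mathcal F_{\mathsf B}[(\im\cF)^{-1}]\cap\cS_n$, and then show that $\mathcal F_{\mathsf B}$ is saturated in $\cS_n$ along $\im\cF$; this reformulation uses the domain property of $\cS_n$ but again reduces to the same Frobenius-descent statement.

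The main obstacle is precisely the descent lemma. Although both $\cF$ and $F_A$ are injective, the image $\tra(\cS_n(\Sigma,\mathbbm v))$ sits strictly between $\cA^+$ and $\cA$, and no clean intrinsic characterization of this subalgebra inside $\cA_{\hat q}$ is presently available in general. Producing such a characterization — or at least one compatible with the $F_A$-fibration — is the heart of the conjecture. I expect that in the polygon/bigon cases the explicit quantum-minor description of $\tra$ together with the injectivity of $F_n$ will suffice, but globalizing this to general essentially bordered $\Sigma$ will require a new ingredient beyond what is developed in the present paper.
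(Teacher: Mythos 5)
The statement you are trying to prove is stated in the paper only as a conjecture, immediately after Theorem \ref{thm-main-center-skein}; the paper gives no proof of it, so there is no argument of the authors to compare yours against. Your forward inclusion $\mathcal F_{\mathsf B}\subset\mathcal Z(\cS_n(\Sigma,\mathbbm v))$ is fine (in the coprime case $d=1$, $m=m'$, and $q^{m'}=q^{m'/n}=1$, so Lemma \ref{lem_q} does make $\cF(\alpha)$ central, and $\mathsf B$ is central by Lemma \ref{boundary_center}), and your reduction of the converse via $\tra$, the compatibility $\tra\circ\cF=F_A\circ\tra$, and Corollary \ref{coprime} is a reasonable way to organize the problem. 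But the proposal is not a proof: as you yourself say, everything hinges on the unproven Frobenius-descent lemma, namely that $F_A(w)\in\tra(\cS_n(\Sigma,\mathbbm v))$ forces $w\in\tra(\cS_n(\Sigma,\mathbbm u))$. Nothing in the paper supplies this; the only structural information available about the image of $\tra$ is the sandwich $\cA^+_{\hat q}\subset\tra(\cS_n)\subset\cA_{\hat q}$ of \eqref{eq-sandwitch-property}, which is exactly too coarse to decide membership of Laurent monomials with negative exponents, and the bigon/saturated-system reduction does not obviously help because $\iota^B_*$ is only a linear (not algebra) isomorphism, so descent on $U(B)$ does not transport to descent on $\Sigma$ without further work.

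There is also a second, unflagged gap in your final step: even granting the descent lemma, you lift $\tra(x)=\sum_{\mathbf k}f_{\mathbf k}\,a^{m'\mathbf k}$ \emph{termwise}, which requires each individual central monomial $a^{m'\mathbf k}$ (with $\mathbf k$ possibly having negative entries) to lie in $\tra(\cS_n(\Sigma,\mathbbm v))$; what you know is only that the whole sum does. Since $\tra(\cS_n)$ sits strictly between $\cA^+$ and $\cA$, a sum of monomials can lie in the image while its individual terms do not, so the descent lemma would have to be applied to something like the honest skein-theoretic lift of $x$ rather than monomial by monomial. In short: correct easy inclusion, sensible reduction, but the heart of the conjecture (an intrinsic characterization of $\tra(\cS_n)$ inside $\cA_{\hat q}$ compatible with $F_A$, plus a way around the termwise-lifting issue) is missing, consistent with the statement remaining open in the paper.
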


\subsection{Proof of Theorem \ref{center_torus}}\label{sec;pf}
\begin{proof}[Proof of Theorem \ref{center_torus}]
    Since $m''$ is odd, we have $\gcd(m'',2n) = \gcd(m'',n) = d'$.

    Let $\mathcal{A}_{m'}$ denote the subalgebra of $\A$  generated by $\tra(\mathsf{B})$ and $\{a^{\mathbf{k}}\mid \mathbf{k}\in \Lambda_{m'} \}$.
    Lemma \ref{boundary_center} implies $\mathcal{A}_{m'} $ is contained in the center of $\A$.

    Lemma \ref{quantum} implies $\mathcal Z(\A) = \cR\text{-span}\{a^{\tft}\mid  \langle {\bf t},{\bf t}'\rangle_{\sfP_{\lambda}}=0\in \mathbb{Z}_{m''},\forall {\bf t}'\in\mathbb{Z}^{V_{\lambda}'}  \}$.
    Suppose $\tft_0\in\mathbb Z^{V_{\lambda}'}$ such that $\tft \sfP_{\lambda} \tft_0^{T} = 0$ {in $\mathbb Z_{m''}$} for any $\tft\in \mathbb Z^{V_{\lambda}'}$, where we regard $\tft,\tft_0$ as row vectors. 
    Lemma~\ref{lem:invertible_KH} implies $\tft \sfK_{\lambda} \sfQ_{\lambda} (\tft_0 \sfK_{\lambda})^T = 0 $ in $\mathbb Z_{m''}$ for all $\tft\in \mathbb Z^{V_{\lambda}'}.$
    Set $\tfk _0= \tft_0 \sfK_{\lambda}$. Then we have $\sfK_{\lambda} \sfQ_{\lambda} \tfk _0^{T}= \bm{0}$ in $\mathbb Z_{m''}.$
    
    We regard $\mathbf{k}_0 = (
        \tfk_1,\tfk_2
    )\in \bZ^{V_{\lambda}}$ with $\tfk_1\in \bZ^{\obVlast}$ and $\tfk_2\in \bZ^{W}$.  From equation \eqref{KQ}, we have 
    \begin{equation}\label{eq_key}
    \begin{cases}
    -2n\bk_1^T+ (D+C_1A) \bk_2^T=\bm{0},\\
    (B-A)\bk_2^T=\bm{0},
    \end{cases}\text{ in $\mathbb Z_{m''}$}.
    \end{equation}

Suppose $\overline{\Sigma}$ has boundary components $\partial_1,\cdots,\partial_b$, and each boundary component $\partial _i$ contains $r_i$ punctures. Suppose $\tfk_2 =  (
\tfk_{\partial_1},\tfk_{\partial_2},\cdots,\tfk_{\partial_b}
)$ where $\tfk_{\partial_i}\in\mathbb Z^{r_i(n-1)}$ is the row vector associated to $\partial_i$ for each $1\leq i\leq b$.
From Lemmas \ref{matrixA} and \ref{matrixB}, we know $A = diag\{A_1,A_2,\cdots,A_b\}$ and $B = diag\{B_1,B_2,\cdots,B_b\}$ with $A_i=-n I$ and $B_i$ given as \eqref{Bi}. Then the second equality in equation \eqref{eq_key} implies $(B_i - A_i) \tfk_{\partial _i}^T = \bm{0}$ in $\mathbb Z_{m''}$ for each $1\leq i\leq b$. Obviously, the matrix $\frac{1}{d'}(B_i-A_i)$ is an integer matrix.
Then we have $\frac{1}{d'}(B_i-A_i) \tfk_{\partial_{i}}^T = \bm{0}$ in $\Zmp$ (not in $\mathbb Z_{m''}$). 

Suppose $r_i$ is odd. 
From Corollary \ref{invertibility}, we have $\det(\frac{1}{d'}(B_i-A_i)) = 2^{n-1}(n/d')^{r_i(n-1)}$. Since $2^{n-1}(n/d')^{r_i(n-1)}$ is coprime with $m'$, 
$\frac{1}{d'}(B_i-A_i)$ has its inverse given as $1/\det(\frac{1}{d'}(B_i-A_i))$ times the adjoint matrix of $\frac{1}{d'}(B_i-A_i)$. 
Hence, $\frac{1}{d'}(B_i-A_i) \tfk_{\partial_{i}}^T = 0$ in $\Zmp$ implies $\tfk_{\partial_{i}}=\bm{0}$ in $\Zmp$.

Suppose $r_i$ is even and $\tfk_{\partial_i}= (
\tfb_{i1},\tfb_{i2},\cdots,\tfb_{ir_i})$, where $\tfb_{ij}\in \bZ^{n-1}$ for any $j$. 
Then $(B_i - A_i) \tfk_{\partial _i}^T = \bm{0}$ in $\mathbb Z_{m''}$ implies 
\begin{equation}\label{eq's}
\begin{cases}
n\tfb_{ir_i} + n\tfb_{i1} =\bm{0},\\
n\tfb_{i1} + n\tfb_{i2} =\bm{0},\\
 n\tfb_{i2} + n\tfb_{i3} =\bm{0},\\
\;\vdots\\
n\tfb_{ir_i-1} + n\tfb_{ir_i} =\bm{0},\\
\end{cases}\text{ in $\mathbb Z_{m''}$}.
\end{equation}
Equation \eqref{eq's} implies 
$\tfb_{ij}  = (-1)^{j+1}\tfb_{i1}$ in $\mathbb Z_{m'}$.
Since $\mathbf{b}_{i1}$ is balanced, Lemma \ref{balance} implies there exists $l_i\in\mathbb Z$ such that 
\begin{align}
\mathbf{b}_{i1} - l_i(1,2,\cdots,n-1) = \bm{0}\text{\quad in } \mathbb Z_n.\label{eq:bi1}
\end{align}
Since $m''$ is odd, there exists $\mathbf{d}_i\in \mathbb Z^{n-1}$  such that $2\mathbf{d}_i E = (-l_i,0,\cdots,0)$ in $\mathbb Z_{m''}$, where $E$ was given as \eqref{matrixEF}.

Define $\mathbf{d}= (\mathbf{d}_{\partial_1},\mathbf{d}_{\partial_2},\cdots,\mathbf{d}_{\partial_b})\in \mathbb Z^{U}$, 
where $\mathbf{d}_{\partial_i}\in\mathbb Z^{r_i(n-1)}$ is the vector associated to $\partial_i$,  such that 
\begin{equation}\label{zero}
    \mathbf{d}_{\partial_i} = 
        \begin{cases}
        \bm{0}& \text{if $r_i$ is odd},\\
        (\mathbf{d}_i,-\mathbf{d}_i,\cdots,\mathbf{d}_i,-\mathbf{d}_i) & \text{if $r_i$ is even}.\\
        \end{cases}
    \end{equation}
    Set $\mathbf{d}' := (\mathbf{0},\mathbf{d})\in\mathbb Z^{V_{\lambda}'}$ and $\mathbf{f} := \mathbf{d}'\sfK_{\lambda}$.
    From the definition of $\mathbf{d}$, we know $\sfK_{\lambda}\sfQ_{\lambda}\mathbf{f}^{T} = \bm{0}$ in $\mathbb Z_{m''}$.
    We regard $\mathbf{f} = (
        \mathbf{f}_1,\mathbf{f}_2
    )\in \mathbb Z^{V_{\lambda}}$ with $\mathbf{f}_1\in \bZ^{\obVlast}$ and $\mathbf{f}_2\in \bZ^{W}$. 
    By replacing $\mathbf{k}_i$ with $\mathbf{f}_i$, $\mathbf{f}_1$ and $\mathbf{f}_2$ satisfy equation \eqref{eq_key}, i.e., $\mathbf{f}$ is a concrete solution. From \eqref{eq_K} and Lemma \ref{matrixK}, we have 
    $\mathbf{f}_2 = \mathbf{d}(K_{32}-K_{22}) = (\mathbf{d}_{\partial_1}L_1,\mathbf{d}_{\partial_2}L_2,\cdots,\mathbf{d}_{\partial_b}L_b)$.

    When $r_i$ is odd, we have $\mathbf{d}_{\partial_i}L_i = \bm{0}$.

    When $r_i$ is even, Lemma \ref{matrixK} implies $\mathbf{d}_{\partial_i}L_i = (-2\mathbf{d}_{i}G,2\mathbf{d}_{i}G,\cdots,-2\mathbf{d}_{i}G,2\mathbf{d}_{i}G)$. Then we have 
    \begin{align}\label{proof-center-mdouble}
        -2\mathbf{d}_{i}G = -2\mathbf{d}_{i}EF=(l_i,0,\cdots,0)F
         = l_i(1,2,\cdots,n-1) \text{\quad in $\mathbb Z_{m''}$}.
    \end{align}
    By combining with \eqref{eq:bi1}, we have $-2\mathbf{d}_{i}G = \mathbf{b}_{i1}$ in $\mathbb Z_{d'}.$ This implies $\mathbf{d}_{\partial_i}L_i =\tfk_{\partial_i}$
    in $\mathbb Z_{d'}$, especially $\mathbf{f}_2=\tfk_2$ in 
    $\mathbb Z_{d'}$.

Set $\mathbf{h} := \mathbf{f} - \mathbf{k}_0$. Then $\mathbf{h}  =
(\mathbf{h} _1,\mathbf{h} _2)$, where $\mathbf{h}_1 = \mathbf{f}_1 - \mathbf{k}_1$ and $\mathbf{h}_2 = \mathbf{f}_2 - \mathbf{k}_2$. Note that $\mathbf{h}_1$ and $\mathbf{h}_2$ satisfy equation \eqref{eq_key}.
We regard $\mathbf{h}_2 =  (\tff_{\partial_1},\tff_{\partial_2},\cdots,\tff_{\partial_b})$ where $\tff_{\partial_i}\in\mathbb Z^{r_i(n-1)}$ is the row vector associated to $\partial_i$ for each $1\leq i\leq b$.
From the above discussion, we know $\tff_{\partial_i} = \bm{0}$ in $\mathbb Z_{m'}$ if $r_i$ is odd. Also, we have 
$\tff_{\partial_i} = (\tff'_{i},-\tff'_{i},\cdots,\tff'_{i},-\tff'_{i})$ in $\mathbb Z_{m'}$ if $r_i$ is even, where 
$\tff'_{i} =(h_{i,1},h_{i,1},\cdots,h_{i,n-1}) \in\mathbb Z^{n-1}$.
Since $\tff_2 =\bm{0}$ in $\mathbb Z_{d'}$, each $h_{i,j}$ is a multiple of $d'$ for $1\leq j \leq n-1$. The equation 
$(x_1,x_2,\cdots,x_{n-1})F= \tff'_{i}$ in $\mathbb Z_{m''}$ implies
\begin{equation}\label{equation}
\begin{cases}
(n-1)x_1 - nx_2 =h_{i,1},\\
(n-2)x_1 - nx_3 =h_{i,2},\\
\;\vdots\\
2x_1 - nx_{n-2} =h_{i,n-2},\\
x_1 = h_{i,n-1},\\
\end{cases}\text{ in $\mathbb Z_{m''}$}.
\end{equation}
Equation \eqref{equation} is equivalent to the following equations:
\begin{equation}\label{equation2}
\begin{cases}
\dfrac{n}{d'} x_2 =(n-1)\dfrac{h_{i,n-1}}{d'}-\dfrac{h_{i,1}}{d'} \text{ in $\mathbb Z_{m'}$},\medskip\\
\dfrac{n}{d'}x_3 =(n-2)\dfrac{h_{i,n-1}}{d'}-\dfrac{h_{i,2}}{d'}\text{ in $\mathbb Z_{m'}$},\\
\;\vdots\\
\dfrac{n}{d'}x_{n-2} =2\dfrac{h_{i,n-1}}{d'}-\dfrac{h_{i,n-2}}{d'}\text{ in $\mathbb Z_{m'}$},\\
x_1 = h_{i,n-1}\text{ in $\mathbb Z_{m''}$},\\
\end{cases}
\end{equation}
Since $n/d'$ and $m'$ are coprime, Equation \eqref{equation2} has a solution, and so does Equation \eqref{equation}.
Fix a solution $\mathbf{x}_i'\in \mathbb Z^{n-1}$ of Equation \eqref{equation}. Since $m''$ is odd, there exists 
$\mathbf{x}_i''\in \mathbb Z^{n-1}$ such that $2\mathbf{x}_i ''E=\mathbf{x}_i'$ in $\mathbb Z_{m''}$. Thus we have 
$2\mathbf{x}_i''G = 2\mathbf{x}_i''EF=\mathbf{x}_i'F=\tff_i'$ in $\mathbb Z_{m''}$.

Define $\mathbf{x}_2= (\mathbf{x}_{\partial_1},\mathbf{x}_{\partial_2},\cdots,\mathbf{x}_{\partial_b})\in \mathbb Z^{U}$, where $\mathbf{x}_{\partial_i}\in\mathbb Z^{r_i(n-1)}$ is the vector associated to $\partial_i$,  such that 
\begin{equation}
\mathbf{x}_{\partial_i} = 
\begin{cases}
\bm{0}& r_i \text{ is odd},\\
(-\mathbf{x}_i'',\mathbf{x}_i'',\cdots,-\mathbf{x}_i'',\mathbf{x}_i'') & r_i \text{ is even}.\\
\end{cases}
\end{equation}
Define $\mathbf{x}\in\mathbb Z^{V_{\lambda}'}$ with $\mathbf{x} = (0,\mathbf{x}_2)$, and define $\mathbf{y} = \mathbf{x}\sfK_{\lambda}$.

We regard $\mathbf{y} = (\mathbf{y}_1,\mathbf{y}_2)\in \mathbb Z^{V_{\lambda}}$ with $\mathbf{y}_1\in \bZ^{\obVlast}$ and $\mathbf{y}_2\in \bZ^{W}$.
By the definition of $\mathbf{x}$, $\mathbf{y}_1$ and $\mathbf{y}_2$ satisfy Equation \eqref{eq_key}, and $\mathbf{y}_2-\mathbf{h}_2 =\bm{0}$ in $\mathbb Z_{m'}$. We have $\mathbf{y} - \mathbf{h} = (\mathbf{y}_1 - \mathbf{h}_1,\mathbf{y}_2 - \mathbf{h}_2)$ satisfies Equation \eqref{eq_key}, especially $-2n(\mathbf{y}_1 - \mathbf{h}_1)^T+ (D+C_1A) (\mathbf{y}_2 - \mathbf{h}_2)^T=\bm{0}\text{ in }\mathbb Z_{m''}$.
Lemma \ref{matrixDC} and $\mathbf{y}_2-\mathbf{h}_2 =\bm{0}$ in $\mathbb Z_{m'}$ imply 
$-2n(\mathbf{y}_1 - \mathbf{h}_1)^T=\bm{0}\text{ in }\mathbb Z_{m''}$.
Then we have $\mathbf{y}_1 - \mathbf{h}_1 = \bm{0}$ in $\mathbb Z_{m'}$. 
This implies $\mathbf{y} - \mathbf{h} =\bm{0}$ in $\mathbb Z_{m'}$.
Since both of $\mathbf{y}$ and $\mathbf{h}$ are balanced, then 
$\mathbf{y} - \mathbf{h} = \mathbf{z}\sfK_{\lambda}$ for some 
$\mathbf{z}\in \mathbb Z^{V_{\lambda}'}$. We have $\mathbf{z}\in\Lambda_{m'}$ since $\mathbf{y} - \mathbf{h} = \bm{0}$ in $\mathbb Z_{m'}$.

We have $\mathbf{h} = \mathbf{f}-\mathbf{k}_0 = \mathbf{d}'\sfK_{\lambda}- \mathbf{t}_0\sfK_{\lambda}$, and 
$\mathbf{h} = \mathbf{y}-\mathbf{z}\sfK_{\lambda} = \mathbf{x}\sfK_{\lambda}- \mathbf{z}\sfK_{\lambda}$. We have $\mathbf{t}_0=
\mathbf{d}'-\mathbf{x}+\mathbf{z}$.
Then $a^{\mathbf{t}_0} = a^{\mathbf{d}'}a^{-\mathbf{x}}a^{\mathbf{z}}\in\mathcal A_{m'}$ because of the definitions of $\mathbf{d}',\mathbf{x}$ and $\mathbf{z}\in \Lambda_{m'}$.
\end{proof}

\section{The Unicity Theorem and the PI-degree of the stated $\SL(n)$-skein algebra}\label{sec-Unicity-Theorem}
In this section, we suppose that $\cR=\bC$ and Condition~$(\ast)$ as Section~\ref{sub_center} when we consider root of unity case.
In the rest of the paper, all the representations are assumed to be finite-dimensional.
We will prove the Unicity Theorem for stated $\SL(n)$-skein algebras. This classifies the irreducible representations of the stated $\SL(n)$-skein algebra. This extends the works \cite{FKBL19,KW24} for $n=2,3$ and \cite{Wan23} for general $n$ and a special root of unity. 
We will also formulate the PI-degree of the stated $\SL(n)$-skein algebra when the essentially bordered pb surface has no interior punctures. This PI-degree equals the 'maximal' dimension of 
irreducible representations. 
Irreducible representations whose dimensions are the PI-degree are called the Azumaya representations.  
See \cite{FKBL21,Yu23,KW24} for related works for $n=2,3$.

\subsection{Almost Azumaya algebra and Unicity theorem}\label{sub-Almost-Azumaya-algebra}
While the following is a sufficient condition to be almost Azumaya, we will use the following definition to avoid the precise definition of almost Azumaya. See e.g. \cite{FKBL19} for details.
\begin{dfn}\label{def:almost_Azumaya}
A $\bC$-algebra $A$ is \textbf{almost Azumaya} if $A$ satisfies the following conditions; 
\begin{enumerate}
    \item $A$ is finitely generated as a $\bC$-algebra, 
    \item $A$ has no zero-divisors, 
    \item $A$ is finitely generated as a module over its center. 
\end{enumerate}
\end{dfn}
Note that the original second condition is that $A$ is prime, which is a weaker condition than $A$ has no zero-divisors. In the paper, we use the different definition suitable for skein algebras.

Suppose $A$ is  almost Azumaya. We use $\text{Frac}( {\mathcal Z(A)})$ to denote the fractional field of $\mathcal Z(A)$, and use 
$\widetilde A$ to denote $A\otimes_{\cZ(A)} \Frac(\cZ(A))$.
Define $\rankZ A$ to be the dimension of $\widetilde{A}$ over $\Frac(\cZ(A))$.

\begin{rem}
If $A$ is free over its center $\cZ(A)$ then $\rankZ A$ is equal to the usual rank. 
\end{rem}

Let $V(A)$ denote the maximal spectrum $\Specm(\cZ(A))$. 
Wedderburn’s theorem implies that $\widetilde{A}$ is a division algebra, and has dimension $D^2$ over $\Frac(\cZ(A))$ with a positive integer $D$, called the \textbf{PI-degree} of $A$.
A point $\mathfrak{m} \in  V(A)$ is \textbf{Azumaya} if $A/\mathfrak{m}A \cong M_D(\bC)$, the algebra of $D \times D$-matrices over $\bC$. Let $\Azumaya(A)\subset V(A)$ denote the subset of all the Azumaya points, called the \textbf{Azumaya locus}. 
It is well-known that $D^2 = \rankZ A$ \cite{FKBL19}.

Let $\Irrep(A)$ denote the set of conjugacy classes of finite dimensional irreducible representations of $A$ and ${\rm Hom}_{\text{\rm $\bC$-alg}}(\cZ(A), \bC)$ denote the set of $\bC$-algebra homomorphisms from $\cZ(A)$ to $\bC$.  

\def\csh{{\rm csh}}
For a non-zero irreducible representation $\rho\colon A \to {\rm End}(V)$, Schur’s lemma implies that, for any $z \in  \cZ(A)$, there is a scalar $\chi_\rho(z) \in \bC$ such that $\rho(z) = \chi_\rho(z) \Id$.
Since $\chi_\rho\colon \cZ(A) \to \bC$ is a surjective $\bC$-algebra homomorphism, the kernel $\kernel(\chi_\rho(z))$ is a maximal ideal. 
By regarding ${\rm Hom}_{\text{\rm $\bC$-alg}}(\cZ(A), \bC)$ and $V(A)$, $\chi_\rho$ is an element of $V(A)$, called the \textbf{central character} (or \textbf{classical shadow}) of $\rho$. 

\begin{thm}[Unicity theorem \cite{FKBL19, BG02}]\label{thm:Unicity-alomost}
Let $A$ be an almost Azumaya $\bC$-algebra. 
\begin{enumerate}
    \item The central character map 
    $$\chi\colon \Irrep(A) \to {\rm Hom}_{\text{\rm $\bC$-alg}}(\cZ(A), \bC)\equiv V(A),\quad [\rho] \mapsto \chi_\rho$$ is surjective. 
    \item The Azumaya locus $\Azumaya(A)$ is a Zariski dense open subset of $V(A)$ and satisfies, for any $\tau \in \Azumaya (A)$, the preimage $\chi^{-1}(\tau)$ consists of exactly one element. In other words, $\tau$ is the central character of a unique (up to equivalence) irreducible representation $\rho_\tau$. In particular, for all $\tau \in \Azumaya (A)$, the dimension of $\rho_\tau$ equals the PI-degree $D$ of $A$. On the other hand, for $\tau \notin \Azumaya (A)$, the central character $\tau$ corresponds to at most $r$ non-equivalent irreducible representations, and the dimension of each is less than $D$, where $r$ depends only on $\Sigma$ and $\hat{q}$.
\end{enumerate}
\end{thm}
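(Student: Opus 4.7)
The plan is to reduce the statement to the standard structure theory of prime polynomial identity (PI) algebras; conditions (1)--(3) of almost Azumaya are precisely the hypotheses needed to invoke that machinery. First I would note that since $A$ is finitely generated as a $\bC$-algebra and module-finite over $\cZ(A)$, the Artin--Tate lemma forces $\cZ(A)$ to be an affine commutative $\bC$-algebra; it is a domain because $A$ is (condition (2)), so $V(A)=\Specm(\cZ(A))$ is an irreducible Noetherian affine variety. By Posner's theorem, $\widetilde{A}=A\otimes_{\cZ(A)}\Frac(\cZ(A))$ is a central simple algebra over $\Frac(\cZ(A))$ of dimension $D^2$, justifying the definition of the PI-degree $D$ and giving $\rankZ A=D^2$.

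For the surjectivity in part (1), pick any $\mathfrak{m}\in V(A)$ and consider the $\bC$-algebra $A/\mathfrak{m}A$. Module-finiteness of $A$ over $\cZ(A)$ makes this quotient finite-dimensional over $\bC$, and Nakayama's lemma ensures it is nonzero. Any simple $(A/\mathfrak{m}A)$-module, pulled back to $A$, is an irreducible representation whose central character is $\mathfrak{m}$ by construction.

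For part (2), the Artin--Procesi theorem characterizes Azumaya points as the complement of the vanishing locus of an explicit ideal of $\cZ(A)$ built from the Formanek central polynomials and evaluations of reduced trace; this yields openness of $\Azumaya(A)$ in $V(A)$. Density follows because $\widetilde{A}$, being central simple of PI-degree $D$ over the field $\Frac(\cZ(A))$, splits after a finite extension, so at a generic point the fiber $A/\mathfrak{m}A$ has the maximal possible dimension $D^2$ and must be isomorphic to $M_D(\bC)$. For $\mathfrak{m}\in\Azumaya(A)$, the isomorphism $A/\mathfrak{m}A\cong M_D(\bC)$ gives exactly one simple module (up to equivalence), of dimension $D$, settling the bijectivity claim. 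For $\mathfrak{m}\notin\Azumaya(A)$, any simple quotient of the finite-dimensional $\bC$-algebra $A/\mathfrak{m}A$ has dimension strictly smaller than $D$: a $D$-dimensional irreducible representation would force $A/\mathfrak{m}A$ to surject onto $M_D(\bC)$, and a comparison with $\rankZ A = D^2$ would force equality, contradicting $\mathfrak{m}\notin\Azumaya(A)$.

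The final ingredient, and the main technical obstacle, is the uniform bound $r$ on the number of irreducibles with a fixed non-Azumaya central character. One approach is to bound $\dim_\bC A/\mathfrak{m}A$ uniformly in $\mathfrak{m}$: by generic freeness applied to the finite morphism $\Spec A\to\Spec\cZ(A)$ one stratifies $V(A)$ into finitely many locally closed pieces along which the fibers have constant dimension, bounded by $D^2$, and then the number of isomorphism classes of simples of any finite-dimensional $\bC$-algebra is bounded by its dimension. The delicate point here is that generic freeness and the Artin--Procesi openness both rely on serious commutative and noncommutative algebra rather than on any feature of skein theory; once granted, the theorem follows by the outline above.
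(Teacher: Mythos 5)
You should first note that the paper itself gives no proof of this theorem: it is imported from [BG02, FKBL19], and your route (Artin--Tate to make $\cZ(A)$ affine, Posner to get the central simple algebra $\widetilde A$ and the PI-degree $D$, Artin--Procesi/Formanek for the Azumaya locus, and a fibrewise study of $A/\mathfrak{m}A$ for surjectivity of $\chi$) is exactly the standard argument of those references, so in outline you are reconstructing the cited proof rather than taking a different path. Parts (1), the openness/density of $\Azumaya(A)$, and the uniqueness statement at Azumaya points are fine as sketched.

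Two steps are not right as written. (i) To exclude $D$-dimensional irreducibles at a non-Azumaya $\mathfrak{m}$ you argue that a surjection $A/\mathfrak{m}A \to M_D(\bC)$ together with $\rankZ A = D^2$ ``forces equality.'' This does not follow: for a finite module over the affine domain $\cZ(A)$, the fibre dimension $\dim_\bC A/\mathfrak{m}A$ is upper semicontinuous with generic value $D^2$, so at special points it can be strictly larger than $D^2$, and the surjection alone yields no contradiction. The assertion that an irreducible representation of dimension $D$ has Azumaya central character is precisely the nontrivial direction of the Azumaya-locus characterization (Brown--Goodearl, Theorem III.1.6), proved via the Formanek center and Artin--Procesi -- machinery you already invoke for openness -- and this step should be routed through that result rather than a rank count; the same remark applies to your parenthetical claim that the stratified fibre dimensions are ``bounded by $D^2$.'' (ii) The uniform bound $r$ is the easy part and needs no generic freeness: if $a_1,\dots,a_r$ generate $A$ as a $\cZ(A)$-module, their images span $A/\mathfrak{m}A$ over $\bC=\cZ(A)/\mathfrak{m}$ for every $\mathfrak{m}$, so $\dim_\bC A/\mathfrak{m}A \le r$ and the number of simple quotients is at most $r$; for the skein algebra such a generating set can be chosen depending only on $\Sigma$ and $\hat q$, which is why the theorem states the dependence of $r$ in that form.
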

For $\tau \in \Azumaya (A)$, the unique element in $\chi^{-1}(\tau)$
(or each of its representatives) is called an \textbf{Azumaya representation} of $A$ associated to $\tau$.

\begin{prop}\label{thm;azumaya}
When $\mathbbm{v}$ is a root of unity and $\Sigma$ is an essentially bordered pb surface, $\cS_n(\Sigma,\mathbbm{v})$ is  almost Azumaya. 
\end{prop}
Note that Theorem 6.1 in \cite{LY23} shows that $\cS_n(\Sigma,\mathbbm{v})$ satisfies conditions (1) and (2) in Definition \ref{def:almost_Azumaya}. 
The remaining part is to show (3). 
Since the proving technique of Proposition 8.10 in \cite{Wan23} works here, we do not give a full proof. We will give an outline of a proof of Proposition \ref{thm;azumaya} in Appendix \ref{appendix}.

\subsection{Dimension of quantum $A$-torus over its center}
In the rest of this section, we suppose that $\Sigma$ is a connected triangulable essentially bordered pb surface without interior punctures and $\lambda$ is a triangulation of $\Sigma$. Suppose $\overline{\Sigma}$ has boundary components $\partial_1,\cdots,\partial_b$, and each $\partial_i$ contains $r_i$ punctures.

We will define a subset $\Lambda_{\partial}\subset \bZ^{V_{\lambda}'}$.
Define $\Lp=\emptyset$ if all $r_i$ ($1\leq i\leq b$) are odd.

Suppose at least one of $r_i$ is even. 
For each $i$ such that $r_i$ is even, we label the boundary components of $\Sigma$ contained in $\partial_i$ consecutively from
$1$ to $r_i$ following the positive orientation of $\partial_i$. For $1\leq j\leq n-1$, define $\mathbf{k}_{j,\partial_i}\in \mathbb Z^{V_{\lambda}'}$ such that 
$$\mathbf{k}_{j,\partial_i}(v) =\begin{cases}
        (-1)^{k-1}& \text{if } v= u_j^{e_k}\text{ for }1\leq k\leq r_i, \\
        0 &  \text{ otherwise}.\\
        \end{cases}$$
Then, define 
\begin{align}\label{eq-balanced-boundary-A-version}
    \Lambda_{\partial} =\{\mathbf{k}_{j,\partial_i}\mid r_i \text{ is even},\; 1\leq j\leq n-1\}.
\end{align}

Let $\Lambda_z$ denote the subgroup of $\mathbb Z^{V_{\lambda}'}$ generated by  $\Lambda_{\partial}$ and $\Lambda_{m'}$ defined as \eqref{eq:Lambda}.

\begin{lem}\label{lem5.1}
Suppose $m''$ is odd. \\
(a) The center of $\A$ is $\mathbb C\text{-span}\{a^{\mathbf{k}}\mid \mathbf{k}\in \Lambda_z\}$.\\
(b) We have $\rankZ\A=\left|\dfrac{\mathbb Z^{V_{\lambda}'}}{\Lambda_z}\right|$. 
\end{lem}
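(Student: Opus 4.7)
The plan is to convert the generating description of $\cZ(\A)$ from Theorem \ref{center_torus} into the monomial-span description in terms of $\Lambda_z$, and then invoke Lemma \ref{PI}(b) for (b).

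For (a), I would start by writing each generator of $\mathsf{B}$ explicitly in the quantum torus. Fix an even boundary component $\partial_i$ with boundary edges $e_1,\dots,e_{r_i}$, and $1\leq j\leq n-1$, $0\leq k\leq m'$. Since $\tra(\gaa_v)=a_v$ for every $v\in V_\lambda'$ (Theorem \ref{traceA}), the image under $\tra$ of the central element $(\gaa_{u_j}^{e_1})^k(\gaa_{u_j}^{e_2})^{m'-k}\cdots(\gaa_{u_j}^{e_{r_i-1}})^k(\gaa_{u_j}^{e_{r_i}})^{m'-k}$ equals a nonzero scalar times $a^{\mathbf{t}_{i,j,k}}$, where
\[
\mathbf{t}_{i,j,k}(u_j^{e_\ell})=\begin{cases}k & \ell\text{ odd},\\ m'-k & \ell\text{ even},\end{cases}\qquad \mathbf{t}_{i,j,k}(v)=0\text{ otherwise.}
\]
A direct splitting gives $\mathbf{t}_{i,j,k}=k\,\mathbf{k}_{j,\partial_i}+m'\mathbf{y}_{i,j}$, where $\mathbf{y}_{i,j}$ is the indicator of $\{u_j^{e_2},u_j^{e_4},\dots,u_j^{e_{r_i}}\}$. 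Since $m'\mathbf{y}_{i,j}\sfK_\lambda=\bm{0}$ in $\mathbb Z_{m'}$, we have $m'\mathbf{y}_{i,j}\in\Lambda_{m'}$ and $\mathbf{k}_{j,\partial_i}\in\Lambda_\partial$, hence $\mathbf{t}_{i,j,k}\in\Lambda_z$. In particular, $\tra(\mathsf{B})\subset\mathbb C\text{-span}\{a^\mathbf{k}\mid\mathbf{k}\in\Lambda_z\}$, and obviously $\{a^\mathbf{k}\mid\mathbf{k}\in\Lambda_{m'}\}\subset\mathbb C\text{-span}\{a^\mathbf{k}\mid\mathbf{k}\in\Lambda_z\}$, so Theorem \ref{center_torus} yields one inclusion.

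For the reverse inclusion, observe that $\mathbf{t}_{i,j,0}=m'\mathbf{y}_{i,j}\in\Lambda_{m'}$, so $a^{-\mathbf{t}_{i,j,0}}$ already belongs to the subalgebra generated by $\{a^\mathbf{k}\mid\mathbf{k}\in\Lambda_{m'}\}$ (which is closed under inversion of these monomials because $\Lambda_{m'}$ is a subgroup). Then
\[
a^{\mathbf{t}_{i,j,1}}\cdot a^{-\mathbf{t}_{i,j,0}}=(\text{scalar})\cdot a^{\mathbf{k}_{j,\partial_i}}
\]
lies in $\cZ(\A)$. Combining with $\Lambda_{m'}$ and using that in a quantum torus products of monomials are scalar multiples of monomials, the subalgebra generated by $\tra(\mathsf{B})$ and $\{a^\mathbf{k}\mid\mathbf{k}\in\Lambda_{m'}\}$ contains $a^\mathbf{k}$ for every $\mathbf{k}$ in the subgroup generated by $\Lambda_\partial$ and $\Lambda_{m'}$, i.e.\ for every $\mathbf{k}\in\Lambda_z$. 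This proves (a).

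For (b), Lemma \ref{PI}(b) applies directly to $\mathbb T(\sfP_\lambda)=\A$ with $\Lambda=\Lambda_z$: part (a) supplies the hypothesis $\cZ(\A)=\mathbb C\text{-span}\{a^\mathbf{k}\mid\mathbf{k}\in\Lambda_z\}$, $\Lambda_z$ is a subgroup (in particular a submonoid) of $\mathbb Z^{V_\lambda'}$, and finiteness of $\mathbb Z^{V_\lambda'}/\Lambda_z$ is immediate from the inclusion $(m'\mathbb Z)^{V_\lambda'}\subset\Lambda_{m'}\subset\Lambda_z$, which forces $\mathbb Z^{V_\lambda'}/\Lambda_z$ to be a quotient of the finite group $(\mathbb Z/m'\mathbb Z)^{V_\lambda'}$. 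Thus $\rankZ\A=|\mathbb Z^{V_\lambda'}/\Lambda_z|$.

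The only place requiring care is the bookkeeping in (a): identifying the exponent $\mathbf{t}_{i,j,k}$ of each element of $\mathsf B$ after Weyl normalization and exhibiting the decomposition $\mathbf{t}_{i,j,k}=k\mathbf{k}_{j,\partial_i}+m'\mathbf{y}_{i,j}$; everything else is an immediate consequence of Theorem \ref{center_torus}, the subgroup property of $\Lambda_{m'}$, and Lemma \ref{PI}.
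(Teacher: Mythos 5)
Your proposal is correct and follows essentially the same route as the paper, which simply cites Theorem \ref{center_torus} for (a) and Lemma \ref{PI} for (b); your extra bookkeeping (writing $\tra$ of each element of $\mathsf B$ as a scalar times $a^{\mathbf{t}_{i,j,k}}$ with $\mathbf{t}_{i,j,k}=k\mathbf{k}_{j,\partial_i}+m'\mathbf{y}_{i,j}$, and using that the $\Lambda_z$-monomial span is a subalgebra) just makes explicit the translation from ``generated by'' to ``$\mathbb C$-span over $\Lambda_z$'' that the paper leaves implicit.
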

\begin{proof}
(a) follows from Theorem \ref{center_torus}.
(b) follows from (a) and Lemma \ref{PI}.
\end{proof}

The following proposition is proved in \cite{FKBL21,Wan23a,Yu23} when $n=2$. We will prove the proposition using techniques in \cite{Wan23a}.

\begin{prop}\label{rank_eq}
We have $\rankZ\A = \rankZ\cS_n(\Sigma,\mathbbm{v})$.
\end{prop}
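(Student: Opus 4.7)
The plan is to exploit the ``sandwiched property'' \eqref{eq-sandwitch-property} together with Posner's theorem, reducing both ranks to the dimension of a common skew field of fractions over its center.

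First, I would identify $\cS_n(\Sigma,\mathbbm{v})$ with its image under the injective $A$-version quantum trace map $\tra$, so that \eqref{eq-sandwitch-property} reads
$$\Ap\subset \cS_n(\Sigma,\mathbbm{v})\subset \A.$$
The multiplicative set $S=\{a^{\mathbf{k}}\mid \mathbf{k}\in\bN^{V_{\lambda}'}\}$ consists of regular, normal elements of $\A$ (they quasi-commute with every generator), hence forms an Ore set; and by construction $\A=S^{-1}\Ap$. Applying $S^{-1}$ to the sandwich yields
$$\A=S^{-1}\Ap\subset S^{-1}\cS_n(\Sigma,\mathbbm{v})\subset S^{-1}\A=\A,$$
so $\A=S^{-1}\cS_n(\Sigma,\mathbbm{v})$, and in particular $\cS_n(\Sigma,\mathbbm{v})$ and $\A$ share the same skew field of fractions, call it $Q$.

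The second step is to invoke Posner's theorem. Both $\cS_n(\Sigma,\mathbbm{v})$ (almost Azumaya by Theorem~\ref{thm;azumaya}) and $\A$ are prime PI-algebras that are finitely generated as modules over their (commutative Noetherian) centers. Posner's theorem then identifies the central localizations of both with $Q$:
$$\cS_n(\Sigma,\mathbbm{v})\otimes_{\cZ(\cS_n(\Sigma,\mathbbm{v}))}\Frac(\cZ(\cS_n(\Sigma,\mathbbm{v})))\;=\;Q\;=\;\A\otimes_{\cZ(\A)}\Frac(\cZ(\A)).$$
In particular $\Frac(\cZ(\cS_n(\Sigma,\mathbbm{v})))=\cZ(Q)=\Frac(\cZ(\A))$, and the dimension of $Q$ over this common center equals both $\rankZ\cS_n(\Sigma,\mathbbm{v})$ and $\rankZ\A$, giving the claim.

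The main obstacle will be the non-commutative bookkeeping around Ore localization: one must verify carefully that $S$ is genuinely a left and right Ore set in $\cS_n(\Sigma,\mathbbm{v})$ (not merely in $\A$) and that $S^{-1}\cS_n(\Sigma,\mathbbm{v})$ coincides with $\A$ as subrings of a common ambient ring. Once this is settled, the remainder is standard PI-theory.
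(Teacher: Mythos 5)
Your proposal is correct in substance, and its skeleton --- identify $\Sn$ with $\tra(\Sn)$, use the sandwich $\Ap\subset\Sn\subset\A$, show the two algebras have a common skew field of fractions $Q$, and compare dimensions over the common center of $Q$ --- is the same as the paper's. The difference lies in how the two halves are closed. The paper proves $\Frac(\cZ(\Sn))=\Frac(\cZ(\A))$ by hand: it uses the explicit description of $\cZ(\A)$ (Lemma~\ref{lem5.1}, i.e.\ Theorem~\ref{center_torus}, which requires $m''$ odd) to write each central monomial $a^{\mathbf{k}}$, $\mathbf{k}\in\Lambda_z$, as $(a^{\mathbf{t}})^{-1}a^{\mathbf{t}+\mathbf{k}}$ with $\mathbf{t}\in m'\mathbb{N}^{V_\lambda'}$, both factors lying in $\cZ(\Ap)\subset\cZ(\Sn)$; and it identifies $\tSn=\tA$ by citing \cite{FKBL21} (Proposition 2.1(a)) that both central localizations are division algebras. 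You instead apply Posner's theorem to the common quotient division ring, which delivers $\Frac(\cZ(\Sn))=\cZ(Q)=\Frac(\cZ(\A))$ and the identification of both central localizations with $Q$ in one stroke, without any computation of $\cZ(\A)$ (in particular your argument is insensitive to the parity of $m''$); what the paper's route buys is that it stays within facts it has already established rather than invoking general PI theory. Finally, the obstacle you flag is not a real one: for $s=a^{\mathbf{k}}\in S$ the power $s^{m'}$ is, up to a power of $\hat{q}$, the monomial $a^{m'\mathbf{k}}$, which is central in $\A$ (the entries of $\sfP_\lambda$ lie in $n\mathbb{Z}$ and $m''\mid m'n$) and lies in $\Ap\subset\Sn$, so $s^{m'}x=xs^{m'}$ for all $x\in\Sn$ and the two-sided Ore condition for $S$ in $\Sn$ follows at once; equivalently, you may localize only at the central multiplicative set $\{a^{m'\mathbf{k}}\mid \mathbf{k}\in\mathbb{N}^{V_\lambda'}\}$, which already recovers $\A$ from $\Ap$ --- this is exactly the manipulation appearing in the paper's proof.
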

\begin{proof}
From Theorem \ref{traceA} (a), we can identify $\Sn$ and $\tra(\Sn)$. 
Then we  have $\Ap\subset\Sn\subset\A,$ which implies 
$$\mathcal Z(\Ap)\subset\mathcal Z(\Sn)\subset\mathcal Z(\A).$$
To show the claim, it suffices to show $\tZS=\tZA$ and $\tSn =\tA$, 
where $\widetilde A=A\otimes_{\mathcal{Z}(A)} \Frac({\mathcal Z(A)})$.

For any ${\bf k}\in\Lambda_z$, there exists ${\bf t}\in m'\mathbb N^{V_{\lambda}'}\subset\Lambda_{m'}$ such that 
${\bf t}+{\bf k}\in \mathbb N^{V_{\lambda}'}.$
We have $a^{{\bf t}}, a^{{\bf t}+{\bf k}}\in \mathcal Z(\Ap)\subset\mathcal Z(\Sn)$.
Then $a^{{\bf k}}=(a^{{\bf t}})^{-1}a^{{\bf t}+{\bf k}}\in\tZS$.
Lemma \ref{lem5.1} implies $\ZA\subset\tZS$. Since $\tZS$ is a field, then $\tZA\subset\tZS$. Obviously, we also have $\tZS\subset\tZA$.
This shows $\tZA=\tZS$.

From \cite[Proposition 2.1 (a)]{FKBL21}, both of $\tSn$ and $\tA$ are division algebras. From this fact, $\Ap\subset \tSn$ implies $\A\subset \tSn$,  
and $\A\subset \tSn$ implies $\tA\subset \tSn$.
Obviously, we also have $\tSn\subset\tA$.
This shows $\tA= \tSn$.
\end{proof}

For any abelian group $G$,
we take the cohomology of a CW complex of $\overline\Sigma$ over $G$
$$0\rightarrow C^0(\overline \Sigma,G)\xrightarrow{\delta_0} C^1(\overline \Sigma,G)\xrightarrow{\delta_1} C^2(\overline \Sigma,G)\rightarrow 0.$$
Let $Z^{i}(\overline\Sigma,G),\ B^{i}(\overline\Sigma,G)$, and $H^{i}(\overline\Sigma,G)$ denote the $i$-th cocycle, the $i$-th coboundary, and the $i$-th cohomology group of $\overline\Sigma$ with coefficient $G$ respectively.

\begin{lem}\label{lem5.3}
Suppose $k$ is a positive integer and $\gcd(k,n) = l$. Set $N = kn/l$.
   Then there is a short exact sequence 
$$0\rightarrow N\mathbb Z^{V_{\lambda}}\xrightarrow{L}\Lambda_{\lambda}\cap k\mathbb Z^{V_{\lambda}} \xrightarrow{J} Z^1(\overline \Sigma,\mathbb Z_n)_l \rightarrow 0,$$
where $L$ is the natural embedding and $Z^1(\overline \Sigma,\mathbb Z_n)_l=l(C^1(\overline\Sigma,\mathbb Z_n))\cap Z^1(\overline \Sigma,\mathbb Z_n)$ and $\Lambda_\lambda$ is the balanced part.
\end{lem}
We will prove Lemma \ref{lem5.3} in Section \ref{sec;pf_lem}

Recall $d=\gcd(m',n)$ defined in Section~\ref{notation}. 
\begin{prop}\label{prop5.4}
    We have $\left|\displaystyle \frac{\Lambda_{\lambda}}{\Lambda_{\lambda}\cap m'\mathbb Z^{V_{\lambda}}}\right|=m^{|V_{\lambda}|} \left|\frac{Z^1(\overline \Sigma,\mathbb Z_n)}{Z^{1}(\overline \Sigma,\mathbb Z_n)_d}\right|$.
\end{prop}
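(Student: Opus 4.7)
The plan is to apply Lemma \ref{lem5.3} twice with two different choices of $k$, and then combine the resulting indices via the usual multiplicative chain rule, using $mn\mathbb{Z}^{V_{\lambda}}$ as an intermediate subgroup. Note that the ``$|\cdot|$'' notation in the proposition should be read as the index $[\Lambda_\lambda : \Lambda_\lambda\cap m'\mathbb{Z}^{V_\lambda}]$, which is finite even though the ambient groups are not.

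First, I would apply Lemma \ref{lem5.3} with $k=1$, so that $l = \gcd(1,n)=1$ and $N = n$. The short exact sequence collapses to
\[
0\longrightarrow n\mathbb{Z}^{V_{\lambda}} \longrightarrow \Lambda_{\lambda} \longrightarrow Z^{1}(\overline{\Sigma},\mathbb{Z}_n) \longrightarrow 0,
\]
so $[\Lambda_{\lambda}:n\mathbb{Z}^{V_{\lambda}}] = |Z^{1}(\overline{\Sigma},\mathbb{Z}_n)|$. Next, I would apply Lemma \ref{lem5.3} with $k=m'$, so that $l=\gcd(m',n)=d$ and $N=m'n/d = mn$ (using $m = m'/d$ from Section~\ref{notation}). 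This yields
\[
0\longrightarrow mn\mathbb{Z}^{V_{\lambda}} \longrightarrow \Lambda_{\lambda}\cap m'\mathbb{Z}^{V_{\lambda}} \longrightarrow Z^{1}(\overline{\Sigma},\mathbb{Z}_n)_{d} \longrightarrow 0,
\]
and in particular $[\Lambda_{\lambda}\cap m'\mathbb{Z}^{V_{\lambda}} : mn\mathbb{Z}^{V_{\lambda}}] = |Z^{1}(\overline{\Sigma},\mathbb{Z}_n)_{d}|$.

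Before combining these, I would verify that $mn\mathbb{Z}^{V_{\lambda}}$ is indeed a common subgroup: since $m' = md$ and $d\mid n$, we have $mn = m'(n/d) \in m'\mathbb{Z}$, so $mn\mathbb{Z}^{V_{\lambda}} \subset m'\mathbb{Z}^{V_{\lambda}}$; and obviously $mn\mathbb{Z}^{V_{\lambda}} \subset n\mathbb{Z}^{V_{\lambda}} \subset \Lambda_{\lambda}$. Hence the multiplicativity of indices gives
\[
\frac{|\Lambda_{\lambda}|}{|\Lambda_{\lambda}\cap m'\mathbb{Z}^{V_{\lambda}}|} \;=\; \frac{[\Lambda_{\lambda}:mn\mathbb{Z}^{V_{\lambda}}]}{[\Lambda_{\lambda}\cap m'\mathbb{Z}^{V_{\lambda}}:mn\mathbb{Z}^{V_{\lambda}}]} \;=\; \frac{[\Lambda_{\lambda}:n\mathbb{Z}^{V_{\lambda}}]\cdot [n\mathbb{Z}^{V_{\lambda}}:mn\mathbb{Z}^{V_{\lambda}}]}{|Z^{1}(\overline{\Sigma},\mathbb{Z}_n)_{d}|}.
\]
The elementary computation $[n\mathbb{Z}^{V_{\lambda}}:mn\mathbb{Z}^{V_{\lambda}}] = m^{|V_{\lambda}|}$, together with the first exact sequence, transforms the right-hand side into $m^{|V_{\lambda}|}\,|Z^{1}(\overline{\Sigma},\mathbb{Z}_n)|\,/\,|Z^{1}(\overline{\Sigma},\mathbb{Z}_n)_{d}|$, which is the claim.

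There is really no main obstacle here: all the substance is packaged inside Lemma \ref{lem5.3}. The only points requiring minor care are checking the inclusion $mn\mathbb{Z}^{V_{\lambda}}\subset m'\mathbb{Z}^{V_{\lambda}}$ so that the chain rule applies, and tracking that $Z^{1}(\overline{\Sigma},\mathbb{Z}_n)_{1} = Z^{1}(\overline{\Sigma},\mathbb{Z}_n)$ in the $k=1$ case, both of which are immediate from the definitions.
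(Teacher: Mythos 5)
Your proof is correct and follows essentially the same route as the paper: both apply Lemma \ref{lem5.3} with $k=m'$ (yielding $N'=mn$) and, implicitly or explicitly, with $k=1$, then chain the indices through the common subgroup $mn\mathbb{Z}^{V_\lambda}=n\mathbb{Z}^{V_\lambda}\cap m'\mathbb{Z}^{V_\lambda}$. The only cosmetic difference is that the paper writes the middle group as $n\mathbb{Z}^{V_\lambda}\cap m'\mathbb{Z}^{V_\lambda}$ rather than $mn\mathbb{Z}^{V_\lambda}$, but these coincide since $\mathrm{lcm}(n,m')=mn$.
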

\begin{proof}Set $N' = m'n/d$.
Under $k=m'$, Lemma \ref{lem5.3} implies
$$\left|\frac{\Lambda_{\lambda}\cap m'\mathbb Z^{V_{\lambda}}}{n\mathbb Z^{V_{\lambda}}\cap m'\mathbb Z^{V_{\lambda}}}\right| = \left|\frac{\Lambda_{\lambda}\cap m'\mathbb Z^{V_{\lambda}}}{ N'\mathbb Z^{V_{\lambda}}}\right| = | Z^1(\overline \Sigma,\mathbb Z_n)_d|,$$ and 
$$\left|\frac{\Lambda_{\lambda}}{n\mathbb Z^{V_{\lambda}}\cap m'\mathbb Z^{V_{\lambda}}}\right| = \left|\frac{\Lambda_{\lambda}}{n\mathbb Z^{V_{\lambda}}}\right|
\left|\frac{n\mathbb Z^{V_{\lambda}}}{N'\mathbb Z^{V_{\lambda}}}\right| = |Z^1(\overline\Sigma,\mathbb Z_n)|
\Big(\frac{m'}{d}\Big)^{|V_{\lambda}|}.$$
Note that 
\begin{align}\label{eq_pi}
    \left|\frac{\Lambda_{\lambda}}{n\mathbb Z^{V_{\lambda}}\cap m'\mathbb Z^{V_{\lambda}}}\right|=\left|\frac{\Lambda_{\lambda}}{\Lambda_{\lambda}\cap m'\mathbb Z^{V_{\lambda}}}\right| \left|\frac{\Lambda_{\lambda}\cap m'\mathbb Z^{V_{\lambda}}}{n\mathbb Z^{V_{\lambda}}\cap m'\mathbb Z^{V_{\lambda}}}\right|.
\end{align}
Then Equation \eqref{eq_pi} completes the proof. 
\end{proof}

Obviously, we have the following short exact sequence
\begin{align}\label{short}
    0\rightarrow d\mathbb Z_n\xrightarrow{f} \mathbb Z_n\xrightarrow{g} \mathbb Z_d\rightarrow 0,
\end{align}
where $f$ is the embedding and $g$ is the projection.

\begin{lem}\label{lem_short}
We have the following short exact sequence
    \begin{align*}
    0\rightarrow Z^1(\overline\Sigma,\mathbb Z_n)_d\xrightarrow{L} Z^1(\overline\Sigma,\mathbb Z_n)\xrightarrow{g_*} Z^1(\overline\Sigma,\mathbb Z_d)\rightarrow 0,
\end{align*}
where $L$ is the embedding and $g_*$ is induced by $g$.
\end{lem}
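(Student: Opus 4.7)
The plan is to recognize the putative sequence as arising from the coefficient short exact sequence \eqref{short} applied to the cellular cochain complex of $\overline\Sigma$. Since $f$ realizes $d\mathbb Z_n$ as a subgroup of $\mathbb Z_n$, a $1$-cochain of $\overline\Sigma$ takes values in $d\mathbb Z_n$ if and only if it lies in $d\,C^1(\overline\Sigma,\mathbb Z_n)$, and the coboundary operator respects $f$. Hence there is a natural identification $Z^1(\overline\Sigma,\mathbb Z_n)_d\cong Z^1(\overline\Sigma,d\mathbb Z_n)$ under which $L$ is induced by $f$. Injectivity of $L$ is immediate, and exactness at the middle follows because the kernel of $g_\ast\colon Z^1(\overline\Sigma,\mathbb Z_n)\to Z^1(\overline\Sigma,\mathbb Z_d)$ consists precisely of cocycles whose values on every $1$-cell lie in $\ker g=d\mathbb Z_n$, which is $Z^1(\overline\Sigma,\mathbb Z_n)_d$.

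The main step, and the only place where a non-formal input enters, is surjectivity of $g_\ast$. Given $\beta\in Z^1(\overline\Sigma,\mathbb Z_d)$, the surjectivity of $g\colon\mathbb Z_n\to\mathbb Z_d$ yields a cochain lift $\tilde\beta\in C^1(\overline\Sigma,\mathbb Z_n)$. Then $g_\ast(\delta\tilde\beta)=\delta\beta=0$, so $\delta\tilde\beta$ takes values in $\ker g=d\mathbb Z_n$; combined with $\delta^2=0$, this shows $\delta\tilde\beta\in Z^2(\overline\Sigma,d\mathbb Z_n)$. Its class $[\delta\tilde\beta]\in H^2(\overline\Sigma,d\mathbb Z_n)$ is the obstruction to promoting $\tilde\beta$ to a cocycle. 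Because $\Sigma$ is essentially bordered throughout this section, every connected component of $\overline\Sigma$ has non-empty boundary, so $\overline\Sigma$ deformation retracts onto a $1$-dimensional spine; consequently $H^2(\overline\Sigma,d\mathbb Z_n)=0$. Therefore $\delta\tilde\beta=\delta\gamma$ for some $\gamma\in C^1(\overline\Sigma,d\mathbb Z_n)$, and $\tilde\beta-\gamma\in Z^1(\overline\Sigma,\mathbb Z_n)$ is a lift of $\beta$, as desired.

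Equivalently, applying the snake lemma to the short exact sequence of cochain complexes
$$0\to C^{\bullet}(\overline\Sigma,d\mathbb Z_n)\to C^{\bullet}(\overline\Sigma,\mathbb Z_n)\xrightarrow{g_\ast} C^{\bullet}(\overline\Sigma,\mathbb Z_d)\to 0$$
in degrees $1$ and $2$ produces a connecting map $Z^1(\overline\Sigma,\mathbb Z_d)\to H^2(\overline\Sigma,d\mathbb Z_n)$ whose vanishing is equivalent to the surjectivity of $g_\ast$ at the cocycle level; this target vanishes by the spine argument above. I do not expect a genuine obstacle: the only content is the vanishing of $H^2(\overline\Sigma,-)$, which is directly supplied by the essentially bordered hypothesis in force throughout the section.
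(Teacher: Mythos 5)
Your proof is correct, and its treatment of injectivity and middle exactness coincides with the paper's (the kernel of $g_*$ consists exactly of cocycles valued in $\ker g=d\mathbb Z_n$). For surjectivity you take a more direct route than the paper: you lift $\beta\in Z^1(\overline\Sigma,\mathbb Z_d)$ to a cochain $\tilde\beta\in C^1(\overline\Sigma,\mathbb Z_n)$, observe that $\delta\tilde\beta$ is valued in $d\mathbb Z_n$, and kill the obstruction class in $H^2(\overline\Sigma,d\mathbb Z_n)=0$ to correct $\tilde\beta$ to a cocycle lifting $\beta$. The paper instead first checks that $g_*$ is surjective on coboundaries $B^1$, applies the snake lemma to the two rows $0\to B^1\to Z^1\to H^1\to 0$ to identify $\coker$ of $g_*$ on $Z^1$ with its $\coker$ on $H^1$, and then uses the long exact sequence of the coefficient sequence \eqref{short} together with $H^2(\overline\Sigma,d\mathbb Z_n)\cong H_0(\overline\Sigma,\partial\overline\Sigma;d\mathbb Z_n)=0$ (Poincar\'e--Lefschetz duality). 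Both arguments hinge on the same topological input, the vanishing of $H^2(\overline\Sigma,-)$ coming from the non-empty boundary guaranteed by the essentially bordered hypothesis; you obtain it by retracting $\overline\Sigma$ onto a $1$-dimensional spine (legitimate, since cellular and singular cohomology agree), while the paper invokes duality. Your cochain-level argument is slightly more economical, as it bypasses the $B^1$-surjectivity check and the cokernel identification; the paper's diagram-chase formulation makes the role of $H^1$ and the connecting map more explicit but proves nothing stronger. One small point worth stating explicitly in your write-up: the correction term $\gamma\in C^1(\overline\Sigma,d\mathbb Z_n)$ has $g_*\gamma=0$, which is what guarantees $g_*(\tilde\beta-\gamma)=\beta$; you use this implicitly in the final step.
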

\begin{proof}
Obviously, we have $g_\ast \circ L=0$. For any $c\in\kernel{g_*}$, we can regard $c$ as a map from the set of all the 1-simplices to $\mathbb Z_n$. Then $g_\ast(c) = g\circ c = 0\in Z^1(\overline\Sigma,\mathbb Z_d)$, i.e., 
$(g\circ c)(e) = 0\in\mathbb Z_d$ for each 1-simplex $e$. Then $c(e)\in d\mathbb Z_n$. This shows $c\in d C^1(\overline\Sigma,\mathbb Z_n)$, and $c\in\im L$.  

The remaining part is to show the surjectivity of $g_\ast$. Consider the following commutative diagram. 
    \begin{equation}
    \begin{tikzcd}
0  \arrow[r, " "] &  B^1(\overline\Sigma,\mathbb Z_n)  \arrow[r, " "]
\arrow[d, "g^b_{\ast}"] 
&  Z^1(\overline\Sigma,\mathbb Z_n)  \arrow[r, " "]
\arrow[d, "g^z_{\ast}"] 
&  H^1(\overline\Sigma,\mathbb Z_n)  \arrow[r, " "]
\arrow[d, "g^h_{\ast}"]& 0\\
  0\arrow[r, " "]  &  B^1(\overline\Sigma,\mathbb Z_d)  \arrow[r, " "]
&  Z^1(\overline\Sigma,\mathbb Z_d)  \arrow[r, " "]
&  H^1(\overline\Sigma,\mathbb Z_d)  \arrow[r, " "]
&  0\\
\end{tikzcd}
\end{equation}
Here the two rows are short exact sequences and $g^b_\ast,g^z_\ast,g^h_\ast$ are induced by $g$. 
It is easy to show $g^b_\ast\colon B^1(\overline\Sigma,\mathbb{Z}_n)\rightarrow  B^1(\overline\Sigma,\mathbb{Z}_d)$ is surjective, i.e., $\coker{g^b_\ast}=0$. Then the snake lemma implies 
\begin{align}
\frac{Z^1(\overline\Sigma,\mathbb Z_d)}{\im g_\ast}= \coker{g^z_\ast}\cong \coker{g^h_\ast}=\frac{H^1(\overline\Sigma,\mathbb Z_d)}{\im g_\ast}.\label{eq:coker}\end{align}

Since the free abelian group is a projective module over $\mathbb Z$,
the short exact sequence in \eqref{short} induces the following short exact sequence for cochain complexes
\begin{align*}
    0\rightarrow C^{\ast}(\overline\Sigma,d \mathbb{Z}_n)\xrightarrow{f_{\ast}} C^{\ast}(\overline\Sigma,\mathbb Z_n)\xrightarrow{g_\ast} C^{\ast}(\overline\Sigma,\mathbb Z_d)\rightarrow 0.
\end{align*}
The above short exact sequence induces the following exact sequence
$$H^1(\overline\Sigma,\mathbb Z_n)\xrightarrow{g_\ast} H^1(\overline\Sigma,\mathbb Z_d)\rightarrow H^2(\overline\Sigma, d\mathbb Z_n).$$
According to Poincar\'e–Lefschetz duality, we have 
$H^2(\overline\Sigma, d\mathbb Z_n) \simeq H_0(\overline\Sigma,\partial \overline\Sigma; d\mathbb Z_n) = 0$, where $H_0(\overline\Sigma,\partial \overline\Sigma; d\mathbb Z_n)$ is the relative homology group.
Since $g^h_\ast\colon H^1(\overline\Sigma,\mathbb Z_n)\rightarrow H^1(\overline\Sigma,\mathbb Z_d)$ is surjective, so is $g^z_\ast\colon Z^1(\overline\Sigma,\mathbb Z_n)\rightarrow Z^1(\overline\Sigma,\mathbb Z_d)$ from \eqref{eq:coker}.
\end{proof}

\begin{rem}
    Lemmas \ref{lem5.3}, \ref{lem_short}, and Proposition \ref{prop5.4} work for all essentially bordered pb surfaces since there always exists a triangulation such that it has no self-folded triangle. 
\end{rem}

\begin{lem}\label{lem5.8}
    We have $$\displaystyle \left|\frac{\Lambda_{\lambda}}{\Lambda_{\lambda}\cap m'\mathbb Z^{V_{\lambda}}}\right| =d^{r(\Sigma)}m^{|V_{\lambda}|}.$$
\end{lem}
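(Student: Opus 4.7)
The plan is to combine Proposition~\ref{prop5.4} with Lemma~\ref{lem_short} to reduce the statement to a purely topological computation of $|Z^1(\overline\Sigma,\mathbb Z_d)|$. From Proposition~\ref{prop5.4} one already has
\[
\frac{|\Lambda_{\lambda}|}{|\Lambda_{\lambda}\cap m'\mathbb Z^{V_{\lambda}}|}=m^{|V_{\lambda}|}\cdot\frac{|Z^1(\overline \Sigma,\mathbb Z_n)|}{|Z^{1}(\overline\Sigma,\mathbb Z_n)_d|},
\]
and the short exact sequence from Lemma~\ref{lem_short} gives $|Z^1(\overline\Sigma,\mathbb Z_n)|=|Z^1(\overline\Sigma,\mathbb Z_n)_d|\cdot |Z^1(\overline\Sigma,\mathbb Z_d)|$. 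So the claim will follow once I establish the identity $|Z^1(\overline\Sigma,\mathbb Z_d)|=d^{r(\Sigma)}$.

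To compute $|Z^1(\overline\Sigma,\mathbb Z_d)|$, I will equip $\overline\Sigma$ with the CW structure coming from $\lambda$: the $0$-cells are the punctures of $\Sigma$, the $1$-cells are the edges of $\lambda$ together with the connected components of $\partial\Sigma$, and the $2$-cells are the triangles of $\lambda$. By the remark following Lemma~\ref{lem_short}, we may assume $\lambda$ has no self-folded triangles, so this is a bona fide CW decomposition. Using the standard cochain complex, the equalities $|Z^1|=|H^1|\cdot|B^1|$ and $|B^1|=|C^0|/|Z^0|$, combined with $|Z^0|=|H^0|$ (since there are no lower cochains), yield
\[
|Z^1(\overline\Sigma,\mathbb Z_d)|=|H^1(\overline\Sigma,\mathbb Z_d)|\cdot d^{V-1},
\]
where $V=\#\partial\Sigma$ is the number of punctures (equivalently the number of $0$-cells).

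Finally, since $\overline\Sigma$ is a connected compact oriented surface with $b\geq 1$ boundary components and genus $g$, its fundamental group is free of rank $2g+b-1$, so $|H^1(\overline\Sigma,\mathbb Z_d)|=d^{2g+b-1}$. Because $\Sigma$ has no interior punctures, $\chi(\Sigma)=\chi(\overline\Sigma)=2-2g-b$, and hence
\[
|Z^1(\overline\Sigma,\mathbb Z_d)|=d^{V+2g+b-2}=d^{\#\partial\Sigma-\chi(\Sigma)}=d^{r(\Sigma)},
\]
which will complete the proof. No step presents a genuine obstacle; the only subtlety is choosing the CW structure so that the vertex count is exactly $\#\partial\Sigma$, which is precisely what the ideal triangulation $\lambda$ (without self-folded triangles) provides.
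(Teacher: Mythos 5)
Your proof is correct and follows essentially the same route as the paper: it reduces, via Proposition~\ref{prop5.4} and Lemma~\ref{lem_short}, to the single count $|Z^1(\overline\Sigma,\mathbb Z_d)|=d^{r(\Sigma)}$. The only difference is that the paper obtains this last count by citing the analogue of Equation (72) in \cite{Yu23}, whereas you verify it directly from the cellular cochain complex ($|Z^1|=|H^1|\,|C^0|/|Z^0|$ together with $|H^1(\overline\Sigma,\mathbb Z_d)|=d^{2g+b-1}$ and the puncture count $\#\partial\Sigma$), which is a correct, self-contained substitute.
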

\begin{proof}
    From Proposition \ref{prop5.4}, and Lemma \ref{lem_short}, we have
$$\left|\displaystyle \frac{\Lambda_{\lambda}}{\Lambda_{\lambda}\cap m'\mathbb Z^{V_{\lambda}}}\right| = m^{|V_{\lambda}|}\left|\frac{Z^1(\overline \Sigma,\mathbb Z_n)}{Z^{1}(\overline\Sigma,\mathbb Z_n)_d}\right|=m^{|V_{\lambda}|}|Z^1(\overline\Sigma,\mathbb Z_d)|.$$
From a similar formula to Equation (72)  in \cite{Yu23}, we have $|Z^1(\overline\Sigma,\mathbb Z_d)|= d^{r(\Sigma)}$.
\end{proof}

\begin{rem}\label{rem_partial}
Proposition~\ref{prop:LY23_11.10} implies there is a group isomorphism $\varphi\colon\mathbb Z^{V_{\lambda}'}\rightarrow \Lambda_{\lambda}$, defined by $\varphi(\textbf{k}) = \textbf{k}\sfK_{\lambda}$ for $\textbf{k}\in \mathbb Z^{V_{\lambda}'}$.
Then $\left|\dfrac{\mathbb Z^{V_{\lambda}'}}{\Lambda_z}\right| =
\left|\dfrac{\Lambda_{\lambda}}{\varphi(\Lambda_z)}\right|,$
where $\Lambda_z = \Lambda_{m'}+\langle\Lambda_{\partial}\rangle$ and $\langle\Lambda_{\partial}\rangle$ is the subgroup of $\mathbb Z^{V_{\lambda}'}$ generated by $\Lambda_{\partial}$ (see \eqref{eq-balanced-boundary-A-version}). From the definition of $\Lambda_{m'}$, we have $\varphi(\Lambda_z) = (\Lambda_{\lambda}\cap m'\mathbb Z^{V_{\lambda}})+\varphi(\langle\Lambda_{\partial}\rangle)$. 

Let us look at what $\varphi(\langle\Lambda_{\partial}\rangle)$ is. If there is no even boundary component, we have $\varphi(\langle\Lambda_{\partial}\rangle)= 0$. Suppose $\partial_1,\cdots,\partial_t$ are even boundary components. For any element $\textbf{k}\in \langle\Lambda_{\partial}\rangle$,
we write $\mathbf{k} = (\tfk_1,\tfk_2)\in \bZ^{V_{\lambda}}$ with $\tfk_1\in \bZ^{\obVlast}$ and $\tfk_2\in \bZ^{U}$. 
From the definition of $\Lambda_{\partial}$, we have $\tfk_1=\bm{0}$ and
$\textbf{k}_2|_{\partial} =\bm{0}$ if $\partial$ is a boundary component with odd number of boundary punctures. For $1\leq i\leq t$, we have $\textbf{d}_i:=\textbf{k}_2|_{\partial_i} = (-\alpha_i,\alpha_i,\cdots,-\alpha_i,\alpha_i)$,  
where $\alpha_i\in\bZ^{n-1}$. Suppose $\varphi(\tfk) = (\textbf{u}_1,\textbf{u}_2)$, where $\textbf{u}_1\in \bZ^{\obVlast}$ and $\textbf{u}_2\in \bZ^{W}$. Then 
$(\textbf{u}_1,\textbf{u}_2)$ satisfies Equation \eqref{eq_key} since $a^{\tfk}$ is in the center of $\A$. See Theorem \ref{lem5.1}. 
Equation \eqref{eq_K} and Lemma \ref{matrixK} imply 
$\textbf{u}_2 = (\textbf{d}_1L_1,\cdots,\textbf{d}_tL_t,0,\cdots,0)$.
From Equation \eqref{eq_L}, we have $\textbf{d}_iL_i= (2\alpha_i G,-2\alpha_i G,\cdots,-2\alpha_i G,2\alpha_i G)$. 
\end{rem}

\begin{lem}\label{lem5.10}
Suppose $\overline\Sigma$ contains $t$ boundary components with even number of boundary punctures. Then we have 
$$\left|\dfrac{\varphi(\Lambda_z)}{\Lambda_{\lambda}\cap m'\mathbb Z^{V_{\lambda}}}\right|=\left| \frac{(\Lambda_{\lambda}\cap m'\mathbb Z^{V_{\lambda}})+\varphi(\langle\Lambda_{\partial}\rangle)}{\Lambda_{\lambda}\cap m'\mathbb Z^{V_{\lambda}}}\right|=(m')^tm^{t(n-2)}.$$
\end{lem}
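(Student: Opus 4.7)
The plan is to reduce the quotient to a per-boundary-component calculation and then apply the explicit formulas already established in Section~\ref{sub_center}. First, since $\varphi:\mathbb{Z}^{V'_\lambda}\to\Lambda_\lambda$ is a group isomorphism (Proposition~\ref{prop:LY23_11.10}) carrying $\Lambda_{m'}$ onto $\Lambda_\lambda\cap m'\mathbb{Z}^{V_\lambda}$, the second isomorphism theorem gives
$$\left|\frac{(\Lambda_\lambda\cap m'\mathbb{Z}^{V_\lambda})+\varphi(\langle\Lambda_\partial\rangle)}{\Lambda_\lambda\cap m'\mathbb{Z}^{V_\lambda}}\right|=\left|\frac{\langle\Lambda_\partial\rangle}{\langle\Lambda_\partial\rangle\cap\Lambda_{m'}}\right|.$$
So I only need to count the index, inside $\langle\Lambda_\partial\rangle\cong\bigoplus_{i:\,r_i\text{ even}}\mathbb{Z}^{n-1}$, of the subgroup of $\mathbf{k}$ with $\varphi(\mathbf{k})\equiv\mathbf{0}\pmod{m'}$.

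The second step is the key reduction: for $\mathbf{k}\in\langle\Lambda_\partial\rangle$, writing $\varphi(\mathbf{k})=(\mathbf{u}_1,\mathbf{u}_2)\in\mathbb{Z}^{\obVlast}\oplus\mathbb{Z}^W$, I claim $\varphi(\mathbf{k})\equiv\mathbf{0}\pmod{m'}$ is equivalent to $\mathbf{u}_2\equiv\mathbf{0}\pmod{m'}$ alone. The forward direction is immediate. For the converse, note that $\langle\Lambda_\partial\rangle\subset\Lambda_z$, so by Lemma~\ref{lem5.1}(a) the monomial $a^{\mathbf{k}}$ is central in $\A$. Using $\sfP_\lambda=\sfK_\lambda\sfQ_\lambda\sfK_\lambda^T$ and antisymmetry, centrality forces $\sfK_\lambda\sfQ_\lambda\,\varphi(\mathbf{k})^T\equiv\mathbf{0}\pmod{m''}$. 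Reading the top block via \eqref{KQ} yields $-2n\mathbf{u}_1^T+(D+C_1A)\mathbf{u}_2^T\equiv\mathbf{0}\pmod{m''}$. Since every entry of $D+C_1A$ lies in $n\mathbb{Z}$ by Lemma~\ref{matrixDC} and $\gcd(n,m'')=d'$, dividing through by $n$ descends this congruence to $2\mathbf{u}_1^T\equiv\frac{1}{n}(D+C_1A)\mathbf{u}_2^T\pmod{m'}$, where $\frac{1}{n}(D+C_1A)$ is an honest integer matrix. If $\mathbf{u}_2\equiv\mathbf{0}\pmod{m'}$ the right-hand side vanishes mod $m'$, and because $m'$ is odd, $2$ is invertible mod $m'$, whence $\mathbf{u}_1\equiv\mathbf{0}\pmod{m'}$.

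Third, I decode the condition $\mathbf{u}_2\equiv\mathbf{0}\pmod{m'}$ per even boundary. By Lemma~\ref{matrixK} the $\partial_i$-block of $\mathbf{u}_2$ is $\mathbf{d}_{\partial_i}L_i$, which by Remark~\ref{rem_partial} equals $(\pm 2\alpha_iG,\mp 2\alpha_iG,\ldots)$ for a unique $\alpha_i\in\mathbb{Z}^{n-1}$ (the parameters identifying $\langle\Lambda_\partial\rangle$ with $\bigoplus_i\mathbb{Z}^{n-1}$). Oddness of $m'$ reduces the divisibility to $\alpha_iG\equiv\mathbf{0}\pmod{m'}$. From $G=EF$ (Lemma~\ref{matrixG}) and the observation that $E$ is lower triangular with $1$s on the diagonal, hence invertible over $\mathbb{Z}$, I substitute $\beta_i:=\alpha_iE$ and reduce to $\beta_iF\equiv\mathbf{0}\pmod{m'}$. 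Writing $\beta_i=(b_1,\ldots,b_{n-1})$ and reading \eqref{matrixEF}, the $(n-1)$-th coordinate equation is $b_1\equiv 0\pmod{m'}$; combined with this, the remaining equations collapse to $nb_{j+1}\equiv 0\pmod{m'}$ for $j=1,\ldots,n-2$, which by $d=\gcd(n,m')$ and $m=m'/d$ amounts to $b_{j+1}\equiv 0\pmod{m}$. The solution subgroup is therefore $m'\mathbb{Z}\oplus(m\mathbb{Z})^{n-2}$, of index $m'\cdot m^{n-2}$ in $\mathbb{Z}^{n-1}$. Taking the product over the $t$ even boundary components yields $(m')^t m^{t(n-2)}$, completing the proof.

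The main obstacle is the reduction in the second paragraph: propagating the $m'$-divisibility from the $W$-block back to the $\obVlast$-block. This genuinely uses both centrality of $a^{\mathbf{k}}$ (to get the bilinear constraint from \eqref{KQ}) and the specific arithmetic input that $m''$, hence $m'$, is odd (so that $2$ is invertible modulo $m'$). Once this step is in place, the remainder is elementary linear algebra over $\mathbb{Z}/m'\mathbb{Z}$ reading off the structure of $F$.
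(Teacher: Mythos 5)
Your proof is correct and follows essentially the same route as the paper's: reduce via the isomorphism $\varphi$ to an index computation in $\langle\Lambda_\partial\rangle$, propagate $m'$-divisibility from the $W$-block to the $\obVlast$-block using equation~\eqref{eq_key} together with Lemma~\ref{matrixDC} and oddness of $m''$, then diagonalize $\alpha_i G\equiv\mathbf{0}$ via the unimodular factorization $G=EF$. The only differences are presentational (second isomorphism theorem and direct index counting in place of the paper's explicit map $\theta$ and its appeal to Lemma~\ref{lem;Im_mu}), and you spell out the arithmetic in the propagation step somewhat more explicitly than the paper does.
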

\begin{proof}
    If $t=0$, it is obvious. Suppose $t>0$,  these even boundary components are $\partial_1,\partial_2,\cdots,\partial_t$. 

Obviously, we have 
$$\left| \frac{(\Lambda_{\lambda}\cap m'\mathbb Z^{V_{\lambda}})+\varphi(\langle\Lambda_{\partial}\rangle)}{\Lambda_{\lambda}\cap m'\mathbb Z^{V_{\lambda}}}\right| =
\left| \{x+m' \mathbb Z^{V_{\lambda}}\mid x\in \varphi(\langle\Lambda_{\partial}\rangle)\}\right|,$$
where $\{x+m' \mathbb Z^{V_{\lambda}}\mid x\in \varphi(\langle\Lambda_{\partial}\rangle)\}$ is a subset of $\frac{\mathbb Z^{V_{\lambda}}}{m'\mathbb Z^{V_{\lambda}}}$ (it is actually a subgroup of $\frac{\mathbb Z^{V_{\lambda}}}{m'\mathbb Z^{V_{\lambda}}}$). 

In the rest of the proof, we follow the notations in Remark \ref{rem_partial}.
Define $$\mu\colon\mathbb Z_{m'}^{n-1}\rightarrow \mathbb Z_{m'}^{n-1},\;\mu(\textbf{p}) = 2\textbf{p} G.$$ For any element $\textbf{u}=(\textbf{u}_1,\textbf{u}_2)\in \varphi(\langle\Lambda_{\partial}\rangle)$, we have $\textbf{u}_2 = (\textbf{d}_1L_1,\cdots,\textbf{d}_tL_t,0,\cdots,0)$, where $\textbf{d}_iL_i= (2\alpha_i G,-2\alpha_i G,\cdots,-2\alpha_i G,2\alpha_i G)$.
Define $\theta(\textbf{u}) = (2\alpha_1 G,\cdots,2\alpha_t G)\in (\im\mu)^t$. It is easy to see $\theta$ is a well-defined surjective group homomorphism from $\{x+m' \mathbb Z^{V_{\lambda}}\mid x\in \varphi(\langle\Lambda_{\partial}\rangle)\}$ to $(\im\mu)^t$. Suppose $\theta(\textbf{u}) = 0$. Then $2\alpha_i G=\bm{0}$ in $\mathbb Z_{m'}$. Thus we have $\textbf{u}_2=\bm{0}$ in $\mathbb Z_{m'}$. Since $(\textbf{u}_1,\textbf{u}_2)$ satisfies equation \eqref{eq_key}, we have $\textbf{u}_1=\bm{0}$ in $\mathbb Z_{m'}$. Thus $\theta$ is a group isomorphism. This means 
$\left| \{x+m' \mathbb Z^{V_{\lambda}}\mid x\in \varphi(\langle\Lambda_{\partial}\rangle)\}\right| = |(\im\mu)^t| = |\im\mu|^t$. Then Lemma \ref{lem;Im_mu} completes the proof.
\end{proof}

\begin{lem}\label{lem;Im_mu}
For $\mu\colon\mathbb Z_{m'}^{n-1}\rightarrow \mathbb Z_{m'}^{n-1},\ \mathbf{p}\mapsto 2\mathbf{p} G$, we have $|\im\mu| = m'm^{n-2}$.
\end{lem}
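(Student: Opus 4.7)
The plan is to reduce the computation of $|\im \mu|$ to the size of the kernel of a much simpler linear map on $\mathbb{Z}_{m'}^{n-1}$.

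First, since we assume $m''$ is odd (Condition $(\ast)$) and $m'$ divides $m''$, the integer $m'$ is odd. Hence $2$ is a unit in $\mathbb{Z}_{m'}$, so multiplication by $2$ is an automorphism of $\mathbb{Z}_{m'}^{n-1}$; this reduces us to computing the size of the image of $\mathbf{p}\mapsto \mathbf{p}G$. Next, by Lemma~\ref{matrixG} we have $G=EF$, and the matrix $E$ of \eqref{matrixEF} is lower-triangular with all diagonal entries equal to $1$, so $\det E=1$. Therefore $E$ is invertible over $\mathbb{Z}$ and a fortiori over $\mathbb{Z}_{m'}$, so $\mathbf{p}\mapsto \mathbf{p}E$ is an automorphism of $\mathbb{Z}_{m'}^{n-1}$. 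Combining these two reductions,
\[
|\im \mu| \;=\; |\im(\mathbf{p}\mapsto \mathbf{p}F)| \;=\; \frac{m'^{\,n-1}}{|\ker(\mathbf{p}\mapsto \mathbf{p}F)|}.
\]

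It remains to compute $|\ker(\mathbf{p}\mapsto \mathbf{p}F)|$. Reading off \eqref{matrixEF}, the only nonzero entries of $F$ are $F_{1j}=n-j$ for $1\le j\le n-1$ and $F_{j+1,j}=-n$ for $1\le j\le n-2$. A direct computation therefore gives $(\mathbf{p}F)_{n-1}=p_1$ and $(\mathbf{p}F)_j = p_1(n-j)-np_{j+1}$ for $1\le j\le n-2$. The condition $\mathbf{p}F\equiv 0\pmod{m'}$ is thus equivalent to $p_1\equiv 0$ together with $np_{j+1}\equiv 0\pmod{m'}$ for $j=1,\dots, n-2$. Since $m'$ is odd, $\gcd(n,m')=\gcd(2n,m')=d$, and the equation $np\equiv 0\pmod{m'}$ has exactly $d$ solutions in $\mathbb{Z}_{m'}$. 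Hence $|\ker(\mathbf{p}\mapsto\mathbf{p}F)|=d^{n-2}$.

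Putting everything together, and using $m'=md$,
\[
|\im\mu|\;=\;\frac{m'^{\,n-1}}{d^{n-2}}\;=\;\frac{(md)^{n-1}}{d^{n-2}}\;=\;m^{n-1}d\;=\;m'\,m^{n-2},
\]
which is the asserted formula. There is no real obstacle in this argument; the only subtlety is noting that $m'$ being odd simultaneously makes $2$ invertible in $\mathbb{Z}_{m'}$ and forces $\gcd(n,m')=d$, so that both reductions and the final kernel count go through cleanly.
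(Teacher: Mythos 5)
Your argument is correct and follows the paper's own proof essentially verbatim: reduce $\mu$ to $\mathbf{p}\mapsto\mathbf{p}F$ using the invertibility of $2$ and of $E$ (via $G=EF$ and $\det E=1$), compute $|\ker(\mathbf{p}\mapsto\mathbf{p}F)|=d^{n-2}$ from the explicit form of $F$, and divide. Your explicit remarks that $m'$ odd makes $2$ invertible and forces $\gcd(n,m')=\gcd(2n,m')=d$ are exactly the (implicit) facts the paper relies on, so there is nothing to add.
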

\begin{proof}
Recall $G=EF$ (Lemma \ref{matrixG}). Since $2$ and $\det{E}=1$ are invertible in $\mathbb{Z}_{m'}$, we can regard $\mu$ as a map defined by $\mu(\textbf{p}) =\textbf{p} F$ up to isomorphism. Suppose $\textbf{p}=(p_1,\cdots,p_{n-1})$. Then $$\mu(\textbf{p})  = ((n-1)p_1-np_2,(n-2)p_1-np_3,\cdots,2p_1-np_{n-1},p_1) =\bm{0}$$ implies 
$p_1= 0$ and $ np_{i} = 0$, $2\leq i\leq n-1$. Since $\gcd(n,m')=d$, we have $|\kernel \mu| = d^{n-2}$. Then 
$|\im\mu|=\dfrac{(m')^{n-1}}{|\kernel \mu|} = m'(\frac{m'}{d})^{n-2}=m'm^{n-2}$.
\end{proof}

\begin{thm}\label{thm:rank}
Let $\Sigma$ be a triangulable essentially
bordered pb surface without interior punctures, $\lambda$ be a triangulation of $\Sigma$, and $r(\Sigma) := \# (\partial \Sigma) - \chi(\Sigma)$, where $\chi(\Sigma)$ denotes the Euler characteristic of $\Sigma$.
Suppose $\overline\Sigma$ contains $t$ even boundary components. 
We have $$\rankZ \cS_n(\Sigma,\mathbbm{v})=\rankZ \A= d^{r(\Sigma)-t}m^{(n^2-1)r(\Sigma)-t(n-1)}, $$
where $d,m$ are defined in Section~\ref{notation}.
\end{thm}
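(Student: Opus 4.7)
The plan is to combine the earlier reductions in a single arithmetic computation. By Proposition~\ref{rank_eq}, it suffices to compute $\rankZ\A$, and by Lemma~\ref{lem5.1}(b) this equals $|\mathbb Z^{V_\lambda'}/\Lambda_z|$. I would first transport the problem to the lattice $\Lambda_\lambda$ via the isomorphism $\varphi\colon \mathbb Z^{V_\lambda'}\to \Lambda_\lambda$, $\mathbf{k}\mapsto \mathbf{k}\sfK_\lambda$, from Remark~\ref{rem_partial} (which uses $\sfK_\lambda\sfH_\lambda=nI$ and Proposition~\ref{prop:LY23_11.10}). This gives
\begin{equation*}
\left|\frac{\mathbb Z^{V_\lambda'}}{\Lambda_z}\right|=\left|\frac{\Lambda_\lambda}{\varphi(\Lambda_z)}\right|.
\end{equation*}

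Next, I would factor this quotient through $\Lambda_\lambda\cap m'\mathbb Z^{V_\lambda}$, which lies inside $\varphi(\Lambda_z)=(\Lambda_\lambda\cap m'\mathbb Z^{V_\lambda})+\varphi(\langle\Lambda_\partial\rangle)$ by the explicit description of $\varphi(\Lambda_z)$ in Remark~\ref{rem_partial}. This yields
\begin{equation*}
\left|\frac{\Lambda_\lambda}{\varphi(\Lambda_z)}\right|=\frac{|\Lambda_\lambda|/|\Lambda_\lambda\cap m'\mathbb Z^{V_\lambda}|}{|\varphi(\Lambda_z)/(\Lambda_\lambda\cap m'\mathbb Z^{V_\lambda})|}.
\end{equation*}
Now I can substitute directly: Lemma~\ref{lem5.8} gives the numerator as $d^{r(\Sigma)} m^{|V_\lambda|}$, and Lemma~\ref{lem5.10} gives the denominator as $(m')^t m^{t(n-2)}$. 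Using $|V_\lambda|=(n^2-1)r(\Sigma)$ from Lemma~\ref{lem:cardinarity} and the identity $m'=dm$, a direct computation gives
\begin{equation*}
\frac{d^{r(\Sigma)} m^{(n^2-1)r(\Sigma)}}{d^t m^{t} m^{t(n-2)}}=d^{r(\Sigma)-t} m^{(n^2-1)r(\Sigma)-t(n-1)},
\end{equation*}
matching the stated formula.

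There is no essential obstacle left at this stage — all the hard work has been done in the preceding sections. The only things to double-check are that the decomposition of $\varphi(\Lambda_z)$ in Remark~\ref{rem_partial} genuinely identifies $\varphi(\Lambda_z)/(\Lambda_\lambda\cap m'\mathbb Z^{V_\lambda})$ with the image of $\varphi(\langle\Lambda_\partial\rangle)$ modulo $m'$, and that the exponent bookkeeping $m'=dm$ is applied consistently. Once these are in place, the proof reduces to the elementary calculation above, together with invoking Proposition~\ref{rank_eq} at the end to transfer the equality from $\A$ to $\cS_n(\Sigma,\mathbbm{v})$.
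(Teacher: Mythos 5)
Your proposal is correct and follows essentially the same route as the paper: reduce to $\rankZ\A$ via Proposition~\ref{rank_eq}, identify it with $\left|\Lambda_{\lambda}/\varphi(\Lambda_z)\right|$ using Lemma~\ref{lem5.1} and Remark~\ref{rem_partial}, and then divide the index $\left|\Lambda_{\lambda}/(\Lambda_{\lambda}\cap m'\mathbb Z^{V_{\lambda}})\right|$ from Lemma~\ref{lem5.8} by the index from Lemma~\ref{lem5.10}, using $|V_\lambda|=(n^2-1)r(\Sigma)$ and $m'=dm$. Your arithmetic and the containment $\Lambda_{\lambda}\cap m'\mathbb Z^{V_{\lambda}}\subset\varphi(\Lambda_z)$ needed for the multiplicativity of indices both check out, so there is nothing to add.
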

\begin{proof}
Proposition \ref{rank_eq} claims $\rankZ \cS_n(\Sigma,\mathbbm{v})=\rankZ \A$.

From Lemma \ref{lem5.1} and Remark \ref{rem_partial}, we have 
$$\rankZ\A=\left|\dfrac{\mathbb Z^{V_{\lambda}'}}{\Lambda_z}\right|=\left|\dfrac{\Lambda_{\lambda}}{\varphi(\Lambda_z)}\right|.$$
We have 
$$\left|\frac{\Lambda_{\lambda}}{\Lambda_{\lambda}\cap m'\mathbb Z^{V_{\lambda}}}\right| = \left|\dfrac{\Lambda_{\lambda}}{\varphi(\Lambda_z)}\right| \left|\dfrac{\varphi(\Lambda_z)}{\Lambda_{\lambda}\cap m'\mathbb Z^{V_{\lambda}}}\right|$$
Lemmas \ref{lem5.8} and \ref{lem5.10} imply 
$\rankZ \A= d^{r(\Sigma)-t}m^{|V_{\lambda}|-t(n-1)}$, 
where $|V_{\lambda}|=(n^2-1)r(\Sigma)$ shown in \cite[Lemma 11.2]{LY23}.  

\end{proof}

\begin{rem}
When $d=1$, the rank in Theorem \ref{thm:rank} recovers Theorem 5.3 in \cite{Yu23}.
\end{rem}

\subsection{Proof of Lemma \ref{lem5.3}}\label{sec;pf_lem}
\begin{proof}
The triangulation $\lambda$ gives a cell decomposition of $\overline\Sigma$. We orient the $1$-simplices so that the orientations of the $1$-simplices contained in $\partial \overline \Sigma$ match with that of $\overline\Sigma$. The orientation of $\overline\Sigma$ gives the orientations of the $2$-simplices. 
For every $1$-simplex $e$, we label the vertices of the $n$-triangulation in $e$ as $v_1^{e},v_2^{e},\cdots,v_{n-1}^{e}$  consecutively using the orientation of $e$ such that the orientation of $e$ is given from $v_1^{e}$ to $v_{n-1}^e$. 

First, we will construct a well-defined group homomorphism $J$.
For any $\mathbf{k}\in \Lambda_{\lambda}\cap k\mathbb Z^{V_{\lambda}}$, Lemma \ref{balance} implies there exists $s^{\mathbf{k}}_e\in\mathbb Z$ such that
$s^{\mathbf{k}}_e(1,2,\cdots,n-1) = (\mathbf{k}(v_1^e),\cdots, \mathbf{k}(v_{n-1}^e)) \in \mathbb Z_n$ (note that $s^{\mathbf{k}}_e$ is unique as an element in $\mathbb Z_n$).
Since $\mathbf{k}\in k\mathbb Z^{V_{\lambda}}$ and $\gcd(k,n)=l$, $s^{\mathbf{k}}_e$ is a multiple of $l$, especially $s^{\mathbf{k}}_e\in l\mathbb Z_n\subset \bZ_n$.  
Let $s^{\mathbf{k}} \in C^1(\overline\Sigma,\mathbb Z_n)$ be an element such that every 1-simplex $e$ is assigned with $s^{\mathbf{k}}_e\in l\mathbb Z_n$. From the above discussion, we know $s^{\mathbf{k}}$ is a well-defined element in $lC^1(\overline\Sigma,\mathbb Z_n)$. Then the remaining part is to show $s^{\mathbf{k}}\in Z^1(\overline\Sigma,\mathbb Z_n)$. Suppose $\tau$ is a 2-simplex, and $e_1,e_2,e_3$ are the three 1-simplices which bound $\tau$. See the left picture in Figure \ref{Fig;coord_ijk}. Assume the orientation $e_2$ matches with that of $\tau$ and the orientations of $e_1,e_3$ do not match with that of $\tau$.
In the $n$-triangulation of $\tau$, we have $v_i^{e_1} = (n-i, i,0),\; v_i^{e_2} = (0,i,n-i),\; v_i^{e_3} = (i,0,n-i)$. 
Since $\mathbf{k}$ is balanced, the restriction $\mathbf{k}|_{\tau}$ of $\mathbf{k}$ to $\tau$ equals $x_1\mathbf{pr}_1+x_2\mathbf{pr}_2+x_3\mathbf{pr}_3$ in $\mathbb Z_n$. Then 
\begin{align*}
    \mathbf{k}(v_i^{e_1}) = (x_2-x_1)i\in \mathbb Z_n,\;
    \mathbf{k}(v_i^{e_2}) = (x_2-x_3)i\in \mathbb Z_n,\;
    \mathbf{k}(v_i^{e_3}) = (x_1-x_3)i\in \mathbb Z_n.
\end{align*}
Thus 
$$s^{\mathbf{k}}_{e_1} = x_2-x_1\in \mathbb Z_n,\; s^{\mathbf{k}}_{e_2} = x_2-x_3\in \mathbb Z_n,\; s^{\mathbf{k}}_{e_3} = x_1-x_3\in \mathbb Z_n.$$
Since $-s^{\mathbf{k}}_{e_1} +s^{\mathbf{k}}_{e_2}-s^{\mathbf{k}}_{e_3} = 0\in \mathbb Z_n$, we have $s^{\mathbf{k}}\in Z^1(\overline\Sigma,\mathbb Z_n)$. Then define
$$J\colon\Lambda_{\lambda}\cap k\mathbb Z^{V_{\lambda}} \rightarrow Z^1(\overline \Sigma,\mathbb Z_n)_l,\quad \mathbf{k}\mapsto s^{\mathbf{k}}.$$
Thus $J$ is a well-defined group homomorphism.

Next, we will show the surjectivity of $J$.
Note that every element in $C^1(\overline\Sigma,\mathbb Z_n)$ is represented by a map from the set of all the 1-simplices to $\mathbb Z_n$. Suppose $c\in Z^1(\overline\Sigma,\mathbb Z_n)_l$. For any 1-simplex $e$, we choose $t_e\in \mathbb Z$ such that $c(e) = t_e\in\mathbb Z_n$. Since $c\in lC^1(\overline\Sigma,\mathbb Z_n)$, we have $t_e$ is a multiple of $l$ for each 1-simplex $e$. For each 2-simplex $\tau$, suppose $\tau$, $e_1,\;e_2,\;e_3$ look like in the left picture in Figure \ref{Fig;coord_ijk}. Assume the orientation $e_2$ is the one induced from $\tau$ and the orientation of $e_1,e_3$ is the one opposite to the orientation induced from $\tau$. 
    Since $c\in Z^1(\overline\Sigma,\mathbb Z_n)$, we have $-t_{e_1}+t_{e_2}-t_{e_3} = 0\in \mathbb Z_n$. 
    Set $y_1 = -t_{e_1}$, $y_2 = 0$, $y_3 = -t_{e_2}$. 
    Then we have the following equations in $\mathbb Z_n$:
    \begin{align}\label{eqy}
        y_2-y_1 = t_{e_1},\; y_2-y_3 = t_{e_2},\; y_1-y_3=t_{e_3}.
    \end{align}
    Note that each $y_i$ is a multiple of $l$. Since $\gcd(n/l,k/l) = 1$, the equation $\frac{y_1}{l}\mathbf{pr}_1+\frac{y_2}{l}\mathbf{pr}_2+\frac{y_3}{l}\mathbf{pr}_3+\frac{n}{l}\mathbf{x} = 0\in \mathbb Z_{\frac{k}{l}}$ has a unique solution in $\mathbb Z_{\frac{k}{l}}$. Then there exists $\mathbf{a}_{\tau}\in\mathbb Z^{\overline V_{\tau}}$ such that 
    $\frac{y_1}{l}\mathbf{pr}_1+\frac{y_2}{l}\mathbf{pr}_2+\frac{y_3}{l}\mathbf{pr}_3+\frac{n}{l}\mathbf{a}_{\tau} = 0 \in \mathbb Z_{\frac{k}{l}}$. Then we have     $y_1\mathbf{pr}_1+y_2\mathbf{pr}_2+y_3\mathbf{pr}_3+n\mathbf{a}_{\tau} = 0 \in \mathbb Z_{k}$. For each 2-simplex $\tau$, define 
    $\mathbf{k}_{\tau} = y_1\mathbf{pr}_1+y_2\mathbf{pr}_2+y_3\mathbf{pr}_3+n\mathbf{a}_{\tau}\in k\mathbb Z^{\overline{V}_{\tau}}$. Then equation \eqref{eqy} implies we have the following equations in $\mathbb Z_n$:
\begin{align}\label{eq_edge}
    \mathbf{k}_{\tau}(v_i^{e_j}) = (-1)^j(y_j-y_{j+1})i=c(e_j)i\ (j=1,2,3)
\end{align}
where the indices are $\bZ_3$-cyclic. 

For each attached triangle $\tau$, we label some vertices in $\tau$ as Figure \ref{Fig;coord_uvw}. 
Let us recall the coordinates for these vertices
$$u_i=(i,0,n-i),\quad v_i=(n-i,i,0),\quad w_i=(0,i,n-i).$$
Suppose $\tau$ is attached to the 1-simplex $e$.
Since $\gcd(\frac{n}{l},\frac{k}{l}) = 1$, there exist $r,s\in\mathbb Z$ such that $r\frac{n}{l}+s\frac{k}{l} = 1$.
Define $\mathbf{f} = -r\frac{t_e}{l}\mathbf{pr}_2$. 
Then 
$\frac{t_e}{l}\mathbf{pr}_2+\frac{n}{l} \mathbf{f} = 0\in \mathbb{Z}_{\frac{k}{l}}$. Set $\mathbf{k}_{\tau} = t_e\mathbf{pr}_2+ n\mathbf{f}$. Then $\mathbf{k}_{\tau} =\bm{0}\in \mathbb Z_{k}$,  $\mathbf{k}_{\tau}(u_i) = 0$ and $\mathbf{k}_{\tau}(v_i) = \mathbf{k}_{\tau}(v_i^e) = t_e i = c(e) i$ for 
    $1\leq i\leq n-1$. 

    We will construct $\mathbf{g}\in \mathbb Z^{\overline{V}_{\lambda}}$ whose restriction on each triangle $\tau$ is $\mathbf{k}_{\tau}$. For each 1-simplex $e$, suppose the triangle $\tau$ (resp. $\tau'$) is on your right (resp. left) if you walk along $e$ (we consider the triangulation $\lambda^\ast$ of $\Sigma^\ast$ when $e$ is contained in $\partial\overline\Sigma$).
    Note that $\tau'$ can never be the attached triangle.
   Equation \eqref{eq_edge} implies 
    $$(\mathbf{k}_{\tau}(v_1^{e}),\cdots,\mathbf{k}_{\tau}(v_{n-1}^{e}))=(\mathbf{k}_{\tau'}(v_1^{e}),\cdots,\mathbf{k}_{\tau'}(v_{n-1}^{e}))\in \mathbb Z_n.$$ We also have 
    $$(\mathbf{k}_{\tau}(v_1^{e}),\cdots,\mathbf{k}_{\tau}(v_{n-1}^{e}))=(\mathbf{k}_{\tau'}(v_1^{e}),\cdots,\mathbf{k}_{\tau'}(v_{n-1}^{e}))=\bm{0}\in \mathbb Z_k.$$
    Then we have 
    $$(\mathbf{k}_{\tau}(v_1^{e}),\cdots,\mathbf{k}_{\tau}(v_{n-1}^{e}))=(\mathbf{k}_{\tau'}(v_1^{e}),\cdots,\mathbf{k}_{\tau'}(v_{n-1}^{e}))\in \mathbb Z_N.$$
    Thus there exists $\mathbf{b}_e\in N\mathbb Z^{n-1}$ such that 
    $$(\mathbf{k}_{\tau}(v_1^{e}),\cdots,\mathbf{k}_{\tau}(v_{n-1}^{e}))=\mathbf{b}_e+(\mathbf{k}_{\tau'}(v_1^{e}),\cdots,\mathbf{k}_{\tau'}(v_{n-1}^{e})).$$
    Define $\mathbf{b}_{\tau'}\in \mathbb Z^{\overline{V}_{\tau'}}$ as $$\mathbf{b}_{\tau'}(v) = \begin{cases}
        \mathbf{b}_e(v_i^e)& v= v_i^e\\
        \bm{0} &\text{otherwise}.
    \end{cases}$$
    Then we define a new $\mathbf{k}_{\tau'}$ as $\mathbf{k}_{\tau'}+\mathbf{b}_{\tau'}$.
    We apply the above procedure consecutively to every  1-simplex. Then for each  1-simplex $e$, we have $\mathbf{k}_{\tau}$ and $\mathbf{k}_{\tau'}$ agree with each on $e$, where $\tau$ and $\tau'$ are the two triangles containing $e$. 
    Thus there exists $\mathbf{g}\in \mathbb Z^{\overline{V}_{\lambda}}$ such that $\mathbf{g}|_{\tau} = \mathbf{k}_{\tau}$ in $\mathbb Z_N$ for every triangle $\tau$ in $\lambda^{\ast}$. 
    From the construction of $\mathbf{g}$, we have $\mathbf{g}\in \Lambda_{\lambda}\cap k\mathbb Z^{V_{\lambda}}$ and $J(\mathbf{g}) = c$. Thus $J$ is surjective. 

Obviously, we have $J\circ L = 0$. Then, to show the exactness, it suffices to show $\kernel J\subset \im L$. 
Suppose $\mathbf{h}\in\kernel J$.  
For each 2-simplex $\tau$ in $\overline{\Sigma}$, suppose $\tau$, $e_1,\;e_2,\;e_3$ look like the left picture in Figure \ref{Fig;coord_ijk}. Assume the orientation $e_2$ is the one induced from $\tau$ and the orientation of $e_1,e_3$ is the one opposite to the orientation induced from $\tau$.
Since $\mathbf{h}$ is balanced, suppose $\mathbf{h}|_{\tau} = 
z_1\mathbf{pr}_1+z_2\mathbf{pr}_2+z_3\mathbf{pr}_3+n\mathbf{p}$. 
Since $J(\mathbf{h}) =\bm{0}$, we have $z_2-z_1 = z_2-z_3=z_1-z_3=0\in\mathbb Z_n$. Then for every $(ijk)\in\overline{V}_{\tau}$, we have 
$$\mathbf{h}|_{\tau}((ijk)) = 
z_1 i+z_2j+z_3k = (z_1-z_3) i+(z_2-z_3)j = 0 \in \mathbb Z_n.$$
Thus $\mathbf{h}|_{\tau}=\bm{0}\in\mathbb Z_n$.

Suppose $\tau$ is an attached triangle whose attaching edge is  $e$. Since $\mathbf{h}|_{\tau} = t_1\mathbf{pr}_1+t_2\mathbf{pr}_2+t_3\mathbf{pr}_3$ in   $\mathbb Z_n$, we have $\mathbf{h}(u_i) = t_1-t_3 = 0\in\mathbb Z_n$. 
Then 
$$\mathbf{h}|_{\tau} = t_1\mathbf{pr}_1+t_2\mathbf{pr}_2+t_1\mathbf{pr}_3 = 
t_1\mathbf{pr}_1+t_2\mathbf{pr}_2-t_1(\mathbf{pr}_1+\mathbf{pr}_2)
= (t_2-t_1)\mathbf{pr}_2\text{ in }\mathbb Z_n.$$
This implies that $\mathbf{h}(v_i^e) = \mathbf{h}(v_i) = (t_2-t_1)i\in\mathbb Z_n$ for $1\leq i\leq n-1$. Since $J(\mathbf{h}) =\bm{0}$, we have $t_2-t_1-0\in\mathbb Z_n$. Thus $\mathbf{h}|_{\tau} =\bm{0}$ in $\mathbb Z_n$.

From the above discussion, we know $\mathbf{h} =\bm{0}$ in $\mathbb Z_n$. We also have $\mathbf{h}\in k\mathbb Z^{V_{\lambda}}$. Then 
$\mathbf{h}\in N\mathbb Z^{V_{\lambda}}$. This shows $\mathbf{h}\in\im L$. 
\end{proof}

\section{On reduced stated $\SL(n)$-skein algebras}
In this section, we will formulate the center and PI-degree of the reduced stated $\SL(n)$-skein algebra when $\hat q$ is a root of unity. Although the strategy is similar to the non-reduced case, the boundary central elements need some "symmetric property", Lemma \ref{reduced-boundary_center} (b). So we have to deal with this symmetric property when we prove the center of the reduced stated $\SL(n)$-skein algebra. Not like the non-reduced case, the boundary central elements in Lemma \ref{reduced-boundary_center} do not need the assumption that $\hat q$ is a root of unity. 
Reduced stated $\SL(n)$-skein algebras are (potentially) related to quantum (higher) cluster algebras and understanding representation theory of reduced stated $\SL(n)$-skein algebras would be helpful that of quantum (higher) cluster algebras. See \cite{Mul16, LY22, IY23} for more details. 
Toward deeper understanding of representations of reduced stated $\SL(n)$-skein algebras related to Azumaya loci, see \cite{KK23} for example.

\subsection{Quantum torus matrices}\label{sub:quantum-torus-reduced}
Suppose that $\partial\overline{\Sigma}$ is labeled as $\partial_1,\partial_2,\dots,\partial_b$. 
For each boundary component of $\overline{\Sigma}$, label boundary punctures and boundary edges of $\Sigma$ on $\partial_i$ by $p_1, p_2,\dots, p_{r_i}$ and
$e_1, e_2,\dots, e_{r_i}$ in the orientation of $\partial \overline{\Sigma}$. 
Here the endpoints of $e_k$ $(k=1,2,\dots, r_i)$ are $p_k$ and $p_{k+1}$ with indices in $\bZ_{r_i}$. 

We consider an ideal triangulation of $\Sigma$ defined as follows. 
Suppose $r_i\geq 2$. 
We take ideal arcs $e_{2k-1,2k+1}$ $(k=1,2,\dots,\lfloor r_i/2\rfloor)$ connecting $p_{2k-1}$ and $p_{2k}$ which are relatively homotopic to a part of the boundary component, where $\lfloor \,\cdot\,\rfloor$ denotes the floor function.
Then $e_{2k-1},e_{2k},e_{2k-1,2k+1}$ forms an ideal triangle. 
We will regard the obtained triangles as attached triangles and use the coordinates defined in Section~\ref{sec:vertex}. 

When $r_i>1$ is odd, we additionally take an ideal arc $e_{r_i-2,1}$ connecting $p_{1}$ and $p_{r_i-2}$ so that $e_{r_i-2,1}, e_{r_i-1}, e_{r_i-2,r_i-1}$ forms an ideal triangle. 
Note that each boundary edge is a boundary of an ideal triangle bounded by some of the above ideal arcs and boundary edges. 
See, e.g. Figure \ref{Fig;arcs_bdr_odd}.

After the above procedure, we take additional ideal arcs to get an ideal triangulation of $\Sigma$, denoted by $\mu$. 
By regarding the triangle bounded by $e_{2k-1},e_{2k},e_{2k-1,2k+1}$ as an attached triangle, let $W$ (resp. $U$) denote the set of all the small vertices $w_j$ (resp. $u_j$) on $\partial\overline{\Sigma}$ with the same indices in Figure \ref{Fig;coord_uvw}. 
When $r_i$ is odd, we label the small vertices on $e_{r_i}$ by $a_1,a_2,\dots,a_{n-1}$ as in Figures~\ref{Fig:r=odd}, \ref{Fig:r=1}.

When $r_i>1$,
we will define the sets $W_i$ and $U_i$. 
Let $W_i$ denote the set of the small vertices in $W$ which are on $\partial_i$. 
Each $w\in W_i$ is determined by $(e_j,w_k)$, where $w$ is the small vertex with the coordinate $w_k$ on the edge $e_j$ in $\partial_i$. 
We identify $(e_j,w_k)$ with $(r_i-j,n-k)$. 
Consider the lexicographic order on $W_i$ with respect to $(r_i-j,n-k)$ and suppose that $W$ is equipped with the order when we regard $W$ is the union of $W_i$'s. 
In the same manner, we define $U_i$ and an order on $U_i$, and suppose $U_i$ is also equipped with the order. 

Only in the case when $r_i\geq 1$ is odd,
set $V_{i}=\{a_1,a_2,\dots, a_{r_i}\}$ and suppose it is equipped with the order defined by $a_j>a_k$ if $j<k$.

In the rest of this subsection, we assume $\Sigma$ is an essentially bordered pb surface and contains no interior punctures.

\begin{figure}[h]  
	\centering\includegraphics[width=7cm]{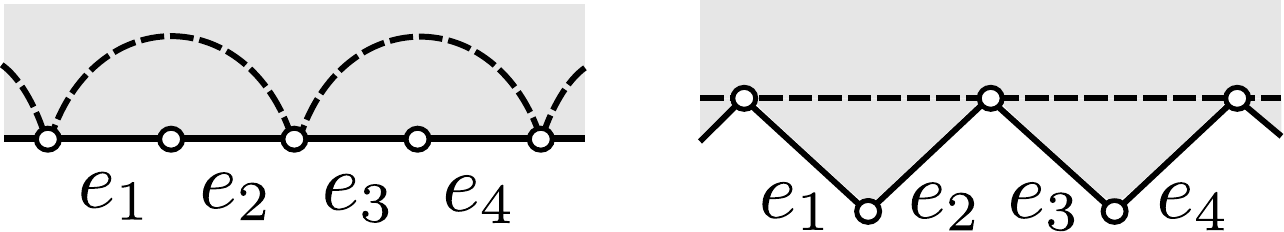}
	\caption{The case when $r_i$ is even. Left: an ideal triangulation $\mu$,\ Right: $\mu$ as an extended triangulation $\lambda^\ast$.}  
	\label{Fig:r=even}   
\end{figure}

\begin{figure}[h]  
	\centering\includegraphics[width=13.5cm]{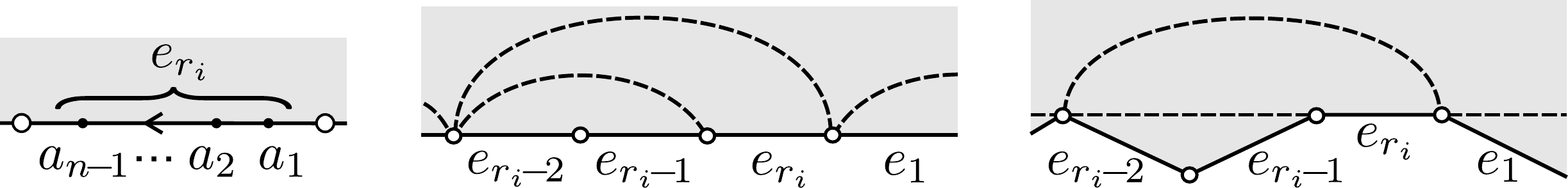}
	\caption{The case when $r_i$ is odd. Left: the small vertices $a_1,a_2,\dots, a_{n-1}$ on $e_{r_i}$,\ Middle: an ideal triangulation $\mu$,\ Right: $\mu$ as a partially extended triangulation.}  
	\label{Fig:r=odd}   
\end{figure}

\begin{figure}[h]
    \centering
    \includegraphics[width=100pt]{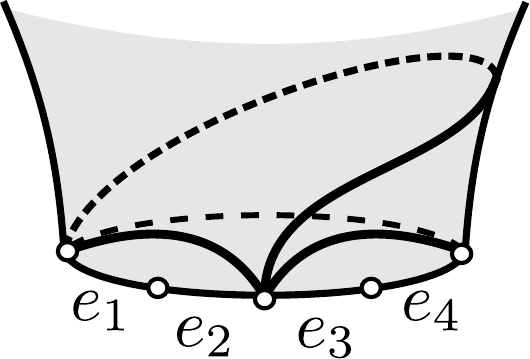}
    \caption{Case of $r_i=5$. The edges $e_{3,1}, e_5, e_{3,4}$ bound a triangle.}\label{Fig;arcs_bdr_odd}
\end{figure}

\begin{lem}\label{lem:barKQ_even}
Let $\Sigma$ be an essentially bordered pb surface without interior punctures such that $\overline{\Sigma}$ has only even boundary components.  
Let $\lambda$ be an ideal triangulation whose extended ideal triangulation $\lambda^\ast$ is $\mu$. 
Then we have 
\begin{align}
\barK_{\mu}\barQ_{\mu}=\barKl\barQl=
\begin{pmatrix}
-2nI & \ast & \ast\\
O    & A    & -A \\
O    & B    & A
\end{pmatrix}, \label{eq:barKQ_even}
\end{align}
where the columns and the rows are divided into $(\mathring{\barV}_{\lambda^\ast}, W, U)$ with respect to $\lambda^\ast$, where $W,U$ are defined in Section~\ref{sec:vertex}. 
\end{lem}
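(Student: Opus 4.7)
The plan is to block-decompose $\barK_\mu \barQ_\mu$ according to the partition $(\mathring{\barV}_{\lambda^\ast}, W, U)$ of $\barV_\mu$ and verify each of the nine blocks one at a time. Five of the nine blocks can be read off from results already established earlier in the paper, and the remaining four require only a routine extension of those same computations, specialized to the attached triangles of $\mu$.

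First I would invoke Lemma \ref{lemKQ} applied to the triangulation $\mu$. Since $W \cup U \subset \barV_{\lambda^\ast}\setminus \mathring{\barV}_{\lambda^\ast}$, this immediately gives the $(\mathring{\barV}_{\lambda^\ast},\mathring{\barV}_{\lambda^\ast})$ block as $-2nI$ and the $(W,\mathring{\barV}_{\lambda^\ast})$ and $(U,\mathring{\barV}_{\lambda^\ast})$ blocks as $O$, which accounts for the entire first column of the stated matrix. Next, Lemma \ref{matrixA} applies verbatim to yield the $(W,W)$ block as $A=-nI$, and Lemma \ref{matrixB}, specialized to the present case in which every boundary component $\partial_i$ of $\overline{\Sigma}$ has an even number of punctures, yields the $(U,W)$ block as $B=\mathrm{diag}(B_1,\ldots,B_b)$ with each $B_i$ of the first form in \eqref{Bi}.

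For the remaining $(W,U)$ and $(U,U)$ blocks I would perform a direct computation modeled on those of Lemmas \ref{matrixA} and \ref{matrixB}. The key geometric input is that each attached triangle $\tau$ of $\mu=\lambda^\ast$ is bounded by two boundary edges of $\Sigma$, namely $e_{2k-1}$ and $e_{2k}$, and the attached edge $e_{2k-1,2k+1}$; the $W$-vertices in $\tau$ lie on one of the two boundary edges of $\Sigma$ while the $U$-vertices lie on the other. Consequently the skeleton $\widetilde Y_{w_i}$ is confined to $\tau$ (both elongation directions terminate at $\partial\Sigma$), so the contribution $\barK_\mu(w_i,v)$ is supported on $v\in\tau$; tallying this against the quiver neighbors of $u_j$ inside $\tau$ via $\barK_{\bP_3}$ yields $\barK_\mu\barQ_\mu(w_i,u_j)=n\delta_{ij}$ when $w_i,u_j\in\tau$ and $0$ otherwise, giving the $(W,U)$ block equal to $-A=nI$. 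The same method, applied to $u_i,u_j$ in the same attached triangle, gives $\barK_\mu\barQ_\mu(u_i,u_j)=-n\delta_{ij}$, producing the $(U,U)$ block $A=-nI$.

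The main obstacle I anticipate is verifying that the $(U,U)$ block has no off-diagonal entries coming from different attached triangles. Unlike the skeleton of $w_i$, the skeleton $\widetilde Y_{u_i}$ genuinely extends through the attached edge $e_{2k-1,2k+1}$ into the adjacent non-attached triangle, and in principle could pass through further triangles and eventually re-enter another attached triangle $\tau'\ne\tau$. One must check that any such segment $s\subset \tau'\cap\widetilde Y_{u_i}$ produces a contribution $\sum_v\barK_{\tau'}(Y(s),v)\barQ_{\tau'}(v,u_j)$ that vanishes. The argument parallels the case analysis in the proof of Lemma \ref{matrixB}: in a non-attached triangle no $U$-vertex is present, and the quiver-neighbor configuration of $u_j\in\tau'$, paired with the explicit $\bZ_3$-invariant formula for $\barK_{\bP_3}$, forces any such cross-triangle contribution to cancel. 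Once this is settled, the block matrix takes the claimed form.
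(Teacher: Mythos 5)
Your proposal is correct and follows essentially the same route as the paper: it quotes the first two block-columns from \eqref{eq:matrix_barKQast} together with Lemmas~\ref{matrixA} and~\ref{matrixB}, and then computes the $(W,U)$- and $(U,U)$-blocks by pairing the skeletons of $w_i$ and $u_i$ against the four quiver neighbours of $u_j$ via the explicit formula for $\barK_{\bP_3}$, with the cross-triangle contribution (whose segment has $Y(s)$ equal to the vertex $v_i$ on the attaching edge) vanishing exactly as you anticipate. Two harmless slips: reading $\mu$ as $\lambda^\ast$ means the underlying boundary components carry $r_i/2$ punctures, so for $r_i=2$ the block $B_i$ is $nI$ (the second form in \eqref{Bi}) rather than the first form, and in that same case $\widetilde{Y}_{u_i}$ re-enters its \emph{own} attached triangle $\tau$ rather than some $\tau'\neq\tau$, adding a $v_i$-segment to the diagonal $(U,U)$ entry which your vanishing computation already shows contributes $0$ --- this is precisely why the paper's proof writes the $(u_i,u_j)$ entry with both the $\barK_\tau(u_i,\cdot)$ and $\barK_\tau(v_i,\cdot)$ groups of terms.
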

\begin{proof}
From \eqref{eq:matrix_barKQast} and Lemmas \ref{matrixA} and \ref{matrixB}, it suffices to show the part corresponding to $(W,U)\times U$ in \eqref{eq:barKQ_even}. 

First, we consider the restriction of $\barK_{\mu}\barQ_{\mu}$ on $W\times U$. 
From the definitions of skeletons, $\barK_{\mu}$, and $\barQ_{\mu}$, we have 
\begin{eqnarray*}
&{}&\barK_{\mu}\barQ_{\mu}(w_i, u_j)=\sum_{v\in \barV_\mu} \barK_{\mu}(w_i,v)\barQ_{\mu}(v,u_j)\\
&{}&=2\barK_{\mu}(w_i,(j-1,1,n-j))-2\barK_{\mu}(w_i,(j,1,n-j-1))
+\barK_{\mu}(w_i,u_{j+1})-\barK_{\mu}(w_i,u_{j-1}). 
\end{eqnarray*}

Next, we consider the restriction of $\barK_{\mu}\barQ_{\mu}$ on $W\times U$. 
From the definitions of skeletons, $\barK_{\mu}$, and $\barQ_{\mu}$, we have 
\begin{eqnarray*}
&{}&\barK_{\mu}\barQ_{\mu}(u_i, u_j)=\sum_{v\in \barV_\mu} \barK_{\mu}(w_i,v)\barQ_{\mu}(v,u_j)\\
&{}&=2\barK_{\tau}(u_i,(j-1,1,n-j))-2\barK_{\tau}(u_i,(j,1,n-j-1))
+\barK_{\tau}(u_i,u_{j+1})-\barK_{\tau}(u_i,u_{j-1})\\
&{}&\ +2\barK_{\tau}(v_i,(j-1,1,n-j))-2\barK_{\tau}(v_i,(j,1,n-j-1))
+\barK_{\tau}(v_i,u_{j+1})-\barK_{\tau}(v_i,u_{j-1}).  
\end{eqnarray*}

In both cases, we have the claim by concrete computations.
\end{proof}

Let $I'$  denote the anti-diagonal matrix whose anti-diagonal entries are 1. 
Let $G'$ denote the matrix obtained from $G$, defined in \eqref{eq-matrix-G-def}, by reversing the rows. 
For $i\in\bZ_{\geq1}$, define the following matrices:
\begin{align*}
    (B;i)&=\begin{pmatrix}
     O & O &  \cdots &O & nI \\
     nI & O & \cdots &O & O \\
     O  & nI& \cdots & O & O\\
     \vdots & \vdots &  & \vdots & \vdots \\
     O & O & \cdots  & nI &  O \\
    \end{pmatrix},\qquad
    (B_O;i)=\begin{pmatrix}
     O & O &  \cdots &O & O \\
     nI & O & \cdots &O & O \\
     O  & nI& \cdots & O & O\\
     \vdots & \vdots &  & \vdots & \vdots \\
     O & O & \cdots  & nI &  O \\
    \end{pmatrix},\;\\
    (A;i) &= \text{diag}\{-nI,\cdots,-nI\},\quad
    (E;i) = (O,\cdots,O, nI),\quad
    (E^T; i) = (nI',O,\cdots,O)^T,\\
      (\widetilde{G};i)&=\begin{pmatrix}
     O & O &  \cdots &O & G \\
     G & O & \cdots &O & O \\
     O  & G& \cdots & O & O\\
     \vdots & \vdots &  & \vdots & \vdots \\
     O & O & \cdots  & G &  O \\
    \end{pmatrix},\qquad
    (\widetilde{G}_O;i)=\begin{pmatrix}
     O & O &  \cdots &O & O \\
     G & O & \cdots &O & O \\
     O  & G& \cdots & O & O\\
     \vdots & \vdots &  & \vdots & \vdots \\
     O & O & \cdots  & G &  O \\
    \end{pmatrix},\\
    (G;i) &= \text{diag}\{G,\cdots,G\},\qquad
    (G_O;i) = \text{diag}\{O,G,\cdots,G\},\\
    (E_G;i) &= (O,\cdots,O, G),\qquad
    (E_G^T; i) = (G',O,\cdots,O)^T,
\end{align*}
where the block matrices $O,I, G$ are of size $n-1$, and the number of the block matrices in the rows or columns is $i$ in each matrix.

From Lemma \ref{lemKQ}, we can
write the matrices 
in the following forms
\begin{align}
\barK_{\mu}\barQ_{\mu}=
\begin{pmatrix}
-2nI & P' \\
O    &  P \\
\end{pmatrix}, \quad
\barK_{\mu}=
\begin{pmatrix}
\ast & \ast \\
\ast    &  K_{\partial} \\
\end{pmatrix},\label{eq;matrix_P}
\end{align}
where the rows and the columns are divided in $(\mathring{\barV}_\mu, \barV_\mu\setminus \mathring{\barV}_\mu)$.

Suppose that the $i$-th boundary component $\partial_i$ of $\overline{\Sigma}$ contains $r_i$ punctures. 
\begin{lem}\label{lem-reduced-P}
The matrix $P$ has the form
    \begin{align}\label{eq-matrix-P-reduced}
        P = \text{diag}\{P_1,\cdots, P_b\},
    \end{align}
 where the matrix $P_i$ is associated to the $i$-th boundary component with the following descriptions:
$$P_i = \begin{cases}
    
    \begin{pmatrix}
       (A,{\frac{r_i}{2}}) &  -(A,{\frac{r_i}{2}})\\
       (B,{\frac{r_i}{2}})&  (A,{\frac{r_i}{2}}) \\
    \end{pmatrix}
&\text{ if } r_i\text{ is even,}\medskip\\
   
    \begin{pmatrix}
       (A,{\frac{r_i-1}{2}})&  -(A,{\frac{r_i-1}{2}}) & O \\
       (B_O,{\frac{r_i-1}{2}})  &  (A,{\frac{r_i-1}{2}}) & (E^T,{\frac{r_i-1}{2}}) \\
      (E,{\frac{r_i-1}{2}})  &  O & -nI \\
    \end{pmatrix}
&\text{ if } r_i\text{ is odd and } r_i>1,\medskip\\
     -nI+nI'
&\text{ if } r_i=1,
\end{cases}
$$
with the decomposition of rows and columns in $(W_i,U_i), (W_i,U_i,V_i), V_i$ respectively.
\end{lem}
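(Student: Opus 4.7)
The plan is to carry out a local, component-by-component analysis of $\barK_\mu\barQ_\mu$ restricted to $\barV_\mu\setminus\mathring{\barV}_\mu$.

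First I will verify the block-diagonal decomposition \eqref{eq-matrix-P-reduced} itself, namely that $\barK_\mu\barQ_\mu(v,v')=0$ whenever $v$ and $v'$ lie on different boundary components of $\overline{\Sigma}$. The reason is that the graph $\widetilde{Y}_v$ defining $\skeleton_\tau(v)$ elongates by turning left at each triangle, so, starting from a small vertex on $\partial_i$, it stays inside the chain of triangles of $\mu$ touching $\partial_i$; combined with the local support of $\barQ_\mu$ this gives the desired vanishing, and the matrix splits as a direct sum of blocks $P_i$, one per boundary component.

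For each even $r_i$, I will apply Lemma~\ref{lem:barKQ_even} directly. The triangulation $\mu$ was chosen so that the triangles $(e_{2k-1},e_{2k},e_{2k-1,2k+1})$ for $k=1,\dots,r_i/2$ play exactly the role of the attached triangles there: the ``attached edge'' is $e_{2k-1}$, the $w$-vertices are the small vertices on the interior arc $e_{2k-1,2k+1}$, and the $u$-vertices are those on the adjacent boundary edge $e_{2k}$. Reading off the induced matrix in the ordering $(W_i,U_i)$ and using Lemmas~\ref{matrixA} and \ref{matrixB} then yields $P_i$ in the stated form; the cyclic wrap-around appearing in $(B;\tfrac{r_i}{2})$ reflects the fact that the skeleton from the last $u$-block loops back to meet the first.

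For odd $r_i>1$, I will decompose further along $(W_i,U_i,V_i)$. The $(W_i,U_i)\times(W_i,U_i)$ block is computed exactly as in the even case except that there is no cyclic wrap-around: the skeleton starting at the final pair of triangles enters the exceptional triangle bounded by $e_{r_i-2,1}, e_{r_i-1}, e_{r_i-2,r_i-1}$ rather than continuing into a ``next'' pair, which kills the wrap-around corner entry and replaces $(B;\tfrac{r_i-1}{2})$ by $(B_O;\tfrac{r_i-1}{2})$. For the mixed blocks, $W_i\times V_i=O$ because skeletons of $W_i$-vertices stay away from $e_{r_i}$, while the $U_i\times V_i$ and $V_i\times U_i$ blocks come from skeletons of $V_i$-vertices (lying on $e_{r_i}$) elongating into the exceptional triangle, producing $(E^T;\tfrac{r_i-1}{2})$ and $(E;\tfrac{r_i-1}{2})$ respectively via the same local computation as in Lemma~\ref{matrixB}. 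The appearance of $I'$ rather than $I$ in these blocks is forced by the reversal of the orientation of $e_{r_i}$ as seen from the exceptional triangle, which reverses the order of small vertices on $e_{r_i}$. Finally, the $V_i\times V_i$ block is $-nI$ by a computation identical to the proof of Lemma~\ref{matrixA} applied to consecutive small vertices on $e_{r_i}$.

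The case $r_i=1$ will be handled by direct computation: $e_1$ is a loop, $V_1=\{a_1,\dots,a_{n-1}\}$ lies on $e_1$, and the two sides of the loop are identified in reverse order. The diagonal contribution is $-nI$ exactly as in Lemma~\ref{matrixA}, while the self-identification produces an extra $nI'$ term (again with the anti-diagonal reflecting the order-reversal), yielding $P_1=-nI+nI'$.

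The main obstacle I expect is the bookkeeping for odd $r_i>1$: one must track with care the left-turn elongations of skeletons in the neighbourhood of the exceptional triangle, distinguish $B$ from $B_O$, and identify precisely which small vertex of the exceptional triangle is hit by each skeleton starting in $V_i$ in order to confirm the $I'$ (rather than $I$) in the off-diagonal blocks.
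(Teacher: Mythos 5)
Your overall strategy (block-diagonal decomposition by boundary component; reduce the even case to Lemma~\ref{lem:barKQ_even}; handle the odd case by a local skeleton computation near the exceptional triangle; direct computation for $r_i=1$) matches the paper's approach, and your analysis of the even case and of the replacement of $(B;\cdot)$ by $(B_O;\cdot)$ is essentially right.  However, the odd case of your plan contains a concrete mis-assignment of blocks that would lead to the wrong matrix.  The $(E;\frac{r_i-1}{2})$ block sits in the $(V_i,W_i)$ position, and the $(V_i,U_i)$ block is $O$; you have it the other way around.  Indeed the paper computes, for $a_j\in V_i$, that $\barK_\mu\barQ_\mu(a_j,w_k)=n\delta_{jk}$ for $w_k\in W_i$ on $e_{r_i-1}$ (giving $(E;\cdot)$ in $(V_i,W_i)$) while $\barK_\mu\barQ_\mu(a_j,u_k)=0$ for $u_k\in U_i$ (giving $O$ in $(V_i,U_i)$).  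Relatedly, your statement that both mixed blocks ``come from skeletons of $V_i$-vertices'' is wrong for the $(U_i,V_i)$ block: an entry $\barK_\mu\barQ_\mu(u_j,a_k)$ is governed by the skeleton of $u_j\in U_i$, not of $a_k\in V_i$, and it is the elongation of $\widetilde Y_{u_j}$ for $u_j$ on $e_1$ into the last triangle that produces the $nI'$ in $(E^T;\cdot)$.  Your phrase ``the $I'$\ldots in the off-diagonal blocks'' (plural) is also misleading, since only $(E^T;\cdot)$ involves $I'$; $(E;\cdot)$ contains $nI$.

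There is also a smaller misstatement in your even case: the attached edge of the pseudo-attached triangle is the interior arc $e_{2k-1,2k+1}$, not $e_{2k-1}$, and correspondingly the $V$-coordinate vertices sit on the interior arc while the $W$- and $U$-vertices are the ones on the boundary edges $e_{2k-1},e_{2k}\subset\partial\overline\Sigma$.  You have the $w$-vertices on the interior arc.  This does not change the outcome since the even case is delegated to Lemma~\ref{lem:barKQ_even}, but it signals the same source of confusion that causes the block swap in the odd case, and carrying out the computation with your labeling would not reproduce Lemma~\ref{matrixA}/\ref{matrixB}.
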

\begin{proof}
From the definition of $\mu$, it is easy to see that $P$ has a block matrix decomposition like in \eqref{eq-matrix-P-reduced}.

When $r_i$ is even, the claim holds from Lemma \ref{lem:barKQ_even}. 

When $r_i$ is odd and $r_i>1$, 
we use $a_1,a_2,\dots,a_{n-1}$ on $e_{r_i}$ given in Figure~\ref{Fig:r=odd}. 
The restriction of $\barK_{\mu}\barQ_{\mu}$ to $(W_i,U_i)\times (W_i,U_i)$ is obtained from the case when $r_i$ is even except for $(B_O,\frac{r_i-1}{2})$. 
Since there is no contribution of small vertices on $e_{r_i}$ in the restriction, we have $(B_O,\frac{r_i-1}{2})$ instead of $(B,\frac{r_i-1}{2})$.

For any $v\in W_i\cup U_i$ not on $e_{r_i-2}$ nor $e_{r_i-1}$, $\barK_{\mu}\barQ_{\mu}(a_j,v)=0$ from the definition of skeletons.
For $w_k \in e_{r_i-1}$, 
The computation of $\barK_{\mu}\barQ_{\mu}(a_j,w_k)$ is the same with that of $\barK_{\lambda^\ast}\barQ_{\lambda^\ast}(u_j,w_k)\ (u_j\in \tau'\cap U_i,\ w_k\in \tau\cap W_i)$ in non-reduced case, where $\tau,\tau'$ are located as in Figure \ref{Fig;tau_tau'}.
Hence, we have $\barK_{\mu}\barQ_{\mu}(a_j,w_k)=n\delta_{jk}$. 
For $u_k \in e_{r_i-2}\cap U_i$, the computation of 
$\barK_{\mu}\barQ_{\mu}(a_j,u_k)$ is the same with that of $\barK_{\lambda^\ast}\barQ_{\lambda^\ast}(u_j,u_k)\ (u_j\in \tau'\cap U_i,\ u_k\in \tau\cap W_i)$ in non-reduced case. 
Hence, we have $\barK_{\mu}\barQ_{\mu}(a_j,u_k)=0$. See Lemma \ref{KQ}. 

Similarly, for any $v\in U_i\cup W_i$ not on $e_{1}$ nor $e_{r_i-2}$, 
$\barK_{\mu}\barQ_{\mu}(v,a_k)=0$. 
For $u_j \in e_{r_i-2}$, the computation of 
$\barK_{\mu}\barQ_{\mu}(u_j,a_k)$ is the same with that of $\barK_{\lambda^\ast}\barQ_{\lambda^\ast}(u_j,u_k)\ (u_j\in \tau'\cap U_i,\ u_k\in \tau\cap U_i)$ in non-reduced case. 
Hence, we have $\barK_{\mu}\barQ_{\mu}(u_j,a_k)=0$. 
For $u_j \in e_{1}$, the computation of 
$\barK_{\mu}\barQ_{\mu}(u_j,a_k)$ is the same with that of $\barK_{\lambda^\ast}\barQ_{\lambda^\ast}(u_j,w_{n-k})\ (u_j\in \tau'\cap U_i,\ w_{n-k}\in \tau\cap W_i)$ in non-reduced case. 
Hence, we have $\barK_{\mu}\barQ_{\mu}(u_j,a_k)=n\delta_{j,n-k}$.  
See Lemma \ref{KQ}.  
Thus we have the second case of $P_i$. 

In the rest of the proof, we consider the case of $r_i=1$. 
Let $e_1$ be the unique edge on $\partial_i$, and we use  $a_1,a_2,\dots,a_{n-1}$ on $e$ given in Figure~\ref{Fig:r=1}. 
There is a unique triangle $\bar{\tau}$ whose boundary has $e_1$, and we assign $\alpha,\beta,\gamma$ to the corners of the triangle. 
Let $p$ be the boundary puncture of the corner $\gamma$ and $e$ be a boundary edge of $\Sigma$ located in clockwise relative to $\gamma$ at $p$. See Figure~\ref{Fig:r=1}. 
For $u_j \in e\neq e_1$, the computation of 
$\barK_{\mu}\barQ_{\mu}(u_j,a_k)$  is the same with that of $\barK_{\lambda^\ast}\barQ_{\lambda^\ast}(u_j,u_k)\ (u_j\in \tau'\cap U_i,\ w_{n-k}\in \tau\cap U_i)$ in non-reduced case. 
See Lemma \ref{KQ}.  
Hence, we have $\barK_{\mu}\barQ_{\mu}(u_j,a_k)=0$. 
This implies that, for any small vertex $v$ not on $e_1$, 
$\barK_{\mu}\barQ_{\mu}(v,a_k)=0$. 
The remaining part is $\barK_{\mu}\barQ_{\mu}(a_j,a_k)$, and we have 
\begin{eqnarray*}
\barK_{\mu}\barQ_{\mu}(a_j,a_k)&=&\sum_{v\in \barV_\mu}\barK_{\mu}(a_j,v)\barQ_{\mu}(v,a_k)\\
&=&\sum_{v\in \barV_\mu}\barK_{\bar{\tau}}(a_j,v)\barQ_{\bar{\tau}}(v,a_k)+\sum_{v\in \barV_\mu}\barK_{\bar{\tau}}(b_j,v)\barQ_{\bar{\tau}}(v,a_k)
+\sum_{v\in \barV_\mu}\barK_{\bar{\tau}}(c_j,v)\barQ_{\bar{\tau}}(v,a_k). 
\end{eqnarray*}
It is easy to see that $\sum_{v\in \barV_\mu}\barK_{\bar{\tau}}(a_j,v)\barQ_{\bar{\tau}}(v,a_k)=n\delta_{jk}$, $\sum_{v\in \barV_\mu}\barK_{\bar{\tau}}(b_j,v)\barQ_{\bar{\tau}}(v,a_k)=n\delta_{j,n-k}$ and
$\sum_{v\in \barV_\mu}\barK_{\bar{\tau}}(c_j,v)\barQ_{\bar{\tau}}(v,a_k)=0$. 
Thus we have the third case of $P_i$. 
\end{proof}

\begin{figure}[h]  
	\centering\includegraphics[width=10cm]{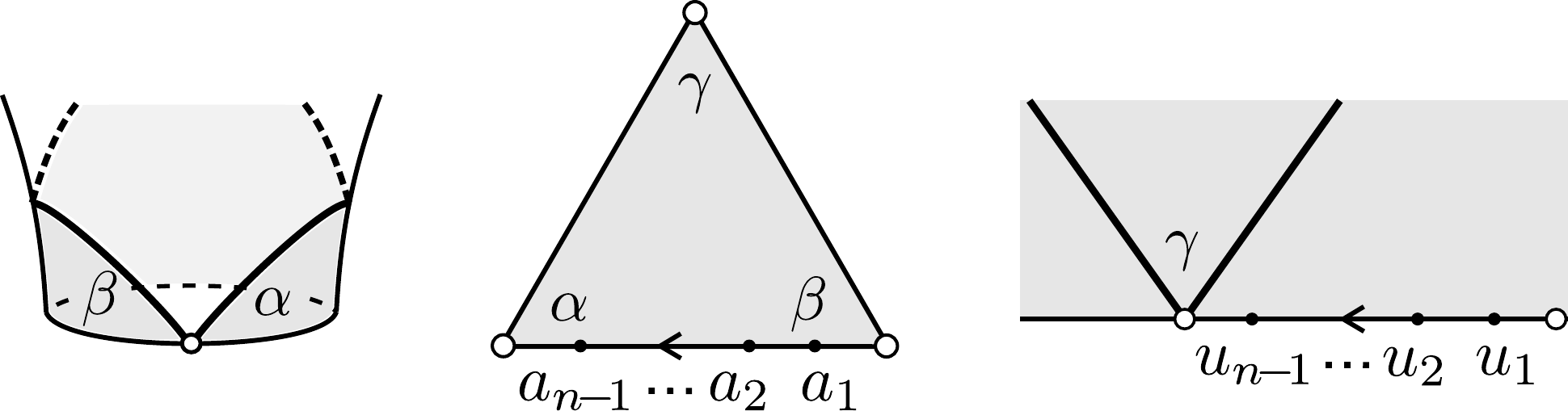}
	\caption{The case of $r_i=1$. Left: the unique triangle on $\partial_i$,\ Middle: the same triangle with the left one with corners $\alpha,\beta,\gamma$ and the small vertices $a_1,a_2,\dots, a_{n-1}$ on $\partial_i$,\ Right: the corner $\gamma$ and the small vertices $u_1,\dots,u_{n-1}$ on $e$.}  
	\label{Fig:r=1}   
\end{figure}

\begin{lem}\label{lem:reduced_P_n}
Each entry of $P'$, given in \eqref{eq;matrix_P}, is $0$ or $n$. 
\end{lem}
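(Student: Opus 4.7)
The entries of $P'$ are $\barK_\mu\barQ_\mu(v,u)$ for $v\in\mathring{\barV}_\mu$ and $u\in\barV_\mu\setminus\mathring{\barV}_\mu$. My plan is to expand each such entry as
\[
\barK_\mu\barQ_\mu(v,u)=\sum_w\barK_\mu(v,w)\barQ_\mu(w,u),
\]
noting that only the (at most four) small vertices $w$ adjacent to $u$ in the weighted quiver $\Gamma_\mu$ contribute. Substituting the skeleton formula $\barK_\mu(v,w)=\sum_{s\subset\widetilde{Y}_v\cap\tau_w}\barK_{\tau_w}(Y(s),w)$ and swapping the two sums, the entry decomposes as
\[
\barK_\mu\barQ_\mu(v,u)=\sum_{s\subset\widetilde{Y}_v\cap\tau}\Phi_s(u),
\]
where $\tau$ is the triangle of $\mu$ containing $u$ and each $\Phi_s(u)$ is a local quantity depending only on $Y(s)\in\barV_\tau$ and the type of $u$.

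Next, I will classify $u$ into its three possible types -- a $w_j$ on a boundary edge paired with an ideal arc, a $u_j$ on a boundary edge, or an $a_j$ on the edge $e_{r_i}$ for odd $r_i$ -- and compute $\Phi_s(u)$ in closed form using the explicit expression of $\barK_{\bP_3}$. The computations will closely mirror those in the proofs of Lemmas~\ref{matrixA}, \ref{matrixB}, and \ref{lem:barKQ_even}, except that the relevant input is now $Y(s)=(i_0,j_0,k_0)$ with $v$ interior rather than a small vertex on a boundary edge. A finite case split on the relative position of $Y(s)$ and $u$ (for example, whether $j_0\ge j$ or $j_0<j$ when $u=w_j$) will show that each $\Phi_s(u)$ lies in $\{0,n\}$.

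Finally, I will verify that at most one segment $s\subset\widetilde{Y}_v\cap\tau$ produces a nonzero $\Phi_s(u)$. The four local values of $\barQ_\mu$ near $u$ combine into a telescoping expression that forces $\Phi_s(u)=0$ unless $Y(s)$ lies in a small ``window'' around $u$; and two segments of $\widetilde{Y}_v$ whose images land in this window cannot coexist, because $v$ is interior and each edge of $Y_v$ elongates by always turning left. This yields $\barK_\mu\barQ_\mu(v,u)\in\{0,n\}$. The main obstacle is the combined bookkeeping of the second and third steps: the argument strengthens the divisibility-by-$n$ conclusion of Lemma~\ref{matrixDC} to the sharper $\{0,n\}$-statement, and thus demands a more detailed case analysis, but it introduces no genuinely new ideas beyond those already developed in Section~\ref{sec-centers}.
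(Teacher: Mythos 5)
Your plan reproduces the paper's argument almost step for step: expand $\barK_\mu\barQ_\mu(v,u)$, localize to the triangle $\tau$ of $\mu$ containing $u$ via the skeleton formula, and reduce to showing that a four-term alternating combination of $\barK_\tau$-values is $0$ or $n$ by a finite case split on barycentric coordinates. One simplification you should adopt: since the claim concerns only the \emph{set} of entries of $P'$, it is invariant under relabeling rows and columns, and the paper uses this to identify every boundary edge with $e_3$ of $\bP_3$ and write $u_j=(j,0,n-j)$ uniformly. Your three-way split on the type of $u$ (whether it is a $w_j$, $u_j$, or $a_j$) is then unnecessary; there is a single case.

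Your final step — checking that at most one segment $s\subset\widetilde{Y}_v\cap\tau$ produces a nonzero $\Phi_s(u)$ — is the right instinct: proving that each summand of the skeleton decomposition lies in $\{0,n\}$ does not by itself bound the entry, and the paper handles this only tacitly. The paper's own case split shows a term is nonzero (equal to $n$) precisely when $v'=(a,b,c)$ satisfies $a<j$ and $c=n-j$; among small vertices on $\partial\tau$ this forces $v'=(0,j,n-j)\in e_2$, and a second potential contribution comes only from the main segment when $v\in\Int\tau$. So there are in fact at most two candidate $v'$, and you would need to show these cannot both occur for a given interior $v$. Your heuristic about the left-turning elongation is the natural direction, but as stated it is not yet a proof; be prepared to argue it carefully. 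In short, your approach is essentially the paper's, with one unnecessary case split to remove and one sharpening (the uniqueness of the nonzero contribution) that would make the argument airtight.
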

\begin{proof}
Note that the claim does not depend on the order of vertex sets. 
Hence, to show the claim, we can identify each boundary edge $e$ with $e_3$ of $\bP_3$ and we will use the coordinate $u_j=(j,0,n-j)$ for small vertices on $e$. 
Let  $\tau$ be the triangle of $\mu$ with $e$. 
Then we formally consider the small vertices in $\tau$
$$u_{j1}=(j-1,1,n-j),\ u_{j2}=(j,1,n-j-1),\ u_{j3}=(j+1,0,n-j-1),\  u_{j4}=(j-1,0,n-j+1).$$ 
While $u_{14}$ (resp. $u_{(n-1)3}$) does not exist for $u_1$ (resp. $u_{n-1}$), the absence does not affect the following computations.

By definition of $\barQ_\mu$, 
\begin{eqnarray*}
\barK_\mu\barQ_\mu(v,u_j)&=&\sum_{u\in\barVl}\barK_\mu(v,u)\barQ_\mu(u,u_j)\\
&=&\sum_{v'}\Big(2\barK_\tau(v',u_{j1})-2\barK_\tau(v',u_{j2})+\barK_\tau(v',u_{j3})-\barK_\tau(v',u_{j4})\Big),
\end{eqnarray*}
where $v'$ depends on the skeleton $\skeleton_\tau(v)$. 
We will check 
\begin{align}
2\barK_\tau(v',u_{j1})-2\barK_\tau(v',u_{j2})+\barK_\tau(v',u_{j3})-\barK_\tau(v',u_{j4})\label{eq:KQ_tau}
\end{align}
is $0$ or $n$ for any $v'\in\barV_\tau\cap \obV_{\lambda}$. 

For $v'\in\barV_\tau\cap \obV_{\lambda}\cap \partial \tau$, from the computations in the proof of Lemma~\ref{lem:barKQ_even}, we know the value of \eqref{eq:KQ_tau} is $0$ or $n$. 

Suppose $v'\in\barV_\tau\cap \obV_{\lambda}\cap \Int\tau$ and use $v'=(a,b,c)\ (a,b,c>0)$. 
First, we consider the case when $a\geq j$, or $a<j$ and $c<n-j$. 
By the definition of $\barK_\tau$, 
we have 
\begin{align*}
&\barK_\tau(v',u_{j1})=a+cj, 
&&\barK_\tau(v',u_{j2})=a+c(j+1),\\ 
&\barK_\tau(v',u_{j3})=c(j+1), 
&&\barK_\tau(v',u_{j4})=c(j-1).
\end{align*}
Hence, \eqref{eq:KQ_tau} is equal to $0$. 

Next, we consider the case when $a<j$ and $c=n-j$. 
Similarly, we have 
\begin{align*}
&\barK_\tau(v',u_{j1})=a+cj, 
&&\barK_\tau(v',u_{j2})=a+(a+b)(n-j-1),\\
&\barK_\tau(v',u_{j3})=(a+b)(n-j-1),
&&\barK_\tau(v',u_{j4})=c(j-1).
\end{align*}
Hence, \eqref{eq:KQ_tau} is equal to $n$. 

Finally, we consider the case when $a<j$ and $c>n-j$. 
Similarly, we have 
\begin{align*}
&\barK_\tau(v',u_{j1})=a+(a+b)(n-j), 
&&\barK_\tau(v',u_{j2})=a+(a+b)(n-j-1),\\
&\barK_\tau(v',u_{j3})=(a+b)(n-j-1),
&&\barK_\tau(v',u_{j4})=(a+b)(n-j+1).
\end{align*}
Hence, \eqref{eq:KQ_tau} is equal to $0$. 

\end{proof}

\begin{lem}\label{reduced-K}
The matrix $K_\partial$ has the form
\begin{align}\label{eq-matrix-K-reduced}
    K_{\partial} = \text{diag}\{S_1,\cdots, S_b\},
\end{align}
     where the matrix $S_i$ is associated to the $i$-th boundary component with the following descriptions:
\begin{align*}
S_i=
\begin{cases} 
    \begin{pmatrix}
       (G;{\frac{r_i}{2}}) & (G;{\frac{r_i}{2}})\\
       (\widetilde{G};{\frac{r_i}{2}})&  (G;{\frac{r_i}{2}}) \\
    \end{pmatrix}
&\text{if  $r_i$ is even,}\medskip\\
\begin{pmatrix}
       (G;{\frac{r_i-1}{2}})&  (G;{\frac{r_i-1}{2}}) & O \\
       (\widetilde{G}_O;{\frac{r_i-1}{2}})  &  (G;{\frac{r_i-1}{2}}) & (E_G^T;{\frac{r_i-1}{2}}) \\
      (E_G;{\frac{r_i-1}{2}})  &  O & G \\
    \end{pmatrix}
&\text{if $r_i$ is odd and  $r_i>1$,}\medskip\\
    G+G'
&\text{if  $r_i=1$},
\end{cases}
\end{align*}
with rows and columns are divided in $(W_i,U_i),\ (W_i,U_i,V_i),\ V_i$ respectively. 
\end{lem}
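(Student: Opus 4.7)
The plan is to compute $\barK_\mu(u,v)$ directly for every pair of boundary small vertices $u,v\in\barV_\mu\setminus\obV_\mu$ by unfolding the defining formula
\begin{align*}
\barK_\mu(u,v)=\sum_{s\subset\tau\cap\tilde Y_u}\barK_\tau(Y(s),v),
\end{align*}
reusing the explicit identifications $\barK_\tau(w_i,w_j)=\barK_\tau(v_i,w_j)=G_{ij}$ and $\barK_\tau(u_i,w_j)=0$ already recorded in the proof of Lemma \ref{matrixK}, together with their obvious variants for the other coordinate pairs. The block-diagonal decomposition $K_\partial=\mathrm{diag}\{S_1,\dots,S_b\}$ along the boundary components of $\overline\Sigma$ follows from the design of $\mu$: the boundary-adjacent triangles near $\partial_i$ are arranged so that the turning-left elongation of $\tilde Y_u$, for $u$ near $\partial_i$, only visits triangles whose boundary small vertices lie on $\partial_i$; consequently $\barK_\mu(u,v)=0$ whenever $u$ and $v$ are associated to different boundary components.

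For the even case $r_i=2l$, the two boundary edges $e_{2k-1},e_{2k}$ and the interior arc $e_{2k-1,2k+1}$ bound a single triangle $\tau_k$ that plays the role of an attached triangle. The diagonal block $(G;l)$ is produced by the main segments $Y_w$ and $Y_u$ inside $\tau_k$, giving $\barK_{\tau_k}(w_j,w_l)=G_{jl}$ and the analogous identities. The off-diagonal blocks $(G;l)$ and $(\widetilde G;l)$ arise from the elongations of these skeletons across $e_{2k-1,2k+1}$ into $\tau_{k+1}$, cyclically; the cyclic-shift pattern of $(\widetilde G;l)$ is produced by the same combinatorial mechanism that gave the matrix $B$ in Lemma \ref{matrixB}.

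For the odd case $r_i>1$, the analysis on $W_i\cup U_i$ proceeds exactly as in the even case on $(r_i-1)/2$ pairs of triangles, but the cyclic link is broken by the absence of a closing triangle; this replaces $(\widetilde G;\cdot)$ by $(\widetilde G_O;\cdot)$. The small vertices $a_1,\dots,a_{n-1}$ on $e_{r_i}$ interact with $W_i\cup U_i$ only through the two triangles adjacent to $\bar\tau_i$ across $e_{r_i-2,1}$ and $e_{r_i-2,r_i-1}$, producing the single $G$-block in each of $(E_G;\cdot)$ and $(E_G^T;\cdot)$; the row-reversal in $(E_G^T;\cdot)$ expresses that, from the viewpoint of $a_j$, the adjacent edge is traversed in the reversed orientation. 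The diagonal entry $G$ in the $V_i$-slot is $\barK_{\bar\tau_i}(a_j,a_k)$, coming from the main segment alone.

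The delicate case is $r_i=1$. The unique boundary edge $e_1$ of $\partial_i$ bounds a single triangle $\bar\tau$ whose two endpoints of $e_1$ are the unique puncture $p$ on $\partial_i$. This local self-gluing forces the elongated skeleton $\tilde Y_{a_j}$ to visit $\bar\tau$ multiple times at the three distinguished vertices $a_j,b_j,c_j$ of $\bar\tau$ already identified in the proof of Lemma \ref{lem-reduced-P}. Evaluating the explicit formula for $\barK_{\bP_3}$ on each contribution, the main segment gives $G_{jk}$, one of the re-entrant elongations contributes $G'_{jk}$ precisely because the re-entering edge is traversed in reversed orientation (so the role of the indices $i,j$ in the formula defining $G$ is swapped by a reflection), and the remaining contribution vanishes; together these yield the $G+G'$ block. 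Tracking the two elongation paths corresponding to $b_j$ and $c_j$ and verifying that the reflected contribution is exactly $G'$ rather than some other permutation---by carefully reading off which small vertex $Y(s)$ each segment produces---is the main obstacle and requires a case analysis of the gluing pattern at $p$.
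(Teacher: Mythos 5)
Your plan matches the paper's proof: both compute $\barK_\mu$ entry by entry via the skeleton formula, break into the three cases ($r_i$ even; $r_i>1$ odd; $r_i=1$), reuse the explicit $\barK_\tau$-values from Lemma \ref{matrixK}, and identify $G+G'$ in the $r_i=1$ case by observing that the second re-entrant contribution is $\barK_\tau(w_{n-j},u_k)=G_{n-j,k}=G'_{jk}$. Two small inaccuracies worth noting, though they do not derail the argument: the upper-right $(G;\cdot)$ block at position $W_i\times U_i$ comes from the \emph{main} segments $Y_w$ (since $\tilde Y_w=Y_w$ for $w\in W_i$), not from elongations, and the cyclic-shift mechanism for $(\widetilde G;\cdot)$ is the one established in Lemma \ref{matrixK} (giving the blocks $L_i$) rather than Lemma \ref{matrixB}.
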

\begin{proof}
From the definition of $\mu$, it is easy to see that $K_{\partial}$ has a block matrix decomposition like in \eqref{eq-matrix-K-reduced}.

First, we consider the case when $r_i$ is even and use the decomposition \eqref{eq:matrix_K_docomp3} by regarding $\mu$ as an extended ideal triangulation. 
Since the graph $\widetilde{Y}_w$ of $w\in W_i$ consists of the main segment, $K_{22}(w,w')=K_{23}(w,w')=0$ if $w$ and $w'$ are not in the same triangle. 
For $w_j,w_k,u_k$ in a same triangle $\tau$, since $K_{22}(w_j,w_k)=\barK_\tau(w_j,w_k)$ and $K_{23}(w_j,u_k)=\barK_\tau(w_j,u_k)$, we have
$$K_{22}(w_j,w_k)=K_{23}(w_j,u_k)=\begin{cases}
    (n-j)k & \text{if $j\geq k$}\\
    (n-k)j & \text{if $j < k$}
\end{cases}.$$ 

From $K_{22}=(G;{\frac{r_i}{2}})$ and Lemma \ref{matrixK}, we have $K_{32}=(\widetilde{G};{\frac{r_i}{2}})$. 

Since $\barK_\tau(v_j,u_k)=0$, 
$$K_{33}(u_j,u_k)=\begin{cases}
    \barK_\tau(u_j,u_k) & \text{if $u_j, u_k$ are in $\tau$} \\
    0 & \text{otherwise}
\end{cases},\qquad 
\barK_\tau(u_j,u_k)=\begin{cases}
    (n-j)k & \text{if $j\geq k$}\\
    (n-k)j & \text{if $j < k$}
\end{cases}.$$
Hence, we have the first case of $S_i$.

Next we consider the case when $r_i>1$ is odd and use the decomposition
$$S_i=\begin{pmatrix}
K_{22} & K_{23} & K_{24} \\
K_{32} & K_{33} & K_{34} \\
K_{42} & K_{43} & K_{44} 
\end{pmatrix},$$ 
where the rows and columns are divided in $(W_i,U_i, V_i)$. 
Since the graph $\widetilde{Y}_w$ of $w\in W_i$ consists of the main segment, 
the computations of $K_{22}$ and $K_{33}$ are the same with even case. 
Moreover, $K_{24}=O$. 

The arc segments of the graph $\widetilde{Y}_u$ of $u\in U_i\cap e_1$ do not affect the computation of $K_{32}$. 
The computation of the remaining part is the same with even case. 
Since $\barK_{\tau}(v_j,v_k)=0$, we have $K_{33}(u_j,u_k)=\barK_\tau(u_j,u_k)$. We also have
$$K_{34}(u_j,a_k)=\begin{cases}
    \barK_{\tau}(w_{n-j},u_k) & \text{if $u_j\in e_1$}\\
    0 & \text{otherwise}
\end{cases}.$$

Note that the skeleton of any small vertex in $e_{r_i}$ affects only the triangle $\tau$ whose boundary contains $e_{r_i-1}$. This implies that it suffices to check
$K_{42}(a_j,w_k)=\barK_\tau(v_j,w_k), K_{43}(a_j,u_k)=0, K_{44}(a_j,a_k)=\barK_{\bar\tau}(u_j,u_k)$, 
where $\bar\tau$ is the triangle containing $e_{r_i}$. 
Hence, we have the second case of $S_i$.

Finally, we consider the case of $r_i=1$. We have 
$$K_\partial(a_j,a_k)=\barK_{\tau}(a_j,a_k)+\barK_{\tau}(b_j,a_k)+\barK_{\tau}(c_j,a_k)=\barK_\tau(u_j,u_k)+\barK_\tau(w_{n-j},u_k)+0.$$
Hence, we have the third case of $S_i$. 
\end{proof}

\subsection{Central elements for generic $\hat q$}\label{sec:central_generic}

\begin{lem}\label{reduced-height}
For any pb surface $\Sigma$, in $\overline \cS_n(\Sigma,\mathbbm{v})$ we have  the following relations
\begin{align*}
    \begin{array}{c}\includegraphics[scale=0.6]{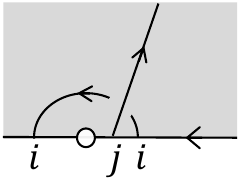}\end{array}
    =q^{-\frac{1}{n}+\delta_{i,j}}
        \begin{array}{c}\includegraphics[scale=0.6]{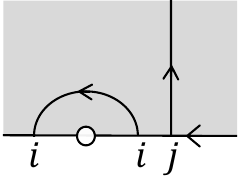}\end{array},\qquad
    \begin{array}{c}\includegraphics[scale=0.6]{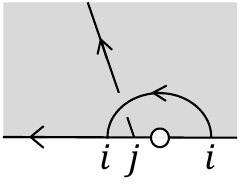}\end{array}
    =q^{\frac{1}{n}-\delta_{n+1-i,j}}
        \begin{array}{c}\includegraphics[scale=0.6]{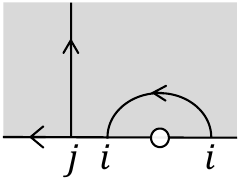}\end{array},\\
            \begin{array}{c}\includegraphics[scale=0.6]{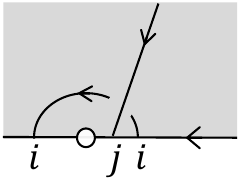}\end{array}
    =q^{\frac{1}{n}-\delta_{n+1-i,j}}
        \begin{array}{c}\includegraphics[scale=0.6]{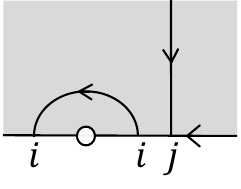}\end{array},\qquad
            \begin{array}{c}\includegraphics[scale=0.6]{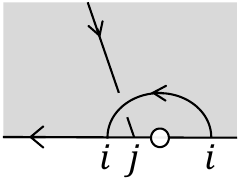}\end{array}
    =q^{-\frac{1}{n}+\delta_{i,j}}
        \begin{array}{c}\includegraphics[scale=0.6]{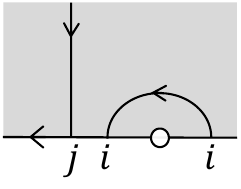}\end{array}.
\end{align*}

\end{lem}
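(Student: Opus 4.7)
The plan is to derive each of the four height-exchange identities from the defining crossing relation \eqref{wzh.eight} (together with its orientation-reversed variants) and then to kill the "correction" term that appears in the non-reduced algebra by using the bad-arc ideal $I^{\text{bad}}$.

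I would begin with the first identity, which concerns two outgoing arcs (both white dots) meeting the boundary edge at adjacent heights. Applying \eqref{wzh.eight} to the local configuration formed by introducing an auxiliary crossing of the two arcs and then resolving it, one obtains, in $\cS_n(\Sigma,\mathbbm{v})$,
$$\text{LHS of (1)} \;=\; q^{-\frac{1}{n}+\delta_{i,j}}\,\text{RHS of (1)} \;+\; q^{-\frac{1}{n}}\delta_{j<i}(q-q^{-1})\,(\text{correction diagram}),$$
where the correction diagram differs from the LHS only in a small neighborhood of the two boundary endpoints, in which it contains a stated corner arc of the form $C(p)_{ij}$ with $j<i$. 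Since $C(p)_{ji}$ with $j<i$ is by definition a bad arc (Figure~\ref{Fig;badarc}), this correction lies in $I^{\text{bad}}$ and hence vanishes in $\overline{\cS}_n(\Sigma,\mathbbm{v})$, giving the first identity.

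For the second and third identities (mixed orientations), I would first use the cup and cap relations \eqref{wzh.six} and \eqref{wzh.seven} to convert the incoming endpoint into an outgoing one, which replaces state $i$ by $\bar{i}=n+1-i$ and explains the appearance of $\delta_{n+1-i,j}$ in the coefficient; then I repeat the argument above. The resulting correction diagram is now a stated corner arc $\cev{C}(p)_{ij}$ (or a reflection thereof) with $i<j$, again a bad arc. The fourth identity follows from the first by applying the reflection involution $\omega$ from Section~\ref{sec:quantum_trace}, or equivalently by a directly parallel computation after reversing the orientations of both arcs simultaneously.

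The main obstacle is the diagrammatic bookkeeping of orientations and height orderings: in each of the four cases, one must verify that the correction term produced by \eqref{wzh.eight} indeed factors, after the local cup/cap manipulations, through a stated corner arc whose state pair $(i,j)$ satisfies the strict inequality required for membership in $I^{\text{bad}}$, rather than through some other boundary-supported configuration. Once this identification is made precise (by straightening the local neighborhood of the two endpoints so that a bona fide bad arc $C(p)_{ij}$ or $\cev{C}(p)_{ij}$ is exposed), the vanishing of the correction in $\overline{\cS}_n(\Sigma,\mathbbm{v})$ is immediate from the definition of the reduced stated $\SL(n)$-skein algebra as a quotient by the two-sided ideal generated by bad arcs.
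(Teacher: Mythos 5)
Your strategy coincides in spirit with the paper's: the paper proves this lemma in one line, by combining the wall relation \eqref{wzh.eight} with its orientation variants, which it imports wholesale from \cite[Proposition 3.2]{Wan23} (the same height-exchange relations that appear in $m$-parallel form in Lemma \ref{lem7.5}), the point being that in $\overline{\cS}_n(\Sigma,\mathbbm{v})$ the leftover terms disappear. As a proof, however, your proposal has two concrete gaps. First, the step that carries all the content --- verifying that in each of the four pictured configurations the $\delta_{j<i}(q-q^{-1})$ correction produced by \eqref{wzh.eight} is a multiple of a bad arc --- is exactly the step you defer, and where you do commit to indices you contradict the paper's convention: bad arcs at $p$ are $C(p)_{ij}$ and $\cev{C}(p)_{ij}$ with $i<j$, so ``a corner arc of the form $C(p)_{ij}$ with $j<i$'' is not a bad arc. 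You need to check, orientation case by orientation case, that the corner arc exposed in the correction diagram has its states in the bad order; this depends on which strand hugs the puncture in the given picture and cannot be left as bookkeeping to be done later, since it is the only reason the statement holds in $\overline{\cS}_n$ but not in $\cS_n$.

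Second, your route to the mixed-orientation identities via the cup/cap relations \eqref{wzh.six}--\eqref{wzh.seven} is not as routine as you suggest: relation \eqref{wzh.seven} replaces a strand by a sum over all states, so ``converting an incoming endpoint into an outgoing one'' produces a linear combination, and you must explain why it collapses (via the $\delta_{\bar j,i}$ in \eqref{wzh.six}) before the exchange relation can be applied with the clean coefficient $q^{\frac{1}{n}-\delta_{n+1-i,j}}$. The paper sidesteps this by quoting the orientation-reversed versions of \eqref{wzh.eight} directly from \cite[Proposition 3.2]{Wan23}; if you prefer not to cite them, derive those variants first and only then pass to the reduced algebra. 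Similarly, the appeal to the reflection $\omega$ for the fourth identity needs a word of justification: $\omega$ inverts $\hat q$ and reverses boundary height orders, so you must check that it preserves the bad-arc ideal and that the coefficient matches after the two sides of the first identity are interchanged.
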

\begin{proof}
Using Relation \eqref{wzh.eight} and \cite[Proposition 3.2]{Wan23}, we have the claim.
\end{proof}

\begin{lem}\label{reduced-boundary_center}
Suppose $\overline{\Sigma}$ has a boundary component $\partial$ such that  
the connected components of $\partial\cap \Sigma$ are labeled as $e_1,e_2,\cdots,e_r$ with respect to the orientation of $\partial$.
For each $1\leq t\leq r$, suppose the small vertices in $e_t$ are labeled as $u_{t,1},\cdots,u_{t,n-1}$ with respect to the orientation of $\partial$. 

(a) Suppose $r$ is even. For $k\in\mathbb N$ and $1\leq i\leq n-1$, the element
\begin{align}\label{eq-reduced-central1}
\bar \gaa_{u_{1,i}}^k \bar \gaa_{u_{2,n-i}}^k\cdots \bar \gaa_{u_{r-1,i}}^k \bar \gaa_{u_{r,n-i}}^k
\end{align}
is central in $\overline \cS_n(\Sigma,\mathbbm{v})$, where $\bar\gaa_v$ with a small vertex $v$ is defined in Section \ref{sub_center}.

(b) Suppose  $r$ is odd. For  $k\in\mathbb N$ and $1\leq i\leq \lfloor\frac{n}{2}\rfloor$, the element
\begin{align}\label{eq-reduced-central2}
\bar \gaa_{u_{1,i}}^k \bar \gaa_{u_{1,n-i}}^k \bar \gaa_{u_{2,i}}^k \bar \gaa_{u_{2,n-i}}^k\cdots \bar \gaa_{u_{r,i}}^k \bar \gaa_{u_{r,n-i}}^k
\end{align}
is central in $\overline \cS_n(\Sigma,\mathbbm{v})$.

\end{lem}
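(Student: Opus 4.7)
\smallskip

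\textbf{Proof plan.} My strategy is to verify centrality by checking that each of the elements \eqref{eq-reduced-central1} and \eqref{eq-reduced-central2} commutes with an arbitrary stated $n$-web diagram $\alpha$ in $\overline{\cS}_n(\Sigma,\mathbbm{v})$. Since each proposed central element is supported in an arbitrarily small neighborhood of the boundary component $\partial$, I can deform $\alpha$ so that it is transverse to $\partial$ and reduce the verification to a local check: it suffices to show commutativity with every strand (or segment) of $\alpha$ that meets $\partial$. This proceeds edge by edge along $\partial = e_1 \cup \cdots \cup e_r$, with special attention paid at each boundary puncture $p_t$ where two adjacent factors of the candidate element interact with the strands of $\alpha$.

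The main computational tool will be the bracket relations of Lemmas~\ref{lem;LY_comm} and~\ref{lem;comm_op}, which express $q$-commutation of boundary-incident strands with stated corner arcs via the bilinear form $\langle\,\cdot\,,\,\cdot\,\rangle$ on the weight lattice $\mathsf{L}$. For part (a), consider a strand of $\alpha$ meeting $e_t$ with $\mathsf{L}$-weight $\mathsf{w}_\ell^\ast$. Commuting it past $\bar{\gaa}_{u_{t,i}}^k$ (if $t$ is odd) or $\bar{\gaa}_{u_{t,n-i}}^k$ (if $t$ is even) contributes a factor $q^{k\langle \mathsf{w}_\ell, \mathbf{j}\rangle}$, where $\mathbf{j}$ is the $\mathsf{L}$-weight of the stated corner arc. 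The alternation of states $i$ and $n-i$ along consecutive edges, together with the identity $\mathsf{w}_i + \mathsf{w}_{n-i} + (\text{residual})$ produced at each puncture $p_t$, yields a telescoping cancellation across adjacent factors $\bar{\gaa}_{u_{t-1,n-i}}^k$ and $\bar{\gaa}_{u_{t,i}}^k$. Since $r$ is even, the telescope closes up and the total exponent vanishes. When $\alpha$ has a segment ending near $p_t$ but not crossing $\partial$, the vanishing of bad arcs in $\overline{\cS}_n(\Sigma,\mathbbm{v})$ will be used to kill any residual obstruction terms.

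For part (b), the alternating-sign telescope argument of (a) fails because $r$ is odd. The remedy built into the statement is the symmetric factor $\bar{\gaa}_{u_{t,i}}^k \bar{\gaa}_{u_{t,n-i}}^k$ placed at every puncture: for any strand of $\alpha$ approaching $p_t$, commuting past both factors contributes an exponent proportional to $\langle \mathsf{w}_\ell, \mathsf{w}_i + \mathsf{w}_{n-i}\rangle$, a symmetric quantity whose sum around the whole boundary is independent of orientation and cancels in pairs. The bound $1 \leq i \leq \lfloor n/2\rfloor$ prevents double-counting, since $i$ and $n-i$ play symmetric roles. As in (a), the interaction at puncture $p_t$ of strands that terminate or reverse direction is controlled by the reduced relation (killing of bad arcs).

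The main obstacle will be case (b): I must verify that the self-pairing $\bar{\gaa}_{u_{t,i}}^k \bar{\gaa}_{u_{t,n-i}}^k$ at each puncture genuinely symmetrizes the interaction with an arbitrary strand of $\alpha$, and that cross-interactions between symmetric corner arcs at different punctures do not introduce unwanted $q$-phases. A careful case analysis based on the local orientation of the strand at each boundary endpoint, together with repeated use of Lemma~\ref{reduced-height} to normalize the height order at the boundary and the relations in $\overline{\cS}_n(\Sigma,\mathbbm{v})$ that kill bad arcs, should complete the argument.
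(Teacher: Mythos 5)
Your overall strategy (check commutation with every boundary-incident strand of an arbitrary diagram, using the $q$-exchange relations) is the right general spirit, but as written it has a genuine gap: the crucial exponent cancellations are asserted rather than established, and the mechanisms you describe are not correct. Keep in mind that Lemma~\ref{reduced-boundary_center} holds for \emph{generic} $\mathbbm{v}$ and arbitrary $k$, so, unlike the non-reduced Lemma~\ref{boundary_center} where the residual phase $-m'\langle \mathsf{w}_\ell,\mathbf{j}^c\rangle$ is killed by $q^{m'}=1$, here the $q$-phases must cancel \emph{exactly}; no root-of-unity escape hatch is available. Your case (a) mechanism (``telescoping cancellation'' that ``closes up since $r$ is even'', via an unspecified identity $\mathsf{w}_i+\mathsf{w}_{n-i}+(\text{residual})$) is not an argument: for a single strand ending on $e_t$ only the corner arcs with endpoints on $e_t$ contribute, so the cancellation you need is local to each edge and must be computed with the actual $\mathbbm{d}$-weights of the reduced corner arcs (which are consecutive sums of fundamental weights coming from quantum minors, not single weights $\mathsf{w}_i$). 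Your case (b) mechanism is worse: $\langle\mathsf{w}_\ell,\mathsf{w}_i+\mathsf{w}_{n-i}\rangle$ does not vanish in general, and a quantity cannot ``cancel in pairs'' around a boundary with an odd number of edges; so the centrality of \eqref{eq-reduced-central2} is not addressed at all.

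The paper avoids this phase bookkeeping entirely. It first reduces to $k=1$ using that the $\bar\gaa_{u_{t,j}}$ are $q$-commuting, then invokes \cite[Lemma 7.5(c)]{LY23} to identify each $\bar\gaa_{u_{t,j}}$, up to a nonzero scalar, with a single stated corner arc in the reduced algebra (equation \eqref{eq-barguti}), and then uses the reduced height-exchange relations of Lemma~\ref{reduced-height} to rewrite the whole product \eqref{eq-reduced-central1} (resp.\ \eqref{eq-reduced-central2}) as a nonzero scalar times one boundary-parallel diagram, as in \eqref{eq-gaa}. Centrality is then the single diagrammatic identity \eqref{eq-gaa1}: sliding any transversal strand from above to below this diagram (i.e.\ changing the relative boundary height order, which is exactly what distinguishes $z\alpha$ from $\alpha z$) does not change the skein. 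If you want to salvage your approach, you must either carry out the exact weight computation edge by edge with the correct multi-index weights of the reduced corner arcs (treating separately the even alternating pattern and the odd symmetric pattern), or, more efficiently, follow the paper's route of merging the product into a single diagram and proving the height-invariance identity once.
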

\begin{proof}
    \cite[Lemma 7.5 (b)]{LY23} implies elements $\bar\gaa_{u_{t,i}}$ are $q$-commuting with each other. 
    This means $\bar\gaa_{u_{t,i}} \bar\gaa_{u_{t',i'}} = q^{l}
    \bar\gaa_{u_{t',i'}} \bar\gaa_{u_{t,i}} $ for some integer $l$.
    Thus it suffices to show the case when the integer $k$ in (a) and (b) is $1$.

    \cite[Lemma 7.5 (c)]{LY23} implies
    \begin{align}\label{eq-barguti}
    \bar\gaa_{u_{t,i}}=c_{t,i}\begin{array}{c}\includegraphics[scale=0.38]{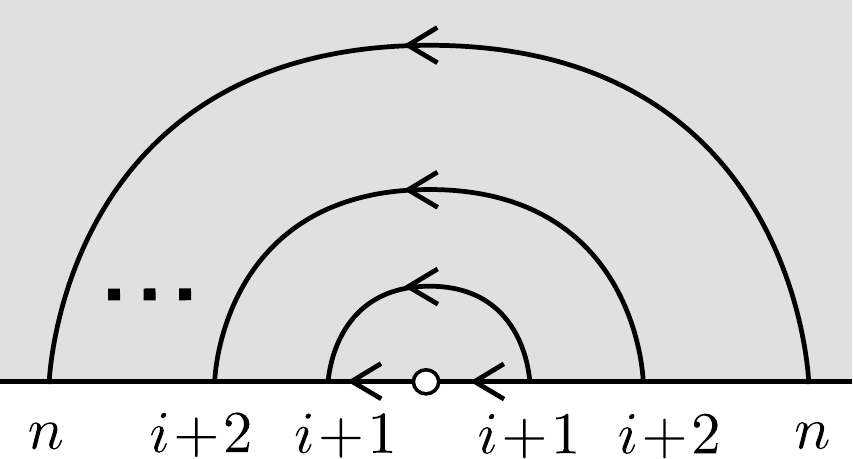}\end{array}.
    \end{align}
    Here $c_{t,i}\in\mathcal R\setminus\{0\}$ and the edge on the right of the puncture is $e_t$. 
    We will use $\begin{array}{c}\includegraphics[scale=0.7]{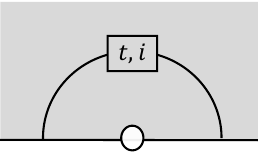}\end{array}$ to denote the stated $n$-web diagram in \eqref{eq-barguti}.

    Lemma \ref{reduced-height} shows 
    \begin{equation}\label{eq-gaa}
    \begin{split}
        &\bar \gaa_{u_{1,i}} \bar \gaa_{u_{2,n-i}}\cdots \bar \gaa_{u_{r-1,i}} \bar \gaa_{u_{r,n-i}}\\=& c
        \begin{array}{c}\includegraphics[scale=0.7]{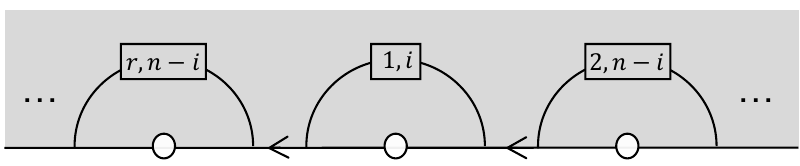}\end{array}
        \end{split}
    \end{equation}
    for some $c\in\mathcal R\setminus\{0\}$ 
    implies 
    \begin{equation}\label{eq-gaa1}
    \begin{split}
        \begin{array}{c}\includegraphics[scale=0.7]{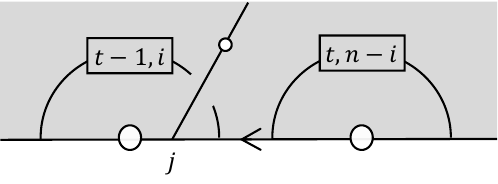}\end{array}=
        \begin{array}{c}\includegraphics[scale=0.7]{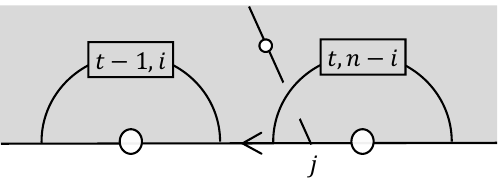}\end{array}.
        \end{split}
    \end{equation}
    Equations \eqref{eq-gaa} and \eqref{eq-gaa1} imply $\bar \gaa_{u_{1,i}} \bar \gaa_{u_{2,n-i}}\cdots \bar \gaa_{u_{r-1,i}} \bar \gaa_{u_{r,n-i}}$ is central.

    Similarly, as above, we can show elements in (b) are central. 
    
\end{proof}

We use $\bar {\mathsf B}$ to denote the subalgebra of $\overline \cS_n(\Sigma,\mathbbm{v})$ generated by central elements in Lemma \ref{reduced-boundary_center}.

In the rest of this section, we always assume $\Sigma$ is a connected triangulable essentially pb surface without interior punctures. 
Fix a triangulation $\lambda$ of $\Sigma$. 

Suppose $\overline{\Sigma}$ has boundary components $\partial_1,\cdots,\partial_b$, and each $\partial_i$ contains $r_i$ punctures.
For each $i$  we label the boundary components of $\Sigma$ contained in $\partial_i$ consecutively from
$1$ to $r_i$ following the positive orientation of $\partial_i$.

For any positive integer $t$ and ${\bf k}=(k_1,\cdots,k_t)\in\mathbb Z^t$, define $\cev{{\bf k}} = (k_t,\cdots,k_1)$.

Let $\overline \Lambda_{\partial}$ be the set consisting of ${\bf k}\in \mathbb Z^{\overline V_{\lambda}}$ with the following properties with the order of the small vertices on $\partial_i$ defined by following the positive orientation of $\partial_i$:
\begin{enumerate}
    \item ${\bf k}|_{\obVl} = {\bf 0}$, 
    \item for each even $r_i$, we have ${\bf k}|_{\partial_i} = ({\bf a},\cdots,{\bf a})$, where ${\bf a}\in\mathbb Z^n$, 
    \item for each odd $r_i$, we have ${\bf k}|_{\partial_i} = ({\bf b},\cdots,{\bf b})$, where ${\bf b}\in\mathbb Z^n$ and $\cev{{\bf b}} = {\bf b}.$ 
\end{enumerate}
It is easy to show that $\overline\Lambda_{\partial}$ is a subgroup of $\mathbb Z^{\overline V_{\lambda}}$ and 
$$\overline{{\rm tr}}_{\lambda}^A(\bar{\mathsf{B}}) = 
\{a^{\bf k}\mid \mathbf{k}\in \overline\Lambda_{\partial}\}.$$
Then Lemma \ref{reduced-boundary_center} implies 
$\{a^{\bf k}\mid k\in \overline\Lambda_{\partial}\}\subset\mathcal Z(\overline{\mathcal A}_v(\Sigma,\lambda))$.

\subsection{The center when $\hat q$ is a root of unity}

\begin{lem}\label{reduced-balance}
Let $\Si$ be a triangulable essentially bordered pb surface with a triangulation $\lambda$.
    Suppose $\mathbf{k}\in\mathbb Z^{\bar V_{\lambda}}$ 
    is balanced, and $e$ is a boundary component of $\Sigma$ containing vertices $x_1,\cdots,x_{n-1}$ (the orientation of $e$ gives the labeling of these vertices). Then there exists $l\in\mathbb Z$ such that $(\mathbf{k}(x_1),\mathbf{k}(x_2),\cdots,\mathbf{k}(x_{n-1})) - l(1,2,\cdots,n-1) =\bm{0}$ in $\mathbb Z_n$.
\end{lem}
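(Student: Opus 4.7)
The plan is to mimic the proof of Lemma \ref{balance} in the non-reduced setting, with the only substantive change being that the boundary edge $e$ now lies in an ordinary face of $\lambda$ rather than in an attached triangle. The key input is Proposition \ref{prop:LY23_11.10} together with the definition of balancedness: since $\mathbf{k}\in\mathbb Z^{\overline V_\lambda}$ is balanced, for the unique face $\tau\in\cF_\lambda$ whose boundary contains the boundary edge $e$ of $\Sigma$, the pullback $f_\tau^\ast \mathbf{k}\in\mathbb Z^{\overline V}$ is balanced on the model triangle $\bP_3$, so we may write
\[
f_\tau^\ast \mathbf{k} \;=\; n\mathbf{b} \;+\; s_1\proj_1 \;+\; s_2\proj_2 \;+\; s_3\proj_3
\]
for some $\mathbf{b}\in\mathbb Z^{\overline V}$ and $s_1,s_2,s_3\in\mathbb Z$.

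Next, I would identify $e$ with one of the three edges $e_1,e_2,e_3$ of $\bP_3$ via $f_\tau$. Since the orientation of $\partial\Sigma$ induced from $\Sigma$ coincides with the counterclockwise orientation of the triangle $\tau$ along $e$, the ordered list of small vertices $x_1,\dots,x_{n-1}$ on $e$ corresponds under $f_\tau$ to the list of small vertices on the chosen $e_j$ labeled by the barycentric formulas recalled in Section \ref{subsec:FGalg}: on $e_1$ these are $(n-i,i,0)$, on $e_2$ they are $(0,n-i,i)$, and on $e_3$ they are $(i,0,n-i)$, for $i=1,\dots,n-1$.

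A direct evaluation of $f_\tau^\ast \mathbf{k}$ at these points then yields
\[
\mathbf{k}(x_i) \;\equiv\;
\begin{cases}
(s_2-s_1)\,i \pmod n & \text{if } e \leftrightarrow e_1,\\
(s_3-s_2)\,i \pmod n & \text{if } e \leftrightarrow e_2,\\
(s_1-s_3)\,i \pmod n & \text{if } e \leftrightarrow e_3,
\end{cases}
\]
since the contribution of $n\mathbf{b}$ vanishes mod $n$. In every case the right-hand side has the form $l\,i\pmod n$ for a single integer $l$ depending only on $\tau$ and on which edge of $\bP_3$ corresponds to $e$, which is exactly the conclusion.

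There is no real obstacle here; the only thing to double-check is the orientation convention, namely that the positive orientation of $\partial\Sigma$ along $e$ matches the counterclockwise orientation on the boundary of $\tau$, so that the indexing $x_1,\dots,x_{n-1}$ in the statement lines up with the indexing $i=1,\dots,n-1$ used on $e_j\subset\partial\bP_3$. Once this is noted, the computation is three lines identical to the corresponding step in the proof of Lemma \ref{balance}, and as before $l$ is only well-defined as an element of $\mathbb Z_n$ but any integer lift suffices for the statement.
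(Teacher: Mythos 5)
Your proposal is correct and matches the paper's approach: the paper's proof simply states ``the proof is the same with Lemma \ref{balance}'', and you carry out exactly that argument, restricting to the face of $\lambda$ containing $e$ rather than an attached triangle and evaluating the balanced pullback at the small vertices on whichever of $e_1,e_2,e_3$ the edge corresponds to. The case check for the three edges and the orientation remark are the right details to make ``the same proof'' explicit.
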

\begin{proof}
    The proof is the same with Lemma \ref{balance}.
\end{proof}

\begin{lem}\label{lem-reverse}
    Suppose ${\bf a}=(a_1,\cdots,a_{n-1})\in\mathbb Z^{n-1}$ such that ${\bf a} = \cev{{\bf a}}\text{ in }\mathbb Z_{m'}$. Then there exists ${\bf b}\in\mathbb Z^{n-1}$ such that ${\bf b}+\cev{{\bf b}}= {\bf a}\text{ in }\mathbb Z_{m'}.$
\end{lem}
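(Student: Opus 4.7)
The plan is to exploit the standing assumption of Section~6 (inherited from Section~5) that $m''$ is odd, which forces $m'=m''/d'$ to be odd as well; hence $2$ is invertible in $\mathbb Z_{m'}$. Once the element $2^{-1}\in\mathbb Z_{m'}$ is available, the palindromic hypothesis ${\bf a}=\cev{\bf a}$ in $\mathbb Z_{m'}$ reduces the whole statement to a one-line computation, and no further structural input is needed.

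Concretely, I will choose, for each $i=1,\dots,n-1$, an integer representative $b_i\in\mathbb Z$ of $2^{-1}a_i\in\mathbb Z_{m'}$, and set ${\bf b}=(b_1,\dots,b_{n-1})\in\mathbb Z^{n-1}$. Then $\cev{\bf b}=(b_{n-1},\dots,b_1)$ is a representative of $2^{-1}\cev{\bf a}$ in $\mathbb Z_{m'}$, which coincides with $2^{-1}{\bf a}$ in $\mathbb Z_{m'}$ by the palindromic hypothesis ${\bf a}=\cev{\bf a}$. Adding the two $(n-1)$-tuples componentwise gives
\[
{\bf b}+\cev{\bf b}\equiv 2^{-1}{\bf a}+2^{-1}{\bf a}=2\cdot 2^{-1}{\bf a}={\bf a}\qquad\text{in }\mathbb Z_{m'},
\]
which is exactly the conclusion.

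There is essentially no obstacle: the only point one might scrutinize is the passage from invertibility of $2$ modulo $m'$ to the choice of integer lifts, but since the required equality ${\bf b}+\cev{\bf b}={\bf a}$ is only asserted in $\mathbb Z_{m'}$, any componentwise integer representative suffices. Thus the entire argument is a direct consequence of the parity of $m''$ combined with the symmetry of ${\bf a}$.
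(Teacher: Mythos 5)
Your proof is correct, and it relies on exactly the same external input as the paper's: the oddness of $m'$ (which, as you say, follows from the standing assumption that $m''$ is odd, since $m'$ divides $m''$); note that this hypothesis is not written into the statement of the lemma, but the paper's own proof also invokes it, so you are not assuming more than the paper does. Where you differ is in the explicit witness. The paper splits on the parity of $n$: for $n$ odd it takes ${\bf b}=(a_1,\cdots,a_{\frac{n-1}{2}},0,\cdots,0)$, so that $\cev{{\bf b}}$ supplies the second half via the palindromic hypothesis $a_j\equiv a_{n-j}$; for $n$ even it does the same but replaces the middle coordinate by an integer $k$ with $2k\equiv a_{\frac n2}$ in $\mathbb Z_{m'}$, which is the only place oddness of $m'$ enters. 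Your choice ${\bf b}\equiv 2^{-1}{\bf a}$ is more uniform (no case split, and the verification ${\bf b}+\cev{{\bf b}}\equiv 2^{-1}{\bf a}+2^{-1}\cev{{\bf a}}\equiv{\bf a}$ is a single line), at the cost of using invertibility of $2$ in every coordinate; the paper's half-support construction needs it only for the middle entry, so in the odd-$n$ case its witness works even without any parity assumption on $m'$. Both arguments are complete and elementary, so this is a matter of taste rather than a mathematical difference.
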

\begin{proof}
    If $n$ is odd, we set ${\bf b} = (a_1,\cdots,a_{\frac{n-1}{2}},0,\cdots,0)$.

    Suppose $n$ is even. Since $m'$ is odd, there exists $k\in\mathbb Z$ such that $2k = a_{\frac{n}{2}}\in\mathbb Z_{m'}$. Set 
    ${\bf b} = (a_1,\cdots,a_{\frac{n}{2}-1},k,0,\cdots,0)$.
\end{proof}

\begin{lem}\label{lem-reverse-d}
    Suppose ${\bf a}=(a_1,\cdots,a_{n-1})\in\mathbb Z^{n-1}$ such that ${\bf a} = \cev{{\bf a}}\text{ in }\mathbb Z_{m'}$ and ${\bf a} = \bm{0}\text{ in }\mathbb Z_{d}$. Then there exists ${\bf b}\in\mathbb Z^{n-1}$ such that ${\bf b}+\cev{{\bf b}}= {\bf a}\text{ in }\mathbb Z_{m'}$ and 
    ${\bf b} = \bm{0}\in\mathbb Z_{d}$.
\end{lem}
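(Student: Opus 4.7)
\medskip

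The plan is to mirror the proof of Lemma \ref{lem-reverse} but to choose the representatives more carefully so that the additional congruence ${\bf b}\equiv{\bf 0}\pmod{d'}$ is automatic. Recall from Condition $(\ast)$ that $m'' = m' d'$ and, in this section, $m''$ is odd; in particular both $m'$ and $d'$ are odd, so $2$ is a unit in $\mathbb Z_{m'}$, in $\mathbb Z_{d'}$, and in $\mathbb Z_{m''}$.

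For $n$ odd I would keep the same construction ${\bf b} = (a_1,\dots,a_{(n-1)/2},0,\dots,0)$ used in Lemma \ref{lem-reverse}. The identity ${\bf b}+\cev{{\bf b}} = {\bf a}$ in $\mathbb Z_{m'}$ is already established there using the symmetry ${\bf a}=\cev{{\bf a}}$; since every entry of ${\bf b}$ is either one of the $a_i$ (which are $\equiv 0\pmod{d'}$ by hypothesis) or literally $0$, the second requirement ${\bf b}\equiv {\bf 0}\pmod{d'}$ is immediate.

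For $n$ even the only non-zero auxiliary entry is the middle coordinate $k$, chosen so that $2k = a_{n/2}$ in $\mathbb Z_{m'}$. The step I need to strengthen is precisely the choice of $k$: I want a single integer $k$ satisfying both $2k\equiv a_{n/2}\pmod{m'}$ and $k\equiv 0\pmod{d'}$. I would obtain it by working one level up in $\mathbb Z_{m''}$: since $m''$ is odd, $2$ is invertible there, so I can pick $k\in\mathbb Z$ with $2k\equiv a_{n/2}\pmod{m''}$. This automatically gives $2k\equiv a_{n/2}\pmod{m'}$, and reducing mod $d'$ yields $2k\equiv a_{n/2}\equiv 0\pmod{d'}$; invertibility of $2$ in $\mathbb Z_{d'}$ then forces $k\equiv 0\pmod{d'}$. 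Taking ${\bf b} = (a_1,\dots,a_{n/2-1},k,0,\dots,0)$ gives a vector all of whose entries are divisible by $d'$, and the check that ${\bf b}+\cev{{\bf b}} = {\bf a}$ in $\mathbb Z_{m'}$ is identical to that in Lemma \ref{lem-reverse}.

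The only subtlety is the even $n$ case, where naively the mod $m'$ and mod $d'$ constraints on $k$ look independent and might clash when $\gcd(m',d')>1$. The resolution, and the only real content beyond Lemma \ref{lem-reverse}, is the observation that lifting to the ring $\mathbb Z_{m''}$ handles both congruences simultaneously precisely because $m''$ is odd; no Chinese remainder style juggling is needed.
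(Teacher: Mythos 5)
Your proof is correct and takes essentially the same route as the paper: reuse the construction of Lemma \ref{lem-reverse} verbatim and note that, since $d'$ is odd, $2k\equiv 0\pmod{d'}$ forces $k\equiv 0\pmod{d'}$, while all other entries of $\mathbf{b}$ are entries of $\mathbf{a}$ or zero. Your choice of $k$ via $2k\equiv a_{n/2}\pmod{m''}$ merely makes explicit the simultaneous mod-$m'$ and mod-$d'$ solvability that the paper's one-line remark leaves implicit, and it is a valid way to do so since $m''=m'd'$ is odd.
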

\begin{proof}
    The proof is the same with Lemma \ref{lem-reverse}. Note that  $2k=0\in\mathbb Z_{d}$ implies $k=0\in\mathbb Z_{d}$ since $d$ is odd. 
\end{proof}

We write $G = (G_1,\cdots,G_{n-1})^{T}$. It is easy to show $\overleftarrow{G_i} = G_{n-i}$ for $1\leq i\leq n-1$.
\begin{lem}\label{lem-G-reverse}
    (a) For any ${\bf k}=(k_1,\cdots,k_{n-1})\in\mathbb Z^{n-1}$, we have
    $\cev{{\bf k}}G={\bf k}G'=\overleftarrow{{\bf k} G}$. 

    (b) For any ${\bf k}\in\mathbb Z^{n-1}$, 
    ${\bf b}:={\bf k}(G+G')$ satisfies ${\bf b} = \cev{{\bf b}}$.

    (c) For any ${\bf c}\in\mathbb Z^{n-1}$, we have
    $({\bf c}+\cev{{\bf c}})(G+G') = 2{\bf c}G + 2\overleftarrow{{\bf c}G}$.
\end{lem}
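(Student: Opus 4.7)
The plan is to derive all three parts directly from the row-reversal identity $\overleftarrow{G_i}=G_{n-i}$ stated immediately before the lemma, treating $G$ row-by-row rather than entry-by-entry. Writing ${\bf k}G=\sum_{i=1}^{n-1}k_i G_i$ as a linear combination of rows is the organizing device; all three claims will reduce to bookkeeping about how the reversal operation $\cev{\,\cdot\,}$ interacts with this expansion.

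For part (a), I would first note that $G'$, being $G$ with the rows reversed, has $i$-th row equal to $G_{n-i}$, so that ${\bf k}G'=\sum_i k_i G_{n-i}$. Reindexing the sum via $j=n-i$ immediately gives ${\bf k}G'=\sum_j k_{n-j}G_j=\cev{{\bf k}}G$. For the third identity, apply the row-reversal identity termwise: $\overleftarrow{{\bf k}G}=\sum_i k_i\overleftarrow{G_i}=\sum_i k_i G_{n-i}$, which is the same expression. Hence all three quantities coincide.

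Part (b) is then a one-line consequence. Writing ${\bf b}={\bf k}G+{\bf k}G'$ and applying part (a) twice yields $\cev{{\bf b}}=\overleftarrow{{\bf k}G}+\overleftarrow{{\bf k}G'}=\cev{{\bf k}}G+\cev{{\bf k}}G'={\bf k}G'+{\bf k}G={\bf b}$, using that $\cev{\cev{{\bf k}}}={\bf k}$ in the middle step. Part (c) is similar: expanding $({\bf c}+\cev{{\bf c}})(G+G')$ into four terms and then identifying ${\bf c}G'=\overleftarrow{{\bf c}G}$, $\cev{{\bf c}}G=\overleftarrow{{\bf c}G}$, and $\cev{{\bf c}}G'={\bf c}G$ via (a), two of the terms equal ${\bf c}G$ and the other two equal $\overleftarrow{{\bf c}G}$, giving $2{\bf c}G+2\overleftarrow{{\bf c}G}$.

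There is no real obstacle here; the only thing to be careful about is the reindexing in (a), since the rest of the lemma is a formal consequence. I would present (a) as a short two-line calculation and derive (b) and (c) as immediate corollaries in a single paragraph, to keep the proof compact.
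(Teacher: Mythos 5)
Your proposal is correct and follows essentially the same route as the paper: both proofs expand ${\bf k}G$ and ${\bf k}G'$ as linear combinations of the rows $G_i$, use the identity $\overleftarrow{G_i}=G_{n-i}$ (equivalently, that the $i$-th row of $G'$ is $G_{n-i}$) with a reindexing to get (a), and then obtain (b) and (c) as formal consequences of (a). No gaps; the argument is fine as written.
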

\begin{proof}
(a) We have 
$$\cev{{\bf k}}G =  k_{n-1} G_1+\cdots+k_1G_{n-1} = k_1\overleftarrow{G_1}+\cdots+  k_{n-1}\overleftarrow{G_{n-1}} = \overleftarrow{{\bf k} G}$$
and 
$${\bf k}G' =  k_{n-1} G_1+\cdots+k_1G_{n-1} = k_1\overleftarrow{G_1}+\cdots+  k_{n-1}\overleftarrow{G_{n-1}} = \overleftarrow{{\bf k} G}.$$
(b) follows from (a).\\
(c) From (a), we have 
$$({\bf c}+\cev{{\bf c}})(G+G')=( {\bf c} G+ \cev{{\bf c}} G')+(\cev{{\bf c}} G+{\bf c} G') = 2{\bf c}G + 2\overleftarrow{{\bf c}G}.$$
\end{proof}

Let $\Si$ be an essentially bordered pb surface.
Recall that we introduced a triangulation $\mu$
in Section~\ref{sub:quantum-torus-reduced}. Actually, $\mu$ belongs to a special family of triangulations, i.e., we made specific choices for some ideal arcs near $\partial\Si$.
In the rest of this section, we will assume the triangulation $\lambda$ of $\Si$ is $\mu$.

Define
$$\overline \Lambda_{m'}= \{\mathbf{k}\in\mathbb Z^{\overline V_{\lambda}} \mid 
\mathbf{k}\barK_{\lambda} =\bm{0} \text{ in }\mathbb Z_{m'}\}
,\quad \overline\Lambda_{m'}^{+}= \{\mathbf{k}\in\mathbb N^{\overline V_{\lambda}} \mid 
\mathbf{k} \barK_{\lambda} =\bm{0} \text{ in }\mathbb Z_{m'} \}.$$

Let $\overline \Lambda_z$ denote the subgroup of $\mathbb Z^{\overline V_{\lambda}}$ generated by $\overline \Lambda_{m'}$ and $\overline \Lambda_{\partial}$.

\begin{thm}\label{center_torus-reduced}
Let $\Sigma$ be a triangulable essentially
bordered pb surface without interior punctures, and $\lambda=\mu$ be a triangulation of $\Sigma$ introduced in Section~\ref{sub:quantum-torus-reduced}.
If $m''$ is odd, we have 
$$\mathcal Z(\rA) = \mathbb C\text{-span}\{a^{\bf k}\mid \mathbf{k}\in \overline\Lambda_{z}\}.$$
\end{thm}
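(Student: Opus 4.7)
The plan is to mimic the strategy used for Theorem~\ref{center_torus}, but with the matrices $\overline{\mathsf{K}}_\mu$, $\overline{\mathsf{Q}}_\mu$ and the more refined block structure of $\overline{\mathsf{K}}_\mu\overline{\mathsf{Q}}_\mu$ and $\overline{\mathsf{K}}_\mu$ supplied by Lemmas~\ref{lem-reduced-P}, \ref{lem:reduced_P_n}, and~\ref{reduced-K}. One inclusion is already essentially proved: every $a^{\mathbf{k}}$ with $\mathbf{k}\in\overline{\Lambda}_{m'}$ lies in the center by Lemma~\ref{PI} (via Proposition~\ref{prop:LY23_11.10} applied in the reduced setting combined with $\overline{\mathsf{K}}_\lambda \overline{\mathsf{Q}}_\lambda = O$ modulo $m'$ on the relevant vectors), and every $a^{\mathbf{k}}$ with $\mathbf{k}\in\overline{\Lambda}_\partial$ is central by Lemma~\ref{reduced-boundary_center} combined with $\overline{tr}{}^A_\lambda(\bar\gaa_v)=a_v$. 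The content is therefore the reverse inclusion.

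To prove the reverse inclusion, take $a^{\mathbf{t}_0}$ in $\mathcal{Z}(\overline{\cA}(\Sigma,\lambda))$. Since $m''$ is odd, $\gcd(m'',2n)=d'$. By Lemma~\ref{quantum}(b) and $\overline{\mathsf{P}}_\lambda=\overline{\mathsf{K}}_\lambda\overline{\mathsf{Q}}_\lambda\overline{\mathsf{K}}_\lambda^T$, setting $\mathbf{k}_0:=\mathbf{t}_0\overline{\mathsf{K}}_\lambda$ I would obtain
\begin{equation*}
\overline{\mathsf{K}}_\lambda\overline{\mathsf{Q}}_\lambda\,\mathbf{k}_0^T=\bm{0}\quad\text{in }\mathbb{Z}_{m''}.
\end{equation*}
Write $\mathbf{k}_0=(\mathbf{k}_1,\mathbf{k}_2)$ with $\mathbf{k}_1\in\mathbb{Z}^{\mathring{\overline{V}}_\lambda}$ and $\mathbf{k}_2\in\mathbb{Z}^{\overline{V}_\lambda\setminus\mathring{\overline{V}}_\lambda}$. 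Using the block form \eqref{eq;matrix_P} and Lemma~\ref{lem:reduced_P_n}, this becomes the two conditions
\begin{equation*}
-2n\mathbf{k}_1^T + P'\mathbf{k}_2^T=\bm{0},\qquad P\mathbf{k}_2^T=\bm{0}\quad\text{in }\mathbb{Z}_{m''},
\end{equation*}
where $P=\mathrm{diag}\{P_1,\dots,P_b\}$ is the boundary-wise block matrix from Lemma~\ref{lem-reduced-P}.

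I would then analyze each boundary component $\partial_i$ separately through the equation $P_i(\mathbf{k}_2|_{\partial_i})^T=\bm{0}$. For even $r_i$, this is precisely the analysis already carried out in the proof of Theorem~\ref{center_torus}; it yields (after reducing mod $d'$) that $\mathbf{k}_2|_{\partial_i}$ has the alternating form $(\mathbf{b}_{i1},-\mathbf{b}_{i1},\dots,\mathbf{b}_{i1},-\mathbf{b}_{i1})$ with $\mathbf{b}_{i1}$ satisfying a balancedness condition as in Lemma~\ref{reduced-balance}. For $r_i=1$, direct inspection of $P_i=-nI+nI'$ shows that $\mathbf{k}_2|_{\partial_i}$ satisfies $\mathbf{k}_2|_{\partial_i}=\cev{\mathbf{k}_2|_{\partial_i}}$ mod $m'$. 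For odd $r_i>1$, the three-block form of $P_i$ forces an alternating pattern on the $(W_i,U_i)$-components and on the $V_i$-component a symmetric condition involving the vertices $a_j$ and $a_{n-j}$ on $e_{r_i}$.

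Next, guided by the decomposition of $K_\partial$ in Lemma~\ref{reduced-K} with its block $G$, $\widetilde{G}$, $E_G$ factors, I would build an auxiliary vector $\mathbf{d}\in\mathbb{Z}^{\overline{V}_\lambda}$ supported on the boundary-component pieces such that $\mathbf{f}:=\mathbf{d}\overline{\mathsf{K}}_\lambda$ agrees with $\mathbf{k}_0$ modulo $d'$; the existence of the required preimages uses Lemmas~\ref{matrixG} and~\ref{lem;Im_mu} (and their obvious reduced analogues) and, crucially for odd $r_i$, the symmetry lemmas~\ref{lem-reverse}, \ref{lem-reverse-d}, and~\ref{lem-G-reverse} which exploit oddness of $m''$. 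The vector $\mathbf{d}$ will be constructed so that it corresponds to an element of $\overline{\Lambda}_\partial$; the symmetric block $G+G'$ in Lemma~\ref{reduced-K} for $r_i=1$ and the palindromic pattern for odd $r_i>1$ are exactly what force the membership in $\overline{\Lambda}_\partial$ as defined through the condition $\cev{\mathbf{b}}=\mathbf{b}$.

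Finally, setting $\mathbf{h}:=\mathbf{f}-\mathbf{k}_0$, one gets $\mathbf{h}=\bm{0}$ in $\mathbb{Z}_{d'}$ and $\mathbf{h}$ again satisfies the kernel equations; a second lifting using the same symmetry lemmas produces $\mathbf{x}\in\overline{\Lambda}_\partial$ with $\mathbf{y}:=\mathbf{x}\overline{\mathsf{K}}_\lambda$ matching $\mathbf{h}$ in $\mathbb{Z}_{m'}$. Then $\mathbf{z}:=\mathbf{t}_0-\mathbf{d}+\mathbf{x}$ satisfies $\mathbf{z}\overline{\mathsf{K}}_\lambda=\mathbf{y}-\mathbf{h}=\bm{0}$ mod $m'$, hence $\mathbf{z}\in\overline{\Lambda}_{m'}$; therefore $\mathbf{t}_0=\mathbf{d}-\mathbf{x}+\mathbf{z}\in\overline{\Lambda}_z$ and $a^{\mathbf{t}_0}\in\mathbb{C}\text{-span}\{a^{\mathbf{k}}\mid\mathbf{k}\in\overline{\Lambda}_z\}$.

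The main obstacle is the odd boundary case, where the $V_i$-block generates a symmetric constraint $\mathbf{b}=\cev{\mathbf{b}}$ that has no analogue in the non-reduced proof; handling this requires the specific triangulation $\mu$ so that Lemma~\ref{reduced-K} produces the symmetric matrix $G+G'$ (for $r_i=1$) or the block $(E_G^T;\cdot)$ linking $U_i$ and $V_i$ (for $r_i>1$), together with the oddness of $m''$ to ensure $2$ is invertible in $\mathbb{Z}_{m''}$ when solving for the lifts via Lemmas~\ref{lem-reverse} and~\ref{lem-reverse-d}.
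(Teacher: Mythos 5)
Your overall architecture is the same as the paper's: pass to $\mathbf{k}_0=\mathbf{t}_0\overline{\mathsf{K}}_\lambda$, split the kernel condition into the interior equation $-2n\mathbf{k}_1^T+P'\mathbf{k}_2^T=\bm{0}$ and the boundary equation $P\mathbf{k}_2^T=\bm{0}$, solve boundary component by component, and then perform the two-stage lifting (first mod $d'$, then mod $m'$) via Lemmas~\ref{lem-reverse}, \ref{lem-reverse-d}, \ref{lem-G-reverse} and the blocks of $K_\partial$ in Lemma~\ref{reduced-K}, ending with $\mathbf{t}_0=\mathbf{d}-\mathbf{x}+\mathbf{z}$. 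However, there is a concrete error in the step you delegate to the non-reduced proof. For an even boundary component the relevant block is not $B_i-A_i$ but $P_i=\begin{pmatrix} A & -A\\ B & A\end{pmatrix}$ from Lemma~\ref{lem-reduced-P} (with $A=-nI$), and its kernel mod $m''$ forces $\mathbf{k}_i'\equiv\mathbf{k}_i''$ and $\mathbf{b}_{i,j}\equiv\mathbf{b}_{i,j+1}$ in $\mathbb{Z}_{m'}$: the per-edge blocks are all \emph{equal}, not alternating in sign. The same holds for odd $r_i$, where additionally each block is palindromic. This constant (resp.\ constant palindromic) shape is exactly what the reduced $\overline{\Lambda}_{\partial}$ encodes; the alternating pattern $(\mathbf{b},-\mathbf{b},\dots)$ you assert is the shape of the \emph{non-reduced} $\Lambda_\partial$ and is not an element of $\overline{\Lambda}_\partial$. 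As written, your correction vectors $\mathbf{d},\mathbf{x}$ would either be built with alternating blocks (hence lie outside $\overline{\Lambda}_\partial$, ruining the conclusion $\mathbf{t}_0\in\overline{\Lambda}_z$) or lie in $\overline{\Lambda}_\partial$ but fail to match $\mathbf{k}_2$ mod $d'$, so the matching step collapses. The fix is not to import the computation from Theorem~\ref{center_torus} but to redo it with the reduced blocks, which is precisely what the different triangulation $\mu$ is for.

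Two smaller glosses: the first lift only matches the boundary part, $\mathbf{f}_2\equiv\mathbf{k}_2$ in $\mathbb{Z}_{d'}$ (nothing is claimed about $\mathbf{f}_1$), and similarly the second lift only gives $\mathbf{y}_2\equiv\mathbf{h}_2$ in $\mathbb{Z}_{m'}$; the interior components are recovered only at the end, from the equation $-2n(\mathbf{y}_1-\mathbf{h}_1)^T+P'(\mathbf{y}_2-\mathbf{h}_2)^T=\bm{0}$ together with Lemma~\ref{lem:reduced_P_n} (entries of $P'$ in $\{0,n\}$) and oddness of $m''$. You cite that lemma but should make this final interior step explicit rather than asserting that $\mathbf{f}$ (or $\mathbf{y}$) agrees with its target on all of $\overline{V}_\lambda$ directly from the boundary construction.
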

While the following proof is similar to the proof of Theorem~\ref{center_torus}, we will use different matrices and discuss more to show Theorem~\ref{center_torus-reduced}. 
Hence, we will give a full proof here. 
\begin{proof}
    It is obvious that $\mathbb C\text{-span}\{a^{\bf k}\mid \mathbf{k}\in \overline\Lambda_{m'}\}\subset \mathcal Z(\rA).$
    Lemma \ref{reduced-boundary_center} implies 
    $\mathbb C\text{-span}\{a^{\bf k}\mid \mathbf{k}\in \overline\Lambda_{\partial}\}\subset \mathcal Z(\rA).$
    Then we have $\mathbb C\text{-span}\{a^{\bf k}\mid \mathbf{k}\in \overline\Lambda_{z}\}\subset \mathcal Z(\rA).$

     Since $m''$ is odd, we have $\gcd(m'',2n) = \gcd(m'',n) = d'$.

    Lemma \ref{quantum} implies $\mathcal Z(\rA) = \mathbb C\text{-span}\{a^{\tft}\mid  \langle {\bf t},{\bf t}'\rangle_{\sfP_{\lambda}}=0\in \mathbb{Z}_{m''},\forall {\bf t}'\in\mathbb{Z}^{\overline V_{\lambda}}  \}$.
    Suppose $\tft_0\in\mathbb Z^{\overline V_{\lambda}}$ such that $\tft \sfP_{\lambda} \tft_0^{T} = 0$ {in $\mathbb Z_{m''}$} for any $\tft\in \mathbb Z^{V_{\lambda}'}$, where we regard $\tft,\tft_0$ as row vectors.  
    Lemma~\ref{lem:invertible_KH} implies $\tft \barK_{\lambda} \barQ_{\lambda} (\tft_0\barK_{\lambda})^T = 0 $ in $\mathbb Z_{m''}$ for all $\tft\in \mathbb Z^{\overline V_{\lambda}}.$
    Set $\tfk _0= \tft_0\barK_{\lambda}$. Then we have $\barK_{\lambda}\barQ_{\lambda} \tfk _0^{T}=\bm{0}$ in $\mathbb Z_{m''}.$
    
    We regard $\mathbf{k}_0 = (
        \tfk_1,\tfk_2
    )\in \bZ^{\overline{V}_{\lambda}}$ with $\tfk_1\in \bZ^{\obVl}$ and $\tfk_2\in \bZ^{\overline V_{\lambda}\setminus \obVl}$.  From 
    \eqref{eq;matrix_P}, we have 
    \begin{equation}\label{eq-reduced-key}
    \begin{cases}
    -2n\bk_1^T+ P' \bk_2^T=\bm{0},\\
    P \bk_2^T=\bm{0},
    \end{cases}\text{ in $\mathbb Z_{m''}$}.
    \end{equation}
     Equation \eqref{eq-reduced-key} is equivalent to the following one
\begin{equation}\label{eq-reduced-key1}
    \begin{cases}
    -2\bk_1^T+ \frac{1}{n}P' \bk_2^T=\bm{0},\\
    \frac{1}{n}P \bk_2^T=\bm{0},
    \end{cases}\text{ in $\mathbb Z_{m'}$}.
    \end{equation}

Suppose $\overline{\Sigma}$ has boundary components $\partial_1,\cdots,\partial_b$, and each boundary component $\partial _i$ contains $r_i$ punctures.
Suppose $\tfk_2 =  (
\tfk_{\partial_1},\tfk_{\partial_2},\cdots,\tfk_{\partial_b}
)$ where $\tfk_{\partial_i}\in\mathbb Z^{r_i(n-1)}$ is the row vector associated to $\partial_i$ for each $1\leq i\leq b$.
Then Lemma \ref{lem-reduced-P} and $P\tfk_2^T =\bm{0}\in\mathbb Z_{m''}$ implies
$P_i\tfk_{\partial_i}^T=0\in\mathbb Z_{m''}$ for $1\leq i\leq b$.

(1) Suppose $r_i$ is even. Let us use $r$ to denote $r_i$ in this case.
Write $\tfk_{\partial_i} = ({\bf k}_i',{\bf k}_i'')$, where ${\bf k}_i'$ (resp. ${\bf k}_i''$) is the part associated to $W$
(resp. $U$).
Then Lemma \ref{lem-reduced-P} and $P_i\tfk_{\partial_i}^T=\bm{0}\in\mathbb Z_{m''}$ imply
\begin{align}\label{eq-reduced-AB}
\begin{cases}
    (A,{\frac{r}{2}})({\bf k}_i')^T-(A,{\frac{r}{2}})({\bf k}_i'')^T=\bm{0},\\
    (B,{\frac{r}{2}})({\bf k}_i')^T+ (A,{\frac{r}{2}})({\bf k}_i'')^T=\bm{0},
\end{cases}\text{ in $\mathbb Z_{m''}$}.
\end{align}
The first equality in  \eqref{eq-reduced-AB} implies
$n({\bf k}_i')^T-n({\bf k}_i'')^T=\bm{0} \text{ in }\mathbb Z_{m''}$. Then we have 
${\bf k}_i'={\bf k}_i''\text{ in }\mathbb Z_{m'}$.
Then the second equality in  \eqref{eq-reduced-AB} implies
$$(B,{\frac{r}{2}})({\bf k}_i')^T+ (A,{\frac{r}{2}})({\bf k}_i'')^T = \big((B,{\frac{r}{2}})+ (A,{\frac{r}{2}})\big)({\bf k}_i')^T=\bm{0}\text{ in }\mathbb Z_{m''}.$$
Suppose $\tfk_{i}'= ({\bf b}_{i1},{\bf b}_{i2},\cdots,{\bf b}_{i\frac{r}{2}})$.
Then we have 
\begin{equation}\label{eq's-reduced}
\begin{cases}
-n\tfb_{i1} + n\tfb_{i2} =\bm{0},\\
 -n\tfb_{i2} + n\tfb_{i3} =\bm{0},\\
\;\vdots\\
-n\tfb_{i\,\frac{r}{2}-1} + n\tfb_{i\frac{r}{2}} =\bm{0},\\
-n\tfb_{i\frac{r}{2}} + n\tfb_{i1} =\bm{0},
\end{cases}\text{ in $\mathbb Z_{m''}$}.
\end{equation}
Equation \eqref{eq's-reduced} implies 
$\tfb_{ij} = \cdots=\tfb_{i\frac{r}{2}}$
in $\mathbb Z_{m'}$.
Thus there exists a vector $\tfb_i\in\mathbb Z^{n-1}$
such that $\tfk_{\partial_i} = (\tfb_i,\cdots,\tfb_i)$ in $\mathbb Z_{m'}$.

(2) Suppose $r_i$ is odd and $r_i>1$. We also use $r$ to denote $r_i$ in this case. 
Write $\tfk_{\partial_i} = ({\bf k}_i',{\bf k}_i'',{\bf k}_i)$, where ${\bf k}_i'$ (resp. ${\bf k}_i''$) is the part associated to $W_i$
(resp. $U_i$).
Then Lemma \ref{lem-reduced-P} and $P_i\tfk_{\partial_i}^T=\bm{0}\in\mathbb Z_{m''}$ imply
\begin{align}\label{eq-reduced-ABE}
\begin{cases}
    (A,\frac{r-1}{2})({\bf k}_i')^T-(A,\frac{r-1}{2})({\bf k}_i'')^T=\bm{0},\\
    (B_O,\frac{r-1}{2})({\bf k}_i')^T+ (A,\frac{r-1}{2})({\bf k}_i'')^T + (E^T,{\frac{r-1}{2}}) (\tfk_i)^T=\bm{0},\\
    (E,{\frac{r-1}{2}})(\tfk_i')^T-n\tfk_i^T=\bm{0}
\end{cases}\text{ in $\mathbb Z_{m''}$}.
\end{align}
The first equation in  \eqref{eq-reduced-ABE} implies
${\bf k}_i'={\bf k}_i'' \text{ in }\mathbb Z_{m'}$.
Suppose $\tfk_{i}'= ({\bf b}_{i1},{\bf b}_{i2},\cdots,{\bf b}_{i\frac{r-1}{2}})$. 
Then the first and second equations in  \eqref{eq-reduced-ABE} imply
\begin{equation}\label{eq's-reduced-1}
\begin{cases}
-n\tfb_{i1} + nI'\tfk_i =\bm{0},\\
-n\tfb_{i1} + n\tfb_{i2} =\bm{0},\\
 -n\tfb_{i2} + n\tfb_{i3} =\bm{0},\\
\;\vdots\\
-n\tfb_{i\frac{r-3}{2}} + n\tfb_{i\frac{r-1}{2}} =\bm{0},
\end{cases}\text{ in $\mathbb Z_{m''}$}.
\end{equation}
Equation \eqref{eq's-reduced-1} implies 
$\tfb_{ij}  = \cdots=\tfb_{i\frac{r-1}{2}}
=\overleftarrow{{\bf k}_i}$ in $\mathbb Z_{m'}$.
The third equation in \eqref{eq-reduced-ABE} implies
$\tfb_{i\frac{r-1}{2}}={\bf k}_i$ 
in $\mathbb Z_{m'}$.
Thus there exists a vector $\tfb_i\in\mathbb Z^{n-1}$
such that $\tfb_i=\cev{\tfb}_i$ in $\mathbb Z_{m'}$ and $\tfk_{\partial_i} = (\tfb_i,\cdots,\tfb_i)$ in $\mathbb Z_{m'}$.

(3) Suppose $r_i=1$. Then $P_i\tfk_{\partial i} =\bm{0}$ in $\mathbb Z_{m''}$ implies $\overleftarrow{\tfk_{\partial _i}} = \tfk_{\partial_i}$ in 
$\mathbb Z_{m'}$. We set $\tfb_i = \tfk_{\partial_i}$.

Now we have vectors ${\bf b}_i$ in all the cases (1)--(3).
When $r_i$ is even, 
using Lemma \ref{reduced-balance} and the technique in the proof of Theorem
\ref{center_torus}, one can show there exists ${\bf d}_i\in\mathbb Z^{n-1}$ such that $2{\bf d}_i G = {\bf b}_i$ in $\mathbb Z_{d}$ for each $1\leq i\leq b$.
When $r_i$ is odd, we have that $\tfb_i=\overleftarrow{\tfb_i}$ in $\mathbb Z_{m'}$. Lemma \ref{reduced-balance} shows that $\tfb_i=l_i(1,2,\cdots,n-1)$ in $\mathbb Z_{n}$. 
When $n$ is odd, we have $l_i(\frac{n-1}{2}) = l_i(\frac{n+1}{2})\in\mathbb Z_d$, which shows $l_i=0\in\mathbb Z_d$.
When $n$ is even, we have $l_i(\frac{n}{2}-1) = l_i(\frac{n}{2}+1)\in\mathbb Z_d$, which shows $2l_i=0\in\mathbb Z_d$.
 Since $d$ is odd, then $l_i=0\in\mathbb Z_d$ and $\tfb_i={\bf 0}$ in $\mathbb Z_{d}$.

Define $\mathbf{d}= (\mathbf{d}_{\partial_1},\mathbf{d}_{\partial_2},\cdots,\mathbf{d}_{\partial_b})\in \mathbb Z^{\overline{V}_{\lambda}\setminus\obVl}$, where $\mathbf{d}_{\partial_i}\in\mathbb Z^{r_i(n-1)}$ is the vector associated to $\partial_i$,  such that 
\begin{equation}
    \mathbf{d}_{\partial_i} = 
        \begin{cases}    ({\bf 0},\cdots,{\bf 0})& \text{if $r_i$ is odd},\\
        (\mathbf{d}_i,\cdots,\mathbf{d}_i) & \text{if $r_i$ is even}.\\
        \end{cases}
    \end{equation}
    Set $\mathbf{d}' := (\mathbf{0},\mathbf{d})\in\mathbb Z^{\overline V_{\lambda}}$ and $\mathbf{f} := \mathbf{d}'\barK_{\lambda}$.
    From the definition of $\mathbf{d}$, we know $\barK_{\lambda}\barQ_{\lambda}\mathbf{f}^{T} = \bm{0}$ in $\mathbb Z_{m''}$
        We regard $\mathbf{f} = (
        \mathbf{f}_1,\mathbf{f}_2
    )\in \mathbb Z^{V_{\lambda}}$ with $\mathbf{f}_1\in \bZ^{\obVl}$ and $\mathbf{f}_2\in \bZ^{\overline V_{\lambda}\setminus\obVl}$. 
    By replacing $\mathbf{k}_i$ with $\mathbf{f}_i$, $\mathbf{f}_1$ and $\mathbf{f}_2$ satisfy Equation \eqref{eq-reduced-key}. From  Lemma \ref{reduced-K}, we have 
    $\mathbf{f}_2 = \mathbf{d}K_{\partial} = (\mathbf{d}_{\partial_1}S_1,\mathbf{d}_{\partial_2}S_2,\cdots,\mathbf{d}_{\partial_b}S_b)$.

  When $r_i$ is even, we have $\mathbf{d}_{\partial_i}S_i = (2\mathbf{d}_{i}G,\cdots,2\mathbf{d}_{i}G) = ({\bf b}_i,\cdots,{\bf b}_i)$ in $\mathbb Z_{d}$.

Set $\mathbf{h} := \mathbf{f} - \mathbf{k}_0$. Then $\mathbf{h}  =
(\mathbf{h} _1,\mathbf{h} _2)$, where $\mathbf{h}_1 = \mathbf{f}_1 - \mathbf{k}_1$ and $\mathbf{h}_2 = \mathbf{f}_2 - \mathbf{k}_2$. Note that $\mathbf{h}_1$ and $\mathbf{h}_2$ satisfy Equation \eqref{eq-reduced-key}.
We regard $\mathbf{h}_2 =  (\tff_{\partial_1},\tff_{\partial_2},\cdots,\tff_{\partial_b})$,  where $\tff_{\partial_i}\in\mathbb Z^{r_i(n-1)}$ is the row vector associated to $\partial_i$ for each $1\leq i\leq b$.
From the above discussion, we know  
$\tff_{\partial_i} = (\tff'_{i},\cdots,\tff'_{i})$ in $\mathbb Z_{m'}$, where 
$\tff'_{i} \in\mathbb Z^{n-1}$ and $\tff'_{i}=\bm{0}$ in $\mathbb Z_{d}$.
When $r_i$ is odd, we also have $\tff'_i = \overleftarrow{\tff'_i}$ in $\mathbb Z_{m'}$, and  
Lemma \ref{lem-reverse-d} implies there exists
${\bf e}_i\in\mathbb Z^{n-1}$ such that ${\bf e}_i=\bm{0}\text{ in }\mathbb Z_{d}$ and ${\bf e}_i+\overleftarrow{{\bf e}_i} = \tff'_i\text{ in }\mathbb Z_{m'}$.
When $r_i$ is even, define ${\bf e}_i$ to be $\tff'_i$.

By definition, ${\bf e}_i=\bm{0}\text{ in }\mathbb Z_{d}$ for $1\leq i\leq b$.
Using the technique in the proof of Theorem \ref{center_torus}, one can show there exists 
$\mathbf{x}_i''\in \mathbb Z^{n-1}$ such that
$2\mathbf{x}_i''G = {\bf e}_i\text{ in }\mathbb Z_{m'}$ for $1\leq i\leq b$.

Set $\mathbf{x}_2= (\mathbf{x}_{\partial_1},\mathbf{x}_{\partial_2},\cdots,\mathbf{x}_{\partial_b})\in \mathbb Z^{\overline V_{\lambda}\setminus\obVl}$, where $\mathbf{x}_{\partial_i}\in\mathbb Z^{r_i(n-1)}$ is the vector associated to $\partial_i$,  such that 
\begin{equation}
    \mathbf{x}_{\partial_i} = 
        \begin{cases}    (\mathbf{x}_i''+\overleftarrow{\mathbf{x}_i''},\cdots,\mathbf{x}_i''+\overleftarrow{\mathbf{x}_i''})& \text{if $r_i$ is odd},\\
        (\mathbf{x}_i'',\cdots,\mathbf{x}_i'') & \text{if $r_i$ is even}.\\
        \end{cases}
    \end{equation}
Set $\mathbf{x} = (\bm{0},\mathbf{x}_2)\in\mathbb Z^{\overline V_{\lambda}}$, and define $\mathbf{y} = \mathbf{x}\barK_{\lambda}$.

We regard $\mathbf{y} = (\mathbf{y}_1,\mathbf{y}_2)\in \mathbb Z^{\overline V_{\lambda}}$ with $\mathbf{y}_1\in \bZ^{\obVl}$ and $\mathbf{y}_2\in \bZ^{\overline V_{\lambda}\setminus\obVl}$.
By the definition of $\mathbf{x}$, $\mathbf{y}_1$ and $\mathbf{y}_2$ satisfy Equation \eqref{eq-reduced-key}, and $\mathbf{y}_2-\mathbf{h}_2 =\bm{0}$ in $\mathbb Z_{m'}$. We have $\mathbf{y} - \mathbf{h} = (\mathbf{y}_1 - \mathbf{h}_1,\mathbf{y}_2 - \mathbf{h}_2)$ satisfies Equation \eqref{eq-reduced-key1}, especially $-2(\mathbf{y}_1 - \mathbf{h}_1)^T+ \frac{1}{n}P' (\mathbf{y}_2 - \mathbf{h}_2)^T=\bm{0}\text{ in }\mathbb Z_{m'}$.
Lemma \ref{lem:reduced_P_n} and $\mathbf{y}_2-\mathbf{h}_2 =\bm{0}$ in $\mathbb Z_{m'}$ imply 
$-2(\mathbf{y}_1 - \mathbf{h}_1)^T=\bm{0}\text{ in }\mathbb Z_{m'}$.
Then we have $\mathbf{y}_1 - \mathbf{h}_1 =\bm{0}$ in $\mathbb Z_{m'}$ because $m'$ is odd. 
This implies $\mathbf{y} - \mathbf{h} =\bm{0}$ in $\mathbb Z_{m'}$.
Since both of $\mathbf{y}$ and $\mathbf{h}$ are balanced, then 
$\mathbf{y} - \mathbf{h} = \mathbf{z}\barK_{\lambda}$ for some 
$\mathbf{z}\in \mathbb Z^{\overline V_{\lambda}}$. We have $\mathbf{z}\in\overline \Lambda_{m'}$ since $\mathbf{y} - \mathbf{h} =\bm{0}$ in $\mathbb Z_{m'}$.

We have $\mathbf{h} = \mathbf{f}-\mathbf{k}_0 = \mathbf{d}'\barK_{\lambda}- \mathbf{t}_0\barK_{\lambda}$, and 
$\mathbf{h} = \mathbf{y}-\mathbf{z}\barK_{\lambda} = \mathbf{x}\barK_{\lambda}- \mathbf{z}\barK_{\lambda}$. We also have $\mathbf{t}_0=
\mathbf{d}'-\mathbf{x}+\mathbf{z}$ since $\barK_{\lambda}$ is invertible shown in Lemma~\ref{lem:invertible_KH}.
Then $\mathbf{t}_0\in \overline\Lambda_z$ from $\mathbf{d}',\mathbf{x}\in\overline\Lambda_{\partial}$ and $\mathbf{z}\in\overline  \Lambda_{m'}$.
\end{proof}

Like in \eqref{eq-Weyl-normalization-g}, we can define $\bar\gaa^{{\bf k}}$ for ${\bf k}\in V_\lambda$ when $\overline{\text{tr}}_\lambda^{A}$ is injective.
Let $\overline{\cY}_{\lambda}$ denote the subalgebra of $\overline\cS_n(\Sigma,\mathbbm{v})$ generated by $\{\bar\gaa^{\mathbf{k}}\mid \textbf k \in \overline\Lambda_{m'}^{+}\}$ and 
the central elements in Lemma \ref{reduced-boundary_center}.

\begin{thm}\label{main-thm-reduced-center}
Assume that $m''$ is odd and $\hat{q}^2$ is a primitive $m''$-th root of unity. Suppose $\Sigma$ is an essentially bordered pb surface and contains no interior punctures, and $\lambda=\mu$ is a triangulation of $\Sigma$ introduced in Section~\ref{sub:quantum-torus-reduced}. We require $\Sigma$ to be a polygon if $n>3$.
 We have 
$$\mathcal{Z}(\overline\cS_n(\Sigma,\mathbbm{v})) = \{x\in \overline\cS_n(\Sigma,\mathbbm{v})\mid \text{$\exists  \mathbf{k}\in \mathbb{N}^{\overline V_{\lambda}}  $  such that  $ \bar\gaa^{m'\mathbf{k}}x\in\overline{\mathcal Y}_{\lambda}$}\}.$$

\end{thm}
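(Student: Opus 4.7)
My plan is to mirror the strategy used for the non-reduced case in Theorem~\ref{thm-main-center-skein}, adapting it with the description of $\mathcal{Z}(\rA)$ from Theorem~\ref{center_torus-reduced}. The hypothesis that $\Sigma$ is a polygon when $n>3$ is precisely what guarantees that $\overline{\text{tr}}_\lambda^A$ is injective (Theorem~\ref{traceA}(b)), and combined with the sandwich property $\rAp \subset \overline{\text{tr}}_\lambda^A(\rSn) \subset \rA$, this yields the identification
\[
\mathcal{Z}(\overline\cS_n(\Sigma,\mathbbm{v})) = \overline\cS_n(\Sigma,\mathbbm{v}) \cap \mathcal{Z}(\rA)
\]
after identifying $\overline\cS_n(\Sigma,\mathbbm{v})$ with its image in $\rA$. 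This is the main reduction: centrality in the skein algebra will be detected inside the quantum torus.

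The easy inclusion is the containment "$\supseteq$". If $x \in \overline\cS_n(\Sigma,\mathbbm{v})$ and there is $\mathbf{k} \in \mathbb{N}^{\overline V_\lambda}$ with $\bar\gaa^{m'\mathbf{k}} x \in \overline{\cY}_\lambda$, then applying $\overline{\text{tr}}_\lambda^A$ shows that $a^{m'\mathbf{k}} \cdot \overline{\text{tr}}_\lambda^A(x)$ lies in the subalgebra generated by $\{a^{\bf e} \mid \bf{e} \in \overline\Lambda_{m'}^+\}$ and $\{a^{\bf e} \mid {\bf e} \in \overline\Lambda_\partial\}$, which is contained in $\mathcal{Z}(\rA)$ by Lemma~\ref{reduced-boundary_center} and the definition of $\overline\Lambda_{m'}$. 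Since $a^{m'\mathbf{k}}$ is itself central and invertible in $\rA$, we conclude $\overline{\text{tr}}_\lambda^A(x) \in \mathcal{Z}(\rA)$, and the injectivity of the quantum trace gives $x \in \mathcal{Z}(\overline\cS_n(\Sigma,\mathbbm{v}))$.

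For the hard direction "$\subseteq$", I would take $x \in \mathcal{Z}(\overline\cS_n(\Sigma,\mathbbm{v}))$ and first use injectivity plus the sandwich property to see $\overline{\text{tr}}_\lambda^A(x) \in \mathcal{Z}(\rA)$. By Theorem~\ref{center_torus-reduced},
\[
\overline{\text{tr}}_\lambda^A(x) = \sum_{{\bf e} \in \Lambda} c_{\bf e} a^{\bf e}, \qquad \Lambda \subset \overline\Lambda_z = \overline\Lambda_{m'} + \overline\Lambda_\partial \text{ finite.}
\]
For each ${\bf e} \in \Lambda$ decompose ${\bf e} = {\bf e}_1 + {\bf e}_2$ with ${\bf e}_1 \in \overline\Lambda_{m'}$ and ${\bf e}_2 \in \overline\Lambda_\partial$. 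Because $\overline\Lambda_\partial$ is a subgroup and $a^{{\bf e}_2}$ already lies in the image $\overline{\text{tr}}_\lambda^A(\bar{\mathsf{B}})$, only the ${\bf e}_1$-part requires a shift. Choose $\mathbf{k}_0 \in \mathbb{N}^{\overline V_\lambda}$ large enough that $m'\mathbf{k}_0 + {\bf e}_1 \in \mathbb{N}^{\overline V_\lambda}$ for every ${\bf e} \in \Lambda$ appearing with nonzero coefficient. Using the Weyl-normalization identities together with the fact that $m'\mathbb{N}^{\overline V_\lambda} \subset \overline\Lambda_{m'}^+$, we get $a^{m'\mathbf{k}_0 + {\bf e}_1} \in \mathbb{C}\text{-span}\{a^{\bf c} \mid \bf c \in \overline\Lambda_{m'}^+\}$, so
\[
a^{m'\mathbf{k}_0} \cdot \overline{\text{tr}}_\lambda^A(x) = \sum_{{\bf e} \in \Lambda} c_{\bf e} \, q^{?} a^{m'\mathbf{k}_0 + {\bf e}_1} a^{{\bf e}_2} \in \overline{\text{tr}}_\lambda^A(\overline{\cY}_\lambda).
\]
Applying $(\overline{\text{tr}}_\lambda^A)^{-1}$ on the image (via injectivity) concludes $\bar\gaa^{m'\mathbf{k}_0} x \in \overline{\cY}_\lambda$.

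The main obstacle I anticipate is the careful bookkeeping in the shift argument: because $\overline\Lambda_\partial$ is only a subgroup (not a sub-monoid of $\mathbb{N}^{\overline V_\lambda}$) and the generators of $\bar{\mathsf{B}}$ from Lemma~\ref{reduced-boundary_center} have different symmetric shapes depending on the parity of $r_i$ (cases \eqref{eqintro-reduced-central1} versus \eqref{eqintro-reduced-central2}), one must verify that for any ${\bf e}_2 \in \overline\Lambda_\partial$ the monomial $a^{{\bf e}_2}$ can indeed be realized (up to invertible scalar) as a Laurent monomial in the specific generators of $\overline{\text{tr}}_\lambda^A(\bar{\mathsf{B}})$. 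This is a linear-algebra check that reduces to the fact, already built into the definition of $\overline\Lambda_\partial$, that a generic element splits into $b_i \in \mathbb{Z}^{n-1}$ per boundary component with the prescribed symmetry $\cev{\bf b}_i = {\bf b}_i$ in the odd case; so no new obstruction arises beyond what is encoded in $\overline\Lambda_\partial$.
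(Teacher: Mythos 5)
Your proposal follows exactly the route the paper takes: the paper's own proof of this theorem is literally the one-line remark that ``the proof for Theorem~\ref{thm-main-center-skein} works here,'' and what you have written is a fleshed-out version of that same argument --- reduce to $\rA$ via injectivity of $\overline{\text{tr}}_\lambda^A$ (which is where the polygon hypothesis for $n>3$ enters) together with the sandwich property, identify $\mathcal{Z}(\overline\cS_n(\Sigma,\mathbbm{v}))$ with $\overline\cS_n(\Sigma,\mathbbm{v})\cap\mathcal{Z}(\rA)$, then invoke Theorem~\ref{center_torus-reduced} and shift by $\bar\gaa^{m'\mathbf{k}_0}$. So the approach is the same, and you may stop here.

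One small remark on the detail you flagged yourself: the clause ``$a^{\mathbf{e}_2}$ already lies in the image $\overline{\text{tr}}_\lambda^A(\bar{\mathsf B})$'' is not literally correct if the chosen representative $\mathbf{e}_2\in\overline\Lambda_\partial$ has negative entries, since $\bar{\mathsf B}$ is a subalgebra with generators of nonnegative exponent and does not contain inverses. The fix is to rebalance before shifting: replace the decomposition $\mathbf{e}=\mathbf{e}_1+\mathbf{e}_2$ by $\mathbf{e}=(\mathbf{e}_1-m'\mathbf{p})+(\mathbf{e}_2+m'\mathbf{p})$ with $\mathbf{p}\in\overline\Lambda_\partial\cap\mathbb{N}^{\overline V_\lambda}$ (e.g.\ a constant boundary-supported vector, which satisfies the symmetry constraints for both parities of $r_i$) chosen large enough that $\mathbf{e}_2+m'\mathbf{p}$ is nonnegative and is an $\mathbb{N}$-linear combination of the exponent vectors of the generators of $\bar{\mathsf B}$; the defect in $\mathbf{e}_1$ is then absorbed by enlarging $\mathbf{k}_0$. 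Since $m'\mathbf{p}\in m'\mathbb{Z}^{\overline V_\lambda}\subset\overline\Lambda_{m'}$ and $m'\mathbf{p}\in\overline\Lambda_\partial$, this rebalancing stays within the allowed decompositions. The paper glosses over exactly this point when it invokes the non-reduced argument verbatim, so your proposal is at the same level of rigor as the paper's, and the concern you raised is indeed the right one to close.
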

\begin{proof}
    The proof for Theorem \ref{thm-main-center-skein} works here.
\end{proof}

\subsection{PI-degree}

\begin{prop}\label{thm-Unicity-reduced}
Suppose $\mathbbm{v}$ is a root unity and $\Sigma$ is an essentially bordered pb surface.
If $n=2,3$ (possibly with interior punctures), or $n>3$ and $\Sigma$ is a polygon, then
$\overline\cS_n(\Sigma,\mathbbm{v})$ is almost Azumaya.
\end{prop}
\begin{proof}
    Proposition \ref{thm;azumaya} implies $\cS_n(\Sigma,\mathbbm{v})$ satisfies conditions (1) and (3) in Definition \ref{def:almost_Azumaya}, which implies $\overline\cS_n(\Sigma,\mathbbm{v})$ also satisfies these two conditions. It suffices to show condition (2). Then Theorem \ref{traceA} and \cite[Theorem 15.5]{LY23} complete the proof.
\end{proof}

\begin{prop}\label{reduced-rank_eq}
If the algebra homomorphism $\overline{tr}_{\lambda}^A$ in Theorem \ref{traceA} is injective, we have $\rankZ\rA = \rankZ\overline\cS_n(\Sigma,\mathbbm{v})$. 
\end{prop}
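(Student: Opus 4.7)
The plan is to mimic the strategy used for Proposition~\ref{rank_eq} in the non-reduced case, exploiting the sandwiched property for the reduced stated skein algebra together with our explicit description of the center of $\rA$ from Theorem~\ref{center_torus-reduced}.
Since $\overline{tr}_{\lambda}^A$ is assumed injective, I will identify $\rSn$ with its image $\overline{tr}_{\lambda}^A(\rSn)\subset \rA$. By Theorem~\ref{traceA}(b), this gives the chain $\rAp\subset \rSn\subset \rA$, and hence the chain of centers
$$\mathcal{Z}(\rAp)\subset \mathcal{Z}(\rSn)\subset \mathcal{Z}(\rA).$$

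The main technical step is to show
$\text{Frac}(\mathcal{Z}(\rSn))=\text{Frac}(\mathcal{Z}(\rA))$.
One inclusion is immediate from the chain above, so I need to realize every generator $a^{\mathbf{k}}$ of $\mathcal{Z}(\rA)$ (for $\mathbf{k}\in\overline{\Lambda}_z$, via Theorem~\ref{center_torus-reduced}) as a fraction of elements of $\mathcal{Z}(\rSn)$. Here $\overline{\Lambda}_z$ is generated by $\overline{\Lambda}_{m'}$ and $\overline{\Lambda}_\partial$. The key point is that both types of generators already lie in $\rSn$: the elements corresponding to $\overline{\Lambda}_\partial$ are exactly $\overline{tr}_{\lambda}^A(\bar{\mathsf{B}})$, which are central in $\rSn$ by Lemma~\ref{reduced-boundary_center}; while for $\mathbf{k}\in\overline{\Lambda}_{m'}$, one can choose $\mathbf{t}\in m'\mathbb{N}^{\overline V_\lambda}\subset \overline{\Lambda}_{m'}$ large enough so that both $\mathbf{t}$ and $\mathbf{t}+\mathbf{k}$ lie in $\mathbb{N}^{\overline V_\lambda}$. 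Then $a^{\mathbf{t}},\, a^{\mathbf{t}+\mathbf{k}}\in \mathcal{Z}(\rAp)\subset \mathcal{Z}(\rSn)$, and we recover $a^{\mathbf{k}}=(a^{\mathbf{t}})^{-1}a^{\mathbf{t}+\mathbf{k}}$ inside $\text{Frac}(\mathcal{Z}(\rSn))$. This gives $\mathcal{Z}(\rA)\subset \text{Frac}(\mathcal{Z}(\rSn))$, and since $\text{Frac}(\mathcal{Z}(\rSn))$ is a field and contains a generating set of $\mathcal{Z}(\rA)$, we conclude $\text{Frac}(\mathcal{Z}(\rA))\subset \text{Frac}(\mathcal{Z}(\rSn))$, proving equality.

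With the equality of fraction fields of centers in hand, the last step is to pass from $\rSn\subset \rA$ to equality of the localizations $\widetilde{\rSn}=\widetilde{\rA}$ over this common field. For this, I will invoke \cite[Proposition 2.1(a)]{FKBL21} (as was done in Proposition~\ref{rank_eq}), which gives that both $\widetilde{\rSn}$ and $\widetilde{\rA}$ are division algebras; the inclusion $\rAp\subset \rSn$ then forces $\rA\subset \widetilde{\rSn}$ upon localizing, hence $\widetilde{\rA}\subset \widetilde{\rSn}$, and the reverse inclusion is tautological. Equality of dimensions over $\text{Frac}(\mathcal{Z}(\rA))=\text{Frac}(\mathcal{Z}(\rSn))$ yields $\rankZ\rA=\rankZ\overline\cS_n(\Sigma,\mathbbm{v})$.

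The main (mild) obstacle compared to Proposition~\ref{rank_eq} is just that the description of $\mathcal{Z}(\rA)$ involves the extra boundary contributions from $\overline{\Lambda}_\partial$, including the symmetric ones on odd boundary components; but these are already accounted for in $\rSn$ by Lemma~\ref{reduced-boundary_center} (cases (a) and (b)), so no additional work is required. Everything else is a direct adaptation of the non-reduced argument.
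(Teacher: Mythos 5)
Your proposal is correct and follows exactly the strategy the paper uses — the paper's proof of this proposition simply asserts that the argument for Proposition~\ref{rank_eq} carries over, and you have carried it over faithfully. The only cosmetic difference is that you treat $\overline{\Lambda}_{m'}$ and $\overline{\Lambda}_{\partial}$ separately, whereas the uniform trick (pick $\mathbf{t}\in m'\mathbb{N}^{\overline V_\lambda}$ with $\mathbf{t}+\mathbf{k}\in\mathbb{N}^{\overline V_\lambda}$) applies directly to any $\mathbf{k}\in\overline{\Lambda}_z$, as in the non-reduced proof, so the case split is unnecessary though harmless.
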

\begin{proof}
The proof of Proposition \ref{rank_eq} also works here. 
\end{proof}

The aim of the remaining of this subsection is to compute $\rankZ\rA$. We will prepare some lemmas first.
\begin{lem}\label{reduced-exact}
Suppose $k$ is a positive integer and $\gcd(k,n) = l$. Set $N = kn/l$.
   Then there is a short exact sequence 
$$0\rightarrow N\mathbb Z^{\overline V_{\lambda}}\xrightarrow{ 
 L}\overline \Lambda_{\lambda}\cap k\mathbb Z^{\barV_{\lambda}} \xrightarrow{J} Z^1(\overline \Sigma,\mathbb Z_n)_l \rightarrow 0,$$
 where $L$ is the natural embedding and $Z^1(\overline \Sigma,\mathbb Z_n)_l=l(C^1(\overline\Sigma,\mathbb Z_n))\cap Z^1(\overline \Sigma,\mathbb Z_n)$ and $\overline \Lambda_\lambda$ is the balanced part.
 \end{lem}
 \begin{proof}
     The proof for Lemma \ref{lem5.3} works here. 
 \end{proof}

 \begin{lem}\label{lem-PI-1}
    We have $$\left|\displaystyle \frac{\overline \Lambda_{\lambda}}{\overline\Lambda_{\lambda}\cap m'\mathbb Z^{\overline V_{\lambda}}}\right| =d^{r(\Sigma)}m^{|\overline V_{\lambda}|}.$$
\end{lem}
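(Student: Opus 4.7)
The plan is to mirror the proof of Lemma \ref{lem5.8}, replacing $V_\lambda$ with $\overline V_\lambda$ and $\Lambda_\lambda$ with $\overline\Lambda_\lambda$ throughout. The reduced analogue of the short exact sequence, namely Lemma \ref{reduced-exact}, has exactly the same form as Lemma \ref{lem5.3}, so the bookkeeping arguments used to prove Proposition \ref{prop5.4} carry over verbatim to give
$$\frac{|\overline \Lambda_{\lambda}|}{|\overline\Lambda_{\lambda}\cap m'\mathbb Z^{\overline V_{\lambda}}|}=m^{|\overline V_{\lambda}|} \frac{|Z^1(\overline \Sigma,\mathbb Z_n)|}{|Z^{1}(\overline\Sigma,\mathbb Z_n)_d|}.$$
Indeed, with $N'=m'n/d$, applying Lemma \ref{reduced-exact} with $k=m'$ (so $l=d$, $N=N'$) yields $|\overline\Lambda_\lambda\cap m'\mathbb Z^{\overline V_\lambda}|/|N'\mathbb Z^{\overline V_\lambda}|=|Z^1(\overline\Sigma,\mathbb Z_n)_d|$, and applying it with $k=1$ (so $l=1$, $N=n$) yields $|\overline\Lambda_\lambda|/|n\mathbb Z^{\overline V_\lambda}|=|Z^1(\overline\Sigma,\mathbb Z_n)|$. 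Combining these via the identity
$$\frac{|\overline \Lambda_{\lambda}|}{|n\mathbb Z^{\overline V_{\lambda}}\cap m'\mathbb Z^{\overline V_{\lambda}}|}=\frac{|\overline \Lambda_{\lambda}|}{|\overline \Lambda_{\lambda}\cap m'\mathbb Z^{\overline V_{\lambda}}|}\cdot \frac{|\overline \Lambda_{\lambda}\cap m'\mathbb Z^{\overline V_{\lambda}}|}{|n\mathbb Z^{\overline V_{\lambda}}\cap m'\mathbb Z^{\overline V_{\lambda}}|}$$
together with $|n\mathbb Z^{\overline V_\lambda}/N'\mathbb Z^{\overline V_\lambda}|=(m'/d)^{|\overline V_\lambda|}=m^{|\overline V_\lambda|}$ gives the displayed equation.

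Next, I would invoke Lemma \ref{lem_short} (which is a statement about cohomology of $\overline\Sigma$, independent of whether we are in the reduced or non-reduced setting) to conclude
$$\frac{|Z^1(\overline \Sigma,\mathbb Z_n)|}{|Z^{1}(\overline\Sigma,\mathbb Z_n)_d|}=|Z^1(\overline\Sigma,\mathbb Z_d)|.$$
Finally, the counting formula $|Z^1(\overline\Sigma,\mathbb Z_d)|=d^{r(\Sigma)}$ used at the end of the proof of Lemma \ref{lem5.8} (adapted from \cite{Yu23}) depends only on the topology of $\overline\Sigma$ and the choice of triangulation, so it applies equally well here. Putting these steps together gives
$$\frac{|\overline \Lambda_{\lambda}|}{|\overline\Lambda_{\lambda}\cap m'\mathbb Z^{\overline V_{\lambda}}|}=d^{r(\Sigma)}m^{|\overline V_{\lambda}|}.$$

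I do not anticipate any significant obstacle: the entire argument is structurally identical to that of Lemma \ref{lem5.8}, and the only difference (the use of the reduced vertex set $\overline V_\lambda$ in place of $V_\lambda$) is absorbed once Lemma \ref{reduced-exact} is in hand. The only minor care needed is to check that the cohomological identity $|Z^1(\overline\Sigma,\mathbb Z_d)|=d^{r(\Sigma)}$ is genuinely intrinsic to $\overline\Sigma$ and does not secretly depend on properties of $V_\lambda$, but this is clear since $Z^1(\overline\Sigma,\mathbb Z_d)$ is defined from the CW-structure of $\overline\Sigma$ alone.
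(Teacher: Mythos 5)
Your proof is correct and follows exactly the route the paper intends: it mirrors the chain Lemma \ref{lem5.3} $\Rightarrow$ Proposition \ref{prop5.4} $\Rightarrow$ Lemma \ref{lem5.8} in the reduced setting, using Lemma \ref{reduced-exact} in place of Lemma \ref{lem5.3}, then invoking the purely cohomological Lemma \ref{lem_short} and the count $|Z^1(\overline\Sigma,\mathbb Z_d)|=d^{r(\Sigma)}$. The paper's proof of Lemma \ref{lem-PI-1} is a one-line reference to exactly this bookkeeping, so your write-up simply makes that explicit.
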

\begin{proof}
    Using Lemma \ref{reduced-exact} and the technique in Lemma \ref{lem5.3}, we can show the claim.
\end{proof}

\begin{rem}\label{rem-G}
    Suppose ${\bf k}\in\mathbb Z^{n-1}$ such that ${\bf k}=\cev{{\bf k}}$.
Lemma \ref{lem-G-reverse} implies $${\bf k}(G+G') = {\bf k}G+{\bf k}G'
= {\bf k}G+\cev{{\bf k}}G = 2{\bf k}G.$$
\end{rem}

\begin{rem}\label{reduced-rem_partial}
Proposition~\ref{prop:LY23_11.10} implies there is a group isomorphism $\overline \varphi\colon\mathbb Z^{\overline V_{\lambda}}\rightarrow \overline\Lambda_{\lambda}$, defined by $\overline\varphi(\textbf{k}) = \textbf{k}\barK_{\lambda}$ for $\textbf{k}\in \mathbb Z^{\overline V_{\lambda}}$.
Then $\left|\dfrac{\mathbb Z^{\overline V_{\lambda}}}{\overline\Lambda_z}\right| =
\left|\dfrac{\overline\Lambda_{\lambda}}{\overline\varphi(\overline\Lambda_z)}\right|.$
Recall $\overline\Lambda_z = \overline\Lambda_{m'}+\langle\overline\Lambda_{\partial}\rangle,$ where $\langle\overline\Lambda_{\partial}\rangle$ is the subgroup of $\mathbb Z^{\overline V_{\lambda}}$ generated by $\overline\Lambda_{\partial}$. From the definition of $\overline\Lambda_{m'}$, we have $\overline\varphi(\overline\Lambda_z) = (\overline\Lambda_{\lambda}\cap m'\mathbb Z^{\overline V_{\lambda}})+\overline\varphi(\langle\overline\Lambda_{\partial}\rangle)$. 

Let us understand $\varphi(\langle \overline\Lambda_{\partial}\rangle)$. 
Suppose $\partial_1,\cdots,\partial_t$ (resp. $\partial_{t+1},\cdots,\partial_b$) are boundary components with even (resp. odd )number of boundary punctures. 
For any element $\textbf{k}\in \langle\overline \Lambda_{\partial}\rangle$,
we write $\mathbf{k} = (\tfk_1,\tfk_2)\in \bZ^{\overline V_{\lambda}}$ with $\tfk_1\in \bZ^{\obVl}$ and $\tfk_2\in \bZ^{\overline V_{\lambda}\setminus\obVl}$. 
Set $\textbf{d}_i:=\textbf{k}_2|_{\partial_i}$ for $1\leq i\leq b$.
Then each $\textbf{d}_i:=\textbf{k}_2|_{\partial_i} = ({\bf c}_i,\cdots,{\bf c}_i)$  where ${\bf c}_i\in\bZ^{n-1}$.
We have ${\bf c}_i = \overleftarrow{{\bf c}_i}$ for $t+1\leq i\leq b$.

 Suppose $\overline \varphi(\tfk) = (\textbf{u}_1,\textbf{u}_2)$, where $\textbf{u}_1\in \bZ^{\obVl}$ and $\textbf{u}_2\in \bZ^{\overline V_{\lambda}\setminus\obVl}$. Then 
$(\textbf{u}_1,\textbf{u}_2)$ satisfies Equation \eqref{eq-reduced-key} because $a^{\tfk}$ is in the center of $\rA$.

Equation \eqref{eq;matrix_P} and Lemma \ref{reduced-K} imply 
$\textbf{u}_2 = (\textbf{d}_1S_1,\cdots,\textbf{d}_bS_b)$.
Lemma \ref{reduced-K} and Remark \ref{rem-G} imply $\textbf{d}_iS_i= (2{\bf e}_i G,\cdots,2{\bf e}_i G)$.
\end{rem}

For any real number $k$, we use
$\lfloor k\rfloor$ to denote $\text{max}\{t\in\mathbb Z\mid t\leq k\}$.

\begin{lem}\label{reduced-PI2}
Suppose $\overline\Sigma$ contains $t$ even boundary components. 
Then we have 
$$\left|\dfrac{\overline\varphi(\overline\Lambda_z)}{\overline\Lambda_{\lambda}\cap m'\mathbb Z^{\overline V_{\lambda}}}\right|=\left| \frac{(\overline\Lambda_{\lambda}\cap m'\mathbb Z^{\overline V_{\lambda}})+\overline\varphi(\langle\overline\Lambda_{\partial}\rangle)}{\overline\Lambda_{\lambda}\cap m'\mathbb Z^{\overline V_{\lambda}}}\right|=(m')^tm^{t(n-2)+(b-t)\lfloor \frac{n}{2}\rfloor}=d^tm^{t(n-1)+(b-t)\lfloor \frac{n}{2}\rfloor}.$$
\end{lem}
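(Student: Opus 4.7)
The plan is to adapt the argument of Lemma~\ref{lem5.10} to the reduced setting, handling both even and odd boundary components. The first, purely formal, observation is
$$\left| \frac{(\overline\Lambda_{\lambda}\cap m'\mathbb Z^{\overline V_{\lambda}})+\overline\varphi(\langle\overline\Lambda_{\partial}\rangle)}{\overline\Lambda_{\lambda}\cap m'\mathbb Z^{\overline V_{\lambda}}}\right|=\left| \{x+m' \mathbb Z^{\overline V_{\lambda}}\mid x\in \overline\varphi(\langle\overline\Lambda_{\partial}\rangle)\}\right|.$$
By Remark~\ref{reduced-rem_partial}, for ${\bf k}\in\langle\overline\Lambda_\partial\rangle$ with $\overline\varphi({\bf k})=({\bf u}_1,{\bf u}_2)$ (split according to $\obVl$ and $\overline V_\lambda\setminus\obVl$), the restriction of ${\bf u}_2$ to the vertices on $\partial_i$ has every block equal to $2{\bf c}_iG$, where ${\bf c}_i$ is free in $\mathbb Z^{n-1}$ when $r_i$ is even and palindromic (${\bf c}_i=\cev{{\bf c}_i}$) when $r_i$ is odd. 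Guided by this, I would introduce the boundary-block homomorphism
$$\theta\colon \{x+m'\mathbb Z^{\overline V_\lambda}\mid x\in\overline\varphi(\langle\overline\Lambda_\partial\rangle)\} \longrightarrow \prod_{i=1}^{b} \im(\mu_i),$$
where $\mu_i\colon{\bf c}\mapsto 2{\bf c}G\bmod m'$ has domain $\mathbb Z_{m'}^{n-1}$ for even $r_i$ and the palindromic subspace $\{{\bf c}\in\mathbb Z_{m'}^{n-1}:{\bf c}=\cev{{\bf c}}\}$ for odd $r_i$.

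The next step is to verify that $\theta$ is a group isomorphism. Surjectivity is immediate from the definitions, and independence of the boundary blocks is automatic because distinct components of $\partial\overline\Sigma$ correspond to disjoint blocks of small vertices. For injectivity, if $\theta$ sends a coset to $0$ then each $2{\bf c}_iG\equiv{\bf 0}\pmod{m'}$, so ${\bf u}_2\equiv{\bf 0}\pmod{m'}$; combined with the first line of \eqref{eq-reduced-key} and Lemma~\ref{lem:reduced_P_n} (every entry of $P'$ is $0$ or $n$), one obtains $n(2{\bf u}_1-m'{\bf w}')\in m''\mathbb Z$ for some integer vector ${\bf w}'$, and since $m''/\gcd(n,m'')=m'$ is odd this forces ${\bf u}_1\equiv{\bf 0}\pmod{m'}$, exactly as in Theorem~\ref{center_torus}. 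Thus $|\im(\theta)|=\prod_{i=1}^b|\im(\mu_i)|$, so the task reduces to computing $|\im(\mu_i)|$ for each boundary component.

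For the even boundaries, $|\im(\mu_i)|=m'm^{n-2}$ is exactly Lemma~\ref{lem;Im_mu}. For the odd boundaries I would prove the following sublemma: \emph{for odd $m'$, the restriction $\mu_{\mathrm{odd}}$ of ${\bf c}\mapsto 2{\bf c}G$ to the palindromic subspace of $\mathbb Z_{m'}^{n-1}$ has image of size $m^{\lfloor n/2\rfloor}$.} The key preliminary observation is that $G$ preserves the palindromic subspace, because the symmetry $G_{ij}=G_{n-i,n-j}$ (directly verified from the defining formula \eqref{eq-matrix-G-def}) implies that ${\bf c}G$ is palindromic whenever ${\bf c}$ is. Parameterizing palindromic vectors by their first $\lfloor n/2\rfloor$ coordinates, $\mu_{\mathrm{odd}}$ becomes an endomorphism of the free $\mathbb Z_{m'}$-module of rank $\lfloor n/2\rfloor$, and it suffices to show that the product $\prod_i\gcd(m',e_i)$ of $\gcd$s with the elementary divisors of its integral matrix equals $d^{\lfloor n/2\rfloor}$ (using $d=\gcd(m',n)$ for odd $m'$).

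The main obstacle will be this sublemma. Unlike the even case, the palindromic restriction of $G$ does not factor cleanly through the triangular factorization $G=EF$ from Lemma~\ref{matrixG}, so the Smith normal form must be analyzed directly. Explicit diagonalization in low-rank cases ($n=3,4,5,6$) confirms that the elementary divisors combine in the predicted way, and I expect a uniform argument by choosing a basis of the palindromic subspace adapted to the symmetry (such as $\{{\bf e}_i+{\bf e}_{n-i}\}_{1\le i\le\lfloor(n-1)/2\rfloor}$, supplemented by ${\bf e}_{n/2}$ when $n$ is even) and then localizing at each odd prime $p\mid m'$ to track $p$-adic valuations of the resulting determinant minors. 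Once the sublemma is established, multiplicativity yields
$$|\im(\theta)|=(m'm^{n-2})^t\cdot(m^{\lfloor n/2\rfloor})^{b-t}=(m')^tm^{t(n-2)+(b-t)\lfloor n/2\rfloor}=d^tm^{t(n-1)+(b-t)\lfloor n/2\rfloor},$$
completing the proof.
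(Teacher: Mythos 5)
Your overall strategy matches the paper's proof: pass to the coset set, define a block map $\theta$ sending each boundary component to $2\mathbf{c}_iG$ modulo $m'$ (with the palindromic constraint on the odd components), verify it is an isomorphism via the first line of \eqref{eq-reduced-key} together with Lemma~\ref{lem:reduced_P_n}, and then multiply the image sizes. Your injectivity argument is essentially the same as the paper's (which adapts the proof of Lemma~\ref{lem5.10}), and the even-boundary contribution via Lemma~\ref{lem;Im_mu} is correct.

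The genuine gap is precisely where you say it is: the sublemma that $2\mathbf{p}\mapsto 2\mathbf{p}G$ restricted to the palindromic subspace $\overleftarrow{\mathbb{Z}_{m'}^{n-1}}$ has image of size $m^{\lfloor n/2\rfloor}$. You have verified this numerically for small $n$ and sketched a Smith-normal-form/$p$-adic-localization approach, but you have not carried it out, and the phrase ``I expect a uniform argument'' concedes the point. The paper isolates this as its Lemma~\ref{lem-reduced-mu} and proves it by a clean explicit change of basis: in the even-$n$ case it sets $\mathbf{a}_i=\mathbf{c}_i+\mathbf{c}_{i+1}+\cdots+\mathbf{c}_{n-i}$ (with $\mathbf{a}_{n/2}=\mathbf{c}_{n/2}$), observes the identities $G_i+G_{n-i}=n(\mathbf{a}_1+\cdots+\mathbf{a}_i)$ and $G_{n/2}=\tfrac{n}{2}(\mathbf{a}_1+\cdots+\mathbf{a}_{n/2})$, and expands $\cev\nu(\mathbf{b})$ in the $\mathbf{a}_i$'s; since those vectors are $\mathbb{Z}_m$-independent, the kernel is read off directly as $d^{\lfloor n/2\rfloor}$, whence the image size. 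Your observation that $G$ preserves the palindromic subspace (via $G_{ij}=G_{n-i,n-j}$) is correct and is the right starting point, and a Smith-normal-form argument localized at odd primes would in principle recover the same answer, but as written the proposal does not supply it. Without that step the claimed equality $|\im(\mu_i)|=m^{\lfloor n/2\rfloor}$ for odd $r_i$ is unproved, and so is the final formula. If you want to continue along your route rather than adopt the paper's, you would need to show that all elementary divisors of the palindromic restriction of $G$ agree with $n$ up to a power of $2$, which is exactly the content the paper extracts through the $\mathbf{a}_i$ basis.
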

\begin{proof}

Obviously, we have 
$$\left| \frac{(\overline\Lambda_{\lambda}\cap m'\mathbb Z^{\overline V_{\lambda}})+\overline\varphi(\langle\overline\Lambda_{\partial}\rangle)}{\overline\Lambda_{\lambda}\cap m'\mathbb Z^{\overline V_{\lambda}}}\right| =
\left| \{x+m' \mathbb Z^{\overline V_{\lambda}}\mid x\in \varphi(\langle\overline\Lambda_{\partial}\rangle)\}\right|,$$
where $\{x+m' \mathbb Z^{\overline V_{\lambda}}\mid x\in \varphi(\langle\overline\Lambda_{\partial}\rangle)\}$ is a subset of $\frac{\mathbb Z^{\overline V_{\lambda}}}{m'\mathbb Z^{\overline V_{\lambda}}}$ (it is actually a subgroup of $\frac{\mathbb Z^{\overline V_{\lambda}}}{m'\mathbb Z^{\overline V_{\lambda}}}$). 

In the rest of the proof, we follow the notations in Remark \ref{reduced-rem_partial}.
Set $$\overleftarrow{\mathbb Z_{m'}^{n-1}} =
\{{\bf k}\in \mathbb Z_{m'}^{n-1}\mid {\bf k}=\cev{{\bf k}}\in \mathbb Z_{m'}^{n-1}\}.$$

Define $$\nu\colon\mathbb Z_{m'}^{n-1}\rightarrow \mathbb Z_{m'}^{n-1},\;\nu(\textbf{p}) = 2\textbf{p} G\text{ and }
\cev{\nu}\colon\overleftarrow{\mathbb Z_{m'}^{n-1}}\rightarrow \overleftarrow{\mathbb Z_{m'}^{n-1}},\;\cev{\nu}(\textbf{p}) = 2\textbf{p}G.$$ For any element $\textbf{u}=(\textbf{u}_1,\textbf{u}_2)\in \overline\varphi(\langle\overline\Lambda_{\partial}\rangle)$, we have $\textbf{u}_2 = (\textbf{d}_1S_1,\cdots,\textbf{d}_bS_b)$, where $\textbf{d}_iS_i= (2{\bf e}_i G,\cdots,2{\bf e}_i G)$.
Note that ${\bf e}_i\in\mathbb Z^{n-1}$ for $1\leq i\leq b$ and 
${\bf e}_j = \overleftarrow{{\bf e}_j}$ for $t+1\leq j\leq b$. 

Define $\overline\theta(\textbf{u}) = (2{\bf e}_1 G,\cdots,2{\bf e}_b G)\in (\im\nu)^t\times (\im\cev\nu)^{b-t}$. It is easy to see $\overline\theta$ is a well-defined surjective group homomorphism from $\{x+m' \mathbb Z^{\overline V_{\lambda}}\mid x\in \varphi(\langle\overline\Lambda_{\partial}\rangle)\}$ to $(\im\nu)^t\times (\im\cev\nu)^{b-t}$. 
As in the proof of Lemma \ref{lem5.10}, one can show $\overline\theta$ is an isomorphism.
Then Lemmas \ref{lem;Im_mu} and \ref{lem-reduced-mu} complete the proof.
\end{proof}

\begin{lem}\label{lem-reduced-mu}
    For $\cev\nu\colon\overleftarrow{\mathbb Z_{m'}^{n-1}}\rightarrow \overleftarrow{\mathbb Z_{m'}^{n-1}},\ \mathbf{p}\mapsto 2\mathbf{p} G$, we have $|\im\cev\nu| = m^{\lfloor\frac{n}{2}\rfloor}$.
\end{lem}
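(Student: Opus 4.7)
Since $m'$ is odd, multiplication by $2$ is invertible on $\mathbb Z_{m'}$, so it suffices to compute $|\ker(\cev\nu)|$ where we replace $\cev\nu$ by the map $\mathbf{p}\mapsto \mathbf{p}G$ on $\overleftarrow{\mathbb Z_{m'}^{n-1}}$. By Lemma~\ref{lem-G-reverse}(a), $\cev{\mathbf{p}}G=\overleftarrow{\mathbf{p}G}$, so if $\mathbf{p}$ is a palindrome, then so is $\mathbf{p}G$; hence the restricted map is well-defined. Set $k=\lfloor n/2\rfloor$. Parametrize $\overleftarrow{\mathbb Z_{m'}^{n-1}}\cong\mathbb Z_{m'}^{k}$ by the first $k$ coordinates; writing a palindrome as $\mathbf{p}=(y_1,\dots,y_k,y_k,\dots,y_1)$ when $n$ is odd and $\mathbf{p}=(y_1,\dots,y_k,y_{k+1},y_k,\dots,y_1)$ when $n$ is even, the restriction of $G$ becomes a $k\times k$ (resp.\ $(k+1)\times(k+1)$) matrix $\tilde G$ acting on $\mathbf{y}$ from the right.

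The key computation will be to give $\tilde G$ explicitly and factor it. For $i,j\le k$, using $G_{ij}=i(n-j)$ if $i\le j$ and $j(n-i)$ if $i>j$, together with the fact that $n-i>k\ge j$ so $G_{n-i,j}=ij$, one sees
\[
G_{ij}+G_{n-i,j}=n\cdot\min(i,j).
\]
Hence $\tilde G_{ij}=n\min(i,j)$ for $i,j\le k$. When $n=2k+1$ is odd this is the entire matrix, so $\tilde G=nM$ with $M=(\min(i,j))_{i,j\le k}$. When $n=2k+2$ is even, an analogous short calculation yields $\tilde G_{i,k+1}=in$ for $i\le k$, $\tilde G_{k+1,j}=j(k+1)$ for $j\le k$, and $\tilde G_{k+1,k+1}=(k+1)^2$, so that
\[
\tilde G=\operatorname{diag}(n,\dots,n,k+1)\cdot M',\qquad M'=(\min(i,j))_{i,j\le k+1}.
\]
In both cases $M$ and $M'$ are integer matrices with determinant $1$ (their inverses are the standard tridiagonal matrices with $2$ on the diagonal and $-1$ on the off-diagonals), hence invertible over $\mathbb Z_{m'}$.

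It then remains to read off the kernel. In the odd case $\ker\tilde G$ consists of $\mathbf{y}$ with $n y_i=0$ in $\mathbb Z_{m'}$ for every $i\le k$, giving $d^{k}$ solutions since $\gcd(n,m')=d$ (as $m'$ is odd). In the even case $\ker\tilde G$ is the set of $\mathbf{y}$ with $ny_i=0$ for $i\le k$ and $(k+1)y_{k+1}=0$; since $n=2(k+1)$ and $m'$ is odd, $\gcd(k+1,m')=\gcd(n,m')=d$, so again each of the $k+1$ coordinates contributes a factor $d$. In either case $|\ker\cev\nu|=d^{\lfloor n/2\rfloor}$, and therefore $|\operatorname{im}\cev\nu|=(m')^{\lfloor n/2\rfloor}/d^{\lfloor n/2\rfloor}=m^{\lfloor n/2\rfloor}$.

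The only delicate point will be the asymmetric middle row/column in the even case; the rest is straightforward bookkeeping. With the factorization $\tilde G=D\cdot M'$ established and $M'$ invertible over $\mathbb Z$, the kernel computation reduces to understanding $\ker(n\cdot)$ and $\ker((n/2)\cdot)$ in $\mathbb Z_{m'}$, both of which have size $d$ by the parity of $m'$.
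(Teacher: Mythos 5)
Your proof is correct and takes essentially the same approach as the paper's: both reduce $\cev\nu$ to the map $\mathbf{p}\mapsto\mathbf{p}G$ on palindromes (since $2$ is a unit mod $m'$), perform a unimodular change of variables after which the restricted map becomes diagonal with entries $n$ (and $n/2$ in the middle slot when $n$ is even), and then count the kernel using $\gcd(n,m')=\gcd(n/2,m')=d$, which holds because $m'$ is odd. Your explicit factorization $\tilde G=D\cdot M$ with $M=(\min(i,j))$ unimodular packages cleanly what the paper does via its basis vectors $\mathbf{a}_i$ and a linear-independence argument over $\mathbb Z_m$, so the content is identical. Two small corrections to the write-up: you set $k=\lfloor n/2\rfloor$ at the outset, yet in the even case parametrize the palindrome as $(y_1,\dots,y_k,y_{k+1},y_k,\dots,y_1)$, which tacitly assumes $n=2k+2$ so that $\lfloor n/2\rfloor=k+1$, not $k$ --- with consistent indexing the free-parameter count and the kernel size both become $k+1=\lfloor n/2\rfloor$ and the conclusion stands; and the inverse of $M$ is tridiagonal with $-1$ off-diagonal and $2$ on the diagonal except for a $1$ in the last diagonal entry, a minor imprecision that does not affect the key fact $\det M=1$.
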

\begin{proof}
    Since $2$ is invertible in $\mathbb Z_{m'}$, we regard $\cev\nu$ as a map defined by $\cev{\nu}(\mathbf{p})=\mathbf{p} G$. To get $|\im\cev\nu|$, we investigate $\ker\cev\nu$.

    Suppose $\{{\bf c}_1,\cdots,{\bf c}_{n-1}\}$ is the standard basis for $\mathbb Z^{n-1}$, i.e., all the entries of each ${\bf c}_i$ are zeros except the $i$-th one being $1$.

    Suppose $n$ is even. Define 
    ${\bf a}_i = {\bf c}_i+{\bf c}_{i+1}+\cdots+{\bf c}_{n-i-1}+{\bf c}_{n-i}$ for $1\leq i\leq \frac{n-1}{2}$, and ${\bf a}_{\frac{n}{2}}
    ={\bf c}_{\frac{n}{2}}.$ 
    Recall that we write $G=(G_1^T,\cdots,G_{n-1}^T)^T$ and $\overleftarrow{G_i} = G_{n-i}$ for $1\leq i\leq n-1$.
    It is easy to check 
    $$G_{i}+G_{n-i} = n({\bf a}_1+\cdots+{\bf a}_i)\text{ for } 1\leq i\leq \frac{n}{2}-1,\text{ and } G_{\frac{n}{2}} = \frac{n}{2}
    ({\bf a}_1+\cdots+{\bf a}_{\frac{n}{2}}).$$

    For any ${\bf b} = (b_1,\cdots,b_{n-1})\in\overleftarrow{\mathbb Z_{m'}^{n-1}}$, we have 
    \begin{equation}\label{eq-Zm}
    \begin{split}
        &\cev\nu({\bf b})\\ = &b_1G_1+\cdots+b_{n-1} G_{n-1}
        =b_1(G_1+ G_{n-1}) +\cdots+ b_{\frac{n}{2}-1}(G_{\frac{n}{2}-1}+ G_{\frac{n}{2}+1})+ b_{\frac{n}{2}} G_{\frac{n}{2}}\\
        =&n b_1{\bf a}_1+ n b_2({\bf a}_1 +{\bf a}_2)+\cdots
        +nb_{\frac{n}{2}-1}({\bf a}_1 +{\bf a}_2+\cdots+{\bf a}_{\frac{n}{2}-1})+\frac{n}{2}b_{\frac{n}{2}}({\bf a}_1 +{\bf a}_2+\cdots+{\bf a}_{\frac{n}{2}})\\
        =&\frac{n}{2}\big( 2b_1{\bf a}_1+ 2b_2({\bf a}_1 +{\bf a}_2)+\cdots
        +2b_{\frac{n}{2}-1}({\bf a}_1 +{\bf a}_2+\cdots+{\bf a}_{\frac{n}{2}-1})+b_{\frac{n}{2}}({\bf a}_1 +{\bf a}_2+\cdots+{\bf a}_{\frac{n}{2}})\big).
        \end{split}
    \end{equation}
    Since $m'$ is odd, then $\gcd(n,m') = \gcd(\frac{n}{2},m')=d$.
    Note that we regard $Z_{m}^{n-1}$ as a module over $\mathbb Z_{m}$.
    Obviously ${\bf a}_1, {\bf a}_1 +{\bf a}_2,\cdots, {\bf a}_1 +{\bf a}_2+\cdots+{\bf a}_{\frac{n}{2}}$ are $\mathbb Z_m$-linearly independent over $\mathbb Z_{m}$.
    Then \eqref{eq-Zm} implies 
    \begin{equation}\label{ker-mu}
        \begin{split}
             &\cev\nu({\bf b})=\bm{0}\in\mathbb Z_{m'}^{n-1}\\
             \Leftrightarrow&
             2b_1{\bf a}_1+ 2b_2({\bf a}_1 +{\bf a}_2)+\cdots
        +2b_{\frac{n}{2}-1}({\bf a}_1 +{\bf a}_2+\cdots+{\bf a}_{\frac{n}{2}-1})+\\&b_{\frac{n}{2}}({\bf a}_1 +{\bf a}_2+\cdots+{\bf a}_{\frac{n}{2}})=\bm{0}\in\mathbb Z_{m}^{n-1}\\
         \Leftrightarrow& b_1=b_2=\cdots=b_{\frac{n}{2}} = 0\in\mathbb Z_{m}.
        \end{split}
    \end{equation}
    
    Define $f\colon\overleftarrow{\mathbb Z_{m'}^{n-1}}\rightarrow \mathbb Z_{m}^{\frac{n}{2}}$ such that $f({\bf b}) = (b_1,\cdots,b_{\frac{n}{2}})$ for any ${\bf b}=(b_1,\cdots,b_{n-1})\in \overleftarrow{\mathbb Z_{m'}^{n-1}}$.
    Then \eqref{ker-mu} implies $\ker\cev{\nu} = \ker f$.
    Define $g\colon\mathbb Z_{m'}^{\frac{n}{2}}\rightarrow \overleftarrow{\mathbb Z_{m'}^{n-1}}$ such that
    $g({\bf d}) = (d_1,\cdots,d_{\frac{n}{2}-1},d_{\frac{n}{2}},d_{\frac{n}{2}-1},\cdots,d_1)$ for any ${\bf d}=(d_1,\cdots,d_{\frac{n}{2}-1},d_{\frac{n}{2}})\in \mathbb Z_{m'}^{\frac{n}{2}}$. We have $g$ is a group isomorphism. 
    It is easy to check that $f\circ g$ is the projection from 
    $\mathbb Z_{m'}^{\frac{n}{2}}$ to $\mathbb Z_{m}^{\frac{n}{2}}$.
    Then $$|\ker\cev{\nu}|=|\ker f |=|\ker(f\circ g)| = d^{\frac{n}{2}}.$$
    Thus we have 
    $$|\im\cev{\nu}|=\left|\frac{\overleftarrow{\mathbb Z_{m'}^{n-1}}}{\ker\cev\nu}\right|=\left|\frac{\mathbb Z_{m'}^{\frac{n}{2}}}{\ker\cev\nu}\right|
    =\frac{(m')^{\frac{n}{2}}}{(d)^{\frac{n}{2}}}=(m)^{\frac{n}{2}}
    =m^{\lfloor\frac{n}{2}\rfloor}.$$

    Using the same technique as above, one can show $|\im\cev\nu| = 
    m^{\frac{n-1}{2}}=m^{\lfloor\frac{n}{2}\rfloor}$ when $n$ is odd.
    
 \end{proof}

\begin{thm}\label{thm-PI-reducedA}
Assume that $m''$ is odd and $\hat{q}^2$ is a primitive $m''$-th root of unity.
Let $\Sigma$ be a triangulable essentially
bordered pb surface without interior punctures, and $\lambda=\mu$ be a triangulation of $\Sigma$
introduced in Section~\ref{sub:quantum-torus-reduced}.
Suppose $\overline\Sigma$ has $b$ boundary components among which there are $t$ even boundary components. Then we have 
$$\rankZ \rA=d^{r(\Sigma)-t} m^{|\overline V_{\lambda}|-t(n-1)-(b-t)\lfloor\frac{n}{2}\rfloor},$$
where $d,m$ are defined in Section~\ref{notation} and $|\overline V_{\lambda}|=(n^2-1)r(\Sigma)-\binom{n}{2}(\#\partial\Sigma)$ given in \eqref{eq:cardinarity}. 
\end{thm}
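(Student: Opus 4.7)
The plan is to reduce the rank computation to counting balanced vectors modulo a certain subgroup, in complete analogy with the proof of Theorem~\ref{thm:rank}. By Lemma~\ref{quantum} together with Theorem~\ref{center_torus-reduced}, the center of $\rA$ is exactly $\mathbb C\text{-span}\{a^{\bf k}\mid {\bf k}\in \overline\Lambda_z\}$, and Lemma~\ref{PI} then identifies $\rankZ \rA$ with the index $|\mathbb Z^{\overline V_\lambda}/\overline\Lambda_z|$. So the entire task is to compute this index.

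Next, I would transport the computation across the group isomorphism $\overline\varphi\colon \mathbb Z^{\overline V_\lambda}\to \overline\Lambda_\lambda,\ {\bf k}\mapsto {\bf k}\barK_\lambda$ (which is available by Proposition~\ref{prop:LY23_11.10} and the invertibility of $\barK_\lambda$ over $\mathbb Q$ from Lemma~\ref{lem:invertible_KH}), exactly as in Remark~\ref{reduced-rem_partial}. This yields
\[
\rankZ \rA = \left|\frac{\mathbb Z^{\overline V_\lambda}}{\overline\Lambda_z}\right| = \left|\frac{\overline\Lambda_\lambda}{\overline\varphi(\overline\Lambda_z)}\right|,
\]
with $\overline\varphi(\overline\Lambda_z)=(\overline\Lambda_\lambda\cap m'\mathbb Z^{\overline V_\lambda})+\overline\varphi(\langle\overline\Lambda_\partial\rangle)$.

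Now I would insert the intermediate group $\overline\Lambda_\lambda\cap m'\mathbb Z^{\overline V_\lambda}$ and factor the index multiplicatively:
\[
\left|\frac{\overline\Lambda_\lambda}{\overline\varphi(\overline\Lambda_z)}\right|
= \frac{\bigl|\overline\Lambda_\lambda\,\big/\,(\overline\Lambda_\lambda\cap m'\mathbb Z^{\overline V_\lambda})\bigr|}{\bigl|\overline\varphi(\overline\Lambda_z)\,\big/\,(\overline\Lambda_\lambda\cap m'\mathbb Z^{\overline V_\lambda})\bigr|}.
\]
The numerator is $d^{r(\Sigma)}m^{|\overline V_\lambda|}$ by Lemma~\ref{lem-PI-1}, while the denominator is $d^tm^{t(n-1)+(b-t)\lfloor n/2\rfloor}$ by Lemma~\ref{reduced-PI2}. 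Dividing gives precisely $d^{r(\Sigma)-t}m^{|\overline V_\lambda|-t(n-1)-(b-t)\lfloor n/2\rfloor}$, proving the claim.

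No new obstacle is expected, since all the heavy lifting (the description of $\mathcal Z(\rA)$ via $\overline\Lambda_z$, the short exact sequence computing $|\overline\Lambda_\lambda/(\overline\Lambda_\lambda\cap m'\mathbb Z^{\overline V_\lambda})|$, and the delicate combinatorial count $|\im\cev\nu|=m^{\lfloor n/2\rfloor}$ capturing the palindromic symmetry ${\bf b}=\cev{{\bf b}}$ forced by odd boundary components) has already been carried out in Theorem~\ref{center_torus-reduced} and Lemmas~\ref{lem-PI-1}, \ref{reduced-PI2}, and \ref{lem-reduced-mu}. The only point that requires care is checking that the surjection $\overline\theta$ in the proof of Lemma~\ref{reduced-PI2} is in fact injective on $\{x+m'\mathbb Z^{\overline V_\lambda}\mid x\in\overline\varphi(\langle\overline\Lambda_\partial\rangle)\}$, which follows because Equation~\eqref{eq-reduced-key} forces $\mathbf u_1\equiv \bm 0\pmod{m'}$ once $\mathbf u_2\equiv \bm 0\pmod{m'}$, using Lemma~\ref{lem:reduced_P_n} to handle the $P'$ block.
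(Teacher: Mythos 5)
Your proposal is correct and is exactly the argument the paper has in mind: Theorem~\ref{center_torus-reduced} and Lemma~\ref{PI} give $\rankZ\rA=|\mathbb Z^{\overline V_\lambda}/\overline\Lambda_z|$, you transfer across $\overline\varphi$ as in Remark~\ref{reduced-rem_partial}, factor through $\overline\Lambda_\lambda\cap m'\mathbb Z^{\overline V_\lambda}$, and divide the counts from Lemmas~\ref{lem-PI-1} and \ref{reduced-PI2}. This is precisely what the paper means by ``the same technique as Theorem~\ref{thm:rank}.''
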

\begin{proof}
    We can show the claim using Lemmas \ref{lem-PI-1} and \ref{reduced-PI2} and the same technique as Theorem \ref{thm:rank}. 
\end{proof}

\begin{thm}\label{main-thm-reduced-PI}
Assume that $m''$ is odd and $\hat{q}^2$ is a primitive $m''$-th root of unity. 
Suppose $\Sigma$ is an essentially bordered pb surface and contains no interior punctures. We require $\Sigma$ to be a polygon if $n>3$.
Assume $\overline\Sigma$ has $b$ boundary components among which there are $t$ even boundary components.
Then we have 
$$\rankZ\overline\cS_n(\Sigma,\mathbbm{v}) =\rankZ \rA= d^{r(\Sigma)-t} m^{|\overline V_{\lambda}|-t(n-1)-(b-t)\lfloor\frac{n}{2}\rfloor},$$
where $d,m$ are defined in Section~\ref{notation} and $|\overline V_{\lambda}|=(n^2-1)r(\Sigma)-\binom{n}{2}(\#\partial\Sigma)$  given in \eqref{eq:cardinarity}.  
\end{thm}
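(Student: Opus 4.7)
The plan is to combine three previously established results: Theorem~\ref{traceA}(b) for the injectivity of the $A$-version reduced quantum trace map, Proposition~\ref{reduced-rank_eq} for reducing the rank computation of $\overline\cS_n(\Sigma,\mathbbm{v})$ to that of $\rA$, and Theorem~\ref{thm-PI-reducedA} for the explicit formula of $\rankZ \rA$.

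First, I would fix the triangulation $\lambda=\mu$ introduced in Section~\ref{sub:quantum-torus-reduced}, so that all the block decompositions of $\barK_{\mu}\barQ_{\mu}$ and $K_{\partial}$ developed in Lemmas~\ref{lem-reduced-P} and \ref{reduced-K} are available. Under the hypothesis that $n=2,3$, or $n>3$ and $\Sigma$ is a polygon, Theorem~\ref{traceA}(b) gives the injectivity of $\overline{tr}_{\lambda}^A\colon \rSn \to \rA$. Together with the sandwich inclusions $\rAp\subset \overline{tr}_{\lambda}^A(\rSn)\subset \rA$, this identifies $\rSn$ with a subalgebra of $\rA$ sandwiched between $\rAp$ and $\rA$.

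Second, with the injectivity in hand, Proposition~\ref{reduced-rank_eq} directly yields
\[
\rankZ \overline\cS_n(\Sigma,\mathbbm{v}) = \rankZ \rA.
\]
Concretely, the argument mirrors the proof of Proposition~\ref{rank_eq}: one shows $\mathrm{Frac}(\cZ(\rSn))=\mathrm{Frac}(\cZ(\rA))$ by checking that for any $\mathbf{k}\in\overline\Lambda_z$ the monomial $a^{\mathbf{k}}$ lies in $\mathrm{Frac}(\cZ(\rSn))$ (using that sufficiently high $m'$-multiples of the exponent land in $\rAp\subset \rSn$), and then uses that both localizations are division algebras (\cite[Proposition~2.1(a)]{FKBL21}) to get $\widetilde{\rSn}=\widetilde{\rA}$.

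Finally, Theorem~\ref{thm-PI-reducedA} provides
\[
\rankZ \rA = d^{r(\Sigma)-t}\, m^{|\overline V_{\lambda}|-t(n-1)-(b-t)\lfloor n/2\rfloor},
\]
and substituting $|\overline V_{\lambda}|=(n^2-1)r(\Sigma)-\binom{n}{2}(\#\partial\Sigma)$ from \eqref{eq:cardinarity} finishes the proof. The rank $\rankZ \rA$ is a Morita/PI-degree invariant and independent of the triangulation, so the restriction $\lambda=\mu$ used to prove Theorem~\ref{thm-PI-reducedA} does not constrain the final statement. There is no genuine obstacle beyond verifying that the polygon hypothesis (when $n>3$) is exactly what Theorem~\ref{traceA}(b) requires to apply Proposition~\ref{reduced-rank_eq}; the rest is assembly of existing results.
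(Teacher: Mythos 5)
Your proposal is correct and follows exactly the paper's argument: the paper proves this theorem by citing Theorem \ref{traceA}, Theorem \ref{thm-PI-reducedA}, and Proposition \ref{reduced-rank_eq}, which is precisely your assembly (injectivity of $\overline{tr}_{\lambda}^A$ under the polygon hypothesis, the rank-transfer argument modeled on Proposition \ref{rank_eq}, and the explicit formula for $\rankZ\rA$ with the triangulation $\lambda=\mu$). Your added remark that the final formula is triangulation-independent is a harmless clarification the paper leaves implicit.
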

\begin{proof}
Theorems \ref{traceA} and \ref{thm-PI-reducedA} and Proposition \ref{reduced-rank_eq} concludes the claim.
\end{proof}

\section{The Azumaya locus of the stated $\SL(n)$-skein algebra}\label{sec:Azumaya}
In this section, we suppose Condition ($*$). All the algebras are assumed to be $\mathbb C$-algebras.

Recall that, for an almost  Azumaya algebra $A$, $\Azumaya (A)$ denotes the Azumaya locus of $A$.

\def\TP{\mathbb T(\mathsf{P})}

\begin{thm}\cite[Theorem 4.1]{New72}\label{thm-decom-symmetric}
    Suppose $\mathsf{P}$ is an anti-symmetric integer matrix of size $r$.
    There exists an integral matrix $\mathsf{X}$ with 
    $\det\mathsf{X} =\pm 1$ such that 
    \begin{align}\label{eq-decom-P}
        \mathsf{X}^T \mathsf{P}\mathsf{X} = 
    \text{diag}\left\{
    \begin{pmatrix}
        0   & h_1\\
        -h_1& 0
    \end{pmatrix},\cdots,
    \begin{pmatrix}
        0   & h_k\\
        -h_k& 0
    \end{pmatrix},
    0,\cdots,0
    \right\},
    \end{align}
    where $h_i\mid h_{i+1}$ for $1\leq i\leq k-1$.
\end{thm}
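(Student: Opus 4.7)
The plan is to proceed by induction on the size $r$ of $\mathsf{P}$, using simultaneous row and column operations coming from left/right multiplication by unimodular integer matrices (i.e., congruence $\mathsf{P} \mapsto \mathsf{U}^T \mathsf{P} \mathsf{U}$ with $\mathsf{U} \in \mathrm{GL}_r(\mathbb{Z})$). If $\mathsf{P} = 0$ we take $\mathsf{X} = I$ and $k = 0$. Otherwise, the key reduction step is to isolate a leading $2 \times 2$ antisymmetric block $\begin{pmatrix} 0 & h_1 \\ -h_1 & 0 \end{pmatrix}$ from the rest, while arranging $h_1$ to divide every remaining entry.

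To perform one reduction step, I would let $h_1$ denote the minimum positive absolute value among the entries of $\mathsf{P}$, choose an entry $\mathsf{P}_{ij}$ (with $i < j$) realizing it, and use a simultaneous permutation of rows $i,j$ with rows $1,2$ (and the analogous column permutation, preserving antisymmetry) to move this entry to the $(1,2)$-position. Now clear the remainder of the first row: for each $j \geq 3$, write $\mathsf{P}_{1j} = q \, h_1 + r_j$ with $0 \leq r_j < h_1$ by the division algorithm. Subtracting $q$ times column $2$ from column $j$, and simultaneously $q$ times row $2$ from row $j$, replaces $\mathsf{P}_{1j}$ by $r_j$ without disturbing $\mathsf{P}_{12}$ and preserving antisymmetry. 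If some $r_j \neq 0$ then this entry has smaller absolute value than $h_1$, contradicting minimality; hence every $\mathsf{P}_{1j}$ with $j \geq 3$ is already a multiple of $h_1$ and clears to $0$ on the nose. Antisymmetry then clears row $2$ as well. An entirely analogous argument clears the first column and second column. At this point $\mathsf{P}$ has been brought to the block form $\begin{pmatrix} 0 & h_1 & 0 \\ -h_1 & 0 & 0 \\ 0 & 0 & \mathsf{P}' \end{pmatrix}$ where $\mathsf{P}'$ is antisymmetric of size $r - 2$.

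For the divisibility condition, the point is that after the above clearing, if some entry of $\mathsf{P}'$ failed to be a multiple of $h_1$ one could first add the offending row of $\mathsf{P}'$ to row $1$ (and symmetrically the corresponding column to column $1$); this reintroduces a nonzero entry in the top row whose reduction modulo $h_1$ would again contradict the minimality of $h_1$. Hence $h_1 \mid \mathsf{P}'_{k\ell}$ for all $k,\ell$, so $h_1$ divides the minimum positive entry $h_2$ of $\mathsf{P}'$. Applying the inductive hypothesis to $\mathsf{P}'$ produces a unimodular change of basis that puts $\mathsf{P}'$ into the desired normal form $\mathrm{diag}\!\left\{\begin{pmatrix} 0 & h_2 \\ -h_2 & 0 \end{pmatrix}, \ldots, \begin{pmatrix} 0 & h_k \\ -h_k & 0 \end{pmatrix}, 0, \ldots, 0\right\}$ with $h_2 \mid h_3 \mid \cdots \mid h_k$, and extending this transformation by the identity on the first two coordinates gives the final $\mathsf{X}$, with the chain of divisibilities $h_1 \mid h_2 \mid \cdots \mid h_k$ inherited from the minimality argument at each induction step.

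The only subtle point, and the one that really deserves care, is the bookkeeping to ensure that \emph{simultaneous} row-column operations (needed to keep antisymmetry) produce the clearing claimed above; this is the reason one uses the congruence $\mathsf{U}^T \mathsf{P} \mathsf{U}$ rather than independent row and column operations as in the usual Smith normal form. Everything else reduces to a careful invocation of the Euclidean algorithm together with the minimality of the pivot $h_1$.
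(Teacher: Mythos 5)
The paper does not prove this statement at all: it is quoted verbatim from Newman's \emph{Integral Matrices} [New72, Theorem 4.1]. Your argument is essentially the classical proof of that theorem (induction on $r$ via unimodular congruence $\mathsf{P}\mapsto \mathsf{U}^T\mathsf{P}\mathsf{U}$, Euclidean reduction against a minimal pivot, then a divisibility argument obtained by mixing a row of the complementary block into row~$1$), so in substance it is correct and matches the cited source's route.

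Two points should be tightened. First, the phrase ``contradicting minimality'' is not literally a contradiction as you set it up: $h_1$ was chosen minimal among the entries of the \emph{given} matrix, and the clearing operations produce a new matrix, so a smaller nonzero remainder is not impossible --- it is simply a smaller pivot. Either choose $h_1$ minimal over the entire unimodular congruence class of $\mathsf{P}$ (possible by well-ordering of $\mathbb{N}$), or phrase the step as a descent: whenever a nonzero remainder smaller than the current pivot appears, restart with it; the process terminates because the minimal positive entry strictly decreases. The same remark applies to the divisibility step for $\mathsf{P}'$. Second, antisymmetry does not ``clear row 2'': once row $1$ is cleared, antisymmetry clears \emph{column} $1$, while row $2$ needs its own Euclidean reduction, using row/column $1$ as the pivot (since $\mathsf{P}_{21}=-h_1$ and $\mathsf{P}_{11}=0$, these operations do not disturb the already-cleared row $1$), after which antisymmetry clears column $2$. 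With these repairs the induction closes, and the chain $h_1\mid h_2\mid\cdots\mid h_k$ follows as you say, because divisibility of all entries by $h_1$ is preserved under integer row and column operations.
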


We refer to the decomposition in \eqref{eq-decom-P} as the {\bf anti-symmetric matrix decomposition}.

\begin{thm}\cite[Theorem 2.26]{EG11}\label{thm-tensor-ir}
    (1) Let $V$ (resp. $W$) be a finite-dimensional irreducible representation of the
    algebra $A$ (resp. $B$). Then $V\ot W$ is a finite-dimensional irreducible representation of the
    algebra $A\ot B$.

    (2) Any finite-dimensional irreducible representation $M$ of the
    algebra $A\ot B$ has the form (1) for unique $V$ and $W$.
\end{thm}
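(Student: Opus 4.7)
The plan is to prove both parts via the Jacobson density theorem (equivalently, Burnside's theorem, valid for finite-dimensional irreducible representations over the algebraically closed field $\mathbb{C}$).

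For part (1), since $V$ (resp.\ $W$) is a finite-dimensional irreducible representation of $A$ (resp.\ $B$) over $\mathbb{C}$, the density theorem implies that the structural maps $A \to \mathrm{End}_{\mathbb{C}}(V)$ and $B \to \mathrm{End}_{\mathbb{C}}(W)$ are both surjective. Taking their tensor product yields a surjection $A \ot B \to \mathrm{End}(V) \ot \mathrm{End}(W) = \mathrm{End}(V \ot W)$, and since $\mathrm{End}(V \ot W)$ obviously acts irreducibly on $V \ot W$, so does its preimage $A \ot B$.

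For part (2), let $M$ be a finite-dimensional irreducible $A \ot B$-module. Restricting to $A$ via $a \mapsto a \ot 1$ and picking any simple $A$-submodule $V \subseteq M$, I would first establish that $M$ is $V$-isotypic as an $A$-module: the $V$-isotypic component $M_V \subseteq M$ is stable under $B$ (because $B$ commutes with $A$ inside $A \ot B$), so $M_V$ is a nonzero $A \ot B$-submodule and hence equals $M$ by irreducibility. Setting $W := \mathrm{Hom}_A(V, M)$ with its natural $B$-action by postcomposition with $1 \ot b$, the evaluation map $V \ot W \to M$, $v \ot \varphi \mapsto \varphi(v)$, is then an $A \ot B$-linear isomorphism by a standard Schur-lemma computation on isotypic components. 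Simplicity of $W$ as a $B$-module is now immediate: any proper $B$-submodule $W' \subsetneq W$ would produce the proper nonzero $A \ot B$-submodule $V \ot W' \subsetneq M$, contradicting irreducibility of $M$. Uniqueness of the factors follows since $V$ is determined as the unique isomorphism class of simple constituent of $M|_A$ and $W = \mathrm{Hom}_A(V, M)$ as a $B$-module.

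The main obstacle (and essentially the only subtle point) is the isotypic decomposition step: one must verify that the $B$-action preserves the $V$-isotypic subspace of $M|_A$. This relies on the commutativity $(a \ot 1)(1 \ot b) = (1 \ot b)(a \ot 1)$ inside $A \ot B$ together with Schur's lemma applied to the $A$-equivariant operators $m \mapsto (1 \ot b) m$. Once this decomposition is in place, the remainder of the proof reduces to routine applications of the density theorem and Schur's lemma, so the real content of the theorem is packaged inside the density theorem itself.
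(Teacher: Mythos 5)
The paper does not prove this statement; it is cited from [EG11, Theorem 2.26]. Your proof is correct and follows essentially the same route as the standard one in [EG11]: part (1) by the Jacobson density theorem and the isomorphism $\mathrm{End}(V)\otimes\mathrm{End}(W)\cong\mathrm{End}(V\otimes W)$, and part (2) by showing $M$ is $V$-isotypic over $A$ (using that the $B$-action gives $A$-linear endomorphisms, which preserve isotypic components), then recovering $W$ as $\mathrm{Hom}_A(V,M)$ via the evaluation map. One small point worth making explicit: you deduce that $M|_A$ is semisimple only after invoking irreducibility of $M$ to get $M=M_V$, which is needed before you can compute $\dim\mathrm{Hom}_A(V,M)$ by Schur and conclude the evaluation map is an isomorphism by a dimension count; your write-up has this in the right order, so the argument is sound.
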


For any integer $h$, we define 
$$\mathbb C[X,Y;h] = \mathbb C\langle X^{\pm1},Y^{\pm1}\rangle/(XY=\hat q^{2h} YX).$$

Recall that $\hat q^2$ is a root of unity of order $m''$.
Define $l=\frac{m''}{\gcd(m'',h)}$. We have the following.

\begin{lem}\cite[Lemma 17]{BL07}\label{lem-ir-CXY}
    Any irreducible representation of $\mathbb C[X,Y;h]$ has dimension $l$.
\end{lem}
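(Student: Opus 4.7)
The plan is to reduce to the fact that, in the two-variable quantum torus with relation $XY=\omega YX$, the center is generated by sufficiently high powers of $X$ and $Y$, and then to build an explicit $l$-dimensional invariant subspace inside any irreducible representation.

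First, I would verify that $\omega:=\hat{q}^{2h}$ has order exactly $l=m''/\gcd(m'',h)$. Since $\hat{q}^2$ is a primitive $m''$-th root of unity by Condition~$(\ast)$, the order of $\hat{q}^{2h}$ is $m''/\gcd(m'',h)=l$, so $\omega$ is a primitive $l$-th root of unity. Next, a direct computation from the defining relation $XY=\omega YX$ yields $X^lY=\omega^l YX^l=YX^l$ and $XY^l=Y^lX$. Hence $X^l$ and $Y^l$ lie in the center of $\mathbb{C}[X,Y;h]$. Given a finite-dimensional irreducible representation $\rho\colon \mathbb{C}[X,Y;h]\to \mathrm{End}(V)$, Schur's lemma produces scalars $\alpha,\beta\in\mathbb{C}$ with $\rho(X^l)=\alpha\,\mathrm{Id}_V$ and $\rho(Y^l)=\beta\,\mathrm{Id}_V$; moreover $\alpha,\beta\in\mathbb{C}^{\times}$ since $X$ and $Y$ are invertible in $\mathbb{C}[X,Y;h]$.

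The next step is to construct an $l$-dimensional invariant subspace. Since $V$ is finite-dimensional and $\rho(Y)$ is invertible, there exists a nonzero eigenvector $v\in V$ of $\rho(Y)$ with eigenvalue $\lambda\in\mathbb{C}^{\times}$. The relation $YX=\omega^{-1}XY$ gives, by induction on $k$,
\begin{equation*}
\rho(Y)\bigl(\rho(X)^k v\bigr) = \omega^{-k}\lambda\cdot \rho(X)^k v \qquad (k\geq 0),
\end{equation*}
so each $\rho(X)^k v$ is a $\rho(Y)$-eigenvector with eigenvalue $\omega^{-k}\lambda$. Because $\omega$ is a primitive $l$-th root of unity, the eigenvalues $\lambda, \omega^{-1}\lambda, \dots, \omega^{-(l-1)}\lambda$ are pairwise distinct, whence the vectors $v, \rho(X)v, \dots, \rho(X)^{l-1}v$ are linearly independent (and in particular nonzero). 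Let $W\subset V$ denote their span. Then $W$ is $\rho(Y)$-stable by construction, and $\rho(X)$-stable because $\rho(X)^l v=\alpha v\in W$. Thus $W$ is a nonzero subrepresentation, so by irreducibility $V=W$ and $\dim V=l$.

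The argument has no real obstacle: the only point requiring care is the linear independence of $v,\rho(X)v,\dots,\rho(X)^{l-1}v$, which is precisely where the exact order of $\omega$ enters, justifying the definition $l=m''/\gcd(m'',h)$. The proof does not require the existence of an $l$-dimensional irreducible representation (which is anyway standard for two-variable quantum tori at roots of unity), since we only need an upper bound matching the lower bound obtained from the explicit invariant subspace.
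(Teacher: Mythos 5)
Your proof is correct and is essentially the standard argument for irreducible representations of a two-variable quantum torus at a root of unity: show $X^l,Y^l$ are central, diagonalize $\rho(Y)$, note that $\rho(X)$ cyclically permutes the $l$ distinct eigenvalue lines $\lambda,\omega^{-1}\lambda,\dots,\omega^{-(l-1)}\lambda$, and conclude by irreducibility. The paper does not give its own proof but simply cites Lemma~17 of Bonahon--Liu, and the argument there is of exactly this form, so your proposal matches the cited source; the only phrasing worth tightening is the last sentence, since the argument directly pins down $\dim V = l$ rather than squeezing an upper bound against a lower one (the span $W$ is nonzero and invariant, so $V=W$, and $\dim W=l$ by linear independence of eigenvectors with distinct eigenvalues).
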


Although the following lemma is probably well-known, we do not know where it is written explicitly. Hence, let us give a proof here.
\begin{lem}\label{lem-quantum-torus-rep}
    Suppose $\mathsf{P}$ is an anti-symmetric integer matrix.
    Any irreducible representation of $\TP$ is an Azumaya representation.
\end{lem}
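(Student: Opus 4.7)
My plan is to reduce to the block-diagonal form of $\mathsf{P}$ provided by Theorem~\ref{thm-decom-symmetric} and then use Theorem~\ref{thm-tensor-ir} to reduce irreducibility to the $2\times 2$ blocks, where Lemma~\ref{lem-ir-CXY} pins down the dimension.

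First, let $\mathsf{X}$ be an integer matrix with $\det\mathsf{X}=\pm1$ such that $\mathsf{X}^T\mathsf{P}\mathsf{X}$ has the block-diagonal form of \eqref{eq-decom-P}. The substitution $y_j:=x^{\mathsf{X}\mathbf{e}_j}$ (where $\mathbf{e}_j$ is the $j$-th standard basis vector) defines new Weyl-normalized generators of $\TP$, and because $\det\mathsf{X}=\pm 1$ the $y_j^{\pm 1}$ again generate $\TP$ as a $\mathbb C$-algebra. Using \eqref{eq;q-comm} the commutation relations among the $y_j$'s are exactly those encoded by $\mathsf{X}^T\mathsf{P}\mathsf{X}$. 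Consequently we get a $\mathbb C$-algebra isomorphism
\[
\TP\;\cong\; \Bigl(\bigotimes_{i=1}^{k}\mathbb C[X_i,Y_i;h_i]\Bigr)\otimes \mathbb C[Z_1^{\pm1},\ldots,Z_s^{\pm1}],
\]
where $s$ is the number of zero rows in the decomposition \eqref{eq-decom-P}.

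Second, I would invoke Theorem~\ref{thm-tensor-ir} iteratively: any finite-dimensional irreducible representation of $\TP$ decomposes uniquely as $V_1\otimes\cdots\otimes V_k\otimes W$ with $V_i$ an irreducible representation of $\mathbb C[X_i,Y_i;h_i]$ and $W$ an irreducible representation of the Laurent polynomial ring $\mathbb C[Z_1^{\pm1},\ldots,Z_s^{\pm1}]$. Since the latter is commutative, $W$ is one-dimensional. By Lemma~\ref{lem-ir-CXY}, $\dim V_i = l_i := m''/\gcd(m'',h_i)$. Hence \emph{every} finite-dimensional irreducible representation of $\TP$ has the same dimension $\prod_{i=1}^{k} l_i$.

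Finally, I would conclude the Azumaya property as follows. The quantum torus $\TP$ is almost Azumaya (a domain, finitely generated over $\mathbb C$, and by Lemma~\ref{PI} finitely generated over its center), so the Unicity theorem (Theorem~\ref{thm:Unicity-alomost}) applies. The Azumaya locus is nonempty, so there exists an Azumaya representation of dimension equal to the PI-degree $D=\sqrt{\rankZ \TP}$. Since all irreducible representations share the common dimension $\prod_i l_i$ computed above, this common value must equal $D$, and therefore every irreducible representation of $\TP$ is Azumaya. I expect no serious obstacle here; the only point requiring a bit of care is checking that the change of variables induced by $\mathsf{X}$ preserves Weyl normalization and yields the claimed tensor decomposition, but this is a direct application of \eqref{eq;q-comm} together with $\det\mathsf{X}=\pm 1$.
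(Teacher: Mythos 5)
Your proposal is correct and follows essentially the same route as the paper: decompose $\mathsf P$ via Theorem \ref{thm-decom-symmetric}, split $\TP$ into tensor factors $\mathbb C[X,Y;h_i]$ (plus a commutative Laurent part), and combine Theorem \ref{thm-tensor-ir} with Lemma \ref{lem-ir-CXY} to conclude that every irreducible representation has dimension $l_1\cdots l_k$. The only (harmless) difference is the final identification: the paper computes $\rankZ(\TP)=(l_1\cdots l_k)^2$ directly from Lemma \ref{PI} (see \eqref{eq-rank-torus-decom}), whereas you pin down the PI-degree indirectly via nonemptiness of the Azumaya locus from Theorem \ref{thm:Unicity-alomost}; both work, though the explicit rank formula is reused later in Proposition \ref{prop-anti-decom-P}.
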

\begin{proof}
    Suppose $\mathsf P$ is of size $r$ and has the  anti-symmetric matrix decomposition
    as \eqref{eq-decom-P}. 
    Then we have
    $$\TP\simeq C[X,Y;h_1]\ot\dots \ot C[X,Y;h_k]\ot(\mathbb C[X]^{\ot(r-2k)}).$$
    For each $1\leq i\leq k$, suppose $l_i = \dfrac{m''}{\gcd(m'',h_i)}$.
    From Lemma~\ref{PI}, it is easy to show that 
    \begin{align}\label{eq-rank-torus-decom}
        \rankZ(\TP) = (l_1l_2\cdots l_k)^2.
    \end{align}
    Theorem \ref{thm-tensor-ir} and Lemma \ref{lem-ir-CXY} imply any irreducible representation of $\TP$ has dimension $l_1l_2\cdots l_k$, which equals the square root of $\rankZ(\TP)$.
    This completes the proof.
\end{proof}

Suppose $\mathsf{P}$ is an anti-symmetric integer matrix, and 
    $A$ is a $\mathbb C$-algebra such that $\mathbb T^{+}(\mathsf{P})\subset A\subset \mathbb T(\mathsf{P})$ (sandwiched property).
    Obviously, we have $\mathcal Z(A)\subset \mathcal Z(\mathbb T(\mathsf{P}))$. Lemma \ref{lem-quantum-torus-rep} implies 
    the Azumaya locus of $\TP$ is
    $\text{MaxSpec}(\mathcal Z(\TP))$.
    We use $\iota$ to denote the embedding from $\mathcal Z(A)$ to 
    $\mathcal Z(\mathbb T(\mathsf{P}))$. Then $\iota$ induces a (continuous) map $\iota^*\colon \text{MaxSpec}(\mathcal Z(\TP))\rightarrow \text{MaxSpec}(\mathcal Z(A))$ given by $\iota^*(\gamma) = \iota^{-1}(\gamma)$ for any $\gamma\in \text{MaxSpec}(\mathcal Z(A))$.

The proof of Proposition \ref{rank_eq} implies the following.
\begin{lem}\label{lem-PI-torus-equal-A}
    Suppose $\mathsf{P}$ is an anti-symmetric integer matrix, and 
    $A$ is a finitely generated $\mathbb C$-algebra such that $\mathbb T^{+}(\mathsf{P})\subset A\subset \mathbb T(\mathsf{P})$.
     Then we have $A$ is  almost Azumaya and 
     the PI-degree of $\mathbb T(\mathsf{P})$ equals that of $A$.
\end{lem}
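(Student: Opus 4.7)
The plan is to mirror the proof of Proposition~\ref{rank_eq} for the sandwich $\mathbb{T}^{+}(\mathsf{P}) \subset A \subset \mathbb{T}(\mathsf{P})$. First I will establish the identification $\mathcal{Z}(A) = A \cap \mathcal{Z}(\mathbb{T}(\mathsf{P}))$: the inclusion $\supset$ is immediate, and for $\subset$, any $z \in \mathcal{Z}(A)$ commutes with every generator $x_i \in \mathbb{T}^{+}(\mathsf{P}) \subset A$; the commutation extends to the inverses $x_i^{-1}$ in the domain $\mathbb{T}(\mathsf{P})$, and since $\mathbb{T}(\mathsf{P})$ is generated by $\{x_i, x_i^{-1}\}$, we conclude $z \in \mathcal{Z}(\mathbb{T}(\mathsf{P}))$. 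Next, for any monomial $x^{\mathbf{k}}$ in $\mathcal{Z}(\mathbb{T}(\mathsf{P}))$ (with $\mathbf{k}$ in the center lattice $\Lambda$), I choose $\mathbf{t} \in \Lambda \cap \mathbb{N}^m$ large enough so that $\mathbf{t}+\mathbf{k} \in \mathbb{N}^m$; this is possible because $\Lambda \supset (m''\mathbb{Z})^m$ at the root of unity $\hat{q}^2$. Then $x^{\mathbf{t}}$ and $x^{\mathbf{t}+\mathbf{k}}$ both lie in $\mathbb{T}^{+}(\mathsf{P}) \cap \mathcal{Z}(\mathbb{T}(\mathsf{P})) \subset \mathcal{Z}(A)$, so $x^{\mathbf{k}} = (x^{\mathbf{t}})^{-1} x^{\mathbf{t}+\mathbf{k}} \in \mathrm{Frac}(\mathcal{Z}(A))$, giving $\mathrm{Frac}(\mathcal{Z}(A)) = \mathrm{Frac}(\mathcal{Z}(\mathbb{T}(\mathsf{P})))$.

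Writing $\widetilde{A} = A \otimes_{\mathcal{Z}(A)} \mathrm{Frac}(\mathcal{Z}(A))$ and $\widetilde{\mathbb{T}(\mathsf{P})}$ analogously, the inclusion $A \subset \mathbb{T}(\mathsf{P})$ gives $\widetilde{A} \subset \widetilde{\mathbb{T}(\mathsf{P})}$; conversely, the factorization $x^{\mathbf{k}} = x^{\mathbf{t}+\mathbf{k}}(x^{\mathbf{t}})^{-1}$ with $x^{\mathbf{t}+\mathbf{k}} \in \mathbb{T}^{+}(\mathsf{P}) \subset A$ and $x^{\mathbf{t}} \in \mathcal{Z}(A)$ places every monomial of $\mathbb{T}(\mathsf{P})$ inside $\widetilde{A}$, giving $\widetilde{A} = \widetilde{\mathbb{T}(\mathsf{P})}$. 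Comparing dimensions over the common ground field yields $\mathrm{rank}_{\mathcal{Z}} A = \mathrm{rank}_{\mathcal{Z}} \mathbb{T}(\mathsf{P})$; combined with the general identity $D^2 = \mathrm{rank}_{\mathcal{Z}}$ applied to each algebra (using Lemma~\ref{lem-quantum-torus-rep} on the $\mathbb{T}(\mathsf{P})$ side), this gives the PI-degree equality.

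For the almost Azumaya conditions, condition~(2) is immediate from $A \subset \mathbb{T}(\mathsf{P})$, a domain. For condition~(3), I take monomial coset representatives $\{y_1, \ldots, y_r\} \subset \mathbb{T}^{+}(\mathsf{P})$ of $\mathbb{Z}^m/\Lambda$, which by Lemma~\ref{PI} form a $\mathcal{Z}(\mathbb{T}(\mathsf{P}))$-basis of $\mathbb{T}(\mathsf{P})$. For any $a \in A$ one may expand $a = \sum c_i y_i$ with $c_i \in \mathcal{Z}(\mathbb{T}(\mathsf{P}))$; multiplying by some $s \in \mathcal{Z}(\mathbb{T}^{+}(\mathsf{P})) \subset \mathcal{Z}(A)$ that clears denominators in the $c_i$ gives $sa \in \mathcal{Z}(A)\{y_1, \ldots, y_r\}$. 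A Noetherian torsion-module argument using that $\mathcal{Z}(\mathbb{T}^{+}(\mathsf{P}))$ is a finitely generated commutative $\mathbb{C}$-algebra (by Gordan's lemma applied to the Hilbert basis of $\Lambda \cap \mathbb{N}^m$) then shows $A = \mathcal{Z}(A)\{y_1, \ldots, y_r\}$. Condition~(1) then follows from~(3) together with the fact that $\mathcal{Z}(A)$ is finitely generated as a $\mathbb{C}$-algebra, being trapped between $\mathcal{Z}(\mathbb{T}^{+}(\mathsf{P}))$ and its finite-index extension $\mathcal{Z}(\mathbb{T}(\mathsf{P}))$.

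The main obstacle will be the descent step in the verification of condition~(3): passing from $sa \in \mathcal{Z}(A)\{y_i\}$ to $a \in \mathcal{Z}(A)\{y_i\}$ is delicate because $A$ need not be a graded subalgebra of $\mathbb{T}(\mathsf{P})$ with respect to the natural $\mathbb{Z}^m/\Lambda$-grading, so a naive projection onto homogeneous components cannot be used directly. The resolution is to set $M = \mathcal{Z}(A)\{y_1, \ldots, y_r\}$, observe that the quotient $A/M$ is $S$-torsion for the multiplicative set $S \subset \mathcal{Z}(\mathbb{T}^{+}(\mathsf{P})) \subset \mathcal{Z}(A)$ generated by $\{x_i^{m''}\}$, and combine this with the $S$-torsion-freeness inherited from $\mathbb{T}(\mathsf{P})$ through a Noetherian descent on $M$ as a module over the f.g. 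Noetherian ring $\mathcal{Z}(\mathbb{T}^{+}(\mathsf{P}))$; this should force $A/M = 0$, closing the argument.
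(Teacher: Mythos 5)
Your first two paragraphs are correct and are essentially the paper's own route: the paper proves this lemma by citing the proof of Proposition~\ref{rank_eq}, i.e.\ the identification $\mathcal{Z}(A)=A\cap\mathcal{Z}(\mathbb{T}(\mathsf{P}))$, the equality $\mathrm{Frac}(\mathcal{Z}(A))=\mathrm{Frac}(\mathcal{Z}(\mathbb{T}(\mathsf{P})))$, and $\widetilde{A}=\widetilde{\mathbb{T}(\mathsf{P})}$, giving $\rankZ A=\rankZ\mathbb{T}(\mathsf{P})$ and hence equal PI-degrees. Your only deviation there (exhibiting inverses of central monomials $x^{\mathbf t}$ directly instead of invoking that the central localizations are division algebras, as in \cite{FKBL21}) is harmless.

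The genuine gap is in your treatment of conditions (1) and (3) of Definition~\ref{def:almost_Azumaya}. The identity $A=\mathcal{Z}(A)\{y_1,\dots,y_r\}$ with coset representatives $y_i$ chosen inside $\mathbb{T}^{+}(\mathsf{P})$ is false in general, and the proposed ``$A/M$ is $S$-torsion, hence zero'' descent cannot be repaired, because for an arbitrary sandwiched $A$ the conclusion itself fails. Concretely, take $m=2$, $\mathsf{P}_{12}=1$, and let $A\subset\mathbb{T}(\mathsf{P})$ be the span of the monomials $x_1^{a}x_2^{b}$ with either $b\geq1$, or $b=0$ and $a\geq0$; this is a subalgebra with $\mathbb{T}^{+}(\mathsf{P})\subset A\subset\mathbb{T}(\mathsf{P})$. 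Comparing $\mathbb{Z}^2$-degrees, writing $x_1^{-1}x_2\in A$ as a $\mathcal{Z}(A)$-combination of the $y_i$ would require the central factor $x_1^{-m''}$, which is not in $A$; more generally the elements $x_1^{-k}x_2$ with $k\gg0$ show that this $A$ is neither finitely generated as a module over $\mathcal{Z}(A)$ nor finitely generated as a $\mathbb{C}$-algebra (and $\mathcal{Z}(A)$ is not a finitely generated algebra either, so your Artin--Tate-style deduction of condition (1) also breaks). Thus $sa\in M$ with $s$ central does not descend to $a\in M$: $A/M$ is $S$-torsion yet nonzero, and no Noetherian argument over $\mathcal{Z}(\mathbb{T}^{+}(\mathsf{P}))$ can force it to vanish. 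The module-finiteness simply cannot be extracted from the sandwich $\mathbb{T}^{+}(\mathsf{P})\subset A\subset\mathbb{T}(\mathsf{P})$ alone; in the paper's applications of this lemma the algebras $A$ are the quantum-trace images of (reduced) stated skein algebras, whose almost-Azumaya properties are supplied separately by Theorems~\ref{thm;azumaya} and~\ref{thm-Unicity-reduced}, while the sandwich is used only for the rank/PI-degree comparison as in your first two paragraphs. You should either restrict the finite-generation claims to such $A$ and import those theorems, or add a hypothesis (e.g.\ $A$ spanned by monomials over a finitely generated, saturated-enough submonoid) under which an explicit generating set of monomials, not necessarily lying in $\mathbb{T}^{+}(\mathsf{P})$, can be produced.
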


\begin{thm}\label{thm-torus-Amz}
    Suppose $\mathsf{P}$ is an anti-symmetric integer matrix, and 
    $A$ is a finitely generated $\mathbb C$-algebra such that $\mathbb T^{+}(\mathsf{P})\subset A\subset \mathbb T(\mathsf{P})$.
    Then we have the following:
\begin{enumerate}
    \item[$(a)$] any irreducible representation of $\TP$ pulls back to an Azumaya representation of $A$,
    \item[$(b)$]  we have $\im \iota^*\subset\Azumaya (A)$. 
\end{enumerate}
\end{thm}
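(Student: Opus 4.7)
The plan is to establish (a) first via a direct restriction argument, and then to derive (b) by combining (a) with the Unicity theorem applied to $\TP$ itself, which is almost Azumaya by Lemma~\ref{lem-PI-torus-equal-A}.

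For (a), let $\rho \colon \TP \to \mathrm{End}(V)$ be irreducible and consider the restricted representation $\rho|_A$. The key observation is that every generator $x_i$ of $\TP$ lies in $\mathbb T^{+}(\mathsf P) \subset A$, so $\rho(x_i)$ is realized through the $A$-action. Since $x_i$ is a unit in $\TP$, the linear map $\rho(x_i)$ is invertible on the finite-dimensional space $V$. Now suppose $W \subset V$ is an $A$-invariant subspace; then $\rho(x_i)(W) \subset W$. Because $\rho(x_i)\colon V \to V$ is a bijection and $\dim \rho(x_i)(W) = \dim W$, we get $\rho(x_i)(W) = W$, and therefore $\rho(x_i^{-1})(W) = W$ as well. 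It follows that $W$ is stable under every generator of $\TP$ and its inverse, hence under all of $\TP$. Irreducibility of $\rho$ then forces $W = 0$ or $W = V$, so $\rho|_A$ is irreducible. By Lemma~\ref{lem-PI-torus-equal-A} the PI-degree of $A$ equals that of $\TP$, which in turn equals $\dim V$ by Lemma~\ref{lem-quantum-torus-rep}. Thus $\rho|_A$ is an irreducible $A$-representation whose dimension equals the PI-degree of $A$, and the Unicity theorem (Theorem~\ref{thm:Unicity-alomost}) identifies such a representation as Azumaya.

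For (b), fix $\gamma \in \mathrm{MaxSpec}(\mathcal Z(\TP))$. Applying the Unicity theorem to the almost Azumaya algebra $\TP$, we obtain an irreducible representation $\rho_\gamma$ of $\TP$ whose central character is $\gamma$. The central character of $\rho_\gamma|_A$ is the composition $\mathcal Z(A) \hookrightarrow \mathcal Z(\TP) \xrightarrow{\gamma} \mathbb C$, whose kernel is precisely $\iota^{-1}(\gamma) = \iota^*(\gamma)$. Part (a) ensures that $\rho_\gamma|_A$ is Azumaya, so its central character $\iota^*(\gamma)$ lies in $\Azumaya(A)$.

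The main subtlety to verify is the step passing from $\rho(x_i)(W) \subset W$ to $\rho(x_i^{-1})(W) \subset W$: this uses crucially that $V$ is finite-dimensional and that $\rho(x_i)$ inherits invertibility from the ambient $\TP$-action even though we only view $x_i$ as an element of $A$. Once this is cleanly isolated, the rest of the argument is a routine combination of the Unicity theorem with the matching of PI-degrees from Lemma~\ref{lem-PI-torus-equal-A}.
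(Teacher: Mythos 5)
Your proof is correct, but your argument for part (a) takes a genuinely different route from the paper's. You prove directly that the restriction $\rho|_A$ is irreducible: any $A$-invariant subspace $W\subset V$ is stable under each $\rho(x_i)$ (since $x_i\in\mathbb{T}_+(\mathsf{P})\subset A$), and invertibility of $\rho(x_i)$ plus finite-dimensionality upgrade $\rho(x_i)(W)\subset W$ to $\rho(x_i^{\pm1})(W)=W$, so $W$ is $\mathbb{T}(\mathsf{P})$-invariant; you then match $\dim V$ with the PI-degree of $A$ via Lemmas~\ref{lem-quantum-torus-rep} and \ref{lem-PI-torus-equal-A} and invoke the dimension criterion implicit in Theorem~\ref{thm:Unicity-alomost}(2) (representations over non-Azumaya points have dimension strictly less than the PI-degree) to conclude the restriction is an Azumaya representation. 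The paper instead factors $\rho$ through $\mathbb{T}(\mathsf{P})/(\ker r_\rho)\cong\mathrm{End}_{\mathbb C}(V)$ and shows that the induced map from $A$ is \emph{surjective}, the key point being that $x_i^{m''}$ is central and acts by a nonzero scalar $c_i$, so that $\bar x_i^{-1}=c_i^{-1}\bar x_i^{m''-1}$ already lies in the image of $A$; irreducibility and the Azumaya property then follow from surjectivity onto the matrix algebra. Your invariant-subspace argument is more elementary and does not use the root-of-unity relation $x_i^{m''}\in\mathcal Z(\mathbb{T}(\mathsf{P}))$ at the irreducibility step (it enters only through the cited lemmas and the Unicity theorem), whereas the paper's argument gives the surjection onto $M_D(\mathbb C)$ explicitly, which is closer to the definition of an Azumaya point. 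For part (b) the two arguments coincide, except that you make explicit the use of the surjectivity of the central character map for the almost Azumaya algebra $\mathbb{T}(\mathsf{P})$, which the paper leaves implicit; note also that when you invoke the Unicity theorem for $A$ you are tacitly using that $A$ is almost Azumaya, which is indeed part of the statement of Lemma~\ref{lem-PI-torus-equal-A}, so no gap results.
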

\begin{proof}
    (a) Suppose $\rho\colon \TP\rightarrow\text{End}_{\mathbb C}(V)$ is an irreducible representation of $\TP$. Then $\rho$ induces a $\bC$-algebra homomorphism $r_\rho\colon\mathcal Z(\TP)\rightarrow\mathbb C$.
    We use $(\ker r_\rho)$ to denote the ideal of $\TP$ generated by
    $\ker r_\rho$.  Lemma \ref{lem-ir-CXY} implies $\rho$ is an Azumaya representation of $\TP$. Then $\rho$ induces a $\bC$-algebra isomorphism
    $$\bar\rho\colon \TP/(\ker r_\rho)\rightarrow \text{End}_{\mathbb C}(V).$$
    We use $\bar L$ to denote the composition 
    $A\rightarrow\TP\rightarrow \TP/(\ker r_\rho)$, where 
    the first map is the injection and the second map is the canonical projection. 
    The pullback of $\rho$ to $A$ is the composition of the following maps
    $$A\xrightarrow{\bar L} \TP/(\ker r_\rho)\xrightarrow{\bar\rho}\text{End}_{\mathbb C}(V).$$
    Lemma \ref{lem-PI-torus-equal-A} implies $\dim_{\mathbb C} V$ equals the PI-degree of $A$. It suffices to show that $\bar L$ is surjective.

    For any element $x\in\TP$, we use $\bar x$ to the image of $x$ under the projection from $\TP$ to $\TP/(\ker r_\rho)$.
    We use $\bar B$ to denote the subalgebra of $\TP/(\ker r_\rho)$ generated by $\bar x_i$ for $1\leq i\leq r$.
    Since $\mathbb T^{+}(\mathsf{P})\subset A$, then $\bar B\subset \im\bar L$.
    Here $r$ is the size of the matrix $\mathsf P$. 
    For any $1\leq i\leq r$, we have $x_i^{m''}\in\mathcal Z(\TP)$.
    Then $r_\rho(x_i^{m''}) = c_i\in\mathbb C$. We have $c_i\neq 0$ since 
    $x_i^{m''}$ is invertible. Then $\bar x_i^{-1} = c_i^{-1} \bar x_i^{m''-1}\in\bar B$.  This shows $\TP/(\ker r_\rho)\subset\bar B$. Thus we have $\TP/(\ker r_\rho)\subset\im\bar L$.

    (b) comes from Lemma \ref{lem-quantum-torus-rep} and (a).
\end{proof}

\def\trra{\overline{\text{tr}}_{\lambda}^A}

We apply the above discussions to stated ${\rm SL}(n)$-skein algebras. 
Suppose $\Sigma$ is an essentially bordered pb surface and contains no interior punctures and let $\lambda$ be a triangulation of $\Sigma$. 
The quantum trace maps 
$$\tra\colon \cS_{n}(\Sigma,\mathbbm{v})\rightarrow \A,\qquad
 \trra \colon \overline \cS_{n}(\Sigma,\mathbbm{v})\rightarrow \rA$$
 induce algebra homomorphisms
 $$\mu\colon \mathcal Z(\cS_{n}(\Sigma,\mathbbm{v}))\rightarrow \mathcal Z(\A),\qquad
 \bar\mu \colon \mathcal Z(\overline \cS_{n}(\Sigma,\mathbbm{v}))\rightarrow \mathcal Z(\rA).$$

Theorems \ref{traceA} and \ref{thm-torus-Amz} imply the following corollaries. 

\begin{cor}
    Suppose $\Sigma$ is an essentially bordered pb surface and contains no interior punctures. Then we have the following.

    (a) any irreducible representation of $\A$ pulls back to 
    an Azumaya representation of $\cS_{n}(\Sigma,\mathbbm{v})$;

    (b) we have $\im \mu^*\subset\Azumaya (\cS_{n}(\Sigma,\mathbbm{v}))$. 
\end{cor}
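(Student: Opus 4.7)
The plan is to invoke Theorem~\ref{thm-torus-Amz} directly in this setting, after identifying the stated skein algebra with a sandwiched subalgebra of the $A$-version quantum torus via the quantum trace map. Since $\Sigma$ has no interior punctures, Theorem~\ref{traceA}(a) provides an algebra embedding $\tra\colon \cS_n(\Sigma,\mathbbm{v})\hookrightarrow \A$ satisfying the sandwich relation
\[
\cA^+_{\hat q}(\Sigma,\lambda)\ \subset\ \tra\bigl(\cS_n(\Sigma,\mathbbm{v})\bigr)\ \subset\ \cA_{\hat q}(\Sigma,\lambda)\ =\ \mathbb T(\mathsf P_\lambda).
\]
Setting $A := \tra(\cS_n(\Sigma,\mathbbm{v}))$, we are precisely in the hypotheses of Theorem~\ref{thm-torus-Amz} with $\mathsf P = \mathsf P_\lambda$, so $A$ (and hence $\cS_n(\Sigma,\mathbbm{v})$, via the isomorphism $\tra$) is almost Azumaya by Lemma~\ref{lem-PI-torus-equal-A}, and has the same PI-degree as $\A$.

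For part~(a), given an irreducible representation $\rho\colon \A\to \mathrm{End}_{\mathbb C}(V)$, its pullback to $\cS_n(\Sigma,\mathbbm{v})$ is $\rho\circ \tra$. Under the identification $\cS_n(\Sigma,\mathbbm{v})\cong A$, this is exactly the pullback of $\rho$ along the inclusion $A\hookrightarrow \mathbb T(\mathsf P_\lambda)$, which by Theorem~\ref{thm-torus-Amz}(a) is an Azumaya representation of $A$, hence of $\cS_n(\Sigma,\mathbbm{v})$.

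For part~(b), since $\tra$ is injective, it induces an algebra isomorphism $\mathcal Z(\cS_n(\Sigma,\mathbbm{v}))\xrightarrow{\sim}\mathcal Z(A)$, and $\mu$ factors as this isomorphism followed by the inclusion $\iota\colon \mathcal Z(A)\hookrightarrow \mathcal Z(\mathbb T(\mathsf P_\lambda))$ appearing in Theorem~\ref{thm-torus-Amz}. Consequently $\mu^*$ corresponds to $\iota^*$ under the induced homeomorphism $\mathrm{MaxSpec}(\mathcal Z(A))\cong \mathrm{MaxSpec}(\mathcal Z(\cS_n(\Sigma,\mathbbm{v})))$, so part~(b) of Theorem~\ref{thm-torus-Amz} yields $\im \mu^* \subset \Azumaya(\cS_n(\Sigma,\mathbbm{v}))$.

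The only point requiring some care, though it is essentially routine, is bookkeeping the identification of $\cS_n(\Sigma,\mathbbm{v})$ with $A$ and verifying that Azumaya loci and central characters match under this identification; this is automatic because $\tra$ is an algebra isomorphism onto its image, and Azumaya-ness is preserved by such isomorphisms. No new technical obstacle arises beyond what is already encoded in Theorem~\ref{traceA}(a) and Theorem~\ref{thm-torus-Amz}.
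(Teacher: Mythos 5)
Your proof is correct and matches the paper's approach exactly: the paper states that the corollary follows from Theorem~\ref{traceA} (which gives the embedding $\tra$ with the sandwich property) together with Theorem~\ref{thm-torus-Amz}, and you have simply spelled out the identification of $\cS_n(\Sigma,\mathbbm{v})$ with its image $A$ and the factorization of $\mu$ through $\iota$, which the paper leaves implicit.
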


\begin{cor}
    Suppose $\Sigma$ is an essentially bordered pb surface and contains no interior punctures. We require $\Sigma$ to be a polygon if $n>3$.
    Then we have the following.

    (a) any irreducible representation of $\rA$ pulls back to 
    an Azumaya representation of $\overline \cS_{n}(\Sigma,\mathbbm{v})$;

    (b) we have $\im \bar\mu^*\subset\Azumaya (\overline \cS_{n}(\Sigma,\mathbbm{v}))$. 
\end{cor}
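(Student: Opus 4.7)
The plan is to reduce the corollary to a direct application of Theorem \ref{thm-torus-Amz} by identifying $\overline{\cS}_n(\Sigma,\mathbbm{v})$ with its image under the reduced $A$-version quantum trace map $\trra$ inside the quantum torus $\rA = \mathbb{T}(\bar{\mathsf{P}}_\lambda)$. Under the standing hypothesis (essentially bordered, no interior punctures, and a polygon when $n>3$), Theorem \ref{traceA}(b) guarantees both that $\trra$ is injective and that the sandwich
\[
\mathbb{T}^+(\bar{\mathsf{P}}_\lambda) = \rAp \;\subset\; \trra(\rSn) \;\subset\; \rA = \mathbb{T}(\bar{\mathsf{P}}_\lambda)
\]
holds. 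Hence, setting $A := \trra(\rSn) \cong \rSn$ and $\mathsf{P} := \bar{\mathsf{P}}_\lambda$, we are exactly in the setting of Theorem \ref{thm-torus-Amz}.

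For part (a), I would take an irreducible representation $\rho$ of $\rA$ and invoke Theorem \ref{thm-torus-Amz}(a) to conclude that the pullback of $\rho$ along the inclusion $A \hookrightarrow \rA$ is an Azumaya representation of $A$. Transporting along the algebra isomorphism $\trra\colon \rSn \to A$, we obtain an Azumaya representation of $\rSn$; to be explicit that its dimension agrees with the PI-degree of $\rSn$, one uses Lemma \ref{lem-PI-torus-equal-A} (PI-degrees of $A$ and $\mathbb{T}(\mathsf{P})$ coincide) together with Theorem \ref{main-thm-reduced-PI} ($\rankZ \rSn = \rankZ \rA$). Part (b) is then immediate from Theorem \ref{thm-torus-Amz}(b): under the identification, the map $\bar\mu^*\colon \Specm(\mathcal{Z}(\rA)) \to \Specm(\mathcal{Z}(\rSn))$ is precisely the map $\iota^*$ arising from the inclusion of centers $\mathcal{Z}(A) \hookrightarrow \mathcal{Z}(\mathbb{T}(\mathsf{P}))$, so its image lies in the Azumaya locus.

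The whole argument is essentially bookkeeping; the only genuine hypothesis check is that the restriction to polygons when $n>3$ is precisely what is required to invoke Theorem \ref{traceA}(b), which secures the injectivity of $\trra$. Without this injectivity the identification $\rSn \cong \trra(\rSn)$ fails and the quantum-torus sandwich cannot be transported back to $\rSn$, so this is the only place where the polygon hypothesis is genuinely used. No other obstacle arises.
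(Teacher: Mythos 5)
Your proposal is correct and takes essentially the same route as the paper, which derives the corollary directly from Theorem \ref{traceA}(b) (supplying injectivity of $\trra$ and the sandwich $\rAp\subset\trra(\rSn)\subset\rA$) combined with Theorem \ref{thm-torus-Amz}. Your identification of the polygon hypothesis (for $n>3$) as precisely what secures injectivity is the key observation, and the rest of your argument matches the paper's implicit bookkeeping.
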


For any two integers $n_1,n_2$, we write   
$n_1\overset{(2)}{=} n_2$ if $n_1 = 2^k n_2$ for some integer $k$. Then for each integer $l$, there exists a unique odd integer $\mathsf{Odd}(l)$ such that $l\overset{(2)}{=} \mathsf{Odd}(l).$

\begin{lem}\label{lem-2power-eq}
    Let $\{n_1,\cdots,n_k\}$ and 
    $\{m_1,\cdots,m_k\}$ be sequences of positive integers such that $n_i\mid n_{i+1}$ and $m_i\mid m_{i+1}$ for $1\leq i\leq k-1$.
    Suppose $\prod_{1\leq i\leq k}\frac{l}{\gcd(l,n_i)} = \prod_{1\leq i\leq k}\frac{l}{\gcd(l,m_i)}$ for any odd positive integer $l$. Then
    $m_i\overset{(2)}{=} n_i$ for each $1\leq i\leq k$.
\end{lem}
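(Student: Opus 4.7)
The plan is to fix an odd prime $p$ and extract information about the $p$-adic valuations of the $n_i$'s and $m_i$'s separately. For each non-negative integer $j$, I will specialize $l=p^j$ in the given hypothesis. Writing $a_i=v_p(n_i)$ and $b_i=v_p(m_i)$, one computes $\gcd(p^j,n_i)=p^{\min(j,a_i)}$, so $\tfrac{l}{\gcd(l,n_i)}=p^{\max(0,\,j-a_i)}$, and similarly for $m_i$. Taking $\log_p$ of the hypothesis (which forces each side to be a power of $p$ once a single prime is involved) then yields the identity
\begin{equation*}
\sum_{i=1}^k \max(0,\,j-a_i)=\sum_{i=1}^k \max(0,\,j-b_i)\qquad\text{for all } j\in\mathbb N.
\end{equation*}

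Next I will take the discrete first difference in $j$. Since $\max(0,(j+1)-c)-\max(0,j-c)$ equals $1$ if $c\le j$ and $0$ otherwise, the identity above upgrades to
\begin{equation*}
\#\{i:a_i\le j\}=\#\{i:b_i\le j\}\qquad\text{for every } j\in\mathbb N.
\end{equation*}
In other words, the multisets $\{a_1,\dots,a_k\}$ and $\{b_1,\dots,b_k\}$ have identical cumulative distribution functions, so they coincide as multisets. The divisibility chains $n_i\mid n_{i+1}$ and $m_i\mid m_{i+1}$ ensure that the sequences $(a_i)$ and $(b_i)$ are non-decreasing, and two non-decreasing sequences with the same underlying multiset must agree term by term. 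Therefore $v_p(n_i)=v_p(m_i)$ for every $i$ and every odd prime $p$.

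Since this equality of $p$-adic valuations holds for all odd primes $p$, the odd parts of $n_i$ and $m_i$ coincide, i.e.\ $\mathsf{Odd}(n_i)=\mathsf{Odd}(m_i)$, which by definition is the relation $n_i\overset{(2)}{=}m_i$. The only step that requires a little care is justifying that the $p$-adic picture can be read off from the single hypothesis by plugging in $l=p^j$; this is clean because the divisibility-compatible ordering of the $n_i$'s and $m_i$'s lets the first-difference argument run without any prime-mixing complications. No step looks like a genuine obstacle: the argument is essentially a Young-diagram / CDF identification after localizing at each odd prime.
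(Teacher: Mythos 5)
Your proof is correct, but it is a genuinely different argument from the paper's. The paper proceeds by induction on $k$: it first specializes the hypothesis at $l=\mathsf{Odd}(n_k)\mathsf{Odd}(m_k)$ to get $\prod_i\mathsf{Odd}(n_i)=\prod_i\mathsf{Odd}(m_i)$, then at $l=\mathsf{Odd}(n_k)$ to force $\gcd(\mathsf{Odd}(n_k),\mathsf{Odd}(m_i))=\mathsf{Odd}(m_i)$ for all $i$, hence $\mathsf{Odd}(m_k)\mid\mathsf{Odd}(n_k)$ and by symmetry $\mathsf{Odd}(n_k)=\mathsf{Odd}(m_k)$; it then strips off the top term and applies the induction hypothesis. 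You instead localize at each odd prime $p$, test the hypothesis only on the prime powers $l=p^j$, and take discrete first differences in $j$ to identify the counting functions $\#\{i:v_p(n_i)\le j\}=\#\{i:v_p(m_i)\le j\}$, so the multisets of $p$-adic valuations agree; the divisibility chains make the valuation sequences non-decreasing, giving $v_p(n_i)=v_p(m_i)$ termwise and hence $\mathsf{Odd}(n_i)=\mathsf{Odd}(m_i)$. Your route avoids induction, uses only prime-power test values of $l$, and yields the finer (though equivalent) statement that all odd-prime valuations match term by term; the paper's route stays at the level of odd parts and gcds, needs only two cleverly chosen values of $l$ per induction step, and matches the $\mathsf{Odd}(\cdot)$ bookkeeping used in the surrounding section. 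All the steps you flag as delicate (reading off exponents from equal powers of $p$, the first-difference computation, and passing from multiset equality to termwise equality via monotonicity) check out.
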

\begin{proof}
Note that $\prod_{1\leq i\leq k}\frac{l}{\gcd(l,n_i)} = \prod_{1\leq i\leq k}\frac{l}{\gcd(l,m_i)}$ if and only if 
$\prod_{1\leq i\leq k}\gcd(l,n_i) = \prod_{1\leq i\leq k}\gcd(l,m_i)$.

We prove the claim using induction on $k$.
When $k=1$, the claim is true by setting $l = \mathsf{Odd}(n_1)\mathsf{Odd}(m_1).$
Suppose that the claim holds for $k-1$ ($k>1$). 
Note that $n_i\mid n_{i+1}$ implies 
$\mathsf{Odd}(n_i)\mid \mathsf{Odd}(n_{i+1})$ 
for each $1\leq i\leq k-1$. The same is true for $m_i$. We have 
$$\prod_{1\leq i\leq k} \mathsf{Odd}(n_i)=\prod_{1\leq i\leq k} \mathsf{Odd}(m_i)$$
by setting $l=\mathsf{Odd}(n_k)\mathsf{Odd}(m_k)$. By setting $l=\mathsf{Odd}(n_k)$, we have 
$$\prod_{1\leq i\leq k} \mathsf{Odd}(n_i)
=\prod_{1\leq i\leq k}\gcd(\mathsf{Odd}(n_k),m_i)
=\prod_{1\leq i\leq k}\gcd(\mathsf{Odd}(n_k),\mathsf{Odd}(m_i)).$$
Thus we have $\gcd(\mathsf{Odd}(n_k),\mathsf{Odd}(m_i)) = \mathsf{Odd}(m_i)$ 
for $1\leq i\leq k$.
This implies that $\mathsf{Odd}(m_k)\mid \mathsf{Odd}(n_k)$. Similarly, we have 
$\mathsf{Odd}(n_k)\mid \mathsf{Odd}(m_k)$. 
Hence, we have $\mathsf{Odd}(n_k)= \mathsf{Odd}(m_k)$ and $n_k \overset{(2)}{=} m_k$. 
From $n_k \overset{(2)}{=} m_k$, it follows that  
$\prod_{1\leq i\leq k-1}\frac{l}{\gcd(l,n_i)} = \prod_{1\leq i\leq k-1}\frac{l}{\gcd(l,m_i)}$ for any odd positive $l$. From the assumption on the induction, we have 
$n_i \overset{(2)}{=} m_i$ for $1\leq i\leq k-1$. 
These imply that $n_i \overset{(2)}{=} m_i$ for $1\leq i\leq k$. 
\end{proof}

We introduced anti-symmetric matrices $\mathsf{P}_\lambda$
and $\overline{\mathsf{P}}_\lambda$ in \eqref{eq-anti-matric-P-def}. 
From Lemma~\ref{lem:invertible_KH}, their entries are in $n\mathbb Z$.
From Theorem \ref{thm-decom-symmetric}, we can suppose the
anti-symmetric matrix decomposition of $\mathsf{P}_\lambda$ is 
\begin{align}\label{anti-decom-P}
\text{diag}\left\{
    \begin{pmatrix}
        0   &n z_1\\
        -nz_1& 0
    \end{pmatrix},\cdots,
    \begin{pmatrix}
        0   & nz_k\\
        -nz_k& 0
    \end{pmatrix},
    0,\cdots,0
    \right\}
\end{align} and 
the
anti-symmetric matrix decomposition of $\overline{\mathsf{P}}_\lambda$ is 
\begin{align}\label{anti-decom-barP}
\text{diag}\left\{
    \begin{pmatrix}
        0   &n \bar z_1\\
        -n\bar z_1& 0
    \end{pmatrix},\cdots,
    \begin{pmatrix}
        0   & n\bar z_l\\
        -n\bar z_l& 0
    \end{pmatrix},
    0,\cdots,0
    \right\}.
\end{align}
\begin{prop}\label{prop-anti-decom-P}

(a) Under the assumption of Theorem \ref{thm:rank}, with the
anti-symmetric matrix decomposition of $\mathsf{P}_\lambda$ as in \eqref{anti-decom-P}, we have 
$$z_i\overset{(2)}{=}
\begin{cases}
1 &\text{for $1\leq i\leq \dfrac{r(\Si)-t}{2}$},\medskip\\
n &\text{for $\dfrac{r(\Si)-t}{2}+1\leq i\leq \dfrac{r(\Si)-t}{2}+\dfrac{n^2r(\Si) - tn - 2r(\Si) + 2t}{2}$}.
\end{cases}$$

(b) 
Under the assumption of Theorem \ref{thm-PI-reducedA} and a triangulation $\lambda=\mu$ of $\Sigma$ introduced in Section~\ref{sub:quantum-torus-reduced}, with the
anti-symmetric matrix decomposition of $\overline{\mathsf{P}}_\lambda$ as in \eqref{anti-decom-barP}, we have 
$$\bar z_i\overset{(2)}{=}
\begin{cases}
1 &\text{for $1\leq i\leq \dfrac{r(\Si)-t}{2}$},\medskip\\
n &\text{for $\dfrac{r(\Si)-t}{2}+1\leq i \leq\dfrac{|\overline V_{\lambda}|-t(n-1)-(b-t)\lfloor\frac{n}{2}\rfloor}2$,}
\end{cases}$$
where
$|\overline V_{\lambda}|=(n^2-1)r(\Sigma)-\binom{n}{2}(\#\partial\Sigma)$ given in \eqref{eq:cardinarity}. 
\end{prop}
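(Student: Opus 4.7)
The plan is to compute the PI-degree of the $A$-version quantum torus in two ways and compare. On one hand, Theorem \ref{thm:rank} (resp.\ Theorem \ref{thm-PI-reducedA}) gives a closed-form expression for $\rankZ$. On the other hand, the anti-symmetric decomposition \eqref{anti-decom-P} of $\mathsf{P}_\lambda$ (resp.\ \eqref{anti-decom-barP} of $\overline{\mathsf{P}}_\lambda$), combined with equation \eqref{eq-rank-torus-decom} and Lemma \ref{lem-PI-torus-equal-A}, expresses the PI-degree as the product $\prod_{i=1}^{k} m''/\gcd(m'',nz_i)$. Since the integers $z_i$ are intrinsic to $\mathsf{P}_\lambda$ and independent of $m''$, equating the two expressions yields an identity valid for \emph{every} odd $m''$, and Lemma \ref{lem-2power-eq} will then pin down $z_i$ up to powers of $2$.

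First I would determine the length $k$ by specializing to an odd prime $m''=p$ coprime to $2n\prod_i z_i$: this gives $d=1$, $m=p$, and $l_i=p$ for every $i$, so the PI-degree equality collapses to $p^k = p^{((n^2-1)r(\Sigma)-t(n-1))/2}$. Hence $k=((n^2-1)r(\Sigma)-t(n-1))/2$, and the number of zero entries in the decomposition is $t(n-1)$; this already forces the two ranges in part (a) to partition $\{1,\ldots,k\}$.

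Next I would introduce the candidate sequence $h_i^{\mathrm{cand}} = n$ for $1 \leq i \leq (r(\Sigma)-t)/2$ and $h_i^{\mathrm{cand}} = n^2$ for $(r(\Sigma)-t)/2 < i \leq k$. A direct $p$-adic computation, split into the three regimes $v_p(m'')\leq v_p(n)$, $v_p(n)< v_p(m'')\leq 2v_p(n)$, and $v_p(m'')>2v_p(n)$, shows that for every odd $m''$ one has $m''/\gcd(m'',n) = m'$ and $m''/\gcd(m'',n^2) = m$. Using $m'=dm$ one then computes
\[
\prod_i \frac{m''}{\gcd(m'',h_i^{\mathrm{cand}})} = (m')^{(r(\Sigma)-t)/2}\, m^{k-(r(\Sigma)-t)/2} = d^{(r(\Sigma)-t)/2}\, m^{((n^2-1)r(\Sigma)-t(n-1))/2},
\]
which matches the PI-degree from Theorem \ref{thm:rank}. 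Both sequences $\{nz_i\}_{i=1}^{k}$ and $\{h_i^{\mathrm{cand}}\}_{i=1}^{k}$ satisfy the Smith divisibility $h_i \mid h_{i+1}$, so Lemma \ref{lem-2power-eq} applies and gives $nz_i \overset{(2)}{=} h_i^{\mathrm{cand}}$; dividing through by $n$ yields part (a). Part (b) proceeds by the identical three-step scheme with $\overline{\mathsf{P}}_\lambda$ replacing $\mathsf{P}_\lambda$ and Theorem \ref{thm-PI-reducedA} replacing Theorem \ref{thm:rank}; the cutoff for the second block then becomes $(|\overline V_\lambda|-t(n-1)-(b-t)\lfloor\tfrac{n}{2}\rfloor)/2$, as dictated by the modified PI-degree exponent.

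The main obstacle is the $p$-adic verification that $m''/\gcd(m'',n^2) = m$ holds uniformly in odd $m''$: this is precisely what forces the second block of elementary divisors to be $n^2$ rather than some intermediate $nz$ with $z\not\overset{(2)}{=} n$, and the case split in the computation has to align with the definitions of $d',d,m',m$ from Section \ref{notation}. A secondary subtlety is that Lemma \ref{lem-2power-eq} requires both sequences to have the \emph{same} length $k$, which is why the prime-specialization step used to determine $k$ must precede the application of the lemma.
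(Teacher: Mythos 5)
Your proposal is correct and is essentially the paper's own argument: both proofs compute the PI-degree of the $A$-version quantum torus in two ways — via Theorem \ref{thm:rank} (resp.\ Theorem \ref{thm-PI-reducedA}) and via \eqref{eq-rank-torus-decom} applied to the decomposition \eqref{anti-decom-P} — and then invoke Lemma \ref{lem-2power-eq} to compare the actual divisor sequence with a candidate sequence taking only the two values corresponding to $1$ and $n$. The only differences are cosmetic: you keep $m''$ and check $m''/\gcd(m'',n)=m'$, $m''/\gcd(m'',n^2)=m$ by a $p$-adic computation and fix the block count $k$ explicitly by specializing to a large prime, whereas the paper rewrites $m''/\gcd(m'',nz_i)$ as $m'/\gcd(m',z_i)$ and compares with the sequence $w_i\in\{1,n\}$, leaving the count of nonzero blocks implicit.
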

\begin{proof}
(a) From Theorem \ref{thm:rank}, we have that    $$\rankZ \A= (m')^{r(\Sigma)-t}m^{n^2r(\Si) - tn - 2r(\Si) + 2t}.$$
From \eqref{eq-rank-torus-decom}, we have that
$$\rankZ \A = \prod_{1\leq i\leq k} (\dfrac{m''}{\gcd(m'',nz_i)})^2=
\prod_{1\leq i\leq k} (\dfrac{m'}{\gcd(m',z_i)})^2.$$
Thus we have 
$$\prod_{1\leq i\leq k} \dfrac{m'}{\gcd(m',z_i)} = (m')^{\frac{r(\Sigma)-t}2}m^{\frac{n^2r(\Si) - tn - 2r(\Si) + 2t}2}
= \prod_{1\leq i\leq k} \dfrac{m'}{\gcd(m',w_i)},$$
where 
$$w_i=
\begin{cases}
1 &\text{for $1\leq i\leq \dfrac{r(\Si)-t}{2}$},\medskip\\
n &\text{for $\dfrac{r(\Si)-t}{2}+1\leq i\leq \dfrac{r(\Si)-t}{2}+\dfrac{n^2r(\Si) - tn - 2r(\Si) + 2t}{2}$}.
\end{cases}$$
Then Lemma \ref{lem-2power-eq} completes the proof.

The proof of (b) is the same with (a).
\end{proof}

\begin{rem}
As in Proposition \ref{prop-anti-decom-P},
    we can give explicitly anti-symmetric matrix decompositions of $\mathsf{P}_\lambda$ and 
    $\overline{\mathsf{P}}_\lambda$ once we generalize Theorems \ref{thm:rank} and \ref{thm-PI-reducedA} to the case when $m''$ is even, which will be done in a future project. 
\end{rem}

\appendix
\section{Outline of proof of Proposition \ref{thm;azumaya}}\label{appendix}
Since Proposition \ref{thm;azumaya} is straightforward from the proof in Proposition 8.10 in \cite{Wan23}, we will only give an outline of the proof. 
\begin{proof}[Outline of a proof of Proposition \ref{thm;azumaya}]
Theorem 6.1 in \cite{LY23} shows $\cS_n(\Sigma,\mathbbm{v})$ satisfies conditions (1) and (2) in Definition \ref{def:almost_Azumaya}.

It suffices to show $\cS_n(\Sigma,\mathbbm{v})$ is a finitely generated $\mathcal Z(\cS_n(\Sigma,\mathbbm{v}))$-module.  
For any stated arc $\alpha\in \cS_n(\Sigma,\mathbbm{v})$, Lemmas~\ref{lem_cross} and \ref{lem7.5} implies that $\alpha^{(2md)}$ is transparent in $\cS_n(\Sigma,\mathbbm{v})$ ({\bf Claim 1}) (i.e. for any two isotopic stated $n$-webs $\beta_1$ and $\beta_2$ in $\Sigma\times (-1,1)$, we have $\alpha^{(2md)}\cup \beta_1
= \alpha^{(2md)}\cup \beta_2\in \cS_n(\Sigma,\mathbbm{v})$).

We use a saturated system to prove the theorem. Suppose $B=\{b_1,\cdots,b_r\}$ is a saturated system of $\Sigma$. We have a linear isomorphism $L_{*} \colon\cS_n(U(B),\mathbbm{v})\rightarrow \cS_n(\Sigma,\mathbbm{v})$ as in Equation \eqref{eq_iso}. 
Suppose $\mathcal A$ is the subalgebra of $\Oq$ generated by
$\{u_{ij}^{2md}\mid 1\leq i,j\leq n\}\subset\mathcal Z(\Oq)$. Then we have 
$\mathcal A\subset\mathcal Z(\Oq)$ because of {\bf Claim 1}.
It is well-known that $\Oq$ is linearly spanned by
$\{\prod_{ij}u_{ij}^{a_{ij}}\mid a_{ij}\in\mathbb N\text{ for } 1\leq i,j\leq n\}$ \cite{PW91}, where the product
$\prod_{ij}u_{ij}^{a_{ij}}$ is taken in the lexicographic order on $\{1,2,\cdots,n\}^2$.
Then $\Oq$ is generated by 
$\{\prod_{ij}u_{ij}^{a_{ij}}\mid 1\leq a_{ij}\leq 2md-1\text{ for } 1\leq i,j\leq n\}$
over $\mathcal A$.
We have $\mathcal A ^{\otimes r}\subset \mathcal Z(\cS_n(U(B),\mathbbm{v}))$ (here we identify the stated skein algebra of the bigon with $\Oq$),
and $\cS_n(U(B),\mathbbm{v})$ is finitely generated over $\mathcal A$.

From {\bf Claim 1}, we know $L_*(x)\in\mathcal Z(\cS_n(\Sigma,\mathbbm{v}))$
for $x\in \mathcal A ^{\otimes r}$. 
{\bf Claim 1} implies that $L_*(xy) = L_*(x)L_*(y)$ for any $x\in  \mathcal A ^{\otimes r}$ and $y\in \cS_n(U(B),\mathbbm{v})$.
Then $\cS_n(\Sigma,\mathbbm{v})$ is finitely generated over $L_*(\mathcal A ^{\otimes r})\subset \mathcal Z(S_n(\Sigma,\mathbbm{v}))$ because $L_{*} \colon\cS_n(U(B),\mathbbm{v})\rightarrow \cS_n(\Sigma,\mathbbm{v})$  is surjective. 

\end{proof}

\section{Addendum on the condition that “$m''$ is odd".}
This appendix is contained only in the arXiv version for reader's convenience. 

All the arguments in this paper work not only when $m''$ is odd but also when $m'$ is odd, i.e., the claims still hold when we replace “$m''$ is odd" with “$m'$ is odd" in the assumptions of Theorems~\ref{center_torus}, \ref{thm-main-center-skein}, \ref{thm:rank}, \ref{main-thm-reduced-center},  \ref{thm-PI-reducedA}, \ref{main-thm-reduced-PI}, and Lemma~\ref{lem5.1}. Since this paper has been accepted, we do not change the claims from the accepted version, and we explain the reasons why the claims still work when $m'$ is odd here.

\begin{enumerate}
    \item The proof of Theorem~\ref{center_torus}: Note that Equation~\eqref{eq_key} is equivalent to   
    \begin{equation}\label{Ap-m}
    \begin{cases}
    -2\bk_1^T+ \frac{1}{n}(D+C_1A) \bk_2^T=\bm{0},\\
    \frac{1}{n}(B-A)\bk_2^T=\bm{0},
    \end{cases}\text{ in $\mathbb Z_{m'}$},
    \end{equation}and Equation~\eqref{eq's} is equivalent to 
    \begin{equation}\label{Ap-eq's}
    \begin{cases}
\tfb_{ir_i} + \tfb_{i1} =\bm{0},\\
\tfb_{i1} + \tfb_{i2} =\bm{0},\\
 \tfb_{i2} + \tfb_{i3} =\bm{0},\\
\;\vdots\\
\tfb_{ir_i-1} + \tfb_{ir_i} =\bm{0},\\
\end{cases}\text{ in $\mathbb Z_{m'}$}.
\end{equation}
Then the arguments still work if we change Equation~\eqref{eq_key} to Equation~\eqref{Ap-m}, Equation~\eqref{eq's} to Equation~\eqref{Ap-eq's}, change all $m'$ to $m$,
change all $m''$ to $m'$, 
and change all $d'$ to $d$.
\item Theorems~\ref{thm-main-center-skein}, \ref{thm:rank}, and Lemma~\ref{lem5.1}: We can replace $m''$ with $m'$ because of the modified Theorem~\ref{center_torus}.
\item Theorems~\ref{main-thm-reduced-center},  \ref{thm-PI-reducedA}, and \ref{main-thm-reduced-PI} are stated for the reduced case.
We can replace $m''$ in them with $m'$ because the same reason for the stated case (the reduced case uses the same techniques as the stated case).
\end{enumerate}


\begin{thebibliography}{}


%
\bibitem[AGS95]{AGS95}
A. Y. Alekseev, H. Grosse, V. Schomerus, 
\emph{Combinatorial quantization of the Hamiltonian Chern-Simons theory. I},
Comm. Math. Phys. \textbf{172} (1995), no. 2, 317--358.

%
%\bibitem[AK17]{AK17}
%D. G. L. Allegretti, H. K. Kim, 
%\emph{A duality map for quantum cluster varieties from surfaces}, 
%Adv. Math. \textbf{306} (2017), 1164--1208.

%
\bibitem[BFR23]{BFR23} 
S. Baseilhac, M. Faitg, P. Roche, 
\emph{Unrestricted quantum moduli algebras, III: Surfaces of arbitrary genus and skein algebras},
arXiv:2302.00396 [math.QA] (2023).

%
\bibitem[BFR24]{BFR24} 
S. Baseilhac, M. Faitg, P. Roche, 
\emph{Structure and representations of quantum moduli and g-skein algebras at roots of unity}, 
in preparation.

%
\bibitem[BR24]{BR24} 
S. Baseilhac, P. Roche, 
\emph{Unrestricted quantum moduli algebras, II: noetherianity and simple fraction rings at roots of 1},
SIGMA Symmetry Integrability Geom. Methods Appl. \textbf{20} (2024), No. 047, 70 pp.

%
\bibitem[BZBJ18]{BZBJ18}
D. Ben-Zvi, A. Brochier, D. Jordan, 
\emph{Integrating quantum groups over surfaces},
J. Topol. \textbf{11} (2018), no. 4, 874--917.

%
\bibitem[BHMV95]{BHMV95} 
C. Blanchet,  N. Habegger, G. Masbaum, P. Vogel, 
\emph{Topological quantum field theories derived from the Kauffman bracket},
Topology \textbf{34} (1995), no. 4, 883--927.

%
\bibitem[BL22]{BL22}
W. Bloomquist, T. T. Q. L\^e,
\emph{The Chebyshev-Frobenius homomorphism for stated skein modules of 3-manifolds},
Math. Z. \textbf{301} (2022), no. 1, 1063--1105.

%
\bibitem[BH23]{BH23}
F. Bonahon, V. Higgins, 
\emph{Central elements in the SL$_d$-skein algebra of a surface},
arXiv:2308.13691 (2023)

%
\bibitem[BL07]{BL07}
F. Bonahon, X. Liu, 
\emph{Representations of the quantum Teichm\"uller space and invariants of surface diffeomorphisms},
Geom. Topol. \textbf{11} (2007), 889--937.

%
\bibitem[BW11]{BW11} 
F. Bonahon, H. Wong,
\emph{Quantum traces for representations of surface groups in $\mathrm{SL}_2(\mathbb C)$}, 
Geom. Topol. \textbf{15} (2011), no. 3, 1569--1615.

%
\bibitem[BW16]{BW16} 
F. Bonahon, H. Wong,
\emph{Representations of the Kauffman bracket skein algebra I: invariants and miraculous cancellations}, 
Invent. Math. \textbf{204} (2016), no. 1, 195--243.

%
\bibitem[BG02]{BG02}
K. A. Brown, K. R. Goodearl, 
\emph{Lectures on algebraic quantum groups},
Adv. Courses Math. CRM Barcelona
Birkhäuser Verlag, Basel (2002).

%
\bibitem[BR95]{BR95}
E. Buffenoir, P. Roche,
\emph{Two-dimensional lattice gauge theory based on a quantum group}, 
Comm. Math. Phys. \textbf{170} (1995), no. 3, 669--698.

%
\bibitem[Bul97]{Bul97} 
D. Bullock, 
\emph{Rings of SL2(C)-characters and the Kauffman bracket skein module},
Comment. Math. Helv. \textbf{72} (1997), no. 4, 521--542.

%
\bibitem[Coo23]{Coo23}
J. Cooke, 
\emph{Excision of skein categories and factorisation homology}, 
Adv. Math.\textbf{414} (2023), No. 108848, 51 pp.

%
%\bibitem[Coo20]{Coo20}
%J. Cooke, 
%\emph{Kauffman skein algebras and quantum Teichmüller spaces via factorization homology}, 
%J. Knot Theory Ramifications \textbf{29} (2020), no. 14, 2050089, 54 pp.

%
\bibitem[CL22]{CL22}
F. Costantino, T. T. Q. L\^e,
\emph{Stated skein algebras of surfaces}, 
J. Eur. Math. Soc. (JEMS) \textbf{24} (2022), no. 12, 4063--4142.

%
\bibitem[CL22a]{CL22a}
F. Costantino, T. T. Q. L\^e,
\emph{Stated skein modules of 3-manifolds and TQFT}, 
arXiv:2206.10906 (2022). 

%
\bibitem[Dou24]{Dou24}
D. C. Douglas, 
\emph{Quantum traces for SL$_n(\mathbb{C})$ : the case $n=3$}, 
J. Pure Appl. Algebra \textbf{228} (2024), no. 7, No. 107652, 50 pp.

\bibitem[EG11]{EG11}
P. Etingof, O. Golberg, S. Hensel, T. Liu, A. Schwendner,
D. Vaintrob, and E. Yudovina.
\emph{Introduction to representation theory}, 
Vol. 59. American Mathematical Soc., 2011.

%
\bibitem[Fai20]{Fai20}
M. Faitg, 
\emph{Holonomy and (stated) skein algebras in combinatorial quantization}, 
arXiv:2003.08992 (2020).


%
\bibitem[FG09]{FG09}
V. V. Fock, A. B. Goncharov,
\emph{Cluster ensembles, quantization and the dilogarithm},
Ann. Sci. Éc. Norm. Supér. (4) \textbf{42} (2009), no. 6, 865--930.

%
\bibitem[FG06]{FG06}
V. V. Fock, A. B. Goncharov,
\emph{Moduli spaces of local systems and higher Teichm\"uller theory}
Publ. Math. Inst. Hautes Études Sci.(2006), no. 103, 1--211.

%
\bibitem[FKBL21]{FKBL21}
C. Frohman, J. Kania-Bartoszynska, T. L\^e, 
\emph{Dimension and trace of the Kauffman bracket skein algebra}, 
Trans. Amer. Math. Soc. Ser. B8(2021), 510--547.

%
\bibitem[FKBL23]{FKBL23}
C. Frohman, J. Kania-Bartoszynska, T. L\^e, 
\emph{Sliced skein algebras and geometric Kauffman bracket}, 
arXiv:2310.06189 (2023).

%
\bibitem[FKBL19]{FKBL19}
C. Frohman, J. Kania-Bartoszynska, T. L\^e, 
\emph{Unicity for representations of the Kauffman bracket skein algebra}, 
Invent. Math.215 (2019), no.2, 609--650.

%
\bibitem[GJS19]{GJS19}
I. Ganev, D. Jordan, P. Safronov,
\emph{The quantum Frobenius for character varieties and multiplicative quiver varieties}, 
arXiv:1901.11450 (2019). 

%
\bibitem[Hig23]{Hig23}
V. Higgins, 
\emph{Triangular decomposition of SL$_3$ skein algebras},
Quantum Topol. \textbf{14} (2023), no. 1, 1--63.

%
\bibitem[IY23]{IY23}
T. Ishibashi, W. Yuasa, 
\emph{Skein and cluster algebras of unpunctured surfaces for $\mathfrak{sl}_3$}, 
Math. Z. \textbf{303} (2023), no. 3, No. 72, 60 pp.

%
\bibitem[KK22]{KK22} 
H. Karuo, J. Korinman, 
\emph{Azumaya loci of skein algebras}, 
arXiv:2211.13700 (2022).

%
\bibitem[KK23]{KK23} 
H. Karuo, J. Korinman, 
\emph{Classification of semi-weight representations of reduced stated skein algebras}, 
arXiv:2303.09433 (2023).

%
\bibitem[Kim20]{Kim20}
H. K. Kim,
\emph{${\rm SL}_3$-laminations as bases for ${\rm PGL}_3$ cluster varieties for surfaces}, 
arXiv:2011.14765 (2020).

%
\bibitem[Kim21]{Kim21}
H. K. Kim,
\emph{Naturality of ${\rm SL}_3$ quantum trace maps for surfaces}, 
arXiv:2104.06286 (2021).

%
\bibitem[KLW25]{KLW}
H. Kim, T. T. Q. L\^e, Z. Wang, 
\emph{Frobenius homomorphisms for stated ${\rm SL}_n$-skein modules},
arXiv:2504.08657 (2025).

%
\bibitem[KW25]{KW25}
H. Karuo, Z. Wang, 
\emph{Center of stated $\mathrm{SL}(n)$-skein algebras: even roots of unity},
arXiv:2509.18950 (2025).

%
\bibitem[KW24]{KW24}
H. Kim, Z. Wang, 
\emph{The Unicity Theorem and the center of the ${\rm SL}_3$-skein algebra},
arXiv:2407.16812 (2024). 

%
\bibitem[KS12]{KS12}
A. Klimyk, K. Schm\"udgen, 
\emph{Quantum groups and their representations},
%Springer Science \& Business Media, (2012).
Texts Monogr. Phys. Springer-Verlag, Berlin, (1997).

%


\bibitem[Kor21]{Kor21}
J. Korinman, 
\emph{Unicity for representations of reduced stated skein algebras},
Topology Appl. \textbf{293} (2021), Paper No. 107570, 28 pp.

\bibitem[KQ24]{KQ24}
J. Korinman, A. Quesney, 
\emph{Classical shadows of stated skein representations at roots of unity},
Algebraic \& Geometric Topology 24.4 (2024): 2091-2148.

%
\bibitem[Kup96]{Kup96}
G. Kuperberg,
\emph{Spiders for rank  2  Lie algebras},
Comm. Math. Phys. \textbf{180} (1996), no. 1, 109--151.

%\bibitem[L\^e15]{Le15} T. T. Q. L\^{e},
%\emph{On Kauffman bracket skein modules at roots of unity}, 
%Algebr. Geom. Topol. \textbf{15} (2015), no. 2, 1093--1117.

\bibitem[L\^e18]{Le18} T. T. Q. L\^{e},
\emph{Triangular decomposition of skein algebras},
Quantum Topol. \textbf{9} (2018), no.~3, 591--632.

\bibitem[L\^e19]{Le19} T. T. Q. L\^{e},
\emph{Quantum Teichm\"{u}ller spaces and quantum trace map},
J. Inst. Math. Jussieu \textbf{18} (2019), no.2, 249--291.

%
\bibitem[LS21]{LS21}
T. T. Q. L\^e, A. S. Sikora, 
\emph{Stated $SL(n)$-Skein Modules and Algebras},
arXiv:2201.00045 (2021).

%
\bibitem[LY23]{LY23}
T. T. Q. L{\^e},  T. Yu,
\emph{Quantum traces for SL$_n$-skein algebras},
arXiv:2201.00045 (2023).

%
\bibitem[LY22]{LY22} 
T. T. Q. L\^{e}, T. Yu, 
\emph{Quantum traces and embeddings of stated skein algebras into quantum tori}, 
Selecta Math. (N.S.) \textbf{28} (2022), no. 4, Paper No. 66, 48 pp.

%
%\bibitem[LY21]{LY21} 
%T. T. Q. L\^{e}, T. Yu, 
%\emph{Stated skein modules of marked 3-manifolds/surfaces, a survey}, 
%Acta Math. Vietnam. \textbf{46} (2021), no. 2, 265--287.

%
\bibitem[Mul16]{Mul16} 
G. Muller,
\emph{Skein and cluster algebras of marked surfaces},
Quantum Topol. \textbf{7} (2016), no. 3, 435--503. 

%
\bibitem[New72]{New72} 
M. Newman,
\emph{Integral matrices},
Pure Appl. Math., Vol. 45
Academic Press, New York-London, (1972). 

%
\bibitem[PW91]{PW91}
B. Parshall, J.-P. Wang,
\emph{Quantum linear groups},
Mem. Amer. Math. Soc. \textbf{89}, (1991), no.439. 	
%volume 439. American Mathematical Soc, (1991).

%
\bibitem[Sik05]{Sik05}
\emph{Skein theory for  SU(n) -quantum invariants},
Algebr. Geom. Topol. \textbf{5} (2005), 865--897.

%
\bibitem[PS19]{PS19}
J. H. Przytycki, A. S. Sikora,
\emph{Skein algebras of surfaces},
Trans. Amer. Math. Soc. \textbf{371} (2019), no. 2, 1309--1332.

%
\bibitem[SW07]{SW07}
A. S. Sikora, B. W. Westbury, 
\emph{Confluence theory for graphs}, 
Algebr. Geom. Topol. \textbf{7} (2007), 439--478.

%
\bibitem[Wan23a]{Wan23}
Z. Wang, 
\emph{On stated SL(n)-skein modules},
arXiv:2307.10288. (2023).

%
\bibitem[Wan23b]{Wan23a}
Z. Wang, 
\emph{On Frobenius algebras obtained from stated skein algebras},
arXiv:2310.13116 (2023).

%  
\bibitem[Wan24]{Wan24}
Z. Wang, 
\emph{Stated $SL_n$-skein modules, roots of unity, and TQFT}, 
arXiv:2401.09995 (2024).

%
\bibitem[Yu23]{Yu23}
T. Yu, 
\emph{Center of the stated skein algebra}, 
arXiv:2309.14713 (2023).

%
\bibitem[Yu23a]{Yu23a}
T. Yu, 
\emph{Explicit representations and Azumaya loci of skein algebras of small surfaces}, 
arXiv:2312.00446 (2023).

%
\bibitem[Prz99]{Prz99} J. H. Przytycki, 
\emph{Fundamentals of Kauffman bracket skein modules}, 
Kobe J. Math. \textbf{16} (1999), no. 1, 45--66. 

%
\bibitem[RT91]{RT91}
N. Reshetikhin, V. G. Turaev, 
\emph{Invariants of  3 -manifolds via link polynomials and quantum groups}, 
Invent. Math. \textbf{103} (1991), no. 3, 547--597.

%
\bibitem[Tur91]{Tur91} V. G. Turaev, 
\emph{Skein quantization of Poisson algebras of loops on surfaces}. 
Ann. Sci. École Norm. Sup. (4) \textbf{24} (1991), no. 6, 635--704.

\end{thebibliography}
\end{document}